\newtheorem{lemma}{Lemma}[section]
\newtheorem{proposition}{Proposition}[section]
\newtheorem{theorem}{Theorem}[section]
\theoremstyle{definition}
\theoremstyle{definition}
\newtheorem{remark}{Remark}
\begin{document}

\hyphenation{Chris-to-dou-lou}

\title{Shock Development in Spherical Symmetry}
\author[D.~Christodoulou]{Demetrios Christodoulou}
\author[A.~Lisibach]{Andr\'e Lisibach}
\address{Demetrios Christodoulou\\Department of Mathematics\\ETH Zurich}
\address{Andr\'e Lisibach\\Department of Physics\\ETH Zurich\\}
\email{lisibach@itp.phys.ethz.ch}
\thanks{Work supported by ERC Advanced Grant 246574 Partial Differential Equations of Classical Physics.}
\begin{abstract}
  The general problem of shock formation in three space dimensions was solved by D.~Christodoulou in \cite{ch2007}. In this work also a complete description of the maximal development of the initial data is provided. This description sets up the problem of continuing the solution beyond the point where the solution ceases to be regular. This problem is called the shock development problem. It belongs to the category of free boundary problems but in addition has singular initial data because of the behavior of the solution at the blowup surface. The present work delivers the solution to this problem in the case of spherical symmetry for a barotropic fluid. A complete description of the singularities associated to the development of shocks in terms of smooth functions is given.
\end{abstract}
\maketitle

\tableofcontents
\section{Introduction}

\subsection{Overview}
The Euler equations are a set of nonlinear hyperbolic partial differential equations. Physically they represent the conservation of energy, momentum and mass. It is well known that, given smooth initial data, solutions of equations of this type can blow up in finite time. In the case of the Euler equations the gradients of the solution become infinite. The mechanism of the blowup is called \textit{formation of a shock} and has first been studied in one space dimension by Riemann in 1858 \cite{Rie}. The general problem of shock formation in three space dimensions for a fluid with an arbitrary equation of state was solved by Christodoulou in the monograph \cite{ch2007}. In this work also a complete description of the maximal development of the initial data is provided. This description properly sets up the problem of continuing the solution beyond the point where the solution ceases to be regular. This problem is called the \textit{shock development problem} and is stated in the epilogue of \cite{ch2007}. It belongs to the category of free boundary problems but possesses the additional difficulty of having singular data due to the behavior of the solution at the blowup surface. The present work gives the solution to this problem in the physically important case of spherical symmetry for a fluid with barotropic equation of state. The result is a step in understanding the development of shocks in fluids. It provides the basis on which the continuation, interaction and breakdown of shocks in spherical symmetry can be studied. Furthermore, the mathematical tools invented to deal with the problem will be of importance in studying solutions to nonlinear hyperbolic equations beyond shock formation.

\subsection{Shock Development}
The general problem of shock formation in a relativistic fluid has been studied in the monograph \cite{ch2007} by Christodoulou. This work is in the framework of special relativity. The theorems in this monograph give a detailed picture of shock formation in 3-dimensional fluids. In particular a detailed description is given of the geometry of the boundary of the maximal development of the initial data and of the behavior of the solution at this boundary. The notion of maximal development in this context is not that relative to the background Minkowski metric $\eta_{\mu\nu}$, but rather the one relative to the acoustical metric $g_{\mu\nu}$. This is a Lorentzian metric, the null cones of which are the sound cones. In the monograph it is shown that the boundary of the maximal development in the 'acoustical' sense (relative to $g$) consists of a regular part and a singular part. Each component of the regular part $\underline{C}$ is an incoming characteristic (relative to $g$) hypersurface which has a singular past boundary. The singular part of the boundary is the locus of points where the density of foliations by outgoing characteristic (relative to $g$) hypersurfaces blows up. It is the union $\partial_-\mathcal{B}\cup \mathcal{B}$, where each component of $\partial_-\mathcal{B}$ is a smooth embedded surface in Minkowski spacetime, the tangent plane to which at each point is contained in the exterior of the sound cone at that point. On the other hand, each component of $\mathcal{B}$ is a smooth embedded hypersurface in Minkowski spacetime, the tangent hyperplane to which at each point is contained in the exterior of the sound cone at that point, with the exception of a single generator of the sound cone, which lies on the hyperplane itself. The past boundary of a component of $\mathcal{B}$ is the corresponding component of $\partial_-\mathcal{B}$. The latter is at the same time the past boundary of a component of $\underline{C}$. This is the surface where a shock begins to form. The maximal development in the case of spherical symmetry is shown in figure \ref{max_dev}. In spherical symmetry a component of $\partial_-\mathcal{B}$ corresponds to a sphere and therefore to a point in the $t$-$r$-plane, the cusp point, which we denote by $O$.

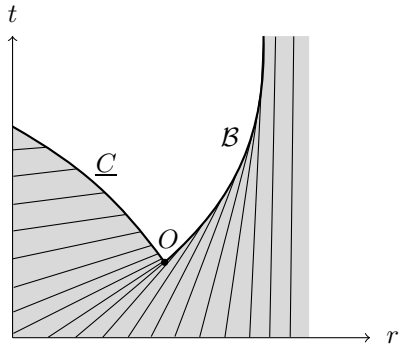
\begin{figure}[h!]
\begin{center}
\begin{tikzpicture}
\node (o) at (0,0) {};
\node (b) at (1.3,3) {};
\node (e) at (-2,1.8) {};
\path [fill=gray!30] (o.center) to [bend right=25] (b.center) to (1.9,3) to (1.9,-1) to (-2,-1) to (-2,1.8) to [bend left=12] (o.center);
\draw [line width=0.8pt] (o.center) to [bend right=25] (b.center);
\draw [line width=0.8pt] (o.center) to [bend right=12] (e.center);
\node at (0.0496,0.311) {$O$};
\draw (o.center) circle (0.4mm);
\fill (o.center) circle (0.4mm);
\node at (0.858,1.6617) {$\mathcal{B}$};
\draw [->] (-2,-1) -- (2.7,-1);
\draw [->] (-2,-1) -- (-2,3);
\node at (3,-1) {$r$};
\node at (-2,3.3) {$t$};
\node at (-0.7689,1.2851) {$\underline{C}$};

\draw (-1.1706,-0.9945) -- (-0.0006,-0.0003);
\draw (0.2504,0.2357) -- (-0.8141,-0.9945);
\draw (0.5015,0.5068) -- (-0.4777,-0.9895);
\draw (-0.0067,0.0181) -- (-1.5321,-0.9945);
\draw (-0.0308,0.0499) -- (-1.9891,-0.9427);

\draw (-0.081,0.1152) -- (-1.9891,-0.6581);
\draw (-0.1663,0.2256) -- (-1.9844,-0.3334);
\draw (0.7325,0.8081) -- (-0.1764,-0.9845);
\draw (0.9484,1.1847) -- (0.1349,-0.9845);
\draw (1.1493,1.6768) -- (0.4613,-0.9895);
\draw (1.2647,2.2743) -- (0.7927,-0.9945);
\draw (1.2999,2.9623) -- (1.1181,-0.993);
\draw (-0.3019,0.4071) -- (-1.9961,0.0462);
\draw (-0.5058,0.6344) -- (-1.9891,0.4164);
\draw (-0.784,0.9236) -- (-1.9891,0.7679);
\draw (-1.1455,1.2399) -- (-1.9891,1.1405);
\draw (-1.5472,1.5262) -- (-1.9941,1.491);
\draw (1.4591,2.9929) -- (1.3817,-0.9895);
\draw (1.6981,2.9964) -- (1.6514,-0.9895);
\end{tikzpicture}\end{center}
\caption[Maximal development in spherical symmetry.]{The maximal development in a neighborhood of a blowup point in spherical symmetry. $\underline{C}$ denotes the regular part of the boundary of the maximal development. $\underline{C}$ is incoming characteristic and originates at the cusp point $O$. The cusp point $O$ corresponds to $\partial_-\mathcal{B}$. $\mathcal{B}$ denotes the singular part of the boundary of the maximal development. The family of outgoing characteristic curves is drawn as straight lines for simplification.}
\label{max_dev}
\end{figure}

Now the maximal development in the acoustical sense, or 'maximal classical solution', is the physical solution of the problem up to $\underline{C}\cup \partial_-\mathcal{B}$, but not up to $\mathcal{B}$. In the last part of the monograph the problem of the physical continuation of the solution is set up as the \textit{shock development problem}. This is a free boundary problem associated to each component of $\partial_-\mathcal{B}$. In this problem one is required to construct a hypersurface of discontinuity $\mathcal{K}$, the shock, lying in the past of the corresponding component of $\mathcal{B}$ but having the same past boundary as the latter, namely the given component of $\partial_-\mathcal{B}$, the tangent hyperplanes to $\mathcal{K}$ and $\mathcal{B}$ coinciding along $\partial_-\mathcal{B}$. Moreover, one is required to construct a solution of the differential conservation laws in the domain in Minkowski spacetime bounded in the past by $\underline{C}\cup \mathcal{K}$, agreeing with the maximal classical solution on $\underline{C}\cup\partial_-\mathcal{B}$, while having jumps across $\mathcal{K}$ relative to the data induced on $\mathcal{K}$ by the maximal classical solution. For reasons which will be made clear below we call this solution \textit{state behind} while the solution in the maximal development we call \textit{state ahead}. The jumps across $\mathcal{K}$ have to satisfy the jump conditions which follow from the integral form of the conservation laws (the relativistic form of the Rankine-Hugoniot jump conditions). Finally, $\mathcal{K}$ is required to be spacelike relative to the acoustical metric $g$ induced by the maximal classical solution, which holds in the past of $\mathcal{K}$, and timelike relative to the new solution, which holds in the future of $\mathcal{K}$ (the last condition is equivalent to the condition that the jump in entropy is positive). The maximal classical solution thus provides the boundary conditions on $\underline{C}\cup \partial_-\mathcal{B}$, as well as a barrier at $\mathcal{B}$. The situation in spherical symmetry is shown in figure \ref{all}.

In the present work the shock development problem is solved in the case of spherical symmetry and under the assumption that the fluid is described by a barotropic equation of state. The presence of spherical symmetry represents an important physical case, also from the point of view of applications, and reduces the problem to one on the $t$-$r$-plane, where $t$ denotes Minkowski time and $r$ denotes the radial coordinate. The assumption of a barotropic equation of state is appropriate for liquids and also for a radiation gas. The fluid being barotropic the energy-momentum conservation law decouples from the particle conservation law. The system of partial differential equations reduces to an inhomogeneous system with two unknowns. One of the key concepts used to deal with the system of equations are the \textit{Riemann Invariants} $\alpha$, $\beta$ of the principal part of the system of equations. The equations are reformulated in terms of characteristic coordinates $(u,v)$. These coordinates are defined by the outgoing and incoming null rays with respect to the acoustical metric $g$, $u$ being constant along outgoing null rays and $v$ being constant along incoming null rays. In addition the coordinates are set up such that the shock $\mathcal{K}$ is given by $u=v$. The system of equations for the time and radial coordinates $(t,r)$ in terms of $(u,v)$ is the \textit{Hodograph system}. The Hodograph system together with the system of equations for the Riemann Invariants is a non-linear four by four system. This system is then solved using a double iteration consisting of an inner and an outer iteration. In the outer iteration the position of the free boundary in the $t$-$r$-plane is iterated, providing, through the jump conditions, in each step the boundary conditions for a fixed boundary problem. The equations being non-linear, this fixed boundary problem is then solved using again an iteration, the inner iteration. The solution of the fixed boundary problem then allows to set the position of the free boundary in the $t$-$r$-plane for the next iterate. This is accomplished as follows. The solution of the fixed boundary problem provides the values of $r$ and $t$ in terms of the characteristic coordinates along the shock $\mathcal{K}$, i.e.~$r(v,v)$, $t(v,v)$. In the formation problem the solution in the maximal development (denoted by $(\cdot)^\ast$) is given in terms of the acoustical coordinates $(t,w)$, where $w$ is a function which is constant along outgoing characteristic hypersurfaces. Now $r(v,v)$ is set equal to the radial coordinate given by the acoustical coordinates, i.e.~$r^\ast(t,w)$, when $t=t(v,v)$ is substituted, i.e.~$r(v,v)=r^\ast(t(v,v),w)$. This equation is called the \textit{identification equation} since it identifies the radial coordinates of events in spacetime, with respect to the solution of the fixed boundary problem and with respect to the solution in the maximal development, along the shock $\mathcal{K}$. It plays a very important role, and the study of it is at the heart of the solution to the problem. The identification equation has to be solved for $w$ in terms of $v$ in order to be able to apply the jump conditions and in order to compute the boundary data for the next iterate in the outer iteration. This is not possible offhandedly. Only after correctly guessing the asymptotic form of the solution as we approach the sphere $\partial_-\mathcal{B}$, can the identification equation be reduced to an equation which is solvable for $w$ in terms of $v$. The iteration then yields the local existence of a continuously differentiable solution to the shock development problem. Also uniqueness of this solution is proven. Finally it is proven that the solution is, away from the shock $\mathcal{K}$, smooth.

\begin{figure}[h!]
\begin{center}
\begin{tikzpicture}
\node (o) at (0,0) {};
\node (b) at (1.3,3) {};
\node (e) at (-2,1.8) {};
\node (a) at (1.3923,1.5412) {};
\node (c) at (-1.1525,1.2389) {};
\node (x) at (-1.1525,1.2319) {};
%\path [fill=gray!30] (o.center) to [bend right=25] (b.center) to (1.9,3) to (1.9,-1) to (-2,-1) to (-2,1.8) to [bend left=12] (o.center);
\path [fill=gray!30] (o.center) to [bend right=12] (a.center) to (1.3888,1.5412) node (v2) {} to (1.3923,-0.9895) to (-2,-1) to (-2,1.8) to [bend left=12] (o.center);

\path [fill=gray!60] (o.center) to [bend right=12] (a.center) to (x.center) to [bend left=9] (o.center);%XXXXX
\draw [line width=0.8pt] (o.center) to [bend right=12] (e.center);
\draw [line width=0.8pt] (o.center) to [bend right=12] (a.center);
%\node at (-0.2316,3.3479) {$O$};
%\draw (o.center) circle (0.4mm);
%\fill (o.center) circle (0.4mm);
\node at (1.5095,1.7802) {$\mathcal{K}$};
\node at (1.0549,2.7048) {$\mathcal{B}$};
\draw [->] (-2,-1) -- (2.7,-1);
\draw [->] (-2,-1) -- (-2,3);
\node at (3,-1) {$r$};
\node at (-2,3.3) {$t$};
\node at (-1.2931,1.7381) {$\underline{C}$};

\draw [dashed,line width=0.4pt] (o.center) to [bend right=25] (b.center);

\draw (-1.1706,-0.9945) -- (0.0021,-0.0036);
\draw (0.1339,0.0966) -- (-0.8141,-0.9945);
\draw (0.3413,0.2635) -- (-0.4777,-0.9895);
\draw (0.0004,-0.0053) -- (-1.5321,-0.9945);
\draw (0.1597,0.1118) -- (-1.9891,-0.9427);
\draw (0.3378,0.2594) -- (-1.9891,-0.6581);
\draw (0.5464,0.4375) -- (-1.9844,-0.3334);
\draw (0.5452,0.434) -- (-0.1764,-0.9845);
\draw (0.7297,0.6221) -- (0.1349,-0.9845);
\draw (0.9143,0.8359) -- (0.4613,-0.9895);
\draw (1.083,1.0374) -- (0.7927,-0.9945);
\draw (1.2189,1.2413) -- (1.1181,-0.993);
\draw (0.7315,0.6227) -- (-1.9961,0.0462);
\draw (0.9213,0.8336) -- (-1.9891,0.3368);
%\draw (-0.784,0.9236) -- (-1.9891,0.7679);
%\draw (-1.1455,1.2399) -- (-1.9891,1.1405) node (v1) {};
\draw (-1.5472,1.5262) -- (-1.9941,1.491);
%\draw (1.4591,2.9929) -- (1.3817,-0.9895);
%\draw (1.8024,2.7855) -- (1.6514,-0.9895);
\draw (1.3888,1.5377) -- (-1.9891,1.1405);
\draw (1.2165,1.246) -- (-1.9867,0.8218);
\draw (1.0736,1.0374) -- (-1.9891,0.5817);
\end{tikzpicture}
\end{center}
\caption[State behind and state ahead.]{The state behind in dark shade and the state ahead in light shade, separated by $\underline{C}$ and the shock $\mathcal{K}$ where outgoing characteristics meet. The family of outgoing characteristic curves is drawn as straight lines for simplification.}
\label{all}
\end{figure}
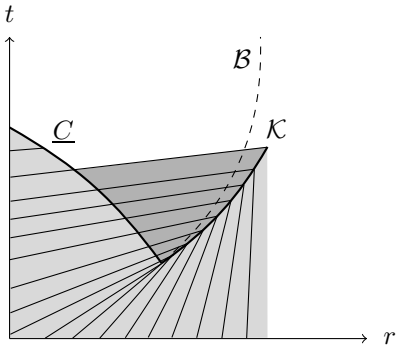

The problem is solved in the framework of special relativity. Nevertheless, no special care is needed to extract information on the non-relativistic limit. This is due to the fact that the non-relativistic limit is a regular limit, obtained by letting the speed of light in conventional units tend to infinity, while keeping the sound speed fixed.

%%% Local Variables: 
%%% mode: latex
%%% TeX-master: "master"
%%% End: 

\section{Relativistic Fluids}
In the present section we introduce the model of a perfect fluid in special relativity. Then we discuss the associated jump conditions. Finally we restrict ourselves to the barotropic case. Most of the material in this section can be found in the first chapter and the epilogue of \cite{ch2007} and in the first section of \cite{I}.
\subsection{Relativistic Perfect Fluids}
The motion of a perfect fluid in special relativity is described by a future-directed unit time-like vector field $u$ and two positive functions $n$ and $s$, the number of particles per unit volume (in the local rest frame of the fluid) and the entropy per particle, respectively. Let us denote the Minkowski metric by $\eta$. The conditions on the velocity $u$ are then
\begin{align}
  \label{eq:1}
  \eta(u,u)=-1, \qquad u^0>0.
\end{align}
The mechanical properties of the fluid are specified once we give the equation of state, which expresses the mass-energy density $\rho$ as a function of $n$ and $s$
\begin{align}
  \label{eq:2}
  \rho=\rho(n,s).
\end{align}
Let $e=\rho /n$ be the energy per particle. According to the first law of thermodynamics we have
\begin{align}
  \label{eq:3}
  de=-pdv+\theta ds,
\end{align}
where $p$ is the pressure, $v=1/n$ the volume per particle and $\theta$ the temperature. We have
\begin{align}
  \label{eq:4}
  p=n\pp{\rho}{n}-\rho,\qquad \theta=\frac{1}{n}\pp{\rho}{s}.
\end{align}
The functions $\rho$, $p$, $\theta$ are assumed to be positive. The equations of motion for a perfect fluid are given by the particle conservation law and the energy-momentum conservation law, i.e.
\begin{align}
  \label{eq:5}
  \nabla_\mu I^\mu&=0,\\
  \label{eq:6}
  \nabla_\nu T^{\mu\nu}&=0,
\end{align}
where $T$ and $I$ are the energy-momentum-stress tensor and the particle current, respectively, given by
\begin{align}
  \label{eq:7}
  T^{\mu\nu}=(\rho+p)u^\mu u^\nu+p(\eta^{-1})^{\mu\nu},\qquad I^\mu=nu^\mu.
\end{align}
The component of \eqref{eq:6} along $u$ is the energy equation
\begin{align}
  \label{eq:8}
  u^\mu\nabla_\mu\rho+(\rho+p)\nabla_\mu u^\mu=0.
\end{align}
Using \eqref{eq:5} in \eqref{eq:8} together with \eqref{eq:4} we deduce
\begin{align}
  \label{eq:9}
  u^\mu\nabla_\mu s=0,
\end{align}
i.e.~modulo the particle conservation law, the energy equation is equivalent to the entropy being constant along the flow lines. Nevertheless the equivalence of the energy and entropy conservation only holds for $C^1$ solutions. Let $\Pi^\mu_\nu\coloneqq\delta^\mu_\nu +u^\mu u_\nu$ denote the projection onto the local simultaneous space of the fluid. The projection of \eqref{eq:6} is the momentum conservation law
\begin{align}
  \label{eq:10}
  (\rho+p)u^\nu\nabla_\nu u^\mu+\Pi^{\mu\nu}\nabla_\nu p=0.
\end{align}

The symbol $\sigma_\xi$ of the Eulerian system \eqref{eq:5}, \eqref{eq:9}, \eqref{eq:10} at a given covector $\xi$ is the linear operator on the space of variations $(\dot{n},\dot{s},\dot{u})$ whose components are
\begin{align}
  \label{eq:11}
  u^\mu\xi_\mu\dot{n}+n\xi_\mu\dot{u}^\mu,\\
  \label{eq:12}
  u^\mu\xi_\mu\dot{s},\\
  \label{eq:13}
  (\rho+p)u^\nu\xi_\nu\dot{u}^\mu+\Pi^{\mu\nu}\xi_\nu\dot{p}.
\end{align}
We note that
\begin{align}
  \label{eq:14}
  \dot{p}=\pp{p}{n}\dot{n}+\pp{p}{s}\dot{s}.
\end{align}
The characteristic subset of $T_x^\ast M$, that is the set of covectors $\xi$ such that the null space of $\sigma_\xi$ is nontrivial, consists of the hyperplane $P_x^\ast$:
\begin{align}
  \label{eq:15}
  H_P\coloneqq u^\mu\xi_\mu=0
\end{align}
and the cone $C_x^\ast$:
\begin{align}
  \label{eq:16}
  H_C\coloneqq\frac{1}{2}\Big((u^\mu\xi_\mu)^2-\eta^2\Pi^{\mu\nu}\xi_\mu\xi_\nu\Big)=0,
\end{align}
where $\eta$ is the sound speed
\begin{align}
  \label{eq:17}
  \eta^2\coloneqq\left(\pp{p}{\rho}\right)_s.
\end{align}
We assume that the equation of state satisfies the basic requirement
\begin{align}
  \label{eq:18}
  0<\eta^2.
\end{align}
The characteristic subset of $T_xM$ corresponding to $P_x^\ast$, i.e., the set of vectors $\dot{x}\in T_xM$ of the form
\begin{align}
  \label{eq:19}
  \dot{x}^\mu=\pp{H_P}{\xi_\mu}=u^\mu,\qquad \xi\in P_x^\ast,
\end{align}
is simply the vector $u(x)$, while the characteristic subset of $T_xM$ corresponding to $C_x^\ast$, i.e., the set of vectors $\dot{x}\in T_xM$ of the form
\begin{align}
  \label{eq:20}
  \dot{x}^\mu=\pp{H_C}{\xi_\mu}=u^\nu\xi_\nu u^\mu-\eta^2\Pi^{\mu\nu}\xi_\nu,\qquad \xi\in C_x^\ast,
\end{align}
is the sound cone $C_x$:
\begin{align}
  \label{eq:21}
  (\eta^2u_\mu u_\nu-\Pi_{\mu\nu})\dot{x}^\mu\dot{x}^\nu=0.
\end{align}
We define the acoustical metric $g_{\mu\nu}$ by
\begin{align}
  \label{eq:22}
  g_{\mu\nu}\coloneqq\eta_{\mu\nu}+(1-\eta^2)u_\mu u_\nu.
\end{align}
$C_x$ is then given by
\begin{align}
  \label{eq:23}
  g_{\mu\nu}\dot{x}^\mu\dot{x}^\nu=0.
\end{align}
We assume that the equation of state satisfies the basic requirement
\begin{align}
  \label{eq:24}
  \eta^2<1,
\end{align}
which is equivalent to the condition that the sound cone is contained within the light cone. For $\xi\in P_x^\ast$ the null space of $\sigma_\xi$ consists of the variations satisfying
\begin{align}
  \label{eq:25}
  \dot{p}=0,\qquad \xi_\mu \dot{u}^\mu=0
\end{align}
(the isobaric vorticity waves). For $\xi\in C_x^\ast$ the null space of $\sigma_\xi$ consists of the variations satisfying
\begin{align}
  \label{eq:26}
  \dot{s}=0,\qquad \dot{u}^\mu=-\frac{\Pi^{\mu\nu}\xi_\nu\dot{p}}{(\rho+p)u^\nu\xi_\nu}
\end{align}
(the adiabatic sound waves). We note that the inverse acoustical metric is given by
\begin{align}
  \label{eq:27}
  (g^{-1})^{\mu\nu}=(\eta^{-1})^{\mu\nu}-\left(\frac{1}{\eta^2}-1\right)u^\mu u^\nu.
\end{align}

We define the one form $\beta$ by
\begin{align}
  \label{eq:28}
  \beta_\mu\coloneqq -hu_\mu,
\end{align}
where $h$ is the enthalpy per particle given by
\begin{align}
  \label{eq:29}
  h\coloneqq e+pv=\frac{\rho+p}{n}.
\end{align}
We deduce
\begin{align}
  \label{eq:30}
  (\mathcal{L}_u\beta)_\mu&=-u^\nu\nabla_\nu(hu_\mu)\notag\\
&=\frac{h}{\rho+p}\nabla_\mu p+u_\mu u^\nu\left(\frac{h}{\rho+p}\nabla_\nu p-\nabla_\nu h\right),
\end{align}
where for the first equality we used the definition of the Lie derivative together with the first of \eqref{eq:1} while for the second equality we used \eqref{eq:10}. By \eqref{eq:29} in conjunction with \eqref{eq:3} the expression in the last parenthesis is equal to $-\theta \nabla_\nu s$. Therefore, by \eqref{eq:9}, the last term vanishes and we have
\begin{align}
  \label{eq:31}
  \mathcal{L}_u\beta=dh-\theta ds.
\end{align}

We define the vorticity two form by
\begin{align}
  \label{eq:32}
  \omega\coloneqq d\beta.
\end{align}
Let us denote by $i_X$ contraction from the left by $X$. From \eqref{eq:28} we deduce
\begin{align}
  \label{eq:33}
  i_u\beta=h.
\end{align}
Since for any exterior differential form $\vartheta$ it holds that $\mathcal{L}_X\vartheta=i_Xd\vartheta+di_X\vartheta$, we obtain from \eqref{eq:31}
\begin{align}
  \label{eq:34}
  i_u\omega=-\theta ds.
\end{align}
We conclude that the equations of motion \eqref{eq:5}, \eqref{eq:6} are equivalent to the system
\begin{align}
  \label{eq:35}
  \nabla_\mu I^\mu&=0,\\
  \label{eq:36}
  u^\mu\nabla_\mu s&=0,\\
  \label{eq:37}
  i_u\omega&=-\theta ds.
\end{align}
In fact \eqref{eq:36} follows from \eqref{eq:37}.

\subsection{Jump Conditions}
It is well known that the solution of the equations \eqref{eq:5}, \eqref{eq:6}, in general, develop discontinuities. Let $\mathcal{K}$ be a hypersurface of discontinuity, i.e.~a $C^1$ hypersurface $\mathcal{K}$ with a neighborhood $\mathcal{U}$ such that $T^{\mu\nu}$ and $I^\mu$ are continuous in the closure of each connected component of the complement of $\mathcal{K}$ in $\mathcal{U}$ but are not continuous across $\mathcal{K}$. Let $N_\mu$ be a covector at $x\in\mathcal{K}$, the null space of which is the tangent space of $\mathcal{K}$ at $x$
\begin{align}
  \label{eq:38}
  T_x\mathcal{K}=\{X^\mu\in T_xM:N_\mu X^\mu=0\}.
\end{align}
Then, denoting by $\jump{\cdot}$ the jump across $\mathcal{K}$ at $x$, we have the jump conditions
\begin{align}
  \label{eq:39}
  \jump{T^{\mu\nu}}N_\nu&=0,\\
  \label{eq:40}
  \jump{I^\mu}N_\mu&=0.
\end{align}
These follow from the integral form of the conservation laws \eqref{eq:5}, \eqref{eq:6}. Consider the 3-form $I_{\alpha\beta\gamma}^\ast$ dual to $I^\mu$, that is,
\begin{align}
  \label{eq:41}
  I_{\alpha\beta\gamma}^\ast=I^\mu \varepsilon_{\mu\alpha\beta\gamma},
\end{align}
where $\varepsilon_{\mu\alpha\beta\gamma}$ is the volume 4-form of the Minkowski metric $\eta$. In terms of $I^\ast$ equation \eqref{eq:5} becomes
\begin{align}
  \label{eq:42}
  dI^\ast=0.
\end{align}

Also, given any vector field $X$, we can define the vector field
\begin{align}
  \label{eq:43}
  P^\mu\coloneqq\eta_{\alpha\beta}X^\alpha T^{\beta\mu}.
\end{align}
By virtue of \eqref{eq:6}, $P$ satisfies
\begin{align}
  \label{eq:44}
  \nabla_\mu P^\mu=\frac{1}{2}\pi_{\mu\nu}T^{\mu\nu},
\end{align}
where
\begin{align}
  \label{eq:45}
  \pi_{\mu\nu}=\mathcal{L}_X\eta_{\mu\nu}.
\end{align}
In terms of the 3-form $P^\ast$ dual to $P$
\begin{align}
  \label{eq:46}
  P^\ast_{\alpha\beta\gamma}=P^\mu \varepsilon_{\mu\alpha\beta\gamma},
\end{align}
equation \eqref{eq:44} reads
\begin{align}
  \label{eq:47}
  dP^\ast=\frac{1}{2}(\pi\cdot T)\varepsilon.
\end{align}

Consider now an arbitrary point $x\in\mathcal{K}$ and let $\mathcal{U}$ be a neighborhood of $x$ in Minkowski spacetime. We denote $\mathcal{W}=\mathcal{K}\cap \mathcal{U}$. Let $Y$ be a vector field without critical points in some larger neighborhood $\mathcal{U}_0\supset \mathcal{U}$ and transversal to $\mathcal{K}$. Let $\mathcal{L}_\delta(y)$ denote the segment of the integral curve of $Y$ through $y\in\mathcal{W}$ corresponding to the parameter interval $(-\delta,\delta)$
\begin{align}
  \label{eq:48}
  \mathcal{L}_\delta(y)\coloneqq\{F_s(y):s\in(-\delta,\delta)\},
\end{align}
where $F_s$ is the flow generated by $Y$. We then define the neighborhood $\mathcal{V}_\delta$ of $x$ in Minkowski spacetime by
\begin{align}
  \label{eq:49}
  \mathcal{V}_\delta\coloneqq\bigcup_{y\in\mathcal{W}}\mathcal{L}_\delta(y).
\end{align}
Integrating equations \eqref{eq:42}, \eqref{eq:47} in $\mathcal{V}_\delta$ and applying Stokes' theorem we obtain
\begin{align}
  \label{eq:50}
  \int_{\partial\mathcal{V}_\delta}I^\ast&=0,\\
  \label{eq:51}
  \int_{\partial\mathcal{V}_\delta}P^\ast&=\int_{\mathcal{V}_\delta}\frac{1}{2}(\pi\cdot T)\varepsilon.
\end{align}
Now the boundary of $\mathcal{V}_\delta$ consists of the hypersurfaces
\begin{align}
  \label{eq:52}
  \mathcal{W}_\delta\coloneqq\{F_\delta(y):y\in \mathcal{W}\},\qquad \mathcal{W}_{-\delta}\coloneqq\{F_{-\delta}(y):y\in \mathcal{W}\},
\end{align}
together with the lateral hypersurface
\begin{align}
  \label{eq:53}
  \bigcup_{y\in\partial\mathcal{W}}\mathcal{L}_\delta(y).
\end{align}
Since this lateral component and $\mathcal{V}_\delta$ are bounded in measure by a constant multiple of $\delta$, we take the limit $\delta\rightarrow 0$ in \eqref{eq:50}, \eqref{eq:51} to obtain
\begin{align}
  \label{eq:54}
  \int_{\mathcal{W}}\jump{I^\ast}&=0,\\
  \label{eq:55}
  \int_{\mathcal{W}}\jump{P^\ast}&=0.
\end{align}
That these are valid for any neighborhood $\mathcal{W}$ of $x$ in $\mathcal{K}$ implies that the corresponding 3-forms induced on $\mathcal{K}$ from the two sides coincide at $x$, or, equivalently, that
\begin{align}
  \label{eq:56}
  \jump{I^\mu}N_\mu=0,\qquad \jump{P^\mu}N_\mu=0.
\end{align}
The first of these equations coincides with \eqref{eq:40}, while the second, for four vector fields $X$ constituting at $x$ a basis for $T_xM$, implies \eqref{eq:39}.

\subsection{Determinism and Entropy Condition\label{sec:det_ent_cond}}
By virtue of \eqref{eq:24} only time-like hypersurfaces of discontinuity can arise. Since $T_x\mathcal{K}$ is time-like, the normal vector $N^\mu=(\eta^{-1})^{\mu\nu}N_\nu$ is space-like and we can normalize it to have unit magnitude
\begin{align}
  \label{eq:57}
  \eta_{\mu\nu}N^\mu N^\nu=1.
\end{align}
We must still determine the orientation of $\mathcal{K}$. Let $N^\mu$ point from one side of $T_x\mathcal{K}$, which we label $+$ and which we say is \textit{behind} $T_x\mathcal{K}$, to the other side of $T_x\mathcal{K}$, which we label $-$ and which we say is \textit{ahead} of $T_x\mathcal{K}$. Then for any quantity $q$ we have $\jump{q}=q_+-q_-$. If we define
\begin{align}
  \label{eq:58}
  u_\perp\coloneqq -\eta(u,N),
\end{align}
then the jump condition \eqref{eq:40} reads
\begin{align}
  \label{eq:59}
  n_+u_{\perp +}=n_-u_{\perp -}=:f,
\end{align}
where the quantity $f$ is called \textit{particle flux}. If $f\neq 0$, the discontinuity is called a \textit{shock}. In this case we choose the orientation of $N$ such that $f>0$, that is, the fluid particles cross the hypersurface of discontinuity $\mathcal{K}$ from the state ahead to the state behind (see figure \ref{orientation}).
%% put here a figure
If $f=0$ the discontinuity is called a \textit{contact discontinuity} and in this case the orientation of $N^\mu$ is merely conventional.

\begin{figure}[h!]
\begin{center}
\begin{tikzpicture}

\node (o) at (-3,0) {};
\node (o) at (-1.1864,-0.491) {};
%\node (o) at (-1.5,-1) {};
\node (b) at (1.0736,3.009) {};
%\node (e) at (-1,3) {};
\path [fill=gray!30] (o.center) to [bend right=25] (b.center) to (2.2,3) to (2.2,-0.5) to (o.center);
\path [fill=gray!60] (o.center) to [bend right=25] (b.center) to (-2,3) to (-2,-0.5) to (o.center);
\draw [line width=0.8pt] (o.center) to [bend right=25] (b.center);

\node at (1.2798,2.4045) {$\mathcal{K}$};
\node at (-0.41,3.5421) {$\textrm{behind}$};
\node at (3,1.2) {$\textrm{ahead}$};

\draw   plot[smooth, tension=.7] coordinates {(0.329,0.8989) (0.141,2.0273) (-0.1659,2.9964)};
\draw  plot[smooth, tension=.7] coordinates {(0.6752,1.5246) (0.5487,2.2593) (0.3312,2.9964)};
\draw  plot[smooth, tension=.7] coordinates {(-0.0647,0.4121) (-0.2784,1.735) (-0.7032,3.0014)};
\draw  plot[smooth, tension=.7] coordinates {(-0.5124,-0.0064) (-0.7026,1.4087) (-1.2807,2.9964)};
\draw  plot[smooth, tension=.7] coordinates {(-0.5124,-0.0064) (-0.3283,-0.2786) (-0.1736,-0.4948)};
\draw  plot[smooth, tension=.7] coordinates {(-0.0647,0.4121) (0.3683,-0.3036) (0.5065,-0.5035)};
\draw  plot[smooth, tension=.7] coordinates {(0.329,0.8972) (0.7456,0.1362) (1.2095,-0.4895)};
\draw  plot[smooth, tension=.7] coordinates {(0.6752,1.5246) (1.23,0.4706) (1.9406,-0.4895)};

\draw [->] (-2,-.5) -- (2.7,-.5);
\draw [->] (-2,-0.5) -- (-2,3.5);
\node at (3,-.5) {$x$};
\node at (-2,3.8) {$t$};

\end{tikzpicture}
\end{center}
\caption[Shock orientation]{Four fluid flow lines crossing the shock $\mathcal{K}$ from the state ahead to the state behind.}
\label{orientation}
\end{figure}
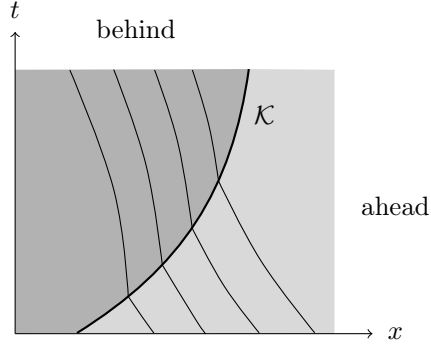

In terms of $v$, the volume per particle, we have
\begin{align}
  \label{eq:60}
  u_{\perp -}=fv_-,\qquad u_{\perp +}=fv_+.
\end{align}
The jump condition \eqref{eq:39} reads
\begin{align}
  \label{eq:61}
  (\rho_++p_+)u_+ u_{\perp +}-p_+N=(\rho_-+p_-)u_- u_{\perp -}-p_-N.
\end{align}
Substituting \eqref{eq:60} into \eqref{eq:61} the latter reduces to
\begin{align}
  \label{eq:62}
  fh_+u_+-p_+N=fh_-u_--p_-N,
\end{align}
where we used \eqref{eq:29}. According to \eqref{eq:62} the vectors $u_+$, $u_-$ and $N$ all lie in the same timelike plane. Taking the $\eta$-inner product of \eqref{eq:62} with $N$ we obtain
\begin{align}
  \label{eq:63}
  fh_+u_{\perp +}+p_+=fh_-u_{\perp -}+p_-.
\end{align}
Substituting from \eqref{eq:60} this becomes
\begin{align}
  \label{eq:64}
  p_+-p_-=-f^2(h_+v_+-h_-v_-).
\end{align}
On the other, taking the $\eta$-inner product of each side of \eqref{eq:62} with itself we obtain
\begin{align}
  \label{eq:65}
  p_+^2-p_-^2=f^2(h_+^2-h_-^2-2h_+v_+p_++2h_-v_-p_-).
\end{align}
Equations \eqref{eq:64} and \eqref{eq:65} together imply whenever $f\neq 0$, as is the case for a shock, the following relation
\begin{align}
  \label{eq:66}
  h_+^2-h_-^2=(p_+-p_-)(h_+v_++h_-v_-).
\end{align}
This is the relativistic \textit{Hugoniot relation}, first derived by A.~Taub \cite{taub}. We note that in the case of a contact discontinuity ($f=0$) \eqref{eq:62} reduces to $p_+=p_-$.

The only shock discontinuities which arise naturally are those which are supersonic relative to the state ahead and subsonic relative to the state behind. We call this the \textit{determinism condition}. The condition that $\mathcal{K}$ is supersonic relative to the state ahead means that, for each $x\in\mathcal{K}$, $N_\mu$ is a time-like covector relative to $g^{-1}_-$, i.e.
\begin{align}
  \label{eq:67}
  (g^{-1}_-)^{\mu\nu}N_\mu N_\nu<0,
\end{align}
while the condition that $\mathcal{K}$ is subsonic relative to the state behind means that, for each $x\in\mathcal{K}$, $N_\mu$ is a space-like covector relative to $g_+^{-1}$, i.e.
\begin{align}
  \label{eq:68}
  (g_+^{-1})^{\mu\nu}N_\mu N_\nu>0.
\end{align}
In view of \eqref{eq:27}, conditions \eqref{eq:67} and \eqref{eq:68} are
\begin{align}
  \label{eq:69}
  u_{\perp -}>\frac{\eta_-}{\sqrt{1-\eta_-^2}},\qquad u_{\perp +}<\frac{\eta_+}{\sqrt{1-\eta_+^2}}.
\end{align}
Substituting from \eqref{eq:60}, these become
\begin{align}
  \label{eq:70}
  f>\frac{\eta_-/v_-}{\sqrt{1-\eta_-^2}},\qquad f<\frac{\eta_+/v_+}{\sqrt{1-\eta_+^2}}.
\end{align}
We conclude that the determinism condition reduces to
\begin{align}
  \label{eq:71}
  \frac{\eta_-/v_-}{\sqrt{1-\eta_-^2}}<\frac{\eta_+/v_+}{\sqrt{1-\eta_+^2}}.
\end{align}
The determinism condition is illustrated in figure \ref{determinism_condition}.

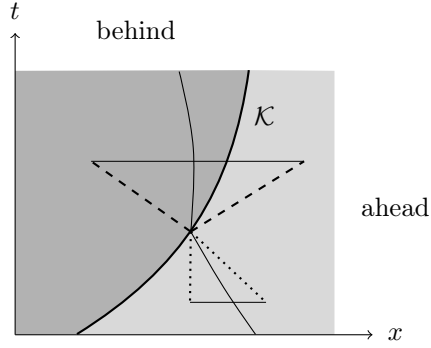
\begin{figure}[h!]
\begin{center}
\begin{tikzpicture}

\node (o) at (-3,0) {};
\node (o) at (-1.1864,-0.491) {};
\node (b) at (1.0736,3.009) {};
\node (x) at (0.3097,0.8673) {};
\path [fill=gray!30] (o.center) to [bend right=25] (b.center) to (2.2,3) to (2.2,-0.5) to (o.center);
\path [fill=gray!60] (o.center) to [bend right=25] (b.center) to (-2,3) to (-2,-0.5) to (o.center);
\draw [line width=0.8pt] (o.center) to [bend right=25] (b.center);
\node at (1.2798,2.4045) {$\mathcal{K}$};
\node at (-0.41,3.5421) {$\textrm{behind}$};
\node at (3,1.2) {$\textrm{ahead}$};
\draw [->] (-2,-.5) -- (2.7,-.5);
\draw [->] (-2,-0.5) -- (-2,3.5);
\node at (3,-.5) {$x$};
\node at (-2,3.8) {$t$};

\draw [dashed,line width=0.8pt] (x.center) to (1.8,1.8);
\draw [dashed, line width=0.8pt] (x.center) to (-1,1.8);
\draw [line width=0.2pt] (-1,1.8) to (1.8,1.8);

\draw [dotted,line width=0.8pt] (x.center) to (1.3,-0.07);
\draw [dotted,line width=0.8pt] (x.center) to (0.3,-0.07);
\draw [line width=0.2pt] (0.3,-0.07) to (1.3,-0.07);

\draw  plot[smooth, tension=.7] coordinates {(x) (0.3432,1.971) (0.1574,2.9903)};

\draw  plot[smooth, tension=.7] coordinates {(x) (0.7449,0.1231) (1.1617,-0.4945)};
\end{tikzpicture}
\end{center}
\caption[Determinism condition]{Illustration of the determinism condition: Sound cones at a point on $\mathcal{K}$. Dotted: Backward sound cone with respect to the state ahead. Dashed: Forward sound cone with respect to the state behind. Thin solid line: fluid flow line.}
\label{determinism_condition}
\end{figure}

We now look at the entropy condition which is
\begin{align}
  \label{eq:72}
  [s]=s_+-s_->0.
\end{align}
In the following we will show the equivalence of the entropy condition to the determinism condition. Since (recall \eqref{eq:3}, \eqref{eq:29})
\begin{align}
  \label{eq:73}
  dh&=de+d(pv)\notag\\
  &=vdp+\theta ds,
\end{align}
the expansion of $\jump{h}=h_+-h_-$ in powers of $\jump{p}$ and $\jump{s}$ is
\begin{align}
  \label{eq:74}
  \jump{h}=v_-\jump{p}+\frac{1}{2}\left(\pp{v}{p}\right)_-\jump{p}^2+\frac{1}{6}\left(\ppp{v}{p}\right)_-\jump{p}^3+\theta_-\jump{s}+\Landau(\jump{s}^2)+\Landau(\jump{p}^4)+\Landau(\jump{p}\jump{s}).
\end{align}
Hence
\begin{align}
  \label{eq:75}
  h_+^2-h_-^2&=2h_-v_-\jump{p}+\left\{h_-\left(\pp{v}{p}\right)_-+v_-^2\right\}\jump{p}^2\notag\\
&\qquad +\left\{\frac{h_-}{3}\left(\ppp{v}{p}\right)_-+v_-\left(\pp{v}{p}\right)_-\right\}\jump{p}^3+2h_-\theta_-\jump{s}+\Landau(\jump{s}^2)+\Landau(\jump{p}^4)+\Landau(\jump{p}\jump{s}).
\end{align}
Also $\jump{v}=v_+-v_-$ is expanded as
\begin{align}
  \label{eq:76}
  \jump{v}=\left(\pp{v}{p}\right)_-\jump{p}+\frac{1}{2}\left(\ppp{v}{p}\right)_-\jump{p}^2+\Landau(\jump{p}^3)+\Landau(\jump{s}).
\end{align}
Hence
\begin{align}
  \label{eq:77}
  (h_+v_++h_-v_-)(p_+-p_-)&=2h_-v_-\jump{p}+\left\{h_-\left(\pp{v}{p}\right)_-+v_-^2\right\}\jump{p}^2\notag\\
&\qquad +\left\{\frac{h_-}{2}\left(\ppp{v}{p}\right)_-+\frac{3v_-}{2}\left(\pp{v}{p}\right)_-\right\}\jump{p}^3+\Landau(\jump{p}^4)+\Landau(\jump{p}\jump{s}).
\end{align}
Comparing \eqref{eq:75} and \eqref{eq:77} with the Hugoniot relation \eqref{eq:66} we conclude
\begin{align}
  \label{eq:78}
  \jump{s}=\frac{1}{12\theta_-h_-}\left\{h_-\left(\ppp{v}{p}\right)_-+3v_-\left(\pp{v}{p}\right)_-\right\}\jump{p}^3+\Landau(\jump{p}^4).
\end{align}

Consider next the condition \eqref{eq:71}. Defining the quantity
\begin{align}
  \label{eq:79}
  q\coloneqq\left(\frac{1}{\eta^2}-1\right)v^2,
\end{align}
the condition \eqref{eq:71} is seen to be equivalent to
\begin{align}
  \label{eq:80}
  \jump{q}<0.
\end{align}
From \eqref{eq:4}, \eqref{eq:17} we have
\begin{align}
  \label{eq:81}
  \frac{1}{\eta^2}=\pp{\rho}{p}=\frac{(\rho+p)}{n}\pp{n}{p}=-\frac{(\rho+p)}{v}\pp{v}{p}.
\end{align}
Hence, in view of \eqref{eq:29} and $v=1/n$, $q$ is given by
\begin{align}
  \label{eq:82}
  q=-h\pp{v}{p}-v^2.
\end{align}
We then obtain
\begin{align}
  \label{eq:83}
  \pp{q}{p}=-h\ppp{v}{p}-3v\pp{v}{p}.
\end{align}
In view of the fact that by \eqref{eq:78} $\jump{s}=\Landau(\jump{p}^3)$, we obtain
\begin{align}
  \label{eq:84}
  \jump{q}=-\left\{h_-\left(\ppp{v}{p}\right)_-+3v_-\left(\pp{v}{p}\right)_-\right\}\jump{p}+\Landau(\jump{p}^2).
\end{align}
Therefore, the condition \eqref{eq:80} is equivalent for suitably small $\jump{p}$ to
\begin{align}
  \label{eq:85}
  \left\{h_-\left(\ppp{v}{p}\right)_-+3v_-\left(\pp{v}{p}\right)_-\right\}\jump{p}>0,
\end{align}
provided that the quantity in the curly bracket is non-zero. This together with \eqref{eq:78} is equivalent to \eqref{eq:72}. We have therefore established, for suitably small $\jump{p}$, the equivalence of the determinism condition \eqref{eq:71} to the entropy condition \eqref{eq:72}.

\begin{remark}
  We will impose the determinism condition in the shock development problem and we will see that this condition is necessary for the solution to be uniquely determined by the data (see the formulation of the shock development problem together with the description of the boundary of the maximal development below).
\end{remark}

\begin{remark}By \eqref{eq:85}, the sign of the coefficient of $\jump{p}$ in \eqref{eq:85} is the same as the sign of $\jump{p}$. Let now $\Sigma$ be defined by
\begin{align}
  \label{eq:734}
  1-h^2\Sigma=\eta^2.
\end{align}
The coefficient of $\jump{p}$ in \eqref{eq:85} can be related to $(d\Sigma/dh)_-$ if the state ahead of the shock is isentropic, as will be the case under consideration. From \eqref{eq:29} we have
\begin{align}
  \label{eq:1193}
  \left(\frac{d\Sigma}{dh}\right)_s=\frac{(d\Sigma/dp)_s}{(dh/dp)_s}=\frac{1}{v}\left(\frac{d\Sigma}{dp}\right)_s,
\end{align}
where we use the subscript $s$ to indicate isentropy. Hence
\begin{align}
  \label{eq:1194}
  vh^2\left(\frac{d\Sigma}{dh}\right)_s=-\left(\frac{d\eta^2}{dp}\right)_s-\frac{2v}{h}(1-\eta^2).
\end{align}
By \eqref{eq:29} we have
\begin{align}
  \label{eq:1195}
  \left(\frac{dh}{dn}\right)_s=\frac{h\eta^2}{n},
\end{align}
which implies
\begin{align}
  \label{eq:1196}
  \frac{1}{\eta^2}=-\frac{h}{v^2}\left(\frac{dv}{dp}\right)_s.
\end{align}
Substituting \eqref{eq:1196} and its derivative with respect to $p$ at constant $s$ in \eqref{eq:1194} we obtain
\begin{align}
  \label{eq:1199}
  -\frac{v^3h^2}{\eta^4}\left(\frac{d\Sigma}{dh}\right)_s=3v\left(\frac{dv}{dp}\right)_s+h\left(\frac{d^2v}{dp^2}\right)_s.
\end{align}
Therefore, if the state ahead is isentropic, the quantity in the curly bracket in \eqref{eq:85} is
\begin{align}
  \label{eq:735}
  h_-\left(\frac{\partial^2v}{\partial p^2}\right)_-+3v_-\left(\pp{v}{p}\right)_-=-\frac{v_-^3h_-^2}{\eta_-^4}\left(\frac{d\Sigma}{dh}\right)_-.
\end{align}
We conclude that the jump in pressure $\jump{p}$ behind the shock is $>0$ or $<0$ according as to whether $(d\Sigma/dh)_-$ is $<0$ or $>0$, at least for suitably small $\jump{p}$.
\end{remark}

\subsection{Barotropic Fluids}
In the barotropic case $p=f(\rho)$ is an increasing function of $\rho$. Therefore,
\begin{align}
  \label{eq:94}
  n\pp{\rho}{n}=p+\rho
\end{align}
is a function of $\rho$, which implies that for a barotropic perfect fluid, $\rho$, and hence also $p$, is a function of the product $\sigma\coloneqq nm$, where $m$ is a function of $s$ alone. In fact
\begin{align}
  \label{eq:95}
  \sigma=nm(s)=\exp\left(\int\frac{d\rho}{\rho+f(\rho)}\right)
\end{align}
and it satisfies
\begin{align}
  \label{eq:96}
  \rho+p=\sigma \frac{d\rho}{d\sigma}.
\end{align}
The positivity of $\theta$ implies that $m$ is a strictly increasing function. Therefore, we can eliminate $s$ in favor of $m$. \eqref{eq:36} becomes
\begin{align}
  \label{eq:97}
  u^\mu\nabla_\mu m=0.
\end{align}

We define
\begin{align}
  \label{eq:98}
  \psi_\mu\coloneqq -\tilde{h}u_\mu,
\end{align}
where
\begin{align}
  \label{eq:99}
  \tilde{h}\coloneqq\frac{h}{m}=\frac{\rho+p}{\sigma}=\frac{d\rho}{d\sigma}.
\end{align}
Comparing \eqref{eq:98} with \eqref{eq:28} we see that
\begin{align}
  \label{eq:1203}
  \beta_\mu=m\psi_\mu.
\end{align}

Defining now
\begin{align}
  \label{eq:100}
  \Omega\coloneqq d\psi,
\end{align}
we obtain
\begin{align}
  \label{eq:101}
  \omega=m\Omega+dm\wedge \psi.
\end{align}
From the second of \eqref{eq:4} we have
\begin{align}
  \label{eq:102}
  \theta ds=\tilde{h}dm.
\end{align}
Therefore
\begin{align}
  \label{eq:103}
  i_u\omega=mi_u\Omega-(\psi\cdot u)dm=mi_u\Omega-\theta ds,
\end{align}
which implies through \eqref{eq:37}
\begin{align}
  \label{eq:104}
  i_u\Omega=0.
\end{align}

From the particle conservation \eqref{eq:35} and the adiabatic condition \eqref{eq:97} we deduce
\begin{align}
  \label{eq:105}
  \nabla_\mu(\sigma u^\mu)=m\nabla_\mu(nu^\mu)+nu^\mu\nabla_\mu m=0,
\end{align}
which, through \eqref{eq:96}, is equivalent to the energy equation \eqref{eq:8}. Therefore, imposing the energy equation \eqref{eq:105} as well as the adiabatic condition \eqref{eq:97} the conservation of particle number follows. We conclude that in the barotropic case the system of equations reduces to the system
\begin{align}
  \label{eq:106}
  \nabla_\mu(\sigma u^\mu)&=0,\\
  \label{eq:107}
  u^\mu\nabla_\mu m&=0,\\
  \label{eq:108}
  i_u\Omega&=0.
\end{align}
The unknowns are $u$, $m$ and $\sigma$. Equation \eqref{eq:107} is decoupled from the other two. We may thus ignore it and consider only the system consisting of \eqref{eq:106}, \eqref{eq:108}.

The irrotational barotropic case is characterized by the existence of a function $\phi$ such that
\begin{align}
  \label{eq:109}
  \psi=d\phi,
\end{align}
which implies
\begin{align}
  \label{eq:110}
  \Omega=0.
\end{align}
Therefore, \eqref{eq:108} is identically satisfied. By \eqref{eq:98},
\begin{align}
  \label{eq:111}
  \psi\cdot X=-\tilde{h}\eta(u,X)>0
\end{align}
whenever $X$ is a future-directed timelike vector. Therefore $\phi$ is a time function. By \eqref{eq:98}, \eqref{eq:109},
\begin{align}
  \label{eq:112}
  H\coloneqq\tilde{h}^2=-(\eta^{-1})^{\mu\nu}\partial_\mu\phi\partial_\nu\phi.
\end{align}
From \eqref{eq:99}
\begin{align}
  \label{eq:113}
  \frac{dH}{d\sigma}=\frac{dH}{d\tilde{h}}\frac{d\tilde{h}}{d\sigma}=\frac{2\tilde{h}}{\sigma}\frac{dp}{d\sigma}=\frac{2H\eta^2}{\sigma},
\end{align}
which implies that $\sigma$ can be expressed as a smooth, strictly increasing function of $H$, i.e.~$\sigma=\sigma(H)$. Defining
\begin{align}
  \label{eq:114}
  G(H)\coloneqq\frac{\sigma(H)}{\sqrt{H}},
\end{align}
equation \eqref{eq:106} becomes
\begin{align}
  \label{eq:115}
  \nabla^\mu(G(H)\partial_\mu\phi)=0,
\end{align}
where $H$ is given by \eqref{eq:112}. Taking into account that (see \eqref{eq:27}, \eqref{eq:98}, \eqref{eq:113})
\begin{align}
  \label{eq:116}
  (g^{-1})^{\mu\nu}=(\eta^{-1})^{\mu\nu}-F\partial^\mu\phi\partial^\nu\phi,
\end{align}
where
\begin{align}
  \label{eq:117}
  F\coloneqq\frac{2}{G}\frac{dG}{dH},
\end{align}
\eqref{eq:115} becomes
\begin{align}
  \label{eq:118}
  (g^{-1})^{\mu\nu}\nabla_\mu\partial_\nu\phi=0.
\end{align}
We note that
\begin{align}
  \label{eq:119}
  \eta^2=\frac{1}{1+HF}.
\end{align}
We also note that in terms of $H$ and $F$, the acoustical metric is given by
\begin{align}
  \label{eq:120}
  g_{\mu\nu}=\eta_{\mu\nu}+\frac{F}{1+HF}\partial_\mu\phi\partial_\nu\phi.
\end{align}

Since \eqref{eq:106} and therefore in the irrotational case \eqref{eq:118} is equivalent to the energy equation \eqref{eq:8}, in the barotropic case we only need to consider the energy-momentum jump conditions \eqref{eq:39}.

%%% Local Variables: 
%%% mode: latex
%%% TeX-master: "./master"
%%% End: 

\section{Setting the Scene\label{chapter_setting}}

\subsection{Nonlinear Wave Equation in Spherical Symmetry}
We choose spherical coordinates $(t,r,\vartheta,\varphi)$. Then $\eta=\textrm{diag}(-1,1,r^2,r^2\sin^2\vartheta)$ and spherically symmetric solutions $\phi=\phi(t,r)$ of equation \eqref{eq:118} satisfy
\begin{align}
  \label{eq:121}
  (g^{-1})^{\mu\nu}\partial_\mu\partial_\nu\phi=-\frac{2}{r}\partial_r\phi,\qquad \textrm{$\mu,\nu=r,t$. }
\end{align}
The radial null vectors $L_\pm$ with respect to the acoustical metric satisfy
\begin{align}
  \label{eq:122}
  g_{\mu\nu}L_\pm^\mu L_\pm^\nu=0,\qquad \textrm{$\mu,\nu=r,t$.}
\end{align}
Using the normalization condition $L^t=1$, we obtain
\begin{align}
  \label{eq:123}
  L_\pm=\partial_t+\frac{v\pm\eta}{1\pm v\eta}\partial_r,
\end{align}
where $\eta$ is the sound speed (see \eqref{eq:17}) and $v$ is the fluid spatial velocity given by
\begin{align}
  \label{eq:124}
  v=\frac{u^r}{u^t}=-\frac{u_r}{u_t}=-\frac{\psi_r}{\psi_t},
\end{align}
where we recall $\psi_\mu=\partial_\mu\phi$. Using the null vectors the inverse acoustical metric can be written as
\begin{align}
  \label{eq:125}
  (g^{-1})^{\mu\nu}=\frac{(g^{-1})^{tt}}{2}(L_+^\mu L_-^\nu+L_-^\mu L_+^\nu).
\end{align}
From \eqref{eq:116} we have
\begin{align}
  \label{eq:126}
  (g^{-1})^{tt}=-1-F(\psi_t^2).
\end{align}
From \eqref{eq:119} in conjunction with \eqref{eq:112} we see that the assumption $\eta^2<1$ is expressed by the condition that $F>0$. The nonlinear wave equation can be written as
\begin{align}
  \label{eq:127}
L_+^\mu L_-^\nu\partial_\mu\partial_\nu\phi=\frac{2}{r(1+F(\psi_t)^2)}\partial_r\phi.
\end{align}

\subsection{Riemann Invariants of the Principal Part}
Keeping only the principal part of \eqref{eq:127} we are left with
\begin{align}
  \label{eq:128}
  L_+^\mu L_-^\nu\partial_\mu\psi_\nu=0.
\end{align}
The Riemann invariants  are defined to be the functions $\alpha(\psi_t,\psi_r)$, $\beta(\psi_t,\psi_r)$ such that
\begin{align}
  \label{eq:129}
  l_{+\mu}\pp{\alpha}{\psi_\mu}=0,\qquad l_{-\mu}\pp{\beta}{\psi_\mu}=0,
\end{align}
where $l_{\pm \mu}$ are the basis 1-forms dual to the basis vector fields $L_{\pm}^\mu$. From \eqref{eq:129} we deduce
\begin{align}
  \label{eq:130}
  \pp{\alpha}{\psi_\mu}=\xi L_-^\mu,\qquad \pp{\beta}{\psi_\mu}=\lambda L_+^\mu
\end{align}
for some functions $\xi$, $\lambda$. Using \eqref{eq:128} we obtain
\begin{align}
  \label{eq:131}
  L_+^\mu\partial_\mu\alpha=L_+^\mu\pp{\alpha}{\psi_\nu}\partial_\mu\psi_\nu=\xi L_+^\mu L_-^\nu \partial_\mu \psi_\nu&=0,\\
\label{eq:132}
L_-^\mu\partial_\mu\beta=L_-^\mu\pp{\beta}{\psi_\nu}\partial_\mu\psi_\nu=\lambda L_-^\mu L_+^\nu \partial_\mu \psi_\nu&=0,
\end{align}
which shows that \eqref{eq:128} is equivalent to the system
\begin{align}
  \label{eq:133}
  L_+\alpha=0,\qquad L_-\beta=0.
\end{align}

We now proceed to determine $\alpha$ and $\beta$. The basis 1-forms dual to the basis vector fields $L_\pm^\mu$ satisfy
\begin{align}
  \label{eq:134}
  l_{+\mu}L_+^\mu=1,\qquad l_{-\mu}L_+^\mu=0,\qquad l_{+\mu}L_-^\mu=0,\qquad l_{-\mu}L_-^\mu=1.
\end{align}
Therefore
\begin{align}
  \label{eq:135}
  l_\pm=\frac{1}{2\eta(1-v^2)}\left[(1\pm v\eta)(\eta\mp v)dt\pm(1-v^2\eta^2)dr\right].
\end{align}
Defining the operators
\begin{align}
  \label{eq:136}
  V_\pm\coloneqq\sum_\mu l_{\pm \mu}\frac{\partial}{\partial \psi_\mu},
\end{align}
\eqref{eq:129} becomes
\begin{align}
  \label{eq:137}
  V_+\alpha=0,\qquad V_-\beta=0.
\end{align}
We introduce the functions $\tilde{h}$, $\zeta$ as coordinates in the positive open cone in the $\psi_t$-$\psi_r$ plane by
\begin{align}
  \label{eq:138}
  \psi_t=\tilde{h}\cosh\zeta,\qquad \psi_r=\tilde{h}\sinh \zeta.
\end{align}
(For $\tilde{h}$ see \eqref{eq:99}, \eqref{eq:112}). Note that by \eqref{eq:124},
\begin{align}
  \label{eq:91}
  v=-\tanh \zeta.
\end{align}
We obtain from \eqref{eq:135}, \eqref{eq:138}
\begin{align}
  \label{eq:139}
  V_\pm=\frac{\cosh\zeta\mp\eta\sinh \zeta}{2\eta \tilde{h}}\left(\eta \tilde{h}\pp{}{\tilde{h}}\pm\pp{}{\zeta}\right).
\end{align}
Defining then the operators
\begin{align}
  \label{eq:140}
  U_\pm\coloneqq\eta \tilde{h}\pp{}{\tilde{h}}\pm\pp{}{\zeta},
\end{align}
\eqref{eq:137} becomes
\begin{align}
  \label{eq:141}
  U_+\alpha=0,\qquad U_-\beta=0.
\end{align}

Let us now define the thermodynamic potential $\tilde{\rho}$ by
\begin{align}
  \label{eq:142}
  \tilde{\rho}\coloneqq\int\frac{d\tilde{h}}{\eta \tilde{h}}.
\end{align}
$\tilde{\rho}$ is defined up to an additional constant. We may fix $\tilde{\rho}$ by setting it equal to zero in the surrounding constant state. Since \eqref{eq:140} takes in terms of $\tilde{\rho}$ the form
\begin{align}
  \label{eq:143}
  U_\pm=\pp{}{\tilde{\rho}}\pm\pp{}{\zeta},
\end{align}
the solutions of \eqref{eq:141} are
\begin{align}
  \label{eq:144}
  \alpha=\tilde{\rho}-\zeta,\qquad\beta=\tilde{\rho}+\zeta,
\end{align}
up to composition on the left with an arbitrary increasing function. Using
\begin{align}
  \label{eq:145}
  \zeta=\arctanh\left(\frac{\psi_r}{\psi_t}\right)=-\arctanh\left(\frac{u^r}{u^t}\right)=-\arctanh\left(v\right)=-\frac{1}{2}\log\left(\frac{1+v}{1-v}\right)
\end{align}
we see that our expressions $\alpha$, $\beta$ agree with (5.16) of \cite{taub} where $\phi$ is in the role of $\tilde{\rho}$ and $u$ is in the role of $v$. $\alpha$ and $\beta$ are the relativistic version of the Riemann invariants introduced in \cite{Rie}.

\subsection{Characteristic System}
In analogy to the equivalence of \eqref{eq:128} and \eqref{eq:133} it follows from \eqref{eq:130} that \eqref{eq:127} is equivalent to the system
\begin{align}
  \label{eq:146}
  L_+\alpha=\frac{2\xi}{r(1+F(\psi_t)^2)}\partial_r\phi,\qquad L_-\beta=\frac{2\lambda}{r(1+F(\psi_t)^2)}\partial_r\phi.
\end{align}
We now proceed to determine $\xi$, $\lambda$ for the choice \eqref{eq:144} of Riemann invariants of the principal part. From $\tilde{h}^2=\psi_t^2-\psi_r^2$ we have
\begin{align}
  \label{eq:147}
  \pp{\tilde{h}}{\psi_t}=\frac{\psi_t}{\tilde{h}}.
\end{align}
From \eqref{eq:145} we obtain
\begin{align}
  \label{eq:148}
  \pp{\zeta}{\psi_t}=-\frac{\psi_r}{\tilde{h}^2}.
\end{align}
Using \eqref{eq:147}, \eqref{eq:148} together with \eqref{eq:142} we obtain
\begin{align}
  \label{eq:149}
  \pp{\alpha}{\psi_t}=\frac{d\tilde{\rho}}{d\tilde{h}}\pp{\tilde{h}}{\psi_t}-\pp{\zeta}{\psi_t}=\frac{1}{H}\left(\frac{\psi_t}{\eta}+\psi_r\right),\\
  \label{eq:150}
  \pp{\beta}{\psi_t}=\frac{d\tilde{\rho}}{d\tilde{h}}\pp{\tilde{h}}{\psi_t}+\pp{\zeta}{\psi_t}=\frac{1}{H}\left(\frac{\psi_t}{\eta}-\psi_r\right).
\end{align}
Using \eqref{eq:130} in the case $\mu=t$ and recalling \eqref{eq:123} we deduce from \eqref{eq:149}, \eqref{eq:150}
\begin{align}
  \label{eq:151}
  \xi=\frac{1}{H}\left(\frac{\psi_t}{\eta}+\psi_r\right),\qquad\lambda=\frac{1}{H}\left(\frac{\psi_t}{\eta}-\psi_r\right).
\end{align}
The system of equations \eqref{eq:146} becomes
\begin{align}
  \label{eq:152}
  L_+\alpha=\frac{2\psi_r}{r\tilde{H}}\left(\frac{\psi_t}{\eta}+\psi_r\right),\qquad L_-\beta=\frac{2\psi_r}{r\tilde{H}}\left(\frac{\psi_t}{\eta}-\psi_r\right),
\end{align}
where
\begin{align}
  \label{eq:153}
  \tilde{H}\coloneqq(1+F(\psi_t)^2)H.
\end{align}

Now we introduce characteristic coordinates $u$, $v$ such that $u=\textrm{const.}$ represents the outgoing and $v=\textrm{const.}$ the incoming characteristic curves. Furthermore, as the characteristic speeds we set $c_\pm\coloneqq L_\pm^r$. It follows that the space-time coordinates $t$, $r$ satisfy
\begin{align}
  \label{eq:154}
  \pp{r}{v}-c_+\pp{t}{v}=0,\qquad \pp{r}{u}-c_-\pp{t}{u}=0.
\end{align}
The system \eqref{eq:152} becomes\footnote{We use
  \begin{align}
    \label{eq:155}
    \pp{}{v}=\pp{t}{v}\pp{}{t}+\pp{r}{v}\pp{}{r}=\pp{t}{v}\left(\pp{}{t}+\frac{\pp{r}{v}}{\pp{t}{v}}\pp{}{r}\right)=\pp{t}{v}\left(\pp{}{t}+c_+\pp{}{r}\right)=\pp{t}{v}L_+.
  \end{align}
Similarly
\begin{align}
  \label{eq:92}
  \pp{}{u}=\pp{t}{u}L_-.
\end{align}

  \label{foot_0}}
\begin{align}
  \label{eq:156}
  \pp{\alpha}{v}=\pp{t}{v}L_+\alpha=\pp{t}{v}\frac{2\psi_r}{r\tilde{H}}\left(\frac{\psi_t}{\eta}+\psi_r\right),\qquad \pp{\beta}{u}=\pp{t}{u}L_-\beta=\pp{t}{u}\frac{2\psi_r}{r\tilde{H}}\left(\frac{\psi_t}{\eta}-\psi_r\right).
\end{align}
Defining
\begin{align}
  \label{eq:157}
  \tilde{A}(\alpha,\beta,r)\coloneqq\frac{2\psi_r}{r\tilde{H}}\left(\frac{\psi_t}{\eta}+\psi_r\right),\qquad \tilde{B}(\alpha,\beta,r)\coloneqq\frac{2\psi_r}{r\tilde{H}}\left(\frac{\psi_t}{\eta}-\psi_r\right),
\end{align}
the characteristic system \eqref{eq:154}, \eqref{eq:156} becomes
\begin{alignat}{3}
  \label{eq:160}
\frac{\p \alpha}{\p v}&=\pp{t}{v}\tilde{A}(\alpha,\beta,r),&\qquad &\frac{\p \beta}{\p u}=\pp{t}{u}\tilde{B}(\alpha,\beta,r),\\
  \label{eq:159}
  \frac{\p r}{\p v}&=\frac{\p t}{\p v}c_+(\alpha,\beta), &&\frac{\p r}{\p u}=\frac{\p t}{\p u}c_-(\alpha,\beta).
\end{alignat}
We note that \eqref{eq:159} is the Hodograph system.
\begin{remark}
The characteristic system is invariant under the conformal map
\begin{align}
  \label{eq:161}
  u\mapsto f(u),\qquad v\mapsto g(v),
\end{align}
where $f$ and $g$ are increasing functions.
\end{remark}
\begin{remark}
  In view of \eqref{eq:112}, \eqref{eq:119}, \eqref{eq:124}, \eqref{eq:153} we can express $\tilde{A}$, $\tilde{B}$ in terms of $r$, the sound speed $\eta$ and the spatial fluid velocity $v$ as
  \begin{align}
    \label{eq:1507}
    \tilde{A}=-\frac{2v\eta}{r(1+v\eta)},\qquad \tilde{B}=-\frac{2v\eta}{r(1-v\eta)}.
  \end{align}
\end{remark}

\subsection{Jump Conditions}
Let $N$ be the unit vector normal to $\mathcal{K}$
\begin{align}
  \label{eq:162}
  N=\frac{1}{\sqrt{1-V^2}}(V\partial_t+\partial_r),
\end{align}
where $V=V(t,r)$ is the shock speed. We define
\begin{align}
  \label{eq:163}
  N'\coloneqq\sqrt{1-V^2}N
\end{align}
and reformulate \eqref{eq:39} as
\begin{align}
  \label{eq:164}
  \jump{T^{\mu\nu}}N'_\nu=0.
\end{align}
In components these are the two jump conditions
\begin{align}
  \label{eq:165}
  -\jump{T^{tt}}V+\jump{T^{tr}}&=0,\\
  \label{eq:166}
  -\jump{T^{rt}}V+\jump{T^{rr}}&=0,
\end{align}
which are equivalent to
\begin{align}
  \label{eq:167}
  V&=\frac{\jump{T^{tr}}}{\jump{T^{tt}}},\\
  \label{eq:168}
  0&=\jump{T^{tt}}\jump{T^{rr}}-\jump{T^{tr}}^2=:J.
\end{align}
Since
\begin{align}
  \label{eq:169}
  u^t=\frac{1}{\sqrt{1-v^2}},\qquad u^r=\frac{v}{\sqrt{1-v^2}},
\end{align}
we obtain from \eqref{eq:7}
\begin{align}
  \label{eq:170}
  T^{tt}=\frac{\rho+p}{1-v^2}-p,\qquad T^{tr}=\frac{(\rho+p)v}{1-v^2},\qquad T^{rr}=\frac{(\rho+p)v^2}{1-v^2}+p.
\end{align}
Using (see \eqref{eq:96}, \eqref{eq:99}, \eqref{eq:112}, \eqref{eq:114}, \eqref{eq:124})
\begin{align}
  \label{eq:171}
  \rho+p=\sigma \tilde{h}=GH=G(\psi_t^2-\psi_r^2)=G\psi_t^2(1-v^2)
\end{align}
the components of the energy-momentum-stress tensor become
\begin{align}
  \label{eq:172}
  T^{tt}=G\psi_t^2-p,\qquad T^{tr}=G\psi_t^2v,\qquad T^{rr}=G\psi_t^2v^2+p.
\end{align}
Let
\begin{align}
  \label{eq:173}
  \gamma\coloneqq\log \tilde{h}-\tilde{\rho}.
\end{align}
\eqref{eq:138} become
\begin{align}
  \label{eq:174}
  \psi_t=\frac{e^\gamma}{2}(e^\beta+e^\alpha),\qquad \psi_r=\frac{e^\gamma}{2}(e^\beta-e^\alpha).
\end{align}
Using (see \eqref{eq:142})
\begin{align}
  \label{eq:175}
  \frac{d\gamma}{d\tilde{\rho}}=\frac{1}{\tilde{h}}\frac{d\tilde{h}}{d\tilde{\rho}}-1=\eta-1,
\end{align}
we get
\begin{align}
  \label{eq:176}
  \pp{\psi_t}{\alpha}=\frac{\psi_t}{2}(\eta+v),\qquad \pp{\psi_t}{\beta}=\frac{\psi_t}{2}(\eta-v),\qquad \pp{\psi_r}{\alpha}=-\frac{\psi_t}{2}(1+v\eta),\qquad\pp{\psi_r}{\beta}=\frac{\psi_t}{2}(1-v\eta).
\end{align}
Now (see \eqref{eq:117}, \eqref{eq:119}, \eqref{eq:142}, \eqref{eq:144})
\begin{align}
  \label{eq:177}
  \pp{G}{\alpha}&=\frac{dG}{dH}\frac{dH}{d\tilde{\rho}}\pp{\tilde{\rho}}{\alpha}\notag\\
&=\frac{1}{2}GFH\eta\notag\\
&=\frac{G}{2\eta}(1-\eta^2).
\end{align}
And similarly
\begin{align}
  \label{eq:178}
  \pp{G}{\beta}=\frac{G}{2\eta}(1-\eta^2).
\end{align}

From \eqref{eq:113}, \eqref{eq:142}, \eqref{eq:144} we have
\begin{align}
  \label{eq:179}
  \pp{p}{\alpha}&=\frac{dp}{dH}\frac{dH}{d\tilde{\rho}}\pp{\tilde{\rho}}{\alpha}\notag\\
&=\frac{1}{2}G\psi_t^2\eta(1-v^2).
\end{align}
And similarly
\begin{align}
  \label{eq:180}
  \pp{p}{\beta}=\frac{1}{2}G\psi_t^2\eta(1-v^2).
\end{align}

From \eqref{eq:124} together with \eqref{eq:176} we obtain
\begin{align}
  \label{eq:181}
  \pp{v}{\alpha}=\frac{1}{2}(1-v^2),\qquad \pp{v}{\beta}=-\frac{1}{2}(1-v^2).
\end{align}
Using \eqref{eq:176}, \eqref{eq:177}, \eqref{eq:178}, \eqref{eq:179}, \eqref{eq:180}, \eqref{eq:181} to compute the partial derivatives of the components of $T$, given by \eqref{eq:172}, we arrive at
\begin{align}
  \label{eq:182}
  \pp{T^{tt}}{\alpha}=\frac{G\psi_t^2}{2\eta}(1+v\eta)^2,\qquad \pp{T^{tr}}{\alpha}=\frac{G\psi_t^2}{2\eta}(v+\eta)(1+v\eta),\qquad \pp{T^{rr}}{\alpha}=\frac{G\psi_t^2}{2\eta}(v+\eta)^2,\\
  \label{eq:183}
  \pp{T^{tt}}{\beta}=\frac{G\psi_t^2}{2\eta}(1-v\eta)^2,\qquad \pp{T^{tr}}{\beta}=\frac{G\psi_t^2}{2\eta}(v-\eta)(1-v\eta),\qquad \pp{T^{rr}}{\beta}=\frac{G\psi_t^2}{2\eta}(v-\eta)^2.
\end{align}
Let us denote $c_\pm\coloneqq L_\pm^r$ (see \eqref{eq:123}). From \eqref{eq:182} we have
\begin{align}
  \label{eq:184}
  \pp{T^{tr}}{\alpha}=c_+\pp{T^{tt}}{\alpha}=\frac{1}{c_+}\pp{T^{rr}}{\alpha},
\end{align}
while from \eqref{eq:183} we have
\begin{align}
  \label{eq:732}
  \pp{T^{tr}}{\beta}=c_-\pp{T^{tt}}{\beta}=\frac{1}{c_-}\pp{T^{rr}}{\beta}.
\end{align}

Let us define
\begin{align}
  \label{eq:186}
  \mu\coloneqq\frac{d\eta}{d\tilde{\rho}}+1-\eta^2.
\end{align}
We note that with $\tilde{\Sigma}$ defined by
\begin{align}
  \label{eq:87}
  1-\tilde{h}^2\Sigma=\eta^2,
\end{align}
we have
\begin{align}
  \label{eq:88}
  \tilde{\Sigma}=m^2\Sigma,
\end{align}
where $\Sigma$ is given by \eqref{eq:734}. So
\begin{align}
  \label{eq:89}
  \left(\frac{d\Sigma}{dh}\right)_s=\frac{1}{m^3}\frac{d\tilde{\Sigma}}{d\tilde{h}}
\end{align}
and
\begin{align}
  \label{eq:90}
  \frac{d\tilde{\Sigma}}{d\tilde{h}}=-\frac{2}{\tilde{h}^3}\mu.
\end{align}

We have the following proposition:
\begin{proposition}\label{proposition_expansion}
  \begin{align}
    \label{eq:185}
    J(\alpha_+,\alpha_-,\beta_+,\beta_-)&=\left(G\psi_t^2(1-v^2)\right)^2\bigg\{\jump{\alpha}\jump{\beta}+\frac{\mu^2}{192\eta^2}\left(\jump{\alpha}^4+\jump{\beta}^4\right)\notag\\
    &\hspace{33mm}+\mathcal{O}\left(\jump{\alpha}^2\jump{\beta}\right)+\mathcal{O}\left(\jump{\alpha}\jump{\beta}^2\right)+\mathcal{O}\left(\jump{\alpha}^5\right)+\mathcal{O}\left(\jump{\beta}^5\right)\bigg\},
  \end{align}
where the coefficients on the right are evaluated at $(\alpha_-,\beta_-)$.
\end{proposition}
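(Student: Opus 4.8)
The plan is to exploit the degenerate structure encoded in \eqref{eq:184} and \eqref{eq:732}. Set $Q(X,Y,Z):=XZ-Y^{2}$, so that by \eqref{eq:168} one has $J=Q\big(\jump{T^{tt}},\jump{T^{tr}},\jump{T^{rr}}\big)$, and let $B$ be the symmetric bilinear form polarizing $Q$, so $B(\xi,\xi)=Q(\xi)$; a direct check gives $B\big((1,c,c^{2}),(1,c',c'^{2})\big)=\tfrac12(c-c')^{2}$, and in particular every vector $(1,c,c^{2})$ is $Q$-null. Now \eqref{eq:184} and \eqref{eq:732} say precisely that, as functions of $(\alpha,\beta)$,
\[
\partial_\alpha\big(T^{tt},T^{tr},T^{rr}\big)=\partial_\alpha T^{tt}\,(1,c_+,c_+^{2}),\qquad \partial_\beta\big(T^{tt},T^{tr},T^{rr}\big)=\partial_\beta T^{tt}\,(1,c_-,c_-^{2}).
\]
Hence the map $(\alpha,\beta)\mapsto(T^{tt},T^{tr},T^{rr})$ has all its tangent directions $Q$-null, and $J$ is the value of $Q$ on a chord of it; this is what will force $J$ to vanish to fourth order in each \emph{pure} jump and to have quadratic, mixed, leading behaviour.

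To extract the expansion I would parametrize the segment from $(\alpha_-,\beta_-)$ to $(\alpha_+,\beta_+)$ by $\gamma(t)=(\alpha_-+t\jump{\alpha},\,\beta_-+t\jump{\beta})$, $t\in[0,1]$, and write $\jump{T^{\mu\nu}}=\int_0^1\frac{d}{dt}T^{\mu\nu}(\gamma(t))\,dt$. Putting $P(t):=\jump{\alpha}\,\partial_\alpha T^{tt}\big|_{\gamma(t)}$, $R(t):=\jump{\beta}\,\partial_\beta T^{tt}\big|_{\gamma(t)}$ and $c_\pm(t):=c_\pm\big|_{\gamma(t)}$, the integrand equals $P(t)(1,c_+(t),c_+(t)^2)+R(t)(1,c_-(t),c_-(t)^2)$, whence
\[
J=\tfrac12\int_0^1\!\!\int_0^1\Big\{P(t)P(t')\big(c_+(t)-c_+(t')\big)^2+2P(t)R(t')\big(c_+(t)-c_-(t')\big)^2+R(t)R(t')\big(c_-(t)-c_-(t')\big)^2\Big\}\,dt\,dt',
\]
the $t\leftrightarrow t'$ symmetry being used to merge the two mixed terms. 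Now Taylor-expand each factor about $t=t'=0$, i.e.\ about $(\alpha_-,\beta_-)$. Only the mixed term carries a quadratic contribution: to leading order $P(t)R(t')\to\jump{\alpha}\jump{\beta}\,(\partial_\alpha T^{tt})(\partial_\beta T^{tt})$ and $(c_+(t)-c_-(t'))^2\to(c_+-c_-)^2$, giving $\jump{\alpha}\jump{\beta}\,(\partial_\alpha T^{tt})(\partial_\beta T^{tt})(c_+-c_-)^2$ plus $\mathcal O(\jump{\alpha}^2\jump{\beta})+\mathcal O(\jump{\alpha}\jump{\beta}^2)$. In the $PP$-term, $P(t)P(t')=\mathcal O(\jump{\alpha}^2)$ while $c_+(t)-c_+(t')=(t-t')\big(\jump{\alpha}\,\partial_\alpha c_++\jump{\beta}\,\partial_\beta c_+\big)+\cdots=\mathcal O(\jump{\alpha}+\jump{\beta})$, so this term is $\mathcal O\big(\jump{\alpha}^2(\jump{\alpha}+\jump{\beta})^2\big)$; isolating its pure-$\jump{\alpha}$ part and using $\int_0^1\!\!\int_0^1(t-t')^2\,dt\,dt'=\tfrac16$ yields $\tfrac1{12}(\partial_\alpha T^{tt})^2(\partial_\alpha c_+)^2\jump{\alpha}^4$, everything else from it being $\mathcal O(\jump{\alpha}^2\jump{\beta})+\mathcal O(\jump{\alpha}^5)$. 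Symmetrically the $RR$-term contributes $\tfrac1{12}(\partial_\beta T^{tt})^2(\partial_\beta c_-)^2\jump{\beta}^4$ plus $\mathcal O(\jump{\alpha}\jump{\beta}^2)+\mathcal O(\jump{\beta}^5)$.

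It then remains to evaluate the three coefficients at $(\alpha_-,\beta_-)$. By \eqref{eq:182}, \eqref{eq:183}, $\partial_\alpha T^{tt}=\tfrac{G\psi_t^2}{2\eta}(1+v\eta)^2$ and $\partial_\beta T^{tt}=\tfrac{G\psi_t^2}{2\eta}(1-v\eta)^2$, while \eqref{eq:123} gives $c_+-c_-=\tfrac{2\eta(1-v^2)}{1-v^2\eta^2}$, so $(\partial_\alpha T^{tt})(\partial_\beta T^{tt})(c_+-c_-)^2=(G\psi_t^2(1-v^2))^2$, the first term of \eqref{eq:185}. For the quartic coefficients, differentiating $c_\pm=\tfrac{v\pm\eta}{1\pm v\eta}$ and using \eqref{eq:181} for $\partial_\alpha v,\partial_\beta v$ together with $\partial_\alpha\tilde\rho=\partial_\beta\tilde\rho=\tfrac12$ (from \eqref{eq:144}) and the definition \eqref{eq:186} of $\mu$ gives $\partial_\alpha c_+=\tfrac{\mu(1-v^2)}{2(1+v\eta)^2}$ and $\partial_\beta c_-=-\tfrac{\mu(1-v^2)}{2(1-v\eta)^2}$; the factors $(1\pm v\eta)^2$ cancel those in $\partial_\alpha T^{tt},\partial_\beta T^{tt}$ and one gets $\tfrac1{12}(\partial_\alpha T^{tt})^2(\partial_\alpha c_+)^2=\tfrac1{12}(\partial_\beta T^{tt})^2(\partial_\beta c_-)^2=\tfrac{\mu^2}{192\eta^2}(G\psi_t^2(1-v^2))^2$, the quartic term of \eqref{eq:185}. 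Since $(G\psi_t^2(1-v^2))^2$ is bounded and bounded away from zero, all the remainders accumulated above can be written with it as a common prefactor, which yields \eqref{eq:185}. The computation is mostly bookkeeping in the double Taylor expansion; the point that needs care — and the reason for organizing everything around the $Q$-null tangent structure — is ruling out pure $\jump{\alpha}^2,\jump{\alpha}^3$ (and $\jump{\beta}^2,\jump{\beta}^3$) terms, which is automatic here because the $PP$- and $RR$-contributions each carry two powers of $P$ (resp.\ $R$) times at least two further powers of $(\jump{\alpha},\jump{\beta})$ coming from the squared speed differences, so the first surviving pure power is the fourth.
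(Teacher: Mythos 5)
Your proposal is correct, and it reaches \eqref{eq:185} by a genuinely different route than the paper. The paper works directly with the Taylor coefficients of $J$ in $\alpha_+$ at the coincidence point: it first notes the $\alpha\leftrightarrow\beta$ symmetry, then uses the vanishing of $\Delta=\partial_{\alpha_+}T_+^{tt}\,\partial_{\alpha_+}T_+^{rr}-(\partial_{\alpha_+}T_+^{tr})^2$ (a consequence of \eqref{eq:182}) to kill $J$ and its first three pure $\alpha_+$-derivatives, computes the mixed second derivative to get $(G\psi_t^2(1-v^2))^2$, and obtains the quartic coefficient from $\partial^4J/\partial\alpha_+^4=-2E$ with $E$ evaluated explicitly from the second derivatives of the $T^{\mu\nu}$ (equations \eqref{eq:198}--\eqref{eq:204}), so that $\mu$ enters through \eqref{eq:203}. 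You instead encode the same degeneracy \eqref{eq:184}, \eqref{eq:732} as the statement that the tangent directions of $(\alpha,\beta)\mapsto(T^{tt},T^{tr},T^{rr})$ are null for the quadratic form $Q(X,Y,Z)=XZ-Y^2$, represent the jumps as chord integrals, and expand the resulting double integral; the identity $B\bigl((1,c,c^2),(1,c',c'^2)\bigr)=\tfrac12(c-c')^2$ then explains structurally why no pure $\jump{\alpha}^2,\jump{\alpha}^3$ terms survive, rather than verifying this derivative by derivative. Your quartic coefficient comes out as $\tfrac1{12}(\partial_\alpha T^{tt})^2(\partial_\alpha c_+)^2$, and the evaluation via $(1+v\eta)^2\,\partial c_+/\partial\alpha=\tfrac12\mu(1-v^2)$ (which is exactly the identity \eqref{eq:1014} the paper derives much later for a different purpose) reproduces $\mu^2/(192\eta^2)\,(G\psi_t^2(1-v^2))^2$, in agreement with \eqref{eq:189} after division by $4!$; likewise your mixed-term computation reproduces \eqref{eq:188}, and the absorbed remainders $\jump{\alpha}^3\jump{\beta}$, $\jump{\alpha}^2\jump{\beta}^2$, etc., fall under the stated $\mathcal{O}$-classes. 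What your route buys is that it trades the paper's computation of second derivatives of the $T^{\mu\nu}$ (the quantity $E$) for first derivatives of the characteristic speeds $c_\pm$, and makes the cancellation mechanism conceptual; what the paper's route buys is that it stays entirely within elementary differentiation of $J$ and produces exactly the derivative data later fed into Proposition \ref{proposition_smooth} to get the smooth-remainder form \eqref{eq:207}.
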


\begin{proof}
We prove the Proposition by showing the following statements:
\begin{enumerate}
\item $J$ is symmetric under the interchange of $\alpha$ and $\beta$.
\item
  \begin{align}
    \label{eq:187}
    J(\alpha_-,\alpha_-,\beta_-,\beta_-)&=\pp{J}{\alpha_+}(\alpha_-,\alpha_-,\beta_-,\beta_-)\notag\\
&=\ppp{J}{\alpha_+}(\alpha_-,\alpha_-,\beta_-,\beta_-)\notag\\
&=\frac{\partial^3J}{\partial\alpha_+^3}(\alpha_-,\alpha_-,\beta_-,\beta_-)\notag\\
&=0.
  \end{align}
\item
\begin{align}
    \label{eq:188}
    \pppp{J}{\alpha_+}{\beta_+}(\alpha_-,\alpha_-,\beta_-,\beta_-)=(G\psi_t^2(1-v^2))^2(\alpha_-,\beta_-).
  \end{align}
\item
  \begin{align}
    \label{eq:189}
    \frac{\partial^4J}{\partial\alpha_+^4}(\alpha_-,\alpha_-,\beta_-,\beta_-)=\left(\frac{(G\psi_t^2)^2}{8\eta^2}(1-v^2)^2\mu^2\right)(\alpha_-,\beta_-).
  \end{align}
\end{enumerate}

To check (i) we note that $\rho$ and $p$, like all thermodynamic variables, are functions of $\tilde{\rho}=\frac{1}{2}(\alpha+\beta)$, therefore symmetric under the interchange of $\alpha$ and $\beta$. On the other hand $v=-\psi_r/\psi_t=-\tanh \zeta$ and $\zeta=\frac{1}{2}(\beta-\alpha)$, therefore $v$ is antisymmetric under the interchange of $\alpha$ and $\beta$. It follows from \eqref{eq:168} \eqref{eq:170} that $J$ is symmetric under the interchange of $\alpha$ and $\beta$.

Since $J$ is quadratic in the differences of components of $T$, the first two of (ii) are satisfied. Let
\begin{align}
  \label{eq:190}
  \Delta\coloneqq\pp{T_+^{tt}}{\alpha_+}\pp{T_+^{rr}}{\alpha_+}-\left(\pp{T_+^{tr}}{\alpha_+}\right)^2.
\end{align}
(Where the notation $\partial T_+^{\mu\nu}/\partial \alpha_+=(\partial T^{\mu\nu}/\partial\alpha)(\alpha_+,\beta_+)$ is used). Now, \eqref{eq:182} implies
\begin{align}
  \label{eq:191}
  \Delta=0.
\end{align}
Therefore
\begin{align}
  \label{eq:192}
  \ppp{J}{\alpha_+}=\ppp{T_+^{tt}}{\alpha_+}\jump{T^{rr}}+\jump{T^{tt}}\ppp{T_+^{rr}}{\alpha_+}-2\ppp{T_+^{tr}}{\alpha_+}\jump{T^{tr}},
\end{align}
which implies
\begin{align}
  \label{eq:193}
  \ppp{J}{\alpha_+}(\alpha_-,\alpha_-,\beta_-,\beta_-)=0.
\end{align}
From \eqref{eq:192} we get
\begin{align}
  \label{eq:194}
  \frac{\partial^3J}{\partial\alpha_+^3}=\frac{\partial^3T_+^{tt}}{\partial\alpha_+^3}\jump{T^{rr}}+\jump{T^{tt}}\frac{\partial^3T_+^{rr}}{\partial\alpha_+^3}-2\frac{\partial^3T_+^{tr}}{\partial\alpha_+^3}\jump{T^{tr}}+\pp{\Delta}{\alpha_+},
\end{align}
which, in conjunction with \eqref{eq:191}, implies
\begin{align}
  \label{eq:195}
  \frac{\partial^3J}{\partial\alpha_+^3}(\alpha_-,\alpha_-,\beta_-,\beta_-)=0.
\end{align}

Now we turn to (iii). We have
\begin{align}
  \label{eq:196}
  \pppp{J}{\alpha_+}{\beta_+}(\alpha_-,\alpha_-,\beta_-,\beta_-)=\left(\pp{T^{tt}}{\alpha}\pp{T^{rr}}{\beta}+\pp{T^{tt}}{\beta}\pp{T^{rr}}{\alpha}-2\pp{T^{tr}}{\alpha}\pp{T^{tr}}{\beta}\right)(\alpha_-,\beta_-).
\end{align}
Using \eqref{eq:182}, \eqref{eq:183} we deduce
\begin{align}
  \label{eq:197}
  \pppp{J}{\alpha_+}{\beta_+}(\alpha_-,\alpha_-,\beta_-,\beta_-)=\left(G\psi_t^2(1-v^2)\right)^2(\alpha_-,\beta_-).
\end{align}

Now we turn to (iv). From \eqref{eq:194} we obtain
\begin{align}
  \label{eq:198}
  \frac{\partial^4J}{\partial\alpha_+^4}=\frac{\partial^4T_+^{tt}}{\partial\alpha_+^4}\jump{T^{rr}}+\frac{\partial^3T_+^{tt}}{\partial\alpha_+^3}\pp{T_+^{rr}}{\alpha_+}+\pp{T^{tt}_+}{\alpha_+}\frac{\partial^3T^{rr}_+}{\partial\alpha_+^3}+\jump{T^{tt}}\frac{\partial^4T^{rr}_+}{\partial\alpha_+^4}-2\frac{\partial^4T^{tr}_+}{\partial\alpha_+^4}\jump{T^{tr}}-2\frac{\partial^3T^{tr}_+}{\partial\alpha_+^3}\pp{T^{tr}_+}{\alpha_+}.
\end{align}
From
\begin{align}
  \label{eq:199}
  0=\ppp{\Delta}{\alpha_+}=\frac{\partial^3T_+^{tt}}{\partial\alpha_+^3}\pp{T_+^{rr}}{\alpha_+}+\pp{T_+^{rr}}{\alpha_+}\frac{\partial^3T_+^{rr}}{\partial\alpha_+^3}-2\pp{T_+^{tr}}{\alpha_+}\frac{\partial^3T_+^{tr}}{\partial\alpha_+^3}+2\left(\ppp{T_+^{tt}}{\alpha_+}\ppp{T_+^{rr}}{\alpha_+}-\left(\ppp{T_+^{tr}}{\alpha_+}\right)^2\right),
\end{align}
we deduce
\begin{align}
  \label{eq:200}
  \frac{\partial^4J}{\partial\alpha_+^4}(\alpha_-,\alpha_-,\beta_-,\beta_-)=-2E,
\end{align}
where
\begin{align}
  \label{eq:201}
  E\coloneqq\ppp{T_+^{tt}}{\alpha_+}\ppp{T_+^{rr}}{\alpha_+}-\left(\ppp{T_+^{tr}}{\alpha_+}\right)^2.
\end{align}
To proceed we need expressions for the second derivatives of the components of $T$. Using
\begin{align}
  \label{eq:202}
  \pp{\eta}{\alpha}=\frac{1}{2}\frac{d\eta}{d\tilde{\rho}},
\end{align}
together with \eqref{eq:177}, the first of \eqref{eq:181} and the first of \eqref{eq:176}, it follows from \eqref{eq:182} by a straightforward computation
\begin{align}
  \label{eq:203}
  E=-\left(\frac{G\psi_t^2}{4\eta^2}\right)^2\eta^2(1-v^2)^2\mu^2,
\end{align}
with $\mu$ given by \eqref{eq:186}. Therefore,
\begin{align}
  \label{eq:204}
  \frac{\partial^4J}{\partial\alpha_+^4}(\alpha_-,\alpha_-,\beta_-,\beta_-)=\left(\frac{(G\psi_t^2)^2}{8\eta^2}(1-v^2)^2\mu^2\right)(\alpha_-,\beta_-).
\end{align}
This concludes the proof of the proposition.
\end{proof}

We will use the following proposition.
\begin{proposition}
  Any smooth function $f(x,y)$ can be written as
  \begin{align}
    \label{eq:205}
    f(x,y)=f(x,0)+f(0,y)-f(0,0)+xy\overline{g}(x,y),
  \end{align}
where
\begin{align}
  \label{eq:206}
  g\coloneqq\frac{\partial^2f}{\partial x\partial y}
\end{align}
and $\overline{g}(x,y)$ is the mean value of $g$ in the rectangle $R(x,y)\coloneqq\{(x',y')\in\mathbb{R}^2:0\leq x'\leq x,0\leq y'\leq y\}$.\label{proposition_smooth}
\end{proposition}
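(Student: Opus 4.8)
The plan is to reduce the identity to two successive applications of the fundamental theorem of calculus. First I would set
\[
  h(x,y)\coloneqq f(x,y)-f(x,0)-f(0,y)+f(0,0),
\]
so that the assertion becomes $h(x,y)=xy\,\overline{g}(x,y)$. Two elementary observations do most of the work: $h(x,0)=0$ and $h(0,y)=0$ hold identically, and the mixed second partial derivative of $h$ equals that of $f$, namely $\partial_y\partial_x h=\partial_y\partial_x f=g$, since each of the three correction terms is independent of one of the variables and is therefore annihilated by one of the two derivatives.

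Next I would integrate. Using $h(x,0)=0$ and then $\partial_{x'}h(x',0)=0$ (which follows by differentiating $h(x',0)\equiv 0$ in $x'$), one gets
\[
  h(x,y)=\int_0^x \partial_{x'}h(x',y)\,dx'
        =\int_0^x\!\int_0^y \partial_{y'}\partial_{x'}h(x',y')\,dy'\,dx'
        =\int_0^x\!\int_0^y g(x',y')\,dy'\,dx'.
\]
For $xy\neq 0$ the right-hand side is exactly $xy$ times the average of $g$ over $R(x,y)$, which is the claim. To cover the degenerate rectangles (a segment, or the point $0$, occurring when $x=0$ or $y=0$) and, more importantly, to exhibit $\overline{g}$ as an honest smooth function of $(x,y)$, I would rescale by $x'=sx$, $y'=ty$ with $(s,t)\in[0,1]^2$:
\[
  h(x,y)=xy\int_0^1\!\int_0^1 g(sx,ty)\,ds\,dt.
\]
One then takes $\overline{g}(x,y)\coloneqq\int_0^1\!\int_0^1 g(sx,ty)\,ds\,dt$; this coincides with the mean value of $g$ over $R(x,y)$ whenever $xy\neq 0$, extends that mean value continuously across the axes, and is $C^\infty$ by differentiation under the integral sign since $g$ is smooth. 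On the axes the claimed formula is trivially valid, because $f(x,0)+f(0,y)-f(0,0)$ reduces to $f(x,y)$ there while the term $xy\,\overline{g}(x,y)$ vanishes.

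The argument involves no estimate and no genuine obstacle; the only point that benefits from care is what ``the mean value of $g$ in $R(x,y)$'' should mean when the rectangle collapses. That is precisely what the scaling representation $\int_0^1\!\int_0^1 g(sx,ty)\,ds\,dt$ settles, and I would regard passing to this representation — rather than any piece of the calculation — as the conceptual crux.
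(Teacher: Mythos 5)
Your proof is correct and follows essentially the same route as the paper, which simply integrates $g=\partial^2 f/\partial x\partial y$ over the rectangle $R(x,y)$ and identifies the result with $f(x,y)-f(x,0)-f(0,y)+f(0,0)$. Your extra care with the scaling representation $\overline{g}(x,y)=\int_0^1\!\int_0^1 g(sx,ty)\,ds\,dt$, which handles degenerate rectangles and makes the smoothness of $\overline{g}$ explicit, is a worthwhile elaboration of the paper's one-line argument but not a different method.
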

\begin{proof}
  Integrating \eqref{eq:206} on the rectangle $R(x,y)$ yields the result.
\end{proof}
We now consider $J(\alpha_+,\alpha_-,\beta_+,\beta_-)$ as a function of $\jump{\alpha}$, $\jump{\beta}$ with given $\alpha_-$, $\beta_-$. We denote this function again by $J$. Using propositions \ref{proposition_expansion}, \ref{proposition_smooth} we can write
\begin{align}
  \label{eq:207}
  J\left(\jump{\alpha},\jump{\beta}\right)=\left(G\psi_t^2(1-v^2)\right)^2\left\{\jump{\alpha}\jump{\beta} M\left(\jump{\alpha},\jump{\beta}\right)+\frac{\mu^2}{192\eta^2}\left(\jump{\alpha}^4L\left(\jump{\alpha}\right)+\jump{\beta}^4N\left(\jump{\beta}\right)\right)\right\},
\end{align}
where the coefficients are evaluated at $(\alpha_-,\beta_-)$. Here $M$, $L$ and $N$ are smooth functions of their arguments and $M(0,0)=L(0)=N(0)=1$.
\begin{proposition}
  Let $f(x,y)$ be a smooth function on $\mathbb{R}^2$ of the form
  \begin{align}
    \label{eq:208}
    f(x,y)=xym(x,y)+x^4l(x)+y^4n(y),
  \end{align}
with $m(0,0)=1$, where $m$, $l$, $n$ are smooth functions. For small enough $x$, the equation
\begin{align}
  \label{eq:209}
  f(x,y)=0
\end{align}
has a unique solution for $y$, given by
\begin{align}
  \label{eq:210}
  y=x^3g(x),
\end{align}
where $g(x)$ is a smooth function and $g(0)=-l(0)$.
\end{proposition}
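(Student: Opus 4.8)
The plan is to reduce the statement to the implicit function theorem, but this cannot be done naively: from \eqref{eq:208} every term of $\partial f/\partial y$ carries a factor of $x$ or of $y$, so $\partial f/\partial y(0,0)=0$ and the linearization of $f$ in $y$ is degenerate at the origin. The way to overcome this is to first guess the scaling of the solution branch. Along the branch we are after, the bilinear term $xy\,m$ has to balance the pure term $x^{4}l(x)$, which forces $y$ to be of order $x^{3}$ with leading behavior $xy\approx-x^{4}l(0)$, i.e.\ $y\approx-x^{3}l(0)$; this already predicts the asserted value $g(0)=-l(0)$. Accordingly, the first step I would take is to introduce the rescaled unknown $z$ through $y=x^{3}z$.

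The computation I would then perform is the substitution of $y=x^{3}z$ into \eqref{eq:208}: since each term picks up exactly four powers of $x$, one obtains $f(x,x^{3}z)=x^{4}\,\Phi(x,z)$, where
\begin{align*}
  \Phi(x,z):=z\,m(x,x^{3}z)+l(x)+x^{8}z^{4}n(x^{3}z).
\end{align*}
Because $m$, $l$, $n$ are smooth, $\Phi$ is smooth near $(0,-l(0))$, and I would check the two hypotheses of the implicit function theorem there. First $\Phi(0,-l(0))=-l(0)\,m(0,0)+l(0)=0$, using $m(0,0)=1$. Second,
\begin{align*}
  \frac{\partial\Phi}{\partial z}(0,-l(0))=m(0,0)=1\neq0,
\end{align*}
all the remaining contributions to $\partial_{z}\Phi$ carrying a positive power of $x$. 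The implicit function theorem then produces a smooth function $g$ defined near $x=0$ with $g(0)=-l(0)$ and $\Phi(x,g(x))\equiv0$, and with $z=g(x)$ the unique zero of $\Phi(x,\cdot)$ in a fixed neighborhood of $-l(0)$.

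Finally I would transfer this back to $f$. For $x\neq0$ small and $y$ with $yx^{-3}$ in that neighborhood of $-l(0)$, dividing the identity $f(x,y)=x^{4}\Phi(x,yx^{-3})$ by $x^{4}$ shows that $f(x,y)=0$ is equivalent to $y=x^{3}g(x)$, and this branch extends continuously to $x=0$ with value $y=0$, which is consistent with $f(0,0)=0$. This is the unique solution $y=x^{3}g(x)$ with $g$ smooth and $g(0)=-l(0)$ claimed in \eqref{eq:210}. The only step that requires genuine insight is the very first one — recognizing the cubic scaling and rescaling by $x^{3}$; once that is done, the degeneracy of $\partial_{y}f$ at the origin disappears and everything reduces to a routine verification of the standard hypotheses of the implicit function theorem.
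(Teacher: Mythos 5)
Your proof is correct and follows essentially the same route as the paper: the substitution $y=x^3z$, the reduction to $z\,m(x,x^3z)+l(x)+x^8z^4n(x^3z)=0$, and the application of the implicit function theorem at $(0,-l(0))$ using $m(0,0)=1$ are exactly the paper's steps. Your remarks on the heuristic scaling and on uniqueness holding within a neighborhood of the branch are consistent with how the paper uses the result.
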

\begin{proof}
  Setting $y=x^3z$, \eqref{eq:209} becomes
  \begin{align}
    \label{eq:211}
    h(x,z)=0,
  \end{align}
where
\begin{align}
  \label{eq:212}
  h(x,z)\coloneqq zm(x,x^3z)+l(x)+x^8z^4n(x^3z).
\end{align}
Since $m(0,0)=1$, the pair $(x_0,z_0)\coloneqq(0,-l(0))$ satisfies \eqref{eq:211}. Now, since
\begin{align}
  \label{eq:213}
  \pp{h}{z}(x_0,z_0)=m(0,0)=1,
\end{align}
we can apply the implicit function theorem to deduce that there exists a smooth function $g(x)$ such that for small enough $x-x_0$ we have $z=g(x)$ with $g(0)=z_0$. It follows that for small enough $x$, $f(x,y)=0$ has a solution
\begin{align}
  \label{eq:214}
  y=x^3z=x^3g(x),\qquad \textrm{with}\qquad g(0)=-l(0).
\end{align}
\end{proof}
Applying this proposition to $J\left(\jump{\alpha},\jump{\beta}\right)=0$ and taking into account \eqref{eq:207} it follows that there is a smooth function $G\left(\jump{\alpha}\right)$ such that
\begin{align}
  \label{eq:1514}
  \jump{\beta}=\jump{\alpha}^3G\left(\jump{\alpha}\right),\qquad \textrm{with}\qquad G(0)=-\frac{\mu^2}{192\eta^2}.
\end{align}
We recall that above we considered $J(\alpha_+,\alpha_-,\beta_+,\beta_-)$ as a function of $\jump{\alpha}$, $\jump{\beta}$ with given $\alpha_-$, $\beta_-$. In the following we will make use of \eqref{eq:1514} in the form (with a different function $G$)
\begin{align}
  \label{eq:215}
  \jump{\beta}=\jump{\alpha}^3G(\alpha_+,\alpha_-,\beta_-).
\end{align}

\subsection{Boundary of the Maximal Development}

Let initial data be given on a spacelike hypersurface which coincides with the initial data of a constant state outside a bounded domain. According to \cite{ch2007} the boundary of the domain of the maximal solution consists of a regular part $\underline{C}$ and a singular part $\partial_-\mathcal{B}\cup\mathcal{B}$. Each component of $\partial_-\mathcal{B}$ is a smooth, space-like (w.r.t.~the acoustical metric), 2-dimensional submanifold, while the corresponding component of $\mathcal{B}$ is a smooth embedded 3-dimensional submanifold ruled by curves of vanishing arc length (w.r.t.~the acoustical metric), having past end points on the component of $\partial_-\mathcal{B}$. The corresponding component of $\underline{C}$ is the incoming null (w.r.t.~the acoustical metric) hypersurface associated to the component of $\partial_-\mathcal{B}$. It is ruled by incoming null geodesics of the acoustical metric with past end points on the component of $\partial_-\mathcal{B}$. The result of \cite{ch2007} holds for a general equation of state.

In the following we will restrict ourselves to the barotropic case. We also assume the initial data to be spherically symmetric. Therefore, also the solution is spherically symmetric and it suffices to study the problem in the $t$-$r$-plane, where $t$, $r$ are part of the standard spherical coordinates $(t,r,\vartheta,\varphi)$.

In the $t$-$r$-plane the boundary of the maximal development corresponds to a curve consisting of a regular part $\underline{C}$ and a singular part $\partial_-\mathcal{B}\cup\mathcal{B}$. Each component of $\mathcal{B}$ corresponds to a smooth curve of vanishing arc length with respect to the induced acoustical metric, having as its past end point the point corresponding to $\partial_-\mathcal{B}$. The corresponding component of $\underline{C}$ corresponds in the $t$-$r$-plane to an incoming null geodesic with respect to the induced acoustical metric with past end point being the point corresponding to $\partial_-\mathcal{B}$. We denote this point by $O$. See figure \ref{position} on the right.

In the following we will use $(t,w)$ as the acoustical coordinates (in contrast to \cite{ch2007}, where $(t,u)$ are playing the corresponding roles). We recall that the level sets of $w$ are the outgoing characteristic hypersurfaces with respect to the acoustical metric. The solution in the maximal development is a smooth solution with respect to the acoustical coordinates. In terms of these coordinates the solution also extends smoothly to the boundary. We recall the function $\mu$ which plays a central role in \cite{ch2007}, given by
\begin{align}
  \label{eq:216}
  \frac{1}{\mu}=-(g^{-1})^{\mu\nu}\partial_\mu t\partial_\nu w.
\end{align}
(See (2.13) of \cite{ch2007}). $\mu$ vanishes on the singular part of the boundary. On the other hand, $\mu$ is positive on the regular part $\underline{C}$ and the solution extends smoothly to this part also in the $(t,r)$ coordinates.

We now show that
\begin{align}
  \label{eq:217}
  \mu=-\eta\pp r w.
\end{align}
We use the vector field $T$, given in acoustical coordinates by (cf. (2.31) of \cite{ch2007})
\begin{align}
  \label{eq:218}
  T\coloneqq\pp{}{w}.
\end{align}
We have
\begin{align}
  \label{eq:219}
  Tr=\pp{r}{w},\qquad Tt=\pp{t}{w}=0.
\end{align}
Therefore
\begin{align}
  \label{eq:220}
  T=\pp{r}{w}\pp{}{r}.
\end{align}
Now we use the function $\kappa$ as defined by (2.24) of \cite{ch2007}
\begin{align}
  \label{eq:221}
  \kappa\coloneqq g(T,T)=g_{rr}(T^r)^2>0.
\end{align}
So
\begin{align}
  \label{eq:222}
  \pp{r}{w}=-\frac{\kappa}{\sqrt{g_{rr}}}<0.
\end{align}
The minus sign appears due to the initial condition
\begin{align}
  \label{eq:223}
  r(0,w)=-w+k,
\end{align}
where $k$ is a positive constant (see page 39 of \cite{ch2007}). We now recall the function $\alpha$ (see (2.41) of \cite{ch2007})
\begin{align}
  \label{eq:224}
  \frac{1}{\alpha^2}=-(g^{-1})^{\mu\nu}\partial_\mu t\partial_\nu t
\end{align}
and the relation (see (2.48) of \cite{ch2007})
\begin{align}
  \label{eq:225}
  \mu=\alpha\kappa.
\end{align}
Since (see \eqref{eq:116})
\begin{align}
  \label{eq:226}
  \alpha^2=-\frac{1}{(g^{-1})^{tt}}=\frac{1}{1+F(\partial_t\phi)^2},
\end{align}
and (see \eqref{eq:119}, \eqref{eq:120})
\begin{align}
  \label{eq:227}
  g_{rr}=1+F\eta^2(\partial_r\phi)^2,
\end{align}
we obtain
\begin{align}
  \label{eq:228}
  \alpha^2 g_{rr}=\eta^2.
\end{align}
Together with \eqref{eq:225} we arrive at
\begin{align}
  \label{eq:229}
  \frac{\kappa^2}{g_{rr}}=\frac{\mu^2}{\alpha^2g_{rr}}=\frac{\mu^2}{\eta^2},
\end{align}
which, in conjunction with \eqref{eq:222}, implies \eqref{eq:217}.
\begin{remark}
  The acoustical metric $h_{\mu\nu}$ as introduced in \cite{ch2007} coincides with $g_{\mu\nu}$, but quantities such as $\phi$, $\beta$, $F$ used in \cite{ch2007} do not coincide with the quantities denoted in the same way which were introduced in the present work. Making a distinction by putting a tilde on the quantities from \cite{ch2007} we have for example
  \begin{align}
    \label{eq:230}
    \tilde{\beta}_\mu=m\beta_\mu,
  \end{align}
where $\tilde{\beta}$ is introduced in (1.44) of \cite{ch2007}, while $\beta$ is the one form defined in \eqref{eq:28}. For $m$ see \eqref{eq:95}. Therefore, despite the fact that the wave equations of the present work and \cite{ch2007} have the same form, the physical meaning of the wave function is different. Nevertheless, functions such as $\alpha$, $\kappa$, $\mu$ and relations thereof such as \eqref{eq:225} are related only to the Lorentzian geometry given by the acoustical metric and can therefore be used in the present context as well.
\end{remark}
In the following we restrict ourselves to one component of $\partial_-\mathcal{B}\cup\mathcal{B}$ and the corresponding component of $\underline{C}$ with past end point $\partial_-\mathcal{B}$ which we denote by $O$. Now, the function $\mu$ vanishes on $O\cup\mathcal{B}$. From \eqref{eq:217} together with $\eta>0$, it follows that $\partial r/\partial w$ vanishes on $O\cup\mathcal{B}$. In particular
\begin{align}
  \label{eq:231}
  \left(\pp{r}{w}\right)_0=0,
\end{align}
where the index $0$ denotes evaluation at the cusp point $O$. Let the singular part of the boundary of the maximal solution be given by $t=t_\ast(w)$ and let us set $w_0=0$, $t_0=0$, i.e.~the cusp point $O$ is the origin of the acoustical coordinate system. In spherical symmetry, the results at the end of Chapter 15 of \cite{ch2007} translate into
\begin{align}
  \label{eq:232}
  t_\ast(w)=t_0+\frac{1}{2}aw^2+\mathcal{O}(w^3),
\end{align}
\begin{align}
  \label{eq:233}
  a=-\left(\frac{\partial^2\mu/\partial w^2}{\partial \mu/\partial t}\right)_0>0.
\end{align}
Using \eqref{eq:217} and \eqref{eq:231} we obtain
\begin{align}
  \label{eq:234}
  \left(\pp{\mu}{t}\right)_0=-\eta_0\kappa,
\end{align}
where we defined
\begin{align}
  \label{eq:235}
  \kappa\coloneqq\left(\frac{\partial^2r}{\partial w\partial t}\right)_0.
\end{align}

From Chapter 15 of \cite{ch2007} we have
\begin{align}
  \label{eq:236}
  \left(\pp{\mu}{t}\right)_0<0.
\end{align}
It follows that $\kappa>0$. Since $\mu(t_\ast(w),w)=0$, we obtain
\begin{align}
  \label{eq:237}
  \left(\frac{d\mu}{dw}\right)_0=\left(\pp{\mu}{t}\right)_0\left(\frac{dt_\ast}{dw}\right)_0+\left(\pp{\mu}{w}\right)_0=0,
\end{align}
which, using \eqref{eq:232}, implies
\begin{align}
  \label{eq:238}
  \left(\pp{\mu}{w}\right)_0=0.
\end{align}
Taking the partial derivative of \eqref{eq:217} with respect to $w$ and evaluating at the cusp point yields
\begin{align}
  \label{eq:239}
  \left(\frac{\partial^2r}{\partial w^2}\right)_0=0,
\end{align}
where we used \eqref{eq:231}. Taking the second partial derivative of \eqref{eq:217} with respect to $w$ and evaluating at the cusp point we obtain
\begin{align}
  \label{eq:240}
  \left(\frac{\partial^2\mu}{\partial w^2}\right)_0=\frac{\eta_0\lambda}{\kappa},
\end{align}
where we defined
\begin{align}
  \label{eq:241}
  \lambda\coloneqq -\kappa\left(\frac{\partial^3r}{\partial w^3}\right)_0.
\end{align}

From Chapter 15 of \cite{ch2007} we have
\begin{align}
  \label{eq:242}
  \left(\frac{\partial^2\mu}{\partial w^2}\right)_0>0,
\end{align}
which implies that $\lambda>0$. From \eqref{eq:232}, \eqref{eq:233}, \eqref{eq:234}, \eqref{eq:240} we deduce that $\mathcal{B}$, i.e.~the singular part of the boundary of the maximal development, is given in a neighborhood of the cusp point by
\begin{align}
  \label{eq:243}
  t_\ast(w)=t_0+\frac{\lambda}{2\kappa^2}w^2+\mathcal{O}(w^3).
\end{align}
In the following we will also make use of the definition
\begin{align}
  \label{eq:244}
  \xi\coloneqq\kappa\left(\frac{\partial^4r}{\partial w^4}\right)_0.
\end{align}
We summarize the behavior of the radial coordinate at the cusp point.
\begin{align}
  \label{eq:245}
  \left(\pp{r}{w}\right)_0=\left(\frac{\partial^2r}{\partial w^2}\right)_0=0,\qquad \left(\frac{\partial^3r}{\partial w^3}\right)_0<0,\qquad \left(\frac{\partial^2r}{\partial w\partial t}\right)_0>0.
\end{align}
We made the definitions
\begin{align}
  \label{eq:246}
  \kappa\coloneqq\left(\frac{\partial^2r}{\partial w\partial t}\right)_0,\qquad \lambda\coloneqq -\kappa\left(\frac{\partial^3r}{\partial w^3}\right)_0.
\end{align}
The boundary of the domain of the maximal solution close to a cusp point is shown in Figure \ref{position}.

\begin{figure}[h!]
\begin{center}
\subfloat{
\begin{tikzpicture}
\shade[top color=gray!30, bottom color=gray!30]
 (-1,0.5) parabola bend (0,0) (0,0) -- (0,0) parabola bend (0,0) (2.5,2.5) |- (-1,-1);
\draw [line width=0.8pt](-1,0.5) parabola bend (0,0) (0,0) parabola bend (0,0) (2.5,2.5);
\draw [dashed] (-2.5,0) -- (2.5,0);
\draw [<-] (-1.5,-1) -- (2.5,-1);
\draw [->] (2.5,-1) -- (2.5,3);
\node at (-1.8,-1) {$w$};
\node at (2.5,3.3) {$t$};
\node at (-1.9,0.2) {$t=t_0$};
\draw [dashed] (0,-1) -- (0,2);
\node at (0,2.2) {$w=0$};
\node at (1.8,1.8) {$\mathcal{B}$};
\draw (0,0) circle (0.4mm);
\fill (0,0) circle (0.4mm);
\node at (0.2,-0.2) {$O$};
\node at (-0.65,0.6) {$\underline{C}$};
\end{tikzpicture}
}
\qquad\quad\quad
\subfloat{
\begin{tikzpicture}
\node (o) at (0,0) {};
\node (a) at (1,0.8) {};
\node (b) at (1.3,3) {};
\node (c) at (1.7,1.7) {};
\node (d) at (0.7,1.2) {};
\node (e) at (-2,1.8) {};
\node (f) at (-0.3,2.2) {};
\node (g) at (-0.6,0.3) {};
\node (h) at (-1.5,-0.5) {};
\node (x) at (-1.5,-0.5) {};
\path [fill=gray!30] (o.center) to [bend right=25] (b.center) to (2.2,3) to (2.2,-1) to (-2,-1) to (-2,1.8) to [bend left=12] (o.center);
\draw [line width=0.8pt] (o.center) to [bend right=25] (b.center);
\draw [line width=0.8pt] (o.center) to [bend right=12] (e.center);
\node at (0.2,-0.2) {$O$};
\draw (o.center) circle (0.4mm);
\fill (o.center) circle (0.4mm);
\node at (1.3,1.5) {$\mathcal{B}$};
\draw [->] (-2,-1) -- (2.7,-1);
\draw [->] (-2,-1) -- (-2,3);
\node at (3,-1) {$r$};
\node at (-2,3.3) {$t$};
\node at (-1,1.55) {$\underline{C}$};
\end{tikzpicture}
}
\end{center}
\caption[Maximal development, comparison]{Left: Part of the maximal development in acoustical coordinates. Right: Part of the maximal development as a subset of spacetime. $\underline{C}$ denotes the incoming characteristic originating at the cusp point $O$ while $\mathcal{B}$ denotes the singular part of the boundary of the maximal development. Both figures show the maximal development just in a neighborhood of a cusp point.}
\label{position}
\end{figure}
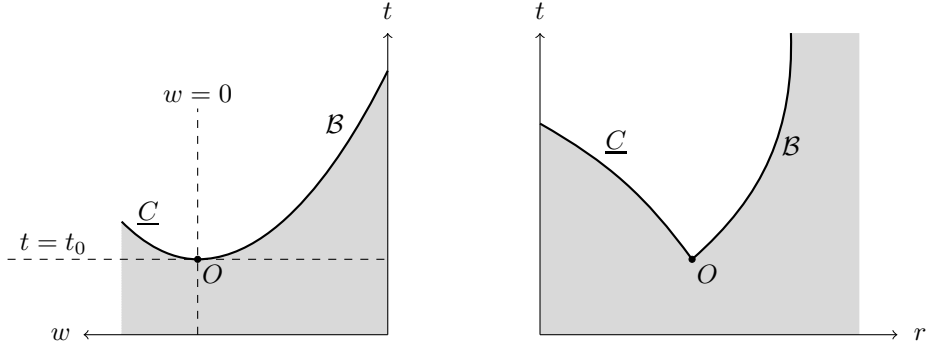

%\begin{remark}
%  It would be interesting to do the formation problem in the spherically symmetric case.
%\end{remark}

\subsubsection{Behavior of $\alpha$ and $\beta$ at the Cusp Point}

In \cite{ch2007} the null vector fields $L$, $\underline{L}$ are used. In the $t$-$r$-plane they are given in terms of acoustical coordinates by (see page 933 of \cite{ch2007})
\begin{align}
  \label{eq:247}
  L=\pp{}{t},\qquad \underline{L}=\frac{\mu}{\alpha^2}\pp{}{t}+2\pp{}{w}.
\end{align}
Therefore,
\begin{align}
  \label{eq:248}
  L_+=L=\pp{}{t},\qquad L_-=\frac{\alpha^2}{\mu}\underline{L}=\pp{}{t}+\frac{2\alpha^2}{\mu}\pp{}{w}.
\end{align}
Now, since the solution is smooth with respect to the acoustical coordinates $(t,w)$ and the Riemann invariants are given smooth functions of $\psi_\mu$ we have
\begin{align}
  \label{eq:249}
  \cp{L_+\alpha}<\infty,\qquad \cp{L_+\beta}<\infty.
\end{align}
\begin{remark}
  We note that in \eqref{eq:247}, \eqref{eq:248} (and also in \eqref{eq:253} and in the second line of \eqref{eq:256} below) $\alpha$ refers to the quantity given by \eqref{eq:224} as in \cite{ch2007}. However in \eqref{eq:249} and everywhere else in the present work $\alpha$ denotes the Riemann invariant defined by the first of \eqref{eq:144}. Also in \eqref{eq:251}, \eqref{eq:252}, \eqref{eq:256} below $\kappa$ denotes, as in \cite{ch2007}, the inverse spatial density of the outgoing characteristic hypersurfaces, defined in \eqref{eq:221} while everywhere else in the present work it denotes the quantity defined in \eqref{eq:235}.
\end{remark}
Now we look at the partial derivative of $\alpha$ and $\beta$ with respect to $w$. From the second of \eqref{eq:130}
\begin{align}
  \label{eq:250}
  \pp{\beta}{w}=T\beta=\pp{\beta}{\psi_\mu}T\psi_\mu=\lambda L_+^\mu T\psi_\mu.
\end{align}
Let us now use the vector field $\hat{T}$, collinear and in the same sense as $T$ and of unit magnitude with respect to the acoustical metric (see (2.57) of \cite{ch2007})
\begin{align}
  \label{eq:251}
  \hat{T}=\kappa^{-1}T.
\end{align}
Using now $X^\mu Y\psi_\mu=Y^\mu X\psi_\mu$ (recall that $\psi_\mu=\partial_\mu\phi$), we deduce from \eqref{eq:250}
\begin{align}
  \label{eq:252}
  \pp{\beta}{w}=\kappa f,
\end{align}
where $f$ is a smooth function of $(t,w)$. Therefore, in conjunction with \eqref{eq:225},
\begin{align}
  \label{eq:253}
  L_-\beta=\pp{\beta}{t}+2\alpha f\pp{\beta}{w},
\end{align}
which implies
\begin{align}
  \label{eq:254}
  \cp{L_-\beta}<\infty.
\end{align}
Now,
\begin{align}
  \label{eq:255}
  \pp{\alpha}{w}=T\alpha=\pp{\alpha}{\psi_\mu}T\psi_\mu=\xi L_-^\mu T\psi_\mu,
\end{align}
where we used the first of \eqref{eq:130}. Using again the vector field $\hat{T}$ we get
\begin{align}
  \label{eq:256}
  \pp{\alpha}{w}&=\xi \kappa \hat{T}^\mu L_-\psi_\mu\notag\\
&=\xi \hat{T}^\mu\left(\kappa\pp{\psi_\mu}{t}+2\alpha\pp{\psi_\mu}{w}\right),
\end{align}
where we used \eqref{eq:225}. Substituting this in $L_-\alpha$ we see that $\cp{L_-\alpha}$ blows up.

We note that from \eqref{eq:252} together with \eqref{eq:238} and the relation \eqref{eq:225} we have
\begin{align}
  \label{eq:257}
  \cp{\pp{\beta}{w}}=0,\qquad \cp{\ppp{\beta}{w}}=0,
\end{align}
while from \eqref{eq:256} we have
\begin{align}
  \label{eq:258}
  \cp{\pp{\alpha}{w}}<\infty.
\end{align}

\subsubsection{Incoming Characteristic Originating at the Cusp Point}
Let in acoustical coordinates $\underline{C}$ be given by $w=\underline{w}(t)$. Setting $\underline{r}(t)=r(t,\underline{w}(t))$, we obtain
\begin{align}
  \label{eq:259}
  \frac{d}{dt}\underline{r}(t)=c_-(t,\underline{w}(t)).
\end{align}
Since
\begin{align}
  \label{eq:260}
  c_-=\frac{d\underline{r}}{dt}=\pp{r}{t}+\pp{r}{w}\frac{d\underline{w}}{dt},
\end{align}
we have
\begin{align}
  \label{eq:261}
  \frac{d\underline{w}}{dt}=\frac{c_--c_+}{\pp{r}{w}}.
\end{align}
Therefore, the inverse function $\underline{t}(w)$ satisfies
\begin{align}
  \label{eq:262}
  \frac{d\underline{t}}{dw}=\frac{-\pp{r}{w}}{c_+-c_-}.
\end{align}
Using \eqref{eq:245} we deduce
\begin{align}
  \label{eq:263}
  \left(\frac{d\underline{t}}{dw}\right)_0=0.
\end{align}

Taking the derivative of \eqref{eq:262} with respect to $w$ we obtain from \eqref{eq:245}, in view of \eqref{eq:263}, that
\begin{align}
  \label{eq:264}
  \left(\frac{d^2\underline{t}}{dw^2}\right)_0=0.
\end{align}
Taking a second derivative of \eqref{eq:262} and using \eqref{eq:245}, \eqref{eq:246} yields
\begin{align}
  \label{eq:265}
  \left(\frac{d^3\underline{t}}{dw^3}\right)_0=\frac{\lambda}{\kappa(c_{+0}-c_{-0})}.
\end{align}
Taking the third derivative of \eqref{eq:262} and evaluating the result at the cusp point yields, in conjunction with the above results
\begin{align}
  \label{eq:266}
  \left(\frac{d^4\underline{t}}{dw^4}\right)_0=-\frac{\lambda(4\kappa-3l)}{\kappa(c_{+0}-c_{-0})^2}-\frac{\xi}{\kappa(c_{+0}-c_{-0})},
\end{align}
where we used the definition \eqref{eq:244} and we defined
\begin{align}
  \label{eq:267}
  l:=\left(\frac{dc_-}{dw}\right)_0.
\end{align}
We conclude from \eqref{eq:263}, \eqref{eq:264}, \eqref{eq:265} and \eqref{eq:266} that
\begin{align}
  \label{eq:268}
  \underline{t}(w)=t_0+\frac{\lambda}{\kappa(c_{+0}-c_{-0})}\frac{w^3}{6}-\left(\frac{\lambda(4\kappa-3l)}{\kappa(c_{+0}-c_{-0})^2}+\frac{\xi}{\kappa(c_{+0}-c_{-0})}\right)\frac{w^4}{24}+\mathcal{O}\left(w^5\right).
\end{align}

The function $\alpha$ along $\underline{C}$ is given by
\begin{align}
  \label{eq:2020}
  \underline{\alpha}(w)=\alpha(\underline{t}(w),w).
\end{align}
Taking into account \eqref{eq:263}, \eqref{eq:264} we obtain
\begin{align}
  \label{eq:2021}
  \cp{\frac{d\underline{\alpha}}{dw}}&=\cp{\pp{\alpha}{w}},\\
  \cp{\frac{d^2\underline{\alpha}}{dw^2}}&=\cp{\ppp{\alpha}{w}}.
\end{align}
Defining
\begin{align}
  \label{eq:2022}
  \dot{\alpha}_0:=\cp{\pp{\alpha}{w}},\qquad \ddot{\alpha}_0:=\cp{\ppp{\alpha}{w}},
\end{align}
we have
\begin{align}
  \label{eq:2028}
  \underline{\alpha}(w)=\alpha_0+\dot{\alpha}_0w+\frac{1}{2}\ddot{\alpha}_0w^2+\Landau(w^3).
\end{align}

For the function $\beta$ along $\underline{C}$ given by
\begin{align}
  \label{eq:2024}
  \underline{\beta}(w)=\beta(\underline{t}(w),w),
\end{align}
we find, taking into account \eqref{eq:257},
\begin{align}
  \label{eq:2025}
  \underline{\beta}(w)=\beta_0+\Landau(w^3).
\end{align}
Now, since
\begin{align}
  \label{eq:2026}
  c_+(\underline{\alpha}(w),\underline{\beta}(w))=\pp{r}{t}(\underline{t}(w),w),
\end{align}
applying $d/dw$ to this, evaluating at $w=0$ and using \eqref{eq:235}, \eqref{eq:257}, \eqref{eq:263} and \eqref{eq:2021} we obtain
\begin{align}
  \label{eq:2027}
  \cp{\pp{c_+}{\alpha}}\dot{\alpha}_0=\kappa.
\end{align}

\subsection{Shock Development Problem}

The notion of maximal development of the initial data is reasonable from the mathematical point of view and also the correct notion from the physical point of view up to $\underline{C}\cup \partial_-\mathcal{B}$. However, it is not the correct notion from the physical point of view up to $\mathcal{B}$. Let us consider a given component of $\mathcal{B}$ which we again denote by $\mathcal{B}$. Its past end point we denote by $O$ (this corresponds to $\partial_-\mathcal{B}$). We also consider the corresponding component of $\underline{C}$, i.e.~the incoming null curve originating at $O$ which we again denote by $\underline{C}$ (see figure \ref{position} on the right).

The shock development problem is the following:

Find a timelike curve $\mathcal{K}$ in the $t$-$r$ plane, lying in the past of $\mathcal{B}$ and originating at $O$, together with a solution of the equations of motion in the domain in Minkowski spacetime bounded in the past by $\mathcal{K}$ and $\underline{C}$, such that the data induced by this solution on $\underline{C}$ coincides with the data induced by the prior maximal solution, while across $\mathcal{K}$ the new solution displays jumps relative to the prior maximal solution, jumps which satisfy the jump conditions. The past of $\mathcal{K}$, where the prior maximal solution holds, is called the state ahead, and the future of $\mathcal{K}$, where the new solution holds, is called the state behind (see \ref{sec:det_ent_cond}). $\mathcal{K}$ is to be space-like relative to the acoustical metric induced by the maximal solution and time-like relative to the new solution which holds in the future of $\mathcal{K}$. The requirement in the last sentence is the determinism condition.

Let $T_\varepsilon$ be the subset bounded by $\underline{C}$, $\mathcal{K}$ and the outgoing characteristic originating at the point on $\underline{C}$ with acoustical coordinate $w=\varepsilon>0$. In $T_\varepsilon$ we use characteristic coordinates. We first shift the origin of the $(t,w)$ coordinate plane so that the cusp point $O$ has coordinates $(0,0)$. We then assign to a point in $T_\varepsilon$ the coordinates $(u,v)$ if it lies on the outgoing characteristic which intersects $\underline{C}$ at the point $w=u$ and on the incoming characteristic which intersects $\mathcal{K}$ at the point where the outgoing characteristic through the point $w=v$ on $\underline{C}$ intersects $\mathcal{K}$. It follows that (see figure \ref{situation})
\begin{align}
  \label{eq:269}
  T_\varepsilon=\left\{(u,v)\in \mathbb{R}^2:0\leq v\leq u\leq \varepsilon\right\}.
\end{align}

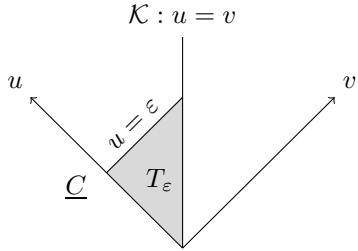
\begin{figure}[h!]
\begin{center}
\begin{tikzpicture}
\filldraw [gray!30] (0,0) -- (0,2) -- (-1,1);
\draw [->](0,0) -- (-2,2);
\draw [->](0,0) -- (2,2);
\node at (-2.2,2.2) {$u$};
\node at (2.2,2.2) {$v$};
\draw (0,0) -- (0,2.8);
\node at (0,3.1) {$\mathcal{K}:u=v$};
\draw (-1,1) -- (0,2) node[midway,sloped,above] {$u=\varepsilon$};
\node at (-1.4,0.8) {$\underline{C}$};
\node at (-0.3,0.9) {$T_\varepsilon$};
\end{tikzpicture}
\end{center}
\caption{The domain $T_\varepsilon$.}
\label{situation}
\end{figure}

\begin{remark}
  We note that to set up the characteristic coordinates in this way we have to a priori assume that the solution is smooth in these coordinates. This is shown to be true below.
\end{remark}
In the following we will denote $\alpha$, $\beta$ and $r$ corresponding to the solution in the maximal development by $\alpha^\ast$, $\beta^\ast$ and $r^\ast$ to distinguish them from $\alpha$, $\beta$, $r$ which we use in referring to the solution in $T_\varepsilon$. The quantities corresponding to the prior maximal solution are expressed in $(t,w)$ coordinates. The solution in $T_\varepsilon$ has to satisfy the characteristic system (see \eqref{eq:160}, \eqref{eq:159})
\begin{alignat}{3}
  \label{eq:270}
  \pp{\alpha}{v}&=\pp{t}{v}\tilde{A}(\alpha,\beta,r),\qquad &\pp{\beta}{u}&=\pp{t}{u}\tilde{B}(\alpha,\beta,r),\\
  \label{eq:271}
  \pp{r}{v}&=\pp{t}{v}c_+(\alpha,\beta),& \pp{r}{u}&=\pp{t}{u}c_-(\alpha,\beta),
\end{alignat}
together with initial data (for $\underline{t}$ see \eqref{eq:268})
\begin{align}
  \label{eq:158}
  t(u,0)&=h(u)\coloneqq \underline{t}(u),\\
  \alpha(u,0)&=\alpha^\ast(h(u),u)=:\alpha_i(u)
\end{align}
and
\begin{align}
  \label{eq:1347}
  r(0,0)&=r^\ast(0,0)=:r_0,\\
  \label{eq:1518}
  \beta(0,0)&=\beta^\ast(0,0)=:\beta_0.
\end{align}
The system consisting of the second of \eqref{eq:270} and the second of \eqref{eq:271} together with \eqref{eq:1347}, \eqref{eq:1518} constitutes, at $v=0$, a system of ordinary differential equations for $\beta$ and $r$. Hence the above conditions on $\beta$ and $r$ at $O$ imply that the data for $\beta$ and $r$ along $\underline{C}$ coincide with the data induced by the prior maximal solution.

Let
\begin{align}
  \label{eq:272}
  f(v)\coloneqq t(v,v),\qquad g(v)\coloneqq r(v,v)-r_0.
\end{align}
Condition \eqref{eq:168} is
\begin{align}
  \label{eq:273}
  J(\alpha_-(v),\alpha_+(v),\beta_-(v),\beta_+(v))=0,
\end{align}
where
\begin{align}
  \label{eq:274}
  \alpha_+(v)=\alpha(v,v),\qquad \beta_+(v)=\beta(v,v),
\end{align}
the right hand sides given by the solution in $T_\varepsilon$ and
\begin{align}
  \label{eq:275}
  \alpha_-(v)=\alpha^\ast(f(v),z(v)),\qquad \beta_-(v)=\beta^\ast(f(v),z(v)),
\end{align}
the right hand sides given by the solution in the maximal development, where $z(v)$ is the solution of the identification equation
\begin{align}
  \label{eq:276}
  g(v)+r_0=r^\ast(f(v),z(v)),
\end{align}
identifying the radial coordinate of points on $\mathcal{K}$ coming from the solution in the maximal development and from the solution in $T_\varepsilon$. Condition \eqref{eq:167} is
\begin{align}
  \label{eq:277}
  V(v)=\frac{\jump{T^{tr}(v)}}{\jump{T^{tt}(v)}}.
\end{align}
$f(v)$ and $g(v)$ have to satisfy
\begin{align}
  \label{eq:278}
  \frac{df}{dv}(v)V(v)=\frac{dg}{dv}(v).
\end{align}
We restate the free boundary problem as follows.\vspace{3mm}\\
\textit{For small enough $\varepsilon$ find in $T_\varepsilon$ a solution of \eqref{eq:270}, \eqref{eq:271} which attains along $\underline{C}$ the given data and along $\mathcal{K}$ satisfies \eqref{eq:273}, \eqref{eq:278}, where $V(v)$ is given by \eqref{eq:277} and $z(v)$ is given by \eqref{eq:276}}.\vspace{3mm}

We solve the problem using an iteration whose strategy is the following. We start with approximate solutions $z_m(v)$, $\beta_{+,m}(v)$, $V_m(v)$. Then we solve the characteristic system \eqref{eq:270}, \eqref{eq:271} with $(\alpha_{m+1},\beta_{m+1},t_{m+1},r_{m+1})$ in the role of $(\alpha,\beta,t,r)$ with initial data $t_{m+1}(u,0)=h(u)$, $\alpha_{m+1}(u,0)=\alpha_i(u)$ (on $\underline{C}$), boundary data $\beta_{m+1}(v,v)=\beta_{+,m}(v)$ (on $\mathcal{K}$) and $r(0,0)=r_0$ together with the requirement that
\begin{align}
  \label{eq:279}
  \frac{df_{m+1}}{dv}(v)V_m(v)=\frac{dg_{m+1}}{dv}(v),
\end{align}
where
\begin{align}
  \label{eq:280}
  f_{m+1}(v)\coloneqq t_{m+1}(v,v),\qquad g_{m+1}(v)\coloneqq r_{m+1}(v,v)-r_0.
\end{align}
We then substitute $f_{m+1}(v)$, $g_{m+1}(v)$ for $f(v)$, $g(v)$, respectively, in the identification equation \eqref{eq:276} and solve for $z$ in terms of $v$. The solution we define to be $z_{m+1}(v)$. Using now $z_{m+1}(v)$, $f_{m+1}(v)$ we obtain through \eqref{eq:275} $\alpha_{-,m+1}(v)$, $\beta_{-,m+1}(v)$. We then use these together with $\alpha_{+,m+1}(v)$ to solve \eqref{eq:273} for $\beta_{+}(v)$ which we define to be $\beta_{+,m+1}(v)$. Note that $\beta_{m+1}(v,v)=\beta_{+,m}(v)$ but $\alpha_{m+1}(v,v)=\alpha_{+,m+1}(v)$. We then define $V_{m+1}(v)$ by \eqref{eq:277} where the jumps on the right hand side correspond to $\alpha_{\pm,m+1}(v)$, $\beta_{\pm,m+1}(v)$. We summarize the strategy as follows
\begin{align}
  \label{eq:281}
  z_m, \beta_{+,m}, V_{m}\stackrel{1\,\,}{\rightarrow} \alpha_{m+1},\beta_{m+1},t_{m+1},r_{m+1}\stackrel{2\,\,}{\rightarrow}f_{m+1},g_{m+1},\alpha_{+,m+1}\stackrel{3\,\,}{\rightarrow} z_{m+1}\stackrel{4\,\,}{\rightarrow} \beta_{+,m+1},V_{m+1}.
\end{align}

In the following we shall call the triplet $(z_m,\beta_{+,m},V_m)$, which are functions on the boundary $\mathcal{K}$, boundary functions corresponding to the $m$'th iterate.

We make the following crucial observation. Let
\begin{align}
  \label{eq:282}
  F(v,z)\coloneqq g(v)+r_0-r^\ast(f(v),z).
\end{align}
Since (see \eqref{eq:231})
\begin{align}
  \label{eq:283}
  \cp{\pp{F}{z}}=-\cp{\pp{r^\ast}{w}}=0
\end{align}
it is not possible to directly solve the identification equation \eqref{eq:276} for $z$ in our iteration scheme. We will use leading order expansions of $g(v)$, $f(v)$, $z(v)$ in the identification equation to arrive, through a cancellation, at a reduced identification equation which can then be solved for the remainder function of $z(v)$.

%%% Local Variables: 
%%% mode: latex
%%% TeX-master: "./master"
%%% End: 

\section{Solution of the Fixed Boundary Problem}
 The characteristic system being nonlinear we use an iteration. In the first subsection we set up the iteration scheme and establish the inductive step. In the second subsection we show convergence.

\subsection{Setup of Iteration Scheme and Inductive Step}\label{fbp}
The goal is to find a solution of the system of equations
\begin{alignat}{3}
  \label{eq:284}
  \frac{\p \alpha}{\p v}&=\pp{t}{v}\tilde{A}(\alpha,\beta,r),& \qquad & \frac{\p \beta}{\p u}=\pp{t}{u}\tilde{B}(\alpha,\beta,r),\\
  \label{eq:285}
  \frac{\p r}{\p v}&=\frac{\p t}{\p v}c_+(\alpha,\beta),& &\frac{\p r}{\p u}=\frac{\p t}{\p u}c_-(\alpha,\beta),
\end{alignat}
together with initial data $\alpha(u,0)=\alpha_i(u)$ (on $\underline{C}$), boundary data $\beta(v,v)=\beta_+(v)$ (on $\mathcal{K}$), initial data $t(u,0)=h(u)$ (on $\underline{C}$) and $r(0,0)=r_0$, together with the requirement that for a given function $V(v)$ the equation 
\begin{align}
  \label{eq:286}
  \frac{df}{dv}(v)V(v)=\frac{dg}{dv}(v),
\end{align}
is to be satisfied, where
\begin{align}
  \label{eq:287}
  f(v)\coloneqq t(v,v),\qquad g(v)\coloneqq r(v,v)-r_0,
\end{align}
and the requirement that $t$ is a time function, i.e.
\begin{align}
  \label{eq:288}
  \pp{t}{u},\pp{t}{v}>0\quad \textrm{for}\quad u,v> 0.
\end{align}
Since we set $t_0=0$ we obtain from \eqref{eq:287} $f(0)=g(0)=0$. For the initial data of $t$ along $\underline{C}$ we assume (see \eqref{eq:268})
\begin{align}
  \label{eq:289}
   t(u,0)=h(u)=u^3\hat{h}(u),\qquad \hat{h}\in C^1[0,\varepsilon],\qquad \hat{h}(0)=\frac{\lambda}{6\kappa(c_{+0}-c_{-0})}.
\end{align}
For the initial data of $\alpha$ along $\underline{C}$ we assume (see \eqref{eq:2028})
\begin{align}
  \label{eq:290}
  \alpha(u,0)=\alpha_i(u)=\alpha_0+\dot{\alpha}_0u+u^2\hat{\alpha}_i(u),\qquad \hat{\alpha}_i\in C^1[0,\varepsilon],\qquad \hat{\alpha}_i(0)=\frac{1}{2}\ddot{\alpha}_0.
\end{align}
Furthermore, we assume $\beta_+(v)\in C^1[0,\varepsilon]$, $V(v)\in C^0[0,\varepsilon]$ and
\begin{align}
  \label{eq:291}
  \frac{d\beta_+}{dv}(v)= \mathcal{O}(v),\qquad V(v)=c_{+0}+\frac{\kappa}{2}(1+y(v))v+\mathcal{O}(v^2),
\end{align}
where $y\in C^1[0,\varepsilon]$ is a given function with
\begin{align}
  \label{eq:292}
  y(0)=-1.
\end{align}
We define
\begin{align}
  \label{eq:293}
  Y\coloneqq \sup_{[0,\varepsilon]}\left|\frac{dy}{dv}\right|.
\end{align}

The solution is to be found in a domain $T_\varepsilon$ for small enough $\varepsilon$, where
\begin{align}
  \label{eq:294}
  T_\varepsilon\coloneqq \left\{(u,v)\in \mathbb{R}^2:0\leq v \leq u\leq \varepsilon\right\}.
\end{align}
Taking the derivative of the first of \eqref{eq:285} with respect to $u$ and of the second of \eqref{eq:285} with respect to $v$ and subtracting yields
\begin{align}
  \label{eq:295}
  (c_+-c_-)\frac{\p^2 t}{\p u\p v}+\frac{\p c_+}{\p u}\frac{\p t}{\p v}-\frac{\p c_-}{\p v}\frac{\p t}{\p u}=0.
\end{align}
Defining
\begin{align}
  \label{eq:296}
  \mu\coloneqq \frac{1}{c_+-c_-}\frac{\p c_+}{\p u},\qquad \nu\coloneqq \frac{1}{c_+-c_-}\frac{\p c_-}{\p v},
\end{align}
equation \eqref{eq:295} becomes
\begin{align}
  \label{eq:297}
  \frac{\p^2 t}{\p u\p v}+\mu\frac{\p t}{\p v}-\nu\frac{\p t}{\p u}=0.
\end{align}
Using \eqref{eq:285} equation \eqref{eq:286} becomes
\begin{align}
  \label{eq:298}
  a(v)=\frac{1}{\gamma(v)}b(v),
\end{align}
where we use the definitions
\begin{align}
  \label{eq:299}
   a(v)\coloneqq \pp{t}{v}(v,v),\qquad b(v)\coloneqq \pp{t}{u}(v,v),\qquad  \gamma(v)\coloneqq \frac{\bar{c}_+(v)-V(v)}{V(v)-\bar{c}_-(v)},
\end{align}
where
\begin{align}
  \label{eq:300}
  \bar{c}_\pm(v)\coloneqq c_\pm(\alpha_+(v),\beta_+(v)).
\end{align}

Let us recall that in acoustical coordinates $(t,w)$, the boundary of the singular part of the maximal development $\mathcal{B}$ is given by $t_\ast(w)$ (see \eqref{eq:243}). We have
\begin{align}
  \label{eq:301}
  \cp{\frac{dt_\ast}{dw}}=0.
\end{align}
Since we are looking for a solution in which the shock $\mathcal{K}$ lies in the past of $\mathcal{B}$ we should have
\begin{align}
  \label{eq:302}
  \cp{\frac{df}{dv}}=0,
\end{align}
where $f(v)=t(v,v)$ describes the shock curve $\mathcal{K}$. By this assumption together with~\eqref{eq:289} we get
\begin{align}
  \label{eq:303}
  \pp{t}{v}(0,0)=0.
\end{align}
From the first of \eqref{eq:291} together with the second of \eqref{eq:284} in conjunction with \eqref{eq:289} we obtain
\begin{align}
  \label{eq:304}
  \pp{\beta}{v}(0,0)=0.
\end{align}

Taking the derivative of the second of \eqref{eq:284} with respect to $v$ we find
\begin{align}
  \label{eq:305}
  \pppp{\beta}{u}{v}=\pppp{t}{u}{v}\tilde{B}+\pp{t}{u}\left(\pp{\tilde{B}}{\alpha}\pp{t}{v}\tilde{A}+\pp{\tilde{B}}{\beta}\pp{\beta}{v}+\pp{\tilde{B}}{r}c_+\pp{t}{v}\right),
\end{align}
where we also used the first of \eqref{eq:284} together with the first of \eqref{eq:285}. Using the first of \eqref{eq:284} we obtain
\begin{align}
  \label{eq:306}
  \pp{c_-}{v}&=\pp{c_-}{\alpha}\pp{t}{v}\tilde{A}+\pp{c_-}{\beta}\pp{\beta}{v}.
\end{align}
Using this in \eqref{eq:295} we get
\begin{align}
  \label{eq:307}
  \pppp{t}{u}{v}=-\frac{1}{c_+-c_-}\left\{\pp{c_+}{u}\pp{t}{v}-\pp{t}{u}\left(\pp{c_-}{\alpha}\pp{t}{v}\tilde{A}+\pp{c_-}{\beta}\pp{\beta}{v}\right)\right\}.
\end{align}
Using this for the first term in \eqref{eq:305} we get
\begin{align}
  \label{eq:308}
  \pppp{\beta}{u}{v}&=-\frac{1}{c_+-c_-}\left\{\pp{c_+}{u}\pp{t}{v}-\pp{t}{u}\left(\pp{c_-}{\alpha}\pp{t}{v}\tilde{A}+\pp{c_-}{\beta}\pp{\beta}{v}\right)\right\}\tilde{B}\notag\\
&\qquad\hspace{30mm}+\pp{t}{u}\left(\pp{\tilde{B}}{\alpha}\pp{t}{v}\tilde{A}+\pp{\tilde{B}}{\beta}\pp{\beta}{v}+\pp{\tilde{B}}{r}c_+\pp{t}{v}\right).
\end{align}
Along $\underline{C}$ \eqref{eq:307}, \eqref{eq:308} build a system of the form
\begin{align}
  \label{eq:309}
  \frac{d}{du}
\left(\begin{array}{c}
      \partial\beta/\partial v\\
      \partial t/\partial v
    \end{array}\right)
=
\left(\begin{array}{cc}
  a_{11} & a_{12}\\
  a_{21} & a_{22}
\end{array}\right)
\left(\begin{array}{c}
  \partial\beta/\partial v\\
  \partial t/\partial v
\end{array}\right)\quad \textrm{for} \quad v=0.
\end{align}
Together with the initial conditions given by \eqref{eq:303}, \eqref{eq:304}, we arrive at
\begin{align}
  \label{eq:310}
  \pp{t}{v}(u,0)=0, \qquad \pp{\beta}{v}(u,0)=0.
\end{align}
Hence, we expect
\begin{align}
  \label{eq:311}
  \pp{t}{v}(u,v)=\Landau(v),\qquad \pp{\beta}{v}(u,v)=\Landau(v).
\end{align}
Therefore, we base our iteration scheme with this expectation in mind.

We construct a solution of the fixed boundary problem as the limit of a sequence of functions $((\alpha_n,\beta_n,t_n,r_n);n=0,1,2,\ldots)$. Given $(\alpha_n,\beta_n)$ we find $(\alpha_{n+1},\beta_{n+1})$ in the following way. We set
\begin{align}
  \label{eq:312}
  \mu_n\coloneqq \frac{1}{c_{+n}-c_{-n}}\pp{c_{+n}}{u},\qquad\nu_n\coloneqq \frac{1}{c_{+n}-c_{-n}}\pp{c_{-n}}{v},
\end{align}
where
\begin{align}
  \label{eq:313}
  c_{\pm n}\coloneqq c_\pm(\alpha_n,\beta_n).
\end{align}
Let $t_{n}$ be the solution of the linear equation
\begin{align}
  \label{eq:314}
  \frac{\p^2t_{n}}{\p u\p v}+\mu_{n}\frac{\p t_{n}}{\p v}-\nu_{n}\frac{\p t_{n}}{\p u}=0,
\end{align}
together with the initial data on $\underline{C}$ (cf.~\eqref{eq:289}) and the boundary condition on $\mathcal{K}$
\begin{align}
  \label{eq:315}
  a_{n}(v)=\frac{1}{\gamma_{n}(v)}b_{n}(v).
\end{align}
We then find $r_{n}$ by integrating \eqref{eq:285},~i.e.
\begin{align}
  \label{eq:316}
  r_{n}(u,v)&=r_n(u,0)+\int_0^v\left(c_{+,n}\pp{t}{v}\right)(u,v')dv'\notag\\
&=r_0+\int_0^u\left(c_{-,n}\frac{\p t_{n}}{\p u}\right)(u',0)du'+\int_0^v\left(c_{+,n}\frac{\p t_{n}}{\p v}\right)(u,v')dv'.
\end{align}
We then define $\alpha_{n+1}$ and $\beta_{n+1}$ by
\begin{align}
  \label{eq:317}
  \alpha_{n+1}(u,v)\coloneqq \alpha_i(u)+\int_0^vA_n(u,v')dv',\qquad \beta_{n+1}(u,v)\coloneqq \beta_+(v)+\int_v^uB_n(u',v)du',
\end{align}
where
\begin{align}
  \label{eq:318}
  A_n\coloneqq \pp{t_n}{v}\tilde{A}(\alpha_n,\beta_n,r_n),\qquad B_n\coloneqq \pp{t_n}{u}\tilde{B}(\alpha_n,\beta_n,r_n).
\end{align}
We have thus found $(\alpha_{n+1} ,\beta_{n+1})$.

To initiate the sequence we set
\begin{align}
  \label{eq:319}
  \alpha_0(u,v)=\alpha_i(u),\qquad \beta_0(u,v)=\beta_+(v).
\end{align}
$t_0(u,v)$ is given by the solution of \eqref{eq:314} (with $0$ in the role of $n$) with $\mu_0$, $\nu_0$ given by \eqref{eq:312} (with $0$ in the role of $n$). $r_0(u,v)$ is then given by \eqref{eq:316}.

The way we set things up we see that to each pair $(\alpha_n,\beta_n)$ there corresponds a unique pair $(t_n,r_n)$ given by \eqref{eq:314}, \eqref{eq:316}. It therefore suffices to show that the iteration mapping maps the respective spaces to itself (by induction) and the convergence only for the sequence $((\alpha_n,\beta_n);n=0,1,2,\ldots)$. Let us denote by $(\alpha,\beta)$ the limit of $((\alpha_n,\beta_n);n=0,1,2,\ldots)$. The convergence of $(\alpha_n,\beta_n)$ to $(\alpha,\beta)$ will imply the convergence of $(t_n,r_n)$ to $(t,r)$, where $t$ is the solution of \eqref{eq:314} with the coefficients $\mu$, $\nu$ given by $c_\pm(\alpha,\beta)$ and $r$ is given by \eqref{eq:316} such that when $t$, $r$ are substituted into the right hand sides of \eqref{eq:332} below, the left hand sides of \eqref{eq:332} are $\alpha$, $\beta$ respectively. Therefore this will imply the existence to a solution of the fixed boundary problem.

The first of \eqref{eq:291} is equivalent to
\begin{align}
  \label{eq:320}
  \left|\frac{d\beta_+}{dv}(v)\right|\leq Cv.
\end{align}
We now define
\begin{align}
  \label{eq:321}
  b_0\coloneqq |\beta_0|+\frac{C\varepsilon^2}{2},
\end{align}
where the constant $C$ is the constant from \eqref{eq:320}. Therefore,
\begin{align}
  \label{eq:322}
  \sup_{v\in[0,\varepsilon]}|\beta_{+}(v)|\leq b_0.
\end{align}
Let now
\begin{align}
  \label{eq:323}
  a_0\coloneqq \sup_{v\in[0,\varepsilon]}|\alpha_i(v)|.
\end{align}
Let $\delta>0$ and let $R_{\delta}$ be the rectangle given by (see figure \ref{domain_R})
\begin{align}
  \label{eq:324}
  R_{\delta}\coloneqq \{(\alpha,\beta)\in\mathbb{R}^2:|\alpha|\leq\delta+a_0,|\beta|\leq\delta+b_0\}.
\end{align}

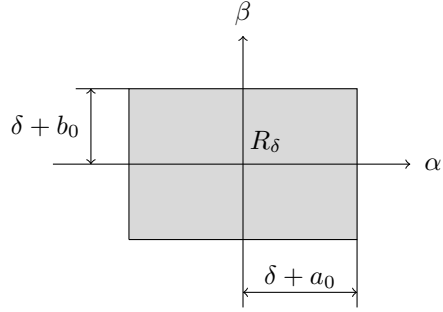
\begin{figure}[h!]
\begin{center}
\begin{tikzpicture}
\filldraw [gray!30] (-1.5,-1) -- (1.5,-1) -- (1.5,1) -- (-1.5,1);
\draw (-1.5,-1) -- (1.5,-1) -- (1.5,1) -- (-1.5,1) -- (-1.5,-1);
\draw [->](-2.5,0) -- (2.2,0);
\draw [->](0,-1.9) -- (0,1.7);
\draw [<->](0,-1.7) -- (1.5,-1.7);
\draw (1.5,-1) -- (1.5,-1.9);
\node at (0.75,-1.5) {$\delta+a_0$};
\draw [<->](-2,0) -- (-2,1);
\draw (-1.5,1) -- (-2.2,1);
\node at (-2.6,0.5) {$\delta+b_0$};
\node at (2.5,0) {$\alpha$};
\node at (0,2) {$\beta$};
\node at (0.3,0.3) {$R_\delta$};
\end{tikzpicture}
\end{center}
\caption{The rectangle $R_\delta$.}
\label{domain_R}
\end{figure}

We define
\begin{align}
  \label{eq:325}
  N\coloneqq \max\left\{\sup_{\Omega_\delta}| \tilde{A}(\alpha,\beta,r)|,\sup_{\Omega_\delta}| \tilde{B}(\alpha,\beta,r)|\right\},
\end{align}
where we defined
\begin{align}
  \label{eq:326}
  {\Omega_\delta}\coloneqq R_\delta\times\left[\tfrac{1}{2}r_0,\tfrac{3}{2}r_0\right].
\end{align}
Let
\begin{align}
  \label{eq:327}
  C_0\coloneqq \frac{2\lambda}{3\kappa^2}.
\end{align}
We now choose a constant $N_0$ such that
\begin{align}
  \label{eq:328}
  N_0>C_0N.
\end{align}
\begin{comment}
Let
\begin{align}
    \label{eq:329}
    \tilde{C}':&=\max\left\{\max_{i\in \{\alpha,\beta,r\}}\sup_{\Omega_\delta}|\partial_i\tilde{A}|,\max_{i\in \{\alpha,\beta,r\}}\sup_{\Omega_\delta}|\partial_i\tilde{B}|\right\},\\
  \label{eq:330}
  \tilde{\tilde{C}}':&=\max\left\{\max_{i\in \{\alpha,\beta\}}\sup_{R_\delta}|\partial_i^nc_+|,\max_{i\in \{\alpha,\beta\}}\sup_{R_\delta}|\partial_i^nc_-|\right\},\qquad n\leq 2.
\end{align}
\end{comment}

Defining
\begin{align}
  \label{eq:331}
  \alpha_n'(u,v)\coloneqq \alpha_n(u,v)-\alpha_i(u),\qquad \beta_n'(u,v)\coloneqq \beta_n(u,v)-\beta_+(v),
\end{align}
we have
\begin{align}
  \label{eq:332}
  \alpha_{n+1}'(u,v)=\int_0^vA_n(u,v')dv',\qquad \beta_{n+1}'(u,v)=\int_v^uB_n(u',v)du'.
\end{align}
Let $X$ be the closed subspace of $C^1(T_\varepsilon,\mathbb{R}^2)$ consisting of those functions $F=(F_1,F_2)$ which satisfy
\begin{enumerate}
\item \begin{align}
  \label{eq:333}
  F_1(u,0)=0,\qquad F_2(v,v)=0,
\end{align}
\item\begin{align}
  \label{eq:334}
  \|F\|_X\coloneqq \max\left\{\sup_{T_\varepsilon}\left|\frac{1}{u}\pp{F_1}{u}\right|,\sup_{T_\varepsilon}\left|\frac{1}{v}\pp{F_1}{v}\right|,\sup_{T_\varepsilon}\left|\frac{1}{u}\pp{F_2}{u}\right|,\sup_{T_\varepsilon}\left|\frac{1}{v}\pp{F_2}{v}\right|\right\}\leq N_0.
\end{align}
\end{enumerate}
As a preliminary result concerning the linear equation \eqref{eq:314} we have the following proposition.
\begin{proposition}\label{prop_inner_iteration}
Let
\begin{align}
  \label{eq:335}
  \mu(u,v)&=\frac{\kappa}{c_{+0}-c_{-0}}(1+\tau(u,v)),\quad\textrm{where}\quad \tau(u,v)=\mathcal{O}(u),\\
  \nu(u,v)&=\mathcal{O}(v),\label{eq:336}\\
  \frac{1}{\gamma(v)}&=\frac{c_{+0}-c_{-0}}{\kappa v}(1+\rho(v)),\quad\textrm{where}\quad \rho(v)=\rho_0(v)+\mathcal{O}(v),\quad\textrm{with}\quad \rho_0(v)=\frac{\frac{1}{2}(y(v)+1)}{1-\frac{1}{2}(y(v)+1)},\label{eq:337}\\
h(u)&=\frac{\lambda}{\kappa(c_{+0}-c_{-0})}\frac{u^3}{6}+\mathcal{O}(u^4).\label{eq:338}
\end{align}
Then, provided we choose $\varepsilon$ small enough, depending on $Y$, the solution of the equation
\begin{align}
  \label{eq:339}
  \frac{\partial^2 t}{\partial u\partial v}+\mu\pp{t}{v}-\nu\pp{t}{u}=0,
\end{align}
with initial condition
\begin{align}
  \label{eq:340}
  t(u,0)=h(u)
\end{align}
and boundary condition
\begin{align}
  \label{eq:341}
  a(v)=\frac{1}{\gamma(v)}b(v),\qquad\textrm{where}\qquad a(v)\coloneqq \pp{t}{v}(v,v),\qquad b(v)\coloneqq \pp{t}{u}(v,v),
\end{align}
is in $C^1(T_\varepsilon)$ and satisfies
\begin{align}
  \label{eq:342}
  \left|\pp{t}{v}(u,v)-\frac{\lambda}{3\kappa^2}v\right|\leq C(Y)uv,\qquad \left|\pp{t}{u}(u,v)-\frac{\lambda(3u^2-v^2)}{6\kappa(c_{+0}-c_{-0})}\right|\leq C(Y)u^3
\end{align}
for $(u,v)\in T_\varepsilon$, where $C(Y)$ are non-negative, non-decreasing, continuous functions of $Y$. Furthermore, let $f(v)\coloneqq t(v,v)$, then
\begin{align}
  \label{eq:343}
  \frac{df}{dv}(v)-\frac{\lambda}{3\kappa^2}v=\frac{\lambda}{18\kappa^2}\left\{2v(y(v)+1)+\frac{1}{v^2}\int_0^vv'^3\frac{dy}{dv}(v')dv'\right\}+\mathcal{O}(v^2),
\end{align}
for $v\in[0,\varepsilon]$.
\end{proposition}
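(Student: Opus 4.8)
The plan is to reduce the Goursat‑type problem \eqref{eq:339}--\eqref{eq:341} to a coupled system of Volterra integral equations for the first derivatives, solve it by a contraction argument in a weighted space that builds in the leading profiles of \eqref{eq:342}, recover $t$ by integration, and finally push one order beyond \eqref{eq:342} along the diagonal to obtain \eqref{eq:343}. Setting $p\coloneqq\partial t/\partial u$ and $q\coloneqq\partial t/\partial v$, equation \eqref{eq:339} together with $\partial_v p=\partial_u q$ is equivalent to the transport equations $\partial_u q=-\mu q+\nu p$ (along $v=\mathrm{const}$) and $\partial_v p=-\mu q+\nu p$ (along $u=\mathrm{const}$). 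Integrating the first from the diagonal point $(v,v)$ and the second from the edge $v=0$, and using \eqref{eq:340} and \eqref{eq:341} in the form $p(u,0)=h'(u)$ and $q(v,v)=a(v)=\gamma(v)^{-1}b(v)=\gamma(v)^{-1}p(v,v)$, yields the closed system
\begin{align*}
q(u,v)&=\frac{p(v,v)}{\gamma(v)}+\int_v^u\big(-\mu q+\nu p\big)(u',v)\,du',\\
p(u,v)&=h'(u)+\int_0^v\big(-\mu q+\nu p\big)(u,v')\,dv'.
\end{align*}
The essential difficulty is visible at once: $\gamma(v)^{-1}\sim (c_{+0}-c_{-0})/(\kappa v)$ is singular at $v=0$, so for the first equation to make sense one needs a priori that $p(v,v)$ vanishes to second order, and the iteration space must enforce this while the map must reproduce it.

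I would run a Picard iteration on this system in the closed subset of $C^0(T_\varepsilon;\mathbb{R}^2)$ of pairs $(p,q)$ satisfying
\begin{align*}
\sup_{T_\varepsilon}\frac{1}{u^3}\Big|p(u,v)-\frac{\lambda(3u^2-v^2)}{6\kappa(c_{+0}-c_{-0})}\Big|\le K,\qquad \sup_{T_\varepsilon}\frac{1}{uv}\Big|q(u,v)-\frac{\lambda}{3\kappa^2}\,v\Big|\le K
\end{align*}
for a fixed $K$; in particular then $p(v,v)=\tfrac{\lambda}{3\kappa(c_{+0}-c_{-0})}v^2+\mathcal{O}(v^3)$, so that $\gamma(v)^{-1}p(v,v)=\tfrac{\lambda}{3\kappa^2}v\,(1+\rho(v))+\mathcal{O}(v^2)$ is bounded. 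Equivalently one may subtract the explicit profiles $q_0=\tfrac{\lambda}{3\kappa^2}v$, $p_0=\tfrac{\lambda(3u^2-v^2)}{6\kappa(c_{+0}-c_{-0})}$ — which already capture the $v=0$ behaviour and the model diagonal relation $(c_{+0}-c_{-0})/(\kappa v)$ — and iterate on the remainder. Using $\tau=\mathcal{O}(u)$, $\nu=\mathcal{O}(v)$, $\rho=\rho_0+\mathcal{O}(v)$ with $\rho_0(0)=0$, and $h(u)-\tfrac{\lambda u^3}{6\kappa(c_{+0}-c_{-0})}=\mathcal{O}(u^4)$, one checks that all inhomogeneous terms in the iteration for the remainder are of the right small order, and that for $\varepsilon$ small enough — depending on $Y$, which enters through the size of $\rho_0$ and already through the requirement $\tfrac12(y(v)+1)<1$ needed for \eqref{eq:337} — the map sends a suitable ball into itself and contracts. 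Passing to the limit gives $(p,q)\in C^0(T_\varepsilon)$, hence $t(u,v)\coloneqq h(u)+\int_0^v q(u,v')\,dv'$ lies in $C^1(T_\varepsilon)$ with $\partial_u t=p$, $\partial_v t=q$, and the bounds defining the ball are exactly \eqref{eq:342}.

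For \eqref{eq:343} I would write $f(v)=t(v,v)$, so $\tfrac{df}{dv}(v)=b(v)+a(v)=b(v)\big(1+\gamma(v)^{-1}\big)$, and go one order beyond \eqref{eq:342}. From \eqref{eq:342}, $b(v)=\tfrac{\lambda}{3\kappa(c_{+0}-c_{-0})}v^2+\mathcal{O}(v^3)$ and $a(v)=\tfrac{\lambda}{3\kappa^2}v+\mathcal{O}(v^2)$; substituting $a(v')=\gamma(v')^{-1}b(v')$ — whose $\mathcal{O}(v'^2)$ part carries $\rho_0(v')=\tfrac12(y(v')+1)+\mathcal{O}(v'^2)$ — into the integral formula for $b(v)=p(v,v)$ produces the $y$‑dependent part of $b(v)$ at order $v^3$ through an integral of the type $\int_0^v v'(y(v')+1)\,dv'$; adding the direct contribution of $\rho(v)$ to $a(v)=\gamma(v)^{-1}b(v)$ then expresses $\tfrac{df}{dv}(v)-\tfrac{\lambda}{3\kappa^2}v$ as a fixed linear combination of $v(y(v)+1)$ and $v^{-1}\int_0^v v'(y(v')+1)\,dv'$, modulo $\mathcal{O}(v^2)$. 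Integrations by parts — legitimate because $y\in C^1$ and $y(0)=-1$, so $y(v)+1=\int_0^v\tfrac{dy}{dv}$ — recast the remaining integral in the form $v^{-2}\int_0^v v'^3\,\tfrac{dy}{dv}(v')\,dv'$, and collecting the coefficients against those of \eqref{eq:342} yields \eqref{eq:343}; $Y$ re‑enters only in the $\mathcal{O}(v^2)$ remainder, in a $Y$‑controlled way.

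I expect the main obstacle to be precisely the singular boundary condition $a=b/\gamma$: the iteration map is well defined only on functions $p$ whose diagonal trace vanishes to second order, and the reproduction of this property rests on the exact matching between the cubic leading coefficient of $h$ in \eqref{eq:338} and the $1/v$ coefficient of $\gamma^{-1}$ in \eqref{eq:337} — the same cancellation responsible for \eqref{eq:231} and \eqref{eq:302}. A second, more bookkeeping, difficulty is keeping every constant $Y$‑uniform, i.e.\ separating the genuinely $Y$‑independent error terms from those that merely force $\varepsilon$ to be small in terms of $Y$; this matters both for the contraction and for pinning down the precise form of \eqref{eq:343}.
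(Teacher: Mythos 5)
Your reduction to the coupled Volterra system for $(p,q)=(\partial t/\partial u,\partial t/\partial v)$ and the weighted, profile-subtracted fixed point is a workable route to existence and to \eqref{eq:342}; it is close in spirit to the paper, which works with the same two integral identities (its \eqref{eq:344}--\eqref{eq:345}) and obtains \eqref{eq:342} by a Gronwall bootstrap on $T(u,v)=\sup_{u'\in[v,u]}|\partial t/\partial v(u',v)|$ rather than by a contraction with the profiles built in. Your ball does encode the essential point that $p(v,v)$ vanishes to second order with the precise coefficient $\lambda/3\kappa(c_{+0}-c_{-0})$, which is what makes $b/\gamma$ regular.

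The genuine gap is in your derivation of \eqref{eq:343}. The $y$-dependent part of $a(v)=\gamma(v)^{-1}b(v)$ at order $v^{2}$ is \emph{not} just $\rho_0(v)\cdot\tfrac{\lambda}{3\kappa^{2}}v$: the $y$-dependent $\mathcal{O}(v^{3})$ part of $b$, call it $\delta b$, satisfies $\delta b(v)\approx-\tfrac{\kappa}{c_{+0}-c_{-0}}\int_0^v\delta a(v')\,dv'$ (since $b$ contains $-\tfrac{\kappa}{c_{+0}-c_{-0}}(f-h)$), and $\gamma^{-1}\delta b\sim\delta b/v$ is of the \emph{same} size, $\sim Yv^{2}$, as the term you keep. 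So the one-pass substitution you describe drops terms of exactly the order being computed; the loop $\delta a\to\delta b\to\delta a$ must be resummed, i.e.\ the singular linear relation along the diagonal must be solved. This is precisely why the paper recasts the boundary relation as the ODE $\tfrac{d(vf)}{dv}+A\,vf=v^{2}B$ (its \eqref{eq:356}), with $\rho_0/v$ sitting inside $A$ (\eqref{eq:354}) and hence inside the integrating factor; the resulting contributions $M_1$, $M_2$ (\eqref{eq:405}, \eqref{eq:410}) must be added to the direct contribution $N_0$ (\eqref{eq:389}), and only their sum produces the coefficients in \eqref{eq:343}. Concretely, the correct $y$-part is $\tfrac{\lambda}{6\kappa^{2}}\bigl\{v(y(v)+1)-v^{-2}\int_0^v v'^{2}(y(v')+1)\,dv'\bigr\}$, which is \eqref{eq:343} after integration by parts; your claimed intermediate form, a fixed combination of $v(y(v)+1)$ and $v^{-1}\int_0^v v'(y(v')+1)\,dv'$, differs from it by terms of size $\sim Yv^{2}$ (take $y+1=Yv$), which is not an admissible, $Y$-uniform $\mathcal{O}(v^{2})$ error, and integrating $v^{-1}\int_0^v v'(y+1)\,dv'$ by parts yields the kernel $v'^{2}/v$, not the kernel $v'^{3}/v^{2}$ appearing in \eqref{eq:343}. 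Since these exact coefficients are what later give the contraction factor $\tfrac14$ in the outer iteration, the final step as sketched would not prove the stated formula; you need to set up and solve the diagonal ODE (or the equivalent singular integral equation) before identifying coefficients.
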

%\begin{comment}
\begin{proof}
Integrating \eqref{eq:339} with respect to $v$ from $v=0$ yields
\begin{align}
  \label{eq:344}
  \frac{\p t}{\p u}(u,v)=e^{-K(u,v)}\left\{h'(u)-\int_0^ve^{K(u,v')}\left(\mu\frac{\p t}{\p v}\right)(u,v')dv'\right\},
\end{align}
while integrating \eqref{eq:339} with respect to $u$ from $u=v$ yields
\begin{align}
  \label{eq:345}
   \frac{\p t}{\p v}(u,v)=e^{-L(u,v)}\left\{a(v)+\int_v^ue^{L(u',v)}\left(\nu\frac{\p t}{\p u}\right)(u',v)du'\right\},
\end{align}
where we used the definitions
\begin{align}
  \label{eq:346}
  K(u,v)\coloneqq \int_0^v(-\nu)(u,v')dv',\qquad L(u,v)\coloneqq \int_v^u\mu(u',v)du'.
\end{align}

From the first of \eqref{eq:346} together with \eqref{eq:336} we obtain
\begin{align}
  \label{eq:347}
  K(u,v)=\mathcal{O}(v^2).
\end{align}
Evaluating \eqref{eq:344} at $u=v$ yields
\begin{align}
  \label{eq:348}
  b(v)=e^{-K(v,v)}\left\{h'(v)-\int_0^ve^{K(v,v')}\left(\mu\pp{t}{v}\right)(v,v')dv'\right\}.
\end{align}
Defining (see \eqref{eq:335} for the definition of $\tau(u,v)$)
\begin{align}
  \label{eq:349}
  I(v)\coloneqq \int_0^v\left\{e^{K(v,v')}-1+e^{K(v,v')}\tau(v,v')\right\}\pp{t}{v}(v,v')dv',
\end{align}
we have
\begin{align}
  \label{eq:350}
  b(v)=e^{-K(v,v)}\left\{h'(v)-\frac{\kappa}{c_{+0}-c_{-0}}\left(f(v)-h(v)+I(v)\right)\right\},
\end{align}
where we used
\begin{align}
  \label{eq:351}
  \int_0^v\pp{t}{v}(v,v')dv'=t(v,v)-t(v,0)=f(v)-h(v).
\end{align}
From \eqref{eq:298}, \eqref{eq:299} we have
\begin{align}
  \label{eq:352}
  \frac{df}{dv}(v)=\left(\frac{1}{\gamma(v)}+1\right)b(v),
\end{align}
which implies, using \eqref{eq:337}, \eqref{eq:350},
\begin{align}
  \label{eq:353}
  v\frac{df}{dv}(v)&=-e^{-K(v,v)}\left(1+\rho(v)+\frac{\kappa v}{c_{+0}-c_{-0}}\right)f(v)\notag\\
&\qquad+e^{-K(v,v)}\left(1+\rho(v)+\frac{\kappa v}{c_{+0}-c_{-0}}\right)\left(\frac{c_{+0}-c_{-0}}{\kappa}h'(v)+h(v)-I(v)\right).
\end{align}
Using the definitions
\begin{align}
  \label{eq:354}
  A(v)&\coloneqq e^{-K(v,v)}\left(\frac{\rho(v)}{v}+\frac{\kappa}{c_{+0}-c_{-0}}\right)-\frac{1}{v}\left(1-e^{-K(v,v)}\right),\\
  \label{eq:355}
  B(v)&\coloneqq \frac{e^{-K(v,v)}}{v^2}\left(1+\rho(v)+\frac{\kappa v}{c_{+0}-c_{-0}}\right)\left(\frac{c_{+0}-c_{-0}}{\kappa}h'(v)+h(v)-I(v)\right),
\end{align}
equation \eqref{eq:353} becomes
\begin{align}
  \label{eq:356}
  \frac{d(vf(v))}{dv}+A(v)vf(v)=v^2B(v).
\end{align}

Since (cf.~\eqref{eq:289})
\begin{align}
  \label{eq:357}
  h'(v)=\frac{\lambda}{2\kappa(c_{+0}-c_{-0})}v^2+\mathcal{O}(v^3),
\end{align}
we can write
\begin{align}
  \label{eq:358}
  B(v)=\frac{\lambda}{2\kappa^2}+\hat{B}(v),
\end{align}
where
\begin{align}
  \label{eq:359}
  \hat{B}(v)&\coloneqq \frac{\lambda}{2\kappa^2}\left\{e^{-K(v,v)}\left(\rho(v)+\frac{\kappa v}{c_{+0}-c_{-0}}\right)-\left(1-e^{-K(v,v)}\right)\right\}\notag\\
  &\qquad +\frac{1}{v^2}e^{-K(v,v)}\left(1+\rho(v)+\frac{\kappa v}{c_{+0}-c_{-0}}\right)\left(\frac{c_{+0}-c_{-0}}{\kappa}h'(v)-\frac{\lambda}{2\kappa^2}v^2+h(v)-I(v)\right).
\end{align}

Integrating \eqref{eq:356} from $v=0$ yields
\begin{align}
  \label{eq:360}
  vf(v)=\int_0^ve^{-\int_{v'}^vA(v'')dv''}v'^2B(v')dv'.
\end{align}
Substituting this back into \eqref{eq:356} gives
\begin{align}
  \label{eq:361}
  \frac{df}{dv}(v)=vB(v)-\frac{1}{v^2}(1+vA(v))\int_0^ve^{-\int_{v'}^vA(v'')dv''}v'^2B(v')dv'.
\end{align}
Using now \eqref{eq:358} we find
\begin{align}
  \label{eq:362}
  \frac{df}{dv}(v)=\frac{\lambda}{2\kappa^2}M(v)+N(v),
\end{align}
where $N(v)$ is linear in $\hat{B}(v)$, while $M(v)$ is independent of $\hat{B}(v)$, i.e.
\begin{align}
  \label{eq:363}
  M(v)=M_{0}(v)+M_{1}(v)+M_{2}(v),\qquad N(v)=N_{0}(v)+N_{1}(v),
\end{align}
with
\begin{align}
  \label{eq:364}
  M_{0}(v)&\coloneqq v-\frac{1}{v^2}\int_0^vv'^2dv'=\frac{2}{3}v,\\
  M_{1}(v)&\coloneqq \frac{1}{v^2}\int_0^v\left(1-e^{-\int_{v'}^vA(v'')dv''}\right)v'^2dv',\label{eq:365}\\
  M_{2}(v)&\coloneqq -\frac{A(v)}{v}\int_0^ve^{-\int_{v'}^vA(v'')dv''}v'^2dv',\label{eq:366}
\end{align}
and
\begin{align}
  \label{eq:367}
  N_{i}(v)\coloneqq v\hat{B}_{i}(v)-\frac{1}{v^2}(1+vA(v))\int_0^ve^{-\int_{v'}^vA(v'')dv''}v'^2\hat{B}_{i}(v')dv',\qquad \textrm{$i=0,1$},
\end{align}
where we split $\hat{B}(v)$ into a part depending on $I(v)$ and a part independent of $I(v)$, i.e.
\begin{align}
  \label{eq:368}
  \hat{B}_{0}(v)&\coloneqq \frac{\lambda}{2\kappa^2}\left\{e^{-K(v,v)}\left(\rho(v)+\frac{\kappa v}{c_{+0}-c_{-0}}\right)-\left(1-e^{-K(v,v)}\right)\right\}\notag\\
  &\qquad +\frac{1}{v^2}e^{-K(v,v)}\left(1+\rho(v)+\frac{\kappa v}{c_{+0}-c_{-0}}\right)\left(\frac{c_{+0}-c_{-0}}{\kappa}h'(v)-\frac{\lambda}{2\kappa^2}v^2+h(v)\right),\\
  \hat{B}_{1}(v)&\coloneqq -\frac{1}{v^2}e^{-K(v,v)}\left(1+\rho(v)+\frac{\kappa v}{c_{+0}-c_{-0}}\right)I(v).\label{eq:369}
\end{align}
In view of \eqref{eq:337} we have
\begin{align}
  \label{eq:370}
  \rho(v)+\frac{\kappa v}{c_{+0}-c_{-0}}=\rho_0(v)+\mathcal{O}(v).
\end{align}

In the arguments to follow $q>0$ will denote a number which we can make as small as we wish by choosing $\varepsilon$ suitably small. From $y(0)=-1$ we have
\begin{align}
  \label{eq:371}
  |y(v)+1|\leq vY\leq q.
\end{align}
Now, since we can make $1-\tfrac{1}{2}vY$ as close to $1$ as we wish by choosing $\varepsilon$ suitably small depending on $Y$ (in the following we will not state this dependence explicitly anymore) and since (cf.~\eqref{eq:371})
\begin{align}
  \label{eq:372}
  1-\tfrac{1}{2}|y(v)+1|\geq 1-\tfrac{1}{2}vY,
\end{align}
we obtain
\begin{align}
  \label{eq:373}
  \frac{1}{1-\tfrac{1}{2}|y(v)+1|}\leq p,
\end{align}
for $p>1$ but as close to $1$ as we wish by choosing $\varepsilon$ suitably small. Hence
\begin{align}
  \label{eq:374}
  |\rho_0(v)|\leq \frac{p}{2}|y(v)+1|\leq \frac{p}{2}v Y\leq \frac{p}{2}\varepsilon Y\leq q,
\end{align}
where $q>0$ as small as we wish by restricting $\varepsilon$ (and therefore $\varepsilon Y$) suitably (cf.~\eqref{eq:371}). Therefore,
\begin{align}
  \label{eq:375}
  |\rho(v)|\leq q+Cv,
\end{align}

We now look at $\hat{B}_{0}$. From \eqref{eq:289} it follows that the asymptotic form of the second bracket on the second line of \eqref{eq:368} is $\mathcal{O}(v^3)$. Taking into account \eqref{eq:347}, \eqref{eq:370}, as well as \eqref{eq:375}, we obtain
\begin{align}
  \label{eq:376}
  \hat{B}_{0}(v)=\frac{\lambda}{2\kappa^2}\rho_0(v)+\mathcal{O}(v).
\end{align}

\eqref{eq:354} together with \eqref{eq:374} yields
\begin{align}
  \label{eq:377}
  v|A(v)|=|\rho_0(v)|+\mathcal{O}(v)\leq q'.
\end{align}
From \eqref{eq:354} we obtain
\begin{align}
  \label{eq:378}
  \int_{v'}^v|A(v'')|dv''\leq \int_{v'}^v\frac{|\rho_0(v'')|}{v''}dv''+\int_{v'}^v\mathcal{O}(1)dv''\leq q'',
\end{align}
where for the first integral we use (cf.~\eqref{eq:374})
\begin{align}
  \label{eq:379}
  |\rho_0(v)|\leq \frac{p}{2}v Y,
\end{align}
and we again choose $\varepsilon$ sufficiently small. $q'$, $q''$ are like $q$, positive and as small as we wish by restricting $\varepsilon$ suitably. From the two bounds in \eqref{eq:377} and \eqref{eq:378} it follows that the contribution of the $\mathcal{O}(v)$ term in $\hat{B}_{0}$ (cf.~\eqref{eq:376}) to $N_{0}$ (cf.~\eqref{eq:367} with $i=0$) has the asymptotic form $\mathcal{O}(v^2)$.

We now look at the contribution of the first term in \eqref{eq:376} to $N_{0}$ (cf.~\eqref{eq:367}). This contribution is
\begin{align}
  \label{eq:380}
  &\frac{\lambda}{2\kappa^2}\left\{v\rho_0(v)-\frac{1}{v^2}\int_0^vv'^2\rho_0(v')dv'\right\}\notag\\
&\hspace{10mm}+\frac{\lambda}{2\kappa^2}\left\{\frac{1}{v^2}\int_0^v\left(1-e^{-\int_{v'}^vA(v'')dv''}\right)v'^2\rho_0(v')dv'-\frac{A(v)}{v}\int_0^ve^{-\int_{v'}^vA(v'')dv''}v'^2\rho_0(v')dv'\right\}.
\end{align}
Since the function $(1-e^{x})/x$ is bounded for $x\in [-1,1]$, it follows from \eqref{eq:378} that the first term in the second curly bracket in \eqref{eq:380} is bounded by
\begin{align}
  \label{eq:381}
  \frac{C}{v^2}\int_0^v\left(\int_{v'}^v|A(v'')|dv''\right)v'^2|\rho_0(v')|dv'.
\end{align}
Now, since
\begin{align}
  \label{eq:382}
  \int_{v'}^v|A(v'')|dv''=\int_{v'}^v\frac{|\rho_0(v'')|}{v''}+\mathcal{O}(1)dv''\leq C(Y+1)v,
\end{align}
where we used \eqref{eq:379}, we obtain
\begin{align}
  \label{eq:383}
  \frac{C}{v^2}\int_0^v\left(\int_{v'}^v|A(v'')|dv''\right)v'^2|\rho_0(v')|dv'\leq \frac{C}{v}\int_0^v(Y+1)v'^2|\rho_0(v')|dv'\leq C(Y+1)Yv^3,
\end{align}
where we again used \eqref{eq:379}. Choosing $\varepsilon$ sufficiently small such that $Y^2\varepsilon\leq 1$, it follows that $C(Y+1)Yv^3\leq Cv^2$, i.e.~the first term in the second curly bracket of \eqref{eq:380} is bounded in absolute value by $Cv^2$.

Now we look at the second term in the second curly bracket in \eqref{eq:380}. We will use
\begin{align}
  \label{eq:384}
  v^2|\rho_0(v)|\leq Cv^3Y,\qquad |A(v)|=\frac{|\rho_0(v)|}{v}+\mathcal{O}(1)\leq C(Y+1).
\end{align}
Those are consequences of \eqref{eq:377} and \eqref{eq:379}. Using these and \eqref{eq:378}, we have
\begin{align}
  \label{eq:385}
  \left|\frac{A(v)}{v}\int_0^ve^{-\int_{v'}^vA(v'')dv''}v'^2\rho_0(v')dv'\right|\leq \frac{C(Y+1)Y}{v}\int_0^vv'^3dv'\leq Cv^2,
\end{align}
where in the last step we again use the assumption that $Y^2\varepsilon\leq 1$. We conclude that the second term in the second curly bracket of \eqref{eq:380} is bounded in absolute value by $Cv^2$. Therefore, the second curly bracket in \eqref{eq:380} is bounded in absolute value by $Cv^2$.

We rewrite the first curly bracket in \eqref{eq:380} as
\begin{align}
  \label{eq:386}
  &v\tfrac{1}{2}(y(v)+1)-\frac{1}{v^2}\int_0^v\frac{v'^2}{2}(y(v')+1)dv'\notag\\
&\hspace{10mm}+v\Big\{\rho_0(v)-\tfrac{1}{2}(y(v)+1)\Big\}-\frac{1}{v^2}\int_0^vv'^2\Big\{\rho_0(v')-\tfrac{1}{2}(y(v')+1)\Big\}dv'.
\end{align}
For the curly brackets we use the estimate
\begin{align}
  \label{eq:387}
  \left|\rho_0(v)-\tfrac{1}{2}(y(v)+1)\right|=\left|\frac{\tfrac{1}{4}(y(v)+1)^2}{1-\tfrac{1}{2}(y(v)+1)}\right|\leq Cv^2Y^2,
\end{align}
where we used \eqref{eq:371}. We deduce that the second line in \eqref{eq:386} is bounded in absolute value by $Cv^2$ where we again make use of the assumption that $Y^2\varepsilon\leq 1$. We conclude that
\begin{align}
  \label{eq:388}
  N_{0}(v)=\frac{\lambda}{4\kappa^2}\left\{v(y(v)+1)-\frac{1}{v^2}\int_0^vv'^2(y(v')+1)dv'\right\}+\mathcal{O}(v^2).
\end{align}
Integrating by parts yields
\begin{align}
  \label{eq:389}
  N_{0}(v)=\frac{\lambda}{12\kappa^2}\left\{2v(y(v)+1)+\frac{1}{v^2}\int_0^vv'^3\frac{dy}{dv}(v')dv'\right\}+\mathcal{O}(v^2).
\end{align}

We now turn to $N_{1}$. For this we have to estimate $\hat{B}_{1}$. In view of \eqref{eq:369} we have
\begin{align}
  \label{eq:390}
  |\hat{B}_{1}(v)|\leq \frac{C}{v^2}|I(v)|.
\end{align}
Now we look at $I(v)$. From \eqref{eq:347}, \eqref{eq:335} we have
\begin{align}
  \label{eq:391}
  |e^{K(v,v')}-1|\leq Cv'^2,\qquad |\tau(v,v')|\leq Cv.
\end{align}
These imply
\begin{align}
  \label{eq:392}
  |I(v)|\leq Cv\int_0^v\left|\pp{t}{v}(v,v')\right|dv',
\end{align}
which yields
\begin{align}
  \label{eq:393}
  |\hat{B}_{1}(v)|\leq\frac{C}{v}\int_0^v\left|\pp{t}{v}(v,v')\right|dv'.
\end{align}
Now,
\begin{align}
  \label{eq:394}
  \frac{1}{v^2}\int_0^vv'^2|\hat{B}_{1}(v')|dv'&\leq \frac{1}{v}\int_0^vv'|\hat{B}_{1}(v')|dv'\notag\\
&\leq \frac{C}{v}\int_0^v\left\{\int_0^{v'}\left|\pp{t}{v}(v',v'')\right|dv''\right\}dv'\notag\\
&\leq \frac{C}{v}\int_0^v\left\{\int_{v''}^v\left|\pp{t}{v}(v',v'')\right|dv'\right\}dv''\notag\\
&\leq \frac{C}{v}\int_0^v(v-v'')T(v,v'')dv''\notag\\
&\leq C\int_0^vT(v,v'')dv'',
\end{align}
where to go from the second to the third line we changed the order of integration and in the fourth line we used the definition
\begin{align}
  \label{eq:395}
  T(u,v)\coloneqq \sup_{u'\in[v,u]}\left|\pp{t}{v}(u',v)\right|.
\end{align}
Using \eqref{eq:393} and \eqref{eq:394} in \eqref{eq:367} with $i=1$ we obtain
\begin{align}
  \label{eq:396}
  |N_{1}(v)|\leq C\int_0^vT(v,v')dv'.
\end{align}
We postpone the estimation of $T$ until the estimates for $M$ are completed.

We now look at $M_{1}$ (cf.~\eqref{eq:365}). We rewrite $M_{1}$ as
\begin{align}
  \label{eq:397}
  M_{1}(v)&=\frac{1}{v^2}\int_0^v\left\{\int_{v'}^vA(v'')dv''\right\}v'^2dv'\notag\\
&\hspace{10mm}+\frac{1}{v^2}\int_0^v\left(1-e^{-\int_{v'}^vA(v'')dv''}-\int_{v'}^vA(v'')dv''\right)v'^2dv'.
\end{align}
Since
\begin{align}
  \label{eq:398}
  A(v)=\frac{\rho_0(v)}{v}+\Landau(1),
\end{align}
we can rewrite the term on the first line as
\begin{align}
  \label{eq:399}
  \frac{1}{3v^2}\int_0^vA(v')v'^3dv'=\frac{1}{3v^2}\int_0^v\left(\frac{\rho_0(v')}{v'}+\mathcal{O}(1)\right)v'^3dv'=\frac{1}{3v^2}\int_0^v\rho_0(v')v'^2dv'+\mathcal{O}(v^2).
\end{align}
We now rewrite
\begin{align}
  \label{eq:400}
  \frac{1}{3v^2}\int_0^v\rho_0(v')v'^2dv'&=\frac{1}{3v^2}\left\{\int_0^v\tfrac{1}{2}(y(v')+1)v'^2dv'+\int_0^v\left(\rho_0(v')-\tfrac{1}{2}(y(v')+1)\right)v'^2dv'\right\}\notag\\
&=\frac{1}{18v^2}\left\{v^3(y(v)+1)-\int_0^vv'^3\frac{dy}{dv}(v')dv'\right\}+\mathcal{O}(v^2),
\end{align}
where for the first term we used integration by parts while for the second term we used \eqref{eq:387} together with the assumption that $\varepsilon Y^2\leq 1$. This implies that the term in the first line of \eqref{eq:397} is equal to
\begin{align}
  \label{eq:401}
  \frac{1}{18v^2}\left\{v^3(y(v)+1)-\int_0^vv'^3\frac{dy}{dv}(v')dv'\right\}+\mathcal{O}(v^2).
\end{align}
Using the fact that the function $(1-e^{-x}-x)/x^2$ is bounded for $x\in[-1,1]$, implies that the term on the second line in \eqref{eq:397} is bounded by
\begin{align}
  \label{eq:402}
  \frac{C}{v^2}\int_0^v\left(\int_{v'}^v|A(v'')|dv''\right)^2v'^2dv'.
\end{align}
Now, since
\begin{align}
  \label{eq:403}
  \int_{v'}^v|A(v'')|dv''\leq\int_0^v|A(v')|dv'\leq Cv(Y+1),
\end{align}
where we used \eqref{eq:384}, we obtain
\begin{align}
  \label{eq:404}
  \frac{C}{v^2}\int_0^v\left(\int_{v'}^v|A(v'')|dv''\right)^2v'^2dv'\leq Cv^3(Y+1)^2\leq Cv^2,
\end{align}
where we used the assumption $\varepsilon Y^2\leq 1$. This together with \eqref{eq:401} implies
\begin{align}
  \label{eq:405}
  M_{1}(v)=\frac{1}{18v^2}\left\{v^3(y(v)+1)-\int_0^vv'^3\frac{dy}{dv}(v')dv'\right\}+\mathcal{O}(v^2).
\end{align}

We now look at $M_{2}$ (cf.~\eqref{eq:366}). We rewrite $M_{2}$ as
\begin{align}
  \label{eq:406}
  M_{2}(v)=-\frac{A(v)}{3}v^2+\frac{A(v)}{v}\int_0^v\left(1-e^{-\int_{v'}^vA(v'')dv''}\right)v'^2dv'.
\end{align}
For the first term we have
\begin{align}
  \label{eq:407}
  -\frac{A(v)}{3}v^2&=-\frac{\rho_0(v)}{3}v+\mathcal{O}(v^2)\notag\\
&=-\frac{v}{6}(y(v)+1)-\frac{v}{3}\Big(\tfrac{1}{2}(y(v)+1)-\rho_0(v)\Big)+\mathcal{O}(v^2)\notag\\
&=-\frac{v}{6}(y(v)+1)+\mathcal{O}(v^2),
\end{align}
where we used \eqref{eq:387} together with the assumption $\varepsilon Y^2\leq 1$. Taking into account that the function $(1-e^{-x})/x$ is bounded for $x\in[-1,1]$ and integrating by parts we find that the second term in \eqref{eq:406} can be estimated in absolute value by
\begin{align}
  \label{eq:408}
  C|A(v)|\left|\int_0^vA(v')dv'\right|v^2.
\end{align}
Using the second of \eqref{eq:384} we get
\begin{align}
  \label{eq:409}
  C|A(v)|\left|\int_0^vA(v')dv'\right|v^2\leq C(Y+1)^2v^3\leq Cv^2,
\end{align}
where we again used the assumption $\varepsilon Y^2\leq 1$. From \eqref{eq:407}, \eqref{eq:409} we obtain
\begin{align}
  \label{eq:410}
  M_{2}(v)=-\frac{v}{6}(y(v)+1)+\mathcal{O}(v^2).
\end{align}

We shall now turn to the estimate of $T(u,v)$ and prove that $T(u,v)\leq Cv$. To accomplish this we will use the ode \eqref{eq:362} but we will use less delicate estimates for $M_{i}$ and $N_{i}$ than derived above. We derive these rough estimates from the delicate estimates \eqref{eq:389}, \eqref{eq:405}, \eqref{eq:410} by using the two estimates (cf.~\eqref{eq:371})
\begin{align}
  \label{eq:411}
  |y(v)+1|\leq C,\qquad \left|v\frac{dy}{dv}(v)\right|\leq C.
\end{align}
Using the resulting (rough) estimates together with \eqref{eq:364}, \eqref{eq:396} in \eqref{eq:362} we arrive at
\begin{align}
  \label{eq:412}
  \left|\frac{df}{dv}(v)\right|\leq C\left\{v+\int_0^vT(v,v')dv'\right\}.
\end{align}
Using the asymptotic form of $\mu(u,v)$ as given by \eqref{eq:335} it follows from the second of\eqref{eq:346} that $|L(u,v)|\leq C\varepsilon$. We deduce from \eqref{eq:345} together with the asymptotic form of $\nu$ given by \eqref{eq:336}, very roughly,
\begin{align}
  \label{eq:413}
  \left|\pp{t}{v}(u,v)\right|\leq C\left\{|a(v)|+v\int_v^u\left|\pp{t}{u}(u',v)\right|du'\right\}.
\end{align}
From \eqref{eq:344} we deduce, very roughly,
\begin{align}
  \label{eq:414}
  \left|\pp{t}{u}(u,v)\right|\leq C\left\{u^2+\int_0^v\left|\pp{t}{v}(u,v')\right|dv'\right\}.
\end{align}
Substituting in the integral in \eqref{eq:413} the bound \eqref{eq:414} yields
\begin{align}
  \label{eq:415}
  \int_v^u\left|\pp{t}{u}(u',v)\right|du'\leq C(u-v)\left\{u^2+\int_0^vT(u,v')dv'\right\}.
\end{align}

Now,
\begin{align}
  \label{eq:416}
  \frac{df}{dv}(v)=a(v)+b(v)=(1+\gamma(v))a(v),
\end{align}
which implies
\begin{align}
  \label{eq:417}
  |a(v)|=\left|\frac{1}{1+\gamma(v)}\,\frac{df}{dv}(v)\right|\leq \left|\frac{df}{dv}(v)\right|\leq C\left\{v+\int_0^vT(v,v')dv'\right\},
\end{align}
where for the first inequality we used
\begin{align}
  \label{eq:418}
 0\leq\gamma(v)\leq Cv,
\end{align}
which follows from \eqref{eq:337}, and for the second inequality in \eqref{eq:417} we used \eqref{eq:412}. Substituting \eqref{eq:415} and \eqref{eq:417} into \eqref{eq:413} yields
\begin{align}
  \label{eq:419}
  \left|\pp{t}{v}(u,v)\right|\leq C\left\{v+\int_0^vT(u,v')dv'\right\}.
\end{align}
Since $T(u,v)$ is non-decreasing in its first argument (cf.~\eqref{eq:395}) we have
\begin{align}
  \label{eq:420}
  \left|\pp{t}{v}(u',v)\right|\leq C\left\{v+\int_0^vT(u',v')dv'\right\}\quad \textrm{for}\quad u'\in[v,u].
\end{align}
Taking the supremum over all $u'\in[v,u]$ we find
\begin{align}
  \label{eq:421}
  T(u,v)\leq C\left\{v+\int_0^vT(u,v')dv'\right\}.
\end{align}
Defining
\begin{align}
  \label{eq:422}
  \Sigma_u(v)\coloneqq \int_0^vT(u,v')dv',
\end{align}
equation \eqref{eq:421} becomes
\begin{align}
  \label{eq:423}
  \frac{d\Sigma_u}{dv}(v)\leq C(v+\Sigma_u(v)).
\end{align}
Since $\Sigma_u(0)=0$, we get
\begin{align}
  \label{eq:424}
  \Sigma_u(v)\leq \int_0^ve^{C(v-v')}Cv'dv'\leq C'v^2.
\end{align}
Therefore,
\begin{align}
  \label{eq:425}
  T(u,v)\leq Cv.
\end{align}
Plugging this estimate into the estimate for $N_{1}$, i.e.~into \eqref{eq:396}, we obtain
\begin{align}
  \label{eq:426}
  |N_1(v)|\leq Cv^2.
\end{align}
Using now the delicate estimates for $M_{0}$, $N_{0}$, $M_{1}$, $M_{2}$ as given by \eqref{eq:364}, \eqref{eq:389}, \eqref{eq:405}, \eqref{eq:410} in the ode \eqref{eq:362}, we arrive at
\begin{align}
  \label{eq:427}
  \frac{df}{dv}(v)-\frac{\lambda}{3\kappa^2}v=\frac{\lambda}{18\kappa^2}\left\{2v(y(v)+1)+\frac{1}{v^2}\int_0^vv'^3\frac{dy}{dv}(v')dv'\right\}+\mathcal{O}(v^2).
\end{align}
This is \eqref{eq:343}.

Using now
\begin{align}
  \label{eq:428}
  |y(v)+1|\leq Yv,\qquad \left|\frac{dy}{dv}\right|\leq Y,
\end{align}
in \eqref{eq:427} we obtain
\begin{align}
  \label{eq:429}
  \left|\frac{df}{dv}(v)-\frac{\lambda}{3\kappa^2}v\right|\leq C(Y)v^2.
\end{align}
Since
\begin{align}
  \label{eq:430}
  a(v)=\frac{df}{dv}(v)-b(v)=\frac{df}{dv}(v)-\frac{\gamma(v)}{1+\gamma(v)}\frac{df}{dv}(v),
\end{align}
we obtain from \eqref{eq:418},
\begin{align}
  \label{eq:431}
  a(v)=\frac{df}{dv}(v)+\mathcal{O}(v^2).
\end{align}
Together with \eqref{eq:429} we conclude
\begin{align}
  \label{eq:432}
  \left|a(v)-\frac{\lambda}{3\kappa^2}v\right|\leq C(Y)v^2.
\end{align}

Recalling the definition \eqref{eq:395} and using \eqref{eq:425} we get
\begin{align}
  \label{eq:433}
  \int_0^v\left|\pp{t}{v}(u,v)\right|dv'\leq\int_0^vT(u,v')dv'\leq Cv^2.
\end{align}
Substituting this into \eqref{eq:414} yields
\begin{align}
  \label{eq:434}
  \left|\pp{t}{u}(u,v)\right|\leq Cu^2.
\end{align}

We now revisit \eqref{eq:345}, in particular the term
\begin{align}
  \label{eq:435}
  \int_v^ue^{-(L(u,v)-L(u',v))}\left(\nu\pp{t}{u}\right)(u',v)du'.
\end{align}
From \eqref{eq:335} we derive
\begin{align}
  \label{eq:436}
  |L(u,v)-L(u',v)|=\left|\int_{u'}^u\mu(u'',v)du''\right|\leq Cu.
\end{align}
Using now \eqref{eq:336} and \eqref{eq:434} we have
\begin{align}
  \label{eq:437}
  \left|\int_v^ue^{-(L(u,v)-L(u',v))}\left(\nu\pp{t}{u}\right)(u',v)du'\right|\leq Cu^3v.
\end{align}

We now look at the first term in \eqref{eq:345} and rewrite it as
\begin{align}
  \label{eq:438}
  e^{-L(u,v)}a(v)=e^{-L(u,v)}\frac{\lambda}{3\kappa^2}v+e^{-L(u,v)}\left(a(v)-\frac{\lambda}{3\kappa^2}v\right).
\end{align}
The second term can be estimated using \eqref{eq:432}, while for the first term we use \eqref{eq:335} to estimate
\begin{align}
  \label{eq:439}
  \left|L(u,v)-\frac{\kappa(u-v)}{c_{+0}-c_{-0}}\right|\leq C\int_v^uu'du'\leq Cu^2.
\end{align}
Now,
\begin{align}
  \label{eq:440}
  e^{-\frac{\kappa(u-v)}{c_{+0}-c_{-0}}}=1-\frac{\kappa(u-v)}{c_{+0}-c_{-0}}+\mathcal{O}(u^2),
\end{align}
which, together with \eqref{eq:439}, implies
\begin{align}
  \label{eq:441}
  e^{-L(u,v)}=1-\frac{\kappa(u-v)}{c_{+0}-c_{-0}}+\mathcal{O}(u^2).
\end{align}
Using the estimates \eqref{eq:432}, \eqref{eq:441} in \eqref{eq:438} and the resulting estimate together with \eqref{eq:437} in \eqref{eq:345} yields
\begin{align}
  \label{eq:442}
  \left|\pp{t}{v}(u,v)-\frac{\lambda}{3\kappa^2}v\right|\leq C(Y)uv.
\end{align}
This is the first of \eqref{eq:342}.

We now revisit \eqref{eq:344}, in particular the term
\begin{align}
  \label{eq:443}
  \int_0^ve^{K(u,v')}\left(\mu\pp{t}{v}\right)(u,v')dv'.
\end{align}
We rewrite it as
\begin{align}
  \label{eq:444}
  \int_0^ve^{K(u,v')}\left(\mu\pp{t}{v}\right)(u,v')dv'&=\frac{\lambda}{3\kappa^2}\int_0^ve^{K(u,v')}\mu(u,v')v'dv'\notag\\
&\hspace{10mm}+\int_0^ve^{K(u,v')}\mu(u,v')\left(\pp{t}{v}(u,v')-\frac{\lambda}{3\kappa^2}v'\right)dv'.
\end{align}
Using \eqref{eq:335}, the first term on the right of \eqref{eq:444} possesses the following asymptotic form
\begin{align}
  \label{eq:445}
  \frac{\lambda}{3\kappa^2}\int_0^ve^{K(u,v')}\mu(u,v')v'dv'=\frac{\lambda}{6\kappa(c_{+0}-c_{-0})}v^2+\mathcal{O}(uv^2).
\end{align}
By \eqref{eq:335} and \eqref{eq:442}, the second integral in \eqref{eq:444} can be bounded in absolute value by $C(Y)uv^2$. Together with \eqref{eq:289} we obtain from \eqref{eq:344},
\begin{align}
  \label{eq:446}
  \left|\pp{t}{u}(u,v)-\frac{\lambda(3u^2-v^2)}{6\kappa(c_{+0}-c_{-0})}\right|\leq C(Y)u^3.
\end{align}
This is the second of \eqref{eq:342}. This completes the proof of the proposition.
\end{proof}

\begin{lemma}\label{lemma_fxdbp_ind}
  For $\varepsilon$ sufficiently small depending, on $N_0$, $Y$, the sequence $((\alpha'_n,\beta'_n);n=0,1,2,\ldots)$ is contained in $X$.
\end{lemma}

\begin{proof}
  From \eqref{eq:319}, \eqref{eq:331} we have
  \begin{align}
    \label{eq:447}
    \alpha_0'(u,v)=0,\qquad \beta_0'(u,v)=0.
  \end{align}
Therefore
\begin{align}
  \label{eq:448}
  (\alpha_0',\beta_0')\in X.
\end{align}

We now show the inductive step. In the following the generic constants will depend on $N_0$ but we shall not specify this dependence. It suffices for us that this constants are non-negative non-decreasing continuous functions of $N_0$. The inductive hypothesis is
\begin{align}
  \label{eq:449}
  \|(\alpha_n',\beta_n')\|_X\leq N_0.
\end{align}
We start with
\begin{align}
  \label{eq:450}
  |\alpha_n'|(u,v)=\left|\alpha_n'(u,0)+\int_0^v\pp{\alpha_n'}{v}(u,v')dv'\right|\leq N_0\varepsilon^2,
\end{align}
where we used $\alpha_n'(u,0)=0$.
\begin{align}
  \label{eq:451}
  |\beta_n'|(u,v)=\left|\beta_n'(v,v)+\int_v^u\pp{\beta_n'}{u}(u',v)du'\right|\leq N_0\varepsilon^2,
\end{align}
where we used $\beta'_n(v,v)=0$. Choosing $\varepsilon$ small enough we deduce
\begin{align}
  \label{eq:452}
  (\alpha'_n+\alpha_i,\beta'_n+\beta_+)=(\alpha_n,\beta_n)\in R_\delta.
\end{align}
Therefore, the functions $c_{\pm,n}$ and derivatives thereof are bounded, i.e.
\begin{align}
  \label{eq:453}
  \sup_{T_\varepsilon}|c_{\pm,n}|,\sup_{T_\varepsilon}\left|\left(\pp{c_\pm}{\alpha}\right)_n\right|,\sup_{T_\varepsilon}\left|\left(\pp{c_\pm}{\beta}\right)_n\right|\leq C.
\end{align}
(also for higher order derivatives). From \eqref{eq:331} we have
\begin{alignat}{4}
  \label{eq:454}
  \pp{\alpha'_n}{u}&=\pp{\alpha_n}{u}-\frac{d\alpha_i}{du},& \qquad\quad \pp{\alpha_n'}{v}&=\pp{\alpha_n}{v},\\
  \label{eq:455}
  \pp{\beta_n'}{u}&=\pp{\beta_n}{u},& \pp{\beta_n'}{v}&=\pp{\beta_n}{v}-\frac{d\beta_+}{dv}.
\end{alignat}

We now derive bounds on $\mu_n$, $\nu_n$. From the first of \eqref{eq:296} we have
\begin{align}
  \label{eq:456}
  \mu_n=\frac{1}{c_{+n}-c_{-n}}\pp{c_{+n}}{u}=\frac{1}{c_{+n}-c_{-n}}\left\{\left(\pp{c_+}{\alpha}\right)_n\pp{\alpha_n}{u}+\left(\pp{c_+}{\beta}\right)_n\pp{\beta_n}{u}\right\}.
\end{align}
For the second term we have
\begin{align}
  \label{eq:457}
  \frac{1}{c_{+n}-c_{-n}}\left(\pp{c_+}{\beta}\right)_n\pp{\beta_n}{u}=\mathcal{O}(u),
\end{align}
where we used the first of \eqref{eq:455} together with the inductive hypothesis \eqref{eq:449}. Defining the functions
\begin{align}
  \label{eq:458}
  f_1\coloneqq \frac{1}{c_{+0}-c_{-0}}-\frac{1}{c_{+n}-c_{-n}},\qquad f_2\coloneqq \left(\pp{c_+}{\alpha}\right)_0-\left(\pp{c_+}{\alpha}\right)_n,\qquad f_3\coloneqq \left(\pp{\alpha}{u}\right)_0-\left(\pp{\alpha}{u}\right)_n,
\end{align}
the first term in \eqref{eq:456} becomes
\begin{align}
  \label{eq:459}
  \frac{1}{c_{+n}-c_{-n}}\left(\pp{c_+}{\alpha}\right)_n\pp{\alpha_n}{u}=\left(\frac{1}{c_{+0}-c_{-0}}-f_1\right)\left(\left(\pp{c_+}{\alpha}\right)_0-f_2\right)\left(\left(\pp{\alpha}{u}\right)_0-f_3\right).
\end{align}
For $f_{1,2}$ we note that $f_{1,2}\in C^1[0,\varepsilon]$ with $f_{1,2}(0,0)=0$, which implies
\begin{align}
  \label{eq:460}
  f_{1,2}(u,v)=\mathcal{O}(u).
\end{align}
For $f_3$ we have
\begin{align}
  \label{eq:461}
  f_3(u,v)=\dot{\alpha}_0-\left(\pp{\alpha_n}{u}\right)(u,v)=\dot{\alpha}_0-\pp{\alpha_n'}{u}(u,v)-\frac{d\alpha_i}{du}(u),
\end{align}
where we used the first of \eqref{eq:454}. The inductive hypothesis together with \eqref{eq:290} yields
\begin{align}
  \label{eq:462}
  f_3(u,v)=\mathcal{O}(u).
\end{align}
From \eqref{eq:460}, \eqref{eq:462} together with
\begin{align}
  \label{eq:463}
  \frac{1}{c_{+0}-c_{-0}}\left(\pp{c_+}{\alpha}\right)_0\left(\pp{\alpha}{u}\right)_0=\frac{\kappa}{c_{+0}-c_{-0}},
\end{align}
where we recalled $(\partial c_+/\partial\alpha)_0\dot{\alpha}_0=\kappa$ (see \eqref{eq:2027}), we obtain
\begin{align}
  \label{eq:464}
  \frac{1}{c_{+n}-c_{-n}}\left(\pp{c_+}{\alpha}\right)_n\pp{\alpha_n}{u}= \frac{\kappa}{c_{+0}-c_{-0}}+\mathcal{O}(u).
\end{align}
Together with \eqref{eq:457} we conclude
\begin{align}
  \label{eq:465}
  \mu_n(u,v)=\frac{\kappa}{c_{+0}-c_{-0}}+\mathcal{O}(u).
\end{align}

We now look at
\begin{align}
  \label{eq:466}
  \nu_n=\frac{1}{c_{+n}-c_{-n}}\left\{\left(\pp{c_-}{\alpha}\right)_n\pp{\alpha_n}{v}+\left(\pp{c_-}{\beta}\right)_n\pp{\beta_n}{v}\right\}.
\end{align}
For the first term we use the second of \eqref{eq:454} which together with the inductive hypothesis \eqref{eq:449} implies
\begin{align}
  \label{eq:467}
  \frac{1}{c_{+n}-c_{-n}}\left(\pp{c_-}{\alpha}\right)_n\pp{\alpha_n}{v}=\mathcal{O}(v).
\end{align}
For the second term in \eqref{eq:466} we use the second of \eqref{eq:455} together with the inductive hypothesis \eqref{eq:449} and the property of the boundary data given by the first of \eqref{eq:291}. We get
\begin{align}
  \label{eq:468}
  \frac{1}{c_{+n}-c_{-n}}\left(\pp{c_-}{\beta}\right)_n\pp{\beta_n}{v}=\mathcal{O}(v).
\end{align}
Therefore
\begin{align}
  \label{eq:469}
  \nu_n(u,v)=\mathcal{O}(v).
\end{align}

We now look at $\gamma_n(v)$. Since
\begin{align}
  \label{eq:470}
  \bar{c}_{+n}(v)=c_+(\alpha_{+n}(v),\beta_+(v)),
\end{align}
we have
\begin{align}
  \label{eq:471}
  \frac{d\bar{c}_{+n}}{dv}(v)=\pp{c_+}{\alpha}(\alpha_{+n}(v),\beta_+(v))\frac{d\alpha_{+n}}{dv}(v)+\pp{c_+}{\beta}(\alpha_{+n}(v),\beta_+(v))\frac{d\beta_+}{dv}(v).
\end{align}
Now,
\begin{align}
  \label{eq:472}
  \alpha_{+n}(v)=\alpha'_n(v,v)+\alpha_i(v).
\end{align}
From $\alpha'(u,0)=0$ together with the inductive hypothesis \eqref{eq:449} we obtain
\begin{align}
  \label{eq:473}
  |\alpha'_n(v,v)|\leq \frac{1}{2}N_0v^2.
\end{align}
Hence
\begin{align}
  \label{eq:474}
  \alpha_{+n}(v)=\alpha_0+\dot{\alpha}_0v+\Landau(v^2).
\end{align}
From \eqref{eq:320} we have
\begin{align}
  \label{eq:475}
  \beta_+(v)=\beta_0+\Landau(v^2).
\end{align}
From
\begin{align}
  \label{eq:476}
  \frac{d\alpha_{+n}}{dv}(v)=\left(\pp{\alpha'_n}{u}+\pp{\alpha'_n}{v}\right)(v,v)+\frac{d\alpha_i}{dv}(v),
\end{align}
together with the inductive hypothesis we have
\begin{align}
  \label{eq:477}
  \frac{d\alpha_{+n}}{dv}(v)=\dot{\alpha}_0+\Landau(v).
\end{align}

Expanding now $(\partial c_+/\partial \alpha)(\alpha,\beta)$, $(\partial c_+/\partial \beta)(\alpha,\beta)$ and making use of \eqref{eq:474}, \eqref{eq:475} we obtain
\begin{align}
  \label{eq:478}
  \pp{c_+}{\alpha}(\alpha_{+n}(v),\beta_+(v))&=\cp{\pp{c_+}{\alpha}}+\cp{\ppp{c_+}{\alpha}}\dot{\alpha}_0v+\Landau(v^2),\\
  \pp{c_+}{\beta}(\alpha_{+n}(v),\beta_+(v))&=\cp{\pp{c_+}{\beta}}+\cp{\pppp{c_+}{\beta}{\alpha}}\dot{\alpha}_0v+\Landau(v^2).
\end{align}
Using these together with \eqref{eq:320}, \eqref{eq:477} in \eqref{eq:471} and recalling $\cp{\partial c_+/\partial\alpha}\dot{\alpha}_0=\kappa$ we find
\begin{align}
  \label{eq:479}
  \frac{d\bar{c}_{+n}}{dv}(v)=\kappa+\Landau(v).
\end{align}
Hence
\begin{align}
  \label{eq:480}
  \bar{c}_{+n}(v)=c_{+0}+\kappa v+\Landau(v^2).
\end{align}
Therefore, using the second of \eqref{eq:291},
\begin{align}
  \label{eq:481}
  \bar{c}_{+n}(v)-V(v)=\kappa v-\frac{\kappa}{2}(1+y(v))v+\mathcal{O}(v^2).
\end{align}
From the fact that $\bar{c}_{-n}(v)=c_-(\alpha_{+n}(v),\beta_+(v))$ is in $C^1[0,\varepsilon]$ we obtain, in conjunction with the second of \eqref{eq:291},
\begin{align}
  \label{eq:482}
  V(v)-\bar{c}_{-n}(v)=c_{+0}-c_{-0}+\mathcal{O}(v).
\end{align}
Now, using \eqref{eq:481} and \eqref{eq:482} in the third of \eqref{eq:299} we find
\begin{align}
  \label{eq:483}
    \frac{1}{\gamma_n(v)}=\frac{c_{+0}-c_{-0}}{\kappa v}(1+\rho_n(v)),
\end{align}
where
\begin{align}
  \label{eq:484}
    \rho_n(v)=\rho_0(v)+\mathcal{O}(v),\qquad \textrm{with}\qquad \rho_0(v)=\frac{\frac{1}{2}(y(v)+1)}{1-\frac{1}{2}(y(v)+1)}.
\end{align}
\begin{remark}
There is no index missing on $\beta_+$ in \eqref{eq:470}. This is because $\beta_+$ originates from the boundary data which is not subjected to the iteration while $\alpha_{+n}$ is part of the solution of the fixed boundary problem, therefore subjected to the iteration.
\end{remark}

In view of \eqref{eq:289}, \eqref{eq:465}, \eqref{eq:469}, \eqref{eq:483}, \eqref{eq:484} we are in the position to apply  proposition  \ref{prop_inner_iteration} with $(\mu_n,\tau_n,\nu_n,\gamma_n,\rho_n,t_n)$ in the role of $(\mu,\tau,\nu,\gamma,\rho,t)$. From proposition \ref{prop_inner_iteration} we have (recalling the constant $C_0$ given in \eqref{eq:327})
\begin{align}
  \label{eq:485}
  \left|\pp{t_n}{u}\right|\leq C_0u^2,\qquad \left|\pp{t_n}{v}\right|\leq C_0v,
\end{align}
provided that $\varepsilon$ is sufficiently small. Using \eqref{eq:485} in \eqref{eq:316} together with \eqref{eq:453} we get
\begin{align}
  \label{eq:486}
   \tfrac{1}{2}r_0\leq r_n\leq\tfrac{3}{2}r_0,
\end{align}
provided that $\varepsilon$ is sufficiently small. It follows from \eqref{eq:452}, \eqref{eq:486} and the definitions \eqref{eq:324}, \eqref{eq:325}
\begin{align}
  \label{eq:487}
  \sup_{T_\varepsilon}|\tilde{A}(\alpha_n,\beta_n,r_n)|,\sup_{T_\varepsilon}|\tilde{B}(\alpha_n,\beta_n,r_n)|\leq N.
\end{align}
Taking the derivative of the first of \eqref{eq:332} with respect to $v$ and of the second of \eqref{eq:332} with respect to $u$ we obtain from \eqref{eq:485} together with \eqref{eq:487} that
\begin{align}
  \label{eq:488}
  \left|\pp{\alpha_{n+1}'}{v}\right|=\left|\pp{t_n}{v}\tilde{A}_n\right|\leq C_0vN,\qquad \left|\pp{\beta_{n+1}'}{u}\right|=\left|\pp{t_n}{u}\tilde{B}_n\right|\leq C_0uN.
\end{align}

Now we establish bounds for $\partial\alpha_{n+1}'/\partial u$, $\partial\beta_{n+1}'/\partial v$. \eqref{eq:452}, \eqref{eq:486} imply
\begin{align}
  \label{eq:489}
  \sup_{T_\varepsilon}\left|\left(\pp{\tilde{A}}{k}\right)_n\right|,\sup_{T_\varepsilon}\left|\left(\pp{\tilde{B}}{k}\right)_n\right|\leq C,\qquad k\in\{\alpha,\beta,r\}.
\end{align}
From the first of \eqref{eq:332},
\begin{align}
  \label{eq:490}
  \pp{\alpha_{n+1}'}{u}(u,v)&=\int_0^v\pp{A_n}{u}(u,v')dv'\notag\\
&=\int_0^v\bigg\{\pp{t_n}{v}\left(\pp{\tilde{A}}{\alpha}\right)_n\pp{\alpha_n}{u}+\pp{t_n}{v}\left(\pp{\tilde{A}}{\beta}\right)_n\pp{\beta_n}{u}\notag\\
&\hspace{40mm}+\pp{t_n}{v}\left(\pp{\tilde{A}}{r}\right)_nc_{-n}\pp{t_n}{u}+\tilde{A}_n\frac{\partial^2t_n}{\partial u\partial v}\bigg\}(u,v')dv',
\end{align}
where for the third term we used the second of \eqref{eq:285}. From \eqref{eq:314}, \eqref{eq:335}, \eqref{eq:336}, \eqref{eq:485} we obtain
\begin{align}
  \label{eq:491}
  \left|\frac{\partial^2t_n}{\partial u\partial v}\right|\leq Cv.
\end{align}

Now we bound \eqref{eq:490}. For the first three terms we use the second of \eqref{eq:485}, \eqref{eq:489} and the inductive hypothesis \eqref{eq:449} in conjunction with \eqref{eq:331} (and the fact that we have bounds on the derivative of the initial data $\alpha_i$). For the fourth term in \eqref{eq:490} we use \eqref{eq:491}. We conclude
\begin{align}
  \label{eq:492}
  \left|\pp{\alpha_{n+1}'}{u}\right|\leq Cu^2.
\end{align}
From the second of \eqref{eq:331},
\begin{align}
  \label{eq:493}
   \pp{\beta_{n+1}'}{v}(u,v)&=\int_v^u\pp{B_n}{v}(u',v)du'-B_n(v,v)\notag\\
  &=\int_v^u\bigg\{\pp{t_n}{u}\left(\pp{\tilde{B}}{\alpha}\right)_n\pp{\alpha_n}{v}+\pp{t_n}{u}\left(\pp{\tilde{B}}{\beta}\right)_n\pp{\beta_n}{v}\notag\\
&\hspace{30mm}+\pp{t_n}{u}\left(\pp{\tilde{B}}{r}\right)_nc_{+n}\pp{t_n}{v}+\tilde{B}_n\frac{\partial^2t_n}{\partial u\partial v}\bigg\}(u',v)du'-B_n(v,v),
\end{align}
We can bound the integral in the same way as we did for \eqref{eq:490} (and using the first of \eqref{eq:291}). For the term $B_n(v,v)$ we use
\begin{align}
  \label{eq:494}
  |B_n(v,v)|=\left|\pp{t_n}{u}(v,v)\tilde{B}_n(v,v)\right|\leq C_0vN,
\end{align}
where we used the second of \eqref{eq:488}. Therefore,
\begin{align}
  \label{eq:495}
  \left|\pp{\beta_{n+1}'}{u}\right|\leq Cuv+C_0vN.
\end{align}
From \eqref{eq:488}, \eqref{eq:492}, \eqref{eq:495} we have
\begin{align}
  \label{eq:496}
  \frac{1}{u}\left|\pp{\alpha_{n+1}'}{u}\right|,\frac{1}{v}\left|\pp{\alpha_{n+1}'}{v}\right|,\frac{1}{u}\left|\pp{\beta_{n+1}'}{u}\right|,\frac{1}{v}\left|\pp{\beta_{n+1}'}{v}\right|\leq Cu+C_0N.
\end{align}
Replacing $u$ by $u'$, taking the supremum over $u'\in [v,u]$ and choosing $\varepsilon$ sufficiently small we find
\begin{align}
  \label{eq:497}
  \|(\alpha_{n+1}',\beta_{n+1}')\|_X\leq N_0.
\end{align}
This completes the inductive step and therefore the proof of the lemma.
\end{proof}

%\end{comment}

\subsection{Convergence}
\begin{lemma}\label{lemma_fxdbp_conv}
For $\varepsilon$ sufficiently small depending on $N_0$, $Y$, the sequence $((\alpha_n',\beta_n');n=0,1,2,\ldots)$ converges in $X$.
\end{lemma}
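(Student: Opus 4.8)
The plan is to show that the iteration map $\Phi\colon(\alpha_n,\beta_n)\mapsto(\alpha_{n+1},\beta_{n+1})$, which by Lemma~\ref{lemma_fxdbp_ind} maps the ball $\{\|(\cdot)'\|_X\le N_0\}$ into itself, is in addition a contraction on that ball, so that $((\alpha_n',\beta_n'))$ is Cauchy in $X$ and hence converges. Since $\alpha_i$ and $\beta_+$ are fixed, writing $\bar\alpha_n\coloneqq\alpha_n-\alpha_{n-1}$, $\bar\beta_n\coloneqq\beta_n-\beta_{n-1}$ and likewise $\bar t_n,\bar r_n,\bar\mu_n,\bar\nu_n$ for the differences of consecutive iterates, we have $\bar\alpha_n=\alpha_n'-\alpha_{n-1}'$, $\bar\beta_n=\beta_n'-\beta_{n-1}'$; set $D_n\coloneqq\|(\bar\alpha_n,\bar\beta_n)\|_X$. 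The target is the estimate $D_{n+1}\le C\varepsilon\,D_n$, with $C$ a non-decreasing continuous function of $N_0$ and $Y$; then, choosing $\varepsilon$ so small that $\theta\coloneqq C\varepsilon<1$, we get $D_{n+1}\le\theta^{n}D_1$, $\sum_nD_n<\infty$, and, $X$ being complete, the sequence converges in $X$.

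First I would bound the differences of the auxiliary quantities. By Lemma~\ref{lemma_fxdbp_ind} all $(\alpha_n,\beta_n)$ lie in $R_\delta$, the associated $t_n,r_n$ satisfy \eqref{eq:485}, \eqref{eq:486}, and $c_\pm,\tilde A,\tilde B$ are smooth on the relevant compacta; the mean value theorem then gives $|\bar c_{\pm,n}|\le C(|\bar\alpha_n|+|\bar\beta_n|)$ together with analogous bounds for the first derivatives of these functions, while integrating along $\underline{C}$ and $\mathcal{K}$ as in \eqref{eq:450}, \eqref{eq:451} gives $|\bar\alpha_n|,|\bar\beta_n|\le C\varepsilon^2D_n$ and $|\partial_u\bar\alpha_n|,|\partial_v\bar\alpha_n|,\dots\le C\varepsilon D_n$. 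Inserting these into the definitions \eqref{eq:312} and exploiting the structure already used in the proof of Lemma~\ref{lemma_fxdbp_ind}, namely that the leading constant $\kappa/(c_{+0}-c_{-0})$ of $\mu_n$ is data-determined and cancels in $\bar\mu_n$, yields $|\bar\mu_n|\le C\varepsilon\,D_n$ and $|\bar\nu_n|\le C\varepsilon\,D_n$; and since $1/\gamma_n$ has the form \eqref{eq:483} in which the iterate enters only through $\alpha_{+n}(v)=\alpha_n'(v,v)+\alpha_i(v)$ with $\alpha_n'(v,v)-\alpha_{n-1}'(v,v)=\mathcal{O}(v^2)D_n$, one obtains $\bigl|\tfrac{1}{\gamma_n}-\tfrac{1}{\gamma_{n-1}}\bigr|\le C\,v\,D_n$.

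Next I would estimate $\bar t_n$, which solves $\partial_u\partial_v\bar t_n+\mu_n\,\partial_v\bar t_n-\nu_n\,\partial_u\bar t_n=-\bar\mu_n\,\partial_v t_{n-1}+\bar\nu_n\,\partial_u t_{n-1}$ with $\bar t_n(u,0)=0$ (both $t_n$ and $t_{n-1}$ have initial data $h$) and, on $\mathcal{K}$, the boundary relation obtained by differencing \eqref{eq:315}. This is exactly the situation of the proof of Proposition~\ref{prop_inner_iteration}, now with an inhomogeneous right-hand side and a forcing term in the ODE \eqref{eq:356}--\eqref{eq:362} for $\bar f_n\coloneqq f_n-f_{n-1}$; re-running that argument — the representation formulas \eqref{eq:344}, \eqref{eq:345}, the first-order ODE for $v\bar f_n$, and the Gronwall loop \eqref{eq:421}--\eqref{eq:425} for the analogue of $T$ — with every input now carrying a factor $C\varepsilon D_n$, I expect $|\partial_v\bar t_n|\le C\varepsilon\,v\,D_n$, $|\partial_u\bar t_n|\le C\varepsilon\,u^2D_n$, and, from the equation itself, $|\partial_u\partial_v\bar t_n|\le C\varepsilon\,v\,D_n$. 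Integrating \eqref{eq:316} then bounds $\bar r_n$ and its first derivatives by $C\varepsilon\,D_n$ with the $u$-, $v$-weights. Finally, differencing \eqref{eq:332}, writing $A_n-A_{n-1}=\partial_v\bar t_n\,\tilde A(\alpha_n,\beta_n,r_n)+\partial_v t_{n-1}\bigl(\tilde A(\alpha_n,\beta_n,r_n)-\tilde A(\alpha_{n-1},\beta_{n-1},r_{n-1})\bigr)$ and similarly for $B_n-B_{n-1}$, and using \eqref{eq:485}, \eqref{eq:489}, the $\bar t_n,\bar r_n$ bounds and the smallness of $\bar\alpha_n,\bar\beta_n$, one gets $|\partial_v(\alpha_{n+1}'-\alpha_n')|\le C\varepsilon\,v\,D_n$ directly and $|\partial_u(\alpha_{n+1}'-\alpha_n')|\le C\varepsilon\,u^2D_n$ after differentiating under the integral as in \eqref{eq:490} (this is where $\partial_u\partial_v\bar t_n$ enters), together with the analogues for $\beta_{n+1}'-\beta_n'$ from the differenced \eqref{eq:493}, the extra boundary term $B_n(v,v)-B_{n-1}(v,v)$ handled as in \eqref{eq:494}. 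Dividing by the weights and taking the supremum over $T_\varepsilon$ yields $D_{n+1}\le C\varepsilon\,D_n$.

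The hard part is the $\bar t_n$ estimate: one must carry the factor $C\varepsilon D_n$ cleanly through the ODE for $v\bar f_n$ and the Gronwall loop that controls the analogue of $T$, and in particular through the boundary contribution coming from $\tfrac{1}{\gamma_n}-\tfrac{1}{\gamma_{n-1}}$, which rests on the second-order vanishing in $v$ of $\alpha_{+n}-\alpha_{+,n-1}$. A secondary bookkeeping point is that the weights $1/u$, $1/v$ in $\|\cdot\|_X$ must be matched at each step; this is automatic because $\bar t_n,\bar\alpha_n,\bar\beta_n$ vanish on the same parts of $\partial T_\varepsilon$ — $\{v=0\}$ and $\{u=v\}$ respectively — that were used in Lemma~\ref{lemma_fxdbp_ind}.
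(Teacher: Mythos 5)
Your proposal follows essentially the same route as the paper's proof: difference consecutive iterates, bound the differences of $c_\pm$, $\tilde A$, $\tilde B$, $\mu_n$, $\nu_n$, $1/\gamma_n$ in terms of $D_n=\|(\Delta_n\alpha',\Delta_n\beta')\|_X$, solve the linear equation for $\Delta_n t$ with forcing $\Xi_n$ and the differenced boundary condition, re-run the ODE for $v\Delta_n f$ and the Gronwall loop to get weighted bounds on $\partial_v\Delta_n t$, $\partial_u\Delta_n t$, and then difference \eqref{eq:332} to obtain $\|(\Delta_{n+1}\alpha',\Delta_{n+1}\beta')\|_X\le C\varepsilon\,D_n$, hence convergence by contraction.

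Two of your intermediate bounds need correcting, and one of them matters. First, $\bigl|\tfrac{1}{\gamma_n}-\tfrac{1}{\gamma_{n-1}}\bigr|\le C\,v\,D_n$ is too strong: the numerator of the difference contains $(V-\bar c_{-,n-1})\,\Delta_n\bar c_+=O(1)\cdot O(v^2D_n)$ against a denominator of order $v^2$, so only $|\Delta_n(1/\gamma)|\le CD_n$ holds (this is \eqref{eq:536}); it is still sufficient because this difference enters multiplied by $\partial_u t_{n-1}(v,v)=O(v^2)$, cf.\ \eqref{eq:537}. Second, and more importantly, your bounds $|\bar\mu_n|,|\bar\nu_n|\le C\varepsilon D_n$ discard exactly the weights that the $X$-norm provides; what one actually gets, and needs, is $|\Delta_n\mu|\le C\,u\,D_n$ and $|\Delta_n\nu|\le C\,v\,D_n$ as in \eqref{eq:518}, \eqref{eq:521}, \eqref{eq:523}. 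The factor $v$ in $\Delta_n\nu$ is essential: the forcing term $\partial_u t_{n-1}\,\Delta_n\nu$ in $\Xi_n$ is $O(u^2)\cdot|\Delta_n\nu|$, and with only $|\Delta_n\nu|\le C\varepsilon D_n$ the representation formula for $\partial_v\Delta_n t$ produces a contribution of size $C\varepsilon u^3D_n$ carrying no factor of $v$, which cannot be divided by $v$ when you estimate $\tfrac1v\bigl|\partial_v(\alpha'_{n+1}-\alpha'_n)\bigr|$ near $v=0$; whereas with $|\Delta_n\nu|\le CvD_n$ one gets $|\Xi_n|\le C\,uv\,D_n$ and the Gronwall loop closes with $|\partial_v\Delta_n t|\le C\,uv\,D_n$, $|\partial_u\Delta_n t|\le C\,uv^2D_n$ (\eqref{eq:524}, \eqref{eq:565}, \eqref{eq:566}), which is exactly what your final step requires. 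With the weighted difference bounds restored, your argument is the paper's.
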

\begin{proof}
We use the notation
\begin{align}
  \label{eq:498}
  \Delta_n\psi\coloneqq \psi_n-\psi_{n-1}.
\end{align}
From the first of \eqref{eq:332} we have
\begin{align}
  \label{eq:499}
  \pp{\Delta_{n+1}\alpha'}{v}=\Delta_nA=\tilde{A}_n\pp{\Delta_nt}{v}+\pp{t_{n-1}}{v}\Delta_n\tilde{A}.
\end{align}
Therefore
\begin{align}
  \label{eq:500}
  \left|\pp{\Delta_{n+1}\alpha'}{v}\right|\leq C\left\{\left|\pp{\Delta_nt}{v}\right|+v\Big(|\Delta_n\alpha|+|\Delta_n\beta|+|\Delta_nr|\Big)\right\}.
\end{align}
From the first of \eqref{eq:331} we have
\begin{align}
  \label{eq:501}
  \Delta_n\alpha=\Delta_n\alpha'=\int_0^v\pp{\Delta_n\alpha'}{v}(u,v')dv',
\end{align}
which implies
\begin{align}
  \label{eq:502}
  |\Delta_n\alpha|\leq v^2\sup_{T_\varepsilon}\left|\frac{1}{v}\pp{\Delta_n\alpha'}{v}\right|.
\end{align}
Similarly we get
\begin{align}
  \label{eq:503}
  |\Delta_n\beta|\leq u^2\sup_{T_\varepsilon}\left|\frac{1}{u}\pp{\Delta_n\beta'}{u}\right|.
\end{align}

For an estimate of $|\Delta_nr(u,v)|$ we use \eqref{eq:316}, which gives
\begin{align}
  \label{eq:504}
   \Delta_nr= \int_0^u\left\{c_{-,n}\pp{\Delta_nt}{u}+\pp{t_{n-1}}{u}\Delta_nc_-\right\}(u',0)du'+\int_0^v\left\{c_{+,n}\pp{\Delta_nt}{v}+\pp{t_{n-1}}{v}\Delta_nc_+\right\}(u,v')dv'.
\end{align}
We first look at the terms $\Delta_nc_\pm$.
\begin{align}
  \label{eq:505}
  |\Delta_nc_\pm|\leq C\left\{|\Delta_n\alpha|+|\Delta_n\beta|\right\}\leq Cu^2\left\{\sup_{T_\varepsilon}\left|\frac{1}{v}\pp{\Delta_n\alpha'}{v}\right|+\sup_{T_\varepsilon}\left|\frac{1}{u}\pp{\Delta_n\beta'}{u}\right|\right\},
\end{align}
where for the second inequality we use \eqref{eq:502}, \eqref{eq:503}. To find an estimate for the differences of partial derivatives of $t$ we subtract \eqref{eq:297} with $t_{n-1}$ in the role of $t$ from \eqref{eq:297} with $t_n$ in the role of $t$ (analogously for the roles of $\mu$ and $\nu$). We arrive at
\begin{align}
  \label{eq:506}
  \frac{\partial^2\Delta_nt}{\partial u\partial v}+\mu_n\pp{\Delta_nt}{v}-\nu_n\pp{\Delta_nt}{u}=\Xi_n,
\end{align}
where
\begin{align}
  \label{eq:507}
  \Xi_n\coloneqq \pp{t_{n-1}}{u}\Delta_n\nu-\pp{t_{n-1}}{v}\Delta_n\mu.
\end{align}
The initial conditions for \eqref{eq:506} are $\Delta_nt(u,0)=0$. Furthermore we have the boundary condition
\begin{align}
  \label{eq:508}
  \pp{\Delta_nt}{v}=\frac{1}{\gamma_n}\pp{\Delta_nt}{u}+\pp{t_{n-1}}{u}\Delta_n\left(\frac{1}{\gamma}\right),\quad\textrm{for}\quad u=v.
\end{align}
Integrating \eqref{eq:506} with respect to $v$ from $v=0$ yields
\begin{align}
  \label{eq:509}
  \pp{\Delta_nt}{u}(u,v)=e^{-K_n(u,v)}\int_0^ve^{K_n(u,v')}\left(\Xi_n-\mu_n\pp{\Delta_nt}{v}\right)(u,v')dv',
\end{align}
where we used the first of \eqref{eq:346}. Evaluating \eqref{eq:509} at $u=v$ gives
\begin{align}
  \label{eq:510}
  \Delta_nb(v)=e^{-K_n(v,v)}\left\{P_n(v)-\frac{\kappa}{c_{+0}-c_{-0}}\Big(\Delta_nf(v)+\Delta_nI(v)\Big)\right\},
\end{align}
where we used the definitions
\begin{align}
  \label{eq:511}
  \Delta_nI(v)&\coloneqq \int_0^v\left\{e^{K_n(v,v')}-1+e^{K_n(v,v')}\tau_n(v,v')\right\}\pp{\Delta_nt}{v}(v,v')dv',\\
  \label{eq:512}
  P_n(v)&\coloneqq \int_0^ve^{K_n(v,v')}\Xi_n(v,v')dv'.
\end{align}
For the definition of $\tau_n(v)$ see \eqref{eq:335}.

We now estimate the differences $\Delta_n\mu$, $\Delta_n\nu$. From the first equality in \eqref{eq:456} we obtain
\begin{align}
  \label{eq:513}
  \Delta_n\mu=-\frac{1}{(c_{+,n}-c_{-,n})(c_{+,n-1}-c_{-,n-1})}\pp{c_{+,n}}{u}\Delta_n(c_+-c_-)+\frac{1}{c_{+,n-1}-c_{-,n-1}}\pp{\Delta_nc_+}{u}.
\end{align}
The first term can be bounded using the estimates \eqref{eq:505}. For the second term we use
\begin{align}
  \label{eq:514}
  \pp{c_{+,n}}{u}=\left(\pp{c_+}{\alpha}\right)_n\pp{\alpha_n}{u}+\left(\pp{c_+}{\beta}\right)_n\pp{\beta_n}{u},
\end{align}
which implies
\begin{align}
  \label{eq:515}
  \pp{\Delta_nc_+}{u}=\left(\pp{c_+}{\alpha}\right)_n\pp{\Delta_n\alpha}{u}+\pp{\alpha_{n-1}}{u}\Delta_n\left(\pp{c_+}{\alpha}\right)+\left(\pp{c_+}{\beta}\right)_n\pp{\Delta_n\beta}{u}+\pp{\beta_{n-1}}{u}\Delta_n\left(\pp{c_+}{\beta}\right).
\end{align}
For the first and third difference we use
\begin{align}
  \label{eq:516}
  \pp{\Delta_n\alpha}{u}=\pp{\Delta_n\alpha'}{u},\qquad \pp{\Delta_n\beta}{u}=\pp{\Delta_n\beta'}{u}.
\end{align}
For the difference of derivatives of $c_{n,\pm}$ we use
\begin{align}
  \label{eq:517}
  \left|\Delta_n\left(\pp{c_+}{\alpha}\right)\right|\leq C\left\{|\Delta_n\alpha|+|\Delta_n\beta|\right\}\leq Cu^2\left\{\sup_{T_\varepsilon}\left|\frac{1}{v}\pp{\Delta_n\alpha'}{v}\right|+\sup_{T_\varepsilon}\left|\frac{1}{u}\pp{\Delta_n\beta'}{u}\right|\right\},
\end{align}
where we used \eqref{eq:502}, \eqref{eq:503}. The same result holds with $\beta$ in the role of $\alpha$. Using \eqref{eq:516}, \eqref{eq:517} in \eqref{eq:515} and the resulting estimate together with \eqref{eq:514} in \eqref{eq:513}, we obtain
\begin{align}
  \label{eq:518}
  |\Delta_n\mu|\leq C\left\{\left|\pp{\Delta_n\alpha'}{u}\right|+\left|\pp{\Delta_n\beta'}{u}\right|+u^2\left(\sup_{T_\varepsilon}\left|\frac{1}{v}\pp{\Delta_n\alpha'}{v}\right|+\sup_{T_\varepsilon}\left|\frac{1}{u}\pp{\Delta_n\beta'}{u}\right|\right)\right\}.
\end{align}

From \eqref{eq:466} we get
\begin{align}
  \label{eq:519}
  \Delta_n\nu=-\frac{1}{(c_{+,n}-c_{-,n})(c_{+,n-1}-c_{-,n-1})}\pp{c_{-,n}}{v}\Delta_n\left(c_+-c_-\right)+\frac{1}{c_{+,n-1}-c_{-,n-1}}\pp{\Delta_nc_-}{v}.
\end{align}
The first term can be bounded using the estimate \eqref{eq:505} and taking into account that
\begin{align}
  \label{eq:520}
  \left|\pp{c_{-,n}}{v}\right|=\left|\left(\pp{c_-}{\alpha}\right)_n\pp{\alpha_n}{v}+\left(\pp{c_-}{\beta}\right)_n\pp{\beta_n}{v}\right|\leq Cv,
\end{align}
where we used the inductive hypothesis \eqref{eq:449}. For the second term in \eqref{eq:519} we use the expression for $\partial \Delta_nc_-/\partial v$ analogous to \eqref{eq:515} and take into account the expressions analogous to \eqref{eq:516}. We arrive at
\begin{align}
  \label{eq:521}
  |\Delta_n\nu|\leq C\left\{\left|\pp{\Delta_n\alpha'}{v}\right|+\left|\pp{\Delta_n\beta'}{v}\right|+u^2v\left(\sup_{T_\varepsilon}\left|\frac{1}{v}\pp{\Delta_n\alpha'}{v}\right|+\sup_{T_\varepsilon}\left|\frac{1}{u}\pp{\Delta_n\beta'}{u}\right|\right)\right\}.
\end{align}

Defining (see \eqref{eq:334})
\begin{align}
  \label{eq:522}
  \Lambda\coloneqq \|(\Delta_n\alpha',\Delta_n\beta')\|_X,
\end{align}
we deduce from \eqref{eq:518}, \eqref{eq:521},
\begin{align}
  \label{eq:523}
  |\Delta_n\mu|\leq Cu\Lambda,\qquad |\Delta_n\nu|\leq Cv\Lambda.
\end{align}

Recalling \eqref{eq:485} we deduce from \eqref{eq:523} that
\begin{align}
  \label{eq:524}
  |\Xi_n|\leq Cuv\Lambda.
\end{align}
Therefore,
\begin{align}
  \label{eq:525}
  |P_n(v)|\leq Cv^3\Lambda.
\end{align}
Since
\begin{align}
  \label{eq:526}
  \frac{d\Delta_nf}{dv}=\Delta_na+\Delta_nb,
\end{align}
the boundary condition \eqref{eq:508} gives
\begin{align}
  \label{eq:527}
  \frac{d\Delta_nf}{dv}=\left(\frac{1}{\gamma_n}+1\right)\Delta_nb+k_n,
\end{align}
where we use the definition
\begin{align}
  \label{eq:528}
  k_n(v)\coloneqq \pp{t_{n-1}}{u}(v,v)\Delta_n\left(\frac{1}{\gamma(v)}\right).
\end{align}
Using \eqref{eq:484} we obtain
\begin{align}
  \label{eq:529}
  v\frac{d\Delta_nf}{dv}=\left(\frac{c_{+0}-c_{-0}}{\kappa}+v+\frac{c_{+0}-c_{-0}}{\kappa}\rho_n(v)\right)\Delta_nb+vk_n.
\end{align}
Substituting \eqref{eq:510} we get
\begin{align}
  \label{eq:530}
  \frac{d(v\Delta_nf)}{dv}+A_n(v\Delta_nf)=v^2\Delta_nB,
\end{align}
where $A_n(v)$ is given by \eqref{eq:354} and
\begin{align}
  \label{eq:531}
  \Delta_nB(v)\coloneqq \frac{e^{-K_n(v,v)}}{v^2}\left(1+\rho_n(v)+\frac{\kappa v}{c_{+0}-c_{-0}}\right)\left(\frac{c_{+0}-c_{-0}}{\kappa}P_n(v)-\Delta_nI(v)\right)+\frac{k_n(v)}{v}.
\end{align}
Integrating \eqref{eq:530} from $v=0$ to $v$ yields
\begin{align}
  \label{eq:532}
  v\Delta_nf(v)=\int_0^ve^{-\int_{v'}^vA_n(v'')dv''}v'^2\Delta_nB(v')dv'.
\end{align}
Substituting this back into \eqref{eq:530} gives
\begin{align}
  \label{eq:533}
  \frac{d\Delta_nf(v)}{dv}=v\Delta_nB(v)-\frac{1}{v^2}(1+vA_n(v))\int_0^ve^{-\int_{v'}^vA_n(v'')dv''}v'^2\Delta_nB(v')dv'.
\end{align}

We decompose $\Delta_nB$ according to
\begin{align}
  \label{eq:534}
  \Delta_nB=\overset{0}{\Delta}_nB+\overset{1}{\Delta}_nB,
\end{align}
where $\overset{1}{\Delta}_nB$ contains the terms of $\Delta_nB$ which are linear in $\Delta_nI$. The right hand side of \eqref{eq:533} being linear in $\Delta_nB$, we decompose analogous to the decomposition \eqref{eq:534}, i.e.
\begin{align}
  \label{eq:535}
  \frac{d\Delta_nf(v)}{dv}=\overset{0}{R}_n+\overset{1}{R}_n,
\end{align}
$\overset{1}{R}_n$ being linear in $\Delta_nI$. Recalling the third of \eqref{eq:299} together with the second of \eqref{eq:291} we deduce
\begin{align}
  \label{eq:536}
  \left|\Delta_n\left(\frac{1}{\gamma}\right)(v)\right|\leq C\frac{|\Delta_n\alpha(v,v)|}{v^2}\leq C\sup_{T_\varepsilon}\left|\frac{1}{v}\pp{\Delta_n\alpha'}{v}\right|\leq C\Lambda,
\end{align}
where we used \eqref{eq:502} (notice that in the third of \eqref{eq:299} we have $\gamma_n$ in the role of $\gamma$ and $\bar{c}_{\pm,n}(v)=c_\pm(\alpha_{+n}(v),\beta_+(v))$ in the role of $\bar{c}_\pm(v)$ but $\beta_+(v)$ as well as $V(v)$ are given by the boundary data). From the first of \eqref{eq:485} we then get
\begin{align}
  \label{eq:537}
  |k_n(v)|\leq Cv^2\Lambda.
\end{align}
Using \eqref{eq:525} and \eqref{eq:537} we find
\begin{align}
  \label{eq:538}
  |\overset{0}{\Delta}_nB(v)|\leq Cv\Lambda.
\end{align}
From \eqref{eq:377} we have
\begin{align}
  \label{eq:539}
  |vA_n(v)|\leq C.
\end{align}
Therefore, through \eqref{eq:533},
\begin{align}
  \label{eq:540}
  |\overset{0}{R}_n(v)|\leq Cv^2\Lambda.
\end{align}
We now estimate $\Delta_nI$. From \eqref{eq:347} we get
\begin{align}
  \label{eq:541}
  |e^{K(v,v')}-1|\leq Cv^2,
\end{align}
while from \eqref{eq:335} we have
\begin{align}
  \label{eq:542}
  |\tau_n(v,v)|\leq Cv.
\end{align}
Using \eqref{eq:541} and \eqref{eq:542} in \eqref{eq:511} we obtain
\begin{align}
  \label{eq:543}
  |\Delta_nI(v)|\leq Cv\int_0^v\left|\pp{\Delta_nt}{v}(v,v')\right|dv'.
\end{align}
Therefore,
\begin{align}
  \label{eq:544}
  |\overset{1}{\Delta}_nB(v)|\leq \frac{C}{v^2}|\Delta_nI(v)|\leq \frac{C}{v}\int_0^v\left|\pp{\Delta_nt}{v}(v,v')\right|dv',
\end{align}
which implies
\begin{align}
  \label{eq:545}
  \frac{1}{v^2}\int_0^vv'^2|\overset{1}{\Delta}_nB(v')|dv'&\leq\frac{1}{v}\int_0^vv'|\overset{1}{\Delta}_nB(v')|dv'\notag\\
&\leq\frac{C}{v}\int_0^v\left(\int_0^{v'}\left|\pp{\Delta_nt}{v}(v',v'')\right|dv''\right)dv'\notag\\
&\leq C\int_0^v\Delta_nT(v,v'')dv'',
\end{align}
where
\begin{align}
  \label{eq:546}
  \Delta_nT(u,v)\coloneqq \sup_{u'\in[v,u]}\left|\pp{\Delta_nt}{v}(u',v)\right|.
\end{align}
Since, from \eqref{eq:544}
\begin{align}
  \label{eq:547}
  v|\overset{1}{\Delta}_nB(v)|\leq C\int_0^v\Delta_nT(v,v'')dv'',
\end{align}
we conclude, together with \eqref{eq:545}, that the part of the right hand side of \eqref{eq:533} which is linear in $\Delta_nI$ (which we denote by $\overset{1}{R}_n$) satisfies the estimate
\begin{align}
  \label{eq:548}
  |\overset{1}{R}_n(v)|\leq C\int_0^v\Delta_nT(v,v'')dv''.
\end{align}
From \eqref{eq:540}, \eqref{eq:548} we conclude
\begin{align}
  \label{eq:549}
  \left|\frac{d\Delta_nf(v)}{dv}\right|\leq Cv^2\Lambda+C\int_0^v\Delta_nT(v,v')dv'.
\end{align}

Integrating \eqref{eq:506} with respect to $u$ from $u=v$ yields
\begin{align}
  \label{eq:550}
  \pp{\Delta_nt}{v}(u,v)=e^{-L_n(u,v)}\left\{\Delta_na(v)+\int_v^ue^{L_n(u',v)}\left(\Xi_n(u',v)+\left(\nu_n\pp{\Delta_nt}{u}\right)(u',v)\right)du'\right\}.
\end{align}
From \eqref{eq:335}, \eqref{eq:336} we have
\begin{align}
  \label{eq:551}
  |\nu_n(u,v)|\leq Cv,\qquad |\mu_n(u,v)|\leq C.
\end{align}
The second of these implies, through the second of \eqref{eq:346}, that
\begin{align}
  \label{eq:552}
  |L_n(u,v)|\leq Cu.
\end{align}
Using the first of \eqref{eq:551} as well as \eqref{eq:552} and \eqref{eq:524} we obtain
\begin{align}
  \label{eq:553}
  \left|\pp{\Delta_nt}{v}(u,v)\right|\leq C\left\{|\Delta_na(v)|+u^2v\Lambda+v\int_v^u\left|\pp{\Delta_nt}{u}(u',v)\right|du'\right\}.
\end{align}
Now, from \eqref{eq:509} in conjunction with \eqref{eq:524} and the second of \eqref{eq:551} we get
\begin{align}
  \label{eq:554}
  \left|\pp{\Delta_nt}{u}(u,v)\right|\leq C\left\{uv^2\Lambda+\int_0^v\left|\pp{\Delta_nt}{v}(u,v')\right|dv'\right\}.
\end{align}
Using this for the integral of the right hand side of \eqref{eq:553} we estimate
\begin{align}
  \label{eq:555}
  \int_v^u\left|\pp{\Delta_nt}{u}(u',v)\right|du'&\leq Cuv^2(u-v)\Lambda+\int_v^u\left(\int_0^v\left|\pp{\Delta_nt}{v}(u',v')\right|dv'\right)du'\notag\\
&\leq C\left\{uv^2(u-v)\Lambda+(u-v)\int_0^v\Delta_nT(u,v')dv'\right\}.
\end{align}
Substituting this into \eqref{eq:553} we obtain
\begin{align}
  \label{eq:556}
  \left|\pp{\Delta_nt}{v}(u,v)\right|\leq C\left\{|\Delta_na(v)|+u^2v\Lambda+v(u-v)\int_0^v\Delta_nT(u,v')dv'\right\}.
\end{align}
From \eqref{eq:526}, \eqref{eq:527},
\begin{align}
  \label{eq:557}
  \Delta_na(v)=\frac{1}{1+\gamma_n(v)}\left(\frac{d\Delta_nf(v)}{dv}+\gamma_n(v)k_n(v)\right).
\end{align}
Using \eqref{eq:537}, \eqref{eq:549} we obtain
\begin{align}
  \label{eq:558}
  |\Delta_na(v)|\leq C\left\{v^2\Lambda+\int_0^v\Delta_nT(v,v')dv'\right\}.
\end{align}
Using this estimate in \eqref{eq:556} we find
\begin{align}
  \label{eq:559}
  \left|\pp{\Delta_nt}{v}(u,v)\right|\leq C\left\{uv\Lambda+\int_0^v\Delta_nT(u,v')dv'\right\}.
\end{align}
Taking the supremum over $u$ in $[v,u]$ we deduce
\begin{align}
  \label{eq:560}
  \Delta_nT(u,v)\leq C\left\{uv\Lambda+\int_0^v\Delta_nT(u,v')dv'\right\}.
\end{align}
Setting
\begin{align}
  \label{eq:561}
  \Delta_n\Sigma_u(v)\coloneqq \int_0^v\Delta_nT(u,v')dv',
\end{align}
\eqref{eq:560} becomes
\begin{align}
  \label{eq:562}
  \frac{d}{dv}\Delta_n\Sigma_u(v)\leq Cuv\Lambda+C\Delta_n\Sigma_u(v).
\end{align}
Integrating yields
\begin{align}
  \label{eq:563}
  \Delta_n\Sigma_u(v)\leq Cuv^2\Lambda.
\end{align}
Using this in \eqref{eq:560}, we find
\begin{align}
  \label{eq:564}
  |\Delta_nT(u,v)|\leq Cuv\Lambda.
\end{align}

The above imply, through \eqref{eq:559},
\begin{align}
  \label{eq:565}
  \left|\pp{\Delta_nt}{v}(u,v)\right|\leq C uv\Lambda.
\end{align}
Using this in \eqref{eq:554} we conclude
\begin{align}
  \label{eq:566}
  \left|\pp{\Delta_nt}{u}(u,v)\right|\leq C uv^2\Lambda.
\end{align}
Using these estimates together with \eqref{eq:485} and \eqref{eq:505} in \eqref{eq:504} we deduce
\begin{align}
  \label{eq:567}
  |\Delta_nr(u,v)|\leq Cu^3\Lambda.
\end{align}
It then follows from \eqref{eq:500} together with \eqref{eq:502}, \eqref{eq:503}, \eqref{eq:565}, \eqref{eq:567} that
\begin{align}
  \label{eq:568}
  \left|\pp{\Delta_{n+1}\alpha'}{v}\right|\leq C uv\Lambda.
\end{align}

From the second of \eqref{eq:332} we deduce
\begin{align}
  \label{eq:569}
  \pp{\Delta_{n+1}\beta'}{u}=\Delta_nB=\tilde{B}_n\pp{\Delta_nt}{u}+\pp{t_{n-1}}{u}\Delta_n\tilde{B}.
\end{align}
Therefore (cf.~\eqref{eq:500}),
\begin{align}
  \label{eq:570}
  \left|\pp{\Delta_{n+1}\beta'}{u}\right|\leq C\left\{\left|\pp{\Delta_nt}{u}\right|+u\Big(|\Delta_n\alpha|+|\Delta_n\beta|+|\Delta_nr|\Big)\right\}.
\end{align}
Using now \eqref{eq:502}, \eqref{eq:503}, \eqref{eq:566} \eqref{eq:567} we arrive at
\begin{align}
  \label{eq:571}
  \left|\pp{\Delta_{n+1}\beta'}{u}(u,v)\right|\leq C u^2\Lambda.
\end{align}

Now, from the first of \eqref{eq:332} we deduce
\begin{align}
  \label{eq:572}
  \pp{\Delta_{n+1}\alpha'}{u}(u,v)&=\int_0^v\bigg\{\left(\pp{A}{\alpha}\right)_n\pp{\Delta_n\alpha}{u}+\pp{\alpha_{n-1}}{u}\Delta_n\left(\pp{A}{\alpha}\right)+\left(\pp{A}{\beta}\right)_n\pp{\Delta_n\beta}{u}\notag\\
&\hspace{20mm}+\pp{\beta_{n-1}}{u}\Delta_n\left(\pp{A}{\beta}\right)+\left(\pp{A}{r}\right)_nc_{-,n}\pp{\Delta_nt}{u}\notag\\
&\hspace{20mm}+\pp{t_{n-1}}{u}\left(\pp{A}{r}\right)_n\Delta_nc_-+\pp{t_{n-1}}{u}c_{-,n-1}\Delta_n\left(\pp{A}{r}\right)\notag\\
&\hspace{60mm}+\tilde{A}_n\frac{\partial^2\Delta_nt}{\partial u\partial v}+\frac{\partial^2t_{n-1}}{\partial u\partial v}\Delta_n\tilde{A}\bigg\}(u,v')dv'.
\end{align}
For the second term on the right of \eqref{eq:572} we use
\begin{align}
  \label{eq:573}
  \left|\Delta_n\left(\pp{A}{\alpha}\right)\right|\leq C\left\{|\Delta_n\alpha|+|\Delta_n\beta|+|\Delta_nr|+\left|\pp{\Delta_nt}{v}\right|\right\}\leq Cu^2\Lambda,
\end{align}
where for the last inequality we used \eqref{eq:502}, \eqref{eq:503}, \eqref{eq:565}, \eqref{eq:567}. The fourth term on the right of \eqref{eq:572} we treat analogous to the second. For the fifth term we use \eqref{eq:566} and for the sixth we use \eqref{eq:505}. The seventh and the last term in \eqref{eq:572} can be bounded in the same way as the second and the fourth. This leaves us with the eighth term. From \eqref{eq:506} in conjunction with \eqref{eq:524}, \eqref{eq:565}, \eqref{eq:566} we get
\begin{align}
  \label{eq:574}
  \left|\frac{\partial^2\Delta_nt}{\partial u\partial v}(u,v)\right|\leq Cuv\Lambda.
\end{align}
Putting things together we deduce from \eqref{eq:572}
\begin{align}
  \label{eq:575}
  \left|\pp{\Delta_{n+1}\alpha'}{u}(u,v)\right|\leq Cu^2\Lambda.
\end{align}
Using the second of \eqref{eq:332} we see that for the difference
\begin{align}
  \label{eq:576}
  \pp{\Delta_{n+1}\beta'}{v}
\end{align}
we get an analogous equation as we got in \eqref{eq:572} with the exception of the additional term
\begin{align}
  \label{eq:577}
  -\Delta_nB(v,v).
\end{align}
For the terms analogous to the ones showing up in \eqref{eq:572} we use the analogous estimates while for the term \eqref{eq:577} we use the first equality in \eqref{eq:569} with $u=v$ and the estimate \eqref{eq:571}. Therefore,
\begin{align}
  \label{eq:578}
  \left|\pp{\Delta_{n+1}\beta'}{v}(u,v)\right|\leq C uv\Lambda.
\end{align}

Using \eqref{eq:568}, \eqref{eq:571}, \eqref{eq:575}, \eqref{eq:578}, it follows
\begin{align}
  \label{eq:579}
  \|(\Delta_{n+1}\alpha',\Delta_{n+1}\beta')\|_X\leq Cu\Lambda= Cu \|(\Delta_{n}\alpha',\Delta_{n}\beta')\|_X.
\end{align}
It follows that for $\varepsilon$ small enough we have convergence of the sequence in the space $X$. This concludes the proof of the lemma.
\end{proof}

The two lemmas above show that the sequence $(\alpha_n',\beta_n')$ converges to $(\alpha',\beta')\in X$ uniformly in $T_\varepsilon$. Therefore we also have uniform convergence of $(\alpha_n,\beta_n)$ to $(\alpha,\beta)\in C^1(T_\varepsilon)$ (see \eqref{eq:331}).  Now, \eqref{eq:565}, \eqref{eq:566} show the convergence of the derivatives of $t_n$. Therefore, the pair of integral equations \eqref{eq:344}, \eqref{eq:345} are satisfied in the limit. We denote by $t$ the limit of $(t_n)$. It then follows that the mixed derivative $\partial^2t/\partial u\partial v$ satisfies \eqref{eq:339}. In view of the Hodograph system \eqref{eq:285} the partial derivatives of $r_n$ converge and the limit satisfies the Hodograph system. Let us denote by $r$ the limit of $(r_n)$. We have thus found a solution of the fixed boundary problem. Since every member of the sequence $(t_n)$ satisfies the expressions for the asymptotic form as given in the statement of proposition \ref{prop_inner_iteration} these expressions (i.e.~\eqref{eq:342}, \eqref{eq:343}) also hold for the limit $t$. We have therefore proven the following proposition.

\begin{proposition}\label{prop_fxdbp}
  Let $h(u)$ and $\alpha_i(u)$ be given by
  \begin{alignat}{5}
    \label{eq:580}
    h(u)&=u^3\hat{h}(u),& \qquad\hat{h}&\in C^1[0,\varepsilon],&\qquad \hat{h}(0)&=\frac{\lambda}{6\kappa(c_{+0}-c_{-0})},\\
    \alpha_i(u)&=\alpha_0+\dot{\alpha}_0v+v^2\hat{\alpha}_i(v),&\qquad \hat{\alpha}_i&\in C^1[0,\varepsilon],& \qquad\hat{\alpha}_i(0)&=\frac{1}{2}\ddot{\alpha}_0.
  \end{alignat}
Furthermore, let $\beta_{+}(v)\in C^1[0,\varepsilon]$, $V(v)\in C^0[0,\varepsilon]$ satisfy
\begin{align}
  \label{eq:581}
  \frac{d\beta_{+}}{dv}(v)= \mathcal{O}(v),\qquad V(v)=c_{+0}+\frac{\kappa}{2}(1+y(v))v+\mathcal{O}(v^2),
\end{align}
where $y$ is a given function such that
\begin{align}
  \label{eq:582}
  y\in C^1[0,\varepsilon],\qquad y(0)=-1.
\end{align}
Let
\begin{align}
  \label{eq:583}
  Y\coloneqq \sup_{[0,\varepsilon]}\left|\frac{dy}{dv}\right|.
\end{align}
Let $r_0>0$ and let $N_0$ be given as on page \pageref{eq:328}. Then there exists a solution $(\alpha, \beta,t,r)\in C^1(T_\varepsilon)$ of the characteristic system \eqref{eq:284}, \eqref{eq:285} such that $\alpha(u,0)=\alpha_i(u)$, $\beta(v,v)=\beta_{+}(v)$, $t(u,0)=h(u)$, $r(0,0)=r_0$ and
\begin{align}
  \label{eq:584}
  \pp{t}{v}(v,v)=\frac{1}{\gamma(v)}\pp{t}{u}(v,v),\qquad\textrm{with}\qquad \gamma(v)=\frac{c_+(\alpha(v,v), \beta_{+}(v))-V(v)}{V(v)-c_-(\alpha(v,v),\beta_+(v))},
\end{align}
\begin{align}
  \label{eq:585}
  \|(\alpha-\alpha_i,\beta-\beta_{+})\|_X\leq N_0,
\end{align}
\begin{align}
  \label{eq:586}
  \frac{df}{dv}(v)-\frac{\lambda}{3\kappa^2}v=\frac{\lambda}{18\kappa^2}\left\{2v(y(v)+1)+\frac{1}{v^2}\int_0^vv'^3\frac{dy}{dv}(v')dv'\right\}+\mathcal{O}(v^2),
\end{align}
\begin{align}
  \label{eq:587}
  \left|\pp{t}{v}(u,v)-\frac{\lambda}{3\kappa^2}v\right|\leq C(Y)uv,\qquad \left|\pp{t}{u}(u,v)-\frac{\lambda(3u^2-v^2)}{6\kappa(c_{+0}-c_{-0})}\right|\leq C(Y)u^3,
\end{align}
provided $\varepsilon$ is sufficiently small depending on $N_0$, $Y$.

\end{proposition}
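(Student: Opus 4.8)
The plan is to obtain the solution as the limit of the iteration $\big((\alpha_n,\beta_n,t_n,r_n);\,n=0,1,2,\dots\big)$ constructed in Section \ref{fbp}: one starts from $\alpha_0=\alpha_i(u)$, $\beta_0=\beta_+(v)$ (cf.~\eqref{eq:319}), lets $t_n$ be the solution of the linear equation \eqref{eq:314} with initial data $h$ on $\underline{C}$ and boundary condition $a_n=b_n/\gamma_n$ on $\mathcal{K}$, lets $r_n$ be given by \eqref{eq:316}, and defines $(\alpha_{n+1},\beta_{n+1})$ by \eqref{eq:317}. Two things must be shown: that the sequence is well defined and stays in an appropriate function space, and that it converges there; everything else is extracting the claimed estimates.

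First I would invoke Lemma \ref{lemma_fxdbp_ind}: for $\varepsilon$ small (depending on $N_0$, $Y$) the sequence $\big((\alpha_n',\beta_n');n\ge 0\big)$, with $\alpha_n'=\alpha_n-\alpha_i$, $\beta_n'=\beta_n-\beta_+$, stays in the ball $\|\cdot\|_X\le N_0$ of $X$. What I need from that lemma is not only the $X$-bound but the intermediate facts established along its induction: the iterates $(\alpha_n,\beta_n)$ remain in $R_\delta$ and $r_n$ in $[\tfrac12 r_0,\tfrac32 r_0]$ (here $r_0>0$ matters so that $\Omega_\delta$ is genuine), so that $\tilde A$, $\tilde B$, $c_\pm$ and their relevant derivatives are bounded uniformly in $n$; and, crucially, that $\mu_n$, $\nu_n$, $1/\gamma_n$ have the asymptotic forms \eqref{eq:465}, \eqref{eq:469}, \eqref{eq:483}--\eqref{eq:484} with constants independent of $n$. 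These are precisely the hypotheses of Proposition \ref{prop_inner_iteration}, so that proposition applies to each $t_n$ and yields, uniformly in $n$, the estimates \eqref{eq:342} for $\partial t_n/\partial u$, $\partial t_n/\partial v$ and the expansion \eqref{eq:343} for $df_n/dv$.

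Next I would invoke Lemma \ref{lemma_fxdbp_conv}, whose proof gives the contraction estimate \eqref{eq:579}, $\|(\Delta_{n+1}\alpha',\Delta_{n+1}\beta')\|_X\le C\varepsilon\,\|(\Delta_{n}\alpha',\Delta_{n}\beta')\|_X$; choosing $\varepsilon$ so small that $C\varepsilon<1$ makes $(\alpha_n',\beta_n')$ Cauchy in the complete space $X$, hence convergent to some $(\alpha',\beta')\in X$, so $(\alpha_n,\beta_n)\to(\alpha,\beta)$ in $C^1(T_\varepsilon)$ (see \eqref{eq:331}) and \eqref{eq:585} is immediate. The difference estimates \eqref{eq:565}, \eqref{eq:566} from the same proof show $\partial t_n/\partial v$, $\partial t_n/\partial u$ converge uniformly, say to the derivatives of a function $t$; since $c_{\pm,n}\to c_\pm(\alpha,\beta)$ uniformly, the integral representations \eqref{eq:344}, \eqref{eq:345} pass to the limit, so $t\in C^1(T_\varepsilon)$, its mixed derivative exists and is continuous, and $t$ solves \eqref{eq:339} with $t(u,0)=h(u)$ and the boundary relation \eqref{eq:584} (the limit of $a_n=b_n/\gamma_n$). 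Then $r_n\to r$ in $C^1$ via the Hodograph system \eqref{eq:285} integrated as in \eqref{eq:316}, and $\alpha$, $\beta$ satisfy \eqref{eq:284} with the prescribed data on $\underline{C}$ and $\mathcal{K}$, being the limits of \eqref{eq:317}. Finally, since every $t_n$ obeys \eqref{eq:342} and \eqref{eq:343} with the same constant $C(Y)$ and these bounds are stable under uniform limits, they pass to $t$, giving \eqref{eq:587} and \eqref{eq:586}.

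The main obstacle is not this assembly but its two ingredients. The genuinely delicate step is keeping the hypotheses of Proposition \ref{prop_inner_iteration} — in particular the exact leading behavior $1/\gamma_n(v)=\frac{c_{+0}-c_{-0}}{\kappa v}\big(1+\rho_0(v)+\mathcal O(v)\big)$ with $\rho_0$ built from the given $y$ — valid along the whole iteration with $n$-independent error constants, which forces the order-by-order expansions \eqref{eq:474}--\eqref{eq:484} in Lemma \ref{lemma_fxdbp_ind} and, one level down, the splitting $\hat B=\hat B_0+\hat B_1$ and the bound $T(u,v)\le Cv$ inside the proof of Proposition \ref{prop_inner_iteration}. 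A secondary point is that the contraction constant in \eqref{eq:579} degrades with $N_0$ and $Y$, so $\varepsilon$ must be fixed last, after $N_0$ (from \eqref{eq:328}) and $Y$; and one must pass to the limit in \eqref{eq:344}--\eqref{eq:345} rather than directly in \eqref{eq:339}, precisely to ensure the limit $t$ has a continuous mixed partial and thus solves the equation classically.
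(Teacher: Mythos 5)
Your proposal is correct and follows essentially the same route as the paper: the paper's proof of Proposition \ref{prop_fxdbp} consists precisely in combining Lemma \ref{lemma_fxdbp_ind} and Lemma \ref{lemma_fxdbp_conv} (whose proofs supply the uniform validity of the hypotheses of Proposition \ref{prop_inner_iteration} along the iteration and the contraction estimate \eqref{eq:579}), then passing to the limit in the integral equations \eqref{eq:344}, \eqref{eq:345} and in the Hodograph system, and noting that \eqref{eq:342}, \eqref{eq:343} hold for each $t_n$ with $n$-independent constants and hence for the limit. Your remarks about fixing $\varepsilon$ last and about taking the limit in the integral form rather than in \eqref{eq:339} directly match the paper's argument.
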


%%% Local Variables: 
%%% mode: latex
%%% TeX-master: "./master"
%%% End: 

\section{Construction}
As outlined in the end of section \ref{chapter_setting} we solve the free boundary problem using an iteration. In the first subsection we specify the form of the boundary functions to be iterated and the corresponding function spaces. Then we establish the inductive step of the iteration. In the subsecond section we show convergence.

\subsection{Inductive Step}\label{part_one_inductive_step}
We recall briefly the strategy of the iteration. We start with the boundary functions corresponding to the $m$'th iterate $(z_m,\beta_{+,m},V_m)$. We then solve the corresponding fixed boundary problem using the result from the previous chapter. The solution of the fixed boundary problem provides us with the functions $\alpha_{+,m+1}$, $f_{m+1}$, $g_{m+1}$. Using $f_{m+1}$, $g_{m+1}$ we solve the identification equation, the solution of which we denote by $z_{m+1}$. Using then $\alpha_{+,m+1}$, $z_{m+1}$ in the jump conditions we obtain $\beta_{+,m+1}$, $V_{m+1}$. We have thus obtained the boundary functions corresponding to the $(m+1)$'th iterate $(z_{m+1},\beta_{+,m+1},V_{m+1})$. This concludes the iteration.

The input for the construction problem are the following assumptions for the boundary functions $z_m$, $\beta_{+,m}$, $V_m$:
\begin{align}
  \label{eq:588}
  z_m(v)&=vy_m(v)\hspace{9mm} \qquad\qquad\qquad\quad\textrm{with} \hspace{10.5mm} y_m(0)=-1,\\
  \beta_{+,m}(v)&=\beta_0+v^2\hat{\beta}_{+,m}(v)\qquad\qquad\qquad \textrm{with} \qquad \hat{\beta}_{+,m}(0)=\frac{\lambda}{6\kappa^2}\left(\frac{\partial\beta^\ast}{\partial t}\right)_0,\label{eq:589}\\
  V_m(v)&=c_{+0}+\frac{\kappa}{2}(1+y_m(v))v+v^2\hat{V}_m(v) \label{eq:590},
\end{align}
with $y_m,\hat{\beta}_{+,m}\in C^1[0,\varepsilon]$, $\hat{V}_{m}\in C^0[0,\varepsilon]$. $\lambda$, $\kappa$ and $(\partial \beta^\ast/\partial t)_0$ are given by the solution in the maximal development. We recall (see \eqref{eq:235}, \eqref{eq:241})
\begin{align}
  \label{eq:591}
  \kappa=\cp{\pppp{r^\ast}{w}{t}},\qquad \lambda=-\kappa\cp{\frac{\partial^3r^\ast}{\partial w^3}}.
\end{align}
We choose closed balls in the function spaces as follows
\begin{align}
  \label{eq:592}
  B_Y&=\left\{f\in C^1[0,\varepsilon]:f(0)=-1,\left|\frac{df}{dv}\right|\leq Y\right\},\\
    \label{eq:593}
    B_{\delta_1}&=\left\{f\in C^1[0,\varepsilon]:f(0)=\frac{\lambda}{6\kappa^2}\left(\pp{\beta^\ast}{t}\right)_0,\left|\frac{df}{dv}\right|\leq \delta_1\right\},\\
    \label{eq:594}
B_{\delta_2}&=\left\{f\in C^0[0,\varepsilon]:|f|\leq \delta_2\right\},
\end{align}
where $Y$, $\delta_1$, $\delta_2$ are to be chosen appropriately.

We initiate the sequence by
\begin{align}
  \label{eq:595}
  y_0\coloneqq -1,\qquad \hat{\beta}_{+,0}\coloneqq \frac{\lambda}{6\kappa^2}\left(\frac{\partial\beta^\ast}{\partial t}\right)_0,\qquad \hat{V}_0\coloneqq 0.
\end{align}
%thus
%\begin{align}
%  \label{eq:596}
%  z_0(v)=-v,\qquad \beta_{+,0}=\beta_0+\frac{\lambda}{6\kappa^2}\left(\frac{\partial\beta}{\partial t}\right)_0v^2,\qquad V_0(v)=c_{+0}.
%\end{align}

\begin{proposition}\label{prop_ind_step}
Choosing the constants $Y$, $\delta_1$, $\delta_2$ appropriately, the sequence $((y_m,\hat{\beta}_{+,m},\hat{V}_m);m=0,1,2,\ldots)$ is contained in $B_Y\times B_{\delta_1}\times B_{\delta_2}$, provided we choose $\varepsilon$ suitably small.
\end{proposition}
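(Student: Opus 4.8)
\emph{Plan of proof.} The statement is the inductive step of the construction iteration, so the argument is an induction on $m$. The base case $m=0$ is immediate: by \eqref{eq:595} the triple $(y_0,\hat\beta_{+,0},\hat V_0)$ consists of the constant functions equal to the values prescribed at $v=0$, so $y_0(0)=-1$, $\hat\beta_{+,0}(0)=\tfrac{\lambda}{6\kappa^2}(\partial\beta^\ast/\partial t)_0$, $\hat V_0\equiv 0$, and all three have vanishing derivative; hence $y_0\in B_Y$, $\hat\beta_{+,0}\in B_{\delta_1}$, $\hat V_0\in B_{\delta_2}$ whatever the positive constants $Y,\delta_1,\delta_2$ are. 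For the inductive step I would assume $(y_m,\hat\beta_{+,m},\hat V_m)\in B_Y\times B_{\delta_1}\times B_{\delta_2}$ and run the four arrows of \eqref{eq:281}, tracking at each stage the leading coefficient of the relevant expansion together with a bound on the first derivative.

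\emph{Arrow 1.} The triple $(z_m,\beta_{+,m},V_m)$ built from \eqref{eq:588}--\eqref{eq:590} satisfies exactly the hypotheses of Proposition \ref{prop_fxdbp}: $\beta_{+,m}-\beta_0=v^2\hat\beta_{+,m}$ gives $d\beta_{+,m}/dv=\mathcal O(v)$; $V_m=c_{+0}+\tfrac\kappa2(1+y_m(v))v+\mathcal O(v^2)$ is the required form \eqref{eq:581}; $h$ and $\alpha_i$ are the fixed data \eqref{eq:289}--\eqref{eq:290}; and $\sup_{[0,\varepsilon]}|dy_m/dv|\le Y$ since $y_m\in B_Y$, so, the threshold $\varepsilon$ in Proposition \ref{prop_fxdbp} being non-increasing in $\sup|dy/dv|$, a single $\varepsilon$ depending only on $N_0$ and $Y$ works for all $m$. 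Applying Proposition \ref{prop_fxdbp} yields $(\alpha_{m+1},\beta_{m+1},t_{m+1},r_{m+1})\in C^1(T_\varepsilon)$ with \eqref{eq:585} and with \eqref{eq:586}, \eqref{eq:587} holding for $f_{m+1}(v)=t_{m+1}(v,v)$. Setting $\alpha_{+,m+1}(v)=\alpha_{m+1}(v,v)$ and $g_{m+1}(v)=r_{m+1}(v,v)-r_0$ (so that $dg_{m+1}/dv=V_m\,df_{m+1}/dv$ by \eqref{eq:279}), one reads off $\alpha_{+,m+1}(v)=\alpha_0+\dot{\alpha}_0v+\mathcal O(v^2)$, $f_{m+1}(v)=\tfrac{\lambda}{6\kappa^2}v^2+\mathcal O(v^3)$, $g_{m+1}(v)=c_{+0}\tfrac{\lambda}{6\kappa^2}v^2+\mathcal O(v^3)$, and, crucially, using $y_m(0)=-1$ (so $V_m-c_{+0}=\mathcal O(v^2)$), the sharpening $g_{m+1}(v)-c_{+0}f_{m+1}(v)=\mathcal O(v^4)$.

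\emph{Arrows 2--3 (the main obstacle).} Next I must solve the identification equation \eqref{eq:276} with $f_{m+1},g_{m+1}$ in place of $f,g$. Since $(\partial r^\ast/\partial w)_0=0$ by \eqref{eq:231}, the function $F(v,z)=g_{m+1}(v)+r_0-r^\ast(f_{m+1}(v),z)$ of \eqref{eq:282} has $\partial F/\partial z$ vanishing at the origin (cf.\ \eqref{eq:283}) and cannot be solved directly for $z$. Following the observation after \eqref{eq:282}, I would seek a solution of the form $z=vy$ and insert the Taylor expansion of $r^\ast$ at $O$ governed by \eqref{eq:245} (in particular $(\partial^2 r^\ast/\partial w^2)_0=0$, $(\partial^2 r^\ast/\partial w\partial t)_0=\kappa$, $(\partial^3 r^\ast/\partial w^3)_0=-\lambda/\kappa$) together with the leading orders of $f_{m+1}$, $g_{m+1}$; the $v^2$ term cancels identically and the $v^3$ term cancels because $g_{m+1}-c_{+0}f_{m+1}=\mathcal O(v^4)$, so that after dividing by $v^3$ one is left with a reduced identification equation $\tfrac{\lambda}{6\kappa}\,y(1-y)(1+y)=v\,\Phi_{m+1}(v,y)$ with $\Phi_{m+1}\in C^1$ depending (through $f_{m+1},g_{m+1}$) on the inductive data. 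Its candidate roots at $v=0$ are $y=0,\pm1$; $y=0$ is excluded because it would place the state ahead $(f_{m+1}(v),z(v))$ beyond $\mathcal B$, and $y=+1$ by the requirement that $\mathcal K$ be a genuine shock strictly in the past of $\mathcal B$ (the determinism condition), leaving $y_{m+1}(0)=-1$. Since $\partial_y[\tfrac{\lambda}{6\kappa}y(1-y)(1+y)-v\Phi_{m+1}]$ equals $-\tfrac{\lambda}{3\kappa}\ne0$ at $(v,y)=(0,-1)$, the implicit function theorem gives a unique $y_{m+1}\in C^1[0,\varepsilon]$ with $y_{m+1}(0)=-1$, and, after bounding the $\mathcal O(v)$ part of $\Phi_{m+1}$ via $y_m\in B_Y$, $\hat V_m\in B_{\delta_2}$, the bound $|dy_{m+1}/dv|\le Y$ for $Y$ large and $\varepsilon$ small. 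I then set $z_{m+1}=vy_{m+1}$, $\alpha_{-,m+1}(v)=\alpha^\ast(f_{m+1}(v),z_{m+1}(v))$, $\beta_{-,m+1}(v)=\beta^\ast(f_{m+1}(v),z_{m+1}(v))$. This reduction step, being the mechanism that makes the whole scheme well defined, is where the real work lies.

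\emph{Arrow 4.} The jump condition $J=0$ is used in the form \eqref{eq:215}: $\beta_{+,m+1}(v)=\beta_{-,m+1}(v)+\jump{\alpha}^3G(\alpha_{+,m+1}(v),\alpha_{-,m+1}(v),\beta_{-,m+1}(v))$, with $\jump{\alpha}=\alpha_{+,m+1}-\alpha_{-,m+1}=\dot{\alpha}_0 v(1-y_{m+1}(v))+\mathcal O(v^2)=\mathcal O(v)$. Because $(\partial\beta^\ast/\partial w)_0=(\partial^2\beta^\ast/\partial w^2)_0=0$ by \eqref{eq:257} while $f_{m+1}=\mathcal O(v^2)$, $z_{m+1}=\mathcal O(v)$, one gets $\beta_{-,m+1}(v)=\beta_0+(\partial\beta^\ast/\partial t)_0 f_{m+1}(v)+\mathcal O(v^3)=\beta_0+\tfrac{\lambda}{6\kappa^2}(\partial\beta^\ast/\partial t)_0 v^2+\mathcal O(v^3)$, and since $\jump{\alpha}^3=\mathcal O(v^3)$ this yields $\beta_{+,m+1}(v)=\beta_0+\tfrac{\lambda}{6\kappa^2}(\partial\beta^\ast/\partial t)_0 v^2+\mathcal O(v^3)$; hence $\hat\beta_{+,m+1}(0)=\tfrac{\lambda}{6\kappa^2}(\partial\beta^\ast/\partial t)_0$ and $|d\hat\beta_{+,m+1}/dv|\le\delta_1$ for $\delta_1$ large and $\varepsilon$ small. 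The remaining jump condition \eqref{eq:277} defines $V_{m+1}$; expanding $\jump{T^{tt}},\jump{T^{tr}}$ to first order in the shock strength $\jump{\alpha}=\mathcal O(v)$ and using the relations \eqref{eq:184} gives $V_{m+1}(v)=c_+(\alpha_{-,m+1},\beta_{-,m+1})+\tfrac12(\partial c_+/\partial\alpha)(\alpha_{-,m+1},\beta_{-,m+1})\jump{\alpha}+\mathcal O(v^2)$, which by $(\partial c_+/\partial\alpha)_0\dot{\alpha}_0=\kappa$ (eq.\ \eqref{eq:2027}) and $\alpha_{-,m+1}(v)=\alpha_0+\dot{\alpha}_0 vy_{m+1}(v)+\mathcal O(v^2)$ equals $c_{+0}+\tfrac\kappa2(1+y_{m+1}(v))v+\mathcal O(v^2)$; so $V_{m+1}$ has the form \eqref{eq:590} with $|\hat V_{m+1}|\le\delta_2$ for $\delta_2$ large and $\varepsilon$ small. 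Finally, choosing the constants so that each of the three output bounds closes, with its fixed ($\varepsilon$-independent) part strictly below the chosen value and the remaining $\mathcal O(\varepsilon)$ contributions absorbed by taking $\varepsilon$ small depending on $N_0,Y,\delta_1,\delta_2$, completes the induction. Everything outside the identification-equation reduction of Arrows 2--3 is expansion and estimation entirely analogous to Proposition \ref{prop_fxdbp}.
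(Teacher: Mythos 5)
Your overall architecture (induction, four arrows of \eqref{eq:281}, fixed boundary problem via Proposition \ref{prop_fxdbp}, reduced identification equation plus implicit function theorem, jump conditions via \eqref{eq:215} and \eqref{eq:277}) is the same as the paper's, and your Arrows 1 and 4 are essentially right. The genuine gap is the single sentence ``the bound $|dy_{m+1}/dv|\le Y$ for $Y$ large and $\varepsilon$ small.'' This is precisely where the real work of the proposition lies, and it is not a matter of absorbing $\mathcal{O}(\varepsilon)$ contributions: the output bound on $\sup|dy_{m+1}/dv|$ unavoidably contains a term proportional to $Y$ itself, because the numerator $\partial\hat F_{m+1}/\partial v$ is dominated by $\kappa\,d\hat f_{m+1}/dv+d\hat\delta_{m+1}/dv$, and these derivatives are controlled only through the inductive hypothesis on $dy_m/dv$ (see \eqref{eq:614}, \eqref{eq:630}: bounds of size $\tfrac{\lambda Y}{24\kappa^2}+C(\delta_2)$ and $\tfrac{\lambda Y}{24\kappa}+C(\delta_2)$). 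This $Y$-proportional part is already present in $\tfrac{dy_{m+1}}{dv}(0)$ and does not shrink as $\varepsilon\to0$, so it is neither a ``fixed part'' independent of the chosen constant nor an $\varepsilon$-absorbable remainder, contradicting your closing paragraph. The induction closes only because the coefficient multiplying $Y$ is strictly less than $1$: the paper proves the identity \eqref{eq:671}, $\tfrac{dy_{m+1}}{dv}=\tfrac{1}{v^4(1-\varepsilon_m)}\int_0^v v'^3\tfrac{dy_m}{dv}\,dv'+\mathcal{O}_{\delta_2}(1)$, whose $Y$-part is at most $\tfrac14 Y(1-C\varepsilon^{1/2})^{-1}\le\tfrac12 Y$, and then sets $Y=2C'(\delta_2)$ (with $\delta_2$, $\delta_1$ previously fixed as the numerical constants coming out of \eqref{eq:743}, \eqref{eq:744}).

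Obtaining that contraction factor is not automatic and cannot be waved through: it rests on the exact cancellations in \eqref{eq:610}--\eqref{eq:613} and \eqref{eq:620}--\eqref{eq:629} (the $2v(y_m+1)$ term in \eqref{eq:608} is combined, after integration by parts, with $-\tfrac{2}{v}(A_m+B_m)$, leaving only $\tfrac{\lambda}{6\kappa^2 v^4}\int_0^v v'^3\tfrac{dy_m}{dv}$ plus $\mathcal{O}_{\delta_2}(1)$), together with the precise value $\partial\hat F/\partial y(0,-1)=\tfrac{\lambda}{3\kappa}$ from \eqref{eq:649} and the structure \eqref{eq:669} of the numerator. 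If instead one estimates the terms of \eqref{eq:608} separately (e.g. $|2v(y_m+1)|\le 2Yv^2$ and $|\tfrac{1}{v^2}\int_0^v v'^3 y_m'|\le\tfrac14 Yv^2$), one gets $|d\hat f_{m+1}/dv|\lesssim\tfrac{5\lambda Y}{24\kappa^2}$ and correspondingly a coefficient of about $\tfrac54$ in front of $Y$ in the bound for $dy_{m+1}/dv$, and then no choice of large $Y$ closes the induction. So your proposal, as written, is missing the quantitative mechanism (the explicit integral representation of $dy_{m+1}/dv$ in terms of $dy_m/dv$ with a factor $<1$) on which the whole inductive step hinges; once that is supplied, your ``choose the constants and shrink $\varepsilon$'' closure can be repaired, though note the paper's order is the reverse of ``take all three large'': $\delta_2$ and $\delta_1$ are fixed numerical constants read off from \eqref{eq:743}, \eqref{eq:744}, and $Y$ is then determined by $\delta_2$.
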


\begin{proof}
We see that
\begin{align}
  \label{eq:597}
  (y_0,\hat{\beta}_{+,0},\hat{V}_0)\in B_Y\times B_{\delta_1}\times B_{\delta_2}.
\end{align}

The inductive hypothesis is
\begin{align}
  \label{eq:598}
  y_m\in B_Y,\qquad \hat{\beta}_{+,m}\in B_{\delta_1},\qquad \hat{V}_m\in B_{\delta_2}.
\end{align}
Therefore,
\begin{align}
  \label{eq:599}
  \sup_{v\in[0,\varepsilon]}\left|\frac{dy_m}{dv}\right|\leq Y.
\end{align}

In the arguments to follow $q>0$ will denote a number which we can make as small as we wish by choosing $\varepsilon$ suitably small. From $y_m(0)=-1$ we obtain
\begin{align}
  \label{eq:600}
  |y_m(v)+1|\leq vY\leq q.
\end{align}
In the following we use the notation $g(v)=\mathcal{O}_d(v^n)$ to denote
\begin{align}
  \label{eq:601}
  |g(v)|\leq C(d)v^n,
\end{align}
where the constant $C$ is a non-negative, non-decreasing, continuous function of $d$.

From \eqref{eq:589} and \eqref{eq:590} we get
\begin{align}
  \label{eq:602}
  \frac{d\beta_{+,m}}{dv}(v)= \mathcal{O}(v),\qquad V_m(v)=c_{+0}+\frac{\kappa}{2}(1+y_m(v))v+\mathcal{O}_{\delta_2}(v^2),
\end{align}
provided that $\varepsilon$ is sufficiently small. This can be seen as follows. The statements \eqref{eq:602} are equivalent, respectively, to
\begin{align}
  \label{eq:603}
  \left|\frac{d\beta_{+,m}}{dv}(v)\right|\leq Cv,\qquad \left|V_m(v)-c_{+0}-\frac{\kappa}{2}(1+y_m(v))v\right|\leq C(\delta_2)v^2.
\end{align}
From the inductive hypothesis \eqref{eq:598} we have
\begin{align}
  \label{eq:604}
  \bigg|\hat{\beta}_{+,m}(v)-\frac{\lambda}{6\kappa^2}\left(\pp{\beta^\ast}{t}\right)_0\bigg|\leq v\delta_1,
\end{align}
which implies
\begin{align}
  \label{eq:605}
  |\hat{\beta}_{+,m}(v)|\leq C
\end{align}
for a fixed numerical constant $C$ if we choose $\varepsilon$ sufficiently small.  Using this in (see \eqref{eq:589})
\begin{align}
  \label{eq:606}
  \frac{1}{v}\frac{d\beta_{+,m}}{dv}(v)=2\hat{\beta}_{+,m}(v)+v\frac{d\hat{\beta}_{+,m}}{dv}(v)
\end{align}
we obtain
\begin{align}
  \label{eq:607}
  \bigg|\frac{1}{v}\frac{d\beta_{+,m}}{dv}(v)\bigg|\leq C+v\delta_1\leq C
\end{align}
if we choose $\varepsilon$ sufficiently small. This is equivalent to the first of \eqref{eq:603}. The second of \eqref{eq:603} follows directly from the inductive hypothesis.

Recalling the definition of $N_0$ in the fixed boundary problem (see \eqref{eq:320},\ldots,\eqref{eq:328}), we note that since the constant in the first of \eqref{eq:603} is a fixed numerical constant, also $N_0$ is a fixed numerical constant.

We now apply proposition \ref{prop_fxdbp} with $(y_m,\beta_{+,m},V_m)$ in the role of $(y,\beta_+,V)$. The resulting solution we denote by $(\alpha_{m+1},\beta_{m+1},t_{m+1},r_{m+1})$. We also denote
\begin{align}
  \label{eq:1540}
  f_{m+1}(v)&\coloneqq t_{m+1}(v,v),\\
  g_{m+1}(v)&\coloneqq r_{m+1}(v,v)-r_0,\\
  \alpha_{+,m+1}(v)&\coloneqq \alpha_{m+1}(v,v).
\end{align}

From the solution of the fixed boundary problem we have
\begin{align}
  \label{eq:608}
  \frac{df_{m+1}}{dv}(v)-\frac{\lambda}{3\kappa^2}v=\frac{\lambda}{18\kappa^2}\left\{2v(y_{m}(v)+1)+\frac{1}{v^2}\int_0^vv'^3\frac{dy_{m}}{dv}(v')dv'\right\}+\mathcal{O}_{\delta_2}(v^2).
\end{align}
Defining the function $\hat{f}_{m+1}$ by
\begin{align}
  \label{eq:609}
  f_{m+1}(v)=v^2\hat{f}_{m+1}(v),
\end{align}
we deduce
\begin{align}
  \label{eq:610}
  \frac{d\hat{f}_{m+1}}{dv}(v)&=\frac{1}{v^2}\left(\frac{df_{m+1}}{dv}(v)-\frac{2}{v}f_{m+1}(v)\right)\notag\\
&=\frac{\lambda}{18\kappa^2v^2}\left\{2v(y_m(v)+1)+\frac{1}{v^2}\int_0^vv'^3\frac{dy_m}{dv}(v')dv'-\frac{2}{v}(A_m+B_m)\right\}+\mathcal{O}_{\delta_2}(1),
\end{align}
where
\begin{align}
  \label{eq:611}
  A_m\coloneqq \int_0^v2v'(y_m(v')+1)dv',\qquad B_m\coloneqq \int_0^v\frac{1}{v'^2}\left(\int_0^{v'}v''^3\frac{dy_m}{dv}(v'')dv''\right)dv'
\end{align}
and we used \eqref{eq:608}. Integrating by parts we obtain
\begin{align}
  \label{eq:612}
  A_m=v^2(y_m(v)+1)-\int_0^vv'^2\frac{dy_m}{dv}(v')dv',\qquad B_m=-\frac{1}{v}\int_0^vv'^3\frac{dy_m}{dv}(v')dv'+\int_0^vv'^2\frac{dy_m}{dv}(v')dv'.
\end{align}
This implies
\begin{align}
  \label{eq:613}
  \frac{d\hat{f}_{m+1}}{dv}(v)=\frac{\lambda}{6\kappa^2v^4}\int_0^vv'^3\frac{dy_m}{dv}(v')dv'+\mathcal{O}_{\delta_2}(1).
\end{align}
Hence
\begin{align}
  \label{eq:614}
  \bigg|\frac{d\hat{f}_{m+1}}{dv}(v)\bigg|\leq \frac{\lambda Y}{24\kappa^2}+C(\delta_2).
\end{align}

Defining the function $\delta_m(v)$ by
\begin{align}
  \label{eq:615}
  \delta_m(v)\coloneqq g_m(v)-c_{+0}f_m(v),
\end{align}
we obtain
\begin{align}
  \label{eq:616}
  \frac{d\delta_{m+1}}{dv}(v)=(V_m(v)-c_{+0})\frac{df_{m+1}}{dv}(v).
\end{align}
\begin{remark}
The appearance of the indices $m$ and $m+1$ originates from the basic strategy (see \eqref{eq:281}) in which the solution of the fixed boundary problem carrying the index $m+1$ satisfies
  \begin{align}
    \label{eq:617}
    \frac{df_{m+1}}{dv}(v)V_m(v)=\frac{dg_{m+1}}{dv}(v).
  \end{align}
This confusion did not appear up to now since we dropped the index $m$ from the outer iteration during the solution process of the fixed boundary problem. An analogous situation appears when evaluating the function $\beta_{m+1}$ on the boundary $u=v$. There we have
\begin{align}
  \label{eq:618}
  \beta_{m+1}(v,v)=\beta_{+,m}(v),
\end{align}
since $\beta_{+,m}$ is the boundary value for the fixed boundary problem whose solution carries the index $m+1$. The function $\beta_{+,m+1}$ is determined later on by making use of the jump conditions. The appearance of the indices $m$ and $m+1$ in \eqref{eq:608} are explained in the same way.
\end{remark}

We define the function $\phi(v)$ by
\begin{align}
  \label{eq:619}
  \phi_{m+1}(v)\coloneqq \frac{df_{m+1}}{dv}(v)-\frac{\lambda}{3\kappa^2}v.
\end{align}
We split up the function $\delta_{m+1}(v)$ according to
\begin{align}
  \label{eq:620}
  \delta_{m+1}(v)=\delta_0(v)+\delta_1(v),
\end{align}
where the functions $\delta_0(v)$ and $\delta_1(v)$ are given by $\delta_0(0)=0$, $\delta_1(0)=0$ and
\begin{align}
  \label{eq:621}
  \frac{d\delta_0}{dv}(v)&=\frac{\lambda}{6\kappa}(1+y_m(v))v^2,\notag\\
 \frac{d\delta_1}{dv}(v)&=\left(V_m(v)-c_{+0}-\frac{\kappa}{2}(1+y_m(v))v\right)\frac{\lambda}{3\kappa^2}v+(V_m(v)-c_{+0})\phi_{m+1}(v).
\end{align}
(We make use of $V_m(v)=c_{+0}+\frac{\kappa}{2}(1+y_m(v))v+\mathcal{O}_{\delta_2}(v^2)$). Defining the functions $\hat{\delta}_i(v),i=0,1$ by $\delta_i(v)=v^3\hat{\delta}_i(v)$ we get
\begin{align}
  \label{eq:622}
  \frac{d\hat{\delta}_0}{dv}(v)&=\frac{1}{v^3}\frac{d\delta_0}{dv}(v)-\frac{3}{v^4}\delta_0(v)\notag\\
&=\frac{\lambda}{6\kappa}\left\{\frac{1+y_m(v)}{v}-\frac{3}{v^4}\int_0^v(1+y_m(v'))v'^2dv'\right\}\notag\\
&=\frac{\lambda}{6\kappa v^4}\int_0^vv'^3\frac{dy_m}{dv}(v')dv',
\end{align}
where we integrated by parts.

From $V_m(v)-c_{+0}=\frac{\kappa}{2}(1+y_m(v))v+\mathcal{O}_{\delta_2}(v^2)$ we have (see \eqref{eq:600})
\begin{align}
  \label{eq:623}
  |V_m(v)-c_{+0}|\leq \frac{\kappa}{2}v^2Y+C(\delta_2)v^2.
\end{align}
Together with (see the right hand side of \eqref{eq:608})
\begin{align}
  \label{eq:624}
  |\phi_{m+1}(v)|\leq CYv^2+C(\delta_2)v^2,
\end{align}
we obtain
\begin{align}
  \label{eq:625}
  |V_m(v)-c_{+0}||\phi_{m+1}(v)|\leq CY^2v^4+C(\delta_2)Yv^4.
\end{align}
Therefore, for $\varepsilon$ sufficiently small,
\begin{align}
  \label{eq:626}
  \left|\frac{d\delta_1}{dv}(v)\right|\leq C(\delta_2)v^3,\qquad |\delta_1(v)|\leq C(\delta_2)v^4.
\end{align}
Using this in
\begin{align}
  \label{eq:627}
  \frac{d\hat{\delta}_1}{dv}(v)=\frac{1}{v^3}\frac{d\delta_1}{dv}(v)-\frac{3}{v^4}\delta_1(v)
\end{align}
we deduce
\begin{align}
  \label{eq:628}
  \bigg|\frac{d\hat{\delta}_1}{dv}(v)\bigg|\leq C(\delta_2).
\end{align}

Using \eqref{eq:622}, \eqref{eq:628} we arrive at
\begin{align}
  \label{eq:629}
  \frac{d\hat{\delta}_{m+1}}{dv}(v)=\frac{\lambda}{6\kappa v^4}\int_0^vv'^3\frac{dy_m}{dv}(v')dv'+\mathcal{O}_{\delta_2}(1).
\end{align}
Hence
\begin{align}
  \label{eq:630}
  \bigg|\frac{d\hat{\delta}_{m+1}}{dv}(v)\bigg|\leq \frac{\lambda Y}{24\kappa}+C(\delta_2).
\end{align}

In view of \eqref{eq:608}, \eqref{eq:616}, \eqref{eq:623} we have
\begin{align}
  \label{eq:631}
  \left|\frac{d\delta_{m+1}}{dv}(v)\right|\leq C(Y,\delta_2)v^3.
\end{align}
Therefore, in conjunction with $\delta_{m+1}(0)=g_{m+1}(0)-c_{+0}f_{m+1}(0)=0$, we have
\begin{align}
  \label{eq:632}
  |\delta_{m+1}(v)|\leq C(Y,\delta_2)v^4,
\end{align}
which implies
\begin{align}
  \label{eq:633}
  \hat{\delta}_{m+1}(0)=0.
\end{align}

Now we look at the identification equation, i.e.~at
\begin{align}
  \label{eq:634}
  g_{m+1}(v)+r_0=r^\ast(f_{m+1}(v),vy_{m+1}).
\end{align}
Here the function on the right hand side is the solution $r^\ast(t,w)$ given in the maximal development (recall that we set $t_0=w_0=0$), while the left hand side is given by the solution of the fixed boundary problem (The identification equation is an equation for $y_{m+1}$ as a function of $v$ given the functions $g_{m+1}(v)$, $f_{m+1}(v)$). We have
\begin{align}
  \label{eq:635}
  g_{m+1}(v)=\delta_{m+1}(v)+c_{+0}f_{m+1}(v)=v^3\hat{\delta}_{m+1}(v)+c_{+0}v^2\hat{f}_{m+1}(v).
\end{align}
In the following discussion of the identification equation we will omit the index $m+1$. We define the function
\begin{align}
  \label{eq:636}
  h(t,w)&\coloneqq r^\ast(t,w)-r_0-\left(\pp{r^\ast}{t}\right)_0t-\left(\frac{\partial^2r^\ast}{\partial t^2}\right)_0\frac{t^2}{2}-\left(\frac{\partial^2r^\ast}{\partial t\partial w}\right)_0tw\notag\\
&\qquad\qquad-\left(\frac{\partial^4r^\ast}{\partial w^4}\right)_0\frac{w^4}{24}-\left(\frac{\partial^3r^\ast}{\partial t\partial w^2}\right)_0\frac{tw^2}{2}-\left(\frac{\partial^3r^\ast}{\partial w^3}\right)_0\frac{w^3}{6}.
\end{align}
Thus
\begin{align}
  \label{eq:637}
  r^\ast(t,w)&=r_0+\left(\pp{r^\ast}{t}\right)_0t+\left(\frac{\partial^2r^\ast}{\partial t^2}\right)_0\frac{t^2}{2}+\left(\frac{\partial^2r^\ast}{\partial t\partial w}\right)_0tw\notag\\
&\quad\qquad +\left(\frac{\partial^4r^\ast}{\partial w^4}\right)_0\frac{w^4}{24}+\left(\frac{\partial^3r^\ast}{\partial t\partial w^2}\right)_0\frac{tw^2}{2}+\left(\frac{\partial^3r^\ast}{\partial w^3}\right)_0\frac{w^3}{6}+h(t,w).
\end{align}
\begin{remark}
  The function $h(t,w)$ is introduced in order to represent the terms $\mathcal{O}(v^k)$ for $k\geq 5$ in the expansion of $r^\ast(t,w)$, when $v^2\hat{f}(v)$ for $t$ and $vy$ for $w$ are being substituted. We use
  \begin{align}
    \label{eq:638}
        \left(\pp{r^\ast}{w}\right)_0=\left(\frac{\partial^2r^\ast}{\partial w^2}\right)_0=0.
  \end{align}
\end{remark}
Let now
\begin{align}
  \label{eq:639}
  F(v,y)\coloneqq g(v)+r_0-r^\ast(f(v),vy).
\end{align}
The identification equation becomes
\begin{align}
  \label{eq:640}
  F(v,y)=0.
\end{align}
Using now
\begin{align}
    \label{eq:641}
\left(\pp{r^\ast}{t}\right)_0=c_{+0},\qquad\left(\frac{\partial^2r^\ast}{\partial w\partial t}\right)_0=\kappa,\qquad \left(\frac{\partial^3r^\ast}{\partial w^3}\right)_0=-\frac{\lambda}{\kappa},
\end{align}
and expressing $(t,w)$ in terms of $v$ and $y$ according to $t=f(v)=v^2\hat{f}(v)$, $w=vy$ and making use of \eqref{eq:635} the function $F(v,y)$ becomes
\begin{align}
  \label{eq:642}
  F(v,y)&=\frac{\lambda}{6\kappa}v^3y^3-\kappa v^3\hat{f}(v)y+v^3\hat{\delta}(v)\notag\\
&\qquad-\left(\frac{\partial^2r^\ast}{\partial t^2}\right)_0v^4\left(\hat{f}(v)\right)^2-\left(\frac{\partial^4r^\ast}{\partial w^4}\right)_0\frac{y^4v^4}{24}-\left(\frac{\partial^3r^\ast}{\partial t\partial w^2}\right)_0\frac{y^2v^4\hat{f}(v)}{2}-h(v^2\hat{f}(v),vy).
\end{align}
We note that
\begin{align}
  \label{eq:643}
 h(v^2\hat{f}(v),vy)=v^5H(\hat{f}(v),y),
\end{align}
where $H$ is a smooth function of its two arguments.

Defining the function $\hat{F}$ by the relation
\begin{align}
  \label{eq:644}
  F(v,y)=v^3\hat{F}(v,y),
\end{align}
\eqref{eq:642} is equivalent to
\begin{align}
  \label{eq:645}
  \hat{F}(v,y)=\frac{\lambda}{6\kappa}y^3-\kappa\hat{f}(v)y+\hat{\delta}(v)+vR(v,y),
\end{align}
where the remainder $R$ is given by
\begin{align}
  \label{eq:646}
  R(v,y)\coloneqq -\left(\frac{\partial^2r^\ast}{\partial t^2}\right)_0\left(\hat{f}(v)\right)^2-\left(\frac{\partial^4r^\ast}{\partial w^4}\right)_0\frac{y^4}{24}-\left(\frac{\partial^3r^\ast}{\partial t\partial w^2}\right)_0\frac{y^2\hat{f}(v)}{2}-vH(\hat{f}(v),y).
\end{align}

The identification equation is now equivalent to
\begin{align}
  \label{eq:647}
  \hat{F}(v,y)=0.
\end{align}
At $v=0$ this becomes (we recall that $\hat{\delta}(0)=0$ and note that $\hat{f}(0)=\lambda/6\kappa^2$ (see \eqref{eq:608}, \eqref{eq:609}))
\begin{align}
  \label{eq:648}
  \frac{\lambda}{6\kappa}y(y^2-1)=0.
\end{align}
The only physical solution is $y=-1$. We set $v_0\coloneqq 0$, $y_0\coloneqq -1$. We now see that
\begin{align}
  \label{eq:649}
  \pp{\hat{F}}{y}(v_0,y_0)=\frac{\lambda}{3\kappa}> 0.
\end{align}
Therefore we are able to solve the identification equation for $y$ as a function of $v$ for $v$ small enough. We have
\begin{align}
  \label{eq:650}
  |y|\leq C.
\end{align}

Differentiating \eqref{eq:647} implicitly yields
\begin{align}
  \label{eq:651}
  \frac{dy}{dv}(v)=-\frac{\displaystyle{\pp{\hat{F}}{v}(v,y(v))}}{\displaystyle{\pp{\hat{F}}{y}(v,y(v))}}.
\end{align}
We have
\begin{align}
  \label{eq:652}
  \pp{\hat{F}}{y}(v,y)&=\frac{\lambda}{2\kappa}y^2-\kappa\hat{f}(v)+v\pp{R}{y}(v,y),\\
  \label{eq:653}
  \pp{\hat{F}}{v}(v,y)&=-\kappa y\frac{d\hat{f}}{dv}(v)+\frac{d\hat{\delta}}{dv}(v)+R(v,y)+v\pp{R}{v}(v,y).
\end{align}
We first derive bounds for the remainder $R$ and its derivatives of first order.
%We start with a bound for $\hat{H}$. We have
%\begin{align}
%  \label{eq:654}
%  \hat{H}(v,y)=\frac{1}{v^5}\sum_{2\alpha_1+\alpha_2\geq 5}\frac{(\partial^\alpha r)_0}{|\alpha|!}\left(f(v)\right)^{\alpha_1}\left(vy\right)^{\alpha_2}=\sum_{2\alpha_1+\alpha_2\geq 5}\frac{(\partial^\alpha r)_0}{|\alpha|!}\left(\hat{f}(v)\right)^{\alpha_1}y^{\alpha_2}v^{2\alpha_1+\alpha_2-5}.
%\end{align}
From \eqref{eq:614} we have, for $\varepsilon$ small enough,
\begin{align}
  \label{eq:655}
  |\hat{f}(v)|\leq C,\qquad \bigg|v\frac{d\hat{f}}{dv}(v)\bigg|\leq C.
\end{align}
%it follows from \eqref{eq:654} that
%\begin{align}
%  \label{eq:656}
%  |\hat{H}(v,y)|\leq C.
%\end{align}
%Taking the derivative of \eqref{eq:654} with respect to $y$ we get
%\begin{align}
%  \label{eq:657}
%  \pp{\hat{H}}{y}(v,y)=\sum_{2\alpha_1+\alpha_2\geq 5}\frac{(\partial^\alpha r)_0}{|\alpha|!}\left(\hat{f}(v)\right)^{\alpha_1}\alpha_2 y^{\alpha_2-1}v^{2\alpha_1+\alpha_2-5},
%\end{align}
%while taking the derivative of \eqref{eq:651} yields
%\begin{align}
%  \label{eq:658}
%  v\pp{\hat{H}}{v}(v,y)&=\sum_{2\alpha_1+\alpha_2\geq 5}\frac{(\partial^\alpha r)_0}{|\alpha|!}\bigg\{\alpha_1\left(\hat{f}(v)\right)^{\alpha_1-1}y^{\alpha_2}v^{2\alpha_1+\alpha_2-4}\frac{d\hat{f}}{dv}(v)\notag\\
%&\hspace{45mm}+\left(\hat{f}(v)\right)^{\alpha_1}y^{\alpha_2}(2\alpha_1+\alpha_2-5)v^{2\alpha_1+\alpha_2-5}\bigg\}
%\end{align}
%Using the same reasoning used to arrive at \eqref{eq:656} we deduce
%\begin{align}
%  \label{eq:659}
%  \bigg|v\pp{\hat{H}}{v}(v,y)\bigg|\leq C.
%\end{align}
%Using again \eqref{eq:655} and \eqref{eq:650} we see that the first three terms in \eqref{eq:646} together with their derivatives with respect to $y$ are bounded in absolute value by a constant.
%To bound the derivatives with respect of $v$ in \eqref{eq:646} we make use of the fact that for $\varepsilon$ small enough we have (see \eqref{eq:614})
%\begin{align}
%  \label{eq:660}
%  
%\end{align}
Therefore,
\begin{align}
  \label{eq:661}
  \left|v\pp{R}{v}(v,y)\right|\leq C,\qquad \left|v\pp{R}{y}(v,y)\right|\leq C,\qquad |R(v,y)|\leq C.
\end{align}

We now examine \eqref{eq:651}. Using \eqref{eq:652} we find for the denominator
\begin{align}
  \label{eq:662}
  \bigg|\pp{\hat{F}}{y}(v,y)-\pp{\hat{F}}{y}(v_0,y_0)\bigg|\leq \frac{\lambda}{2\kappa}|y^2-1|+\kappa |\hat{f}(v)-\hat{f}(0)|+v\left|\pp{R}{v}(v,y)\right|.
\end{align}
Now, for small enough $\varepsilon$, we have
\begin{align}
  \label{eq:663}
  |y^2-1|&=|(y+1)(y-1)|\leq C|y+1|\leq CYv\leq Cv^{\frac{1}{2}},\\
  \label{eq:664}
  |\hat{f}(v)-\hat{f}(0)|&\leq \sup_{v'\in[0,v]}\bigg|\frac{d\hat{f}}{dv}(v')\bigg|v\leq CYv+C(\delta_2)v\leq Cv^{\frac{1}{2}},
\end{align}
which, together with the second of \eqref{eq:661} implies
\begin{align}
  \label{eq:665}
  \bigg|\pp{\hat{F}}{y}(v,y)-\pp{\hat{F}}{y}(v_0,y_0)\bigg|\leq Cv^{\frac{1}{2}}.
\end{align}

Now we look at the numerator of \eqref{eq:651}. Making use of \eqref{eq:653} we get
\begin{align}
  \label{eq:666}
  \bigg|\pp{\hat{F}}{v}(v,y)-\kappa\frac{d\hat{f}}{dv}(v)-\frac{d\hat{\delta}}{dv}(v)\bigg|\leq \kappa \bigg|\frac{d\hat{f}}{dv}(v)\bigg||y+1|+|R(v,y)|+v\left|\pp{R}{v}(v,y)\right|.
\end{align}
From \eqref{eq:614} together with \eqref{eq:600} we have, for small enough $\varepsilon$,
\begin{align}
  \label{eq:667}
  \bigg|\frac{d\hat{f}}{dv}(v)\bigg||y+1|\leq C.
\end{align}
Together with the first and the third of \eqref{eq:661} we obtain
\begin{align}
  \label{eq:668}
  \pp{\hat{F}}{v}(v,y)=\kappa \frac{d\hat{f}}{dv}(v)+\frac{d\hat{\delta}}{dv}(v)+\mathcal{O}(1),
\end{align}
which implies, through \eqref{eq:613}, \eqref{eq:629},
\begin{align}
  \label{eq:669}
  \pp{\hat{F}}{v}(v,y)=\frac{\lambda}{3\kappa v^4}\int_0^vv'^3\frac{dy}{dv}(v')dv'+\mathcal{O}_{\delta_2}(1).
\end{align}
%\begin{align}
%  \label{eq:670}
%  \bigg|\pp{\hat{F}}{v}(v,y)-\pp{\hat{F}}{v}(v_0,y_0)\bigg|&\leq \kappa \bigg|\frac{d\hat{f}}{dv}(v)\bigg|+\bigg|\frac{d\hat{\delta}}{dv}(v)\bigg|+C(\delta_2)\notag\\
%&\leq \frac{\lambda Y}{12\kappa}+C(\delta_2),
%\end{align}
%where we used \eqref{eq:614} and \eqref{eq:630}. 

Substituting \eqref{eq:669} for the numerator in \eqref{eq:651}, using the estimate \eqref{eq:665} together with \eqref{eq:649} for the denominator in \eqref{eq:651} and putting back the indices $m$ and $m+1$ we arrive at
\begin{align}
  \label{eq:671}
  \frac{dy_{m+1}}{dv}(v)=\frac{1}{v^4(1-\varepsilon_m(v))}\int_0^vv'^3\frac{dy_m}{dv}(v')dv'+\mathcal{O}_{\delta_2}(1),
\end{align}
where
\begin{align}
  \label{eq:672}
  |\varepsilon_m(v)|\leq Cv^{\frac{1}{2}}.
\end{align}
Taking the absolute value and then taking the supremum over $v\in[0,\varepsilon]$ yields
\begin{align}
  \label{eq:674}
  \sup_{v\in[0,\varepsilon]}\bigg|\frac{dy_{m+1}}{dv}(v)\bigg|\leq \frac{\frac{1}{4}Y}{1-C\varepsilon^{\frac{1}{2}}}+C'(\delta_2).
\end{align}
Choosing then $\varepsilon$ suitably small such that
\begin{align}
  \label{eq:675}
  C\varepsilon^{\frac{1}{2}}\leq \frac{1}{2},
\end{align}
where $C$ is the constant appearing in the denominator of \eqref{eq:674}, we obtain
\begin{align}
  \label{eq:676}
  \sup_{v\in[0,\varepsilon]}\bigg|\frac{dy_{m+1}}{dv}(v)\bigg|\leq \frac{1}{2}Y+C'(\delta_2).
\end{align}
Therefore, choosing now $Y=2C'(\delta_2)$,
\begin{align}
  \label{eq:677}
  \sup_{v\in[0,\varepsilon]}\bigg|\frac{dy_{m+1}}{dv}(v)\bigg|\leq Y.
\end{align}
\begin{remark}
  $Y$ depends on $\delta_2$.
\end{remark}

In the following we will establish closure for the iterations of the functions $\hat{\beta}_m$, $\hat{V}_m$. The balls for the respective iterations have been chosen according to \eqref{eq:593}, \eqref{eq:594}. Since there are no more indices $m$ to appear, only indices $m+1$, we will omit in the following the index $m+1$. We have
\begin{align}
  \label{eq:678}
  Y=\mathcal{O}_{\delta_2}(1).
\end{align}
From \eqref{eq:614} it follows (using $\hat{f}(0)=\lambda/6\kappa^2$)
\begin{align}
  \label{eq:679}
  \frac{d\hat{f}}{dv}(v)=\mathcal{O}_{\delta_2}(1),\qquad \hat{f}(v)=\frac{\lambda}{6\kappa^2}+\mathcal{O}_{\delta_2}(v).
\end{align}
This implies, through \eqref{eq:609} and the first line of \eqref{eq:610},
\begin{align}
  \label{eq:680}
  \frac{df}{dv}(v)&=\frac{\lambda}{3\kappa^2}v+\mathcal{O}_{\delta_2}(v^2),\\
  \label{eq:681}
  f(v)&=\frac{\lambda}{6\kappa^2}v^2+\Landau_{\delta_2}(v^3).
\end{align}
From $z_{m+1}(v)=vy_{m+1}(v)$ we obtain
\begin{align}
  \label{eq:682}
  \frac{dz}{dv}(v)&=v\frac{dy}{dv}(v)+y(v)\notag\\
&=-1+\mathcal{O}_{\delta_2}(v).
\end{align}
Therefore,
\begin{align}
  \label{eq:683}
  z(v)=-v+\Landau_{\delta_2}(v^2).
\end{align}

Now we look at the asymptotic form of $\beta_-(v)$. We have
\begin{align}
  \label{eq:684}
  \beta_-(v)=\beta^\ast(f(v),z(v)),
\end{align}
where the function on the right is $\beta^\ast(t,w)$ from the state ahead (i.e.~given by the solution in the maximal development) and we recall that $t_0=w_0=0$. We have
\begin{align}
  \label{eq:685}
  \frac{d\beta_-}{dv}(v)=\pp{\beta^\ast}{t}(f(v),z(v))\frac{df}{dv}(v)+\pp{\beta^\ast}{w}(f(v),z(v))\frac{dz}{dv}(v).
\end{align}
Expanding $(\partial\beta^\ast/\partial t)(t,w)$ to first order and substituting $t=f(v)$, $w=z(v)$ we obtain
\begin{align}
  \label{eq:686}
  \pp{\beta^\ast}{t}(f(v),z(v))=\cp{\pp{\beta^\ast}{t}}-\cp{\pppp{\beta^\ast}{t}{w}}v+\Landau_{\delta_2}(v^2),
\end{align}
while expanding $(\partial\beta^\ast/\partial w)(t,w)$ to second order, substituting $t=f(v)$, $w=z(v)$ and using (see \eqref{eq:257})
\begin{align}
  \label{eq:687}
  \left(\pp{\beta^\ast}{w}\right)_0=0,\qquad\left(\frac{\partial^2\beta^\ast}{\partial w^2}\right)_0=0,
\end{align}
yields
\begin{align}
  \label{eq:688}
  \pp{\beta^\ast}{w}(f(v),z(v))=\left\{\cp{\pppp{\beta^\ast}{t}{w}}\frac{\lambda}{6\kappa^2}+\frac{1}{2}\cp{\frac{\partial^3\beta^\ast}{\partial w^3}}\right\}v^2+\Landau_{\delta_2}(v^3).
\end{align}
%\begin{align}
%  \label{eq:689}
%  \pp{\beta}{t}(f(v),z(v))=\left(\pp{\beta}{t}\right)_0+\mathcal{O}_{\delta_2}(v),\qquad \pp{\beta}{w}(f(v),z(v))=\mathcal{O}_{\delta_2}(v^2).
%\end{align}
Therefore,
\begin{align}
  \label{eq:690}
  \frac{d\beta_-}{dv}(v)=\cp{\pp{\beta^\ast}{t}}\frac{\lambda}{3\kappa^2}v+\Landau_{\delta_2}(v^2).
\end{align}
Hence,
\begin{align}
  \label{eq:691}
  \beta_-(v)=\beta_0+\left(\pp{\beta^\ast}{t}\right)_0\frac{\lambda}{6\kappa^2}v^2+\mathcal{O}_{\delta_2}(v^3).
\end{align}

Now we find the asymptotic form of $\alpha_-(v)$. From
\begin{align}
  \label{eq:692}
  \alpha_-(v)=\alpha^\ast(f(v), z(v)),
\end{align}
we have
\begin{align}
  \frac{d\alpha_-}{dv}(v)=\pp{\alpha^\ast}{t}(f(v),z(v))\frac{df}{dv}(v)+\pp{\alpha^\ast}{w}(f(v),z(v))\frac{dz}{dv}(v).
\end{align}
Expanding now $(\partial\alpha^\ast/\partial t)(t,w)$ and $(\partial \alpha^\ast/\partial w)(t,w)$ to first order and substituting $t=f(v)$, $w=z(v)$ we obtain
\begin{align}
  \label{eq:695}
  \pp{\alpha^\ast}{t}(f(v),z(v))&=\cp{\pp{\alpha^\ast}{t}}-\cp{\pppp{\alpha^\ast}{t}{w}}v+\Landau_{\delta_2}(v^2),\\
  \pp{\alpha^\ast}{w}(f(v),z(v))&=\dot{\alpha}_0-\cp{\ppp{\alpha^\ast}{w}}v+\Landau_{\delta_2}(v^2).
\end{align}
Therefore,
\begin{align}
  \label{eq:696}
  \frac{d\alpha_-}{dv}(v)=\left\{\cp{\pp{\alpha^\ast}{t}}\frac{\lambda}{3\kappa^2}+\cp{\ppp{\alpha^\ast}{w}}\right\}v+\dot{\alpha}_0\frac{d}{dv}(vy(v))+\mathcal{O}_{\delta_2}(v^2).
\end{align}
Thus,
\begin{align}
  \label{eq:697}
  \alpha_-(v)&=\alpha_0+\dot{\alpha}_0vy(v)+\left\{\cp{\pp{\alpha^\ast}{t}}\frac{\lambda}{3\kappa^2}+\cp{\ppp{\alpha^\ast}{w}}\right\}\frac{v^2}{2}+\Landau_{\delta_2}(v^3)\notag\\
  &=\alpha_0+\dot{\alpha}_0vy(v)+\mathcal{O}(v^2),
\end{align}
where the last equality holds provided we choose $\varepsilon$ suitably small.

We now deal with the asymptotic form of $\alpha_+(v)$. From
\begin{align}
  \label{eq:698}
  \pp{\alpha}{v}=\pp{t}{v}\tilde{A}(\alpha,\beta,r)
\end{align}
we obtain
\begin{align}
  \label{eq:699}
  \alpha_+(v)=\alpha(v,v)=\alpha_i(v)+\int_0^v\left(\pp{t}{v}\tilde{A}(\alpha,\beta,r)\right)(v,v')dv',
\end{align}
where we made use of $\alpha(v,0)=\alpha_i(v)$. This implies
\begin{align}
  \label{eq:700}
  \frac{d\alpha_+}{dv}(v)&=\frac{d\alpha_i}{dv}(v)+\left(\pp{t}{v}\tilde{A}(\alpha,\beta,r)\right)(v,v)\notag\\
&\qquad+\int_0^v\Bigg\{\left(\frac{\partial^2t}{\partial u\partial v}\tilde{A}(\alpha,\beta,r)\right)\notag\\
&\hspace{20mm}+\pp{t}{v}\left(\pp{\tilde{A}}{\alpha}(\alpha,\beta,r)\pp{\alpha}{u}+\pp{\tilde{A}}{\beta}(\alpha,\beta,r)\pp{\beta}{u}+\pp{\tilde{A}}{r}(\alpha,\beta,r)c_-(\alpha,\beta)\pp{t}{u}\right)\Bigg\}(v,v')dv'.
\end{align}

The solution of the fixed boundary problem satisfies (see proposition \ref{prop_fxdbp})
\begin{align}
  \label{eq:701}
  \left|\pp{t}{v}(u,v)-\frac{\lambda}{3\kappa^2}v\right|\leq Cuv,\qquad \left|\pp{t}{u}(u,v)-\frac{\lambda(3u^2-v^2)}{6\kappa(c_{+0}-c_{-0})}\right|\leq Cu^3.
\end{align}
Here the constants depend on $Y$ and $\delta_2$. Therefore (cf.~\eqref{eq:678}),
\begin{align}
  \label{eq:702}
  \pp{t}{v}(u,v)=\frac{\lambda}{3\kappa^2}v+\mathcal{O}_{\delta_2}(uv),\qquad \pp{t}{u}(u,v)=\frac{\lambda(3u^2-v^2)}{6\kappa(c_{+0}-c_{-0})}+\Landau_{\delta_2}(u^3).
\end{align}
The first implies
\begin{align}
  \label{eq:703}
  \left(\pp{t}{v}\tilde{A}(\alpha,\beta,r)\right)(v,v)=\frac{\lambda}{3\kappa^2}\tilde{A}_0v+\mathcal{O}_{\delta_2}(v^2).
\end{align}
From
\begin{align}
  \label{eq:704}
  \frac{\partial^2t}{\partial u\partial v}=-\mu\pp{t}{v}+\nu\pp{t}{u},
\end{align}
we deduce, together with \eqref{eq:702} and (cf.~\eqref{eq:465}, \eqref{eq:469})
\begin{align}
  \label{eq:705}
  \mu(u,v)=\frac{\kappa}{c_{+0}-c_{-0}}+\Landau_{\delta_2}(u),\qquad \nu(u,v)=\mathcal{O}_{\delta_2}(v),
\end{align}
that
\begin{align}
  \label{eq:706}
  \frac{\partial^2t}{\partial u\partial v}(v,v')=-\frac{\lambda}{3\kappa(c_{+0}-c_{-0})}v+\Landau_{\delta_2}(v^2)=\mathcal{O}(v),
\end{align}
where the second equality holds provided we choose $\varepsilon$ small enough. Therefore
\begin{align}
  \label{eq:707}
  \int_0^v\left(\frac{\partial^2t}{\partial u\partial v}\tilde{A}(\alpha,\beta,r)\right)(v,v')dv'=\mathcal{O}(v^2).
\end{align}
Using the first of \eqref{eq:702} we find
\begin{align}
  \label{eq:708}
  \int_0^v\left(\pp{t}{v}(\ldots)\right)(v,v')dv'=\mathcal{O}(v^2),
\end{align}
where we denote by $(\ldots)$ the bracket in the last line of \eqref{eq:700}. We conclude from \eqref{eq:703}, \eqref{eq:707}, \eqref{eq:708} that
\begin{align}
  \label{eq:709}
  \frac{d\alpha_+}{dv}(v)=\frac{d\alpha_i}{dv}(v)+\frac{\lambda\tilde{A}_0}{3\kappa^2}v+\mathcal{O}_{\delta_2}(v^2),
\end{align}
which implies
\begin{align}
  \label{eq:710}
  \alpha_+(v)&=\alpha_i(v)+\frac{\lambda\tilde{A}_0}{6\kappa^2}v^2+\Landau_{\delta_2}(v^3)\notag\\
&=\alpha_i(v)+\mathcal{O}(v^2),
\end{align}
where the second equality holds provided we choose $\varepsilon$ small enough.

We now turn to the jumps $\jump{\alpha(v)}$, $\jump{\beta(v)}$. The first line of \eqref{eq:710} together with the first line of \eqref{eq:697} yields (note that $\tilde{A}_0=\cp{\partial\alpha^\ast/\partial t}$)
\begin{align}
  \label{eq:711}
  \jump{\alpha(v)}&=\dot{\alpha}_0(1-y(v))v-\cp{\ppp{\alpha^\ast}{w}}\frac{v^2}{2}+\Landau_{\delta_2}(v^3)\notag\\
&=\dot{\alpha}_0(1-y(v))v+\mathcal{O}(v^2),
\end{align}
where the second equality holds provided we choose $\varepsilon$ sufficiently small. Using now \eqref{eq:215}, i.e.
\begin{align}
  \label{eq:712}
  \jump{\beta(v)}=\jump{\alpha(v)}^3G(\alpha_+(v),\alpha_-(v),\beta_-(v))
\end{align}
we get from \eqref{eq:711}
\begin{align}
  \label{eq:713}
  \jump{\beta(v)}=8G_0\dot{\alpha}_0^3v^3+\Landau_{\delta_2}(v^4),
\end{align}
where
\begin{align}
  \label{eq:714}
  G_0\coloneqq G(\alpha_{+}(0),\alpha_-(0),\beta_-(0))=G(\alpha_0,\alpha_0,\beta_0).
\end{align}
Therefore, in view of \eqref{eq:691}, we get
\begin{align}
  \label{eq:715}
  \beta_+(v)=\beta_0+\left(\pp{\beta^\ast}{t}\right)_0\frac{\lambda}{6\kappa^2}v^2+\mathcal{O}_{\delta_2}(v^3).
\end{align}

From $\beta_+(v)=\beta_0+v^2\hat{\beta}_+(v)$ we have
\begin{align}
  \label{eq:716}
  \frac{d\hat{\beta}_+}{dv}(v)=-\frac{2}{v^3}(\beta_+(v)-\beta_0)+\frac{1}{v^2}\frac{d\beta_+}{dv}(v).
\end{align}
Taking the derivative of \eqref{eq:712} we obtain
\begin{align}
  \label{eq:717}
  \frac{d}{dv}\jump{\beta(v)}=\left(3\jump{\alpha(v)}^2\frac{d}{dv}\jump{\alpha(v)}\right)G(\alpha_+(v),\alpha_-(v),\beta_-(v))+\jump{\alpha(v)}^3\frac{dG}{dv}(\alpha_+(v),\alpha_-(v),\beta_-(v)).
\end{align}
%Now (see \eqref{eq:682}),
%\begin{align}
%  \label{eq:718}
%  \frac{dz}{dv}(v)=\frac{d}{dv}(vy(v))=-1+\Landau_{\delta_2}(v).
%\end{align}
From \eqref{eq:682}, \eqref{eq:696}, \eqref{eq:709} we have
\begin{align}
  \label{eq:719}
  \frac{d}{dv}\jump{\alpha(v)}=2\dot{\alpha}_0+\Landau_{\delta_2}(v).
\end{align}
Using this together with \eqref{eq:711} in \eqref{eq:717} we obtain
\begin{align}
  \label{eq:720}
  \frac{d}{dv}\jump{\beta(v)}=24G_0\dot{\alpha}_0^3v^2+\Landau_{\delta_2}(v^3).
\end{align}
Using now \eqref{eq:690} we find
\begin{align}
  \label{eq:721}
  \frac{d\beta_+}{dv}(v)=\cp{\pp{\beta^\ast}{t}}\frac{\lambda}{3\kappa^2}v+\Landau_{\delta_2}(v^2).
\end{align}
Therefore, substituting \eqref{eq:715}, \eqref{eq:721} into \eqref{eq:716}, we conclude (putting back the index $m+1$)
\begin{align}
  \label{eq:722}
  \frac{d\hat{\beta}_{+,m+1}}{dv}(v)=\mathcal{O}_{\delta_2}(1).
\end{align}

We now find an expression for the asymptotic form of $V_{m+1}(v)$. We again omit the index $m+1$ for now. We have (see \eqref{eq:167})
\begin{align}
  \label{eq:723}
  V=\frac{\jump{T^{tr}}}{\jump{T^{tt}}}.
\end{align}
We rewrite the numerator as
\begin{align}
  \label{eq:724}
  \jump{T^{tr}}=T^{tr}(\alpha_+,\beta_+)-T^{tr}(\alpha_-,\beta_-)&=T^{tr}(\alpha_+,\beta_+)-T^{tr}(\alpha_+,\beta_-)\notag\\
&\qquad+T^{tr}(\alpha_+,\beta_-)-T^{tr}(\alpha_-,\beta_-).
\end{align}
We expand
\begin{align}
  \label{eq:725}
  T^{tr}(\alpha_+,\beta_+)-T^{tr}(\alpha_+,\beta_-)=\pp{T^{tr}}{\beta}(\alpha_+,\beta_-)\jump{\beta}+\mathcal{O}\left(\jump{\beta}^2\right)
\end{align}
and
\begin{align}
  \label{eq:726}
  T^{tr}(\alpha_+,\beta_-)-T^{tr}(\alpha_-,\beta_-)=\pp{T^{tr}}{\alpha}(\alpha_+,\beta_-)\jump{\alpha}-\frac{1}{2}\frac{\partial^2T^{tr}}{\partial \alpha^2}(\alpha_+,\beta_-)\jump{\alpha}^2+\mathcal{O}\left(\jump{\alpha}^3\right).
\end{align}
Similar expressions hold for the denominator of \eqref{eq:723}. Using $\jump{\beta}=\jump{\alpha}^3G(\alpha_+,\alpha_-,\beta_-)$  (see \eqref{eq:215}) it follows
\begin{align}
  \label{eq:727}
  V&=\frac{\displaystyle\pp{T^{tr}}{\alpha}(\alpha_+,\beta_-)-\frac{1}{2}\frac{\partial^2T^{tr}}{\partial\alpha^2}(\alpha_+,\beta_-)\jump{\alpha}+\mathcal{O}\left(\jump{\alpha}^2\right)}{\displaystyle\pp{T^{tt}}{\alpha}(\alpha_+,\beta_-)-\frac{1}{2}\frac{\partial^2T^{tt}}{\partial\alpha^2}(\alpha_+,\beta_-)\jump{\alpha}+\mathcal{O}\left(\jump{\alpha}^2\right)}\notag\\
&=\frac{\displaystyle\pp{T^{tr}}{\alpha}(\alpha_+,\beta_-)-\frac{1}{2}\frac{\partial^2T^{tr}}{\partial\alpha^2}(\alpha_+,\beta_-)\jump{\alpha}}{\displaystyle\pp{T^{tt}}{\alpha}(\alpha_+,\beta_-)-\frac{1}{2}\frac{\partial^2T^{tt}}{\partial\alpha^2}(\alpha_+,\beta_-)\jump{\alpha}}+\mathcal{O}\left(\jump{\alpha}^2\right),
\end{align}
where for the second equality we used $\partial T^{tt}/\partial\alpha\neq 0$, which follows from the first of \eqref{eq:182}. We now use the first of \eqref{eq:184}, i.e.
\begin{align}
  \label{eq:728}
  \pp{T^{tr}}{\alpha}=c_+\pp{T^{tt}}{\alpha},
\end{align}
which implies
\begin{align}
  \label{eq:729}
  V=c_+(\alpha_+,\beta_-)-\frac{1}{2}\pp{c_+}{\alpha}(\alpha_+,\beta_-)\jump{\alpha}+\mathcal{O}\left(\jump{\alpha}^2\right).
\end{align}

We look at $c_+(\alpha_+,\beta_-)$. We have
\begin{align}
  \label{eq:730}
  \frac{d}{dv}c_+(\alpha_+(v),\beta_-(v))=\pp{c_+}{\alpha}(\alpha_+(v),\beta_-(v))\frac{d\alpha_+}{dv}(v)+\pp{c_+}{\beta}(\alpha_+(v),\beta_-(v))\frac{d\beta_-}{dv}(v).
\end{align}
Using $\cp{d\beta_-/dv}=0$ we obtain
\begin{align}
  \label{eq:731}
  \pp{c_+}{\alpha}(\alpha_+(v),\beta_-(v))&=\cp{\pp{c_+}{\alpha}}+\left\{\cp{\ppp{c_+}{\alpha}}\cp{\frac{d\alpha_+}{dv}}+\cp{\pppp{c_+}{\alpha}{\beta}}\cp{\frac{d\beta_-}{dv}}\right\}v+\mathcal{O}(v^2)\notag\\
&=\frac{\kappa}{\dot{\alpha}_0}+\cp{\ppp{c_+}{\alpha}}\dot{\alpha}_0v+\mathcal{O}(v^2),
\end{align}
where we also used $\cp{\partial c_+/\partial \alpha}=\kappa/\dot{\alpha}_0$. 
We also have
\begin{align}
  \label{eq:736}
  \pp{c_+}{\beta}(\alpha_+(v),\beta_-(v))=\cp{\pp{c_+}{\beta}}+\mathcal{O}(v).
\end{align}

Using now \eqref{eq:731} and \eqref{eq:736} together with the asymptotic forms of $d\alpha_+/dv$, $d\beta_-/dv$ given by \eqref{eq:709}, \eqref{eq:721} respectively, we obtain
\begin{align}
  \label{eq:737}
  \frac{d}{dv}c_+(\alpha_+(v),\beta_-(v))&=\frac{\kappa}{\dot{\alpha}_0}\frac{d\alpha_i}{dv}(v)\notag\\
&\qquad+\left\{\frac{\lambda}{3\kappa^2}\left(\frac{\kappa\tilde{A}_0}{\dot{\alpha}_0}+\cp{\pp{c_+}{\beta}}\cp{\pp{\beta^\ast}{t}}\right)+\dot{\alpha}_0\cp{\ppp{c_+}{\alpha}}\frac{d\alpha_i}{dv}(v)\right\}v+\Landau_{\delta_2}(v^2).
\end{align}
Therefore,
\begin{align}
  \label{eq:738}
  c_+(\alpha_+(v),\beta_-(v))&=c_{+0}+\frac{\kappa}{\dot{\alpha}_0}(\alpha_i(v)-\alpha_0)+\dot{\alpha}_0\cp{\ppp{c_+}{\alpha}}(\alpha_i(v)-\overline{\alpha_i}(v))v\notag\\
&\qquad\qquad\qquad+\frac{\lambda}{6\kappa^2}\left\{\frac{\kappa\tilde{A}_0}{\dot{\alpha}_0}+\cp{\pp{c_+}{\beta}}\cp{\pp{\beta^\ast}{t}}\right\}v^2+\Landau_{\delta_2}(v^3),
\end{align}
where we introduced
\begin{align}
  \label{eq:739}
  \overline{\alpha_i}(v)\coloneqq \frac{1}{v}\int_0^v\alpha_i(v')dv'.
\end{align}
Therefore, provided we choose $\varepsilon$ small enough,
\begin{align}
  \label{eq:740}
  c_+(\alpha_+(v),\beta_-(v))=c_{+0}+\kappa v+\Landau(v^2).
\end{align}

Using \eqref{eq:711}, \eqref{eq:731}, \eqref{eq:740} in \eqref{eq:729} we arrive at
\begin{align}
  \label{eq:741}
  V(v)=c_{+0}+\frac{\kappa}{2}(1+y(v))v+\mathcal{O}(v^2).
\end{align}
Using now $\hat{V}(v)=(1/v^2)(V(v)-c_{+0}-(\kappa/2)(1+y(v))v)$ we deduce, putting back the index $m+1$,
\begin{align}
  \label{eq:742}
  \hat{V}_{m+1}(v)=\mathcal{O}(1),
\end{align}
i.e.
\begin{align}
  \label{eq:743}
  \sup_{v\in[0,\varepsilon]}|\hat{V}_{m+1}(v)|\leq C
\end{align}
for a fixed numerical constant $C$. Choosing now the size of the ball for the iteration of the function $\hat{V}_m$, i.e.~$\delta_2$, to be equal to the numerical constant $C$ appearing in \eqref{eq:743} we see that $\hat{V}_{m+1}\in B_{\delta_2}$. From \eqref{eq:722} we then have
\begin{align}
  \label{eq:744}
  \sup_{v\in[0,\varepsilon]}\bigg|\frac{d\hat{\beta}_{+,m+1}}{dv}(v)\bigg|\leq C
\end{align}
for a fixed numerical constant $C$. Choosing now the size of the ball for the iteration of the function $\hat{\beta}_{+,m}$, i.e.~$\delta_1$, to be equal to the numerical constant appearing in \eqref{eq:744} we see that $\hat{\beta}_{+,m+1}\in B_{\delta_1}$. From \eqref{eq:678} we get
\begin{align}
  \label{eq:745}
  Y\leq C,
\end{align}
for a fixed numerical constant $C$. In view of \eqref{eq:677} this shows that $y_{m+1}\in B_Y$. This concludes the proof of the proposition.

\end{proof}

\subsection{Convergence}

We define
\begin{align}
  \label{eq:746}
  \nd{f}\coloneqq \sup_{[0,v]}\left|\frac{df}{dv}\right|.
\end{align}
For differences between successive members of the iteration we will use the notation $\Delta_m f\coloneqq f_m-f_{m-1}$. For any sequence of functions $h_m\in C^1$ with identical values $h_m(0)=:h_0$ for any $m$ we have
\begin{align}
  \label{eq:747}
  \Delta_mh(v)=\int_0^v\frac{d}{dv}\Delta_mh(v')dv',
\end{align}
which implies
\begin{align}
  \label{eq:748}
  |\Delta_mh|\leq v\nd{\Delta_mh}.
\end{align}

\subsubsection{Estimates for the Solution of the Fixed Boundary Problem}

\begin{lemma}\label{lemma_est_fxdbp}
  Let $(t_{m+1},r_{m+1},\alpha_{m+1},\beta_{m+1})$ be the solution of the fixed boundary problem as given by proposition \ref{prop_fxdbp}, corresponding to the boundary functions $(z_m,\beta_{+,m},V_m)$. Then the following estimates hold
  \begin{align}
  \label{eq:749}
  \bigg|\frac{d\Delta_{m+1}\hat{f}}{dv}(v)\bigg|&\leq \frac{\lambda}{24\kappa^2}\nd{\Delta_my}+C\left\{\sup_{[0,v]}|\Delta_m\hat{V}|+v\nd{\Delta_m\hat{\beta}_+}+v\nd{\Delta_my}\right\},\\
    \label{eq:750}
  \left|\frac{d\Delta_{m+1}\hat{\delta}}{dv}(v)\right|&\leq\frac{\lambda}{24\kappa}\nd{\Delta_my}+C\left\{\sup_{[0,v]}|\Delta_m\hat{V}|+v\nd{\Delta_m\hat{\beta}_+}+v\nd{\Delta_my}\right\},\\
    \label{eq:751}
  \nd{\Delta_{m+1}\alpha_+}&\leq Cv^2\left\{\sup_{[0,v]}|\Delta_{m}\hat{V}|+v\nd{\Delta_{m}\hat{\beta}_+}+\nd{\Delta_{m}y}\right\}.
  \end{align}
\end{lemma}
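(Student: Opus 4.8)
The plan is to obtain all three estimates by differencing, in the outer index $m$, the identities and bounds already established in the proofs of Propositions \ref{prop_inner_iteration} and \ref{prop_ind_step}. Write $\delta y=\Delta_m y$, $\delta\hat\beta_+=\Delta_m\hat\beta_+$, $\delta\hat V=\Delta_m\hat V$ for the differences of the input data of step $m$ versus step $m-1$, and $\delta t,\delta\alpha,\delta\beta,\delta r$ for the corresponding differences of the fixed--boundary solutions furnished by Proposition \ref{prop_fxdbp}. Since $y_m(0)=-1$ and $\hat\beta_{+,m}(0)$ are the same for every $m$, \eqref{eq:748} gives $|\delta y(v)|\le v\nd{\delta y}$, $|\delta\hat\beta_+(v)|\le v\nd{\delta\hat\beta_+}$, whence $|\Delta_m V(v)|\le Cv^2\bigl(\nd{\delta y}+\sup_{[0,v]}|\delta\hat V|\bigr)$ and $|\Delta_m\beta_+(v)|\le v^3\nd{\delta\hat\beta_+}$; these are exactly the weights appearing on the right of \eqref{eq:749}--\eqref{eq:751}. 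Abbreviate $X:=\sup_{[0,v]}|\delta\hat V|+v\nd{\delta\hat\beta_+}+v\nd{\delta y}$.

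The first step is a quantitative continuous--dependence statement for the fixed--boundary problem. The limit solution $(\alpha,\beta,t,r)$ satisfies the integral and ODE representations \eqref{eq:344}, \eqref{eq:345}, \eqref{eq:316}, \eqref{eq:317}, \eqref{eq:356} used in Section \ref{fbp}; subtracting the two copies (attached to the data of steps $m$ and $m-1$) produces a linear inhomogeneous system for $(\delta t,\delta\alpha,\delta\beta,\delta r)$ whose source terms are linear in the coefficient differences $\delta\mu,\delta\nu,\delta(1/\gamma),\delta\bar c_\pm,\delta h$ (with $\delta\alpha_i=0$), and these, exactly as in \eqref{eq:513}--\eqref{eq:523} and \eqref{eq:536}, are bounded by $\delta y,\delta\hat\beta_+,\delta\hat V$ with the weights just recorded. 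A Gr\"onwall argument of precisely the shape running from \eqref{eq:506} to \eqref{eq:567} --- first the rough bound $\sup_{u'\in[v,u]}\!\bigl|\pp{\delta t}{v}(u',v)\bigr|\le CuvX$ and then feeding it back --- yields $\bigl|\pp{\delta t}{v}\bigr|\le CuvX$, $\bigl|\pp{\delta t}{u}\bigr|\le Cuv^2X$, $|\delta\alpha|,|\delta r|\le Cu^3X$ and $|\delta\beta|\le Cu^2X$.

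Given these, \eqref{eq:751} follows by differencing \eqref{eq:699}--\eqref{eq:700}: the terms $d\alpha_i/dv$ and $\tfrac{\lambda\tilde A_0}{3\kappa^2}v$ carry no index $m$, so $d(\Delta_{m+1}\alpha_+)/dv$ consists entirely of the source pieces controlled in the previous step (the $y_m$--dependence entering only through $V_m$, hence with the stated weight), giving $\nd{\Delta_{m+1}\alpha_+}\le Cv^2\bigl(\sup_{[0,v]}|\Delta_m\hat V|+v\nd{\Delta_m\hat\beta_+}+\nd{\Delta_m y}\bigr)$. For \eqref{eq:749} I would difference \eqref{eq:613}: its only term whose coefficient survives at leading order is $\tfrac{\lambda}{6\kappa^2v^4}\int_0^v v'^3(dy_m/dv)\,dv'$, and $\bigl|\tfrac{\lambda}{6\kappa^2v^4}\int_0^v v'^3(d\Delta_m y/dv)\,dv'\bigr|\le\tfrac{\lambda}{6\kappa^2v^4}\cdot\tfrac{v^4}{4}\nd{\Delta_m y}=\tfrac{\lambda}{24\kappa^2}\nd{\Delta_m y}$; the $\mathcal{O}_{\delta_2}(1)$ remainder of \eqref{eq:613} --- reassembled from $M_1,M_2$, the remainder of $N_0$, $N_1$, the $h'$--correction and the $\mathcal{O}_{\delta_2}(v^2)$ of \eqref{eq:608} --- is re-estimated in difference form from the previous step together with \eqref{eq:387}-type bounds on $\rho_0$, and is $\le C\{\sup_{[0,v]}|\Delta_m\hat V|+v\nd{\Delta_m\hat\beta_+}+v\nd{\Delta_m y}\}$ (the $\hat V$--part enters $df/dv$ as $\mathcal{O}(v^2)$ and hence, after the transformation $\hat f=v^{-2}f$, yields the bare $\sup|\Delta_m\hat V|$). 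Estimate \eqref{eq:750} is identical, starting from \eqref{eq:629} and using \eqref{eq:616} in the form $d(\Delta_{m+1}\delta)/dv=\Delta_m V\cdot df_{m+1}/dv+(V_{m-1}-c_{+0})\,d(\Delta_{m+1}f)/dv$; the leading term is that of $d\hat\delta_0/dv$ from \eqref{eq:622}, whose differenced bound is $\tfrac{\lambda}{24\kappa}\nd{\Delta_m y}$.

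I expect the main obstacle to be the bookkeeping in the last step: preserving the exact constants $\tfrac{\lambda}{24\kappa^2}$ and $\tfrac{\lambda}{24\kappa}$ in front of $\nd{\Delta_m y}$ (these are what make the outer iteration a contraction once $\varepsilon$ is small enough that the coefficient of the remaining terms drops below $\tfrac34$), and checking term by term that every other contribution to $d(\Delta_{m+1}\hat f)/dv$ and $d(\Delta_{m+1}\hat\delta)/dv$ either carries an extra factor $v$ relative to $\nd{\Delta_m y}$ or is absorbed into $\sup|\Delta_m\hat V|$ or $v\nd{\Delta_m\hat\beta_+}$ --- in particular that the pieces of the remainder depending on $\partial t/\partial v$ close through a Gr\"onwall of the same form as \eqref{eq:421}--\eqref{eq:425} and \eqref{eq:560}--\eqref{eq:564}, rather than leaving an uncontrolled $\mathcal{O}(1)$ term.
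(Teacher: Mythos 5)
Your plan is essentially the paper's own proof: the paper likewise first establishes the continuous-dependence (Gr\"onwall) estimates for $\Delta_{m+1}t$, $\Delta_{m+1}\alpha$, $\Delta_{m+1}\beta$, $\Delta_{m+1}r$ driven by $\Delta_m y$, $\Delta_m\hat V$, $\Delta_m\hat\beta_+$ (culminating in \eqref{eq:866}, \eqref{eq:867}), and then extracts the exact leading terms $\tfrac{\lambda}{6\kappa^2v^4}\int_0^v v'^3\tfrac{d\Delta_m y}{dv}dv'$ and $\tfrac{\lambda}{6\kappa v^4}\int_0^v v'^3\tfrac{d\Delta_m y}{dv}dv'$ giving the constants $\tfrac{\lambda}{24\kappa^2}$, $\tfrac{\lambda}{24\kappa}$, with every remaining contribution carrying either an extra $v$ or being absorbed into $\sup|\Delta_m\hat V|$, $v\nd{\Delta_m\hat\beta_+}$, exactly as you describe. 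The only (inessential) organizational difference is that the paper re-runs the boundary-ODE derivation at the level of differences, integrating \eqref{eq:831} to get \eqref{eq:888}, rather than differencing the already-derived expansions \eqref{eq:613}, \eqref{eq:629} termwise; and your treatment of \eqref{eq:751} via the differenced representation of $d\alpha_+/dv$ coincides with the paper's \eqref{eq:903}--\eqref{eq:910}.
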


\begin{proof}
The difference $\Delta_{m+1}t$ satisfies
\begin{align}
  \label{eq:752}
  \pppp{\Delta_{m+1}t}{u}{v}+\mu_{m+1}\pp{\Delta_{m+1}t}{v}-\nu_{m+1}\pp{\Delta_{m+1}t}{u}=\Xi_{m+1},
\end{align}
where
\begin{align}
  \label{eq:753}
  \Xi_{m+1}=\pp{t_m}{u}\Delta_{m+1}\nu-\pp{t_m}{v}\Delta_{m+1}\mu.
\end{align}
In addition we have the boundary condition
\begin{align}
  \label{eq:754}
  \pp{\Delta_{m+1}t}{v}=\frac{1}{\gamma_{m+1}}\pp{\Delta_{m+1}t}{u}+\pp{t_m}{u}\Delta_{m+1}\left(\frac{1}{\gamma}\right)\quad\textrm{for}\quad u=v,
\end{align}
where
\begin{align}
  \label{eq:755}
  \gamma_m=\frac{\bar{c}_{+,m}(v)-V_{m-1}(v)}{V_{m-1}(v)-\bar{c}_{-,m(v)}},\qquad \bar{c}_{\pm,m}(v)=c_\pm(\alpha_{+,m}(v),\beta_{+,m-1}(v)),
\end{align}
and the initial condition
\begin{align}
  \label{eq:756}
  \Delta_{m+1}t(u,0)=0.
\end{align}

Before we study equation \eqref{eq:752} we estimate the difference $\Delta_{m+1}(1/\gamma)$ and the term $\Xi_{m+1}$. We have
\begin{align}
  \label{eq:757}
  \Delta_{m+1}\left(\frac{1}{\gamma}\right)=\frac{\omega_{m+1}}{(\bar{c}_{+,m+1}-V_{m})(\bar{c}_{+,m}-V_{m-1})},
\end{align}
where
\begin{align}
  \label{eq:758}
  \omega_{m+1}=(V_{m}-\bar{c}_{-,m+1})(\bar{c}_{+,m}-V_{m-1})-(V_{m-1}-\bar{c}_{-,m})(\bar{c}_{+,m+1}-V_{m}).
\end{align}
\begin{remark}
The mixture of indices in \eqref{eq:755}, \eqref{eq:758} arises because the index increases with the solution of the fixed boundary problem as described in the basic strategy. $\alpha_+$ and $c_\pm$ carry the index of the solution of the fixed boundary problem while $\beta_+$ and $V_m$ carry the index of the data which goes into the fixed boundary problem.
\end{remark}
We consider first the denominator of \eqref{eq:757}. From
\begin{align}
  \label{eq:759}
  \frac{d\bar{c}_{+,m}}{dv}=\pp{c_+}{\alpha}(\alpha_{+,m},\beta_{+,m-1})\frac{d\alpha_{+,m}}{dv}+\pp{c_+}{\beta}(\alpha_{+,m},\beta_{+,m-1})\frac{d\beta_{+,m-1}}{dv}
\end{align}
together with $\cp{\partial c_+/\partial\alpha}=\kappa/\dot{\alpha}_0$ and the asymptotic forms of $d\alpha_{+,m}/dv$, $d\beta_{+,m}/dv$ given by \eqref{eq:709}, \eqref{eq:721} respectively, we find
\begin{align}
  \label{eq:760}
  \bar{c}_{+,m}(v)=c_{+0}+\kappa v+\Landau(v^2).
\end{align}
Using this together with
\begin{align}
  \label{eq:761}
  V_m(v)=c_{+0}+\Landau(v^2),
\end{align}
(recall that $V_m(v)=c_{+0}+\frac{\kappa}{2}(1+y_m(v))v+v^2\hat{V}(v)$, $y_m(v)=-1+\Landau(v)$) we get
\begin{align}
  \label{eq:762}
  (\bar{c}_{+,m+1}-V_{m})(\bar{c}_{+,m}-V_{m-1})=\kappa^2v^2+\mathcal{O}(v^3).
\end{align}

We turn to the numerator of \eqref{eq:757}. We rewrite it as
\begin{align}
  \label{eq:763}
  \omega_{m+1}=(\Delta_mV-\Delta_{m+1}\bar{c}_-)(\bar{c}_{+,m}-V_{m-1})-(V_{m-1}-\bar{c}_{-,m})(\Delta_{m+1}\bar{c}_+-\Delta_mV).
\end{align}
We have
\begin{align}
  \label{eq:764}
  \Delta_mV(\bar{c}_{+,m}-V_{m-1})=\mathcal{O}(v^2|\Delta_my|)+\mathcal{O}(v^3|\Delta_m\hat{V}|).
\end{align}

Now we study the difference $\Delta_{m+1}\bar{c}_\pm$. Since $\bar{c}_{\pm,m}(v)=c_\pm(\alpha_{+,m}(v),\beta_{+,m-1}(v))$ we need to estimate the difference $\Delta_{m+1}\alpha_+$. From (see the first of \eqref{eq:284})
\begin{align}
  \label{eq:765}
  \Delta_{m+1}\alpha(u,v)=\int_0^v\Delta_{m+1}A(u,v')dv',
\end{align}
we obtain
\begin{align}
  \label{eq:766}
  |\Delta_{m+1}\alpha(u,v)|&\leq Cv\sup_{T_u}|\Delta_{m+1}A(u,v)|\notag\\
&\leq Cv\left\{\sup_{T_u}\left|\pp{\Delta_{m+1}t}{v}\right|+v\left(\sup_{T_u}|\Delta_{m+1}\alpha|+\sup_{T_u}|\Delta_{m+1}\beta|+\sup_{T_u}|\Delta_{m+1}r|\right)\right\}.
\end{align}
From this we deduce that for $\varepsilon$ small enough
\begin{align}
  \label{eq:767}
  \sup_{T_u}|\Delta_{m+1}\alpha|\leq Cu\left\{\sup_{T_u}\left|\pp{\Delta_{m+1}t}{v}\right|+u\left(\sup_{T_u}|\Delta_{m+1}\beta|+\sup_{T_u}|\Delta_{m+1}r|\right)\right\}.
\end{align}

For the difference $|\Delta_{m+1}r|$ we integrate \eqref{eq:285} (cf.~also \eqref{eq:316}) to get
\begin{align}
  \label{eq:768}
  \Delta_{m+1}r(u,v)&=\int_0^u\left(c_{-,m+1}\pp{\Delta_{m+1}t}{u}+\pp{t_m}{u}\Delta_{m+1}c_-\right)(u',0)du'\notag\\
&\qquad+\int_0^v\left(c_{+,m+1}\pp{\Delta_{m+1}t}{v}+\pp{t_m}{v}\Delta_{m+1}c_+\right)(u,v')dv',
\end{align}
which implies
\begin{align}
  \label{eq:769}
  |\Delta_{m+1}r(u,v)|\leq Cu\left\{\sup_{T_u}\left|\pp{\Delta_{m+1}t}{u}\right|+\sup_{T_u}\left|\pp{\Delta_{m+1}t}{v}\right|+u\left(\sup_{T_u}|\Delta_{m+1}\alpha|+\sup_{T_u}|\Delta_{m+1}\beta|\right)\right\}.
\end{align}
Using this in \eqref{eq:767} and choosing $\varepsilon$ sufficiently small we find
\begin{align}
  \label{eq:770}
  \sup_{T_u}|\Delta_{m+1}\alpha|\leq Cu\left\{u^2\sup_{T_u}\left|\pp{\Delta_{m+1}t}{u}\right|+\sup_{T_u}\left|\pp{\Delta_{m+1}t}{v}\right|+u\sup_{T_u}|\Delta_{m+1}\beta|\right\}.
\end{align}

To estimate the difference $\Delta_{m+1}\beta$ we use (see the second of \eqref{eq:284})
\begin{align}
  \label{eq:771}
  \Delta_{m+1}\beta(u,v)=\Delta_{m}\beta_+(v)+\int_v^u\Delta_{m+1}B(u',v)du',
\end{align}
which implies
\begin{align}
  \label{eq:772}
  |\Delta_{m+1}\beta(u,v)|&\leq |\Delta_m\beta_+(v)|+Cu\bigg\{\sup_{T_u}\left|\pp{\Delta_{m+1}t}{u}\right|\notag\\
&\hspace{35mm}+u^2\left(\sup_{T_u}|\Delta_{m+1}\alpha|+\sup_{T_u}|\Delta_{m+1}\beta|+\sup_{T_u}|\Delta_{m+1}r|\right)\bigg\}.
\end{align}
Using \eqref{eq:769} and choosing $\varepsilon$ small enough, this in turn implies
\begin{align}
  \label{eq:773}
  \sup_{T_u}|\Delta_{m+1}\beta|\leq\sup_{[0,u]}|\Delta_m\beta_+|+Cu\left\{\sup_{T_u}\left|\pp{\Delta_{m+1}t}{u}\right|+u^2\left(u\sup_{T_u}\left|\pp{\Delta_{m+1}t}{v}\right|+\sup_{T_u}|\Delta_{m+1}\alpha|\right)\right\}.
\end{align}
Using this in \eqref{eq:770} we obtain
\begin{align}
  \label{eq:774}
  \sup_{T_u}|\Delta_{m+1}\alpha|\leq Cu\left\{u^2\sup_{T_u}\left|\pp{\Delta_{m+1}t}{u}\right|+\sup_{T_u}\left|\pp{\Delta_{m+1}t}{v}\right|+u\sup_{[0,u]}|\Delta_m\beta_+|\right\},
\end{align}
provided that we choose $\varepsilon$ suitably small. For future reference we use this in \eqref{eq:773} which implies
\begin{align}
  \label{eq:775}
  \sup_{T_u}|\Delta_{m+1}\beta|\leq C\sup_{[0,u]}|\Delta_m\beta_+|+Cu\left\{\sup_{T_u}\left|\pp{\Delta_{m+1}t}{u}\right|+u^3\sup_{T_u}\left|\pp{\Delta_{m+1}t}{v}\right|\right\}.
\end{align}
From \eqref{eq:774} we have
\begin{align}
  \label{eq:776}
  \sup_{[0,v]}|\Delta_{m+1}\alpha_+|\leq\sup_{T_v}|\Delta_{m+1}\alpha|\leq Cv\left\{v^2\sup_{T_v}\left|\pp{\Delta_{m+1}t}{u}\right|+\sup_{T_v}\left|\pp{\Delta_{m+1}t}{v}\right|+v\sup_{[0,v]}|\Delta_{m}\beta_+|\right\}.
\end{align}

It now follows for the difference $\Delta_{m+1}\bar{c}_\pm$
\begin{align}
  \label{eq:777}
  |\Delta_{m+1}\bar{c}_\pm|\leq C\left\{v^3\sup_{T_v}\left|\pp{\Delta_{m+1}t}{u}\right|+v\sup_{T_v}\left|\pp{\Delta_{m+1}t}{v}\right|+\sup_{[0,v]}|\Delta_{m}\beta_+|\right\}.
\end{align}
Using this we get
\begin{align}
  \label{eq:778}
  |\Delta_{m+1}\bar{c}_-(\bar{c}_{+,m}-V_{m-1})|&\leq Cv \left\{v^3\sup_{T_v}\left|\pp{\Delta_{m+1}t}{u}\right|+v\sup_{T_v}\left|\pp{\Delta_{m+1}t}{v}\right|+\sup_{[0,v]}|\Delta_{m}\beta_+|\right\},\\
  \label{eq:779}
  |(V_{m-1}-\bar{c}_{-,m})\Delta_{m+1}\bar{c}_+|&\leq C\left\{v^3\sup_{T_v}\left|\pp{\Delta_{m+1}t}{u}\right|+v\sup_{T_v}\left|\pp{\Delta_{m+1}t}{v}\right|+\sup_{[0,v]}|\Delta_{m}\beta_+|\right\},
\end{align}
where for \eqref{eq:778} we used $\bar{c}_{+,m}-V_{m-1}=\mathcal{O}(v)$ and for \eqref{eq:779} we used $V_{m-1}-\bar{c}_{-,m}=c_{+0}-c_{-0}+\mathcal{O}(v)$.

The remaining part of the numerator of \eqref{eq:757} is
\begin{align}
  \label{eq:780}
  (V_{m-1}-\bar{c}_{-m})(\Delta_{m}V)=\frac{(c_{+0}-c_{-0})\kappa v}{2}\Delta_my+\mathcal{O}(v^2|\Delta_my|)+\mathcal{O}(v^2|\Delta_m\hat{V}|).
\end{align}
Using now \eqref{eq:764}, \eqref{eq:778}, \eqref{eq:779}, \eqref{eq:780} together with \eqref{eq:762} in \eqref{eq:757} we arrive at
\begin{align}
  \label{eq:781}
  \Delta_{m+1}\left(\frac{1}{\gamma}\right)&=\frac{c_{+0}-c_{-0}}{2\kappa v}\Delta_my+\mathcal{O}(|\Delta_my|)+\mathcal{O}(|\Delta_m\hat{V}|)+\mathcal{O}\bigg(\frac{1}{v^2}\sup_{[0,v]}|\Delta_m\beta_+|\bigg)\notag\\
&\qquad+\mathcal{O}\left(\frac{1}{v}\sup_{T_v}\left|\pp{\Delta_{m+1}t}{v}\right|\right)+\mathcal{O}\left(v\sup_{T_v}\left|\pp{\Delta_{m+1}t}{u}\right|\right).
\end{align}

We turn to $\Xi_{m+1}$. For an estimate of this we need estimates for the differences $\Delta_{m+1}\mu$, $\Delta_{m+1}\nu$. From the first of \eqref{eq:296} we have
\begin{align}
  \label{eq:782}
  \Delta_{m+1}\mu=-\frac{1}{(c_{+,m+1}-c_{-,m+1})(c_{+,m}-c_{-,m})}\pp{c_{+,m+1}}{u}\Delta_{m+1}(c_+-c_-)+\frac{1}{c_{+,m}-c_{-,m}}\pp{\Delta_{m+1}c_+}{u},
\end{align}
which implies
\begin{align}
  \label{eq:783}
  |\Delta_{m+1}\mu|\leq C\left\{|\Delta_{m+1}c_+|+|\Delta_{m+1}c_-|+\left|\pp{\Delta_{m+1}c_+}{u}\right|\right\}.
\end{align}
For the last term we use
\begin{align}
  \label{eq:784}
  \pp{\Delta_{m+1}c_+}{u}&=\left(\pp{c_+}{\alpha}\right)_{m+1}\pp{\Delta_{m+1}\alpha}{u}+\pp{\alpha_m}{u}\Delta_{m+1}\left(\pp{c_+}{\alpha}\right)\notag\\
&\qquad+\left(\pp{c_+}{\beta}\right)_{m+1}\pp{\Delta_{m+1}\beta}{u}+\pp{\beta_m}{u}\Delta_{m+1}\left(\pp{c_+}{\beta}\right).
\end{align}

For the partial derivatives of $c_+$ with respect to $\alpha$, $\beta$ we have
\begin{align}
  \label{eq:785}
  \left|\Delta_{m+1}\left(\pp{c_+}{\alpha}\right)\right|,\left|\Delta_{m+1}\left(\pp{c_+}{\beta}\right)\right|\leq C\left\{|\Delta_{m+1}\alpha|+|\Delta_{m+1}\beta|\right\}.
\end{align}
For the partial derivative of $\beta$ with respect to $u$ we use the second of \eqref{eq:284}. I.e.~we have
\begin{align}
  \label{eq:786}
  \pp{\Delta_{m+1}\beta}{u}=\Delta_{m+1}B,
\end{align}
which implies
\begin{align}
  \label{eq:787}
  \left|\pp{\Delta_{m+1}\beta}{u}\right|\leq C \left\{\left|\pp{\Delta_{m+1}t}{u}\right|+u^2\Big(|\Delta_{m+1}\alpha|+|\Delta_{m+1}\beta|+|\Delta_{m+1}r|\Big)\right\}.
\end{align}

Using \eqref{eq:785}, \eqref{eq:787} in \eqref{eq:784} and the resulting estimate in \eqref{eq:783} we obtain
\begin{align}
  \label{eq:788}
  |\Delta_{m+1}\mu|\leq C\left\{|\Delta_{m+1}\alpha|+|\Delta_{m+1}\beta|+u^2|\Delta_{m+1}r|+\left|\pp{\Delta_{m+1}t}{u}\right|+\left|\pp{\Delta_{m+1}\alpha}{u}\right|\right\}.
\end{align}
For the term involving the partial derivative of $\alpha$ with respect to $u$ we use
\begin{align}
  \label{eq:789}
  \alpha_m(u,v)=\alpha_i(u)+\int_0^vA_m(u,v')dv',
\end{align}
which implies
\begin{align}
  \label{eq:790}
  \pp{\Delta_{m+1}\alpha}{u}(u,v)&=\int_0^v\bigg\{\left(\pp{A}{\alpha}\right)_{m+1}\pp{\Delta_{m+1}\alpha}{u}+\pp{\alpha_{m}}{u}\Delta_{m+1}\left(\pp{A}{\alpha}\right)\notag\\
&\hspace{10mm}+\left(\pp{A}{\beta}\right)_{m+1}\pp{\Delta_{m+1}\beta}{u}+\pp{\beta_{m}}{u}\Delta_{m+1}\left(\pp{A}{\beta}\right)+\left(\pp{A}{r}\right)_{m+1}c_{-,m+1}\pp{\Delta_{m+1}t}{u}\notag\\
&\hspace{10mm}+\pp{t_{m}}{u}\left(\left(\pp{A}{r}\right)_{m+1}\Delta_{m+1}c_-+c_{-,m}\Delta_{m+1}\left(\pp{A}{r}\right)\right)\notag\\
&\hspace{50mm}+\tilde{A}_{m+1}\frac{\partial^2\Delta_{m+1}t}{\partial u\partial v}+\frac{\partial^2t_{m}}{\partial u\partial v}\Delta_{m+1}\tilde{A}\bigg\}(u,v')dv'.
\end{align}
We rewrite this as
\begin{align}
  \label{eq:791}
  \pp{\Delta_{m+1}\alpha}{u}(u,v)=\int_0^v\sum_{i=1}^9I_i(u,v')dv',
\end{align}
where the $I_i$ denote the nine terms appearing in \eqref{eq:790} (the $(\ldots)$ bracket counting as two terms). The first term we will absorb on the left hand side. For $I_2$ we use
\begin{align}
  \label{eq:792}
  \left|\Delta_{m+1}\left(\pp{A}{\alpha}\right)\right|&\leq \bigg|\pp{t_{m+1}}{v}\Delta_{m+1}\bigg(\pp{\tilde{A}}{\alpha}\bigg)\bigg|+\bigg|\bigg(\pp{\tilde{A}}{\alpha}\bigg)_m\pp{\Delta_{m+1}t}{v}\bigg|\notag\\
%&\leq C\bigg\{v\bigg|\Delta_{m+1}\bigg(\pp{\tilde{A}}{\alpha}\bigg)\bigg|+\bigg|\pp{\Delta_{m+1}t}{v}\bigg|\bigg\}\notag\\
&\leq C\left\{v\Big(|\Delta_{m+1}\alpha|+|\Delta_{m+1}\beta|+|\Delta_{m+1}r|\Big)+\bigg|\pp{\Delta_{m+1}t}{v}\bigg|\right\},
\end{align}
which implies
\begin{align}
  \label{eq:793}
  \left|\int_0^vI_2(u,v')dv'\right|\leq Cv\left\{v\Big(\sup_{T_u}|\Delta_{m+1}\alpha|+\sup_{T_u}|\Delta_{m+1}\beta|+\sup_{T_u}|\Delta_{m+1}r|\Big)+\sup_{T_u}\bigg|\pp{\Delta_{m+1}t}{v}\bigg|\right\}.
\end{align}
For $I_3$ we use
\begin{align}
  \label{eq:794}
  \bigg|\left(\pp{A}{\beta}\right)_{m+1}\bigg|=\bigg|\pp{t_{m+1}}{v}\bigg(\pp{\tilde{A}}{\beta}\bigg)_{m+1}\bigg|\leq Cv.
\end{align}
Using now \eqref{eq:787} this implies
\begin{align}
  \label{eq:795}
  \left|\int_0^vI_3(u,v')dv'\right|\leq Cv^2\left\{\sup_{T_u}\left|\pp{\Delta_{m+1}t}{u}\right|+u^2\Big(\sup_{T_u}|\Delta_{m+1}\alpha|+\sup_{T_u}|\Delta_{m+1}\beta|+\sup_{T_u}|\Delta_{m+1}r|\Big)\right\}.
\end{align}
$I_4$ can be treated in the same way as $I_2$. For $I_5$ we use \eqref{eq:794} with $r$ in the role of $\beta$, which implies
\begin{align}
  \label{eq:796}
  \left|\int_0^vI_5(u,v')dv'\right|\leq Cv^2\sup_{T_u}\left|\pp{\Delta_{m+1}t}{u}\right|.
\end{align}
For $I_6$ we use
\begin{align}
  \label{eq:797}
  \left|\pp{t_m}{u}\left(\pp{A}{r}\right)_{m+1}\right|=\left|\pp{t_m}{u}\pp{t_{m+1}}{v}\bigg(\pp{\tilde{A}}{r}\bigg)_{m+1}\right|\leq Cu^2v,
\end{align}
which implies
\begin{align}
  \label{eq:798}
  \left|\int_0^vI_6(u,v')dv'\right|\leq Cu^2v^2\left\{\sup_{T_u}|\Delta_{m+1}\alpha|+\sup_{T_u}|\Delta_{m+1}\beta|\right\}.
\end{align}
For $I_7$ we use
\begin{align}
  \label{eq:799}
  \left|\Delta_{m+1}\left(\pp{A}{r}\right)\right|\leq C\left\{\left|\pp{\Delta_{m+1}t}{v}\right|+v\Big(|\Delta_{m+1}\alpha|+|\Delta_{m+1}\beta|+|\Delta_{m+1}r|\Big)\right\},
\end{align}
which implies
\begin{align}
  \label{eq:800}
  \left|\int_0^vI_7(u,v')dv'\right|\leq Cu^2v\left\{\sup_{T_u}\left|\pp{\Delta_{m+1}t}{v}\right|+v\Big(\sup_{T_u}|\Delta_{m+1}\alpha|+\sup_{T_u}|\Delta_{m+1}\beta|+\sup_{T_u}|\Delta_{m+1}r|\Big)\right\}.
\end{align}
For $I_8$ we use
\begin{align}
  \label{eq:801}
  \pppp{\Delta_{m+1}t}{u}{v}&=\Xi_{m+1}-\mu_{m+1}\pp{\Delta_{m+1}t}{v}+\nu_{m+1}\pp{\Delta_{m+1}t}{u}\notag\\
&=\pp{t_m}{u}\Delta_{m+1}\nu-\pp{t_m}{v}\Delta_{m+1}\mu-\mu_{m+1}\pp{\Delta_{m+1}t}{v}+\nu_{m+1}\pp{\Delta_{m+1}t}{u}.
\end{align}
From this it follows that
\begin{align}
  \label{eq:802}
  \left|\pppp{\Delta_{m+1}t}{u}{v}\right|\leq C\left\{v|\Delta_{m+1}\mu|+u^2|\Delta_{m+1}\nu|+\left|\pp{\Delta_{m+1}t}{v}\right|+v\left|\pp{\Delta_{m+1}t}{u}\right|\right\},
\end{align}
which implies
\begin{align}
  \label{eq:803}
  \left|\int_0^vI_8(u,v')dv'\right|\leq Cv\left\{v\sup_{T_u}|\Delta_{m+1}\mu|+u^2\sup_{T_u}|\Delta_{m+1}\nu|+\sup_{T_u}\left|\pp{\Delta_{m+1}t}{v}\right|+v\sup_{T_u}\left|\pp{\Delta_{m+1}t}{u}\right|\right\}.
\end{align}
For $I_9$ we use
\begin{align}
  \label{eq:804}
  \left|\pppp{t_m}{u}{v}\right|\leq \left|\mu_m\pp{t_m}{v}\right|+\left|\nu_m\pp{t_m}{u}\right|\leq Cu,
\end{align}
which implies
\begin{align}
  \label{eq:805}
  \left|\int_0^vI_9(u,v')dv'\right|\leq Cuv\left\{\sup_{T_u}|\Delta_{m+1}\alpha|+\sup_{T_u}|\Delta_{m+1}\beta|+\sup_{T_u}|\Delta_{m+1}r|\right\}.
\end{align}
Using the estimates for the integrals of $I_2\ldots I_9$ in \eqref{eq:791}, taking the supremum of the resulting estimate in $T_u$ and absorbing the term involving $I_1$ on the left hand side it follows that for small enough $\varepsilon$ we have
\begin{align}
  \label{eq:806}
  \sup_{T_u}\left|\pp{\Delta_{m+1}\alpha}{u}\right|&\leq C\Bigg\{u^2\left(\sup_{T_u}|\Delta_{m+1}\alpha|+\sup_{T_u}|\Delta_{m+1}\beta|+\sup_{T_u}|\Delta_{m+1}r|\right)\notag\\
&\qquad +u\sup_{T_u}\left|\pp{\Delta_{m+1}t}{v}\right|+u^2\sup_{T_u}\left|\pp{\Delta_{m+1}t}{u}\right|+u^2\sup_{T_u}|\Delta_{m+1}\mu|+u^3\sup_{T_u}|\Delta_{m+1}\nu|\Bigg\}.
\end{align}
Using this estimate in \eqref{eq:788} it follows that for $\varepsilon$ small enough we have
\begin{align}
  \label{eq:807}
  \sup_{T_u}|\Delta_{m+1}\mu|&\leq C\bigg\{\sup_{T_u}|\Delta_{m+1}\alpha|+\sup_{T_u}|\Delta_{m+1}\beta|+u^2\sup_{T_u}|\Delta_{m+1}r|\notag\\
&\quad\qquad +u\sup_{T_u}\left|\pp{\Delta_{m+1}t}{v}\right|+\sup_{T_u}\left|\pp{\Delta_{m+1}t}{u}\right|+u^3\sup_{T_u}|\Delta_{m+1}\nu|\bigg\}.
\end{align}

To get an estimate for $\Delta_{m+1}\nu$ we apply a similar procedure. The estimate \eqref{eq:783} is replaced by
\begin{align}
  \label{eq:808}
  |\Delta_{m+1}\nu|\leq C\left\{|\Delta_{m+1}c_+|+|\Delta_{m+1}c_-|+\left|\pp{\Delta_{m+1}c_-}{v}\right|\right\}.
\end{align}
Using now
\begin{align}
  \label{eq:809}
  \pp{\Delta_{m+1}\alpha}{v}=\Delta_{m+1}A,
\end{align}
it follows that in the role of the estimate \eqref{eq:788} we have
\begin{align}
  \label{eq:810}
  |\Delta_{m+1}\nu|\leq C\left\{|\Delta_{m+1}\alpha|+|\Delta_{m+1}\beta|+u|\Delta_{m+1}r|+\left|\pp{\Delta_{m+1}t}{v}\right|+\left|\pp{\Delta_{m+1}\beta}{v}\right|\right\}.
\end{align}

Now for the estimate of $\partial\Delta_{m+1}\beta/\partial v$ we use
\begin{align}
  \label{eq:811}
  \pp{\beta_{m+1}}{v}=\frac{d\beta_{+,m}}{dv}-B_{m+1}+\int_v^u\pp{B_{m+1}}{v}(u',v)du'.
\end{align}
This follows from the second of \eqref{eq:284} integrated with respect to $u$ from $u=v$ up to $u$. Using this we get for the difference $\partial\Delta_{m+1}\beta/\partial v$ terms which are analogous to the ones in \eqref{eq:790} ($A$ replaced by $B$, $\partial/\partial u$ replaced by $\partial /\partial v$) but in addition to the integral we have the terms
\begin{align}
  \label{eq:812}
  \frac{d\Delta_{m}\beta_+}{dv}-\Delta_{m+1}B.
\end{align}
%\begin{remark}
%  Again there is no mistake with the indices in \eqref{eq:811}, \eqref{eq:812}. $\beta_+$ comes from the boundary condition posessing index $m$ while $\beta$ is part %of the solution of the fixed boundary problem posessing index $m+1$.
%\end{remark}
For the second difference in \eqref{eq:812} we use \eqref{eq:786}, \eqref{eq:787}. It follows
\begin{align}
  \label{eq:813}
  \sup_{T_u}\left|\pp{\Delta_{m+1}\beta}{v}\right|&\leq \sup_{[0,u]}\left|\frac{d\Delta_m\beta_+}{dv}\right|+C\Bigg\{u^2\left(\sup_{T_u}|\Delta_{m+1}\alpha|+\sup_{T_u}|\Delta_{m+1}\beta|+\sup_{T_u}|\Delta_{m+1}r|\right)\notag\\
&\quad\qquad\qquad +u\sup_{T_u}\left|\pp{\Delta_{m+1}t}{v}\right|+\sup_{T_u}\left|\pp{\Delta_{m+1}t}{u}\right|+u^2\sup_{T_u}|\Delta_{m+1}\mu|+u^3\sup_{T_u}|\Delta_{m+1}\nu|\Bigg\}.
\end{align}
Therefore, for $\varepsilon$ small enough, we have
\begin{align}
  \label{eq:814}
  \sup_{T_u}|\Delta_{m+1}\nu|&\leq C\Bigg\{\sup_{T_u}|\Delta_{m+1}\alpha|+\sup_{T_u}|\Delta_{m+1}\beta|+u\sup_{T_u}|\Delta_{m+1}r|+\sup_{[0,u]}\left|\frac{d\Delta_{m}\beta_+}{dv}\right|\notag\\
&\qquad\hspace{25mm} +\sup_{T_u}\left|\pp{\Delta_{m+1}t}{v}\right|+\sup_{T_u}\left|\pp{\Delta_{m+1}t}{u}\right|+u^2\sup_{T_u}|\Delta_{m+1}\mu|\Bigg\}.
\end{align}

From \eqref{eq:807}, \eqref{eq:814} together with \eqref{eq:769} we obtain
\begin{align}
  \label{eq:815}
  \sup_{T_u}|\Delta_{m+1}\mu|&\leq C\Bigg\{\sup_{T_u}|\Delta_{m+1}\alpha|+\sup_{T_u}|\Delta_{m+1}\beta|+u^3\sup_{[0,u]}\left|\frac{d\Delta_{m}\beta_+}{dv}\right|\notag\\
&\hspace{40mm} +u\sup_{T_u}\left|\pp{\Delta_{m+1}t}{v}\right|+\sup_{T_u}\left|\pp{\Delta_{m+1}t}{u}\right|\Bigg\},\\
  \label{eq:816}
  \sup_{T_u}|\Delta_{m+1}\nu|&\leq C\Bigg\{\sup_{T_u}|\Delta_{m+1}\alpha|+\sup_{T_u}|\Delta_{m+1}\beta|+\sup_{[0,u]}\left|\frac{d\Delta_{m}\beta_+}{dv}\right|\notag\\
&\hspace{40mm}+\sup_{T_u}\left|\pp{\Delta_{m+1}t}{v}\right|+\sup_{T_u}\left|\pp{\Delta_{m+1}t}{u}\right|\Bigg\}.
\end{align}
Substituting now \eqref{eq:774}, \eqref{eq:775} we get
\begin{align}
  \label{eq:817}
  \sup_{T_u}|\Delta_{m+1}\mu|&\leq C\left\{u\sup_{[0,u]}\left|\frac{d\Delta_{m}\beta_+}{dv}\right|+u\sup_{T_u}\left|\pp{\Delta_{m+1}t}{v}\right|+\sup_{T_u}\left|\pp{\Delta_{m+1}t}{u}\right|\right\},\\
  \label{eq:818}
  \sup_{T_u}|\Delta_{m+1}\nu|&\leq C\left\{\sup_{[0,u]}\left|\frac{d\Delta_{m}\beta_+}{dv}\right|+\sup_{T_u}\left|\pp{\Delta_{m+1}t}{v}\right|+\sup_{T_u}\left|\pp{\Delta_{m+1}t}{u}\right|\right\},
\end{align}
where we also used $\sup_{[0,u]}|\Delta_{m+1}\beta_+|\leq u\sup_{[0,u]}|d\Delta_{m+1}\beta/dv|$. Using the estimates \eqref{eq:817}, \eqref{eq:818} in \eqref{eq:753} we arrive at
\begin{align}
  \label{eq:819}
  \sup_{T_u}|\Xi_{m+1}|\leq CJ_1(u),
\end{align}
where
\begin{align}
  \label{eq:820}
  J_1(u)\coloneqq u^2\sup_{[0,u]}\left|\frac{d\Delta_{m}\beta_+}{dv}\right|+u^2\sup_{T_u}\left|\pp{\Delta_{m+1}t}{v}\right|+u\sup_{T_u}\left|\pp{\Delta_{m+1}t}{u}\right|.
\end{align}

We now look at equation \eqref{eq:752}. We are going to deal with this equation in a similar way as we dealt with the one appearing in the convergence proof of the fixed boundary problem. Integrating with respect to $v$ yields (for $K_{m+1}$ recall the first of \eqref{eq:346})
\begin{align}
  \label{eq:821}
  \pp{\Delta_{m+1}t}{u}(u,v)=e^{-K_{m+1}(u,v)}\int_0^ve^{K_{m+1}(u,v')}\left(\Xi_{m+1}-\mu_{m+1}\pp{\Delta_{m+1}t}{v}\right)(u,v')dv'.
\end{align}
We define
\begin{align}
  \label{eq:822}
  \Delta_{m+1}I(v)&\coloneqq \int_0^v\left\{e^{K_{m+1}(v,v')}-1+e^{K_{m+1}(v,v')}\tau_{m+1}(v,v')\right\}\pp{\Delta_{m+1}t}{v}(v,v')dv',\\
  \label{eq:823}
  P_{m+1}(v)&\coloneqq \int_0^ve^{K_{m+1}(v,v')}\Xi_{m+1}(v,v')dv'.
\end{align}
We recall that $\tau_{m+1}(u,v)$ is given by
\begin{align}
  \label{eq:824}
  \mu_{m+1}(u,v)=\frac{\kappa}{c_{+0}-c_{-0}}(1+\tau_{m+1}(u,v))
\end{align}
and that $\tau_{m+1}(u,v)=\mathcal{O}(u)$. We obtain from \eqref{eq:821} (recall that $b(v)=(\partial t/\partial u)(v,v)$)
\begin{align}
  \label{eq:825}
  \Delta_{m+1}b(v)=e^{-K_{m+1}(v,v)}\left\{P_{m+1}(v)-\frac{\kappa}{c_{+0}-c_{-0}}\left(\Delta_{m+1}f(v)+\Delta_{m+1}I(v)\right)\right\}.
\end{align}

Defining
\begin{align}
  \label{eq:826}
  \Lambda_{m+1}(v)\coloneqq \pp{t_m}{u}(v,v)\Delta_{m+1}\left(\frac{1}{\gamma(v)}\right),
\end{align}
we obtain from \eqref{eq:754} (recall $a(v)=(\partial t/\partial v)(v,v)$)
\begin{align}
  \label{eq:827}
  \Delta_{m+1}a(v)=\frac{1}{\gamma_{m+1}(v)}\Delta_{m+1}b(v)+\Lambda_{m+1}(v),
\end{align}
which implies
\begin{align}
  \label{eq:828}
  \frac{d\Delta_{m+1}f}{dv}(v)=\left(\frac{1}{\gamma_{m+1}(v)}+1\right)\Delta_{m+1}b(v)+\Lambda_{m+1}(v).
\end{align}

Using the estimate \eqref{eq:819} in \eqref{eq:823} we find
\begin{align}
  \label{eq:829}
  |P_{m+1}(v)|\leq Cv^2\left\{v\nd{\Delta_{m}\beta_+}+v\sup_{T_v}\left|\pp{\Delta_{m+1}t}{v}\right|+\sup_{T_v}\left|\pp{\Delta_{m+1}t}{u}\right|\right\}.
\end{align}
Using \eqref{eq:781} together with the second of \eqref{eq:342} we obtain
\begin{align}
  \label{eq:830}
  \Lambda_{m+1}(v)&=\frac{\lambda}{6\kappa^2}v\Delta_my(v)+\mathcal{O}(v^2|\Delta_my(v)|)+\mathcal{O}(v^2|\Delta_m\hat{V}(v)|)+\mathcal{O}(\sup_{[0,v]}|\Delta_m\beta_+(v)|)\notag\\
&\qquad+\mathcal{O}\left(v\sup_{T_v}\left|\pp{\Delta_{m+1}t}{v}\right|\right)+\mathcal{O}\left(v^3\sup_{T_v}\left|\pp{\Delta_{m+1}t}{u}\right|\right).
\end{align}
We substitute \eqref{eq:825} into \eqref{eq:828} and arrive at
\begin{align}
  \label{eq:831}
  \frac{d(v\Delta_{m+1}f(v))}{dv}+A_{m+1}(v)(v\Delta_{m+1}f(v))=v^2\Delta_{m+1}B(v),
\end{align}
where (for $\rho$ see \eqref{eq:337})
\begin{align}
  \label{eq:832}
  A_{m+1}(v)&=e^{-K_{m+1}(v,v)}\left(\frac{\rho_{m+1}(v)}{v}+\frac{\kappa}{c_{+0}-c_{-0}}\right)-\frac{1}{v}\left(1-e^{-K_{m+1}(v,v)}\right),\\
  \label{eq:833}
  \Delta_{m+1}B(v)&=\frac{e^{-K_{m+1}(v,v)}}{v^2}\left(1+\rho_{m+1}(v)+\frac{\kappa v}{c_{+0}-c_{-0}}\right)\left(\frac{c_{+0}-c_{-0}}{\kappa}P_{m+1}(v)-\Delta_{m+1}I(v)\right)\notag\\
&\qquad+\frac{\Lambda_{m+1}(v)}{v}.
\end{align}
Integrating \eqref{eq:831} from $v=0$ gives
\begin{align}
  \label{eq:834}
  v\Delta_{m+1}f(v)=\int_0^ve^{-\int_{v'}^vA_{m+1}(v'')dv''}v'^2\Delta_{m+1}B(v')dv'.
\end{align}
Substituting back into \eqref{eq:831} yields
\begin{align}
  \label{eq:835}
  \frac{d\Delta_{m+1}f}{dv}(v)=v\Delta_{m+1}B(v)-\frac{1}{v^2}(1+vA_{m+1}(v))\int_0^ve^{-\int_{v'}^vA_{m+1}(v'')dv''}v'^2\Delta_{m+1}B(v')dv'.
\end{align}

Now we decompose $\Delta_{m+1}B$ into
\begin{align}
  \label{eq:836}
  \Delta_{m+1}B=\overset{0}{\Delta}_{m+1}B+\overset{1}{\Delta}_{m+1}B,
\end{align}
where $\overset{1}{\Delta}_{m+1}B$ contains the terms of $\Delta_{m+1}B$ which are linear in $\Delta_{m+1}I$. Analogously we decompose
\begin{align}
  \label{eq:837}
  \frac{d\Delta_{m+1}f}{dv}=\overset{0}{R}_{m+1}+\overset{1}{R}_{m+1}.
\end{align}

We estimate $\overset{0}{\Delta}_{m+1}B$ using \eqref{eq:829},\eqref{eq:830}. We obtain
\begin{align}
  \label{eq:838}
  |\overset{0}{\Delta}_{m+1}B(v)|\leq C\Bigg\{|\Delta_{m}y(v)|+v|\Delta_{m}\hat{V}(v)|+\nd{\Delta_m\beta_+}+\sup_{T_v}\left|\pp{\Delta_{m+1}t}{v}\right|+\sup_{T_v}\left|\pp{\Delta_{m+1}t}{u}\right|\Bigg\}.
\end{align}
This implies
\begin{align}
  \label{eq:839}
  |\overset{0}{R}_{m+1}(v)|\leq CvJ_0(v),
\end{align}
where
\begin{align}
  \label{eq:840}
  J_0(v)\coloneqq |\Delta_{m}y(v)|+v|\Delta_{m}\hat{V}(v)|+\nd{\Delta_m\beta_+}+\sup_{T_v}\left|\pp{\Delta_{m+1}t}{v}\right|+\sup_{T_v}\left|\pp{\Delta_{m+1}t}{u}\right|.
\end{align}

We now estimate $\overset{1}{\Delta}_{m+1}B$. For this we need an estimate for $\Delta_{m+1}I$. From $K_{m+1}(u,v)=\mathcal{O}(v^2)$ we have
\begin{align}
  \label{eq:841}
  e^{K_{m+1}(v,v')}-1=\mathcal{O}(v^2).
\end{align}
Since we also have $\tau_{m+1}(v)=\mathcal{O}(v)$ we obtain from \eqref{eq:822}
\begin{align}
  \label{eq:842}
  |\Delta_{m+1}I(v)|\leq Cv\int_0^v\left|\pp{\Delta_{m+1}t}{v}(v,v')\right|dv',
\end{align}
which implies
\begin{align}
  \label{eq:843}
  |\overset{1}{\Delta}_{m+1}B(v)|\leq\frac{C}{v^2}|\Delta_{m+1}I(v)|\leq \frac{C}{v}\int_0^v\left|\pp{\Delta_{m+1}t}{v}(v,v')\right|dv'.
\end{align}
Therefore,
\begin{align}
  \label{eq:844}
  \frac{1}{v^2}\int_0^vv'^2|\overset{1}{\Delta}_{m+1}B(v')|dv'&\leq \frac{1}{v}\int_0^vv'|\overset{1}{\Delta}_{m+1}B(v')|dv'\notag\\
&\leq\frac{C}{v}\int_0^v\left(\int_0^{v'}\left|\pp{\Delta_{m+1}t}{v}(v',v'')\right|dv''\right)dv'\notag\\
&\leq C\int_0^v\Delta_{m+1}T(v,v'')dv'',
\end{align}
where we use the definition
\begin{align}
  \label{eq:845}
  \Delta_{m+1}T(v,v'')\coloneqq \sup_{v'\in[v'',v]}\left|\pp{\Delta_{m+1}t}{v}(v',v'')\right|.
\end{align}
Using this definition also in \eqref{eq:843} we obtain
\begin{align}
  \label{eq:846}
  v|\overset{1}{\Delta}_{m+1}B(v)|\leq C\int_0^v\Delta_{m+1}T(v,v')dv'.
\end{align}
From the estimates \eqref{eq:844}, \eqref{eq:846} we find
\begin{align}
  \label{eq:847}
  |\overset{1}{R}_{m+1}(v)|\leq C\int_0^v\Delta_{m+1}T(v,v')dv''.
\end{align}
This together with \eqref{eq:839} gives
\begin{align}
  \label{eq:848}
  \left|\frac{d\Delta_{m+1}f}{dv}(v)\right|\leq C\left\{vJ_0(v)+\int_0^v\Delta_{m+1}T(v,v')dv'\right\}.
\end{align}

Integrating \eqref{eq:752} with respect to $u$ from $u=v$ yields
\begin{align}
  \label{eq:849}
  \pp{\Delta_{m+1}t}{v}(u,v)&=e^{-L_{m+1}(u,v)}\bigg\{\Delta_{m+1}a(v)\notag\\
&\hspace{27mm}+\int_v^ue^{L_{m+1}(u',v)}\left(\Xi_{m+1}(u',v)+\left(\nu_{m+1}\pp{\Delta_{m+1}t}{u}\right)(u',v)\right)du'\bigg\}.
\end{align}
Using now $\nu_{m+1}(u,v)=\mathcal{O}(v)$ (cf.~\eqref{eq:469}) together with \eqref{eq:819} we obtain
\begin{align}
  \label{eq:850}
  \left|\pp{\Delta_{m+1}t}{v}(u,v)\right|\leq C\left\{|\Delta_{m+1}a(v)|+uJ_1(u)+v\int_v^u\left|\pp{\Delta_{m+1}t}{u}(u',v)\right|du'\right\}.
\end{align}

Now, from \eqref{eq:821} we get
\begin{align}
  \label{eq:851}
  \left|\pp{\Delta_{m+1}t}{u}(u,v)\right|\leq C\left\{vJ_1(u)+\int_0^v\left|\pp{\Delta_{m+1}t}{v}(u,v')\right|dv'\right\}.
\end{align}
From this we obtain
\begin{align}
  \label{eq:852}
  \int_v^u\left|\pp{\Delta_{m+1}t}{u}(u',v)\right|du'\leq C\left\{v(u-v)J_1(u)+(u-v)\int_0^v\Delta_{m+1}T(u,v')dv'\right\}.
\end{align}
Using this in \eqref{eq:850} we find
\begin{align}
  \label{eq:853}
  \left|\pp{\Delta_{m+1}t}{v}(u,v)\right|\leq C\left\{|\Delta_{m+1}a(v)|+uJ_1(u)+v(u-v)\int_0^v\Delta_{m+1}T(u,v')dv'\right\}.
\end{align}

From \eqref{eq:827} we have
\begin{align}
  \label{eq:854}
  \Delta_{m+1}a(v)=\frac{1}{1+\gamma_{m+1}(v)}\left(\frac{d\Delta_{m+1}f}{dv}(v)+\gamma_{m+1}(v)\Lambda_{m+1}(v)\right).
\end{align}
Using the estimates for $\Lambda_{m+1}$ and $d\Delta_{m+1}f/dv$ given by \eqref{eq:830}, \eqref{eq:848} respectively, we obtain
\begin{align}
  \label{eq:855}
  |\Delta_{m+1}a(v)|\leq C\left\{vJ_0(v)+\int_0^v\Delta_{m+1}T(v,v')dv'\right\}.
\end{align}
Using this in \eqref{eq:853} we get
\begin{align}
  \label{eq:856}
  \left|\pp{\Delta_{m+1}t}{v}(u,v)\right|&\leq C\left\{vJ_0(v)+uJ_1(u)+\int_0^v\Delta_{m+1}T(v,v')dv'\right\}\notag\\
&\leq C\left\{uF_m(u)+u\sup_{T_u}\left|\pp{\Delta_{m+1}t}{u}\right|+u\sup_{T_u}\left|\pp{\Delta_{m+1}t}{v}\right|+\int_0^v\Delta_{m+1}T(u,v')dv'\right\},
\end{align}
where we use the definition
\begin{align}
  \label{eq:857}
  F_m(u)\coloneqq \sup_{[0,u]}|\Delta_{m}y|+u\sup_{[0,u]}|\Delta_{m}\hat{V}|+\sup_{[0,u]}\left|\frac{d\Delta_{m}\beta_+}{dv}\right|.
\end{align}
%\begin{leftbar}
%  $F(u)$ corresponds to $\{\ldots\}_3$ in the notes from 17.1.14.
%\end{leftbar}
We also define
\begin{align}
  \label{eq:858}
  \tilde{F}_m(u)\coloneqq F_m(u)+\sup_{T_u}\left|\pp{\Delta_{m+1}t}{u}\right|+\sup_{T_u}\left|\pp{\Delta_{m+1}t}{v}\right|.
\end{align}

From \eqref{eq:856} with $u'$ in the role of $u$ and taking the supremum over $u'\in[v,u]$ we deduce
\begin{align}
  \label{eq:859}
  \Delta_{m+1}T(u,v)\leq C\left\{u\tilde{F}_m(u)+\int_0^v\Delta_{m+1}T(u,v')dv'\right\}.
\end{align}
Defining now
\begin{align}
  \label{eq:860}
  \Delta_{m+1}\Sigma_u(v)\coloneqq \int_0^v\Delta_{m+1}T(u,v')dv',
\end{align}
\eqref{eq:859} becomes
\begin{align}
  \label{eq:861}
  \frac{d}{dv}\Delta_{m+1}\Sigma_u(v)\leq Cu\tilde{F}_m(u)+C\Delta_{m+1}\Sigma_u(v),
\end{align}
which implies
\begin{align}
  \label{eq:862}
  \Delta_{m+1}\Sigma_u(v)\leq Cuv\tilde{F}_m(u).
\end{align}
Therefore,
\begin{align}
  \label{eq:863}
  |\Delta_{m+1}T(u,v)|\leq Cu\tilde{F}_m(u),
\end{align}
i.e.
\begin{align}
  \label{eq:864}
  \left|\pp{\Delta_{m+1}t}{v}(u,v)\right|\leq Cu\tilde{F}_m(u),
\end{align}
which in turn implies, through \eqref{eq:851},
\begin{align}
  \label{eq:865}
  \left|\pp{\Delta_{m+1}t}{u}(u,v)\right|\leq Cu\tilde{F}_m(u).
\end{align}
It follows that for $\varepsilon$ small enough
\begin{align}
  \label{eq:866}
  \left|\pp{\Delta_{m+1}t}{v}(u,v)\right|&\leq CuF_m(u),\\
  \label{eq:867}
  \left|\pp{\Delta_{m+1}t}{u}(u,v)\right|&\leq CuF_m(u).
\end{align}
We call these the rough estimates. In the following we will use these to get more precise estimates.
%Also
%\begin{align}
%  \label{eq:868}
%  \left|\frac{d\Delta_{m+1}f}{dv}(v)\right|\leq CvE(v).
%\end{align}

Using \eqref{eq:866} in \eqref{eq:822} we get
\begin{align}
  \label{eq:869}
  |\Delta_{m+1}I(v)|\leq Cv^3F_m(v),
\end{align}
which implies, through the first inequality of \eqref{eq:843},
\begin{align}
  \label{eq:870}
  |\overset{1}{\Delta}_{m+1}B(v)|\leq CvF_m(v).
\end{align}

Now, from \eqref{eq:838}, together with the rough estimates, we obtain
\begin{align}
  \label{eq:871}
  |\overset{0}{\Delta}_{m+1}B(v)|\leq CF_m(v).
\end{align}
Therefore,
\begin{align}
  \label{eq:872}
  |\Delta_{m+1}B(v)|\leq CF_m(v),
\end{align}
which, in conjunction with \eqref{eq:834}, implies
\begin{align}
  \label{eq:873}
  |\Delta_{m+1}f(v)|\leq Cv^2F_m(v).
\end{align}
Using the rough estimates in \eqref{eq:829} yields
\begin{align}
  \label{eq:874}
  |P_{m+1}(v)|\leq Cv^3F_m(v).
\end{align}
Now we use \eqref{eq:869}, \eqref{eq:873}, \eqref{eq:874} in \eqref{eq:825} and arrive at
\begin{align}
  \label{eq:875}
  \left|\Delta_{m+1}b(v)+\frac{\kappa\Delta_{m+1}f(v)}{c_{+0}-c_{-0}}\right|\leq Cv^3F_m(v).
\end{align}

Using the rough estimates in \eqref{eq:830} we get
\begin{align}
  \label{eq:876}
  \Lambda_{m+1}(v)=\frac{\lambda}{6\kappa^2}v\Delta_my(v)+\mathcal{O}(v^2|\Delta_m\hat{V}(v)|)+\mathcal{O}\left(v\nd{\Delta_m\beta_+}\right)+\mathcal{O}(v^2F_m(v)).
\end{align}
Using now \eqref{eq:874}, \eqref{eq:876} in \eqref{eq:833} we get
\begin{align}
  \label{eq:877}
  \overset{0}{\Delta}_{m+1}B=\frac{\lambda}{6\kappa^2}\Delta_my(v)+\mathcal{O}(v\sup_{[0,v]}|\Delta_m\hat{V}|)+\mathcal{O}(\nd{\Delta_m\beta_+})+\mathcal{O}(vF_m(v)).
\end{align}
Together with \eqref{eq:870} we find
\begin{align}
  \label{eq:878}
  \Delta_{m+1}B(v)=\frac{\lambda}{6\kappa^2}\Delta_my(v)+\mathcal{O}(v\sup_{[0,v]}|\Delta_m\hat{V}|)+\mathcal{O}(\nd{\Delta_m\beta_+})+\mathcal{O}(vF_m(v)).
\end{align}

Now we derive the precise estimate for $A_{m+1}(v)$. In view of \eqref{eq:832} we need first an estimate for $\rho_{m+1}(v)$. From \eqref{eq:337} we have
\begin{align}
  \label{eq:879}
  \rho_{m+1}(v)=\rho_{0,m+1}(v)+\mathcal{O}(v).
\end{align}
From \eqref{eq:374} we have
\begin{align}
  \label{eq:880}
  \rho_{0,m+1}(v)=\mathcal{O}(v).
\end{align}
Therefore,
\begin{align}
  \label{eq:881}
  A_{m+1}(v)=\mathcal{O}(1),
\end{align}
which, in conjunction with \eqref{eq:873}, implies
\begin{align}
  \label{eq:882}
  A_{m+1}(v)(v\Delta_{m+1}f(v))=\mathcal{O}(v^3F_m(v)).
\end{align}

Using \eqref{eq:878}, \eqref{eq:882} in \eqref{eq:831} we arrive at
\begin{align}
  \label{eq:883}
  \frac{d(v\Delta_{m+1}f(v))}{dv}=\frac{\lambda}{6\kappa^2}v^2\Delta_my(v)+\mathcal{O}(v^3\sup_{[0,v]}|\Delta_m\hat{V}|)+\mathcal{O}(v^2\nd{\Delta_m\beta_+})+\mathcal{O}(v^3F_m(v)).
\end{align}
Let us make the following generic definition. By $E_m(v)$ we mean a function of the kind
\begin{align}
  \label{eq:884}
  E_m(v)\coloneqq \mathcal{O}(v\nd{\Delta_my})+\mathcal{O}(\sup_{[0,v]}|\Delta_m\hat{V}|)+\mathcal{O}(v\nd{\Delta_m\hat{\beta}_+}).
\end{align}
%\begin{leftbar}
%  $E_m(v)$ corresponds to $E_m(v)$ in the notes from 17.1.14 (see page 37 of these notes).
%\end{leftbar}
Using now
\begin{align}
  \label{eq:885}
  \sup_{[0,v]}\left|\frac{d\Delta_{m}\beta_+}{dv}\right|\leq Cv^2\sup_{[0,v]}\bigg|\frac{d\Delta_m\hat{\beta}_+}{dv}\bigg|,
\end{align}
\eqref{eq:883} becomes
\begin{align}
  \label{eq:886}
   \frac{d(v\Delta_{m+1}f(v))}{dv}=\frac{\lambda}{6\kappa^2}v^2\Delta_my(v)+v^3E_m(v).
\end{align}
Therefore,
\begin{align}
  \label{eq:887}
  \Delta_{m+1}f(v)=\frac{1}{v}\frac{\lambda}{6\kappa^2}\int_0^vv'^2\Delta_my(v')dv'+v^3E_m(v).
\end{align}
Integrating by parts we obtain
\begin{align}
  \label{eq:888}
  \frac{6\kappa^2}{\lambda}\Delta_{m+1}f(v)=\frac{v^3}{3}\Delta_my(v)-\frac{1}{3v}\int_0^vv'^3\frac{d\Delta_my}{dv}(v')dv'+v^3E_m(v).
\end{align}

Now we express the above in terms of $\Delta_{m+1}\hat{f}(v)$ given by $\Delta_{m+1}f(v)=v^2\Delta_{m+1}\hat{f}(v)$. Using $|\Delta_my(v)|\leq v\nd{\Delta_my}$, we obtain from \eqref{eq:887},
\begin{align}
  \label{eq:889}
  |\Delta_{m+1}\hat{f}(v)|\leq \frac{\lambda}{24\kappa^2}v\nd{\Delta_my}+vE_m(v).
\end{align}
Since
\begin{align}
  \label{eq:890}
  \frac{d\Delta_{m+1}\hat{f}}{dv}=\frac{1}{v^3}\left(\frac{d(v\Delta_{m+1}f(v))}{dv}-3\Delta_{m+1}f(v)\right),
\end{align}
from \eqref{eq:886}, \eqref{eq:888} we deduce
\begin{align}
  \label{eq:891}
  \bigg|\frac{d\Delta_{m+1}\hat{f}}{dv}(v)\bigg|\leq \frac{\lambda}{24\kappa^2}\nd{\Delta_my}+E_m(v).
\end{align}

We turn to $\delta_{m+1}(v)=g_{m+1}(v)-c_{+0}f_{m+1}(v)$. From (see \eqref{eq:616})
\begin{align}
  \label{eq:892}
  \frac{d\delta_{m+1}}{dv}=(V_{m}-c_{+0})\frac{df_{m+1}}{dv}
\end{align}
we have
\begin{align}
  \label{eq:893}
  \frac{d\Delta_{m+1}\delta}{dv}=\frac{df_{m+1}}{dv}\Delta_{m}V+(V_{m-1}-c_{+0})\frac{d\Delta_{m+1}f}{dv}.
\end{align}

For the first term in \eqref{eq:893} we use (see \eqref{eq:608} and $V_m(v)-c_{+0}=\frac{\kappa}{2}(1+y_m(v))v+\mathcal{O}(v^2)$)
\begin{align}
  \label{eq:894}
  \frac{df_{m+1}}{dv}(v)=\frac{\lambda}{3\kappa^2}v+\mathcal{O}(v^2),\qquad \Delta_mV(v)=\frac{\kappa}{2}v\Delta_my(v)+\mathcal{O}(v^2E_m(v)),
\end{align}
and get
\begin{align}
  \label{eq:895}
  \frac{df_{m+1}}{dv}(v)\Delta_mV(v)=\frac{\lambda}{6\kappa}v^2\Delta_my(v)+\mathcal{O}(v^3E_m(v)).
\end{align}

For the second term in \eqref{eq:893} we use
\begin{align}
  \label{eq:896}
  \frac{df_{m+1}}{dv}(v)=2v\hat{f}_{m+1}(v)+v^2\frac{d\hat{f}_{m+1}}{dv}(v),
\end{align}
which implies, in conjunction with \eqref{eq:889}, \eqref{eq:891},
\begin{align}
  \label{eq:897}
  \frac{d\Delta_{m+1}f}{dv}(v)=\mathcal{O}(vE_m(v)).
\end{align}
Therefore,
\begin{align}
  \label{eq:898}
  (V_{m-1}(v)-c_{+0})\frac{d\Delta_{m+1}f}{dv}(v)=\mathcal{O}(v^3E_m(v)).
\end{align}
Putting things together we arrive at
\begin{align}
  \label{eq:899}
  \frac{d\Delta_{m+1}\delta}{dv}(v)=\frac{\lambda}{6\kappa}v^2\Delta_my(v)+\mathcal{O}(v^3E_m(v)).
\end{align}
Integrating (and also integrating by parts) we obtain
\begin{align}
  \label{eq:900}
  \Delta_{m+1}\delta(v)=\frac{\lambda}{18\kappa}\left(v^3\Delta_my(v)-\int_0^vv'^3\frac{d\Delta_my}{dv}(v')dv'\right)+\mathcal{O}(v^4E_m(v)).
\end{align}
Now we use $\hat{\delta}_{m+1}$ given by $\delta_{m+1}(v)=v^3\hat{\delta}_{m+1}(v)$. We obtain from \eqref{eq:899}, \eqref{eq:900}
\begin{align}
  \label{eq:901}
  \frac{d\Delta_{m+1}\hat{\delta}}{dv}=\frac{1}{v^4}\frac{\lambda}{6\kappa}\int_0^vv'^3\frac{d\Delta_{m}y}{dv}(v')dv'+\mathcal{O}(E_m(v)),
\end{align}
which implies
\begin{align}
  \label{eq:902}
  \left|\frac{d\Delta_{m+1}\hat{\delta}}{dv}(v)\right|\leq\frac{\lambda}{24\kappa}\nd{\Delta_my}+E_m(v).
\end{align}

We have
\begin{align}
  \label{eq:903}
  \frac{d\Delta_{m+1}\alpha_+}{dv}(v)=\left(\pp{\Delta_{m+1}\alpha}{v}+\pp{\Delta_{m+1}\alpha}{u}\right)(v,v).
\end{align}
For the first term we use as a starting point \eqref{eq:809}, i.e.
\begin{align}
  \label{eq:904}
  \pp{\Delta_{m+1}\alpha}{v}=\Delta_{m+1}A,
\end{align}
which implies
\begin{align}
  \label{eq:905}
  \left|\pp{\Delta_{m+1}\alpha}{v}\right|\leq C\left\{\left|\pp{\Delta_{m+1}t}{v}\right|+v\Big(|\Delta_{m+1}\alpha|+|\Delta_{m+1}\beta|+|\Delta_{m+1}r|\Big)\right\}.
\end{align}
To deal with the difference $\Delta_{m+1}r$ we use \eqref{eq:769}. For the differences $\Delta_{m+1}\alpha$, $\Delta_{m+1}\beta$ we use \eqref{eq:774}, \eqref{eq:775} respectively. Using also the estimates \eqref{eq:866}, \eqref{eq:867} together with the definition \eqref{eq:857}, we find
\begin{align}
  \label{eq:906}
  \sup_{T_u}\left|\pp{\Delta_{m+1}\alpha}{v}\right|\leq CuF_m(u),
\end{align}
For the second term in \eqref{eq:903} we use \eqref{eq:806}. For the differences in the first line of \eqref{eq:806} we use the same estimates which we used for the first term in \eqref{eq:903}. For the differences in the second line of \eqref{eq:806} we use again the estimates \eqref{eq:866}, \eqref{eq:867}. We obtain
\begin{align}
  \label{eq:907}
  \sup_{T_u}\left|\pp{\Delta_{m+1}\alpha}{u}\right|\leq C\left\{u^2F_m(u)+u^2\sup_{T_u}|\Delta_{m+1}\mu|+u^3\sup_{[0,u]}|\Delta_{m+1}\nu|\right\}.
\end{align}
For the differences of $\mu$ and $\nu$ we use \eqref{eq:817}, \eqref{eq:818} together with \eqref{eq:866}, \eqref{eq:867}. We obtain
\begin{align}
  \label{eq:908}
  \sup_{T_u}\left|\pp{\Delta_{m+1}\alpha}{u}\right|\leq Cu^2F_m(u).
\end{align}
Therefore,
\begin{align}
  \label{eq:909}
  \nd{\Delta_{m+1}\alpha_+}\leq CvF_m(v),
\end{align}
i.e.~(see \eqref{eq:857} and take into account that $\beta_{+,m}(v)=\beta_0+v^2\hat{\beta}_{+.m}(v)$)
\begin{align}
  \label{eq:910}
  \nd{\Delta_{m+1}\alpha_+}\leq Cv^2\left\{\nd{\Delta_{m}y}+\sup_{[0,v]}|\Delta_{m}\hat{V}|+v\nd{\Delta_{m}\hat{\beta}_+}\right\}.
\end{align}

In view of \eqref{eq:891}, \eqref{eq:902} and \eqref{eq:910}, the proof of the lemma is complete.
\end{proof}

\subsubsection{Estimates for the Identification Equation}
\begin{lemma}\label{lemma_est_ie}
Let $f_m(v)=v^2\hat{f}_m(v)$, $y_m(v)$, $g_m(v)$ satisfy the identification equation
\begin{align}
  \label{eq:911}
  g_m(v)+r_0=r^\ast(f_m(v),vy_m(v)).
\end{align}
Then the following relation holds
\begin{align}
  \label{eq:912}
  \frac{d\Delta_{m+1}y}{dv}(v)= \frac{3\kappa}{\lambda}\left\{\kappa\frac{d\Delta_{m+1}\hat{f}}{dv}(v)+\frac{d\Delta_{m+1}\hat{\delta}}{dv}(v)\right\}+\Landau(v\nd{\Delta_{m+1}\hat{f}})+\Landau(v\nd{\Delta_{m+1}y}),
\end{align}
where
\begin{align}
  \label{eq:913}
  v^3\hat{\delta}_m(v)=\delta_m(v)=g_m(v)-c_{+0}f_m(v).
\end{align}
\end{lemma}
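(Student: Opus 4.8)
The plan is to differentiate the reduced identification equation with respect to $v$ and then take the difference between two consecutive iterates. Recall that the identification equation \eqref{eq:911} is, by the reduction carried out in \eqref{eq:639}--\eqref{eq:647}, equivalent to $\hat{F}_m(v,y_m(v))=0$ for each $m$, where $\hat{F}_m(v,y)$ is the function \eqref{eq:645} built from $\hat{f}_m$, $\hat{\delta}_m$ (and whose remainder $R$ depends on $\hat{f}_m$ but not on $\hat{\delta}_m$). Differentiating this identity in $v$ gives $\partial_v\hat{F}_m(v,y_m(v))+\partial_y\hat{F}_m(v,y_m(v))\,\frac{dy_m}{dv}(v)=0$, with $\partial_y\hat{F}_m$ and $\partial_v\hat{F}_m$ given explicitly by \eqref{eq:652}, \eqref{eq:653}. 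Writing this for indices $m$ and $m+1$ and subtracting will produce the claimed relation once the various differences have been estimated.

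First I would record the elementary bookkeeping facts. Since $y_m(0)=-1$, $\hat{f}_m(0)=\lambda/6\kappa^2$ and $\hat{\delta}_m(0)=0$ hold for every $m$ (the last two by \eqref{eq:608}, \eqref{eq:609}, \eqref{eq:633}), \eqref{eq:748} gives $|\Delta_{m+1}y(v)|\le v\nd{\Delta_{m+1}y}$ and $|\Delta_{m+1}\hat{f}(v)|\le v\nd{\Delta_{m+1}\hat{f}}$, while $y_m(0)=-1$ and $|y_m'|\le Y$ give $y_m(v)=-1+\mathcal{O}(v)$. From \eqref{eq:652}, \eqref{eq:649}, \eqref{eq:679} and \eqref{eq:661} one then gets $\partial_y\hat{F}_m(v,y_m(v))=\frac{\lambda}{3\kappa}+\mathcal{O}(v)$. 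Because $\partial_y\hat{F}$ does not involve $\hat{\delta}$ and $R$ is smooth, its difference is controlled by $|\Delta_{m+1}\hat{f}(v)|+|\Delta_{m+1}y(v)|$, hence $|\Delta_{m+1}(\partial_y\hat{F})|=\mathcal{O}(v\nd{\Delta_{m+1}\hat{f}})+\mathcal{O}(v\nd{\Delta_{m+1}y})$. For $\partial_v\hat{F}_m=-\kappa\hat{f}_m'(v)y_m(v)+\hat{\delta}_m'(v)+R+v\,\partial_v R$, the only terms whose differences are not already of this size are $-\kappa\hat{f}'y$ and $\hat{\delta}'$: using $y_{m+1}(v)=-1+\mathcal{O}(v)$ one obtains $\Delta_{m+1}(-\kappa\hat{f}'y)=\kappa\,\frac{d\Delta_{m+1}\hat{f}}{dv}(v)+\mathcal{O}(v\nd{\Delta_{m+1}\hat{f}})+\mathcal{O}(v\nd{\Delta_{m+1}y})$, while $\Delta_{m+1}(\hat{\delta}')=\frac{d\Delta_{m+1}\hat{\delta}}{dv}(v)$ exactly; the remaining pieces $\Delta_{m+1}R$ and $v\,\Delta_{m+1}(\partial_v R)$ are $\mathcal{O}(v\nd{\Delta_{m+1}\hat{f}})+\mathcal{O}(v\nd{\Delta_{m+1}y})$ by smoothness of $R$ and the bounds \eqref{eq:655}, \eqref{eq:661}.

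Finally I would subtract the two differentiated identities. The difference of $\partial_y\hat{F}_m(v,y_m(v))\,\frac{dy_m}{dv}(v)$ splits as $\partial_y\hat{F}_{m+1}(v,y_{m+1}(v))\,\frac{d\Delta_{m+1}y}{dv}(v)+\frac{dy_m}{dv}(v)\,\Delta_{m+1}(\partial_y\hat{F})$, and in the first summand I would write $\partial_y\hat{F}_{m+1}=\frac{\lambda}{3\kappa}+\bigl(\partial_y\hat{F}_{m+1}-\frac{\lambda}{3\kappa}\bigr)$, the discrepancy being $\mathcal{O}(v)$ and multiplying the small factor $\frac{d\Delta_{m+1}y}{dv}(v)=\mathcal{O}(\nd{\Delta_{m+1}y})$, so it contributes only $\mathcal{O}(v\nd{\Delta_{m+1}y})$. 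Keeping $\frac{\lambda}{3\kappa}\frac{d\Delta_{m+1}y}{dv}(v)$ on one side and moving all other terms (including $\Delta_{m+1}(\partial_v\hat{F})$ estimated above) to the other, then dividing by the exact constant $\lambda/3\kappa>0$, yields \eqref{eq:912}. The main obstacle is exactly this last bookkeeping: the remainder quantities $R$, $\partial_v R$, $\partial_y R$ are only $\mathcal{O}(1)$ individually, so one must verify that all of their differences collapse to the admissible errors $\mathcal{O}(v\nd{\Delta_{m+1}\hat{f}})+\mathcal{O}(v\nd{\Delta_{m+1}y})$, and that the $\mathcal{O}(v)$ deviation of $\partial_y\hat{F}$ from $\lambda/3\kappa$ is charged against $\frac{d\Delta_{m+1}y}{dv}$ rather than against $\frac{d\Delta_{m+1}\hat{\delta}}{dv}$ (which would otherwise introduce a spurious $\mathcal{O}(v\nd{\Delta_{m+1}\hat{\delta}})$ term not present in the statement); dividing by the constant $\lambda/3\kappa$, rather than expanding $1/\partial_y\hat{F}$, is what makes this work cleanly.
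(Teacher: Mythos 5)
Your argument is correct and is essentially the paper's own: the paper likewise differences the implicit relation coming from $\hat{F}_m(v,y_m(v))=0$ (in the quotient form \eqref{eq:914}, via \eqref{eq:917}) and uses exactly the estimates you describe, namely the expansion of $\Delta_{m+1}(\partial\hat{F}/\partial v)$, the bound $|\Delta_{m+1}(\partial\hat{F}/\partial y)|\leq Cv(\nd{\Delta_{m+1}\hat{f}}+\nd{\Delta_{m+1}y})$ and $(\partial\hat{F}/\partial y)(v,y_{m+1}(v))=\frac{\lambda}{3\kappa}+\mathcal{O}(v)$, cf.~\eqref{eq:932}, \eqref{eq:941}, \eqref{eq:942}. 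Your reorganization --- differencing the undivided identity and dividing by the exact constant $\lambda/3\kappa$ rather than by $\partial\hat{F}/\partial y$ as in \eqref{eq:917} --- is harmless (it even avoids the $\mathcal{O}(v\nd{\Delta_{m+1}\hat{\delta}})$ remainder the quotient route technically produces), and the overall sign you get (a minus in front of $\frac{3\kappa}{\lambda}\{\cdots\}$, consistent with the paper's own intermediate formulas) versus the plus displayed in \eqref{eq:912} is immaterial, since only absolute values of this relation are used downstream.
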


\begin{proof}
We look at (see \eqref{eq:651})
\begin{align}
  \label{eq:914}
  \frac{dy_{m+1}}{dv}(v)=-\frac{\displaystyle\pp{\hat{F}_{m+1}}{v}(v,y_{m+1}(v))}{\displaystyle\pp{\hat{F}_{m+1}}{y}(v,y_{m+1}(v))},
\end{align}
where (we omit the argument of $y$)
\begin{align}
  \label{eq:916}
  \pp{\hat{F}_{m+1}}{y}(v,y_{m+1})&=\frac{\lambda}{2\kappa}y_{m+1}^2-\kappa \hat{f}_{m+1}(v)+v\pp{R}{y}(v,y_{m+1}),\\
  \label{eq:915}
  \pp{\hat{F}_{m+1}}{v}(v,y_{m+1})&=-\kappa y_{m+1}\frac{d\hat{f}_{m+1}}{dv}(v)+\frac{d\hat{\delta}_{m+1}}{dv}(v)+R_{m+1}(v,y_{m+1})+v\pp{R_{m+1}}{v}(v,y_{m+1}).
\end{align}
The difference we have to estimate is given by
\begin{align}
  \label{eq:917}
  \frac{d\Delta_{m+1}y}{dv}(v)=-\frac{\displaystyle \Delta_{m+1}\left(\pp{\hat{F}}{v}(v,y)\right)}{\displaystyle\pp{\hat{F}_{m+1}}{y}(v,y_{m+1})}-\pp{\hat{F}_m}{v}(v,y_m)\Delta_{m+1}\left(\frac{1}{\displaystyle\pp{\hat{F}}{y}(v,y)}\right).
\end{align}

Now
\begin{align}
  \label{eq:918}
  \Delta_{m+1}\left(\pp{\hat{F}}{v}(v,y)\right)&=-\kappa y_{m+1}\frac{d\Delta_{m+1}\hat{f}}{dv}(v)-\kappa \frac{d\hat{f}_m}{dv}(v)\Delta_{m+1}y+\frac{d\Delta_{m+1}\hat{\delta}}{dv}(v)\notag\\
&\qquad+\Delta_{m+1}R(v,y)+v\Delta_{m+1}\left(\pp{R}{v}(v,y)\right).
\end{align}
For the second term we use (cf.~\eqref{eq:614})
\begin{align}
  \label{eq:919}
  \bigg|\kappa \frac{d\hat{f}_m}{dv}(v)\Delta_{m+1}y\bigg|\leq Cv\nd{\Delta_{m+1}y}.
\end{align}
For the fourth term in \eqref{eq:918} we recall (see \eqref{eq:646})
\begin{align}
  \label{eq:920}
  R(v,y)=-\cp{\ppp{r^\ast}{t}}\left(\hat{f}(v)\right)^2-\cp{\frac{\partial^4r^\ast}{\partial w^4}}\frac{y^4}{24}-\cp{\frac{\partial^3r^\ast}{\partial t\partial w^2}}\frac{y^2\hat{f}(v)}{2}-vH(\hat{f}(v),y),
\end{align}
where $H$ is a smooth function of its two arguments. We estimate the difference of the first term by
\begin{align}
  \label{eq:921}
  \Delta_{m+1}\left(\hat{f}(v)\right)^2=(\hat{f}_{m+1}(v)+\hat{f}_m(v))\Delta_{m+1}\hat{f}(v)=\mathcal{O}(|\Delta_{m+1}\hat{f}(v)|)&=\mathcal{O}(v\nd{\Delta_{m+1}\hat{f}}).
\end{align}
The difference of the second term in \eqref{eq:920} we estimate using (recall $y(v)=\mathcal{O}(1)$)
\begin{align}
  \label{eq:922}
  \Delta_{m+1}y^4=\mathcal{O}(v\nd{\Delta_{m+1}y}),
\end{align}
while for the difference of the third term in \eqref{eq:920} we use
\begin{align}
  \label{eq:923}
  \Delta_{m+1}(y^2\hat{f}(v))&=\hat{f}_{m+1}(v)\Delta_{m+1}y^2+y_m^2\Delta_{m+1}\hat{f}(v)\notag\\
&=\mathcal{O}(v\nd{\Delta_{m+1}y})+\mathcal{O}(v\nd{\Delta_{m+1}\hat{f}}).
\end{align}
For the difference of the fourth term in \eqref{eq:920} we use
\begin{align}
  \label{eq:924}
  |v\Delta_{m+1}H(\hat{f}(v),y)|\leq Cv^2(\nd{\Delta_{m+1}\hat{f}}+\nd{\Delta_{m+1}y}).
\end{align}
Therefore,
\begin{align}
  \label{eq:925}
  |\Delta_{m+1}R(v,y)|\leq Cv(\nd{\Delta_{m+1}\hat{f}}+\nd{\Delta_{m+1}y})
\end{align}

Now we look at the fifth term in \eqref{eq:918}. We have
\begin{align}
  \label{eq:926}
  \pp{R}{v}(v,y)=-2\cp{\ppp{r^\ast}{t}}\hat{f}(v)\frac{d\hat{f}}{dv}(v)-\cp{\frac{\partial^3r^\ast}{\partial t\partial w^2}}\frac{y^2}{2}\frac{d\hat{f}}{dv}(v)-v(\partial_1H)(\hat{f}(v),y)\frac{d\hat{f}}{dv}(v)-H(\hat{f}(v),y),
\end{align}
where by $\partial_1H$ we denote the partial derivative of $H$ with respect to its first argument. For the difference of the first term we use
\begin{align}
  \label{eq:927}
  \Delta_{m+1}\left(\hat{f}(v)\frac{d\hat{f}}{dv}(v)\right)&=\frac{d\hat{f}_{m+1}}{dv}(v)\Delta_{m+1}\hat{f}(v)+\hat{f}_m(v)\frac{d\Delta_{m+1}\hat{f}}{dv}(v)\notag\\
  &=\mathcal{O}(\nd{\Delta_{m+1}\hat{f}}),
\end{align}
while for the difference of the second term we use
\begin{align}
  \label{eq:928}
  \Delta_{m+1}\left(y^2\frac{d\hat{f}}{dv}(v))\right)&=\frac{d\hat{f}_{m+1}}{dv}(v)\Delta_{m+1}(y^2)+y_m^2\frac{d\Delta_{m+1}\hat{f}}{dv}(v)\notag\\
&=\Landau(v\nd{\Delta_{m+1}y})+\Landau({\nd{\Delta_{m+1}\hat{f}}}).
\end{align}
For the differences of the last two terms in \eqref{eq:926} we use
\begin{align}
  \label{eq:929}
  |\Delta_{m+1}\partial_1H(\hat{f}(v),y)|,|\Delta_{m+1}H(\hat{f}(v),y)|\leq Cv(\nd{\Delta_{m+1}\hat{f}}+\nd{\Delta_{m+1}y}).
\end{align}
Therefore,
\begin{align}
  \label{eq:930}
  \left|\Delta_{m+1}\left\{v(\partial_1H)(\hat{f}(v),y)\frac{d\hat{f}}{dv}(v)+H(\hat{f}(v),y)\right\}\right|\leq Cv(\nd{\Delta_{m+1}\hat{f}}+\nd{\Delta_{m+1}y}).
\end{align}
It follows that
\begin{align}
  \label{eq:931}
  \left|v\Delta_{m+1}\left(\pp{R}{v}(v,y)\right)\right|\leq Cv(\nd{\Delta_{m+1}\hat{f}}+v\nd{\Delta_{m+1}y}).
\end{align}
We conclude that
\begin{align}
  \label{eq:932}
  \Delta_{m+1}\left(\pp{\hat{F}}{v}(v,y)\right)=-\kappa y_{m+1}\frac{d\Delta_{m+1}\hat{f}}{dv}(v)+\frac{d\Delta_{m+1}\hat{\delta}}{dv}(v)+\Landau(v\nd{\Delta_{m+1}\hat{f}})+\Landau(v\nd{\Delta_{m+1}y}).
\end{align}

Now we look at the second term in \eqref{eq:917}. We use
\begin{align}
  \label{eq:933}
  \Delta_{m+1}\left(\frac{1}{f}\right)=-\frac{\Delta_{m+1}f}{f_{m+1}f_m}.
\end{align}
We need to estimate the difference $\Delta_{m+1}\left((\partial\hat{F}/\partial y)(v,y)\right)$. We have
\begin{align}
  \label{eq:934}
  \Delta_{m+1}\left(\pp{\hat{F}}{y}(v,y)\right)=\frac{\lambda}{2\kappa}\Delta_{m+1}(y^2)-\kappa\Delta_{m+1}\hat{f}(v)+v\Delta_{m+1}\left(\pp{R}{y}(v,y)\right).
\end{align}
For the first term we have
\begin{align}
  \label{eq:935}
  \Delta_{m+1}(y^2)=\mathcal{O}(v\nd{\Delta_{m+1}y}).
\end{align}
For the third term in \eqref{eq:934} we use
\begin{align}
  \label{eq:936}
  \pp{R}{y}(v,y)=-\cp{\frac{\partial^4r^\ast}{\partial w^4}}\frac{y^3}{6}-\cp{\frac{\partial^3r^\ast}{\partial t\partial^2w}}y\hat{f}(v)-v\pp{H}{y}(\hat{f}(v),y).
\end{align}
For difference of the first term we use
\begin{align}
  \label{eq:937}
  \Delta_{m+1}y^3=\mathcal{O}(v\nd{\Delta_{m+1}y}),
\end{align}
while for the difference of the second term we use
\begin{align}
  \label{eq:938}
  \Delta_{m+1}(y\hat{f}(v))&=\mathcal{O}(v\nd{\Delta_{m+1}y})+\mathcal{O}(v\nd{\Delta_{m+1}\hat{f}}).
\end{align}
For the difference of the third term in \eqref{eq:936} we have
\begin{align}
  \label{eq:939}
  \left|\Delta_{m+1}\left(v\pp{H}{y}(\hat{f}(v),y)\right)\right|\leq Cv^2(\nd{\Delta_{m+1}\hat{f}}+\nd{\Delta_{m+1}y}).
\end{align}
Therefore, we obtain for the third term in \eqref{eq:934}
\begin{align}
  \label{eq:940}
  \left|v\Delta_{m+1}\left(\pp{R}{y}(v,y)\right)\right|\leq Cv^2(\nd{\Delta_{m+1}\hat{f}}+\nd{\Delta_{m+1}y}).
\end{align}
We conclude that
\begin{align}
  \label{eq:941}
  \left|\Delta_{m+1}\left(\pp{\hat{F}}{y}(v,y)\right)\right|\leq Cv(\nd{\Delta_{m+1}\hat{f}}+\nd{\Delta_{m+1}y}).
\end{align}

To deal with \eqref{eq:917} we also need (see \eqref{eq:649}, \eqref{eq:652})
\begin{align}
  \label{eq:942}
  \pp{\hat{F}}{y}(v,y_{m+1}(v))=\frac{\lambda}{3\kappa}+\mathcal{O}(v).
\end{align}
Using now the asymptotic forms as given by \eqref{eq:932}, \eqref{eq:941} and \eqref{eq:942} in \eqref{eq:917} we arrive at \eqref{eq:912}.
\end{proof}

\subsubsection{Estimates for the Jump Conditions}
\begin{lemma}\label{lemma_est_jc}
Let $\alpha_{+,m}(v)$, $\beta_{+,m}(v)$, $f_m(v)$, $z_m(v)$, $V_m(v)$ satisfy
\begin{align}
  \label{eq:943}
  \jump{\beta_m(v)}=\jump{\alpha_m(v)}^3G(\alpha_{+,m}(v),\alpha_{-,m}(v),\beta_{-,m}(v)),\qquad V_m(v)=\frac{\jump{T_m^{tr}(v)}}{\jump{T_m^{tt}(v)}},
\end{align}
where
\begin{align}
  \label{eq:944}
  \alpha_{-,m}(v)=\alpha^\ast(f_m(v),z_m(v)),\qquad \beta_{-,m}(v)=\beta^\ast(f_m(v),z_m(v)),
\end{align}
where the right hand sides are given by the state ahead, i.e.~by the solution in the maximal development, and
\begin{align}
  \label{eq:945}
  \jump{T_m^{\mu\nu}(v)}=T^{\mu\nu}(\alpha_{+,m}(v),\beta_{+,m}(v))-T^{\mu\nu}(\alpha_{-,m}(v),\beta_{-,m}(v)).
\end{align}
Then, the following estimates hold:
\begin{align}
  \label{eq:946}
\left|\frac{d\Delta_{m+1}\beta_+}{dv}(v)\right|&\leq C\left\{v^2\nd{\Delta_{m+1}z}+\nd{\Delta_{m+1}f}+v^2\nd{\Delta_{m+1}\alpha_+}\right\},\\
 \label{eq:947}
  \left|\Delta_{m+1}V(v)-\frac{\kappa}{2}\Delta_{m+1}z(v)\right|&\leq C\left\{v^2\nd{\Delta_{m+1}z}+v\nd{\Delta_{m+1}f}+v\nd{\Delta_{m+1}\alpha_+}\right\}. 
\end{align}
\end{lemma}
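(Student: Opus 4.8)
The plan is to differentiate the jump relations in \eqref{eq:943} with respect to $v$ and then take differences between the $m$'th and $(m+1)$'th iterates, exactly in the spirit of the estimates derived in Lemma \ref{lemma_est_fxdbp} and Lemma \ref{lemma_est_ie}. The key structural input is the expansion \eqref{eq:215}, $\jump{\beta}=\jump{\alpha}^3G(\alpha_+,\alpha_-,\beta_-)$, together with the asymptotic forms established during the proof of Proposition \ref{prop_ind_step}: $\jump{\alpha_m(v)}=\dot\alpha_0(1-y_m(v))v+\mathcal{O}(v^2)$ from \eqref{eq:711}, $\alpha_-(v)=\alpha_0+\dot\alpha_0 vy(v)+\mathcal{O}(v^2)$ from \eqref{eq:697}, $\beta_-(v)=\beta_0+(\partial\beta^\ast/\partial t)_0(\lambda/6\kappa^2)v^2+\mathcal{O}(v^3)$ from \eqref{eq:691}, and the derivative bounds \eqref{eq:719}. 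First I would record the difference estimates for the ingredient functions that feed into the jump conditions, namely $\Delta_{m+1}\alpha_-$, $\Delta_{m+1}\beta_-$, $\Delta_{m+1}\jump{\alpha}$, in terms of $\Delta_{m+1}f$, $\Delta_{m+1}z$ and $\Delta_{m+1}\alpha_+$.

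Concretely, from $\alpha_{-,m}(v)=\alpha^\ast(f_m(v),z_m(v))$ one gets $\Delta_{m+1}\alpha_-=\partial_t\alpha^\ast\,\Delta_{m+1}f+\partial_w\alpha^\ast\,\Delta_{m+1}z+(\text{lower order})$, and since $z_m(v)=vy_m(v)=\mathcal{O}(v)$ and $f_m(v)=\mathcal{O}(v^2)$, the chain-rule/mean-value argument gives $|\Delta_{m+1}\alpha_-|\le C(|\Delta_{m+1}f|+|\Delta_{m+1}z|)\le C(v^2\nd{\Delta_{m+1}\hat f}+v\nd{\Delta_{m+1}z})$ after extracting the vanishing factors; similarly for $\Delta_{m+1}\beta_-$, where one additionally uses $(\partial\beta^\ast/\partial w)_0=(\partial^2\beta^\ast/\partial w^2)_0=0$ from \eqref{eq:687} to gain an extra power of $v$. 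For $\Delta_{m+1}\jump{\alpha}$ I would use $\jump{\alpha_m}=\alpha_{+,m}-\alpha_{-,m}$ directly, so $\Delta_{m+1}\jump{\alpha}=\Delta_{m+1}\alpha_+-\Delta_{m+1}\alpha_-$, and for the $v$-derivative I differentiate $\alpha_{+,m}(v)=\alpha_{m}(v,v)$ using \eqref{eq:709} and differentiate $\alpha_{-,m}$ through the chain rule, matching the form already used to obtain \eqref{eq:719}. Then $\jump{\beta_m}=\jump{\alpha_m}^3 G(\alpha_{+,m},\alpha_{-,m},\beta_{-,m})$ gives, via $\Delta_{m+1}(X^3Y)=3X^2\,\Delta_{m+1}X\,Y+X^3\,\Delta_{m+1}Y$ with $X=\jump{\alpha_m}=\mathcal{O}(v)$, that $|\Delta_{m+1}\jump{\beta}|\le Cv^2|\Delta_{m+1}\jump{\alpha}|+Cv^3(|\Delta_{m+1}\alpha_+|+|\Delta_{m+1}\alpha_-|+|\Delta_{m+1}\beta_-|)$, and likewise for its $v$-derivative one extra differentiation produces the bound $Cv^2(\cdots)$; combining with $\beta_{+,m}=\beta_{-,m}+\jump{\beta_m}$ and $\frac{d\beta_{+,m}}{dv}=\frac{d\beta_{-,m}}{dv}+\frac{d}{dv}\jump{\beta_m}$, and using $\frac{d\beta_{-,m}}{dv}=\mathcal{O}(v)$ from \eqref{eq:690}, yields \eqref{eq:946} after absorbing $|\Delta_{m+1}\alpha_-|,|\Delta_{m+1}\beta_-|$ into the claimed terms.

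For \eqref{eq:947} I would start from the expansion \eqref{eq:729}, $V=c_+(\alpha_+,\beta_-)-\tfrac12(\partial c_+/\partial\alpha)(\alpha_+,\beta_-)\jump{\alpha}+\mathcal{O}(\jump{\alpha}^2)$, and take the difference. The leading difference term is $\Delta_{m+1}c_+(\alpha_+,\beta_-)=(\partial c_+/\partial\alpha)\Delta_{m+1}\alpha_++(\partial c_+/\partial\beta)\Delta_{m+1}\beta_-$; using $(\partial c_+/\partial\alpha)_0\dot\alpha_0=\kappa$, and the asymptotic forms $\alpha_+(v)=\alpha_i(v)+\mathcal{O}(v^2)$, $\beta_-(v)=\beta_0+\mathcal{O}(v^2)$, this contributes a term whose principal part matches $\tfrac{\kappa}{2}\Delta_{m+1}z$ once one relates $\Delta_{m+1}\alpha_+$ to $\Delta_{m+1}z$ through the identification equation as done in Lemma \ref{lemma_est_ie} — this is the subtle point. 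In fact the cleaner route is to exploit the structure $V_m-c_{+0}=\tfrac\kappa2(1+y_m)v+\mathcal{O}(v^2)$ directly and note $z_m=vy_m$, so $\Delta_{m+1}V-\tfrac\kappa2\Delta_{m+1}z$ collects precisely the $\mathcal{O}(v^2)$ remainder terms in \eqref{eq:741}; differencing those remainders term-by-term, each of which is built from $\alpha_+$, $\alpha_-$, $\beta_-$, $c_+$ and $\jump{\alpha}$, and invoking the difference bounds already established, gives \eqref{eq:947}. The main obstacle is bookkeeping: ensuring that the $\tfrac\kappa2\Delta_{m+1}z$ cancellation is exact and that every remainder after the cancellation genuinely carries the claimed vanishing factor $v^2$ (for the $z$-term) or $v$ (for the $f$- and $\alpha_+$-terms), which requires tracking the precise order of vanishing of each building block at $v=0$ — exactly the type of argument that was carried out in the expansions \eqref{eq:711}, \eqref{eq:731}, \eqref{eq:740} and that must now be redone at the level of differences.
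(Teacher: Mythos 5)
Your treatment of the first estimate \eqref{eq:946} follows essentially the paper's route: chain-rule difference bounds for $\Delta_{m+1}\alpha_-$, $\Delta_{m+1}\beta_-$ (with the extra power of $v$ for $\beta_-$ coming from \eqref{eq:687}), then the product rule on $\jump{\alpha}^3G$ using $\jump{\alpha}=\Landau(v)$, and finally $\frac{d\beta_+}{dv}=\frac{d\beta_-}{dv}+\frac{d}{dv}\jump{\beta}$. Apart from some hand-waving about carrying these bounds at the level of $v$-derivatives of the differences (which is what \eqref{eq:946} actually requires, and what the paper does explicitly in \eqref{eq:949}--\eqref{eq:966}), this part is fine.

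For \eqref{eq:947} there is a genuine gap: you never actually produce the exact $\frac{\kappa}{2}\Delta_{m+1}z$ term. In your route (a) you claim the principal part "matches $\frac{\kappa}{2}\Delta_{m+1}z$ once one relates $\Delta_{m+1}\alpha_+$ to $\Delta_{m+1}z$ through the identification equation" -- but the identification equation plays no role in this lemma; here $\Delta_{m+1}\alpha_+$, $\Delta_{m+1}f$, $\Delta_{m+1}z$ are independent inputs, and the $\Delta_{m+1}z$ term arises from the state-ahead trace $\alpha_-(v)=\alpha^\ast(f(v),z(v))$, whose $w$-derivative at the cusp point is $\dot{\alpha}_0\neq0$, so that $\Delta_{m+1}\jump{\alpha}\simeq-\dot{\alpha}_0\Delta_{m+1}z$ and the coefficient $\frac{\kappa}{2}$ comes from $-\frac12\cp{\partial c_+/\partial\alpha}\cdot(-\dot{\alpha}_0)$. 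Moreover, differencing \eqref{eq:729} requires the $\Landau(\jump{\alpha}^2)$ remainder as an explicit smooth function of $(\alpha_+,\alpha_-,\beta_-)$ (e.g.\ via Taylor's theorem with integral remainder applied to the quotient $\jump{T^{tr}}/\jump{T^{tt}}$); without that, the remainder's difference is uncontrolled. Your fallback route (b) is circular: the statement $V_m-c_{+0}=\frac{\kappa}{2}(1+y_m)v+\Landau(v^2)$ merely defines the remainder $v^2\hat{V}_m$, and bounding the difference of precisely that remainder by $C\{v^2\nd{\Delta z}+v\nd{\Delta f}+v\nd{\Delta\alpha_+}\}$ is the entire content of \eqref{eq:947}; one cannot "collect" it term-by-term without an explicit formula. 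The paper instead differences the quotient $V=\jump{T^{tr}}/\jump{T^{tt}}$ directly, Taylor-expands $T^{\mu\nu}$ and the composite $\Psi(t,w)=T^{tt}(\alpha^\ast(t,w),\cdot)$ to second order, and shows that the coefficient of $\Delta_{m+1}z$ in the numerator, namely the bracket $\cp{\partial^2T^{tt}/\partial\alpha^2}\cp{\partial T^{tr}/\partial\alpha}-\cp{\partial^2T^{tr}/\partial\alpha^2}\cp{\partial T^{tt}/\partial\alpha}$, equals (via \eqref{eq:182}, \eqref{eq:1002}, \eqref{eq:1003}, the definition \eqref{eq:186} of $\mu$ and the identity \eqref{eq:1014}) exactly $\frac{\kappa}{2}$ times the leading part of the denominator. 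This algebraic identification is the core of the proof of \eqref{eq:947} and is missing from your proposal; either supply it, or make route (a) rigorous by writing \eqref{eq:729} with an explicit smooth remainder and differencing that.
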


\begin{proof}
We start with
\begin{align}
  \label{eq:948}
  \jump{\beta}=\jump{\alpha}^3G(\alpha_+,\alpha_-,\beta_-).
\end{align}
We have
\begin{align}
  \label{eq:949}
  \frac{d\Delta_{m+1}\jump{\beta}}{dv}=3\Delta_{m+1}\left(\jump{\alpha}^2\frac{d\jump{\alpha}}{dv}G\right)+\Delta_{m+1}\left(\jump{\alpha}^3\frac{dG}{dv}\right).
\end{align}
We start with an estimate for the difference of
\begin{align}
  \label{eq:950}
  \beta_{-,m}(v)=\beta^\ast(f_m(v),z_m(v)),
\end{align}
where $\beta^\ast(t,w)$ is given by the state ahead, i.e.~by the solution in the maximal development. We have
\begin{align}
  \label{eq:951}
  \frac{d\Delta_{m+1}\beta_-}{dv}&=\left(\pp{\beta^\ast}{t}\right)_{m+1}\frac{d\Delta_{m+1}f}{dv}+\frac{df_m}{dv}\Delta_{m+1}\left(\pp{\beta^\ast}{t}\right)\notag\\
&\qquad+\left(\pp{\beta^\ast}{w}\right)_{m+1}\frac{d\Delta_{m+1}z}{dv}+\frac{dz_m}{dv}\Delta_{m+1}\left(\pp{\beta^\ast}{w}\right).
\end{align}
Here the partial derivatives of $\beta$ possess the same arguments as on the right hand side of \eqref{eq:950}. Taking into account the second of \eqref{eq:687} we obtain
\begin{align}
  \label{eq:952}
  \left|\Delta_{m+1}\left(\pp{\beta^\ast}{t}\right)\right|\leq C\left\{|\Delta_{m+1}f|+|\Delta_{m+1}z|\right\},\qquad \left|\Delta_{m+1}\left(\pp{\beta^\ast}{w}\right)\right|\leq C\left\{|\Delta_{m+1}f|+v|\Delta_{m+1}z|\right\}.
\end{align}
In view of \eqref{eq:680}, \eqref{eq:682}, \eqref{eq:686}, \eqref{eq:688} we arrive at
\begin{align}
  \label{eq:953}
  \left|\frac{d\Delta_{m+1}\beta_-}{dv}\right|\leq C\left\{\nd{\Delta_{m+1}f}+v^2\nd{\Delta_{m+1}z}\right\}.
\end{align}
Similarly we find
\begin{align}
  \label{eq:954}
  \left|\frac{d\Delta_{m+1}\alpha_-}{dv}\right|\leq C\left\{\nd{\Delta_{m+1}f}+\nd{\Delta_{m+1}z}\right\}.
\end{align}
(The factor $v^2$ does not appear since the conditions \eqref{eq:687} do not hold for the partial derivatives of $\alpha$ with respect to $w$).

We split the first term on the right hand side of \eqref{eq:949} into $I_1+I_2$, where
\begin{align}
  \label{eq:955}
  I_1\coloneqq 3\jump{\alpha_{m+1}}^2G_{m+1}\frac{d\Delta_{m+1}\jump{\alpha}}{dv},\qquad I_2\coloneqq 3\frac{d\jump{\alpha_m}}{dv}\Delta_{m+1}\left(\jump{\alpha}^2G\right).
\end{align}
For $I_1$ we use
\begin{align}
  \label{eq:956}
  \frac{d\Delta_{m+1}\jump{\alpha}}{dv}=\frac{d\Delta_{m+1}\alpha_+}{dv}-\frac{d\Delta_{m+1}\alpha_-}{dv}.
\end{align}
The second term is estimated by \eqref{eq:954}. Taking into account $\jump{\alpha(v)}=\mathcal{O}(v)$ (see \eqref{eq:711}) we get
\begin{align}
  \label{eq:957}
  |I_1|\leq Cv^2\left\{\nd{\Delta_{m+1}\alpha_+}+\nd{\Delta_{m+1}f}+\nd{\Delta_{m+1}z}\right\}.
\end{align}
For $I_2$ we use
\begin{align}
  \label{eq:958}
  |\Delta_{m+1}G|\leq C\left\{\nd{\Delta_{m+1}\alpha_+}+\nd{\Delta_{m+1}\alpha_-}+\nd{\Delta_{m+1}\beta_-}\right\}.
\end{align}
Together with \eqref{eq:953}, \eqref{eq:954} we obtain
\begin{align}
  \label{eq:959}
  |I_2|\leq Cv^2\left\{\nd{\Delta_{m+1}\alpha_+}+\nd{\Delta_{m+1}f}+\nd{\Delta_{m+1}z}\right\}.
\end{align}

The second term in \eqref{eq:949} we split into $I_3+I_4$, where
\begin{align}
  \label{eq:960}
  I_3\coloneqq \left(\frac{dG}{dv}\right)_{m+1}\Delta_{m+1}\jump{\alpha}^3,\qquad I_4\coloneqq \jump{\alpha_m}^3\Delta_{m+1}\left(\frac{dG}{dv}\right).
\end{align}
Reasoning in a similar way as to arrive at \eqref{eq:957}, \eqref{eq:959} we find
\begin{align}
  \label{eq:961}
  |I_3|&\leq Cv^3\left\{\nd{\Delta_{m+1}\alpha_+}+\nd{\Delta_{m+1}f}+\nd{\Delta_{m+1}z}\right\},\\
  \label{eq:962}
  |I_4|&\leq Cv^4\left\{\nd{\Delta_{m+1}\alpha_+}+\nd{\Delta_{m+1}f}+\nd{\Delta_{m+1}z}\right\}.
\end{align}

Using now \eqref{eq:957}, \eqref{eq:959}, \eqref{eq:961}, \eqref{eq:962} in \eqref{eq:949}, %which is
%\begin{align}
%  \label{eq:963}
%  \frac{d\Delta_{m+1}\beta_+}{dv}=\frac{d\Delta_{m+1}\beta_-}{dv}+I_a+I_b+I_c+I_d,
%\end{align}
we obtain
\begin{align}
  \label{eq:964}
  \left|\frac{d\Delta_{m+1}\jump{\beta}}{dv}\right|&\leq \sum_{i=1}^4|I_i|\notag\\
&\leq Cv^2\left\{\nd{\Delta_{m+1}\alpha_+}+\nd{\Delta_{m+1}f}+\nd{\Delta_{m+1}z}\right\},
\end{align}
which implies
\begin{align}
  \label{eq:965}
  |\Delta_{m+1}\jump{\beta}|\leq Cv^3\left\{\nd{\Delta_{m+1}\alpha_+}+\nd{\Delta_{m+1}f}+\nd{\Delta_{m+1}z}\right\}.
\end{align}
From \eqref{eq:964} together with \eqref{eq:953} we conclude
\begin{align}
  \label{eq:966}
  \left|\frac{d\Delta_{m+1}\beta_+}{dv}\right|\leq C\left\{\nd{\Delta_{m+1}f}+v^2\nd{\Delta_{m+1}z}+v^2\nd{\Delta_{m+1}\alpha_+}\right\}.
\end{align}

We turn to the jump condition
\begin{align}
  \label{eq:967}
  V=\frac{\jump{T^{tr}}}{\jump{T^{tt}}}.
\end{align}
We have
\begin{align}
  \label{eq:968}
  \Delta_{m+1}V=\frac{A_{m+1}B_m-A_mB_{m+1}}{B_{m+1}B_m},
\end{align}
where
\begin{align}
  \label{eq:969}
  A_m\coloneqq T^{tr}(\alpha_{+,m},\beta_{+,m})-T^{tr}(\alpha_{-,m},\beta_{-,m}),\qquad B_m\coloneqq T^{tt}(\alpha_{+,m},\beta_{+,m})-T^{tt}(\alpha_{-,m},\beta_{-,m}).
\end{align}

Let us denote the numerator of \eqref{eq:968} by $N$. We have
\begin{align}
  \label{eq:970}
  -N= A_mB_{m+1}-A_{m+1}B_m=A_m\Delta_{m+1}B-B_m\Delta_{m+1}A.
\end{align}
Now we rewrite
\begin{align}
  \label{eq:971}
  A_m=\overset{\,\,1}{A}_{m}+\overset{\,\,2}{A}_{m},
\end{align}
where
\begin{align}
  \label{eq:972}
  \overset{\,\,1}{A}_{m}=T^{tr}(\alpha_{+,m},\beta_{+,m})-T^{tr}(\alpha_{+,m},\beta_{-,m}),\qquad \overset{\,\,2}{A}_{m}=T^{tr}(\alpha_{+,m},\beta_{-,m})-T^{tr}(\alpha_{-,m},\beta_{-,m}).
\end{align}

Now, for a smooth function $F(x,y)$ we have
\begin{align}
  \label{eq:973}
  F(x,y_2)-F(x,y_1)&=\left(\pp{F}{y}\right)(x,y_1)(y_2-y_1)+\mathcal{O}((y_2-y_1)^2).
\end{align}
Suppose now $g_2(v)-g_1(v)=\mathcal{O}(v)$. It follows
\begin{align}
  \label{eq:974}
  F(f(v),g_2(v))-F(f(v),g_1(v))&=\cp{\pp{F}{y}}(g_2(v)-g_1(v))+\mathcal{O}(v(g_2(v)-g_1(v)))\notag\\
&=\cp{\pp{F}{y}}(g_2(v)-g_1(v))+\mathcal{O}(v^2).
\end{align}
Using this we obtain
\begin{align}
  \label{eq:975}
  \overset{\,\,1}{A}_{m}=\cp{\pp{T^{tr}}{\beta}}\jump{\beta_m}+\mathcal{O}(v^2),\qquad \overset{\,\,2}{A}_{m}=\cp{\pp{T^{tr}}{\alpha}}\jump{\alpha_m}+\mathcal{O}(v^2).
\end{align}
Now we recall (for the first see \eqref{eq:711})
\begin{align}
  \label{eq:976}
  \jump{\alpha_m(v)}=\dot{\alpha}_0(1-y_m(v))v+\mathcal{O}(v^2)=\mathcal{O}(v),\qquad \jump{\beta_m(v)}=\jump{\alpha_m(v)}^3G(\alpha_{+,m}(v),\alpha_{-,m}(v),\beta_{-,m}(v)).
\end{align}
Therefore,
\begin{align}
  \label{eq:977}
  A_m=\cp{\pp{T^{tr}}{\alpha}}\dot{\alpha}_0(1-y_m(v))v+\mathcal{O}(v^2).
\end{align}
Similarly we find
\begin{align}
  \label{eq:978}
  B_m=\cp{\pp{T^{tt}}{\alpha}}\dot{\alpha}_0(1-y_m(v))v+\mathcal{O}(v^2).
\end{align}

Now we look at the difference $\Delta_{m+1}B$. We have
\begin{align}
  \label{eq:979}
  \Delta_{m+1}B=\,\overset{1}{\Delta}_{m+1}B+\overset{2}{\Delta}_{m+1}B,
\end{align}
where
\begin{align}
  \label{eq:980}
  \overset{1}{\Delta}_{m+1}B&=T^{tt}(\alpha_{+,m+1},\beta_{+,m+1})-T^{tt}(\alpha_{+,m},\beta_{+,m}),\\
  \label{eq:981}
  \overset{2}{\Delta}_{m+1}B&=T^{tt}(\alpha_{-,m},\beta_{-,m})-T^{tt}(\alpha_{-,m+1},\beta_{-,m+1}).
\end{align}

We rewrite
\begin{align}
  \label{eq:982}
  \overset{1}{\Delta}_{m+1}B=\overset{1a}{\Delta}_{m+1}B+\overset{1b}{\Delta}_{m+1}B,
\end{align}
where
\begin{align}
  \label{eq:983}
  \overset{1a}{\Delta}_{m+1}B&=T^{tt}(\alpha_{+,m+1},\beta_{+,m+1})-T^{tt}(\alpha_{+,m},\beta_{+,m+1}),\\
  \label{eq:984}
  \overset{1b}{\Delta}_{m+1}B&=T^{tt}(\alpha_{+,m},\beta_{+,m+1})-T^{tt}(\alpha_{+,m},\beta_{+,m}).
\end{align}
Now, appealing to the first line of \eqref{eq:974}, we have
\begin{align}
  \label{eq:985}
  \overset{1a}{\Delta}_{m+1}B&=\cp{\pp{T^{tt}}{\alpha}}\Delta_{m+1}\alpha_++\mathcal{O}(v|\Delta_{m+1}\alpha_+|),\\
  \label{eq:986}
  \overset{1b}{\Delta}_{m+1}B&=\cp{\pp{T^{tt}}{\beta}}\Delta_{m+1}\beta_++\mathcal{O}(v|\Delta_{m+1}\beta_+|).
\end{align}
Therefore,
\begin{align}
  \label{eq:987}
  \overset{1}{\Delta}_{m+1}B=\cp{\pp{T^{tt}}{\alpha}}\Delta_{m+1}\alpha_++\cp{\pp{T^{tt}}{\beta}}\Delta_{m+1}\beta_++\mathcal{O}(v|\Delta_{m+1}\alpha_+|)+\mathcal{O}{(v|\Delta_{m+1}\beta_+|)}.
\end{align}

We rewrite
\begin{align}
  \label{eq:988}
  \overset{2}{\Delta}_{m+1}B=\overset{2a}{\Delta}_{m+1}B+\overset{2b}{\Delta}_{m+1}B,
\end{align}
where
\begin{align}
  \label{eq:989}
  \overset{2a}{\Delta}_{m+1}B&=-\Big(T^{tt}(\alpha_{-,m+1},\beta_{-,m+1})-T^{tt}(\alpha_{-,m},\beta_{-,m+1})\Big),\\
  \label{eq:990}
  \overset{2b}{\Delta}_{m+1}B&=-\Big(T^{tt}(\alpha_{-,m},\beta_{-,m+1})-T^{tt}(\alpha_{-,m},\beta_{-,m})\Big).
\end{align}
Again, by the first line of \eqref{eq:974}, we have
\begin{align}
  \label{eq:991}
  \overset{2b}{\Delta}_{m+1}B=-\cp{\pp{T^{tt}}{\beta}}\Delta_{m+1}\beta_-+\mathcal{O}(v|\Delta_{m+1}\beta_-|).
\end{align}

Defining
\begin{align}
  \label{eq:992}
  \Psi(t,w)\coloneqq T^{tt}(\alpha^\ast(t,w),\beta^\ast(f_{m+1},z_{m+1})),
\end{align}
where $\alpha^\ast(t,w)$, $\beta^\ast(t,w)$ are given by the state ahead, we have
\begin{align}
  \label{eq:993}
  \overset{2a}{\Delta}_{m+1}B=-\Big(\Psi(f_{m+1},z_{m+1})-\Psi(f_m,z_m)\Big).
\end{align}
We have
\begin{align}
  \label{eq:994}
  \Psi(t,w)&=\Psi(0,0)+\cp{\pp{\Psi}{t}}t+\cp{\pp{\Psi}{w}}w\notag\\
&\qquad +\cp{\ppp{\Psi}{t}}\frac{t^2}{2}+\cp{\pppp{\Psi}{t}{w}}tw+\cp{\ppp{\Psi}{w}}\frac{w^2}{2}+\mathcal{O}(t^3,t^2w,tw^2,w^3).
\end{align}
Using
\begin{align}
  \label{eq:995}
  \cp{\pp{\Psi}{t}}&=\cp{\pp{T^{tt}}{\alpha}}\cp{\pp{\alpha^\ast}{t}},\qquad \cp{\pp{\Psi}{w}}=\cp{\pp{T^{tt}}{\alpha}}\dot{\alpha}_0,\notag\\
\cp{\ppp{\Psi}{t}}&=\cp{\ppp{T^{tt}}{\alpha}}\cp{\pp{\alpha^\ast}{t}}^2+\cp{\pp{T^{tt}}{\alpha}}\cp{\ppp{\alpha^\ast}{t}},\notag\\
\cp{\ppp{\Psi}{w}}&=\cp{\ppp{T^{tt}}{\alpha}}\dot{\alpha}_0^2+\cp{\pp{T^{tt}}{\alpha}}\ddot{\alpha}_0,\notag\\
\cp{\pppp{\Psi}{t}{w}}&=\cp{\ppp{T^{tt}}{\alpha}}\cp{\pp{\alpha^\ast}{t}}\dot{\alpha}_0+\cp{\pp{T^{tt}}{\alpha}}\cp{\pppp{\alpha^\ast}{t}{w}},
\end{align}
where we also use $(\partial\alpha^\ast/\partial w)_0=\dot{\alpha}_0$, $(\partial^2\alpha^\ast/\partial w^2)_0=\ddot{\alpha}_0$ (see \eqref{eq:2022}), we obtain
\begin{align}
  \label{eq:996}
  -\overset{2a}{\Delta}_{m+1}B&=\cp{\pp{T^{tt}}{\alpha}}\cp{\pp{\alpha^\ast}{t}}\Delta_{m+1}f+\cp{\pp{T^{tt}}{\alpha}}\dot{\alpha}_0\Delta_{m+1}z\notag\\
&\qquad -\left\{\cp{\ppp{T^{tt}}{\alpha}}\dot{\alpha}_0^2+\cp{\pp{T^{tt}}{\alpha}}\ddot{\alpha}_0\right\}v\Delta_{m+1}z+\mathcal{O}(v|\Delta_{m+1}f|)+\mathcal{O}(v^2|\Delta_{m+1}z|),
\end{align}
where we also use $\tfrac{1}{2}(\Delta_{m+1}(z^2))=-v(\Delta_{m+1}z)+\mathcal{O}(v^2|\Delta_{m+1}z|)$, since $z(v)=-v+\mathcal{O}(v^2)$ and $f(v)=\mathcal{O}(v^2)$. This together with \eqref{eq:991} implies
\begin{align}
  \label{eq:997}
  \overset{2}{\Delta}_{m+1}B&=-\cp{\pp{T^{tt}}{\alpha}}\dot{\alpha}_0\Delta_{m+1}z+\left\{\cp{\ppp{T^{tt}}{\alpha}}\dot{\alpha}_0^2+\cp{\pp{T^{tt}}{\alpha}}\ddot{\alpha}_0\right\}v\Delta_{m+1}z+\mathcal{O}(v^2|\Delta_{m+1}z|)\notag\\
&\qquad-\cp{\pp{T^{tt}}{\alpha}}\cp{\pp{\alpha^\ast}{t}}\Delta_{m+1}f+\mathcal{O}(v|\Delta_{m+1}f|)-\cp{\pp{T^{tt}}{\beta}}\Delta_{m+1}\beta_-+\mathcal{O}(v|\Delta_{m+1}\beta_-|).
\end{align}

From \eqref{eq:987} and \eqref{eq:997} we obtain
\begin{align}
  \label{eq:998}
  \Delta_{m+1}B&=-\cp{\pp{T^{tt}}{\alpha}}\dot{\alpha}_0\Delta_{m+1}z+\left\{\cp{\ppp{T^{tt}}{\alpha}}\dot{\alpha}_0^2+\cp{\pp{T^{tt}}{\alpha}}\ddot{\alpha}_0\right\}v\Delta_{m+1}z\notag\\
&\qquad+\cp{\pp{T^{tt}}{\alpha}}\Delta_{m+1}\alpha_+-\cp{\pp{T^{tt}}{\alpha}}\cp{\pp{\alpha^\ast}{t}}\Delta_{m+1}f\notag\\
&\qquad +\cp{\pp{T^{tt}}{\beta}}\Delta_{m+1}\jump{\beta}+\mathcal{O}(v|\Delta_{m+1}\beta_+|)+\mathcal{O}(v|\Delta_{m+1}\beta_-|)\notag\\
&\qquad+\mathcal{O}(v^3\nd{\Delta_{m+1}z})+\mathcal{O}(v^2\nd{\Delta_{m+1}\alpha_+})+\mathcal{O}(v^2\nd{\Delta_{m+1}f}).
\end{align}
Making use of the estimate \eqref{eq:965} we arrive at
\begin{align}
  \label{eq:999}
  \Delta_{m+1}B&=-\cp{\pp{T^{tt}}{\alpha}}\dot{\alpha}_0\Delta_{m+1}z+\left\{\cp{\ppp{T^{tt}}{\alpha}}\dot{\alpha}_0^2+\cp{\pp{T^{tt}}{\alpha}}\ddot{\alpha}_0\right\}v\Delta_{m+1}z\notag\\
&\qquad+\cp{\pp{T^{tt}}{\alpha}}\Delta_{m+1}\alpha_+-\cp{\pp{T^{tt}}{\alpha}}\cp{\pp{\alpha^\ast}{t}}\Delta_{m+1}f\notag\\
&\qquad+\mathcal{O}(v^3\nd{\Delta_{m+1}z})+\mathcal{O}(v^2\nd{\Delta_{m+1}\alpha_+})+\mathcal{O}(v^2\nd{\Delta_{m+1}f}),
\end{align}
where we also used \eqref{eq:953}, \eqref{eq:966}. The analogous expression holds for $\Delta_{m+1}A$ but with $T^{tr}$ in the role of $T^{tt}$. Putting things together we arrive at the following expression for the numerator of \eqref{eq:968}
\begin{align}
  \label{eq:1000}
  N&=-v^2\dot{\alpha}_0^3(1-y_m(v))\left\{\cp{\ppp{T^{tt}}{\alpha}}\cp{\pp{T^{tr}}{\alpha}}-\cp{\ppp{T^{tr}}{\alpha}}\cp{\pp{T^{tt}}{\alpha}}\right\}\Delta_{m+1}z\notag\\
&\qquad +\mathcal{O}(v^4\nd{\Delta_{m+1}z})+\mathcal{O}(v^3\nd{\Delta_{m+1}\alpha_+})+\mathcal{O}(v^3\nd{\Delta_{m+1}f}).
\end{align}

Let us inspect the curly bracket in \eqref{eq:1000}. For this we use (see the first and second of \eqref{eq:182})
\begin{align}
  \label{eq:1001}
  \pp{T^{tt}}{\alpha}=\frac{G\psi_t^2}{2\eta}(1+v\eta)^2,\qquad \pp{T^{tr}}{\alpha}=\frac{G\psi_t^2}{2\eta}(v+\eta)(1+v\eta).
\end{align}
Using \eqref{eq:177}, \eqref{eq:202}, the first of \eqref{eq:176} and the first of \eqref{eq:181} we obtain
\begin{align}
  \label{eq:1002}
  \ppp{T^{tt}}{\alpha}&=\frac{G\psi_t^2}{4\eta^2}(1+v\eta)\left((1+3\eta^2+3v\eta+v\eta^3)-(1-v\eta)\frac{d\eta}{d\tilde{\rho}}\right),\\
  \label{eq:1003}
  \ppp{T^{tr}}{\alpha}&=\frac{G\psi_t^2}{4\eta^2}\left((v+2\eta+2v^2\eta+2\eta^3+6v\eta^2+2v^2\eta^3+v\eta^4)-v(1-\eta^2)\frac{d\eta}{d\tilde{\rho}}\right).
\end{align}
Therefore, we obtain the following expression for the curly bracket in \eqref{eq:1000}
\begin{align}
  \label{eq:1004}
  -\frac{G_0^2\psi_{t0}^4}{8\eta_0^2}(1-v_0^2)(1+v_0\eta_0)^2\left(1+\cp{\frac{d\eta}{d\tilde{\rho}}}-\eta_0^2\right).
\end{align}
Using the definition of $\mu$ (see \eqref{eq:186}) we see that \eqref{eq:1004} is equal to
\begin{align}
  \label{eq:1005}
  -\frac{G_0^2\psi_{t0}^4}{8\eta_0^2}(1-v_0^2)(1+v_0\eta_0)^2\mu_0.
\end{align}
Therefore,
\begin{align}
  \label{eq:1006}
  N&=v^2\dot{\alpha}_0^3\frac{G_0^2\psi_{t0}^4}{4\eta_0^2}(1-v_0^2)(1+v_0\eta_0)^2\mu_0\Delta_{m+1}z\notag\\
&\qquad +\mathcal{O}(v^4\nd{\Delta_{m+1}z})+\mathcal{O}(v^3\nd{\Delta_{m+1}\alpha_+})+\mathcal{O}(v^3\nd{\Delta_{m+1}f}).
\end{align}

We turn to the investigation of the denominator of \eqref{eq:968}. Let
\begin{align}
  \label{eq:1007}
  D\coloneqq B_{m+1}B_m.
\end{align}
Rewriting
\begin{align}
  \label{eq:1008}
  B_m=\overset{1}{B}_m+\overset{2}{B}_m,
\end{align}
where
\begin{align}
  \label{eq:1009}
  \overset{1}{B}_m&=T^{tt}(\alpha_{+,m},\beta_{+,m})-T^{tt}(\alpha_{+,m},\beta_{-,m}),\\
  \label{eq:1010}
  \overset{2}{B}_m&=T^{tt}(\alpha_{+,m},\beta_{-,m})-T^{tt}(\alpha_{-,m},\beta_{-,m})
\end{align}
and arguing as we did for the derivation of the asymptotic forms of $\overset{\,\,1}{A}_m$ and $\overset{\,\,2}{A}_m$, we arrive at
\begin{align}
  \label{eq:1011}
  B_m=\cp{\pp{T^{tt}}{\alpha}}\dot{\alpha}_0(1-y_m(v))v+\mathcal{O}(v^2).
\end{align}
Similarly with $m+1$ in the role of $m$. Therefore, we obtain
\begin{align}
  \label{eq:1012}
  D=4\cp{\pp{T^{tt}}{\alpha}}^2\dot{\alpha}_0^2v^2+\mathcal{O}(v^3),
\end{align}
where we used $y_m=-1+\mathcal{O}(v)$. Using the first of \eqref{eq:1001} we find
\begin{align}
  \label{eq:1013}
  D=v^2\dot{\alpha}_0^2\frac{G_0^2\psi_{t0}^4}{\eta_0^2}(1+v_0\eta_0)^4+\mathcal{O}(v^3).
\end{align}

Now, from $c_+=(v+\eta)/(1+v\eta)$ together with \eqref{eq:181}, \eqref{eq:186}, \eqref{eq:202} we have
\begin{align}
  \label{eq:1014}
  (1+v\eta)^2\pp{c_+}{\alpha}=\frac{1}{2}\mu(1-v^2).
\end{align}
Using $\kappa=\dot{\alpha}_0(\partial c_+/\partial \alpha)_0$ we obtain from \eqref{eq:1006}, \eqref{eq:1013}
\begin{align}
  \label{eq:1015}
  \Delta_{m+1}V(v)=\frac{\kappa}{2}\Delta_{m+1}z(v)+\mathcal{O}(v^2\nd{\Delta_{m+1}z})+\mathcal{O}(v\nd{\Delta_{m+1}\alpha_+})+\mathcal{O}(v\nd{\Delta_{m+1}f}).
\end{align}

\end{proof}

\subsubsection{Closing the Argument}

\begin{proposition}\label{prop_est_combination}
  For $\varepsilon$ small enough the sequence $(y_m,\hat{\beta}_{+,m},\hat{V}_m)$ converges in $B_Y\times B_{\delta_1}\times B_{\delta_2}$.
\end{proposition}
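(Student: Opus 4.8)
The goal is to show that the iteration map $(y_m,\hat{\beta}_{+,m},\hat{V}_m)\mapsto(y_{m+1},\hat{\beta}_{+,m+1},\hat{V}_{m+1})$ is a contraction on the product of balls $B_Y\times B_{\delta_1}\times B_{\delta_2}$, in the norm measuring $\nd{\cdot}$ on the $C^1$ factors and $\sup$ on the $C^0$ factor. Since closure of the iteration in these balls is already established (Proposition \ref{prop_ind_step}), and the three chain lemmas (Lemma \ref{lemma_est_fxdbp}, Lemma \ref{lemma_est_ie}, Lemma \ref{lemma_est_jc}) provide linear estimates for the differences of successive iterates, the work consists in chaining these estimates together and checking that the resulting constant is $<1$ once $\varepsilon$ is chosen small.

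First I would introduce the natural combined quantity for the $m$-th difference,
\begin{align}
  \label{eq:plan1}
  D_m\coloneqq \nd{\Delta_m y}+\sup_{[0,\varepsilon]}|\Delta_m\hat{V}|+\varepsilon\,\nd{\Delta_m\hat{\beta}_+},
\end{align}
or more precisely the $v$-dependent version with $\varepsilon$ replaced by a running bound, so that the three lemmas can be read as recursion inequalities. From Lemma \ref{lemma_est_fxdbp} one gets bounds on $\nd{\Delta_{m+1}\hat f}$, $\nd{\Delta_{m+1}\hat\delta}$, $\nd{\Delta_{m+1}\alpha_+}$ of the form $\tfrac{\lambda}{24\kappa^2}\nd{\Delta_m y}+ C\varepsilon(\cdots)$; feeding $\nd{\Delta_{m+1}\hat f}$ and $\nd{\Delta_{m+1}\hat\delta}$ into Lemma \ref{lemma_est_ie} yields
\begin{align}
  \label{eq:plan2}
  \nd{\Delta_{m+1}y}\leq \frac{3\kappa}{\lambda}\left(\kappa\cdot\frac{\lambda}{24\kappa^2}+\frac{\lambda}{24\kappa}\right)\nd{\Delta_m y}+C\varepsilon\,D_m=\frac14\,\nd{\Delta_m y}+C\varepsilon D_m,
\end{align}
where the crucial point is that the two leading contributions combine to the explicit constant $\tfrac14$ (exactly as the closure argument produced $\tfrac14 Y$ in \eqref{eq:674}). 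Then Lemma \ref{lemma_est_jc}, together with the bounds just obtained on $\nd{\Delta_{m+1}\hat f}$, $\nd{\Delta_{m+1}\alpha_+}$ and $\nd{\Delta_{m+1}z}$ (the latter deduced from $z=vy$ and \eqref{eq:plan2}), gives $\nd{\Delta_{m+1}\hat\beta_+}\leq C(D_m)$ and $\sup|\Delta_{m+1}\hat V|\leq \tfrac{\kappa}{2}\varepsilon\,\nd{\Delta_{m+1}z}+C\varepsilon D_m$, i.e. both are $O(\varepsilon)$ times $D_m$ once we note $\nd{\Delta_{m+1}z}\leq \nd{\Delta_{m+1}y}+|\Delta_{m+1}y|/v\leq C D_m$ near $v=0$. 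Assembling,
\begin{align}
  \label{eq:plan3}
  D_{m+1}\leq \left(\frac14+C\varepsilon\right)D_m,
\end{align}
and choosing $\varepsilon$ so small that $\tfrac14+C\varepsilon<1$ (say $<\tfrac12$) makes the map a contraction; the sequence is Cauchy in the complete space $B_Y\times B_{\delta_1}\times B_{\delta_2}$ and hence converges.

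**Main obstacle.**
The delicate point is not the contraction constant in the abstract — that comes out cleanly as $\tfrac14$ from \eqref{eq:plan2} — but rather the bookkeeping of the mixed powers of $v$ (equivalently $\varepsilon$) across the three lemmas, and in particular making sure that the $\nd{\Delta_{m+1}z}$ appearing in the jump-condition estimate \eqref{eq:947} does not feed back a term of size $O(1)$ rather than $O(\varepsilon)$ into $\sup|\Delta_{m+1}\hat V|$. One must use that $\Delta_{m+1}z(v)=v\Delta_{m+1}y(v)$ with $\Delta_{m+1}y(0)=0$, so $|\Delta_{m+1}z(v)|\leq v^2\nd{\Delta_{m+1}y}$ and $\nd{\Delta_{m+1}z}\leq C\varepsilon\nd{\Delta_{m+1}y}$, which together with $\tfrac{\kappa}{2}\Delta_{m+1}z$ on the left of \eqref{eq:947} turns that leading term into an $O(\varepsilon)$ contribution — so the only genuinely non-small contribution to $D_{m+1}$ is the $\tfrac14\nd{\Delta_m y}$ in \eqref{eq:plan2}. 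Once this is checked, closing the argument is routine: state \eqref{eq:plan3}, invoke completeness, and conclude that $(y_m,\hat\beta_{+,m},\hat V_m)$ converges, which by the defining relations \eqref{eq:588}--\eqref{eq:590} is the convergence of $(z_m,\beta_{+,m},V_m)$ in the stated spaces.
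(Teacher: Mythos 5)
Your chaining of the three lemmas is exactly the paper's route (Lemma \ref{lemma_est_fxdbp} $\to$ Lemma \ref{lemma_est_ie} for the $y$-difference, Lemma \ref{lemma_est_jc} plus Lemma \ref{lemma_est_fxdbp} for the $\hat{\beta}_+$- and $\hat{V}$-differences, with the $\tfrac14=\tfrac{3\kappa}{\lambda}\bigl(\kappa\cdot\tfrac{\lambda}{24\kappa^2}+\tfrac{\lambda}{24\kappa}\bigr)$ cancellation and the observation $\Delta_{m+1}z=v\,\Delta_{m+1}y$ handled correctly), but the final assembly has a genuine gap. Your claimed one-step contraction $D_{m+1}\le(\tfrac14+C\varepsilon)D_m$ for $D_m=\nd{\Delta_m y}+\sup|\Delta_m\hat{V}|+\varepsilon\,\nd{\Delta_m\hat{\beta}_+}$ does not follow from the lemmas: in \eqref{eq:749}, \eqref{eq:750} the term $\sup_{[0,v]}|\Delta_m\hat{V}|$ enters with a plain constant $C$, \emph{not} with a factor of $v$ (there is no normalization fixing $\hat{V}_m(0)$, so $\Delta_m\hat{V}$ is not $O(v)$), and likewise the $v\,\nd{\Delta_m\hat{\beta}_+}$ term is only $O(\varepsilon)\nd{\Delta_m\hat{\beta}_+}$, i.e.\ an $O(1)$ multiple of your $\varepsilon$-weighted component. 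Hence the recursion you actually obtain is the coupled system \eqref{eq:1021}: $\nd{\Delta_{m+1}y}\le\tfrac13\nd{\Delta_m y}+C_1\{\,\sup|\Delta_m\hat{V}|+v\,\nd{\Delta_m\hat{\beta}_+}\}$, with $C_1$ an absolute constant that need not be $\le\tfrac14$, so in your norm the coefficient of the $\hat{V}$- and $\hat{\beta}_+$-components of $D_m$ is $O(1)$, not $\tfrac14+C\varepsilon$, and the asserted contraction inequality is unjustified as written.

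The gap is fixable, and the fix is exactly what makes the paper's argument close: the saving structure is that the $\hat{V}$- and $\hat{\beta}_+$-rows of the system map \emph{everything} at level $m$ into quantities of size $O(\varepsilon)$, while only the $y\to y$ coefficient is not small, and it equals $\tfrac13$. One can therefore either (i) reweight the norm, e.g.\ $D_m=\nd{\Delta_m y}+K\sup|\Delta_m\hat{V}|+L\varepsilon\,\nd{\Delta_m\hat{\beta}_+}$ with $K,L$ chosen large compared with $C_1$ and then $\varepsilon$ chosen small depending on $K,L$ (so the bad entries become $C_1/K$, $C_1/L$ while the new rows cost only $K C_3\varepsilon$, $LC_2\varepsilon$), or (ii) note that the coefficient matrix of \eqref{eq:1021} has spectral radius $\tfrac13+O(\varepsilon)$, so some iterate $T^n$ of the map (with $3^{-(n-1)}C_1<1$) is a contraction, which suffices for convergence of the full sequence. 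Either patch turns your outline into a correct proof; as stated, the inequality $D_{m+1}\le(\tfrac14+C\varepsilon)D_m$ is the one step that would fail.
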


\begin{proof}

We first note
\begin{align}
  \label{eq:1016}
  \nd{\Delta_{m+1}\hat{\beta}_+}\leq \frac{C}{v^2}\nd{\Delta_{m+1}\beta_+}.
\end{align}
We now use the first estimate from lemma \ref{lemma_est_jc} together with the estimates for $|d\hat{f}/dv|$ and $\nd{\Delta_{m+1}\alpha_+}$ from lemma \ref{lemma_est_fxdbp}. We obtain
\begin{align}
  \label{eq:1017}
  \nd{\Delta_{m+1}\hat{\beta}_+}\leq C\left\{\nd{\Delta_{m}y}+\sup_{[0,v]}|\Delta_m\hat{V}|+v\nd{\Delta_{m}\hat{\beta}_+}\right\}.
\end{align}
For an estimate of $\nd{\Delta_{m+1}y}$ we look at the estimate of lemma \ref{lemma_est_ie}. Using also the first two estimates of lemma \ref{lemma_est_fxdbp} we obtain
\begin{align}
  \label{eq:1018}
  \nd{\Delta_{m+1}y}\leq \frac{1}{3}\nd{\Delta_my}+C\left\{v\nd{\Delta_m\hat{\beta}_+}+\sup_{[0,v]}|\Delta_m\hat{V}|\right\},
\end{align}
provided we choose $\varepsilon$ suitably small.
Now we note
\begin{align}
  \label{eq:1019}
  \Delta_{m+1}\hat{V}(v)=\frac{1}{v^2}\left(\Delta_{m+1}V(v)-v\frac{\kappa}{2}\Delta_{m+1}y(v)\right).
\end{align}
We now use the second estimate from lemma \ref{lemma_est_jc} together with \eqref{eq:1018}. We obtain
\begin{align}
  \label{eq:1020}
  \sup_{[0,v]}|\Delta_{m+1}\hat{V}|\leq C\left\{v\nd{\Delta_my}+v\sup_{[0,v]}|\Delta_{m}\hat{V}|+v^2\nd{\Delta_m\hat{\beta}_+}\right\}.
\end{align}

We rewrite the estimates \eqref{eq:1017}, \eqref{eq:1018}, \eqref{eq:1020} as
\begin{align}
  \label{eq:1021}
  \nd{\Delta_{m+1}y}&\leq \frac{1}{3}\nd{\Delta_my}+C_1\left\{v\nd{\Delta_m\hat{\beta}_+}+\sup_{[0,v]}|\Delta_m\hat{V}|\right\},\notag\\
\nd{\Delta_{m+1}\hat{\beta}_+}&\leq C_2\left\{\nd{\Delta_{m}y}+\sup_{[0,v]}|\Delta_m\hat{V}|+v\nd{\Delta_{m}\hat{\beta}_+}\right\},\notag\\
\sup_{[0,v]}|\Delta_{m+1}\hat{V}|&\leq C_3\left\{v\nd{\Delta_my}+v\sup_{[0,v]}|\Delta_{m}\hat{V}|+v^2\nd{\Delta_m\hat{\beta}_+}\right\}.
\end{align}
It follows that for $\varepsilon$ small enough the sequence $(y_m,\hat{\beta}_{+,m},\hat{V}_m)$ converges in $B_Y\times B_{\delta_1}\times B_{\delta_2}$.

\begin{comment}
One way to proceed is to (re)define the norms by
\begin{align}
  \label{eq:1022}
  \|\hat{V}\|\coloneqq 2\max\{C_1,C_2\}\sup_{[0,v]}|\hat{V}|,\qquad \|y\|\coloneqq 2C_2\nd{y}.
\end{align}
It follows that
\begin{align}
  \label{eq:1023}
  \nd{\Delta_{m+1}y}&\leq \frac{1}{3}\nd{\Delta_my}+C_1v\nd{\Delta_m\hat{\beta}_+}+\frac{1}{2}\sup_{[0,v]}|\Delta_m\hat{V}|,\notag\\
\nd{\Delta_{m+1}\hat{\beta}_+}&\leq \frac{1}{2}\nd{\Delta_{m}y}+\frac{1}{2}\sup_{[0,v]}|\Delta_m\hat{V}|+v\nd{\Delta_{m+1}\hat{\beta}_+},\notag\\
\sup_{[0,v]}|\Delta_{m+1}\hat{V}|&\leq C_3\left\{v\nd{\Delta_my}+v\sup_{[0,v]}|\Delta_{m}\hat{V}|+v^2\nd{\Delta_m\hat{\beta}_+}\right\}.
\end{align}
Therefore, we see that for $\varepsilon$ small enough we have convergence. Another way to proceed is that if $T$ is the iteration mapping there exists $n\in\NN$ such that for $\varepsilon$ small enough $T^n$ is a contraction. From this it follows by a corollary to the fixed point theorem that also $T$ is a contraction.
\begin{remark}
  In fact, $n$ has to satisfy the conditions
  \begin{align}
    \label{eq:1024}
    \left(\frac{1}{3}\right)^{n-1}C_1<1,\qquad \left(\frac{1}{3}\right)^{n}C_2<1,\qquad \left(\frac{1}{3}\right)^{n-1}C_1C_2<1.
  \end{align}
\end{remark}
\end{comment}

\end{proof}

The two propositions above show that the sequence $(y_m,\hat{\beta}_{+,m},\hat{V}_m)$ converges uniformly in $[0,\varepsilon]$ to $(y,\hat{\beta}_+,\hat{V})\in B_Y\times B_{\delta_1}\times B_{\delta_2}$.

We see that $(F_m)$ as given by \eqref{eq:857} converges to $0$ as $m\rightarrow \infty$ uniformly in $[0,\varepsilon]$. Therefore, in view of \eqref{eq:866}, \eqref{eq:867} also $\partial t_m/\partial v$, $\partial t_m/\partial u$ converge uniformly in $T_\varepsilon$. Let us denote the limit of $(t_m)$ by $t$. This, in view of \eqref{eq:817}, \eqref{eq:818}, implies the convergence of $(\mu_m,\nu_m)$ to $(\mu,\nu)$ uniformly in $T_\varepsilon$ and, in view of \eqref{eq:819}, also the convergence of $(\Xi_m)$ to $0$. Therefore, the pair of integral equations \eqref{eq:821}, \eqref{eq:849} are satisfied in the limit. It then follows that $t$ satisfies \eqref{eq:339}.

In view of \eqref{eq:774}, \eqref{eq:775} we have convergence of $(\alpha_m,\beta_m)$ to $(\alpha,\beta)$ uniformly in $T_\varepsilon$, which implies convergence of $(c_{\pm,m})$ to $c_\pm$ uniformly in $T_\varepsilon$. In view of the Hodograph system the partial derivatives of $(r_m)$ converge uniformly in $T_\varepsilon$ and the limit satisfies this system.

Now, the uniform convergence of $(\alpha_m,\beta_m)$, $(r_m)$ and the partial derivatives of $(t_m)$ imply the uniform convergence of $(A_m)$, $(B_m)$ to $A$, $B$. Therefore, the partial derivatives $\partial\alpha_m/\partial v$, $\partial\beta_m/\partial u$ converge to $\partial\alpha/\partial v$, $\partial\beta/\partial u$, uniformly in $T_\varepsilon$ and it holds $\partial\alpha/\partial v=A$, $\partial\beta/\partial u=B$, i.e.~also the other two equations of the characteristic system are satisfied in the limit. In view of \eqref{eq:806}, \eqref{eq:813} also the partial derivatives $(\partial\alpha_m/\partial u)$, $(\partial\beta_m/\partial v)$ converge to $\partial\alpha/\partial u$, $\partial\beta/\partial v$ uniformly in $T_\varepsilon$.

From \eqref{eq:749}, \eqref{eq:750} we see that $(\hat{f}_m)$, $(\hat{\delta}_m)$ converge to $\hat{f}$, $\hat{\delta}$ uniformly in $C^1[0,\varepsilon]$. Therefore, $z=vy$ satisfies the identification equation when $f\coloneqq v^2\hat{f}$, $g\coloneqq v^3\hat{\delta}+c_{+0}v^2\hat{f}$ are substituted. Also $V$, $\beta_+$ are given by the jump conditions when $\alpha_+$, $f$, $z$ are substituted. We have thus found a solution to the free boundary problem.

We have that $z(v)$ is given by $z(v)=vy(v)$, where $y\in C^1[0,\varepsilon]$, $y(0)=-1$ (see \eqref{eq:683}). $f(v)$ is given by $f(v)=v^2\hat{f}(v)$, with $\hat{f}\in C^1[0,\varepsilon]$, $\hat{f}(0)=\lambda/6\kappa^2$ (see \eqref{eq:681}). $\beta_+(v)$ is given by $\beta_+(v)=\beta_0+v^2\hat{\beta}_+(v)$ with $\hat{\beta}_+\in C^1[0,\varepsilon]$, $\hat{\beta}_+(0)=\cp{\partial\beta/\partial t}\lambda/6\kappa^2$ (see \eqref{eq:715}). $\alpha_+(v)=\alpha_i(v)+v^2\hat{\alpha}_+(v)$ with $\hat{\alpha}_+\in C^1[0,\varepsilon]$, $\hat{\alpha}_+(0)=\lambda\tilde{A}_0/6\kappa^2$ (see \eqref{eq:710}). From \eqref{eq:615} together with $\delta(v)=v^3\hat{\delta}(v)$, we have $g(v)=v^2\hat{g}(v)$ with $\hat{g}\in C^1[0,\varepsilon]$, $\hat{g}(0)=\lambda c_{+0}/6\kappa^2$.

We recall \eqref{eq:243} which is the singular boundary of the maximal development in acoustical coordinates $(t,w)$:
\begin{align}
  \label{eq:1025}
  t_\ast(w)=t_0+\frac{\lambda}{2\kappa^2}w^2+\Landau(w^3).
\end{align}
Therefore, using $w=z(v)=-v+\Landau(v^2)$, we have
\begin{align}
  \label{eq:1026}
  t_\ast(z(v))=t_0+\frac{\lambda}{2\kappa^2}v^2+\Landau(v^3).
\end{align}
Comparing this with (see \eqref{eq:681})
\begin{align}
  \label{eq:1027}
  f(v)+t_0=t_0+\frac{\lambda}{6\kappa^2}v^2+\Landau(v^3),
\end{align}
we see that for $\varepsilon$ sufficiently small the shock curve $\mathcal{K}$ lies in the past of the singular boundary of the maximal development $\mathcal{B}$.

From \eqref{eq:761} we have
\begin{align}
  \label{eq:1028}
  V(v)=c_{+0}+\Landau(v^2).
\end{align}
From \eqref{eq:760} we have that the characteristic speed of the outgoing characteristics along $\mathcal{K}$ in the state behind is
\begin{align}
  \label{eq:1029}
  \bar{c}_{+}(v)=c_{+0}+\kappa v+\Landau(v^2).
\end{align}
Now, let us denote by $\underline{c}_+$ the characteristic speed of the outgoing 
characteristics along $\mathcal{K}$ in the state ahead. We have
\begin{align}
  \label{eq:1030}
  \underline{c}_+(v)=c_+(\alpha^\ast(f(v),z(v)),\beta^\ast(f(v),z(v))).
\end{align}
Now,
\begin{align}
  \label{eq:1031}
  \frac{d\underline{c}_+}{dv}=\pp{c_+}{\alpha}\left(\pp{\alpha^\ast}{t}\frac{df}{dv}+\pp{\alpha^\ast}{w}\frac{dz}{dv}\right)+\pp{c_+}{\beta}\left(\pp{\beta^\ast}{t}\frac{df}{dv}+\pp{\beta^\ast}{w}\frac{dz}{dv}\right).
\end{align}
Therfore, using \eqref{eq:2027} and
\begin{align}
  \label{eq:1032}
  \frac{df}{dv}(v)=\Landau(v),\qquad \pp{\beta^\ast}{w}(f(v),z(v))=\Landau(v),
\end{align}
we find
\begin{align}
  \label{eq:1033}
  \frac{d\underline{c}_+}{dv}(v)=-\kappa+\Landau(v),
\end{align}
which implies
\begin{align}
  \label{eq:1034}
  \underline{c}_+(v)=c_{+0}-\kappa v+\Landau(v^2).
\end{align}
From \eqref{eq:1028}, \eqref{eq:1029} and \eqref{eq:1034} we obtain
\begin{align}
  \label{eq:1035}
  V(v)-\underline{c}_+(v)&=\kappa v+\Landau(v^2),\\
  \bar{c}_+(v)-V(v)&=\kappa v+\Landau(v^2).
\end{align}
These imply the determinism condition, provided that $\varepsilon$ is sufficiently small.

We have therefore proven the following existence theorem.

\begin{theorem}\label{existence_theorem}
Let initial data for $t$ and $\alpha$ be given along $\underline{C}$. Let $r_0>0$. Let the solution in the state ahead be given by $\alpha^\ast(t,w)$, $\beta^\ast(t,w)$, $r^\ast(t,w)$. Then, for $\varepsilon$ small enough, there exists a continuously differentiable solution $(t,r,\alpha,\beta)$ of the characteristic system in $T_\varepsilon$ such that
\begin{enumerate}
\item along $\underline{C}$ it attains the initial data and $r(0,0)=r_0$.
\item  $\alpha_+(v)\coloneqq \alpha(v,v)$, $\beta_+(v)\coloneqq \beta(v,v)$, $\alpha_-(v)\coloneqq \alpha^\ast(f(v),z(v))$, $\beta_-(v)\coloneqq \beta^\ast(f(v),z(v))$ satisfy the jump conditions
 \begin{align}
   \label{eq:1036}
  -\jump{T^{tt}(v)}V(v)+\jump{T^{tr}(v)}&=0,\\
  \label{eq:1037}
  -\jump{T^{rt}(v)}V(v)+\jump{T^{rr}(v)}&=0,
 \end{align}
where
\begin{align}
  \label{eq:1038}
  T^{\mu\nu}_+(v)=T^{\mu\nu}(\alpha_+(v),\beta_+(v)),\qquad T^{\mu\nu}_-(v)=T^{\mu\nu}(\alpha_-(v),\beta_-(v)),
\end{align}
$V(v)$ satisfies
\begin{align}
  \label{eq:1039}
  \frac{df}{dv}(v)V(v)=\frac{dg}{dv}(v)
\end{align}
and $z(v)$ satisfies the identification equation
\begin{align}
  \label{eq:1040}
  g(v)+r_0=r^\ast(f(v),z(v)),
\end{align}
where
\begin{align}
  \label{eq:1041}
  f(v)\coloneqq t(v,v),\qquad g(v)\coloneqq r(v,v)-r_0.
\end{align}
Furthermore, we have $\hat{V}\in C^0[0,\varepsilon]$, where $\hat{V}(v)$ is given by
\begin{align}
  \label{eq:11214}
  V(v)=c_{+0}+\frac{\kappa}{2}(1+y(v))v+v^2\hat{V}(v).
\end{align}
\item We have
\begin{align}
  \label{eq:1042}
  z(v)&=vy(v),& y&\in C^1[0,\varepsilon],& y(0)&=-1,\\
  f(v)&=v^2\hat{f}(v),& \hat{f}&\in C^1[0,\varepsilon],& \hat{f}(0)&=\frac{\lambda}{6\kappa^2},\\
  g(v)&=v^2\hat{g}(v),&\hat{g}&\in C^1[0,\varepsilon],& \hat{g}(0)&=\frac{\lambda c_{+0}}{6\kappa^2},\\
  \alpha_+(v)-\alpha_i(v)&=v^2\hat{\alpha}_+(v),&\hat{\alpha}_+&\in C^1[0,\varepsilon],&\hat{\alpha}_+(0)&=\frac{\lambda\tilde{A}_0}{6\kappa^2},\\
  \beta_+(v)-\beta_0&=v^2\hat{\beta}(v),&\hat{\beta}_+&\in C^1[0,\varepsilon],&\hat{\beta}_+(0)&=\frac{\lambda}{6\kappa^2}\cp{\pp{\beta}{t}}.
\end{align}
\item The curve $\mathcal{K}$ given in rectangular coordinates by
  \begin{align}
    \label{eq:1043}
  v\mapsto (f(v),g(v)+r_0)
  \end{align}
lies in the past of the singular boundary of the maximal development $\mathcal{B}$ and satisfies the determinism condition, i.e.~it is supersonic relative to the state ahead and subsonic relative to the state behind.
\end{enumerate}

\end{theorem}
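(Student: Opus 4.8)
The plan is to assemble the existence theorem by carefully combining the two main iteration schemes developed above: the inner iteration for the fixed boundary problem, codified in Proposition~\ref{prop_fxdbp}, and the outer iteration for the boundary functions $(z_m,\beta_{+,m},V_m)$, codified in Propositions~\ref{prop_ind_step} and~\ref{prop_est_combination}. First I would invoke Proposition~\ref{prop_ind_step} to conclude that, with the constants $Y$, $\delta_1$, $\delta_2$ chosen as described there and $\varepsilon$ sufficiently small, the sequence $((y_m,\hat\beta_{+,m},\hat V_m))$ stays in the product of closed balls $B_Y\times B_{\delta_1}\times B_{\delta_2}$; then Proposition~\ref{prop_est_combination}, whose proof rests on the three Lipschitz-type estimates \eqref{eq:1017}, \eqref{eq:1018}, \eqref{eq:1020} packaged as \eqref{eq:1021}, gives convergence of this sequence to a limit $(y,\hat\beta_+,\hat V)$ in the same space. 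This pins down the boundary data, and hence $z(v)=vy(v)$, $f(v)=v^2\hat f(v)$, $\beta_+(v)=\beta_0+v^2\hat\beta_+(v)$, $V(v)=c_{+0}+\tfrac{\kappa}{2}(1+y(v))v+v^2\hat V(v)$, together with the asymptotic coefficients at $v=0$ collected from \eqref{eq:681}, \eqref{eq:683}, \eqref{eq:710}, \eqref{eq:715}.

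Next I would pass to the limit inside the fixed boundary problem. Since $F_m(v)$ from \eqref{eq:857} tends to $0$ uniformly on $[0,\varepsilon]$, the rough estimates \eqref{eq:866}, \eqref{eq:867} force $\partial t_m/\partial u$, $\partial t_m/\partial v$ to converge uniformly on $T_\varepsilon$; then \eqref{eq:817}, \eqref{eq:818} give convergence of $(\mu_m,\nu_m)$ and \eqref{eq:819} gives $\Xi_m\to 0$, so the integral equations \eqref{eq:821}, \eqref{eq:849} pass to the limit and the limit $t$ satisfies the linear wave equation \eqref{eq:339}. The estimates \eqref{eq:774}, \eqref{eq:775} give uniform convergence of $(\alpha_m,\beta_m)\to(\alpha,\beta)$, hence of $(c_{\pm,m})$; the Hodograph system \eqref{eq:285} then forces convergence of $(r_m)$ and its derivatives, and uniform convergence of $(A_m),(B_m)$ gives that the remaining two characteristic equations $\partial\alpha/\partial v=A$, $\partial\beta/\partial u=B$ hold in the limit. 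The estimates \eqref{eq:806}, \eqref{eq:813} take care of the transversal derivatives $\partial\alpha/\partial u$, $\partial\beta/\partial v$, so $(t,r,\alpha,\beta)\in C^1(T_\varepsilon)$. Finally, Lemma~\ref{lemma_est_fxdbp}'s estimates \eqref{eq:749}, \eqref{eq:750} give convergence of $(\hat f_m),(\hat\delta_m)$ in $C^1[0,\varepsilon]$, so the identification equation \eqref{eq:911} holds in the limit with $f=v^2\hat f$, $g=v^3\hat\delta+c_{+0}v^2\hat f$ substituted, and the jump conditions, which are continuous functions of $(\alpha_+,f,z)$, hold for the limit as well — establishing items (i), (ii), (iii) of the theorem.

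For item (iv) I would carry out the two explicit comparisons already sketched at the end of the excerpt. Comparing the leading term $\tfrac{\lambda}{2\kappa^2}v^2$ of $t_\ast(z(v))$ in \eqref{eq:1026} with the leading term $\tfrac{\lambda}{6\kappa^2}v^2$ of $f(v)$ in \eqref{eq:1027}, and using $\lambda>0$, $\kappa>0$, shows that for small enough $\varepsilon$ the shock $\mathcal{K}$ lies strictly in the past of $\mathcal{B}$. For the determinism condition I would compute the outgoing characteristic speeds along $\mathcal{K}$ on each side: in the state behind, $\bar c_+(v)=c_{+0}+\kappa v+\mathcal O(v^2)$ from \eqref{eq:760}; in the state ahead, differentiating $c_+(\alpha^\ast(f,z),\beta^\ast(f,z))$ and using \eqref{eq:2027} together with $df/dv=\mathcal O(v)$, $(\partial\beta^\ast/\partial w)(f,z)=\mathcal O(v)$ gives $\underline c_+(v)=c_{+0}-\kappa v+\mathcal O(v^2)$; and $V(v)=c_{+0}+\mathcal O(v^2)$. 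Hence $V-\underline c_+=\kappa v+\mathcal O(v^2)>0$ and $\bar c_+-V=\kappa v+\mathcal O(v^2)>0$ for $\varepsilon$ small, which is exactly the statement that $\mathcal{K}$ is supersonic relative to the state ahead and subsonic relative to the state behind.

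The main obstacle is not any single one of these limiting arguments — each is essentially a matter of quoting the already-established uniform estimates — but rather the bookkeeping needed to verify that the limit genuinely solves the \emph{coupled} free boundary problem rather than two separate subproblems: one must check that the boundary data fed into the fixed boundary problem in the limit ($\beta_{+}$ on $\mathcal{K}$, $V$ through $\gamma$) coincides with the boundary data produced \emph{out} of the fixed boundary problem via the jump conditions and the identification equation, i.e.\ that the index shift between $m$ and $m+1$ in the strategy \eqref{eq:281} washes out in the limit. This is where the various ``putting back the index'' remarks must be reconciled, and where one must be careful that $\beta_+(v,v)=\beta_{+,m}(v)$ while $\alpha(v,v)=\alpha_{+,m+1}(v)$, so that the fixed point of the composite map is a true solution. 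Once that consistency is in hand, the smoothness away from $\mathcal{K}$ and the qualitative properties follow as above.
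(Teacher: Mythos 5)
Your proposal is correct and follows essentially the same route as the paper: convergence of the outer iteration via Propositions~\ref{prop_ind_step} and~\ref{prop_est_combination}, passage to the limit in the fixed boundary problem using the estimates \eqref{eq:866}, \eqref{eq:867}, \eqref{eq:817}--\eqref{eq:819}, \eqref{eq:774}, \eqref{eq:775}, \eqref{eq:806}, \eqref{eq:813}, $C^1$ convergence of $(\hat f_m),(\hat\delta_m)$ from \eqref{eq:749}, \eqref{eq:750} so that the identification equation and jump conditions hold in the limit, and then the comparison of $f(v)$ with $t_\ast(z(v))$ and of $V$, $\bar c_+$, $\underline c_+$ for item (iv). Your closing concern about the index shift washing out in the limit is exactly what the uniform convergence of the boundary triple resolves, and the paper treats it the same way.
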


%%% Local Variables: 
%%% mode: latex
%%% TeX-master: "./master"
%%% End: 

\section{Uniqueness}

\subsection{Asymptotic Form}
In the following we prove that any solution of the characteristic system satisfying the smoothness conditions from the existence theorem possesses the same leading order behavior as the solution given by the existence theorem.

\begin{proposition}\label{prop_ass_form}
Let $(t,r,\alpha,\beta)$ be a continuously differentiable solution of the free boundary problem and let $z(v)$ be the corresponding solution of the identification equation. Let $z(v)$ and
\begin{align}
  \label{eq:1044}
  f(v)\coloneqq t(v,v),\qquad g(v)\coloneqq r(v,v)-r_0,\qquad \alpha_+(v)\coloneqq \alpha(v,v),\qquad \beta_+(v)\coloneqq \beta(v,v)
\end{align}
be given by
\begin{align}
  \label{eq:1045}
  z(v)&=vy(v),\\\label{eq:1046}
  f(v)&=v^2\hat{f}(v),\\\label{eq:1047}
  g(v)&=v^2\hat{g}(v),\\\label{eq:1048}
  \alpha_+(v)-\alpha_i(v)&=v^2\hat{\alpha}_+(v),\\
  \beta_+(v)-\beta_0&=v^2\hat{\beta}_+(v),\label{eq:1049}
\end{align}
with
\begin{align}
  \label{eq:1050}
  y\in C^0[0,\varepsilon],\qquad\hat{f},\hat{g},\hat{\alpha}_+,\hat{\beta}_+\in C^1[0,\varepsilon].
\end{align}
Then it follows that
\begin{align}
  \label{eq:1051}
  y(0)=-1,\qquad \hat{f}(0)=\frac{\lambda}{6\kappa^2},\qquad \hat{g}(0)=\frac{\lambda c_{+0}}{6\kappa^2},\qquad \hat{\alpha}_+(0)=\frac{\lambda \tilde{A}_0}{6\kappa^2},\qquad \hat{\beta}_+(0)=\frac{\lambda}{6\kappa^2}\cp{\pp{\beta^\ast}{t}}
\end{align}
and
\begin{align}
  \label{eq:1052}
  \frac{d\hat{f}}{dv}(0)=\frac{1}{c_{+0}}\frac{d\hat{g}}{dv}(0).
\end{align}
\end{proposition}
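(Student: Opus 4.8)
The plan is to re‑derive the leading behaviour along $\mathcal K$ directly from the defining relations of the free boundary problem, essentially running the computations of Sections~\ref{chapter_setting} and~\ref{fbp} in reverse. Three ingredients enter: the identification equation, the jump conditions in the forms \eqref{eq:215} and \eqref{eq:729}, and a bootstrap analysis of the interior wave equation for $t$.

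The first, ``algebraic'', part. Since $\alpha_i(v)=\alpha_0+\dot\alpha_0 v+\mathcal O(v^2)$ (the data on $\underline C$, \eqref{eq:290}) and $f=\mathcal O(v^2)$, $z=vy$ with $y\in C^0$, the Taylor expansions of $\alpha^\ast,\beta^\ast$ at $O$ — using $(\partial_w\alpha^\ast)_0=\dot\alpha_0$, $(\partial_w\beta^\ast)_0=(\partial_w^2\beta^\ast)_0=0$ (cf.~\eqref{eq:687}) — give $\jump\alpha(v)=\dot\alpha_0 v(1-y(v))+\mathcal O(v^2)$ and $\beta_-(v)=\beta_0+(\partial_t\beta^\ast)_0\hat f(0)v^2+\mathcal O(v^3)$. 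Since $\jump\beta=\jump\alpha^3 G=\mathcal O(v^3)$ by \eqref{eq:215}, comparison of the $v^2$–coefficients in $\jump\beta=\beta_+-\beta_-$ forces $\hat\beta_+(0)=(\partial_t\beta^\ast)_0\hat f(0)$. Substituting $f=v^2\hat f$, $z=vy$, $g=v^2\hat g$ into the identification equation \eqref{eq:276} and using $(\partial_w r^\ast)_0=(\partial_w^2 r^\ast)_0=0$, $(\partial_{tw}r^\ast)_0=\kappa$, $(\partial_w^3 r^\ast)_0=-\lambda/\kappa$, $(\partial_t r^\ast)_0=c_{+0}$ together with $h(v^2\hat f,vy)=\mathcal O(v^5)$ (\eqref{eq:643}), one finds
\[
\delta(v):=g(v)-c_{+0}f(v)=v^3\Big(\kappa\hat f(v)y(v)-\tfrac{\lambda}{6\kappa}y(v)^3\Big)+\mathcal O(v^4),
\]
so $\delta(0)=0$, $\hat g(0)=c_{+0}\hat f(0)$, and $\delta(v)/v^3\to L_1:=\kappa\hat f(0)y(0)-\tfrac{\lambda}{6\kappa}y(0)^3$. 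On the other hand, the jump‑condition expansion \eqref{eq:729} (which uses only $J=0$ and the expansions of $T^{\mu\nu}$) gives $V(v)=c_{+0}+\tfrac{\kappa}{2}v(1+y(v))+\mathcal O(v^2)$; feeding this and $f'(v)=2\hat f(0)v+\mathcal O(v^2)$ into $\delta'(v)=(V(v)-c_{+0})f'(v)$ (from \eqref{eq:278}) and integrating, $\delta(v)/v^3\to L_2:=\tfrac{\kappa\hat f(0)}{3}(1+y(0))$. Equating,
\[
\kappa\hat f(0)\,y(0)-\frac{\lambda}{6\kappa}\,y(0)^3=\frac{\kappa\hat f(0)}{3}\big(1+y(0)\big).\qquad(\star)
\]

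The second, and main, step supplies the value of $\hat f(0)$, and is the hard part: one must show the interior data of the given $C^1$ solution satisfy the hypotheses of Proposition~\ref{prop_inner_iteration}. From $f'(0)=0$ and $\gamma(0)=0$ (the latter since $\bar c_+(0)=V(0)=c_{+0}$) one gets $a(0)=b(0)=0$, where $a(v)=\partial_v t(v,v)$, $b(v)=\partial_u t(v,v)$; propagating along $\underline C$ via the linear system \eqref{eq:309} gives $\partial_v t(u,0)=\partial_v\beta(u,0)=0$. Then — with $\partial_u\partial_v t=-\mu\partial_v t+\nu\partial_u t$ bounded by the $C^1$ hypothesis — a coupled Gronwall argument on the integral identities \eqref{eq:344}, \eqref{eq:345}, of the type carried out in the proof of Proposition~\ref{prop_inner_iteration} (eqs.~\eqref{eq:413}--\eqref{eq:425}), yields $\partial_v t=\mathcal O(v)$, $\partial_u t=\mathcal O(u^2)$, $\nu=\mathcal O(v)$, $\tau=\mathcal O(u)$, and the required leading forms of $\mu,\nu,1/\gamma,h$ (the identity $1+\rho_0=2/(1-y)$ makes $1/\gamma(v)=\tfrac{c_{+0}-c_{-0}}{\kappa v}(1+\rho(v))$, $\rho=\rho_0+\mathcal O(v)$, hold exactly). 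Proposition~\ref{prop_inner_iteration} then gives $\partial_v t(u,v)=\tfrac{\lambda}{3\kappa^2}v+\mathcal O(uv)$, whence, since $h(v)=\mathcal O(v^3)$,
\[
\hat f(0)v^2+\mathcal O(v^3)=f(v)-h(v)=\int_0^v\partial_v t(v,v')\,dv'=\frac{\lambda}{6\kappa^2}v^2+\mathcal O(v^3),
\]
so $\hat f(0)=\lambda/6\kappa^2$. The same input gives $\hat\alpha_+(0)=\tilde A_0\hat f(0)=\lambda\tilde A_0/6\kappa^2$, since $v^2\hat\alpha_+(v)=\int_0^v(\partial_v t\cdot\tilde A)(v,v')\,dv'$ and $\tilde A(\cdot)(v,v')=\tilde A_0+\mathcal O(v)$.

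Finally I would combine the two. With $\hat f(0)=\lambda/6\kappa^2$ the relation $(\star)$ becomes $3y(0)^3-2y(0)+1=0$, i.e.\ $(y(0)+1)\big(3y(0)^2-3y(0)+1\big)=0$; the quadratic factor has discriminant $-3$, hence $y(0)=-1$. Then $\hat g(0)=c_{+0}\hat f(0)=\lambda c_{+0}/6\kappa^2$ and $\hat\beta_+(0)=(\partial_t\beta^\ast)_0\hat f(0)=\tfrac{\lambda}{6\kappa^2}(\partial_t\beta^\ast)_0$, which together with the values already found is \eqref{eq:1051}. Moreover with these values $L_1=L_2=0$, so $\delta(v)=o(v^3)$; since $\delta(v)/v^3=(\hat g(v)-c_{+0}\hat f(v))/v\to\hat g'(0)-c_{+0}\hat f'(0)$ (as $\hat g-c_{+0}\hat f\in C^1$ vanishes at $0$), this gives $\hat f'(0)=\tfrac{1}{c_{+0}}\hat g'(0)$, which is \eqref{eq:1052}. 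The principal difficulty throughout is the interior step: upgrading mere $C^1$ regularity to the quantitative decay rates needed to invoke Proposition~\ref{prop_inner_iteration}, which means reproducing — now in an a~priori setting rather than along the iteration — the coupled integral estimates of Section~\ref{fbp}.
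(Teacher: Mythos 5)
Your ``algebraic'' part is sound and in places slicker than the paper's: using \eqref{eq:215} to read off $\hat\beta_+(0)=\cp{\partial\beta^\ast/\partial t}\,\hat f(0)$, using \eqref{eq:729} to get $V(v)=c_{+0}+\tfrac{\kappa}{2}(1+y(v))v+\mathcal O(v^2)$, expanding the identification equation to get $\hat g(0)=c_{+0}\hat f(0)$ and $\delta(v)/v^3\to\kappa\hat f(0)y(0)-\tfrac{\lambda}{6\kappa}y(0)^3$, and the closing deduction of \eqref{eq:1052} from $L_1=L_2=0$ are all legitimate. The gap is in the step you yourself flag as the hard one: the claim that the \emph{a priori} interior analysis yields $\hat f(0)=\lambda/6\kappa^2$. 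Proposition \ref{prop_inner_iteration} cannot be invoked here, because its hypotheses \eqref{eq:337}, \eqref{eq:292} require the boundary coefficient to have the form $1/\gamma=\tfrac{c_{+0}-c_{-0}}{\kappa v}(1+\rho)$ with $\rho(v)\to 0$, built from a $C^1$ function $y$ with $y(0)=-1$ (its proof uses $|y(v)+1|\le Yv$ throughout). In your setting $y(v)=z(v)/v$ is the unknown boundary function, merely $C^0$, and $y(0)=-1$ is precisely one of the conclusions to be proved. Your identity $1+\rho_0=2/(1-y)$ does put $1/\gamma$ in the form \eqref{eq:337}, but with $\rho_0(0)=\tfrac12(1+y(0))/(1-\tfrac12(1+y(0)))$, which vanishes only if $y(0)=-1$. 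If $y(0)\neq-1$ the leading constant in $1/\gamma$ is $\tfrac{c_{+0}-c_{-0}}{\kappa v}\cdot\tfrac{2}{1-y(0)}$, and the boundary o.d.e.\ for $f$ (the analogue of \eqref{eq:356}) then gives a different value, roughly $\hat f(0)=\lambda/\bigl(2\kappa^2(2-y(0))\bigr)$, not $\lambda/6\kappa^2$. So deriving $\hat f(0)=\lambda/6\kappa^2$ first and then extracting $y(0)=-1$ from your relation $(\star)$ is circular.

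One could try to repair this by carrying $y(0)$ as an unknown through the interior analysis, but that means reproving (not invoking) Proposition \ref{prop_inner_iteration} with a general leading constant in $1/\gamma$, and the combined equation then becomes $(y_0-1)^3(y_0+1)=0$: the extraneous root $y_0=1$ must be excluded by a separate admissibility argument (e.g.\ $z(v)<0$, nondegeneracy of the jump, determinism), so your clean factorization $(y_0+1)(3y_0^2-3y_0+1)=0$ with no other real root is lost. The paper avoids the circularity altogether: it never uses Proposition \ref{prop_inner_iteration} in this proof. Instead it obtains a second, independent relation along $\mathcal K$, namely \eqref{eq:1080}, $\hat g'(0)=c_{+0}\hat f'(0)+\kappa\hat f(0)-\lambda/6\kappa$, from soft little-$o$ estimates that follow from $C^1$ regularity, the o.d.e.\ system \eqref{eq:1063} along $\underline C$, and the given data $h$ (no quantitative decay rates are needed), and then closes the algebra by expanding \emph{both} components of the jump conditions, \eqref{eq:1113}--\eqref{eq:1114}, together with the identification relation \eqref{eq:1086}; the sign $d_1/d_2=-\kappa<0$ then forces $p=1$, hence $m=0$, $l_+=0$, and all of \eqref{eq:1051}, \eqref{eq:1052}. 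To make your route work you must either supply that missing second relation (as the paper does) or carry out the modified interior analysis with unknown $y(0)$ plus an admissibility argument; as written, the proposal does not close.
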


\begin{proof}
We first note that the characteristic system together with the solution being continuously differentiable implies
\begin{align}
  \label{eq:1053}
  \pppp{t}{u}{v}=\frac{1}{c_+-c_-}\left(\pp{c_-}{v}\pp{t}{u}-\pp{c_+}{u}\pp{t}{v}\right)\in C^0(T_\varepsilon).
\end{align}
We recall the initial data for $t$:
\begin{align}
  \label{eq:1054}
  t(u,0)=h(u)=u^3\hat{h}(u),\qquad \hat{h}\in C^1[0,\varepsilon],\qquad \hat{h}(0)=\frac{\lambda}{6\kappa(c_{+0}-c_{-0})}.
\end{align}
Taking into account
\begin{align}
  \label{eq:1055}
  \pp{t}{u}(v,v)+\pp{t}{v}(v,v)&=\frac{df}{dv}(v)\notag\\
&=2v\hat{f}(v)+v^2\frac{d\hat{f}}{dv}(v),
\end{align}
we deduce
\begin{align}
  \label{eq:1056}
  \cp{\pp{t}{u}}=\cp{\pp{t}{v}}=\cp{\pppp{t}{u}{v}}=0.
\end{align}

Now,
\begin{align}
  \label{eq:1057}
  \frac{dg}{dv}(v)&=\pp{r}{u}(v,v)+\pp{r}{v}(v,v)\notag\\
&=\bar{c}_-(v)\pp{t}{u}(v,v)+\bar{c}_+(v)\pp{t}{v}(v,v)\notag\\
&=\bar{c}_+(v)\frac{df}{dv}(v)+\pp{t}{u}(v,v)\left(\bar{c}_-(v)-\bar{c}_+(v)\right)\notag\\
&=\bar{c}_+(v)\frac{df}{dv}(v)+\left\{\pp{t}{u}(v,0)+\int_0^v\pppp{t}{u}{v}(v,v')dv'\right\}\left(\bar{c}_-(v)-\bar{c}_+(v)\right).
\end{align}
Substituting
\begin{align}
  \label{eq:1058}
  \frac{dg}{dv}(v)&=2v\hat{g}(v)+v^2\frac{d\hat{g}}{dv}(v),\\
\pp{t}{u}(v,0)=\frac{dh}{dv}(v)&=3v^2\hat{h}(v)+v^3\frac{d\hat{h}}{dv}(v),\label{eq:1059}
\end{align}
together with \eqref{eq:1055} yields, after dividing by $v$ and taking the limit $v\rightarrow 0$,
\begin{align}
  \label{eq:1060}
  \hat{g}(0)=c_{+0}\hat{f}(0).
\end{align}

Since by \eqref{eq:1049}
\begin{align}
  \label{eq:1212}
  \frac{d\beta_+}{dv}(0)=0
\end{align}
and the second of \eqref{eq:160}, namely
\begin{align}
  \label{eq:1214}
  \pp{\beta}{u}=\pp{t}{u}\tilde{B}(\alpha,\beta,r),
\end{align}
gives, by the first of \eqref{eq:1056},
\begin{align}
  \label{eq:1216}
  \cp{\pp{\beta}{u}}=0,
\end{align}
it follows
\begin{align}
  \label{eq:1062}
  \cp{\pp{\beta}{v}}=0.
\end{align}

Now, $\partial t/\partial v$ and $\partial\beta/\partial v$ satisfy along $\underline{C}$ a system of the form (see page \pageref{eq:309})
\begin{align}
  \label{eq:1063}
    \frac{d}{du}
\left(\begin{array}{c}
      \partial\beta/\partial v\\
      \partial t/\partial v
    \end{array}\right)
=
\left(\begin{array}{cc}
  a_{11} & a_{12}\\
  a_{21} & a_{22}
\end{array}\right)
\left(\begin{array}{c}
  \partial\beta/\partial v\\
  \partial t/\partial v
\end{array}\right)\quad \textrm{for} \quad v=0,
\end{align}
which, together with \eqref{eq:1056}, \eqref{eq:1062} implies
\begin{align}
  \label{eq:1064}
  \pp{t}{v}(u,0)=0, \qquad \pp{\beta}{v}(u,0)=0.
\end{align}
Hence
\begin{align}
  \label{eq:1065}
  \pp{t}{v}(u,v)=\landau_v(1),\qquad \pp{\beta}{v}(u,v)=\landau_v(1),
\end{align}
the indices on the Landau symbols representing the variable with respect to which the limit is taken, the limit being uniform in the other variable.

Making use of the equations for $\alpha$ and $\beta$ from the characteristic system (see \eqref{eq:160}) we obtain
\begin{align}
  \label{eq:1066}
  \pp{c_+}{u}&=\pp{c_+}{\alpha}\pp{\alpha}{u}+\pp{c_+}{\beta}\pp{t}{u}\tilde{B},\\
  \pp{c_+}{v}&=\pp{c_+}{\alpha}\pp{t}{v}\tilde{A}+\pp{c_+}{\beta}\pp{\beta}{v}.
\end{align}
Therefore,
\begin{align}
  \label{eq:1067}
  \pp{c_+}{u}(u,v)&=\kappa+\landau_u(1),\\
  \pp{c_+}{v}(u,v)&=\landau_v(1).
\end{align}
where for the first we used $(\partial c_+/\partial\alpha)_0=\kappa /\dot{\alpha}_0$ (see \eqref{eq:2027}). Hence
\begin{align}
  \label{eq:1068}
  \bar{c}_+(v)=c_{+0}+\kappa v+{\scriptstyle\mathcal{O}}(v).
\end{align}

We turn to the integral in \eqref{eq:1057}. Making again use of the characteristic system we obtain
\begin{align}
  \label{eq:1069}
  \pp{c_-}{v}=\pp{c_-}{\alpha}\pp{t}{v}\tilde{A}+\pp{c_-}{\beta}\pp{\beta}{v}.
\end{align}
Hence
\begin{align}
  \label{eq:1070}
  \pp{c_-}{v}(u,v)=\landau_v(1),
\end{align}
which, in view of \eqref{eq:1053}, implies
\begin{align}
  \label{eq:1071}
  \pppp{t}{u}{v}(u,v)=\landau_v(1).
\end{align}

We have
\begin{align}
  \label{eq:1072}
  \pp{t}{u}(v,v')&=\pp{t}{u}(v,0)+\int_0^{v'}\pppp{t}{u}{v}(v,v'')dv''\notag\\
&=\landau_{v}(v),
\end{align}
where we made use of \eqref{eq:1059}. Together with (see \eqref{eq:1070})
\begin{align}
  \label{eq:1073}
  \left(\frac{1}{c_+-c_-}\pp{c_-}{v}\right)(v,v')=\landau_{v'}(1),
\end{align}
we find
\begin{align}
  \label{eq:1074}
  \int_0^v\left(\frac{1}{c_+-c_-}\pp{c_-}{v}\pp{t}{u}\right)(v,v')dv'={\scriptstyle\mathcal{O}}(v^2).
\end{align}

We have
\begin{align}
  \label{eq:1075}
  \pp{t}{v}(v,v')&=\pp{t}{v}(v',v')+\int_{v'}^{v}\pppp{t}{u}{v}(u',v')du'\notag\\
&=\frac{df}{dv}(v')-\pp{t}{u}(v',v')+\int_{v'}^{v}\pppp{t}{u}{v}(u',v')du'\notag\\
&=2v'\hat{f}(0)+v\landau_{v'}(1),
\end{align}
where for the second term in the second line we substituted \eqref{eq:1072}, setting $v=v'$. Together with (see \eqref{eq:1067})
\begin{align}
  \label{eq:1076}
  \left(\frac{1}{c_+-c_-}\pp{c_+}{u}\right)(v,v')=\frac{\kappa}{c_{+0}-c_{-0}}+{\scriptstyle\mathcal{O}}_{v}(1),
\end{align}
we find
\begin{align}
  \label{eq:1077}
  \int_0^v\left(\frac{1}{c_+-c_-}\pp{c_+}{u}\pp{t}{v}\right)(v,v')dv'=\frac{\kappa \hat{f}(0)}{c_{+0}-c_{-0}}v^2+{\scriptstyle\mathcal{O}}(v^2).
\end{align}

Now, rewriting the integrand in \eqref{eq:1057} using \eqref{eq:1053} and then substituting \eqref{eq:1074} and \eqref{eq:1077}, we obtain
\begin{align}
  \label{eq:1078}
  \int_0^v\pppp{t}{u}{v}(v,v')dv'=-\frac{\kappa \hat{f}(0)}{c_{+0}-c_{-0}}v^2+{\scriptstyle\mathcal{O}}(v^2).
\end{align}
Substituting now \eqref{eq:1055}, \eqref{eq:1058}, \eqref{eq:1059}, \eqref{eq:1068}, \eqref{eq:1078}  into \eqref{eq:1057}, noting that
\begin{alignat}{3}
  \label{eq:1079}
  \hat{f}(v)&=\hat{f}(0)+\frac{d\hat{f}}{dv}(0)v+{\scriptstyle\mathcal{O}}(v),&\qquad\hat{g}(v)&=\hat{g}(0)+\frac{d\hat{g}}{dv}(0)v+{\scriptstyle\mathcal{O}}(v),\\
  \frac{d\hat{f}}{dv}(v)&=\frac{d\hat{f}}{dv}(0)+\landau(v),&\qquad \frac{d\hat{g}}{dv}(v)&=\frac{d\hat{g}}{dv}(0)+\landau(v),
\end{alignat}
and that by \eqref{eq:1060} the terms linear in $v$ cancel, dividing by $v^2$ and taking the limit as $v\rightarrow 0$, we find
\begin{align}
  \label{eq:1080}
  \frac{d\hat{g}}{dv}(0)=c_{+0}\frac{d\hat{f}}{dv}(0)+\kappa \hat{f}(0)-\frac{\lambda}{6\kappa}.
\end{align}

We now consider the identification equation
\begin{align}
  \label{eq:1081}
  g(v)+r_0=r^\ast(f(v),z(v)).
\end{align}
In view of \eqref{eq:1060}, \eqref{eq:1080} the left hand side is
\begin{align}
  \label{eq:1082}
  r_0+c_{+0}\hat{f}(0)v^2+\left(c_{+0}\frac{d\hat{f}}{dv}(0)+\kappa \hat{f}(0)-\frac{\lambda}{6\kappa}\right)v^3+\landau(v^3).
\end{align}
For the right hand side of \eqref{eq:1081} we expand $r^\ast(t,w)$ up to third order. Substituting $t=f(v)=v^2\hat{f}(v)$, $w=z(v)=vy(v)$ we obtain
\begin{align}
  \label{eq:1083}
  r_0+c_{+0}\hat{f}(0)v^2+\left(c_{+0}\frac{d\hat{f}}{dv}(0)+y(0)\kappa\hat{f}(0)-\frac{\lambda y(0)^3}{6\kappa}\right)v^3+\landau(v^3).
\end{align}
Therefore, setting \eqref{eq:1082} equal to \eqref{eq:1083}, dividing by $v^3$ and taking the limit $v\rightarrow 0$ we obtain
\begin{align}
  \label{eq:1084}
  \kappa\hat{f}(0)-\frac{\lambda}{6\kappa}=y(0)\kappa\hat{f}(0)-\frac{\lambda y(0)^3}{6\kappa}.
\end{align}
Defining now
\begin{align}
  \label{eq:1085}
  m\coloneqq 3\kappa-\frac{\lambda}{2\hat{f}(0)\kappa},\qquad p\coloneqq -y(0),
\end{align}
(we recall that for a physical solution we need $p>0$), this becomes
\begin{align}
  \label{eq:1086}
  m(1+p^3)+3\kappa p(1-p^2)=0.
\end{align}

We now turn to the jump conditions. Using (cf.~\eqref{eq:1079})
\begin{align}
  \label{eq:1087}
  \frac{df}{dv}(v)&=2v\hat{f}(v)+v^2\frac{d\hat{f}}{dv}(v)\notag\\
&=2v\hat{f}(0)+3v^2\frac{d\hat{f}}{dv}(0)+\landau(v^2)
\end{align}
and the analogous expansion for $dg/dv(v)$ in
\begin{align}
  \label{eq:1088}
  V(v)=\frac{dr}{dv}(v)=\frac{\displaystyle{\frac{dg}{dv}(v)}}{\displaystyle{\frac{df}{dv}(v)}},
\end{align}
we obtain
\begin{align}
  \label{eq:1089}
  V(v)=c_{+0}+\frac{3}{2\hat{f}(0)}\left(\frac{d\hat{g}}{dv}(0)-c_{+0}\frac{d\hat{f}}{dv}(0)\right)v+\landau(v).
\end{align}
Hence, in view of \eqref{eq:1080} and the first of \eqref{eq:1085},
\begin{align}
  \label{eq:1090}
  V(v)=c_{+0}+\frac{m}{2}v+\landau(v).
\end{align}

We recall the jump conditions:
\begin{align}
  \label{eq:1091}
  \jump{T^{tt}(v)}V(v)&=\jump{T^{tr}(v)},\\
  \jump{T^{tr}(v)}V(v)&=\jump{T^{rr}(v)},\label{eq:1092}
\end{align}
where
\begin{align}
  \label{eq:1093}
  \jump{T^{\mu\nu}(v)}=T_+^{\mu\nu}(v)-T_-^{\mu\nu}(v)
\end{align}
and
\begin{align}
  \label{eq:1094}
  T_+^{\mu\nu}(v)&=T^{\mu\nu}(\alpha_+(v),\beta_+(v)),\\
  T_-^{\mu\nu}(v)&=T^{\mu\nu}(\alpha_-(v),\beta_-(v)),\label{eq:1095}
\end{align}
where
\begin{align}
  \label{eq:1096}
  \alpha_-(v)=\alpha^\ast(f(v),z(v)),\qquad \beta_-(v)=\beta^\ast(f(v),z(v))
\end{align}
the functions $\alpha^\ast(t,w)$, $\beta^\ast(t,w)$ in \eqref{eq:1096} being given by the solution in the maximal development. Using the initial condition for $\alpha$ as given by \eqref{eq:290}, we obtain from \eqref{eq:1048}, \eqref{eq:1049}
\begin{align}
  \label{eq:1097}
  \alpha_+(v)-\alpha_0&=\dot{\alpha}_0v+\left(\frac{\ddot{\alpha}_0}{2}+\hat{\alpha}_+(0)\right)v^2+\landau(v^2),\\
\beta_+(v)-\beta_0&=\hat{\beta}_+(0)v^2+\landau(v^2).\label{eq:1098}
\end{align}
It would actually suffice to assume $\hat{\alpha},\hat{\beta}\in C^0$. But since $\alpha_+$ and $\beta_+$ correspond to the solution of the fixed boundary problem, the assumption for $\hat{\alpha}$ and $\hat{\beta}$ to be continuously differentiable is consistent.

Expanding now $T^{\mu\nu}(\alpha,\beta)$ up to second order and substituting \eqref{eq:1097}, \eqref{eq:1098} we obtain
\begin{align}
  \label{eq:1099}
  T_+^{\mu\nu}(v)&=T_0^{\mu\nu}+\cp{\pp{T^{\mu\nu}}{\alpha}}\dot{\alpha}_0v+\Bigg\{\cp{\pp{T^{\mu\nu}}{\alpha}}\left(\frac{\ddot{\alpha}_0}{2}+\hat{\alpha}_+(0)\right)\notag\\
&\hspace{50mm}+\cp{\pp{T^{\mu\nu}}{\beta}}\hat{\beta}_+(0)+\cp{\ppp{T^{\mu\nu}}{\alpha}}\frac{\dot{\alpha}_0^2}{2}\Bigg\}v^2+\landau(v^2).
\end{align}
Expanding $\alpha^\ast(t,w)$, $\beta^\ast(t,w)$ up to second order and substituting $t=v^2\hat{f}(v)$, $w=vy(v)$ we obtain
\begin{align}
  \label{eq:1100}
  \alpha_-(v)-\alpha_0&=\dot{\alpha}_0y(v)v+\left\{\cp{\pp{\alpha^\ast}{t}}\hat{f}(0)+\frac{\ddot{\alpha}_0y(0)}{2}^2\right\}v^2+\landau(v^2),\\
\beta_+(v)-\beta_0&=\cp{\pp{\beta^\ast}{t}}\hat{f}(0)v^2+\landau(v^2),\label{eq:1101}
\end{align}
where for the second one we made use of \eqref{eq:257}.
Expanding now $T^{\mu\nu}(\alpha,\beta)$ up to second order and substituting \eqref{eq:1100}, \eqref{eq:1101} we obtain
\begin{align}
  \label{eq:1102}
  T_-^{\mu\nu}(v)&=T_0^{\mu\nu}+\cp{\pp{T^{\mu\nu}}{\alpha}}\dot{\alpha}_0y(v)v+\Bigg\{\cp{\pp{T^{\mu\nu}}{\alpha}}\left(\cp{\pp{\alpha^\ast}{t}}\hat{f}(0)+\frac{\ddot{\alpha}_0y(0)^2}{2}\right)\notag\\
&\hspace{52mm}+\cp{\pp{T^{\mu\nu}}{\beta}}\cp{\pp{\beta^\ast}{t}}\hat{f}(0)+\cp{\ppp{T^{\mu\nu}}{\alpha}}\frac{\dot{\alpha}_0^2y(0)^2}{2}\Bigg\}v^2+\landau(v^2).
\end{align}
Using the definitions
\begin{align}
  \label{eq:1103}
  m_-\coloneqq 2\cp{\pp{\alpha^\ast}{t}}\hat{f}(0),\qquad n_-\coloneqq 2\cp{\pp{\beta^\ast}{t}}\hat{f}(0),
\end{align}
this becomes
\begin{align}
  \label{eq:1104}
  T_-^{\mu\nu}(v)&=T_0^{\mu\nu}+\cp{\pp{T^{\mu\nu}}{\alpha}}\dot{\alpha}_0y(v)v+\frac{1}{2}\Bigg\{\cp{\pp{T^{\mu\nu}}{\alpha}}\left(m_-+\ddot{\alpha}_0y(0)^2\right)\notag\\
&\hspace{55mm}+\cp{\pp{T^{\mu\nu}}{\beta}}n_-+\cp{\ppp{T^{\mu\nu}}{\alpha}}\dot{\alpha}_0^2y(0)^2\Bigg\}v^2+\landau(v^2).
\end{align}
Therefore, from \eqref{eq:1099}, \eqref{eq:1104},
\begin{align}
  \label{eq:1105}
  \jump{T^{\mu\nu}(v)}&=\cp{\pp{T^{\mu\nu}}{\alpha}}\dot{\alpha}_0(1-y(v))v\notag\\
&\qquad+\frac{1}{2}\Bigg\{\cp{\pp{T^{\mu\nu}}{\alpha}}\left(\ddot{\alpha}_0(1-y(0)^2)+2\hat{\alpha}_+(0)-m_-\right)\notag\\
&\hspace{30mm}+\cp{\pp{T^{\mu\nu}}{\beta}}\left(2\hat{\beta}_+(0)-n_-\right)+\cp{\ppp{T^{\mu\nu}}{\alpha}}\dot{\alpha}_0^2(1-y(0)^2)\Bigg\}v^2+\landau(v^2).
\end{align}

We now substitute \eqref{eq:1105} together with \eqref{eq:1090} into \eqref{eq:1091}, \eqref{eq:1092}. Making use of (see \eqref{eq:184})
\begin{align}
  \label{eq:1106}
  \pp{T^{tr}}{\alpha}=c_+\pp{T^{tt}}{\alpha}=\frac{1}{c_+}\pp{T^{rr}}{\alpha},
\end{align}
dividing by $v^2$ and taking the limit as $v\rightarrow 0$ we arrive at
\begin{align}
  \label{eq:1107}
  &m\dot{\alpha}_0\cp{\pp{T^{tt}}{\alpha}}(1-y(0))+\dot{\alpha}_0^2\left\{c_{+0}\cp{\ppp{T^{tt}}{\alpha}}-\cp{\ppp{T^{tr}}{\alpha}}\right\}(1-y(0)^2)\notag\\
&\hspace{70mm}+\left\{c_{+0}\cp{\pp{T^{tt}}{\beta}}-\cp{\pp{T^{tr}}{\beta}}\right\}l_+=0,\\
&m\dot{\alpha}_0\cp{\pp{T^{tr}}{\alpha}}(1-y(0))+\dot{\alpha}_0^2\left\{c_{+0}\cp{\ppp{T^{tr}}{\alpha}}-\cp{\ppp{T^{rr}}{\alpha}}\right\}(1-y(0)^2)\notag\\
&\hspace{70mm}+\left\{c_{+0}\cp{\pp{T^{tr}}{\beta}}-\cp{\pp{T^{rr}}{\beta}}\right\}l_+=0,\label{eq:1108}
\end{align}
where we used the definition
\begin{align}
  \label{eq:1109}
  l_+\coloneqq 2\hat{\beta}_+(0)-n_-.
\end{align}
Let us define
\begin{alignat}{3}
  \label{eq:1110}
  c_{01}&\coloneqq c_{+0}\cp{\pp{T^{tt}}{\beta}}-\cp{\pp{T^{tr}}{\beta}},&\qquad c_{02}&\coloneqq c_{+0}\cp{\pp{T^{tr}}{\beta}}-\cp{\pp{T^{rr}}{\beta}},\\
  \label{eq:1111}
  c_{11}&\coloneqq \dot{\alpha}_0^2\left\{c_{+0}\cp{\ppp{T^{tt}}{\alpha}}-\cp{\ppp{T^{tr}}{\alpha}}\right\},&\qquad c_{12}&\coloneqq \dot{\alpha}_0^2\left\{c_{+0}\cp{\ppp{T^{tr}}{\alpha}}-\cp{\ppp{T^{rr}}{\alpha}}\right\},\\
  \label{eq:1112}
  c_{21}&\coloneqq \dot{\alpha}_0\cp{\pp{T^{tt}}{\alpha}},&\qquad c_{22}&\coloneqq \dot{\alpha}_0\cp{\pp{T^{tr}}{\alpha}}.
\end{alignat}
Using these definitions together with $p=-y(0)$ in \eqref{eq:1107}, \eqref{eq:1108} we obtain
\begin{align}
  \label{eq:1113}
  c_{01}l_++c_{11}(1-p^2)+c_{21}m(1+p)&=0,\\
  c_{02}l_++c_{12}(1-p^2)+c_{22}m(1+p)&=0.\label{eq:1114}
\end{align}

We now solve the system of equations \eqref{eq:1086}, \eqref{eq:1113}, \eqref{eq:1114}. Solving \eqref{eq:1086} for $m$ yields
\begin{align}
  \label{eq:1115}
  m=-3\kappa p\frac{1-p^2}{1+p^3}.
\end{align}
Substituting this in \eqref{eq:1113}, \eqref{eq:1114} gives
\begin{align}
  \label{eq:1116}
  c_{01}l_+&=(1-p^2)\left(-c_{11}+3\kappa c_{21}\frac{p(1+p)}{1+p^3}\right),\\
  \label{eq:1117}
  c_{02}l_+&=(1-p^2)\left(-c_{12}+3\kappa c_{22}\frac{p(1+p)}{1+p^3}\right).
\end{align}
Multiplying \eqref{eq:1116} by $c_{02}$ and \eqref{eq:1117} by $c_{01}$ and then subtracting the resulting equations from each other yields
\begin{align}
  \label{eq:1118}
  0=(1-p^2)\left(-d_1+3\kappa d_2\frac{p(1+p)}{1+p^3}\right),
\end{align}
where we used the definitions
\begin{align}
  \label{eq:1119}
  d_1\coloneqq c_{02}c_{11}-c_{01}c_{12},\qquad d_2\coloneqq c_{02}c_{21}-c_{01}c_{22}.
\end{align}
If $d_1$, $d_2$ have opposite sign (recall that $\kappa>0$) then $p=1$ is the only root of \eqref{eq:1118} (recall the requirement $p>0$ for a physical solution). We look at $d_1/d_2$. From \eqref{eq:1106} we deduce
\begin{align}
  \label{eq:1120}
  c_+\ppp{T^{tt}}{\alpha}-\ppp{T^{tr}}{\alpha}=-\pp{c_+}{\alpha}\pp{T^{tt}}{\alpha},\qquad c_+\ppp{T^{tr}}{\alpha}-\ppp{T^{rr}}{\alpha}=-\pp{c_+}{\alpha}\pp{T^{tr}}{\alpha}.
\end{align}
Therefore
\begin{align}
  \label{eq:1121}
  c_{11}=-\dot{\alpha}_0^2\cp{\pp{c_+}{\alpha}}\left(\pp{T^{tt}}{\alpha}\right)_0=-\kappa c_{21},\qquad c_{12}=-\dot{\alpha}_0^2\cp{\pp{c_+}{\alpha}}\left(\pp{T^{rr}}{\alpha}\right)_0=-\kappa c_{22},
\end{align}
where we made use of $(\partial c_+/\partial\alpha)_0=\kappa/\dot{\alpha}_0$ (see \eqref{eq:2027}). Hence,
\begin{align}
  \label{eq:1122}
  \frac{d_1}{d_2}=\frac{c_{02}c_{11}-c_{01}c_{12}}{c_{02}c_{21}-c_{01}c_{22}}=-\kappa<0
\end{align}
and we conclude that $p=1$ is the only root of \eqref{eq:1118}. From \eqref{eq:1115} we deduce $m=0$. \eqref{eq:1113} then yields $l_+=0$. Therefore, from \eqref{eq:1060}, \eqref{eq:1085} and the second of \eqref{eq:1103} together with \eqref{eq:1109} we obtain
\begin{align}
  \label{eq:1123}
  y(0)=-1,\qquad \hat{f}(0)=\frac{\lambda}{6\kappa^2},\qquad \hat{g}(0)=\frac{\lambda c_{+0}}{6\kappa^2},\qquad \hat{\beta}_+(0)=\frac{\lambda}{6\kappa^2}\cp{\pp{\beta^\ast}{t}}.
\end{align}
From \eqref{eq:1080} we obtain
\begin{align}
  \label{eq:1124}
  \frac{d\hat{f}}{dv}(0)=\frac{1}{c_{+0}}\frac{d\hat{g}}{dv}(0).
\end{align}

We now turn to $\hat{\alpha}_+(0)$. We recall
\begin{align}
  \label{eq:1125}
  \alpha_+(v)=\alpha(v,v)=\alpha_i(v)+\int_0^v\left(\pp{t}{v}\tilde{A}(\alpha,\beta,r)\right)(v,v')dv'.
\end{align}
Therefore, (this is \eqref{eq:700})
\begin{align}
  \label{eq:1126}
  \frac{d\alpha_+}{dv}(v)&=\frac{d\alpha_i}{dv}(v)+\left(\pp{t}{v}\tilde{A}(\alpha,\beta,r)\right)(v,v)\notag\\
&\qquad+\int_0^v\Bigg\{\left(\frac{\partial^2t}{\partial u\partial v}\tilde{A}(\alpha,\beta,r)\right)\notag\\
&\hspace{20mm}+\pp{t}{v}\left(\pp{\tilde{A}}{\alpha}(\alpha,\beta,r)\pp{\alpha}{u}+\pp{\tilde{A}}{\beta}(\alpha,\beta,r)\pp{\beta}{u}+\pp{\tilde{A}}{r}(\alpha,\beta,r)c_-\pp{t}{u}\right)\Bigg\}(v,v')dv'.
\end{align}
Using \eqref{eq:1075}, \eqref{eq:1078} we obtain
\begin{align}
  \label{eq:1127}
  \frac{d\alpha_+}{dv}(v)=\frac{d\alpha_i}{dv}(v)+2\hat{f}(0)\tilde{A}_0v+\landau(v),
\end{align}
which implies (using the second of \eqref{eq:1123})
\begin{align}
  \label{eq:1128}
  \alpha_+(v)-\alpha_i(v)=\frac{\lambda\tilde{A}_0}{6\kappa^2}v^2+\landau(v^2).
\end{align}
Therefore,
\begin{align}
  \label{eq:1129}
  \hat{\alpha}_+(0)=\frac{\lambda\tilde{A}_0}{6\kappa^2}.
\end{align}
This concludes the proof of the proposition.
\end{proof}

\subsection{Uniqueness}
\begin{theorem}\label{uniqueness_theorem}
  Let $(t',r',\alpha',\beta')$, $(t'',r'',\alpha'',\beta'')$ be two continuously differentiable solutions to the free boundary problem and let $z'(v)$, $z''(v)$ be the corresponding solutions of the identification equation. Let $z'(v)$, $z''(v)$ and
\begin{align}
  \label{eq:1130}
  f'(v)&\coloneqq t'(v,v),& g'(v)&\coloneqq r'(v,v)-r_0,& \alpha_+'(v)&\coloneqq \alpha'(v,v),& \beta_+'(v)&\coloneqq \beta'(v,v),\\
f''(v)&\coloneqq t''(v,v),& g''(v)&\coloneqq r''(v,v)-r_0,& \alpha_+''(v)&\coloneqq \alpha''(v,v),& \beta_+''(v)&\coloneqq \beta''(v,v),
\end{align}
be given by
\begin{alignat}{3}
  \label{eq:1131}
  z'(v)&=vy'(v),&\qquad  z''(v)&=vy''(v),\\
  f'(v)&=v^2\hat{f}'(v),&\qquad  f''(v)&=v^2\hat{f}''(v),\label{eq:1132}\\
  g'(v)&=v^2\hat{g}'(v),&\qquad  g''(v)&=v^2\hat{g}''(v),\label{eq:1133}\\
  \alpha_+'(v)-\alpha_i(v)&=v^2\hat{\alpha}_+'(v),&\qquad  \alpha_+''(v)-\alpha_i(v)&=v^2\hat{\alpha}_+''(v),\label{eq:1134}\\
  \beta_+'(v)-\beta_0&=v^2\hat{\beta}_+'(v),&\qquad  \beta_+''(v)-\beta_0&=v^2\hat{\beta}_+''(v),\label{eq:1135}
\end{alignat}
with
\begin{align}
  \label{eq:1136}
  y',y'',\hat{f}',\hat{f}'',\hat{g}',\hat{g}'',\hat{\alpha}_+',\hat{\alpha}_+'',\hat{\beta}_+',\hat{\beta}_+''\in C^1[0,\varepsilon].
\end{align}
Let $\hat{V}'(v)$, $\hat{V}''(v)$ be given by
\begin{alignat}{4}
  \label{eq:1238}
  V'(v)&=c_{+0}+\frac{\kappa}{2}(1+y'(v))v+v^2\hat{V}'(v),&\qquad\textrm{where}\qquad V'(v)&=\frac{\displaystyle{\frac{dg'}{dv}(v)}}{\displaystyle{\frac{df'}{dv}(v)}},\\
  \label{eq:1278}  V''(v)&=c_{+0}+\frac{\kappa}{2}(1+y''(v))v+v^2\hat{V}''(v),&\qquad\textrm{where}\qquad V''(v)&=\frac{\displaystyle{\frac{dg''}{dv}(v)}}{\displaystyle{\frac{df''}{dv}(v)}},
\end{alignat}
with
\begin{align}
  \label{eq:1242}
  \hat{V}',\hat{V}''\in C^0[0,\varepsilon].
\end{align}
Then it follows that for $\varepsilon$ sufficiently small, the two solutions coincide.
\end{theorem}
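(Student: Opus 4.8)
The plan is to reduce uniqueness to the contraction estimates already established for the iteration, now applied directly to the pair of solutions rather than to successive iterates. First I would invoke Proposition \ref{prop_ass_form}: it pins down the leading coefficients of $y$, $\hat f$, $\hat g$, $\hat\alpha_+$, $\hat\beta_+$, so that the differences $\Delta y\coloneqq y'-y''$, $\Delta\hat f\coloneqq \hat f'-\hat f''$, $\Delta\hat\delta\coloneqq \hat\delta'-\hat\delta''$, $\Delta\hat\alpha_+$, $\Delta\hat\beta_+$ and $\Delta\hat V\coloneqq \hat V'-\hat V''$ all vanish at $v=0$; together with the assumed $C^1$ (resp.\ $C^0$) regularity of the hat functions, this gives $|\Delta h(v)|\le v\nd{\Delta h}$ for each $C^1$ difference, as in \eqref{eq:748}, and finiteness of all the seminorms involved. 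In particular both triples $(y',\hat\beta_+',\hat V')$ and $(y'',\hat\beta_+'',\hat V'')$ lie in a common ball of the form $B_{Y^\ast}\times B_{\delta_1^\ast}\times B_{\delta_2^\ast}$ of the function spaces \eqref{eq:592}--\eqref{eq:594}, for suitably large constants $Y^\ast,\delta_1^\ast,\delta_2^\ast$ (these need not be the constants produced by the existence theorem).

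Next I would observe that Lemmas \ref{lemma_est_fxdbp}, \ref{lemma_est_ie} and \ref{lemma_est_jc} never use the iterative structure: their proofs only use that both sets of boundary functions lie in a fixed ball and that each of them, together with the associated $(t,r,\alpha,\beta)$, solves the fixed boundary problem, the identification equation and the jump conditions, respectively. Hence these three lemmas apply verbatim with $(\,\cdot\,',\,\cdot\,'')$ in the role of $(\,\cdot\,_{m+1},\,\cdot\,_m)$, the constants now depending on $Y^\ast,\delta_1^\ast,\delta_2^\ast$. Combining them exactly as in the proof of Proposition \ref{prop_est_combination} yields, for $v\in[0,\varepsilon]$,
\begin{align*}
  \nd{\Delta y}&\le \tfrac13\nd{\Delta y}+C_1\Big\{v\nd{\Delta\hat\beta_+}+\sup_{[0,v]}|\Delta\hat V|\Big\},\\
  \nd{\Delta\hat\beta_+}&\le C_2\Big\{\nd{\Delta y}+\sup_{[0,v]}|\Delta\hat V|+v\nd{\Delta\hat\beta_+}\Big\},\\
  \sup_{[0,v]}|\Delta\hat V|&\le C_3\Big\{v\nd{\Delta y}+v\sup_{[0,v]}|\Delta\hat V|+v^2\nd{\Delta\hat\beta_+}\Big\}.
\end{align*}
The decisive factor $\tfrac13$ on the right of the first inequality is universal: it comes from the bound $\sup_{v\in[0,\varepsilon]}\tfrac{1}{v^4}\big|\int_0^v v'^3\tfrac{d\Delta y}{dv}(v')\,dv'\big|\le \tfrac14\nd{\Delta y}$ (plus lower-order corrections) and is independent of the ball. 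Substituting the third estimate into the first two and absorbing, one concludes that for $\varepsilon$ small enough $\nd{\Delta y}=\nd{\Delta\hat\beta_+}=\sup_{[0,\varepsilon]}|\Delta\hat V|=0$. Thus $y'\equiv y''$, $\beta_+'\equiv\beta_+''$, $V'\equiv V''$ on $[0,\varepsilon]$, whereupon Lemma \ref{lemma_est_fxdbp} forces $\hat f'\equiv\hat f''$, $\hat\delta'\equiv\hat\delta''$ (hence $f'\equiv f''$, $g'\equiv g''$) and $\alpha_+'\equiv\alpha_+''$, and the identification equation gives $z'\equiv z''$.

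Finally, with the boundary functions $(z,\beta_+,V)$ and the data on $\underline C$ now common to both solutions, $(t',r',\alpha',\beta')$ and $(t'',r'',\alpha'',\beta'')$ are both $C^1$ solutions of one and the same fixed boundary problem on $T_\varepsilon$. Its solution is unique: the iteration map $(\alpha_n,\beta_n)\mapsto(\alpha_{n+1},\beta_{n+1})$ is, by the estimate \eqref{eq:579}, a contraction on the relevant closed subspace of $C^1(T_\varepsilon)$ once $\varepsilon$ is small, so it has a unique fixed point, and any $C^1$ solution with the boundary behavior supplied by Proposition \ref{prop_ass_form} is such a fixed point (the vanishing relations \eqref{eq:1056}, \eqref{eq:1062}, \eqref{eq:1064} and the asymptotic forms ensure membership in the subspace $X$ after possibly enlarging $N_0$). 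Hence the two solutions coincide throughout $T_\varepsilon$, proving the theorem. I expect the main obstacle to be the bookkeeping of the second step: checking carefully that the three difference lemmas, stated for $\Delta_{m+1}$ versus $\Delta_m$, are genuine stability estimates that persist between two arbitrary solutions on the enlarged ball, with only the constants $C_i$ — but not the decisive factor $\tfrac13$ — affected.
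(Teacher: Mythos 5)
Your overall strategy coincides with the paper's: use Proposition \ref{prop_ass_form} to fix the leading behavior of both solutions, apply the three difference lemmas \ref{lemma_est_fxdbp}, \ref{lemma_est_ie}, \ref{lemma_est_jc} to the pair of solutions instead of to successive iterates, and close the resulting system of inequalities as in Proposition \ref{prop_est_combination} to force the boundary functions to coincide. But your second step needs more than the assertion that the lemmas apply ``verbatim''. Their proofs do not only use that the boundary triples lie in fixed balls; they also use quantitative asymptotic information about the solutions themselves, namely the forms of $\partial t/\partial u$, $\partial t/\partial v$, $\mu$, $\nu$, $1/\gamma$, $\rho$ (cf.\ \eqref{eq:342}, \eqref{eq:465}, \eqref{eq:469}, \eqref{eq:483}--\eqref{eq:484}), and the fact that $\hat{\delta}$ is $C^1$ on $[0,\varepsilon]$ with $\hat{\delta}(0)=0$, since \eqref{eq:750} and \eqref{eq:912} involve $d\hat{\delta}/dv$. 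For the constructed solution these facts came out of the existence machinery; for an arbitrary $C^1$ solution satisfying only the hypotheses of the theorem they must be re-derived, and this is the bulk of the paper's uniqueness proof (\eqref{eq:1156}--\eqref{eq:1180}); in particular the continuous extension of $d\hat{\delta}/dv$ to $v=0$ (\eqref{eq:1256}--\eqref{eq:1302}) is exactly where the hypothesis $\hat{V}\in C^0[0,\varepsilon]$ enters. Proposition \ref{prop_ass_form} only supplies the leading coefficients of the boundary functions, not these interior asymptotics, so as stated your reduction is incomplete, although the missing derivations are available.

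Your final step contains a genuine gap. Once the boundary functions coincide you invoke uniqueness of the fixed boundary problem through the contraction estimate \eqref{eq:579}, claiming that any $C^1$ solution lies in the space $X$ after enlarging $N_0$. Membership in $X$ requires the weighted bounds \eqref{eq:334}, e.g.\ $|\partial(\alpha-\alpha_i)/\partial v|\leq N_0v$ uniformly on $T_\varepsilon$, and these do not follow from $C^1$ regularity together with the vanishing relations \eqref{eq:1056}, \eqref{eq:1062}, \eqref{eq:1064}: those give only $\landau_v(1)$, and the asymptotics available at this stage, e.g.\ \eqref{eq:1161}, carry an error $u\,\landau_v(1)$ which is not $O(v)$. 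Moreover \eqref{eq:579} is an estimate for successive iterates of the constructed sequence; to regard an arbitrary solution as a fixed point of that map you would in addition need uniqueness for the linear problem \eqref{eq:339}--\eqref{eq:341} determining $t$ from $(\alpha,\beta)$, which is nowhere established. The paper avoids all of this: with identical boundary functions the quantity $F$ of \eqref{eq:857} vanishes, and the same difference machinery (\eqref{eq:866}, \eqref{eq:867}, then \eqref{eq:774}, \eqref{eq:775}, \eqref{eq:769}, \eqref{eq:806}, \eqref{eq:813}, together with the characteristic and Hodograph systems) forces every difference, including those of $t$, $r$, $\alpha$, $\beta$ and their first derivatives, to vanish. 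Replace your fixed-point appeal by this continuation of the difference estimates and, modulo the first point, the proof closes.
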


\begin{proof}
In the following, whenever there is no prime or no double prime on a function it is meant that the statement holds for both the primed as well as for the double primed function. Let us make the following definition for any function $f$
\begin{align}
  \label{eq:1137}
  \Delta f\coloneqq f''-f'.
\end{align}

We see that the assumptions from proposition \ref{prop_ass_form} are satisfied. Therefore, we are able to make use of the statement and the proof of proposition \ref{prop_ass_form}. From proposition \ref{prop_ass_form} we have
\begin{alignat}{3}
  \label{eq:1138}
  \frac{dz}{dv}(v)&=-1+\Landau(v),&\qquad z(v)&=-v+\Landau(v^2),\\
  \frac{df}{dv}(v)&=\frac{\lambda}{3\kappa^2}v+\Landau(v^2),&\qquad f(v)&=\frac{\lambda}{6\kappa^2}v^2+\Landau(v^3),\label{eq:1139}\\
  \frac{dg}{dv}(v)&=\frac{\lambda c_{+0}}{3\kappa^2}v+\Landau(v^2),&\qquad g(v)&=\frac{\lambda c_{+0}}{6\kappa^2}v^2+\Landau(v^3),\label{eq:1140}\\
  \frac{d\alpha_+}{dv}(v)&=\frac{d\alpha_i}{dv}(v)+\frac{\lambda\tilde{A}_0}{3\kappa^2}v+\Landau(v^2),&\qquad \alpha_+(v)&=\alpha_i(v)+\frac{\lambda\tilde{A}_0}{6\kappa^2}v^2+\Landau(v^3),\label{eq:1141}\\
  \frac{d\beta_+}{dv}(v)&=\frac{\lambda}{3\kappa^2}\cp{\pp{\beta^\ast}{t}}v+\Landau(v^2),&\qquad \beta_+(v)&=\beta_0+\frac{\lambda}{6\kappa^2}\cp{\pp{\beta^\ast}{t}}v^2+\Landau(v^3).\label{eq:1142}
\end{alignat}
Using the asymptotic forms \eqref{eq:1138}, \eqref{eq:1139} in
\begin{align}
  \label{eq:1143}
    \alpha_-(v)=\alpha^\ast(f(v),z(v)),\qquad\beta_-(v)=\beta^\ast(f(v),z(v)),
\end{align}
we obtain, in the same way as we arrived at \eqref{eq:690}, \eqref{eq:691}, \eqref{eq:696}, \eqref{eq:697},
\begin{alignat}{3}
  \label{eq:1144}
  \frac{d\alpha_-}{dv}(v)&=\dot{\alpha}_0\frac{dz}{dv}(v)+\Landau(v),&\qquad \alpha_-(v)&=\alpha_0+\dot{\alpha}_0z(v)+\Landau(v^2),\\
\frac{d\beta_-}{dv}(v)&=\frac{\lambda}{3\kappa^3}\cp{\pp{\beta^\ast}{t}}v+\Landau(v^2),&\qquad\beta_-(v)&=\beta_0+\frac{\lambda}{6\kappa^2}\cp{\pp{\beta^\ast}{t}}v^2+\Landau(v^3).
\end{alignat}
Substituting \eqref{eq:1138} in \eqref{eq:1144} we obtain
\begin{align}
  \label{eq:1250}
  \frac{d\alpha_-}{dv}(v)=-\dot{\alpha}_0+\Landau(v),\qquad \alpha_-(v)=\alpha_0-\dot{\alpha}_0v+\Landau(v^2).
\end{align}
The second of \eqref{eq:1141} together with the second of \eqref{eq:1250} imply
\begin{align}
  \label{eq:1145}
  \jump{\alpha(v)}=2\dot{\alpha}_0v+\Landau(v^2).
\end{align}
Looking at the proof of lemma \ref{lemma_est_jc} we see that the above asymptotic forms constitute all the necessary requirements for this proof to hold. Therefore, we have the following estimates
\begin{align}
  \label{eq:1146}
  \left|\frac{d\Delta\beta_+}{dv}(v)\right|&\leq C\left\{v^2\nd{\Delta z}+\nd{\Delta f}+v^2\nd{\Delta\alpha_+}\right\},\\
 \label{eq:1147}
  \left|\Delta V(v)-\frac{\kappa}{2}\Delta z(v)\right|&\leq C\left\{v^2\nd{\Delta z}+v\nd{\Delta f}+v\nd{\Delta\alpha_+}\right\},
\end{align}
where the $X$ norm is defined by \eqref{eq:746}.

We now define
\begin{align}
  \label{eq:1148}
  \delta(v)\coloneqq g(v)-c_{+0}f(v).
\end{align}
From \eqref{eq:1132}, \eqref{eq:1133} we have
\begin{align}
  \label{eq:1149}
  \delta(v)=v^2(\hat{g}(v)-c_{+0}\hat{f}(v)).
\end{align}
Furthermore, from $\hat{f}, \hat{g}\in C^1[0,\varepsilon]$ together with \eqref{eq:1052} we have
\begin{align}
  \label{eq:1150}
  \hat{g}(v)-c_{+0}\hat{f}(v)=\landau(v).
\end{align}
Therefore, we can write
\begin{align}
  \label{eq:1151}
  \delta(v)=v^3\hat{\delta}(v),
\end{align}
with
\begin{align}
  \label{eq:1305}
  \hat{\delta}\in C^1(0,\varepsilon]\cap C^0[0,\varepsilon],\qquad \hat{\delta}(0)=0.
\end{align}

We now show that $d\hat{\delta}/dv$ extends continuously to $0$. From the definition \eqref{eq:1148} we obtain
\begin{align}
  \label{eq:1256}
  \frac{d\delta}{dv}(v)=(V(v)-c_{+0})\frac{df}{dv}(v),
\end{align}
where (see \eqref{eq:1238}, \eqref{eq:1278})
\begin{align}
  \label{eq:1283}
  V(v)=c_{+0}+\frac{\kappa}{2}(1+y(v))v+v^2\hat{V}(v).
\end{align}
We have
\begin{align}
  \label{eq:1259}
  \frac{1+y(v)}{v}=\frac{1}{v}\int_0^v\frac{dy}{dv}(v')dv'.
\end{align}
Therefore,
\begin{align}
  \label{eq:1277}
  \lim_{v\rightarrow 0}\frac{1+y(v)}{v}=\frac{dy}{dv}(0)
\end{align}
and we obtain
\begin{align}
  \label{eq:1286}
  \lim_{v\rightarrow 0}\frac{V(v)-c_{+0}}{v^2}=V_2,
\end{align}
where
\begin{align}
  \label{eq:1289}
  V_2\coloneqq \frac{\kappa}{2}\frac{dy}{dv}(0)+\hat{V}(0).
\end{align}
I.e.
\begin{align}
  \label{eq:1290}
  V(v)-c_{+0}=V_2v^2+\landau(v^2).
\end{align}
It follows that
\begin{align}
  \label{eq:1291}
  \lim_{v\rightarrow 0}\frac{1}{v^3}\frac{d\delta}{dv}=\frac{V_2\lambda}{3\kappa^2},
\end{align}
where we used the first of \eqref{eq:1139}. Taking into account $\delta(0)=0$, this implies
\begin{align}
  \label{eq:1294}
  \lim_{v\rightarrow 0}\frac{1}{v^4}\delta=\frac{V_2\lambda}{12\kappa^2}.
\end{align}
Hence,
\begin{align}
  \label{eq:1302}
  \lim_{v\rightarrow 0}\frac{d\hat{\delta}}{dv}=\lim_{v\rightarrow 0}\left(\frac{1}{v^3}\frac{d\delta}{dv}-\frac{3\delta}{v^4}\right)=\frac{V_2\lambda}{12\kappa^2}.
\end{align}
Thus $\hat{\delta}$ extends to a $C^1$ function on $[0,\varepsilon]$, i.e.~we have
\begin{align}
  \label{eq:1152}
  \hat{\delta}\in C^1[0,\varepsilon].
\end{align}

Looking now at the proof of lemma \ref{lemma_est_ie} we see that the above asymptotic forms constitute all the necessary requirements for this proof to hold. Therefore, we have
\begin{align}
  \label{eq:1153}
  \frac{d\Delta y}{dv}(v)= \frac{3\kappa}{\lambda}\left\{\kappa\frac{d\Delta\hat{f}}{dv}(v)+\frac{d\Delta\hat{\delta}}{dv}(v)\right\}+\Landau(v\nd{\Delta\hat{f}})+\Landau(v\nd{\Delta y}).
\end{align}

Now we look at the partial derivatives of $t$. We recall \eqref{eq:1053}:
\begin{align}
  \label{eq:1156}
  \pppp{t}{u}{v}=\frac{1}{c_+-c_-}\left(\pp{c_-}{v}\pp{t}{u}-\pp{c_+}{u}\pp{t}{v}\right).
\end{align}
Integrating with respect to $v$ from $v=0$ yields
\begin{align}
  \label{eq:1157}
  \frac{\p t}{\p u}(u,v)=e^{-K(u,v)}\left\{h'(u)-\int_0^ve^{K(u,v')}\left(\mu\frac{\p t}{\p v}\right)(u,v')dv'\right\},
\end{align}
where we recall
\begin{align}
  \label{eq:1158}
  K(u,v)=\int_0^v(-\nu)(u,v')dv',\qquad\mu=\frac{1}{c_+-c_-}\pp{c_+}{u}
\end{align}
and
\begin{align}
  \label{eq:1164}
  \nu=\frac{1}{c_+-c_-}\pp{c_-}{v}.
\end{align}
We also recall from the proof of proposition \ref{prop_ass_form} the expressions \eqref{eq:1072}, \eqref{eq:1075}:
\begin{align}
  \pp{t}{u}(u,v)&=\landau_u(u)\label{eq:1160},\\
  \pp{t}{v}(u,v)&=2v\hat{f}(0)+u\landau_v(1)\label{eq:1159}.
\end{align}
From the second of \eqref{eq:1051} we obtain
\begin{align}
  \label{eq:1161}
  \pp{t}{v}(u,v)=\frac{\lambda}{3\kappa^2}v+u\landau_v(1).
\end{align}
From \eqref{eq:1076} we have
\begin{align}
  \label{eq:1162}
  \mu(u,v)=\frac{\kappa}{c_{+0}-c_{-0}}+\landau_u(1).
\end{align}
From \eqref{eq:1073} we have
\begin{align}
  \label{eq:1306}
  \nu(u,v)=\landau_v(1).
\end{align}
Using this together with \eqref{eq:1159} in \eqref{eq:1157} and recalling \eqref{eq:1054} and the second of \eqref{eq:1051}, we obtain
\begin{align}
  \label{eq:1163}
  \pp{t}{u}(u,v)=\frac{\lambda(3u^2-v^2)}{6\kappa(c_{+0}-c_{-0})}+\landau_u(u^2).
\end{align}

We now improve the expressions for $\mu$ and $\nu$. From
\begin{align}
  \label{eq:1165}
   \pp{\beta}{u}=\pp{t}{u}\tilde{B}
\end{align}
we have
\begin{align}
  \label{eq:1166}
  \pp{\beta}{u}=\Landau(u^2).
\end{align}
From
\begin{align}
  \label{eq:1167}
  \pp{\alpha}{v}=\pp{t}{v}\tilde{A}
\end{align}
we have
\begin{align}
  \label{eq:1168}
  \alpha(u,v)=\alpha_i(u)+\int_0^v\left(\pp{t}{v}\tilde{A}\right)(u,v')dv'.
\end{align}
Therefore,
\begin{align}
  \label{eq:1169}
  \pp{\alpha}{u}(u,v)=\frac{d\alpha_i}{du}(u)+\int_0^v\left(\pppp{t}{u}{v}\tilde{A}+\pp{t}{v}\pp{\tilde{A}}{u}\right)(u,v')dv',
\end{align}
which implies
\begin{align}
  \label{eq:1170}
  \pp{\alpha}{u}(u,v)=\dot{\alpha}_0+\Landau(u).
\end{align}

In view of \eqref{eq:1066} and the second of \eqref{eq:1158} we have (recall that $\cp{\partial c_+/\partial\alpha}\dot{\alpha}_0=\kappa$)
\begin{align}
  \label{eq:1171}
  \mu(u,v)=\frac{\kappa}{c_{+0}-c_{-0}}+\Landau(u).
\end{align}
From \eqref{eq:1161} and \eqref{eq:1167} we have
\begin{align}
  \label{eq:1172}
  \pp{\alpha}{v}(u,v)=\Landau(u).
\end{align}
From \eqref{eq:1165} we have
\begin{align}
  \label{eq:1173}
  \beta(u,v)=\beta_+(v)+\int_v^u\left(\pp{t}{u}\tilde{B}\right)(u',v)du'.
\end{align}
Therefore
\begin{align}
  \label{eq:1174}
  \pp{\beta}{v}(u,v)=\frac{d\beta_+}{dv}(v)+\int_v^u\left(\pppp{t}{u}{v}\tilde{B}+\pp{t}{u}\pp{\tilde{B}}{v}\right)(u',v)du',
\end{align}
which implies
\begin{align}
  \label{eq:1175}
  \pp{\beta}{v}(u,v)=\Landau(u).
\end{align}
In view of \eqref{eq:1069} we have
\begin{align}
  \label{eq:1176}
  \nu(u,v)=\Landau(u).
\end{align}

Now we look at $1/\gamma(v)$ given by (see \eqref{eq:299})
\begin{align}
  \label{eq:1177}
  \gamma(v)\coloneqq \frac{\bar{c}_+(v)-V(v)}{V(v)-\bar{c}_-(v)},
\end{align}
where
\begin{align}
  \label{eq:1178}
  \bar{c}_\pm(v)\coloneqq c_\pm(\alpha_+(v),\beta_+(v)).
\end{align}
Using the above asymptotic forms for $\alpha_+$, $\beta_+$ and $V$ we obtain, in exactly the same way as we did in the part on the fixed boundary problem,
\begin{align}
  \label{eq:1179}
  \frac{1}{\gamma(v)}=\frac{c_{+0}-c_{-0}}{\kappa v}(1+\rho(v))
\end{align}
with
\begin{align}
  \label{eq:1180}
  \rho(v)=\rho_0(v)+\Landau(v),\qquad \rho_0(v)=\frac{\frac{1}{2}(y(v)+1)}{1-\frac{1}{2}(y(v)+1)}.
\end{align}

The above asymptotic forms now allow us to follow the proof of lemma \ref{lemma_est_fxdbp}. % with only very few modification. These modifications are just bounds which have to be changed from being in terms of powers of $v$ to being in terms of powers of $u$. The overall argument is not affected since we always take the supremum over $T_u$ anyway.
We thus obtain
\begin{align}
  \label{eq:1181}
  \bigg|\frac{d\Delta\hat{f}}{dv}(v)\bigg|&\leq \frac{\lambda}{24\kappa^2}\nd{\Delta y}+C\left\{\sup_{[0,v]}|\Delta \hat{V}|+v\nd{\Delta\hat{\beta}_+}+v\nd{\Delta y}\right\},\\
  \left|\frac{d\Delta \hat{\delta}}{dv}(v)\right|&\leq\frac{\lambda}{24\kappa}\nd{\Delta y}+C\left\{\sup_{[0,v]}|\Delta \hat{V}|+v\nd{\Delta \hat{\beta}_+}+v\nd{\Delta y}\right\},\label{eq:1182}\\
  \nd{\Delta \alpha_+}&\leq Cv^2\left\{\sup_{[0,v]}|\Delta \hat{V}|+v\nd{\Delta \hat{\beta}_+}+\nd{\Delta y}\right\}.\label{eq:1183}
\end{align}

We now combine \eqref{eq:1146}, \eqref{eq:1147}, \eqref{eq:1153}, \eqref{eq:1181}, \eqref{eq:1182}, \eqref{eq:1183} in the same way as we did in the proof of proposition \ref{prop_est_combination} and find, for $\varepsilon$ sufficiently small,
\begin{align}
  \label{eq:1184}
  \nd{\Delta y}&\leq \frac{1}{3}\nd{\Delta y}+C\left\{v\nd{\Delta \hat{\beta}_+}+\sup_{[0,v]}|\Delta \hat{V}|\right\},\notag\\
\nd{\Delta \hat{\beta}_+}&\leq C\left\{\nd{\Delta y}+\sup_{[0,v]}|\Delta \hat{V}|+v\nd{\Delta \hat{\beta}_+}\right\},\notag\\
\sup_{[0,v]}|\Delta \hat{V}|&\leq C\left\{v\nd{\Delta y}+v\sup_{[0,v]}|\Delta \hat{V}|+v^2\nd{\Delta \hat{\beta}_+}\right\}.
\end{align}
These imply, for $\varepsilon$ sufficiently small,
\begin{align}
  \label{eq:1185}
  \nd{\Delta y}&\leq C\left\{v\nd{\Delta\hat{\beta}_+}+\sup_{[0,\varepsilon]}|\Delta \hat{V}|\right\},\\
  \nd{\Delta\hat{\beta}_+}&\leq C\left\{\nd{\Delta y}+\sup_{[0,\varepsilon]}|\Delta\hat{V}|\right\},\label{eq:1186}\\
  \sup_{[0,\varepsilon]}|\Delta\hat{V}|&\leq C\left\{v\nd{\Delta y}+v^2\nd{\Delta\hat{\beta}_+}\right\}.\label{eq:1187}
\end{align}
Substituting \eqref{eq:1185} in \eqref{eq:1186} and \eqref{eq:1187} yields
\begin{align}
  \label{eq:1188}
  \nd{\Delta\hat{\beta}_+}&\leq C\sup_{[0,\varepsilon]}|\Delta\hat{V}|,\\
  \sup_{[0,\varepsilon]}|\Delta\hat{V}|&\leq Cv^2\nd{\Delta\hat{\beta}_+},\label{eq:1189}
\end{align}
for $\varepsilon$ sufficiently small. Substituting \eqref{eq:1188} in \eqref{eq:1189} gives, for $\varepsilon$ sufficiently small,
\begin{align}
  \label{eq:1190}
  \Delta\hat{V}=0,
\end{align}
which gives
\begin{align}
  \label{eq:1191}
  \nd{\Delta\hat{\beta}_+}=0,
\end{align}
which gives
\begin{align}
  \label{eq:1192}
  \nd{\Delta y}=0.
\end{align}
In view of \eqref{eq:1181}, \eqref{eq:1182} and \eqref{eq:1183}, the vanishing of these differences implies that also the differences of $f$, $\delta$ and $\alpha_+$ vanish. Now we make use of estimates appearing in the proof of lemma \ref{lemma_est_fxdbp}. In all these estimates there appear no indices in the present context. $F(u)$ given by \eqref{eq:857} vanishes. Therefore, in view of \eqref{eq:866}, \eqref{eq:867} the differences of the partial derivatives of $t$ vanish. In view of \eqref{eq:774}, \eqref{eq:775} the differences of $\alpha$ and $\beta$ vanish, therefore also the differences of $\mu$ and $\nu$ vanish. In view of \eqref{eq:769} also the difference of $r$ vanishes. In view of \eqref{eq:806}, \eqref{eq:813} the differences of $\partial\alpha/\partial u$ and $\partial\beta/\partial v$ vanish. In view of the characteristic system the differences of $\partial\alpha/\partial v$ and $\partial\beta/\partial u$ vanish. In view of the Hodograph system also the differences of the partial derivatives of $r$ vanish. Therefore, the two solutions (prime and double prime) coincide. This completes the uniqueness proof.
\end{proof}

\subsection{Continuity of $L_+\alpha$ and $L_+\beta$ across the Incoming Characteristic originating at the Cusp Point}
In the present subsection we carry the argument of the above proof further. In particular we will first improve the estimates \eqref{eq:1161}, \eqref{eq:1172} and \eqref{eq:1175}. Then on the basis of these improved estimates we will show the continuity of $L_+\alpha$ and $L_+\beta$ across $\underline{C}$.
\begin{proposition}
  $L_+\alpha$ and $L_+\beta$ are continuous across $\underline{C}$.
\end{proposition}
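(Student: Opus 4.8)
The plan is to exploit the structure that has already been established in the uniqueness proof. We have the state-behind solution $(t,r,\alpha,\beta)$ in $T_\varepsilon$, which is $C^1$ up to $\underline C$ (the boundary $v=0$) by Proposition~\ref{prop_fxdbp} and its companions, and we have the state-ahead solution $(\alpha^\ast,\beta^\ast,r^\ast)$, which is smooth up to $\underline C$ in acoustical coordinates $(t,w)$. On $\underline C$ the two solutions agree (the data for $\beta$ and $r$ along $\underline C$ coincide with those induced by the maximal solution, by the ODE argument following \eqref{eq:158}, and $\alpha(u,0)=\alpha_i(u)=\alpha^\ast(h(u),u)$ by construction). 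Since $L_+=\partial_t$ in acoustical coordinates \eqref{eq:248}, continuity of $L_+\alpha$ and $L_+\beta$ across $\underline C$ means: the value of $L_+\alpha$ computed from the state-behind solution as one approaches $\underline C$ from $v>0$ must equal $(\partial\alpha^\ast/\partial t)$ evaluated along $\underline C$ (and similarly for $\beta$). So the statement to prove is an equality of two one-sided limits, and the task reduces to computing the state-behind side.

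First I would express $L_+\alpha$ and $L_+\beta$ in characteristic coordinates. By footnote~\ref{foot_0}, $\partial/\partial v=(\partial t/\partial v)L_+$, hence $L_+\alpha=(\partial\alpha/\partial v)/(\partial t/\partial v)$ and $L_+\beta=(\partial\beta/\partial v)/(\partial t/\partial v)$, away from $v=0$ where $\partial t/\partial v>0$. From the characteristic system \eqref{eq:284} this is just $L_+\alpha=\tilde A(\alpha,\beta,r)$ — which is manifestly continuous, being a smooth function of $(\alpha,\beta,r)$ which are all $C^1$ up to $\underline C$; and since $(\alpha,\beta,r)$ match the state-ahead values on $\underline C$, the limit equals $\tilde A(\alpha^\ast,\beta^\ast,r^\ast)|_{\underline C}$, which is exactly what $L_+\alpha^\ast$ equals there via the characteristic form \eqref{eq:152}–\eqref{eq:157} of the state-ahead equations. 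So $L_+\alpha$ poses no difficulty. The real content is $L_+\beta$: here $L_+\beta=(\partial\beta/\partial v)/(\partial t/\partial v)$ is a ratio of two quantities that both vanish as $v\to 0$ — indeed $\partial t/\partial v=\mathcal O(v)$ (the first of \eqref{eq:342}, giving $\partial t/\partial v\sim(\lambda/3\kappa^2)v$) and $\partial\beta/\partial v=\mathcal O(v)$ (by \eqref{eq:304}, \eqref{eq:310}, \eqref{eq:311}). So to get the limit of $L_+\beta$ across $\underline C$ I must extract the precise linear-in-$v$ coefficient of $\partial\beta/\partial v$ near $\underline C$ and compare it with $(\lambda/3\kappa^2)$ times the corresponding coefficient for the state ahead.

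The key steps, then, in order: (i) sharpen the expansions of the partial derivatives near $\underline C$. The proposition announces this: improve \eqref{eq:1161}, \eqref{eq:1172}, \eqref{eq:1175}, i.e.\ get $\partial t/\partial v(u,v)=(\lambda/3\kappa^2)v+u\,\mathcal O(v)+\mathcal O(v^2)$ and similarly one more order for $\partial\alpha/\partial v$ and $\partial\beta/\partial v$, by differentiating the characteristic equations once more along $\underline C$ (as in the system \eqref{eq:309}) and bootstrapping with the already-proven asymptotics. (ii) From the equation $\partial\beta/\partial u=(\partial t/\partial u)\tilde B$ integrated in $u$ (as in \eqref{eq:1173}–\eqref{eq:1175}), compute $\partial\beta/\partial v(u,v)$ to leading order: $\partial\beta/\partial v(u,v)=(d\beta_+/dv)(v)+\int_v^u(\partial_{uv}^2 t\,\tilde B+\partial_u t\,\partial_v\tilde B)(u',v)\,du'$, and use $d\beta_+/dv(v)=(\lambda/3\kappa^2)(\partial\beta^\ast/\partial t)_0 v+\mathcal O(v^2)$ from \eqref{eq:721}/\eqref{eq:1142} together with the expansions of $\partial_{uv}^2 t$ (from \eqref{eq:706}: $-\lambda/(3\kappa(c_{+0}-c_{-0}))v+\mathcal O(v^2)$) and $\partial_u t$ (from \eqref{eq:446}). (iii) Divide by $\partial t/\partial v=(\lambda/3\kappa^2)v+\dots$, take $v\to 0$, and check that the integral term contributes at order $\mathcal O(v^2)$ or higher along $\underline C$ (it does, since the integration runs from $v$ to $u$ but the integrand is $\mathcal O(v)$, and on $\underline C$ one sends $v\to 0$ with $u$ comparable), so that $\lim_{v\to 0}L_+\beta(u,v)=(\partial\beta^\ast/\partial t)_0$, matching the state-ahead value at the cusp point, and then extend this to all of $\underline C$ by the same argument centered at a generic point of $\underline C$ rather than at $O$. (iv) Conclude that both $L_+\alpha$ and $L_+\beta$, viewed as functions on $T_\varepsilon\cup\underline C$ (taking the state-ahead values on the $\underline C$ side), are continuous.

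The main obstacle I expect is step (ii)–(iii): the quantity $L_+\beta$ is a $0/0$ limit, so rough $\mathcal O$-bounds are not enough — one needs the exact leading coefficients of $\partial\beta/\partial v$ and of $\partial t/\partial v$, and one must verify that the "correction" integral $\int_v^u(\dots)du'$ genuinely does not pollute the leading order as $v\to 0$ along $\underline C$. This is a delicate matching of linear-order terms entirely parallel to the cancellation already carried out for $df/dv$ in Proposition~\ref{prop_inner_iteration} and for the boundary asymptotics in Proposition~\ref{prop_ass_form}; the technical engine (integrating the wave equation for $t$ along characteristics, controlling $K$, $L$, $\mu$, $\nu$) is exactly the one built there, so no new machinery is needed — only careful bookkeeping of the $\mathcal O(v^2)$ remainders. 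A secondary point to be careful about is that $L_+$ itself is not smooth up to $\underline C$ a priori in $(t,r)$ coordinates; the cleanest route is to do everything in characteristic coordinates where $L_+\alpha=\tilde A$ and $L_+\beta=\tilde B$ on $T_\varepsilon$ and only the $\partial t/\partial v$-weighted combination needs limiting, as above.
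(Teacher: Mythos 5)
Your treatment of $L_+\alpha$ is exactly the paper's: $L_+\alpha=(\partial\alpha/\partial v)/(\partial t/\partial v)=\tilde{A}(\alpha,\beta,r)$ and $\alpha,\beta,r,t$ are continuous across $\underline{C}$, so that part is fine. The gap is in your treatment of $L_+\beta$. Your step (iii) asserts that the correction integral $\int_v^u\bigl(\partial_u\partial_v t\,\tilde{B}+\partial_u t\,\partial_v\tilde{B}\bigr)du'$ is negligible "since the integrand is $\mathcal{O}(v)$ and on $\underline C$ one sends $v\to0$ with $u$ comparable". But continuity across $\underline{C}$ must be proved at every point $(u,0)$ with $u>0$ \emph{fixed}; there the integral is $\mathcal{O}(uv)$, which is of the \emph{same} order as the main term $\mathcal{O}(v)$, and after dividing by $\partial t/\partial v\sim(\lambda/3\kappa^2)v$ it contributes $\mathcal{O}(u)$, not $o(1)$. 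Consequently your claimed limit $\lim_{v\to0}L_+\beta(u,v)=(\partial\beta^\ast/\partial t)_0$ is false away from the cusp point -- and it had better be, since the state-ahead value $L_+\beta^\ast$ at the point of $\underline{C}$ with coordinate $u$ also differs from the cusp value at order $u$. Your leading-order bookkeeping therefore only matches the two sides at $O$, and the proposed fix, "extend by the same argument centered at a generic point of $\underline{C}$", is not available: $\beta$ carries no data on $\underline{C}$ (only $\alpha$ and $t$ do), so there is no local analogue of Proposition~\ref{prop_ass_form} at an interior point of $\underline{C}$; the behavior of $\partial\beta/\partial v$ near such a point is determined by propagation from $\mathcal{K}$ through the interior, which is precisely what your expansion does not control to the required precision.

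What is missing is the paper's propagation mechanism. Along each incoming characteristic the pair $(\partial\beta/\partial v,\partial t/\partial v)$ satisfies the linear homogeneous system \eqref{eq:1217} with coefficient matrix \eqref{eq:1348}, built from $\alpha,\beta,r,\partial t/\partial u$, hence continuous across $\underline{C}$; its fundamental matrix $m$ (normalized to the identity on $\mathcal{K}$) is then continuous across $\underline{C}$, and \eqref{eq:1221} gives the Möbius formula \eqref{eq:1349} expressing $L_+\beta(u,v)$ in terms of $L_+\beta|_{\mathcal{K}}(v)$. The sharpened asymptotics are obtained on $\mathcal{K}$ (via \eqref{eq:1061}, \eqref{eq:1215}, using $df/dv$ and $d\beta_+/dv$ from Proposition~\ref{prop_ass_form}) and then transported inward by $m$, yielding \eqref{eq:1246}, \eqref{eq:1312}; and the continuity of $L_+\beta$ across all of $\underline{C}$ follows because both one-sided values along $\underline{C}$ are the image under the same matrix $m(u,0)$ of the common cusp value $\lim_{v\to0}L_+\beta|_{\mathcal{K}}=(\partial\beta^\ast/\partial t)_0=(L_+\beta^\ast)_0$, see \eqref{eq:1372}. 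Note also that this route uses only first derivatives of the $C^1$ solution; your step (i), which differentiates the characteristic equations once more in $v$ near $\underline{C}$, is not justified at this stage of regularity (higher regularity is established only later in the paper).
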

\begin{proof}
Let us consider $\partial t/\partial v$, $\partial\beta/\partial v$ along $\mathcal{K}$. We have, along $\mathcal{K}$,
\begin{align}
  \label{eq:86}
  \pp{t}{v}=\frac{df}{dv}-\pp{t}{u},\qquad \pp{\beta}{v}=\frac{d\beta_+}{dv}-\pp{\beta}{u}
\end{align}
and, by proposition \ref{prop_ass_form},
\begin{align}
  \label{eq:93}
  \frac{df}{dv}(v)&=\frac{\lambda}{3\kappa^2}v+\Landau(v^2),\\
  \label{eq:1155}
  \frac{d\beta_+}{dv}(v)&=\frac{\lambda}{3\kappa^2}\cp{\pp{\beta^\ast}{t}}v+\Landau(v^2).
\end{align}
Evaluating \eqref{eq:1163} at $u=v$ we obtain
\begin{align}
  \label{eq:733}
  \pp{t}{u}(v,v)=\frac{\lambda}{3\kappa(c_{+0}-c_{-0})}v^2+\landau(v^2).
\end{align}
From \eqref{eq:93} and \eqref{eq:733} we obtain, through the first of \eqref{eq:86},
\begin{align}
  \label{eq:1061}
  \pp{t}{v}(v,v)=\frac{\lambda}{3\kappa^2}v+\Landau(v^2).
\end{align}
By \eqref{eq:1165} and \eqref{eq:733} we get
\begin{align}
  \label{eq:1154}
  \pp{\beta}{u}(v,v)=\Landau(v^2).
\end{align}
From \eqref{eq:1155} and \eqref{eq:1154} we obtain, through the second of \eqref{eq:86},
\begin{align}
  \label{eq:1215}
  \pp{\beta}{v}(v,v)=\frac{\lambda}{3\kappa^2}\cp{\pp{\beta^\ast}{t}}v+\Landau(v^2).
\end{align}

Let us then consider the system \eqref{eq:307}, \eqref{eq:308} along \textit{any} incoming characteristic. It is a system of the form \eqref{eq:309}:
\begin{align}
  \label{eq:1217}
  \frac{d}{du}
\left(\begin{array}{c}
      \partial\beta/\partial v\\
      \partial t/\partial v
    \end{array}\right)
=
\left(\begin{array}{cc}
  a_{11} & a_{12}\\
  a_{21} & a_{22}
\end{array}\right)
\left(\begin{array}{c}
  \partial\beta/\partial v\\
  \partial t/\partial v
\end{array}\right).
\end{align}
This is a linear homogeneous system with a coefficient matrix
\begin{align}
  \label{eq:1218}
  a=\left(\begin{array}{cc}
  a_{11} & a_{12}\\
  a_{21} & a_{22}
\end{array}\right)
\end{align}
which is continuous on $T_\varepsilon$. The initial data are on $\mathcal{K}$ and given by \eqref{eq:1061}, \eqref{eq:1215}. Let the matrix $m$ be the solution of
\begin{align}
  \label{eq:1220}
  \frac{dm}{du}=am,\qquad \left.m\right|_{\mathcal{K}}=\textrm{id}.
\end{align}
Then the solution of \eqref{eq:1217} is
\begin{align}
  \label{eq:1221}
  \left(\begin{array}{c}
      \partial\beta/\partial v\\
      \partial t/\partial v
    \end{array}\right)
=m\left.\left(\begin{array}{c}
      \partial\beta/\partial v\\
      \partial t/\partial v
    \end{array}\right)\right|_\mathcal{K}.
\end{align}
Since
\begin{align}
  \label{eq:1224}
  m-\textrm{id}=\Landau(u-v),
\end{align}
and by \eqref{eq:1061}, \eqref{eq:1215},
\begin{align}
  \label{eq:1226}
  \left.\left(\begin{array}{c}
      \partial\beta/\partial v\\
      \partial t/\partial v
    \end{array}\right)\right|_\mathcal{K}
=\Landau(v),
\end{align}
it follows that
\begin{align}
  \label{eq:1244}
  \left(\begin{array}{c}
      \partial\beta/\partial v\\
      \partial t/\partial v
    \end{array}\right)
=\left.\left(\begin{array}{c}
      \partial\beta/\partial v\\
      \partial t/\partial v
    \end{array}\right)\right|_\mathcal{K}+\Landau(uv),
\end{align}
i.e.~by \eqref{eq:1061}, \eqref{eq:1215},
\begin{align}
  \label{eq:1246}
  \pp{t}{v}(u,v)&=\frac{\lambda}{3\kappa^2}v+\Landau(uv),\\
  \label{eq:1312}
  \pp{\beta}{v}(u,v)&=\frac{\lambda}{3\kappa^2}\cp{\pp{\beta^\ast}{t}}v+\Landau(uv),
\end{align}
which improve \eqref{eq:1161} and \eqref{eq:1175} respectively. Also, \eqref{eq:1246} implies, through \eqref{eq:1167},
\begin{align}
  \label{eq:1319}
  \pp{\alpha}{v}(u,v)=\frac{\lambda\tilde{A}_0}{3\kappa^2}v+\Landau(uv),
\end{align}
which improves \eqref{eq:1172}.

Since $\alpha$ and $t$ are by construction continuous across $\underline{C}$ while $\beta$ and $r$ satisfy along $\underline{C}$ the o.d.e.~system
\begin{align}
  \label{eq:1321}
  \frac{d\beta}{du}&=\pp{t}{u}\tilde{B}(\alpha,\beta,r),\\
  \frac{dr}{du}&=\pp{t}{u}c_-(\alpha,\beta),
\end{align}
while
\begin{align}
  \label{eq:1324}
  \beta(0,0)=\beta^\ast(0,0)=\beta_0,\qquad r(0,0)=r^\ast(0,0)=r_0,
\end{align}
at the cusp point, it follows that $r$ and $\beta$ are continuous across $\underline{C}$ as well. Then,
\begin{align}
  \label{eq:1325}
  L_+\alpha=\frac{\partial\alpha/\partial v}{\partial t/\partial v}=\tilde{A}(\alpha,\beta,r)
\end{align}
is also continuous across $\underline{C}$.

Let us consider
\begin{align}
  \label{eq:1326}
  L_+\beta=\frac{\partial \beta/\partial v}{\partial t/\partial v}.
\end{align}
From \eqref{eq:307}, \eqref{eq:308} we have
\begin{align}
  \label{eq:1348}
  a_{11}&=\frac{1}{c_+-c_-}\pp{t}{u}\pp{c_-}{\beta}\tilde{B}+\pp{t}{u}\pp{\tilde{B}}{\beta},\\
  a_{12}&=-\frac{1}{c_+-c_-}\left(\pp{c_+}{u}-\pp{t}{u}\pp{c_-}{\alpha}\tilde{A}\right)\tilde{B}+\pp{t}{u}\left(\pp{\tilde{B}}{\alpha}\tilde{A}+\pp{\tilde{B}}{r}c_+\right),\\
  a_{21}&=\frac{1}{c_+-c_-}\pp{t}{u}\pp{c_-}{\beta},\\
  a_{22}&=-\frac{1}{c_+-c_-}\left(\pp{c_+}{u}-\pp{t}{u}\pp{c_-}{\alpha}\tilde{A}\right).
\end{align}
We see that the matrix $a$ is continuous across $\underline{C}$. This implies that the matrix $m$ is continuous across $\underline{C}$. By \eqref{eq:1221},
\begin{align}
  \label{eq:1349}
  L_+\beta=\frac{\left.m_{11}L_+\beta\right|_\mathcal{K}+m_{12}}{\left.m_{21}L_+\beta\right|_\mathcal{K}+m_{22}},
\end{align}
while by \eqref{eq:1061} and \eqref{eq:1215},
\begin{align}
  \label{eq:1372}
  \cp{L_+\beta}=\lim_{v\rightarrow 0}\left.L_+\beta\right|_\mathcal{K}=\lim_{v\rightarrow 0}\left(\frac{\partial\beta/\partial v}{\partial t/\partial v}\right)(v,v)=\cp{\pp{\beta^\ast}{t}}=\cp{L_+\beta^\ast}.
\end{align}
Hence $L_+\beta$ is continuous across $\underline{C}$ as well.

\end{proof}

%%% Local Variables: 
%%% mode: latex
%%% TeX-master: "./master"
%%% End: 

\section{Higher Regularity}

In the following we denote by $\bar{P}_{m,n}$ a polynomial in $v$ of degree $m$ starting with an $n$'th order term. We denote by $P_{m,n}(v)$ a sum of $\bar{P}_{m,n}$ and a function of $\Landau(v^{m+1})$. We then define $P_m(v)\coloneqq P_{m,0}(v)$. We also denote by $Q_{m,n}(u,v)$ a sum of a polynomial in $u$ and $v$ of degree $m$ starting with an $n$'th order term and a function of $\Landau(u^{m+1})$ (we recall that in the domain in question, i.e.~in $T_\varepsilon$, we have $0\leq v\leq u\leq\varepsilon$). We then define $Q_m\coloneqq Q_{m,0}$. We extend the meaning of $P_{m,n}$ and $Q_{m,n}$ to the case $n=m+1$ by
\begin{align}
  \label{eq:1425}
  P_{m,m+1}\coloneqq\Landau(v^{m+1}),\qquad Q_{m,m+1}\coloneqq\Landau(u^{m+1}).
\end{align}
Furthermore we will use the definitions
\begin{align}
  \label{eq:1219}
  I_{m,n}(v)\coloneqq\int_0^vv'^n\frac{d^my}{dv^m}(v')dv',\qquad \hat{I}_{m,n}(v)\coloneqq\frac{1}{v^n}I_{m,n}(v)
\end{align}
and
\begin{align}
  \label{eq:1927}
  Y_m\coloneqq\sup_{[0,\varepsilon]}\left|\frac{d^my}{dv^m}\right|,\qquad F_m\coloneqq\sup_{[0,\varepsilon]}\bigg|\frac{d^m\hat{f}}{dv^m}\bigg|.
\end{align}
In the following we prove that the solution established in the existence theorem is smooth. We do this by induction, showing that all derivatives of $y$, $\hat{f}$ are bounded and all derivatives of $t$, $r$, $\alpha$, $\beta$ are in $C^1$.

\subsection{Inductive Hypothesis}
We make the following inductive hypotheses: We assume that we have bounds for $Y_m$, $F_m$ for $m=1,\ldots, n-1$, i.e.
\begin{align}
  \label{eq:1197}
  Y_1,\ldots,Y_{n-1}&\leq C,\tag{$Y_{n-1}$}\\
  \label{eq:1299}
  F_1,\ldots,F_{n-1}&\leq C\tag{$F_{n-1}$}.
\end{align}
For the function $\alpha_+(v)=\alpha(v,v)$ we assume
\begin{align}
  \label{eq:1198}
  \frac{d^{n-1}\alpha_+}{dv^{n-1}}=P_1.\tag{$\alpha_{+,n-1}$}
\end{align}
For the function $t(u,v)$ we assume, for $n\geq 3$,
\begin{align}
  \label{eq:1233}
  \frac{\partial^{k-2}t}{\partial u^{k-2}}=Q_2,:k\leq n,\qquad \frac{\partial^{k-2}t}{\partial v^{k-2}}&=Q_1,:k\leq n,\tag{$t_{p,n-1}$}\\
  \label{eq:1637}
  \frac{\partial^{i+j}}{\partial u^i\partial v^j}\left(\pppp{t}{u}{v}\right)&=Q_1:i+j\leq n-3.\tag{$t_{m,n-1}$}
\end{align}
With the indices $p$ and $m$ we indicate that we refer to pure and mixed derivatives.

In the case $n=2$, \eqref{eq:1637} is not present and \eqref{eq:1233} is
\begin{align}
  \label{eq:2007}
  t=Q_2.\tag{$t_{p,1}$}
\end{align}
For the functions $\alpha$ and $\beta$ we assume
\begin{align}
  \label{eq:1235}
  \frac{\partial^{k-2}\alpha}{\partial u^{k-2}}=Q_2,:k\leq n,\qquad \frac{\partial^{k-2}\alpha}{\partial v^{k-2}}&=Q_1,:k\leq n,\tag{$\alpha_{p,n-1}$}\\
\label{eq:1641}
\frac{\partial^{i+j}}{\partial u^i\partial v^j}\left(\pppp{\alpha}{u}{v}\right)&=Q_1:i+j\leq n-3,\tag{$\alpha_{m,n-1}$}\\
  \label{eq:1292}
  \frac{\partial^{k-2}\beta}{\partial u^{k-2}}=Q_2,:k\leq n,\qquad\frac{\partial^{k-2}\beta}{\partial v^{k-2}}&=Q_1,:k\leq n,\tag{$\beta_{p,n-1}$}\\
  \label{eq:1639}
  \frac{\partial^{i+j}}{\partial u^i\partial v^j}\left(\pppp{\beta}{u}{v}\right)&=Q_1:i+j\leq n-3,\tag{$\beta_{m,n-1}$}
\end{align}
and
\begin{alignat}{3}
  \label{eq:1234}
  \frac{\partial^{n-1}\alpha}{\partial u^{n-1}}&=Q_0,\qquad& \frac{\partial^{n-1}\alpha}{\partial v^{n-1}}&=Q_0,\tag{$\alpha_{0,n-1}$}\\
  \label{eq:1293}
  \frac{\partial^{n-1}\beta}{\partial u^{n-1}}&=Q_0,& \frac{\partial^{n-1}\beta}{\partial v^{n-1}}&=Q_0.\tag{$\beta_{0,n-1}$}
\end{alignat}
For $n=2$ the properties \eqref{eq:1641}, \eqref{eq:1639} are not present and \eqref{eq:1235}, \eqref{eq:1292} are
\begin{align}
  \label{eq:2009}
  \alpha&=Q_2,\tag{$\alpha_{p,1}$}\\
  \label{eq:2010}
  \beta&=Q_2.\tag{$\beta_{p,1}$}
\end{align}

\subsection{Base Case $n=2$}
We show that the inductive hypothesis holds for $n=2$. We are going to use estimates established during the existence proof. The index on functions in estimates in the existence proof was in order to label the iterates. These estimates also hold in the limit hence without any indices. Also the dependencies on $\delta_2$ of the bounds are not present anymore since $\delta_2$ has been chosen appropriately.

Since $Y_1=Y$, where $Y$ was defined in \eqref{eq:293}, we have from \eqref{eq:745},
\begin{align}
  \label{eq:2004}
  Y_1\leq C.\tag{$Y_1$}
\end{align}
From \eqref{eq:614} we have
\begin{align}
  \label{eq:2005}
  F_1\leq C,\tag{$F_1$}
\end{align}
while from \eqref{eq:709} we have
\begin{align}
  \label{eq:2006}
  \frac{d\alpha_+}{dv}=P_1.\tag{$\alpha_{+,1}$}
\end{align}
Using now the first of \eqref{eq:701} in
\begin{align}
  \label{eq:2011}
  t(u,v)=t(u,0)+\int_0^v\pp{t}{v}(u,v')dv'
\end{align}
together with the fact that $t(u,0)=u^3\hat{h}(u)$, where $\hat{h}$ is a smooth function, we obtain \eqref{eq:2007}. From \eqref{eq:1166}, \eqref{eq:1170}, \eqref{eq:1172}, \eqref{eq:1175} we have
\begin{alignat}{2}
  \label{eq:2014}
  \pp{\alpha}{u}&=Q_0,&\qquad \pp{\alpha}{v}&=Q_{0,1},\\
  \label{eq:2018}
  \pp{\beta}{u}&=Q_{1,2},& \pp{\beta}{v}&=Q_{0,1}.
\end{alignat}
Therefore, $(\alpha_{0,1})$ and $(\beta_{0,1})$ hold.

We have
\begin{align}
  \label{eq:2015}
  \alpha(u,v)=\alpha_i(u)+\int_0^v\pp{\alpha}{v}(u,v')dv'.
\end{align}
From
\begin{align}
  \label{eq:2019}
  \pp{\alpha}{v}=\pp{t}{v}\tilde{A}(\alpha,\beta,r),
\end{align}
together with the first of \eqref{eq:701} we obtain
\begin{align}
  \label{eq:673}
  \pp{\alpha}{v}=Q_{1,1}.
\end{align}
Using this together with the fact that $\alpha_i$ is a smooth function in \eqref{eq:2015}, we deduce that $(\alpha_{p,1})$ holds.

We have
\begin{align}
  \label{eq:2016}
  \beta(u,v)=\beta_+(v)+\int_v^u\pp{\beta}{u}(u',v)du'.
\end{align}
From
\begin{align}
  \label{eq:694}
  \pp{\beta}{u}=\pp{t}{u}\tilde{B},
\end{align}
together with the second of \eqref{eq:701} we obtain
\begin{align}
  \label{eq:693}
  \pp{\beta}{u}=Q_{1,1}.
\end{align}
From the second of \eqref{eq:1142} we have
\begin{align}
  \label{eq:2017}
  \beta_+=P_2.
\end{align}
Using this together with  \eqref{eq:693} in \eqref{eq:2016} we deduce that $(\beta_{p,1})$ holds. We conclude that the inductive hypothesis holds in the case $n=2$.

\subsection{Inductive Step}
We now show the inductive step, i.e.~we show that \eqref{eq:1197}, \ldots, \eqref{eq:1293} hold with $n$ in the role of $n-1$. Once this is proved, we have proven the following regularity theorem.
\begin{theorem}
  The solution whose existence is the content of theorem \ref{existence_theorem} and whose uniqueness is the content of theorem \ref{uniqueness_theorem} is actually smooth.
\end{theorem}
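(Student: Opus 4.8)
The plan is to complete the induction on $n$ that has been set up: the base case $n=2$ is in place, and it remains to prove the inductive step, namely that the hypotheses $(Y_{n-1})$, $(F_{n-1})$, $(\alpha_{+,n-1})$, $(t_{p,n-1})$, $(t_{m,n-1})$, $(\alpha_{p,n-1})$, $(\alpha_{m,n-1})$, $(\beta_{p,n-1})$, $(\beta_{m,n-1})$, $(\alpha_{0,n-1})$, $(\beta_{0,n-1})$ all hold with $n$ in the role of $n-1$. The step follows the same logical chain as the existence proof — fixed boundary problem, then identification equation, then jump conditions — but carried out one order of differentiation higher. The universal tool is to differentiate the governing equations (the reduced wave equation \eqref{eq:297}, the characteristic system \eqref{eq:284}--\eqref{eq:285}, the identification equation \eqref{eq:647}, the Hodograph system, and the jump conditions) and to read off the asymptotic behaviour of the new derivatives from the integral representations \eqref{eq:344}, \eqref{eq:345} and their differentiated forms, keeping track of orders of vanishing with the $P_{m,n}$, $Q_{m,n}$ calculus introduced above. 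The bookkeeping principle is that each application of $\int_0^v(\cdot)\,dv'$ or $\int_v^u(\cdot)\,du'$ raises the order of vanishing by one, while differentiation of a $P_{m,n}$ (resp.\ $Q_{m,n}$) lowers it by exactly one, so the recorded orders are conserved.

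First I would treat the fixed boundary side, assuming for the moment bounds on $Y_n$, $F_n$ and on the $n$-th derivatives of the boundary functions. Differentiating the definitions of $\mu$, $\nu$ and invoking the hypotheses on $\alpha$, $\beta$ yields the asymptotic forms of their mixed derivatives up to the required order; substituting these into \eqref{eq:344} and \eqref{eq:345} and differentiating bootstraps bounds for $\partial^{n-1}t/\partial u^{n-1}$, $\partial^{n-1}t/\partial v^{n-1}$ and for $\partial^{i+j}(\partial^2t/\partial u\partial v)/\partial u^i\partial v^j$ with $i+j\le n-2$, which is the new $(t_{p,n-1})$, $(t_{m,n-1})$. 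The initial data along $\underline C$ come from the smoothness of $h(u)=u^3\hat h(u)$ in $u$; the $v$-derivatives on $\underline C$ are propagated in from $\mathcal K$ by differentiating the linear homogeneous ODE system \eqref{eq:309}/\eqref{eq:1217} along incoming characteristics (coefficient matrix continuous on $T_\varepsilon$, data on $\mathcal K$ expressed through $d^nf/dv^n$ and $d^n\beta_+/dv^n$). Integrating and differentiating the characteristic-system equations $\partial\alpha/\partial v=(\partial t/\partial v)\tilde A$ and $\partial\beta/\partial u=(\partial t/\partial u)\tilde B$ then delivers the new $(\alpha_{p,n-1})$, $(\alpha_{m,n-1})$, $(\alpha_{0,n-1})$, $(\beta_{p,n-1})$, $(\beta_{m,n-1})$, $(\beta_{0,n-1})$ and the new $(\alpha_{+,n-1})$, and integrating the Hodograph system controls $r$ at the same order.

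Second, the identification equation and the jump conditions furnish the missing bounds on $Y_n$, $F_n$ and close the loop. Differentiating $\hat F(v,y)=0$ in \eqref{eq:647} $n$ times and solving for $d^ny/dv^n$ is possible because $(\partial\hat F/\partial y)(0,-1)=\lambda/3\kappa\ne0$ (cf.\ \eqref{eq:649}); after the same leading-order cancellation that gave \eqref{eq:648} at order zero and an integration by parts one obtains a relation of the schematic form $d^ny/dv^n=\tfrac{3\kappa}{\lambda}\{\kappa\,d^n\hat f/dv^n+d^n\hat\delta/dv^n\}$ plus an integral remainder built from the quantities $\hat I_{n,j}$ of \eqref{eq:1219} plus lower-order terms, the integral remainder carrying a contraction factor strictly less than one — the higher-order analogue of the $\tfrac{1}{3}$ in \eqref{eq:1018}, produced precisely by that integration by parts. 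Differentiating $\hat f=f/v^2$, $\hat\delta=\delta/v^3$, the relation $\delta'=(V-c_{+0})f'$, and the jump conditions $V=\jump{T^{tr}}/\jump{T^{tt}}$ and $\jump{\beta}=\jump{\alpha}^3G(\alpha_+,\alpha_-,\beta_-)$ expresses $d^n\hat f/dv^n$, $d^n\hat\delta/dv^n$, $d^n\hat\beta_+/dv^n$ and $d^n\hat V/dv^n$ through $Y_n$, $F_n$ and lower-order data, exactly as in Lemmas \ref{lemma_est_fxdbp}, \ref{lemma_est_ie}, \ref{lemma_est_jc}. Assembling these gives a closed linear system of inequalities for the $n$-th order quantities of the shape \eqref{eq:1021}, which for $\varepsilon$ small yields finite bounds; this establishes $(Y_n)$, $(F_n)$ and the $C^1$ regularity of $\hat\beta_+$, $\hat V$, hence of $f$, $g$, $z$, $\alpha_+$, $\beta_+$ one order higher. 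Feeding this back into the estimates of the previous paragraph upgrades the order-$(n-1)$ pure derivatives of $t$, $\alpha$, $\beta$, $r$ from $Q_0$ to $Q_1$/$Q_2$ and supplies the new $Q_0$ bounds at order $n$, completing the step. Since $n$ was arbitrary, all derivatives of $y$ and $\hat f$ are bounded and all derivatives of $t$, $r$, $\alpha$, $\beta$ are $C^1$, so the solution is smooth; away from $\underline C$ and $\mathcal K$ the same machinery, stripped of the boundary propagation, gives the smoothness directly.

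The main obstacle is twofold. Conceptually, one must verify that the leading-order cancellation in the differentiated identification equation survives at every order, so that the coefficient of the integral remainder stays a genuine contraction: this is precisely what allows $Y_n$ to be controlled by the lower-order data and hence what makes the closed system in the last paragraph solvable, just as the $\tfrac{1}{3}$ factor did in the existence proof. Technically, the burden is the $P_{m,n}$/$Q_{m,n}$ bookkeeping under repeated differentiation: one must check that differentiating the wave equation \eqref{eq:297}, the characteristic system, and the compositions $\alpha^\ast(f(v),z(v))$, $\beta^\ast(f(v),z(v))$, $T^{\mu\nu}(\alpha_\pm,\beta_\pm)$ never destroys more orders of vanishing than the symbols record, with particular care for the mixed derivatives $\partial^{i+j}(\partial^2t/\partial u\partial v)/\partial u^i\partial v^j$ and for the diagonal boundary terms (the $B_n(v,v)$-type contributions) that arise when differentiating along $\mathcal K$.
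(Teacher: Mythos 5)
Your overall architecture is the paper's: induction on the order, with the step organized as (fixed boundary problem) $\to$ (identification equation) $\to$ (jump conditions), carried out one derivative higher with the $P_{m,n}$/$Q_{m,n}$ bookkeeping, and with the order-$n$ derivatives of $t,\alpha,\beta$ recovered from the differentiated integral representations and the boundary ODE system along $\mathcal{K}$ and $\underline{C}$. But the decisive step is asserted rather than proved, and it is asserted in a form that does not match what actually happens. You claim that differentiating $\hat F(v,y)=0$ $n$ times yields, ``after an integration by parts,'' a remainder in the $\hat I_{n,j}$ of \eqref{eq:1219} carrying a contraction factor strictly less than one, the analogue of the $\tfrac13$ in \eqref{eq:1018}. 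At order one the factor $\tfrac13$ comes from the single weighted integral $\tfrac{1}{v^4}\int_0^v v'^3(\cdot)$ in \eqref{eq:671}; at order $n$ nothing of the sort is automatic. Differentiating that relation $n-1$ times, or differentiating the ODE for $f$ and the relation $\delta'=(V-c_{+0})f'$, produces a whole family of singular terms $v^{-k}I_{n,m}$ with various $k,m$, and whether the top-order relation is a contraction depends on the exact numerical coefficients with which these survive. The bulk of the paper's proof consists precisely of computing these coefficients: the estimate of $d^{n-1}V/dv^{n-1}$ through the jump conditions, the operators $A_k$, $B_k$, $M_n$, $L_n$ of \eqref{eq:1431}, \eqref{eq:1761}, \eqref{eq:1897}, \eqref{eq:1919}, and the combinatorial identities (e.g.\ \eqref{eq:1903}--\eqref{eq:1911}) which show that \emph{all} the $\hat I_{n,m}$ with $m\le n+1$ cancel, leaving exactly $d^n\hat f/dv^n=\tfrac{\lambda}{6\kappa^2v}\hat I_{n,n+2}+\Landau(1)$ and $d^n\hat\delta/dv^n=\tfrac{\lambda}{6\kappa v}\hat I_{n,n+2}+\Landau(1)$ (\eqref{eq:1913}, \eqref{eq:1982}), hence $d^ny/dv^n=\tfrac1v\hat I_{n,n+2}+\Landau(1)$ in \eqref{eq:1992} with coefficient exactly one. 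Only for this specific kernel $v^{-(n+3)}\int_0^v v'^{n+2}(\cdot)$ does the argument close, and the paper closes it not by a smallness argument but by the exact ODE device \eqref{eq:1993}--\eqref{eq:1998} for $Z_n/v$. Without verifying these cancellations you have no control on the size of the coefficient of the singular term, and a coefficient as large as $n+3$ already destroys the closure; so the ``contraction produced by that integration by parts'' is exactly the content that is missing.

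A second, related problem is your proposed closure ``for $\varepsilon$ small.'' At order $n$ the constants multiplying $\varepsilon$ in such a linear system involve the bounds already obtained at orders $\le n-1$, so the admissible $\varepsilon$ would shrink with $n$; the induction would then only give $C^n$ regularity on domains $T_{\varepsilon_n}$ with $\varepsilon_n$ possibly tending to zero, which does not prove smoothness of the given solution on the fixed $T_\varepsilon$. Your remedy, smoothness ``away from $\underline{C}$ and $\mathcal{K}$ by the same machinery stripped of the boundary propagation,'' covers only the open interior and leaves the points of $\mathcal{K}$ and $\underline{C}$ with $v>0$ (where the free boundary, the jump conditions and the identification equation are still in force) uncovered. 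The paper's inductive step deliberately involves no new smallness condition on $\varepsilon$ -- this is what the exact cancellation structure buys -- so the estimates hold uniformly on the original domain at every order.
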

We remark that to see that also the function $r(u,v)$, which is not present in the inductive hypothesis, is a smooth function, we appeal to the Hodograph system, i.e.~to \eqref{eq:159}.

We write
\begin{align}
  \label{eq:1328}
  \frac{d^{k-1}y}{dv^{k-1}}=\cp{\frac{d^{k-1}y}{dv^{k-1}}}+\int_0^v\frac{d^{k}y}{dv^{k}}(v')dv',
\end{align}
where we recall the notation $\cp{\cdot}$ for evaluation at the cusp point, i.e.~for functions of $v$ evaluation at $v=0$. Since $z=vy$, we have
\begin{align}
  \label{eq:1327}
  \frac{d^{i}z}{dv^{i}}=v\frac{d^{i}y}{dv^{i}}+i\frac{d^{i-1}y}{dv^{i-1}}.
\end{align}
Setting $i=n-1$ and using \eqref{eq:1328}, we deduce from the inductive hypothesis \eqref{eq:1197}
\begin{align}
  \label{eq:2050}
  \frac{d^{n-1}z}{dv^{n-1}}=P_{0}.
\end{align}
This implies, through integration,
\begin{align}
  \label{eq:1298}
  \frac{d^mz}{dv^m}=\left\{
    \begin{array}{ll}
      P_{n-1,1} & m=0,\\
      P_{n-m-1} & 1\leq m\leq n-1.
    \end{array}\right.
\end{align}

Recalling $f(v)=v^2\hat{f}(v)$ and making use of the assumption \eqref{eq:1299} we obtain, analogous to the way we arrived at \eqref{eq:1298},
\begin{align}
  \label{eq:1213}
  \frac{d^mf}{dv^m}=\left\{
    \begin{array}{ll}
      P_{n,2} & m=0,\\
      P_{n-1,1} & m=1,\\
      P_{n-m} & 2\leq m\leq n-1.
    \end{array}\right.
\end{align}
Making use of assumption \eqref{eq:1198} we obtain, through integration,
\begin{align}
  \label{eq:1300}
  \frac{d^m\alpha_+}{dv^m}=P_{n-m}:0\leq m\leq n-1.
\end{align}

We now look at the behavior of $\alpha_-(v)$ and $\beta_-(v)$. Recalling
\begin{align}
  \label{eq:1295}
  \alpha_-(v)=\alpha^\ast(f(v),z(v)),\qquad \beta_-(v)=\beta^\ast(f(v),z(v)),
\end{align}
where $\alpha^\ast(t,w)$, $\beta^\ast(t,w)$ denotes the solution in the state ahead and $t(v,v)=f(v)$, $w=z(v)$ are substituted, we have
\begin{align}
  \label{eq:1296}
  \frac{d\alpha_-}{dv}=\pp{\alpha^\ast}{t}(f,z)\frac{df}{dv}+\pp{\alpha^\ast}{w}(f,z)\frac{dz}{dv},\qquad\frac{d\beta_-}{dv}=\pp{\beta^\ast}{t}(f,z)\frac{df}{dv}+\pp{\beta^\ast}{w}(f,z)\frac{dz}{dv}.
\end{align}
Now, by \eqref{eq:1298} and \eqref{eq:1213} we have
\begin{align}
  \label{eq:1297}
  \pp{\alpha^\ast}{t}(f,z)=P_{n-1},\qquad\pp{\alpha^\ast}{w}(f,z)=P_{n-1}
\end{align}
and for higher order derivatives we have
\begin{align}
  \label{eq:1441}
  \frac{\partial^{i+j}\alpha^\ast}{\partial t^i\partial w^j}(f,z)=P_{n-1}:i+j\leq m.
\end{align}
Analogous, but now taking into account the fact that $\cp{\partial\beta^\ast/\partial w}=0$, we have
\begin{align}
  \label{eq:1301}
  \pp{\beta^\ast}{t}(f,z)=P_{n-1},\qquad\pp{\beta^\ast}{w}(f,z)=P_{n-1,1}
\end{align}
and for higher order derivatives we have
\begin{align}
  \label{eq:1442}
  \frac{\partial^{i+j}\beta^\ast}{\partial t^i\partial w^j}(f,z)=P_{n-1}:i+j\leq m.
\end{align}
Taking $m-1$ derivatives of the first of \eqref{eq:1296} and making use of \eqref{eq:1298}, \eqref{eq:1213}, \eqref{eq:1297} and \eqref{eq:1441} we obtain
\begin{align}
  \label{eq:1303}
  \frac{d^m\alpha_-}{dv^m}=P_{n-m-1}:0\leq m\leq n-1.
\end{align}
Taking $m-1$ derivatives of the second of \eqref{eq:1296} and making use of \eqref{eq:1298}, \eqref{eq:1213}, \eqref{eq:1301} and \eqref{eq:1442} we obtain
\begin{align}
  \label{eq:1304}
  \frac{d^m\beta_-}{dv^m}=\left\{
    \begin{array}{ll}
      P_{n-1,1} & m=1,\\
      P_{n-m} & 2\leq m\leq n-1.
    \end{array}\right.
\end{align}

Now we look at the behavior of $\beta_+(v)$. For this we recall $\jump{\beta}=\jump{\alpha}^3G(\alpha_+,\alpha_-,\beta_-)$. We have
\begin{align}
  \label{eq:1307}
  \frac{d^m\jump{\beta}}{dv^m}=\sum_{i=0}^m\binom{m}{i}\frac{d^i\jump{\alpha}^3}{dv^i}\frac{d^{m-i}G}{dv^{m-i}}.
\end{align}
From \eqref{eq:1300}, \eqref{eq:1303} and taking into account $\alpha_-(0)=\alpha_+(0)$, we obtain
\begin{align}
  \label{eq:1308}
  \jump{\alpha}=P_{n-1,1},\qquad \jump{\alpha}^2=P_{n,2},\qquad \jump{\alpha}^3=P_{n+1,3},
\end{align}
and
\begin{align}
  \label{eq:1460}
  \frac{d^m\jump{\alpha}}{dv^m}=P_{n-m-1}:1\leq m\leq n-1
\end{align}
and similarly
\begin{align}
  \label{eq:1309}
  \frac{d^m\jump{\alpha}^3}{dv^m}=\left\{
    \begin{array}{ll}
      P_{n,2} & m=1,\\
      P_{n-1,1} & m=2,\\
      P_{n-m+1} & 3\leq m\leq n-1.
    \end{array}\right.
\end{align}
In view of \eqref{eq:1300}, \eqref{eq:1303}, \eqref{eq:1304} we have
\begin{align}
  \label{eq:1310}
  \frac{d^{m-i}G}{dv^{m-i}}=P_{n-m+i-1}.
\end{align}
Using \eqref{eq:1308}, \eqref{eq:1309} and \eqref{eq:1310} in \eqref{eq:1307} we obtain
\begin{align}
  \label{eq:1311}
  \frac{d^m\jump{\beta}}{dv^m}=\left\{
    \begin{array}{ll}
      P_{n+1,3} & m=0,\\
      P_{n,2} & m=1,\\
      P_{n-1,1} & m=2,\\
      P_{n-m+1} & 3\leq m\leq n-1.
    \end{array}\right.
\end{align}
Therefore, from \eqref{eq:1304} we obtain
\begin{align}
  \label{eq:1313}
  \frac{d^m\beta_+}{dv^m}=\left\{
    \begin{array}{ll}
      P_{n-1,1} & m=1,\\
      P_{n-m} & 2\leq m\leq n-1.
    \end{array}\right.
\end{align}
We note that \eqref{eq:1300}, \eqref{eq:1303}, \eqref{eq:1304} and \eqref{eq:1313} constitute the behaviors of $\alpha_\pm(v)$, $\beta_\pm(v)$.

\subsubsection{Estimate for $d^{n-1}V/dv^{n-1}$}
We turn to estimating $d^{n-1}V/dv^{n-1}$. We recall
\begin{align}
  \label{eq:1200}
  V=\frac{\jump{T^{tr}}}{\jump{T^{tt}}},
\end{align}
which implies
\begin{align}
  \label{eq:1201}
  \frac{dV}{dv}=\frac{1}{\jump{T^{tt}}}\left\{\frac{d\jump{T^{tr}}}{dv}-V\frac{d\jump{T^{tt}}}{dv}\right\}.
\end{align}
We use the notation
\begin{align}
  \label{eq:1202}
  \bar{c}_\pm=c_\pm(\alpha_+,\beta_+),\qquad \ccirc_\pm=c_\pm(\alpha_-,\beta_-).
\end{align}
and we recall (see \eqref{eq:184}, \eqref{eq:732})
\begin{align}
  \label{eq:1314}
  \pp{T^{tr}}{\alpha}=c_+\pp{T^{tt}}{\alpha},\qquad \pp{T^{tr}}{\beta}=c_-\pp{T^{tt}}{\beta}.
\end{align}
\eqref{eq:1201} becomes
\begin{align}
  \label{eq:1315}
  \frac{dV}{dv}&=\frac{1}{\jump{T^{tt}}}\bigg\{(\bar{c}_+-V)\pp{T^{tt}}{\alpha}(\alpha_+,\beta_+)\frac{d\alpha_+}{dv}+(\bar{c}_--V)\pp{T^{tt}}{\beta}(\alpha_+,\beta_+)\frac{d\beta_+}{dv}\notag\\
&\qquad\qquad-(\ccirc_+-V)\pp{T^{tt}}{\alpha}(\alpha_-,\beta_-)\frac{d\alpha_-}{dv}-(\ccirc_--V)\pp{T^{tt}}{\beta}(\alpha_-,\beta_-)\frac{d\beta_-}{dv}\bigg\}.
\end{align}

Defining now $a(v)$ and $b(v)$ by
\begin{align}
  \label{eq:1517}
  a&\coloneqq-\frac{1}{\jump{T^{tt}}}\frac{d}{dv}\jump{T^{tt}},\\
  \label{eq:1928}
  b&\coloneqq\frac{1}{\jump{T^{tt}}}\left\{\frac{d}{dv}\jump{T^{tr}}-c_{+0}\frac{d}{dv}\jump{T^{tt}}\right\},
\end{align}
i.e.
\begin{align}
  \label{eq:1204}
  a&=-\frac{1}{\jump{T^{tt}}}\bigg\{\pp{T^{tt}}{\alpha}(\alpha_+,\beta_+)\frac{d\alpha_+}{dv}-\pp{T^{tt}}{\alpha}(\alpha_-,\beta_-)\frac{d\alpha_-}{dv}\notag\\
&\qquad\qquad\qquad +\pp{T^{tt}}{\beta}(\alpha_+,\beta_+)\frac{d\beta_+}{dv}-\pp{T^{tt}}{\beta}(\alpha_-,\beta_-)\frac{d\beta_-}{dv}\bigg\},\\
b&=\frac{1}{\jump{T^{tt}}}\bigg\{(\bar{c}_+-c_{+0})\pp{T^{tt}}{\alpha}(\alpha_+,\beta_+)\frac{d\alpha_+}{dv}-(\ccirc_+-c_{+0})\pp{T^{tt}}{\alpha}(\alpha_-,\beta_-)\frac{d\alpha_-}{dv}\notag\\
&\qquad\qquad\qquad +(\bar{c}_--c_{+0})\pp{T^{tt}}{\beta}(\alpha_+,\beta_+)\frac{d\beta_+}{dv}-(\ccirc_--c_{+0})\pp{T^{tt}}{\beta}(\alpha_-,\beta_-)\frac{d\beta_-}{dv}\bigg\},\label{eq:1211}
\end{align}
we can rewrite \eqref{eq:1315} as
\begin{align}
  \label{eq:1205}
  \frac{dV}{dv}=(V-c_{+0})a+b.
\end{align}

Now we define $\tilde{a}$ by
\begin{align}
  \label{eq:1206}
  \tilde{a}(v)\coloneqq a(v)+\frac{1}{v}.
\end{align}
With
\begin{align}
  \label{eq:1207}
  u\coloneqq V-c_{+0},\qquad \tilde{u}\coloneqq vu,\qquad \tilde{b}\coloneqq vb,
\end{align}
\eqref{eq:1205} becomes
\begin{align}
  \label{eq:1208}
  \frac{d\tilde{u}}{dv}=\tilde{a}\tilde{u}+\tilde{b}.
\end{align}
The $(i-1)$'th order derivative of this can be written as
\begin{align}
  \label{eq:1316}
  \frac{d^i\tilde{u}}{dv^i}=\tilde{a}_{i-1}\tilde{u}+\tilde{b}_{i-1}.
\end{align}
Differentiating this we obtain
\begin{align}
  \label{eq:1317}
  \frac{d^{i+1}\tilde{u}}{dv^{i+1}}=\left(\frac{d\tilde{a}_{i-1}}{dv}+\tilde{a}\tilde{a}_{i-1}\right)\tilde{u}+\frac{d\tilde{b}_{i-1}}{dv}+\tilde{a}_{i-1}\tilde{b},
\end{align}
which gives us the following recursion formulas
\begin{align}
  \label{eq:1318}
  \tilde{a}_i&=\left(\frac{d}{dv}+\tilde{a}\right)\tilde{a}_{i-1}, \qquad\tilde{a}_0=\tilde{a},\\
  \label{eq:1320}
  \tilde{b}_i&=\frac{d\tilde{b}_{i-1}}{dv}+\tilde{a}_{i-1}\tilde{b},\qquad\hspace{2mm} \tilde{b}_0=\tilde{b}.
\end{align}
Solving these gives
\begin{align}
  \label{eq:1210}
  \tilde{a}_i=\left(\frac{d}{dv}+\tilde{a}\right)^i\tilde{a},\qquad \tilde{b}_i=\frac{d^i\tilde{b}}{dv^i}+\sum_{m=0}^{i-1}\frac{d^{i-1-m}(\tilde{a}_m\tilde{b})}{dv^{i-1-m}}.
\end{align}
In the expression for the $n$'th derivative for $y$ there will be involved the $n$'th derivative of $f$ which in turn involves the $(n-1)$'th derivative of $V$. Therefore, we have to estimate
\begin{align}
  \label{eq:1209}
  \frac{d^{n-1}\tilde{u}}{dv^{n-1}}=\tilde{a}_{n-2}\tilde{u}+\tilde{b}_{n-2}.
\end{align}

To estimate $\tilde{a}_{n-2}$ we have to estimate the $n-2$ order derivative of $\tilde{a}$. To estimate $\tilde{b}_{n-2}$ we need to estimate the $n-2$ order derivative of $\tilde{b}$ and the $n-3$ order derivative of $\tilde{a}$. We consider first $\tilde{b}_{n-2}$. We derive expressions for $a$, $b$ to $\Landau(v^2)$. We start with $b$. Since $\jump{T^{tt}(0)}=0$, we have to estimate the $n-2$ order derivative of the numerator of $b$ to $\Landau(v^3)$. Let us denote by $T_i$ the $i$'th term in the curly bracket of \eqref{eq:1211} and let
\begin{align}
  \label{eq:1398}
  N\coloneqq -\sum_{i=1}^4T_i.
\end{align}
Then
\begin{align}
  \label{eq:1381}
  b=-\frac{N}{\jump{T^{tt}}}.
\end{align}
We have
\begin{align}
  \label{eq:1323}
  T_2&=-(\ccirc_+-c_{+0})\pp{T^{tt}}{\alpha}(\alpha_-,\beta_-)\frac{d\alpha_-}{dv}\notag\\
&=-(\ccirc_+-c_{+0})\pp{T^{tt}}{\alpha}(\alpha_-,\beta_-)\left\{\pp{\alpha^\ast}{t}(f,z)\frac{df}{dv}+\pp{\alpha^\ast}{w}(f,z)\frac{dz}{dv}\right\}.
\end{align}

Let us look at
\begin{align}
  \label{eq:1322}
  \frac{d^{n-2}}{dv^{n-2}}\left((\ccirc_+-c_{+0})\frac{dz}{dv}\right).
\end{align}
Using \eqref{eq:1327} with $n-1$ in the role of $i$ and in the resulting expression \eqref{eq:1328} with $n$ and $n-1$ in the role of $k$ we obtain
\begin{align}
  \label{eq:1329}
  \frac{d^{n-1}z}{dv^{n-1}}=\bar{P}_1+nv\int_0^v\frac{d^ny}{dv^n}(v')dv'-(n-1)\int_0^vv'\frac{d^ny}{dv^n}(v')dv'.
\end{align}
Using now \eqref{eq:1329} with $n-1$ in the role of $n$ and using \eqref{eq:1328} with $n$ in the role of $k$ for the resulting integrands yields
\begin{align}
  \label{eq:1330}
  \frac{d^{n-2}z}{dv^{n-2}}=\bar{P}_2+\frac{n}{2}v^2\int_0^v\frac{d^ny}{dv^n}(v')dv'-(n-1)v\int_0^vv'\frac{d^ny}{dv^n}(v')dv'+\frac{n-2}{2}\int_0^vv'^2\frac{d^ny}{dv^n}(v')dv'.
\end{align}
We rewrite \eqref{eq:1329} and \eqref{eq:1330} as
\begin{align}
  \label{eq:1331}
  \frac{d^{n-1}z}{dv^{n-1}}&=\bar{P}_1+nvI_{n,0}-(n-1)I_{n,1},\\
  \label{eq:1332}
  \frac{d^{n-2}z}{dv^{n-2}}&=\bar{P}_2+\frac{n}{2}v^2I_{n,0}-(n-1)vI_{n,1}+\frac{n-2}{2}I_{n,2},\\
  \label{eq:1333}
  \frac{d^{n-k}z}{dv^{n-k}}&=P_{k-1}:3\leq k\leq n-2,\\
  \label{eq:1337}
  \frac{dz}{dv}&=-1+P_{n-2,1},
\end{align}
where \eqref{eq:1333} follows directly from \eqref{eq:1298} and \eqref{eq:1337} follows from $z=vy$, $y(0)=-1$.

Since
\begin{align}
  \label{eq:1334}
  \ccirc_+(v)=c_+(\alpha_-(v),\beta_-(v)),
\end{align}
we have, in view of \eqref{eq:1303}, \eqref{eq:1304},
\begin{align}
  \label{eq:1335}
  \frac{d^m}{dv^m}(\ccirc_+-c_{+0})=P_{n-m-1}.
\end{align}
Taking the derivative of \eqref{eq:1334} we obtain
\begin{align}
  \label{eq:1338}
  \frac{d}{dv}(\ccirc_+-c_{+0})&=\left\{\pp{c_+}{\alpha}(\alpha_-,\beta_-)\pp{\alpha^\ast}{w}(f,z)+\pp{c_+}{\beta}(\alpha_-,\beta_-)\pp{\beta^\ast}{w}(f,z)\right\}\frac{dz}{dv}\notag\\
&\qquad+\left\{\pp{c_+}{\alpha}(\alpha_-,\beta_-)\pp{\alpha^\ast}{t}(f,z)+\pp{c_+}{\beta}(\alpha_-,\beta_-)\pp{\beta^\ast}{t}(f,z)\right\}\frac{df}{dv}.
\end{align}
Taking $n-3$ derivatives of this and making use of \eqref{eq:1298}, \eqref{eq:1213}, \eqref{eq:1303}, \eqref{eq:1304}, \eqref{eq:1337} and
\begin{align}
  \label{eq:1339}
  \cp{\pp{\beta^\ast}{w}}=0,\qquad \cp{\pp{c_+}{\alpha}}\cp{\pp{\alpha^\ast}{w}}=\kappa,
\end{align}
we obtain
\begin{align}
  \label{eq:1343}
  \frac{d^{n-2}}{dv^{n-2}}(\ccirc_+-c_{+0})=P_2+\kappa \frac{d^{n-2}z}{dv^{n-2}}.
\end{align}
Substituting \eqref{eq:1332} we obtain
\begin{align}
  \label{eq:1336}
  \frac{d^{n-2}}{dv^{n-2}}(\ccirc_+-c_{+0})&=P_2+\kappa \left\{\frac{n}{2}v^2I_{n,0}-(n-1)vI_{n,1}+\frac{n-2}{2}I_{n,2}\right\},\\
  \label{eq:1340}
  \frac{d^{n-k}}{dv^{n-k}}(\ccirc_+-c_{+0})&=P_{k-1}:3\leq k\leq n-2,\\
  \label{eq:1341}
  \frac{d}{dv}(\ccirc_+-c_{+0})&=-\kappa+P_{n-2,1},\\
  \label{eq:1342}
  \ccirc_+-c_{+0}&=-\kappa v+P_{n-1,2},
\end{align}
where \eqref{eq:1340} is \eqref{eq:1335} with $m=n-k$, \eqref{eq:1341} follows from \eqref{eq:1338}, \eqref{eq:1339} and \eqref{eq:1342} follows from $\ccirc_+(0)=c_{+0}$. We note that \eqref{eq:1331}, \ldots, \eqref{eq:1337} and \eqref{eq:1336}, \ldots, \eqref{eq:1342} express the derivatives of $z$ and $\ccirc_+-c_{+0}$ to $\Landau(v^3)$ for $n\geq 5$. In the case $n=4$ the derivatives of $z$ and $\ccirc_+-c_{+0}$ to $\Landau(v^3)$ are given by the same expressions with the exceptions of \eqref{eq:1333} and \eqref{eq:1340} which have to be excluded. In the case $n=3$ also \eqref{eq:1337} and \eqref{eq:1341} have to be excluded, i.e.~in the case $n=3$ the derivatives of $z$ and $\ccirc_+-c_{+0}$ to $\Landau(v^3)$ are given by \eqref{eq:1331}, \eqref{eq:1332}, \eqref{eq:1336}, \eqref{eq:1342}. Finally, in the case $n=2$ the derivative of $z$ to $\Landau(v^3)$ is given by \eqref{eq:1331} and $\ccirc_+-c_{+0}$ to $\Landau(v^3)$ is given by \eqref{eq:1336}. The fact that the latter is true is seen from the fact that \eqref{eq:1332} is also true for $n=2$ and then using this in \eqref{eq:1343} which in the case $n=2$ is the Taylor expansion of $\ccirc_+-c_{+0}$ to $\Landau(v^3)$.

Integrating by parts we have
\begin{align}
  \label{eq:1466}
  I_{n,m}=\int_0^vv'^m\frac{d^ny}{dv^n}(v')dv'=v^m\frac{d^{n-1}y}{dv^{n-1}}-\int_0^vmv'^{m-1}\frac{d^{n-1}y}{dv^{n-1}}(v')dv'.
\end{align}
By the inductive hypothesis this is $\Landau(v^m)$. In the case $m=0$ we have
\begin{align}
  \label{eq:1500}
  I_{n,0}=\int_0^v\frac{d^ny}{dv^n}(v')dv'=\frac{d^{n-1}y}{dv^{n-1}}-\cp{\frac{d^{n-1}y}{dv^{n-1}}},
\end{align}
which is $\Landau(1)$ by the inductive hypothesis. \eqref{eq:1466} and \eqref{eq:1500} imply
\begin{align}
  \label{eq:1346}
    v^kI_{n,m}=\Landau(v^{m+k}),\qquad v^k\hat{I}_{n,m}=\Landau(v^k).
\end{align}

Now, in the case $n\geq 5$ we have
\begin{align}
  \label{eq:1344}
  \frac{d^{n-2}}{dv^{n-2}}\left((\ccirc_+-c_{+0})\frac{dz}{dv}\right)&=\sum_{i=0}^{n-2}\binom{n-2}{i}\frac{d^i}{dv^i}(\ccirc_+-c_{+0})\frac{d^{n-1-i}z}{dv^{n-1-i}}\notag\\
&=\sum_{i=2}^{n-3}\binom{n-2}{i}\frac{d^i}{dv^i}(\ccirc_+-c_{+0})\frac{d^{n-1-i}z}{dv^{n-1-i}}\notag\\
&\qquad+(\ccirc_+-c_{+0})\frac{d^{n-1}z}{dv^{n-1}}+(n-2)\frac{d}{dv}(\ccirc_+-c_{+0})\frac{d^{n-2}z}{dv^{n-2}}+\frac{d^{n-2}}{dv^{n-2}}(\ccirc_+-c_{+0})\frac{dz}{dv}.
\end{align}
All terms in the sum are products of terms of the form \eqref{eq:1333} and \eqref{eq:1340} therefore they are all a $P_2$. For the second line in \eqref{eq:1344} we make use of \eqref{eq:1331}, \eqref{eq:1332}, \eqref{eq:1337} and \eqref{eq:1336}, \eqref{eq:1341}, \eqref{eq:1342} together with \eqref{eq:1346}. Therefore, we obtain in the case $n\geq 5$,
\begin{align}
  \label{eq:1345}
  \frac{d^{n-2}}{dv^{n-2}}\left((\ccirc_+-c_{+0})\frac{dz}{dv}\right)=P_2-\kappa J_n,
\end{align}
where we defined
\begin{align}
  \label{eq:1452}
  J_n\coloneqq\frac{n(n+1)}{2} v^2I_{n,0}-n(n-1) vI_{n,1}+\frac{(n-1)(n-2)}{2}I_{n,2}.
\end{align}

In the case $n=4$ the sum in \eqref{eq:1344} is not present and \eqref{eq:1333}, \eqref{eq:1340} are not needed. Therefore, \eqref{eq:1345} is valid in the case $n=4$ as well.

Let us look at the case $n=3$. From \eqref{eq:1332} and $\cp{dz/dv}=-1$ we have
\begin{align}
  \label{eq:1350}
  \frac{dz}{dv}=-1+\bar{P}_{2,1}+\frac{1}{2}\left\{3v^2I_{3,0}-4vI_{3,1}+I_{3,2}\right\}.
\end{align}
Using this together with \eqref{eq:1339} in \eqref{eq:1338} yields
\begin{align}
  \label{eq:1351}
  \frac{d}{dv}(\ccirc_+-c_{+0})=-\kappa+P_{2,1}+\frac{\kappa}{2}\left\{3v^2I_{3,0}-4vI_{3,1}+I_{3,2}\right\}.
\end{align}
Using now \eqref{eq:1331} in the case $n=3$ together with \eqref{eq:1342}, \eqref{eq:1350}, \eqref{eq:1351} we obtain
\begin{align}
  \label{eq:1352}
  \frac{d}{dv}\left((\ccirc_+-c_{+0})\frac{dz}{dv}\right)=P_2-\kappa J_3,
\end{align}
which is \eqref{eq:1345} with $3$ in the role of $n$. Therefore, \eqref{eq:1345} is valid in the case $n=3$ as well.

In the case $n=2$ we have from \eqref{eq:1331} together with $\cp{dz/dv}=-1$
\begin{align}
  \label{eq:1353}
  \frac{dz}{dv}=-1+\bar{P}_{1,1}+2vI_{2,0}-I_{2,1},
\end{align}
while from \eqref{eq:1336} in the case $n=2$ together with $\cp{\ccirc_+}=c_{+0}$, $\cp{d(\ccirc_+-c_{+0})/dv}=-\kappa$,
\begin{align}
  \label{eq:1354}
  \ccirc_+-c_{+0}=-\kappa v+P_{2,2}+\kappa\left\{v^2I_{2,0}-vI_{2,1}\right\}.
\end{align}
From \eqref{eq:1353} and \eqref{eq:1354} we obtain
\begin{align}
  \label{eq:1355}
  (\ccirc_+-c_{+0})\frac{dz}{dv}=\kappa v+P_{2,2}-\kappa J_2,
\end{align}
which agrees with \eqref{eq:1345} in the case $n=2$. We therefore that \eqref{eq:1345} is valid for $n\geq 2$.

Integrating \eqref{eq:1345} we obtain
\begin{align}
  \label{eq:1366}
  \frac{d^k}{dv^k}\left((\ccirc_+-c_{+0})\frac{dz}{dv}\right)&=P_{n-k-1}:k\leq n-3,\\
  \label{eq:1371}
  (\ccirc_+-c_{+0})\frac{dz}{dv}&=P_{n-1,1}.
\end{align}
We note that these are valid in the case $n\geq 4$. In the case $n=3$ we have to make use of \eqref{eq:1352} instead of \eqref{eq:1366} (\eqref{eq:1371} stays valid), while in the case $n=2$ we have to make use of \eqref{eq:1355} alone.

Now we look at
\begin{align}
  \label{eq:1356}
  \frac{d^{n-2}}{dv^{n-2}}\left((\ccirc_+-c_{+0})\frac{df}{dv}\right).
\end{align}
From \eqref{eq:1213} we have
\begin{align}
  \label{eq:1358}
  \frac{d^{n-1}f}{dv^{n-1}}&=P_1,\\
  \label{eq:1359}
  \frac{d^{n-2}f}{dv^{n-2}}&=P_2,\\
  \label{eq:1360}
  \frac{d^{n-k}f}{dv^{n-k}}&=P_k:3\leq k\leq n-2,\\
  \label{eq:1361}
  \frac{df}{dv}&=P_{n-1,1}.
\end{align}
We note that \eqref{eq:1358}, \ldots, \eqref{eq:1361} express derivatives of $f$ for $n\geq 5$. In the case $n=4$ the derivatives of $f$ are expressed by the same expressions with the exceptions of \eqref{eq:1360} which has to be excluded. In the case $n=3$ the second and first derivative of $f$ are given by \eqref{eq:1358} and \eqref{eq:1361} respectively. Finally in the the case $n=2$ the first derivative of $f$ is given by \eqref{eq:1361}.

We start with the case $n\geq 5$. We have
\begin{align}
  \label{eq:1357}
  \frac{d^{n-2}}{dv^{n-2}}\left((\ccirc_+-c_{+0})\frac{df}{dv}\right)&=\sum_{i=0}^{n-2}\binom{n-2}{i}\frac{d^i}{dv^i}(\ccirc_+-c_{+0})\frac{d^{n-1-i}f}{dv^{n-1-i}}\notag\\
&=\sum_{i=2}^{n-3}\binom{n-2}{i}\frac{d^i}{dv^i}(\ccirc_+-c_{+0})\frac{d^{n-1-i}f}{dv^{n-1-i}}\notag\\
&\qquad+(\ccirc_+-c_{+0})\frac{d^{n-1}f}{dv^{n-1}}+(n-2)\frac{d}{dv}(\ccirc_+-c_{+0})\frac{d^{n-2}f}{dv^{n-2}}+\frac{d^{n-2}}{dv^{n-2}}(\ccirc_+-c_{+0})\frac{df}{dv}.
\end{align}
All terms in the sum are products of terms of the form \eqref{eq:1340} and \eqref{eq:1360} therefore they are all a $P_2$. For the second line in \eqref{eq:1357} we make use of \eqref{eq:1336}, \eqref{eq:1341}, \eqref{eq:1342} and \eqref{eq:1358}, \eqref{eq:1359}, \eqref{eq:1361} together with \eqref{eq:1346}. We see that in the case $n\geq 5$ all terms in \eqref{eq:1357} are a $P_2$ and therefore so is \eqref{eq:1356}. In the case $n=4$ the sum in \eqref{eq:1357} is not present, hence \eqref{eq:1340} and \eqref{eq:1360} are not needed. Therefore, also in the case $n=4$ \eqref{eq:1356} is a $P_2$. In the case $n=3$ we use \eqref{eq:1351} together with \eqref{eq:1342}, \eqref{eq:1358}, \eqref{eq:1361}. We obtain that also in the case $n=3$ \eqref{eq:1356} is a $P_2$. Finally in the case $n=2$ we use \eqref{eq:1354} and \eqref{eq:1361}. We conclude that, for $n\geq 2$,
\begin{align}
  \label{eq:1362}
  \frac{d^{n-2}}{dv^{n-2}}\left((\ccirc_+-c_{+0})\frac{df}{dv}\right)=P_2.
\end{align}

By integration we obtain from \eqref{eq:1362}
\begin{align}
  \label{eq:1363}
  \frac{d^k}{dv^k}\left((\ccirc_+-c_{+0})\frac{df}{dv}\right)&=P_{n-k}:2\leq k\leq n-2,\\
  \label{eq:1364}
  \frac{d}{dv}\left((\ccirc_+-c_{+0})\frac{df}{dv}\right)&=P_{n-1,1},\\
  \label{eq:1365}
  (\ccirc_+-c_{+0})\frac{df}{dv}&=P_{n,2}.
\end{align}
Here \eqref{eq:1363} is only valid for $n\geq 4$ and \eqref{eq:1364} is only valid for $n\geq 3$.

Let us define
\begin{align}
  \label{eq:1367}
  F_1&\coloneqq\pp{T^{tt}}{\alpha}(\alpha_-(f,z),\beta_-(f,z))\pp{\alpha^\ast}{w}(f,z),\\
  \label{eq:1368}
  F_2&\coloneqq\pp{T^{tt}}{\alpha}(\alpha_-(f,z),\beta_-(f,z))\pp{\alpha^\ast}{t}(f,z).
\end{align}
From \eqref{eq:1298}, \eqref{eq:1213} we have
\begin{align}
  \label{eq:1369}
  \frac{d^kF}{dv^k}&=P_{n-k-1}:k\leq n-2 \quad\textrm{for}\quad F\in \{F_1,F_2\},\\
  \label{eq:1373}
  F_1&=\dot{\alpha}_0\cp{\pp{T^{tt}}{\alpha}}+P_{n-1,1},
\end{align}
where for the second we used $\cp{\partial \alpha^\ast/\partial w}=\dot{\alpha}_0$.

Now, in the case $n\geq 4$ we have
\begin{align}
  \label{eq:1370}
  \frac{d^{n-2}}{dv^{n-2}}\left(F_1(\ccirc_+-c_{+0})\frac{dz}{dv}\right)&=\sum_{i=0}^{n-2}\binom{n-2}{i}\frac{d^iF_1}{dv^i}\frac{d^{n-2-i}}{dv^{n-2-i}}\left((\ccirc_+-c_{+0})\frac{dz}{dv}\right)\notag\\
&=\sum_{i=1}^{n-3}\binom{n-2}{i}\frac{d^iF_1}{dv^i}\frac{d^{n-2-i}}{dv^{n-2-i}}\left((\ccirc_+-c_{+0})\frac{dz}{dv}\right)\notag\\
&\qquad +F_1\frac{d^{n-2}}{dv^{n-2}}\left((\ccirc_+-c_{+0})\frac{dz}{dv}\right)+\frac{d^{n-2}F_1}{dv^{n-2}}(\ccirc_+-c_{+0})\frac{dz}{dv}.
\end{align}
All terms in the sum are products of terms of the form \eqref{eq:1366} and \eqref{eq:1369} therefore they are all a $P_2$. For the first term in the second line of \eqref{eq:1370} we use \eqref{eq:1345} together with \eqref{eq:1373} and for the second term in the second line of \eqref{eq:1370} we use \eqref{eq:1371}. We find
\begin{align}
  \label{eq:1374}
  \frac{d^{n-2}}{dv^{n-2}}\left(F_1(\ccirc_+-c_{+0})\frac{dz}{dv}\right)=P_2-\dot{\alpha}_0\kappa\cp{\pp{T^{tt}}{\alpha}} J_n.
\end{align}

In the case $n=3$ we have
\begin{align}
  \label{eq:1375}
  \frac{d}{dv}\left(F_1(\ccirc_+-c_{+0})\frac{dz}{dv}\right)=\frac{dF_1}{dv}(\ccirc_+-c_{+0})\frac{dz}{dv}+F\frac{d}{dv}\left((\ccirc_+-c_{+0})\frac{dz}{dv}\right).
\end{align}
Making use of \eqref{eq:1352}, \eqref{eq:1371}, \eqref{eq:1369}, \eqref{eq:1373} we see that \eqref{eq:1374} is also valid in the case $n=3$. In the case $n=2$ we make use of \eqref{eq:1355}, \eqref{eq:1373} and see that \eqref{eq:1374} is valid in the case $n=2$ as well but we have in particular
\begin{align}
  \label{eq:1448}
  F_1(\ccirc_+-c_{+0})\frac{dz}{dv}=P_{2,2}+\dot{\alpha}_0\kappa\cp{\pp{T^{tt}}{\alpha}}(v-J_2).
\end{align}

Now, in the case $n\geq 4$ we have
\begin{align}
  \label{eq:1376}
  \frac{d^{n-2}}{dv^{n-2}}\left(F_2(\ccirc_+-c_{+0})\frac{df}{dv}\right)&=\sum_{i=0}^{n-2}\binom{n-2}{i}\frac{d^iF_2}{dv^i}\frac{d^{n-2-i}}{dv^{n-2-i}}\left((\ccirc_+-c_{+0})\frac{df}{dv}\right)\notag\\
&=\sum_{i=0}^{n-4}\binom{n-2}{i}\frac{d^iF_2}{dv^i}\frac{d^{n-2-i}}{dv^{n-2-i}}\left((\ccirc_+-c_{+0})\frac{df}{dv}\right)\notag\\
&\qquad+(n-2)\frac{d^{n-3}F}{dv^{n-3}}\frac{d}{dv}\left((\ccirc_+-c_{+0})\frac{df}{dv}\right)+\frac{d^{n-2}F_2}{dv^{n-2}}(\ccirc_+-c_{+0})\frac{df}{dv}.
\end{align}
All terms in the sum are products of terms of the form \eqref{eq:1363} and \eqref{eq:1369} therefore they are all at least a $P_2$. For the first term in the last line of \eqref{eq:1376} we use \eqref{eq:1364} together with \eqref{eq:1369}. For the second term in the last line in \eqref{eq:1376} we use \eqref{eq:1365} together with \eqref{eq:1369}. We find in the case $n\geq 4$,
\begin{align}
  \label{eq:1377}
  \frac{d^{n-2}}{dv^{n-2}}\left(F_2(\ccirc_+-c_{+0})\frac{df}{dv}\right)=P_2.
\end{align}

In the case $n=3$ the sum in \eqref{eq:1376} is not present and the second line in \eqref{eq:1376} is dealt with in the same way as in the case $n\geq 4$. We see that \eqref{eq:1377} is also valid in the case $n=3$. In the case $n=2$ we use again \eqref{eq:1365} together with \eqref{eq:1369}. We see that \eqref{eq:1377} is also valid in the case $n=2$ and we have in particular
\begin{align}
  \label{eq:1447}
  F_2(\ccirc_+-c_{+0})\frac{df}{dv}=P_{2,2}.
\end{align}

From \eqref{eq:1374}, \eqref{eq:1377}, in view of \eqref{eq:1323} and the definitions \eqref{eq:1367}, \eqref{eq:1368}, we deduce, for $n\geq 2$,
\begin{align}
  \label{eq:1380}
  \frac{d^{n-2}T_2}{dv^{n-2}}=P_2+\dot{\alpha}_0\kappa\cp{\pp{T^{tt}}{\alpha}} J_n.
\end{align}
In the case $n=2$ we have in particular, from \eqref{eq:1448} \eqref{eq:1447},
\begin{align}
  \label{eq:1449}
  T_2=P_{2,2}-\dot{\alpha}_0\kappa\cp{\pp{T^{tt}}{\alpha}}(v-J_2).
\end{align}

We turn to
\begin{align}
  \label{eq:1382}
  \frac{d^{n-2}T_1}{dv^{n-2}},
\end{align}
where (see \eqref{eq:1211}, \eqref{eq:1398}, \eqref{eq:1381})
\begin{align}
  \label{eq:1383}
  T_1=(\bar{c}_+-c_{+0})\pp{T^{tt}}{\alpha}(\alpha_+,\beta_+)\frac{d\alpha_+}{dv}=F\frac{d\alpha_+}{dv},
\end{align}
where we defined
\begin{align}
  \label{eq:1384}
  F\coloneqq(\bar{c}_+-c_{+0})\pp{T^{tt}}{\alpha}(\alpha_+,\beta_+).
\end{align}
Since $\bar{c}_+(v)=c_+(\alpha_+(v),\beta_+(v))$, in view of \eqref{eq:1300}, \eqref{eq:1313} we have
\begin{align}
  \label{eq:1385}
  F=P_{n,1},\qquad \frac{d^mF}{dv^m}=P_{n-m}:1\leq m\leq n-2.
\end{align}
From this together with \eqref{eq:1300} we obtain
\begin{align}
  \label{eq:1386}
  \frac{d^{n-2}T_1}{dv^{n-2}}=P_2.
\end{align}

We now look at the case $n=2$ in more detail. In the case $n=2$ we use
\begin{align}
  \label{eq:1438}
  \pp{T^{tt}}{\alpha}(\alpha_+,\beta_+)=\cp{\pp{T^{tt}}{\alpha}}+P_{2,1},
\end{align}
together with
\begin{align}
  \label{eq:1439}
  \bar{c}_+-c_{+0}=\kappa v+P_{2,2},
\end{align}
to deduce (in agreement with the first of \eqref{eq:1385})
\begin{align}
  \label{eq:1440}
  F=\cp{\pp{T^{tt}}{\alpha}}\kappa v+P_{2,2}.
\end{align}
From \eqref{eq:1300} we have
\begin{align}
  \label{eq:1421}
  \frac{d\alpha_+}{dv}=\dot{\alpha}_0+P_{1,1}.
\end{align}
Therefore, in the case $n=2$, we obtain
\begin{align}
  \label{eq:1443}
  T_1=\dot{\alpha}_0\kappa\cp{\pp{T^{tt}}{\alpha}} v+P_{2,2}.
\end{align}

We turn to $T_3+T_4$ (see \eqref{eq:1211}, \eqref{eq:1398}, \eqref{eq:1381}).
\begin{align}
  \label{eq:1387}
  T_3+T_4&=(\bar{c}_--c_{+0})\pp{T^{tt}}{\beta}(\alpha_+,\beta_+)\frac{d\beta_+}{dv}-(\ccirc_--c_{+0})\pp{T^{tt}}{\beta}(\alpha_-,\beta_-)\frac{d\beta_-}{dv}\notag\\
&=\left\{(\bar{c}_--c_{+0})\pp{T^{tt}}{\beta}(\alpha_+,\beta_+)-(\ccirc_--c_{+0})\pp{T^{tt}}{\beta}(\alpha_-,\beta_-)\right\}\frac{d\beta_-}{dv}\notag\\
&\qquad +(\bar{c}_--c_{+0})\pp{T^{tt}}{\beta}(\alpha_+,\beta_+)\frac{d\jump{\beta}}{dv}.
\end{align}
Defining
\begin{align}
  F&\coloneqq(c_--c_{+0})\pp{T^{tt}}{\beta},\\
  \label{eq:1388}
  \bar{F}&\coloneqq F(\alpha_+,\beta_+),\\
  \label{eq:1929}
  \Fcirc&\coloneqq F(\alpha_-,\beta_-),
\end{align}
\eqref{eq:1387} becomes
\begin{align}
  \label{eq:1389}
  T_3+T_4=(\bar{F}-\Fcirc)\frac{d\beta_-}{dv}+\bar{F}\frac{d\jump{\beta}}{dv}.
\end{align}
From \eqref{eq:1300}, \eqref{eq:1303}, \eqref{eq:1304}, \eqref{eq:1313} we have
\begin{align}
  \label{eq:1390}
  (\bar{F}-\Fcirc)=P_{n-1,1},\qquad \frac{d^m}{dv^m}(\bar{F}-\Fcirc)=P_{n-m-1}:1\leq m\leq n-2.
\end{align}

Now, in the case $n\geq 4$,
\begin{align}
  \label{eq:1391}
  \frac{d^{n-2}}{dv^{n-2}}\left((\bar{F}-\Fcirc)\frac{d\beta_-}{dv}\right)&=\sum_{i=0}^{n-2}\binom{n-2}{i}\frac{d^i}{dv^i}(\bar{F}-\Fcirc)\frac{d^{n-1-i}\beta_-}{dv^{n-1-i}}\notag\\
&=\sum_{i=1}^{n-3}\binom{n-2}{i}\frac{d^i}{dv^i}(\bar{F}-\Fcirc)\frac{d^{n-1-i}\beta_-}{dv^{n-1-i}}\notag\\
&\qquad +(\bar{F}-\Fcirc)\frac{d^{n-1}\beta_-}{dv^{n-1}}+\frac{d^{n-2}}{dv^{n-2}}(\bar{F}-\Fcirc)\frac{d\beta_-}{dv}.
\end{align}
All terms in the sum are products of terms of the form of the second case of \eqref{eq:1304} and the second of \eqref{eq:1390}. For the second line in \eqref{eq:1391} we use the first case of \eqref{eq:1304} and the first of \eqref{eq:1390}. We find that in the case $n\geq 4$,
\begin{align}
  \label{eq:1392}
  \frac{d^{n-2}}{dv^{n-2}}\left((\bar{F}-\Fcirc)\frac{d\beta_-}{dv}\right)=P_2.
\end{align}

In the case $n=3$ the sum in \eqref{eq:1391} is not present. For the remaining terms we argue as in the case $n\geq 4$ and find again \eqref{eq:1392}. In the case $n=2$ only the product of the first case of \eqref{eq:1304} and the first of \eqref{eq:1390} is present and we find
\begin{align}
  \label{eq:1444}
  (\bar{F}-\Fcirc)\frac{d\beta_-}{dv}=P_{2,2},
\end{align}
in agreement with \eqref{eq:1392}. In view of \eqref{eq:1300}, \eqref{eq:1313}, we have
\begin{align}
  \label{eq:1394}
  \frac{d^m\bar{F}}{dv^m}=P_{n-m}.
\end{align}
Using this together with \eqref{eq:1311} we obtain, for $n\geq 3$,
\begin{align}
  \label{eq:1395}
  \frac{d^{n-2}}{dv^{n-2}}\left(\bar{F}\frac{d\jump{\beta}}{dv}\right)&=\sum_{i=0}^{n-2}\binom{n-2}{i}\frac{d^i\bar{F}}{dv^i}\frac{d^{n-1-i}\jump{\beta}}{dv^{n-1-i}}\notag\\
&=\sum_{i=0}^{n-2}P_{n-i}P_{2+i}=P_2.
\end{align}
In the case $n=2$ we have, using the second case of \eqref{eq:1311} together with \eqref{eq:1394},
\begin{align}
  \label{eq:1445}
  \bar{F}\frac{d\jump{\beta}}{dv}=P_{2,2}.
\end{align}

From \eqref{eq:1392}, \eqref{eq:1444}, \eqref{eq:1395}, \eqref{eq:1445} we obtain that for $n\geq 2$,
\begin{align}
  \label{eq:1396}
  \frac{d^{n-2}}{dv^{n-2}}(T_3+T_4)=P_2,
\end{align}
and, in particular in the case $n=2$,
\begin{align}
  \label{eq:1446}
  T_3+T_4=P_{2,2}.
\end{align}

In view of \eqref{eq:1398}, we deduce from \eqref{eq:1380}, \eqref{eq:1386} and \eqref{eq:1396}
\begin{align}
  \label{eq:1399}
  \frac{d^{n-2}N}{dv^{n-2}}=P_2-\dot{\alpha}_0\kappa\cp{\pp{T^{tt}}{\alpha}} J_n.
\end{align}
In the case $n=2$ we have in particular, from \eqref{eq:1449}, \eqref{eq:1443} and \eqref{eq:1446},
\begin{align}
  \label{eq:1450}
  N=P_{2,2}-\dot{\alpha}_0\kappa\cp{\pp{T^{tt}}{\alpha}}J_2.
\end{align}

Now we look at
\begin{align}
  \label{eq:1400}
  \frac{d^{n-2}}{dv^{n-2}}\jump{T^{tt}}.
\end{align}
We first restrict ourselves to the case $n\geq 3$. Defining
\begin{align}
  \label{eq:1401}
  F_\alpha&\coloneqq\pp{T^{tt}}{\alpha}(\alpha_+,\beta_+),\\
  \label{eq:1930}
  F_\beta&\coloneqq\pp{T^{tt}}{\beta}(\alpha_+,\beta_+),\\
  \label{eq:1931}
  F_t&\coloneqq F_t'(f,z),\qquad\textrm{where}\qquad F_t'\coloneqq\pp{T^{tt}}{\alpha}(\alpha^\ast,\beta^\ast)\pp{\alpha^\ast}{t}+\pp{T^{tt}}{\beta}(\alpha^\ast,\beta^\ast)\pp{\beta^\ast}{t},\\
  \label{eq:1932}
  F_w&\coloneqq F_w'(f,z),\qquad\textrm{where}\qquad F_w'\coloneqq\pp{T^{tt}}{\alpha}(\alpha^\ast,\beta^\ast)\pp{\alpha^\ast}{w}+\pp{T^{tt}}{\beta}(\alpha^\ast,\beta^\ast)\pp{\beta^\ast}{w},
\end{align}
we have
\begin{align}
  \label{eq:1402}
  \frac{d}{dv}\jump{T^{tt}}=F_\alpha\frac{d\alpha_+}{dv}+F_\beta\frac{d\beta_+}{dv}-F_t\frac{df}{dv}-F_w\frac{dz}{dv}.
\end{align}
From \eqref{eq:1298}, \eqref{eq:1213} we obtain
\begin{align}
  \label{eq:1403}
  \frac{d^mF_w}{dv^m}=P_{n-m-1},\qquad\frac{d^mF_t}{dv^m}=P_{n-m-1}.
\end{align}
In particular, from $\cp{\partial\alpha/\partial w}=\dot{\alpha}_0$ we have
\begin{align}
  \label{eq:1404}
  F_w=\dot{\alpha}_0\cp{\pp{T^{tt}}{\alpha}}+P_{n-1,1}.
\end{align}

Now, in the case $n\geq 4$ we have
\begin{align}
  \label{eq:1405}
  \frac{d^{n-3}}{dv^{n-3}}\left(F_w\frac{dz}{dv}\right)&=\sum_{i=0}^{n-3}\binom{n-3}{i}\frac{d^iF_w}{dv^i}\frac{d^{n-2-i}z}{dv^{n-2-i}}\notag\\
&=\sum_{i=1}^{n-3}\binom{n-3}{i}\frac{d^iF_w}{dv^i}\frac{d^{n-2-i}z}{dv^{n-2-i}}+F_w\frac{d^{n-2}z}{dv^{n-2}}.
\end{align}
In view of \eqref{eq:1298} and the first of \eqref{eq:1403} we see that all terms in the sum are a $P_2$. For the remaining term we use \eqref{eq:1332} together with \eqref{eq:1404}. Therefore, in the case $n\geq 4$,
\begin{align}
  \label{eq:1406}
  \frac{d^{n-3}}{dv^{n-3}}\left(F_w\frac{dz}{dv}\right)=P_2+\dot{\alpha}_0\cp{\pp{T^{tt}}{\alpha}}\left\{\frac{n}{2}v^2I_{n,0}-(n-1)vI_{n,1}+\frac{n-2}{2}I_{n,2}\right\}.
\end{align}

In the case $n=3$ we only have the last term in \eqref{eq:1405}. Arguing as in the case $n\geq 4$ we see that \eqref{eq:1406} is also valid in the case $n=3$. From \eqref{eq:1213} together with the second of \eqref{eq:1403} we obtain, in the case $n\geq 3$,
\begin{align}
  \label{eq:1408}
  \frac{d^{n-3}}{dv^{n-3}}\left(F_t\frac{df}{dv}\right)=\sum_{i=0}^{n-3}\binom{n-3}{i}\frac{d^iF_t}{dv^i}\frac{d^{n-2-i}f}{dv^{n-2-i}}=P_2.
\end{align}
In view of \eqref{eq:1300}, \eqref{eq:1313} we have
\begin{align}
  \label{eq:1409}
  \frac{d^mF_\alpha}{dv^m}=P_{n-m},\qquad\frac{d^mF_\beta}{dv^m}=P_{n-m}.
\end{align}
Therefore, in the case $n\geq 3$,
\begin{align}
  \label{eq:1410}
  \frac{d^{n-3}}{dv^{n-3}}\left(F_\alpha\frac{d\alpha_+}{dv}\right)&=\sum_{i=0}^{n-3}\binom{n-3}{i}\frac{d^iF_\alpha}{dv^i}\frac{d^{n-2-i}\alpha_+}{dv^{n-2-i}}=P_2,\\
  \label{eq:1411}
  \frac{d^{n-3}}{dv^{n-3}}\left(F_\beta\frac{d\beta_+}{dv}\right)&=\sum_{i=0}^{n-3}\binom{n-3}{i}\frac{d^iF_\beta}{dv^i}\frac{d^{n-2-i}\beta_+}{dv^{n-2-i}}=P_2.
\end{align}
From \eqref{eq:1406}, \eqref{eq:1408}, \eqref{eq:1410} and \eqref{eq:1411} we deduce, in the case $n\geq 3$,
\begin{align}
  \label{eq:1412}
  \frac{d^{n-2}}{dv^{n-2}}\jump{T^{tt}}=P_2-\dot{\alpha}_0\cp{\pp{T^{tt}}{\alpha}}K_n,
\end{align}
where
\begin{align}
  \label{eq:1454}
  K_n\coloneqq\frac{n}{2}v^2I_{n,0}-(n-1)vI_{n,1}+\frac{n-2}{2}I_{n,2}.
\end{align}

Now we look at the case $n=2$. From \eqref{eq:1401} and the first of \eqref{eq:1409} we have
\begin{align}
  \label{eq:1422}
  F_\alpha=\cp{\pp{T^{tt}}{\alpha}}+P_{1,1}.
\end{align}
Therefore, together with \eqref{eq:1421} and using also \eqref{eq:1313} and the second of \eqref{eq:1409}, we obtain
\begin{align}
  \label{eq:1413}
  F_\alpha\frac{d\alpha_+}{dv}=\dot{\alpha}_0\cp{\pp{T^{tt}}{\alpha}}+P_{1,1},\qquad F_\beta\frac{d\beta_+}{dv}=P_{1,1}.
\end{align}
From \eqref{eq:1213} and the second of \eqref{eq:1403} we have
\begin{align}
  \label{eq:1414}
  F_t\frac{df}{dv}=P_{1,1}.
\end{align}
For the derivative of $z$ in the case $n=2$ we use \eqref{eq:1331}. Since $y(0)=-1$, \eqref{eq:1331} in the case $n=2$ is
\begin{align}
  \label{eq:1423}
  \frac{dz}{dv}=-1+\bar{P}_{1,1}+2vI_{2,0}-I_{2,1}.
\end{align}
Together with \eqref{eq:1404} we obtain
\begin{align}
  \label{eq:1415}
  F_w\frac{dz}{dv}=-\dot{\alpha}_0\cp{\pp{T^{tt}}{\alpha}}+P_{1,1}+\dot{\alpha}_0\cp{\pp{T^{tt}}{\alpha}}(2vI_{2,0}-I_{2,1}).
\end{align}
Using \eqref{eq:1413}, \eqref{eq:1414} and \eqref{eq:1415} in \eqref{eq:1402} we obtain
\begin{align}
  \label{eq:1417}
  \frac{d\jump{T^{tt}}}{dv}=2\dot{\alpha}_0\cp{\pp{T^{tt}}{\alpha}}+P_{1,1}-\dot{\alpha}_0\cp{\pp{T^{tt}}{\alpha}}(2vI_{2,0}-I_{2,1}).
\end{align}
Since
\begin{align}
  \label{eq:1416}
  2\int_0^vv'I_{2,0}(v')dv'&=v^2I_{2,0}-I_{2,2},\\
  \label{eq:1419}
  \int_0^vI_{2,1}(v')dv'&=vI_{2,1}-I_{2,2},
\end{align}
we obtain from \eqref{eq:1417},
\begin{align}
  \label{eq:1418}
  \jump{T^{tt}}=P_{2,2}+\dot{\alpha}_0\cp{\pp{T^{tt}}{\alpha}}(2v-K_2),
\end{align}
where we made use of $\jump{T^{tt}(0)}=0$. From this we see that \eqref{eq:1412} is also valid for $n\geq 2$. In addition we see that $\jump{T^{tt}}$ is divisible by $v$.

We now integrate \eqref{eq:1399} $n-2$ times to obtain $N$. If we integrate $n-2$ times a $P_2$ we obtain a $P_n$. In view of the definition \eqref{eq:1452} we must calculate the $l$-fold iterated integral of
\begin{align}
  \label{eq:1457}
  v^{2-i}I_{n,i}:i=0,1,2.
\end{align}
Let
\begin{align}
  \label{eq:1455}
  f_l\coloneqq v^lf_0,
\end{align}
for an integrable function $f_0$ (see \eqref{eq:1459} below). We define
\begin{align}
  \label{eq:1456}
  g_0(v)&\coloneqq\int_0^vf_0(v')dv',\\
  \label{eq:1462}
  g_l(v)&\coloneqq\int_0^vf_l(v')dv'.
\end{align}
We integrate $l$ times the function $v^kg_0(v)$ and denote the result by $G_{k,l}$. We claim
\begin{align}
  \label{eq:1458}
  G_{k,l}=\frac{k!}{(k+l)!}v^{k+l}g_0+\sum_{m=1}^l\frac{(-1)^m}{(m-1)!(l-m)!}\frac{v^{l-m}}{(k+m)}g_{k+m}.
\end{align}

We prove the claim in \eqref{eq:1458} by induction. We start with $l=1$. Integrating by parts and making use of \eqref{eq:1456}, \eqref{eq:1462}, we obtain
\begin{align}
  \label{eq:1461}
  G_{k,1}&=\int_0^vv'^kg_0(v')dv'\notag\\
&=\frac{1}{k+1}\left(v^{k+1}g_0-\int_0^vv'^{k+1}\frac{dg_0}{dv}(v')dv'\right)\notag\\
&=\frac{1}{k+1}\left(v^{k+1}g_0-g_{k+1}\right),
\end{align}
which is \eqref{eq:1458} in the case $l=1$. Let then \eqref{eq:1458} hold for $l=1, \ldots, l$. We have
\begin{align}
  \label{eq:1463}
  \int_0^vv'^{k+l}g_0(v')dv'&=\frac{1}{k+l+1}\left(v^{k+l+1}g_0-g_{k+l+1}\right),\\
  \label{eq:1933}
  \int_0^vv'^{l-m}g_{k+m}(v')dv'&=\frac{1}{l-m+1}\left(v^{l-m+1}g_{k+m}-g_{k+l+1}\right).
\end{align}
Using the above to integrate \eqref{eq:1458} we obtain
\begin{align}
  \label{eq:1464}
  G_{k,l+1}&=\frac{k!}{(k+l+1)!}v^{k+l+1}g_0+\sum_{m=1}^l\frac{(-1)^m}{(m-1)!(l+1-m)!}\frac{v^{l+1-m}}{k+m}g_{k+m}\notag\\
&\qquad -\left\{\frac{k!}{(k+l+1)!}+\sum_{m=1}^l\frac{(-1)^m}{(m-1)!(l+1-m)!}\frac{1}{k+m}\right\}g_{k+l+1}.
\end{align}
Since the first line agrees already with the right hand side with $l+1$ in the role of $l$, except for the last term in the sum, the thing left to show is
\begin{align}
  \label{eq:1465}
  -\frac{k!}{(k+l+1)!}-\sum_{m=1}^l\frac{(-1)^m}{(m-1)!(l+1-m)!}\frac{1}{k+m}=\frac{(-1)^{l+1}}{l!}\frac{1}{k+l+1}.
\end{align}
We have
\begin{align}
  \label{eq:1467}
  -l!\left\{\sum_{m=1}^l\frac{(-1)^m}{(m-1)!(l+1-m)!}\frac{1}{k+m}+\frac{k!}{(k+l+1)!}\right\}&=-\sum_{m=1}^l\binom{l}{m-1}\frac{(-1)^m}{k+m}-\frac{l!k!}{(k+l+1)!}\notag\\
&=-\frac{k!}{(k+l+1)!}P_l,
\end{align}
where
\begin{align}
  \label{eq:1468}
  P_l\coloneqq l!+\sum_{m=1}^l\binom{l}{m-1}(-1)^m(k+1)\cdots {\widetilde{(k+m)}}\cdots(k+l+1),
\end{align}
where the tilde denotes omission. $P_l$ is a polynomial in $k$ of degree $l$. The coefficient of $k^l$ is
\begin{align}
  \label{eq:1469}
  \sum_{m=1}^l\binom{l}{m-1}(-1)^m=(-1)^l.
\end{align}
Since if $n\in \{1,\ldots,l\}$
\begin{align}
  \label{eq:1470}
  (-n+1)\cdots\widetilde{(-n+m)}\cdots(-n+l+1)=\left\{\begin{array}{ll} 0 & m\neq n,\\ (n-1)!(-1)^{n-1}(l-n+1)! & m=n,\end{array}\right.
\end{align}
we have
\begin{align}
  \label{eq:1471}
  \sum_{m=1}^l\binom{l}{m-1}(-1)^m(-n+1)\cdots\widetilde{(-n+m)}\cdots(-n+l+1)=-l!.
\end{align}
Therefore,
\begin{align}
  \label{eq:1472}
  P_l(-n)=0:n=1,\ldots,l,
\end{align}
i.e.~the roots of $P_l(k)$ are $k=-1,\ldots,-l$, which implies, together with the fact that the coefficient of $k^l$ in $P_l(k)$ is given by \eqref{eq:1469},
\begin{align}
  \label{eq:1473}
  P_l(k)=(-1)^l(k+1)\cdots(k+l).
\end{align}
Using this in \eqref{eq:1467} yields \eqref{eq:1465}. This completes the proof of \eqref{eq:1458}. We remark that since \eqref{eq:1458} was proved by induction on $l$, a positive integer, it also holds when $k$ is a negative integer, as long as $k+l<0$ and for $k<0$ the first factor in the first term of \eqref{eq:1458} is interpreted as
\begin{align}
  \label{eq:1725}
  \frac{k!}{(k+l)!}=\frac{1}{(k+l)\cdots(k+1)}=\frac{(-1)^l}{(-k-l)\cdots(-k-1)}=(-1)^l\frac{(-k-l-1)!}{(-k-1)!}.
\end{align}

We now set
\begin{align}
  \label{eq:1459}
  f_0=v^i\frac{d^ny}{dv^n}:i=0,1,2,
\end{align}
such that (see \eqref{eq:1462})
\begin{align}
  \label{eq:1474}
  g_0=I_{n,i}.
\end{align}
We also set $k=2-i$. Then $G_{k,l}$ given by \eqref{eq:1458} is the $l$-fold iterated integral of $v^kg_0=v^{2-i}I_{n,i}$ (cf.~\eqref{eq:1457}) and
\begin{align}
  \label{eq:1476}
  g_j(v)&=\int_0^vv'^{i+j}\frac{d^ny}{dv^n}(v')dv'\notag\\
&=I_{n,i+j}.
\end{align}
Therefore, the $l$-fold iterated integral of $v^{2-i}I_{n,i}$ is
\begin{align}
  \label{eq:1475}
  \frac{(2-i)!}{(2-i+l)!}v^{2-i+l}I_{n,i}+\sum_{m=1}^l\frac{(-1)^m}{(m-1)!(l-m)!}\frac{v^{l-m}}{(2-i+m)}I_{n,m+2}.
\end{align}
Using this in $J_n$, given by \eqref{eq:1452}, we obtain, after a straightforward computation, that the $l$-fold iterated integral of $J_n$ is
\begin{align}
  \label{eq:1477}
  v^{l+2}\sum_{m=0}^{l+2}\frac{(-1)^m(n-m)(n+1-m)}{m!(l+2-m)!}\hat{I}_{n,m},
\end{align}
where we recall the notation $\hat{I}_{n,m}=(1/v^m)I_{n,m}$. From \eqref{eq:1399} this gives $d^{n-l-2}N/dv^{n-l-2}$. Setting then $j=l+2$ we obtain
\begin{align}
  \label{eq:1478}
  \frac{d^{n-j}N}{dv^{n-j}}=P_j-\dot{\alpha}_0\kappa\cp{\pp{T^{tt}}{\alpha}}v^j\sum_{m=0}^j\frac{(-1)^m(n-m)(n+1-m)}{m!(j-m)!}\hat{I}_{n,m}.
\end{align}
In particular, setting $j=n$ yields (see \eqref{eq:1450})
\begin{align}
  \label{eq:1479}
  N=P_{n,2}-\dot{\alpha}_0\kappa \cp{\pp{T^{tt}}{\alpha}}v^n\sum_{m=0}^{n-1}\frac{(-1)^m(n+1-m)}{m!(n-1-m)!}\hat{I}_{n,m}.
\end{align}

Making use of \eqref{eq:1475}, we obtain, after a straightforward computation, that the $l$-fold iterated integral of $K_n$ (see the definition \eqref{eq:1454}) is
\begin{align}
  \label{eq:1492}
  v^{l+2}\sum_{m=0}^{l+2}\frac{(-1)^m(n-m)}{m!(l+2-m)!}\hat{I}_{n,m}.
\end{align}
Together with \eqref{eq:1412} this gives $d^{n-l-2}\jump{T^{tt}}/dv^{n-l-2}$. Setting then $j=l+2$ we obtain
\begin{align}
  \label{eq:1493}
  \frac{d^{n-j}}{dv^{n-j}}\jump{T^{tt}}=P_j-\dot{\alpha}_0\cp{\pp{T^{tt}}{\alpha}}v^j\sum_{m=0}^j\frac{(-1)^m(n-m)}{m!(j-m)!}\hat{I}_{n,m}.
\end{align}
In particular, setting $j=n$ yields (see \eqref{eq:1418})
\begin{align}
  \label{eq:1494}
  \jump{T^{tt}}=P_{n,2}+\dot{\alpha}_0\cp{\pp{T^{tt}}{\alpha}}\left(2v-v^n\sum_{m=0}^{n-1}\frac{(-1)^m}{m!(n-1-m)!}\hat{I}_{n,m}\right).
\end{align}

Since $\jump{T^{tt}}$ and $N$ are divisible by $v$, we define
\begin{align}
  \label{eq:1420}
  \widehat{\jump{T^{tt}}}\coloneqq\frac{\jump{T^{tt}}}{v},\qquad \hat{N}\coloneqq\frac{N}{v}.
\end{align}
Furthermore, we note that, in view of \eqref{eq:1450}, \eqref{eq:1418} and using (see \eqref{eq:1346})
\begin{align}
  \label{eq:1453}
  |vI_{2,0}|\leq Cv,\qquad |I_{2,1}|\leq Cv,
\end{align}
we have
\begin{align}
  \label{eq:1451}
  \hat{N}(0)=0,\qquad \widehat{\jump{T^{tt}}}(0)=2\dot{\alpha}_0\cp{\pp{T^{tt}}{\alpha}}.
\end{align}
We then have (see \eqref{eq:1381})
\begin{align}
  \label{eq:1426}
  b=-\frac{\hat{N}}{\widehat{\jump{T^{tt}}}}.
\end{align}

Let now $f$ be a $C^k$ function of $v$ such that $f(0)=0$. We define
\begin{align}
  \label{eq:1427}
  \hat{f}\coloneqq\frac{f}{v}.
\end{align}
We have
\begin{align}
  \label{eq:1428}
  \frac{d^k\hat{f}}{dv^k}=\sum_{i=0}^k\binom{k}{i}\frac{d^{k-i}}{dv^{k-i}}\left(\frac{1}{v}\right)\frac{d^if}{dv^i}.
\end{align}
Since
\begin{align}
  \label{eq:1429}
  \frac{d^m}{dv^m}\left(\frac{1}{v}\right)=\frac{(-1)^mm!}{v^{m+1}},
\end{align}
\eqref{eq:1428} is
\begin{align}
  \label{eq:1430}
  \frac{d^k\hat{f}}{dv^k}=\frac{k!}{v^{k+1}}A_kf,
\end{align}
where $A_k$ is the operator
\begin{align}
  \label{eq:1431}
  A_k\coloneqq\sum_{i=0}^k\frac{(-1)^{k-i}}{i!}v^i\frac{d^i}{dv^i},
\end{align}
which is homogeneous of degree zero relative to scaling.

In view of
\begin{align}
  \label{eq:1432}
  \frac{d^mv^k}{dv^m}=\left\{\begin{array}{ll}\frac{k!}{(k-m)!}v^{k-m}& m\leq k,\\0 & m>k,\end{array}\right.
\end{align}
we have
\begin{align}
  \label{eq:1433}
  A_kv^m=a_{k,m}v^m,
\end{align}
where
\begin{align}
  \label{eq:1434}
  a_{k,m}\coloneqq\sum_{i=0}^{\min\{m,k\}}\binom{m}{i}(-1)^{k-i}.
\end{align}
We see that $a_{k,m}=0$ for $m\leq k$, unless $m=0$. Therefore,
\begin{align}
  \label{eq:1435}
  A_k\bar{P}_{k,1}=0,
\end{align}
i.e.~the null space of $A_k$ is the space of polynomials of degree $k$ with no constant term, a $k$-dimensional space. In view of the above, applying $d^{n-2}/dv^{n-2}$ to $\hat{N}$, $\widehat{\jump{T^{tt}}}$, we obtain
\begin{align}
  \label{eq:1436}
  \frac{d^{n-2}\hat{N}}{dv^{n-2}}&=\frac{(n-2)!}{v^{n-1}}A_{n-2}N,\\
  \label{eq:1437}
  \frac{d^{n-2}}{dv^{n-2}}\widehat{\jump{T^{tt}}}&=\frac{(n-2)!}{v^{n-1}}A_{n-2}\jump{T^{tt}}.
\end{align}

Setting $k=n-2$ and $j=n-i$ in \eqref{eq:1431} we obtain
\begin{align}
  \label{eq:1480}
  A_{n-2}N=\sum_{j=2}^n\frac{(-1)^j}{(n-j)!}v^{n-j}\frac{d^{n-j}N}{dv^{n-j}}.
\end{align}
Since the null space of $A_{n-2}$ is the space of polynomials of degree $n-2$ with no constant term, only the terms corresponding to the powers $v^{n-1}$, $v^n$ in $P_{n,2}$ survive when we apply $A_{n-2}$ to $N$. Therefore, substituting \eqref{eq:1478} into \eqref{eq:1480}, we obtain
\begin{align}
  \label{eq:1481}
  A_{n-2}N=P_{n,n-1}-\dot{\alpha}_0\kappa\cp{\pp{T^{tt}}{\alpha}}v^n\sum_{j=2}^n\frac{(-1)^j}{(n-j)!}\sum_{m=0}^j\frac{(-1)^m(n-m)(n+1-m)}{m!(j-m)!}\hat{I}_{n,m}.
\end{align}
We rewrite the double sum as
\begin{align}
  \label{eq:1482}
  \sum_{m=0}^na_{n,m}\frac{(-1)^m(n-m)(n+1-m)}{m!}\hat{I}_{n,m},
\end{align}
where
\begin{align}
  \label{eq:1483}
  a_{n,m}\coloneqq\sum_{j=\max\{2,m\}}^n\frac{(-1)^j}{(n-j)!(j-m)!}.
\end{align}

Let us consider
\begin{align}
  \label{eq:1484}
  \tilde{a}_{n,m}=\sum_{j=m}^n\frac{(-1)^j}{(n-j)!(j-m)!}.
\end{align}
For $m\geq 2$ we have $\tilde{a}_{n,m}=a_{n,m}$, but for $m=0,1$ we have
\begin{align}
  \label{eq:1485}
  \tilde{a}_{n,0}&=a_{n,0}-\frac{1}{n(n-2)!},\\
  \label{eq:1776}
  \tilde{a}_{n,1}&=a_{n,1}-\frac{1}{(n-1)!}.
\end{align}
We set $k=j-m$ in \eqref{eq:1484} to obtain
\begin{align}
  \label{eq:1486}
  \tilde{a}_{n,m}&=\frac{(-1)^m}{(n-m)!}\sum_{k=0}^{n-m}\binom{n-m}{k}(-1)^k\notag\\
&=\frac{(-1)^m}{(n-m)!}(1-1)^{n-m}.
\end{align}
This vanishes except in the case $n=m$ and we have $\tilde{a}_{n,n}=(-1)^n$. I.e.
\begin{align}
  \label{eq:1603}
  \tilde{a}_{n,m}=\left\{
  \begin{array}{ll}
    0 & m\neq n,\\
    (-1)^n & m=n.
  \end{array}\right.
\end{align}
We conclude
\begin{align}
  \label{eq:1487}
  a_{n,m}=\left\{\begin{array}{ll}\frac{1}{n(n-2)!} & m=0, \\ \frac{1}{(n-1)!} & m=1, \\ (-1)^n & m=n, \\ 0 & m\neq 0,1,n.\end{array}\right.
\end{align}

Substituting \eqref{eq:1487} in \eqref{eq:1482} we conclude that the double sum in \eqref{eq:1481} is
\begin{align}
  \label{eq:1488}
  \frac{(n+1)}{(n-2)!}\hat{I}_{n,0}-\frac{n}{(n-2)!}\hat{I}_{n,1}.
\end{align}
Therefore, from \eqref{eq:1436},
\begin{align}
  \label{eq:1489}
  \frac{d^{n-2}\hat{N}}{dv^{n-2}}=P_1-\dot{\alpha}_0\kappa\cp{\pp{T^{tt}}{\alpha}}v\left\{(n+1)\hat{I}_{n,0}-n\hat{I}_{n,1}\right\}.
\end{align}
For $n=2$ we have from \eqref{eq:1479},
\begin{align}
  \label{eq:1501}
  \hat{N}=P_{1,1}-\dot{\alpha}_0\kappa\cp{\pp{T^{tt}}{\alpha}}v\left\{3\hat{I}_{2,0}-2\hat{I}_{2,1}\right\}.
\end{align}

Using the inductive hypothesis \eqref{eq:1197} we have
\begin{align}
  \label{eq:1490}
  \frac{d^{n-2}\hat{N}}{dv^{n-2}}=P_{0}.
\end{align}
Integrating this yields
\begin{align}
  \label{eq:1491}
  \frac{d^m\hat{N}}{dv^m}=P_{n-m-2}:1\leq m\leq n-3,\qquad \hat{N}=P_{n-2,1},
\end{align}
where for the second we used \eqref{eq:1479}. We note that the first of \eqref{eq:1491} is only valid for $n\geq 4$.

Setting $k=n-2$ and $j=n-i$ in \eqref{eq:1431} we obtain
\begin{align}
  \label{eq:1495}
  A_{n-2}\jump{T^{tt}}=\sum_{j=2}^n\frac{(-1)^j}{(n-j)!}v^{n-j}\frac{d^{n-j}}{dv^{n-j}}\jump{T^{tt}}.
\end{align}
Substituting \eqref{eq:1493} yields
\begin{align}
  \label{eq:1496}
  A_{n-2}\jump{T^{tt}}=P_{n,n-1}-\dot{\alpha}_0\cp{\pp{T^{tt}}{\alpha}}v^n\sum_{j=2}^n\frac{(-1)^j}{(n-j)!}\sum_{m=0}^j\frac{(-1)^m(n-m)}{m!(j-m)!}\hat{I}_{n,m}.
\end{align}
We rewrite the double sum as
\begin{align}
  \label{eq:1497}
  \sum_{m=0}^na_{n,m}\frac{(-1)^m(n-m)}{m!}\hat{I}_{n,m},
\end{align}
where the coefficients $a_{n,m}$ are defined by \eqref{eq:1483} and given by \eqref{eq:1487}. We find that the double sum in \eqref{eq:1496} is
\begin{align}
  \label{eq:1498}
  \frac{1}{(n-2)!}\left(\hat{I}_{n,0}-\hat{I}_{n,1}\right).
\end{align}
Therefore, from \eqref{eq:1437}
\begin{align}
  \label{eq:1499}
  \frac{d^{n-2}}{dv^{n-2}}\widehat{\jump{T^{tt}}}=P_1-\dot{\alpha}_0\cp{\pp{T^{tt}}{\alpha}}v\left(\hat{I}_{n,0}-\hat{I}_{n,1}\right).
\end{align}
For $n=2$ we have, in view of \eqref{eq:1418},
\begin{align}
  \label{eq:1504} 
  \widehat{\jump{T^{tt}}}=P_{1,1}+\dot{\alpha}_0\cp{\pp{T^{tt}}{\alpha}}\left\{2-v\left(\hat{I}_{2,0}-\hat{I}_{2,1}\right)\right\}.
\end{align}

Using the inductive hypothesis \eqref{eq:1197} we have
\begin{align}
  \label{eq:1502}
  \frac{d^{n-2}}{dv^{n-2}}\widehat{\jump{T^{tt}}}=P_0.
\end{align}
Integrating this yields
\begin{align}
  \label{eq:1503}
  \frac{d^{m}}{dv^{m}}\widehat{\jump{T^{tt}}}=P_{n-m-2}:1\leq m\leq n-3,\qquad \widehat{\jump{T^{tt}}}=P_{n-2,1}+2\dot{\alpha}_0\cp{\pp{T^{tt}}{\alpha}},
\end{align}
where for the second we used the second of \eqref{eq:1494}. We note that the first of \eqref{eq:1503} is only valid for $n\geq 4$.

We now go back to \eqref{eq:1381} and calculate $d^{n-2}b/dv^{n-2}$ using \eqref{eq:1489} and \eqref{eq:1499}. In the case $n\geq 4$ we have
\begin{align}
  \label{eq:1505}
  \frac{d^{n-2}b}{dv^{n-2}}&=-\sum_{m=0}^{n-2}\binom{n-2}{m}\frac{d^{n-2-m}}{dv^{n-2-m}}\left(\frac{1}{\widehat{\jump{T^{tt}}}}\right)\frac{d^m\hat{N}}{dv^m}\notag\\
&=-\sum_{m=1}^{n-3}\binom{n-2}{m}\frac{d^{n-2-m}}{dv^{n-2-m}}\left(\frac{1}{\widehat{\jump{T^{tt}}}}\right)\frac{d^m\hat{N}}{dv^m}\notag\\
&\qquad -\frac{d^{n-2}}{dv^{n-2}}\left(\frac{1}{\widehat{\jump{T^{tt}}}}\right)\hat{N}-\frac{1}{\widehat{\jump{T^{tt}}}}\frac{d^{n-2}\hat{N}}{dv^{n-2}}.
\end{align}
Each of the terms in the sum involves products of terms of the form given by the first of \eqref{eq:1491} and the first of \eqref{eq:1503}, therefore they are all a $P_1$. Making use of
\begin{align}
  \label{eq:1506}
  -\frac{d^{n-2}}{dv^{n-2}}\left(\frac{1}{\widehat{\jump{T^{tt}}}}\right)&=\frac{1}{\widehat{\jump{T^{tt}}}^2}\frac{d^{n-2}}{dv^{n-2}}\widehat{\jump{T^{tt}}}+P_1\notag\\
&=P_{1}-\frac{1}{4\dot{\alpha}_0\cp{\pp{T^{tt}}{\alpha}}}v\left(\hat{I}_{n,0}-\hat{I}_{n,1}\right),
\end{align}
where for the second equality we used \eqref{eq:1499} and the second of \eqref{eq:1503}, together with the second of \eqref{eq:1491}, we see that the first term in the second line of \eqref{eq:1505} is a $P_1$. For the second term in the second line of \eqref{eq:1505} we use \eqref{eq:1489} and the second of \eqref{eq:1503}. In the case $n=3$ the sum in \eqref{eq:1505} is not present. With the second line in this equation we deal in the same way as in the case $n\geq 4$. In the case $n=2$ we have
\begin{align}
  \label{eq:1509}
  b=-\frac{\hat{N}}{\widehat{\jump{T^{tt}}}}
\end{align}
and we make use of \eqref{eq:1501}, \eqref{eq:1504}. We conclude
\begin{align}
  \label{eq:1508}
  \frac{d^{n-2}b}{dv^{n-2}}=\frac{\kappa}{2}v\left((n+1)\hat{I}_{n,0}-n\hat{I}_{n,1}\right)+\left\{
        \begin{array}{ll}
          P_1 & n\geq 3,\\
          P_{1,1}& n=2.
        \end{array}
\right.
\end{align}
Integrating this we obtain
\begin{align}
  \label{eq:1541}
  \frac{d^mb}{dv^m}=P_{n-m-2}:m\leq n-3,\qquad b=P_{n-2,1},
\end{align}
where for the second we used \eqref{eq:1451}. The first is valid in the case $n\geq 3$.

We recall $\tilde{b}=vb$. We have
\begin{align}
  \label{eq:1511}
  \frac{d^{n-2}\tilde{b}}{dv^{n-2}}=v\frac{d^{n-2}b}{dv^{n-2}}+(n-2)\frac{d^{n-3}b}{dv^{n-3}}.
\end{align}
Since
\begin{align}
  \label{eq:1512}
  \int_0^vv'\hat{I}_{n,0}(v')dv'&=\frac{v^2}{2}\left(\hat{I}_{n,0}-\hat{I}_{n,2}\right),\\
  \label{eq:1934}
\int_0^vv'\hat{I}_{n,1}(v')dv'&=v^2\left(\hat{I}_{n,1}-\hat{I}_{n,2}\right),
\end{align}
we obtain from \eqref{eq:1508},
\begin{align}
  \label{eq:1513}
  \frac{d^{n-3}b}{dv^{n-3}}=\frac{\kappa}{2}v^2\left(\frac{n+1}{2}\hat{I}_{n,0}-n\hat{I}_{n,1}+\frac{n-1}{2}\hat{I}_{n,2}\right)+\left\{
  \begin{array}{ll}
    P_2 & n\geq 4,\\
    P_{2,1} & n =3.
  \end{array}
\right.
\end{align}
Substituting in \eqref{eq:1511} and using again \eqref{eq:1508} yields $d^{n-2}\tilde{b}/dv^{n-2}$ for $n\geq 3$. In the case $n=2$ we use directly \eqref{eq:1508}. We conclude
\begin{align}
  \label{eq:1558}
  \frac{d^{n-2}\tilde{b}}{dv^{n-2}}=\frac{\kappa}{2}J_n+\left\{
    \begin{array}{ll}
      P_2 & n\geq 4,\\
      P_{2,1} & n=3,\\
      P_{2,2} & n=2,
    \end{array}
\right.
\end{align}
(see \eqref{eq:1452} for the definition of $J_n$). Using the inductive hypothesis \eqref{eq:1197} we have
\begin{align}
  \label{eq:1515}
  \frac{d^{n-2}\tilde{b}}{dv^{n-2}}=P_{1}.
\end{align}
Therefore, by integration,
\begin{align}
  \label{eq:1516}
  \frac{d^m\tilde{b}}{dv^m}=P_{n-m-1}:1\leq m\leq n-3,\qquad \frac{d\tilde{b}}{dv}=P_{n-2,1},\qquad\tilde{b}=P_{n-1,2}.
\end{align}
The first one is valid in the case $n\geq 4$. The second and the third are valid in the case $n\geq 3$. We note  that in the case $n\geq 2$ we have
\begin{align}
  \label{eq:1559}
  \tilde{b}=\Landau(v^2).
\end{align}

We turn to $a$ given by \eqref{eq:1517}. We have
\begin{align}
  \label{eq:1510}
  a=-\frac{M}{\jump{T^{tt}}},
\end{align}
where
\begin{align}
  \label{eq:1524}
  M\coloneqq\frac{d}{dv}\jump{T^{tt}}.
\end{align}
In view of \eqref{eq:1402} we have
\begin{align}
  \label{eq:1519}
  \frac{d^{n-2}M}{dv^{n-2}}=\frac{d^{n-2}}{dv^{n-2}}\left\{F_\alpha\frac{d\alpha_+}{dv}+F_\beta\frac{d\beta_+}{dv}-F_t\frac{df}{dv}-F_w\frac{dz}{dv}\right\}.
\end{align}
As in \eqref{eq:1410}, \eqref{eq:1411} we have
\begin{align}
  \label{eq:1520}
  \frac{d^{n-2}}{dv^{n-2}}\left(F_\alpha\frac{d\alpha_+}{dv}\right)=P_1,\qquad \frac{d^{n-2}}{dv^{n-2}}\left(F_\beta\frac{d\beta_+}{dv}\right)=P_1.
\end{align}
As in \eqref{eq:1408} we have
\begin{align}
  \label{eq:1521}
  \frac{d^{n-2}}{dv^{n-2}}\left(F_t\frac{df}{dv}\right)=P_1.
\end{align}
Using the first of \eqref{eq:1403} together with \eqref{eq:1298} we obtain
\begin{align}
  \label{eq:1522}
  \frac{d^{n-2}}{dv^{n-2}}\left(F_w\frac{dz}{dv}\right)=P_0.
\end{align}
Therefore,
\begin{align}
  \label{eq:1523}
  \frac{d^{n-2}M}{dv^{n-2}}=P_0.
\end{align}

Now, from \eqref{eq:1417} we have
\begin{align}
  \label{eq:1525}
  M(0)=2\dot{\alpha}_0\cp{\pp{T^{tt}}{\alpha}}.
\end{align}
Recalling the second of \eqref{eq:1451}, we define
\begin{align}
  \label{eq:1526}
  M'&\coloneqq M-M(0),\\
  \label{eq:1935}
  \widehat{\jump{T^{tt}}}'&\coloneqq\widehat{\jump{T^{tt}}}-M(0).
\end{align}
We then have
\begin{align}
  \label{eq:1527}
  a=-\frac{M'+M(0)}{v\left(\widehat{\jump{T^{tt}}}'+M(0)\right)},
\end{align}
which, recalling \eqref{eq:1206}, implies
\begin{align}
  \label{eq:1528}
  \tilde{a}=\frac{\widehat{\jump{T^{tt}}}'-M'}{v\left(\widehat{\jump{T^{tt}}}'+M(0)\right)}.
\end{align}

Setting now
\begin{align}
  \label{eq:1529}
  \widehat{\widehat{\jump{T^{tt}}}}\coloneqq\frac{\widehat{\jump{T^{tt}}}'}{v},\qquad \hat{M}\coloneqq\frac{M'}{v},
\end{align}
we have
\begin{align}
  \label{eq:1530}
  \tilde{a}=\frac{\widehat{\widehat{\jump{T^{tt}}}}-\hat{M}}{\widehat{\jump{T^{tt}}}}.
\end{align}

Integrating \eqref{eq:1523} in view of \eqref{eq:1526} we obtain
\begin{align}
  \label{eq:1531}
  \frac{d^mM'}{dv^m}=P_{n-m-2},\qquad M'=P_{n-2,1}.
\end{align}
We now use \eqref{eq:1436} with $\hat{M}$, $M'$ in the role of $\hat{N}$, $N$ respectively. Since the null space of $A_{n-2}$ is the space of polynomials of degree $n-2$ with no constant term, we obtain
\begin{align}
  \label{eq:1532}
  \frac{d^{n-2}\hat{M}}{dv^{n-2}}=\Landau(1).
\end{align}
By \eqref{eq:1502} we have
\begin{align}
  \label{eq:1533}
  \frac{d^{n-2}}{dv^{n-2}}\widehat{\jump{T^{tt}}}'=P_0.
\end{align}
Integrating this yields
\begin{align}
  \label{eq:1534}
  \frac{d^m}{dv^{m}}\widehat{\jump{T^{tt}}}'=P_{n-m-2},\qquad \widehat{\jump{T^{tt}}}'=P_{n-2,1},
\end{align}
the second in view of the fact that $\widehat{\jump{T^{tt}}}'(0)=0$. Then, from \eqref{eq:1436} with $\widehat{\jump{T^{tt}}}'$, $\widehat{\widehat{\jump{T^{tt}}}}$ in the role of $N$, $\hat{N}$ respectively, we obtain
\begin{align}
  \label{eq:1535}
  \frac{d^{n-2}}{dv^{n-2}}\widehat{\widehat{\jump{T^{tt}}}}=\Landau(1).
\end{align}
The estimates \eqref{eq:1532}, \eqref{eq:1535} imply, through integration,
\begin{align}
  \label{eq:1547}
  \frac{d^{m}\hat{M}}{dv^{m}},\frac{d^{m}}{dv^{m}}\widehat{\widehat{\jump{T^{tt}}}}=P_{n-m-3}:m\leq n-3.
\end{align}

From \eqref{eq:1502}, \eqref{eq:1503} we have
\begin{align}
  \label{eq:1548}
  \frac{d^m}{dv^m}\widehat{\jump{T^{tt}}}=P_{n-m-2}:m\leq n-2.
\end{align}
\eqref{eq:1547}, \eqref{eq:1548} imply, through \eqref{eq:1530},
\begin{align}
  \label{eq:1536}
  \frac{d^{n-2}\tilde{a}}{dv^{n-2}}=\Landau(1).
\end{align}
Integrating this yields
\begin{align}
  \label{eq:1537}
  \frac{d^m\tilde{a}}{dv^m}=P_{n-m-3}:m\leq n-3.
\end{align}

In order to estimate $d^{n-1}\tilde{u}/dv^{n-1}$ we have to estimate $\tilde{a}_{n-2}$, $\tilde{b}_{n-2}$ (see \eqref{eq:1209}). From the first of \eqref{eq:1210} we have
\begin{align}
  \label{eq:1549}
  \tilde{a}_{n-2}=\left(\frac{d}{dv}+\tilde{a}\right)^{n-2}\tilde{a}.
\end{align}
Using now \eqref{eq:1536}, \eqref{eq:1537} we obtain
\begin{align}
  \label{eq:1550}
  \tilde{a}_{n-2}=\Landau(1).
\end{align}

We turn to $\tilde{b}_{n-2}$. We restrict ourselves first to the case $n\geq 3$. From the second of \eqref{eq:1210} we have
\begin{align}
  \label{eq:1538}
  \tilde{b}_{n-2}=\frac{d^{n-2}\tilde{b}}{dv^{n-2}}+\sum_{m=0}^{n-3}\sum_{l=0}^{n-3-m}\binom{n-3-m}{l}\frac{d^{n-3-m-l}\tilde{b}}{dv^{n-3-m-l}}\frac{d^l\tilde{a}_m}{dv^l}.
\end{align}
For the first term we use \eqref{eq:1558}. The double sum we rewrite as
\begin{align}
  \label{eq:1539}
  \sum_{j=0}^{n-3}\sum_{l=0}^j\binom{n-3-j+l}{l}\frac{d^{n-3-j}\tilde{b}}{dv^{n-3-j}}\frac{d^l\tilde{a}_{j-l}}{dv^l}.
\end{align}
Now, in view of the first of \eqref{eq:1210}, the factor $d^l\tilde{a}_{j-l}/dv^l$ involves $d^j\tilde{a}/dv^j$. Therefore, each of the terms in the inner sum in \eqref{eq:1539} involves
\begin{align}
  \label{eq:1542}
  \frac{d^{n-3-j}\tilde{b}}{dv^{n-3-j}}\frac{d^j\tilde{a}}{dv^j}.
\end{align}
Here $0\leq j\leq n-3$. In the case $j=n-3$ we use the third of \eqref{eq:1516} together with \eqref{eq:1537}. We obtain
\begin{align}
  \label{eq:1543}
  \tilde{b}\frac{d^{n-3}\tilde{a}}{dv^{n-3}}=P_{2,2}.
\end{align}
In the case $j=n-4$ we use the second of \eqref{eq:1516} together with \eqref{eq:1537}. We note that this case only shows up for $n\geq 4$. We obtain
\begin{align}
  \label{eq:1544}
  \frac{d\tilde{b}}{dv}\frac{d^{n-4}\tilde{a}}{dv^{n-4}}=P_{2,1}.
\end{align}
In the case $0\leq j\leq n-5$ we use the first of \eqref{eq:1516} together with \eqref{eq:1537}. We note that this case only shows up for $n\geq 5$. We obtain
\begin{align}
  \label{eq:1545}
  \frac{d^{n-3-j}\tilde{b}}{dv^{n-3-j}}\frac{d^j\tilde{a}}{dv^j}=P_2.
\end{align}
In the case $n=2$ we use $\tilde{b}_{n-2}=\tilde{b}_0$ and \eqref{eq:1558} in the case $n=2$. In view of the above we conclude
\begin{align}
  \label{eq:1561}
    \tilde{b}_{n-2}=\frac{\kappa}{2}J_n+\left\{
    \begin{array}{ll}
      P_2 & n\geq 4,\\
      P_{2,1} & n=3,\\
      P_{2,2} & n=2.
    \end{array}
\right.
\end{align}

We turn to $d^{n-1}u/dv^{n-1}$. Let us investigate first the case $n=2$. To find an estimate for $\tilde{u}$ we integrate \eqref{eq:1208} and obtain
\begin{align}
  \label{eq:1553}
  \tilde{u}=\int_0^ve^{\int_{v'}^{v}\tilde{a}_0(v'')dv''}\tilde{b}_0(v')dv'.
\end{align}
From \eqref{eq:1550} and \eqref{eq:1561} we have
\begin{align}
  \label{eq:1554}
  \tilde{a}_0=\Landau(1),\qquad \tilde{b}_0=\frac{\kappa}{2}J_2+P_{2,2}=\Landau(v^2).
\end{align}
Therefore,
\begin{align}
  \label{eq:1565}
  \left|\int_0^v\left(e^{\int_{v'}^v\tilde{a}_0(v'')dv''}-1\right)\tilde{b}_0(v')dv'\right|\leq Cv^4
\end{align}
and we obtain, through \eqref{eq:1553},
\begin{align}
  \label{eq:1566}
  \tilde{u}=P_{3,3}+\frac{\kappa}{2}\int_0^vJ_2(v')dv'.
\end{align}
Using now
\begin{align}
  \label{eq:1567}
  \int_0^vv'^2I_{2,0}(v')dv'&=\frac{1}{3}\left(v^3I_{2,0}-I_{2,3}\right),\\
  \label{eq:1936}
  \int_0^vv'I_{2,1}(v')dv'&=\frac{1}{2}\left(v^2I_{2,1}-I_{2,3}\right),
\end{align}
we obtain
\begin{align}
  \label{eq:1568}
  \tilde{u}=P_{3,3}+\frac{\kappa}{2}\left(v^3I_{2,0}-v^2I_{2,1}\right).
\end{align}
We note that $\tilde{u}=\Landau(v^3)$. From this together with \eqref{eq:1550} and \eqref{eq:1561}, in view of \eqref{eq:1208}, we obtain
\begin{align}
  \label{eq:1583}
  \frac{d\tilde{u}}{dv}=P_{2,2}+\frac{\kappa}{2}J_2.
\end{align}
Substituting this together with \eqref{eq:1568} into
\begin{align}
  \label{eq:1584}
  \frac{du}{dv}=\frac{1}{v}\frac{d\tilde{u}}{dv}-\frac{\tilde{u}}{v^2},
\end{align}
we find
\begin{align}
  \label{eq:1585}
  \frac{du}{dv}=P_{1,1}+\frac{\kappa}{2}v\left(2\hat{I}_{2,0}-\hat{I}_{2,1}\right).
\end{align}

Now we turn to the case $n\geq 3$. From $\tilde{u}=\Landau(v^3)$ together with \eqref{eq:1550} and \eqref{eq:1561} in \eqref{eq:1209} we obtain
\begin{align}
  \label{eq:1571}
  \frac{d^{n-1}\tilde{u}}{dv^{n-1}}=\frac{\kappa}{2}J_n+\left\{
    \begin{array}{ll}
      P_2 & n\geq 4,\\
      P_{2,1} & n=3.
    \end{array}
\right.
\end{align}
The $l$-fold iterated integral of $J_n$ is given by \eqref{eq:1477}. From \eqref{eq:1571} this gives $d^{n-1-l}\tilde{u}/dv^{n-1-l}$. Setting then $j=l+1$ we obtain
\begin{align}
  \label{eq:1572}
  \frac{d^{n-j}\tilde{u}}{dv^{n-j}}=\frac{\kappa}{2}v^{j+1}\sum_{m=0}^{j+1}\frac{(-1)^m(n-m)(n+1-m)}{m!(j+1-m)!}\hat{I}_{n,m}+\left\{
  \begin{array}{ll}
    P_{n+1,3} & j=n,\\
    P_{n,2} & j=n-1,\\
    P_{n-1,1} & j=n-2\\
    P_{j+1} & j\leq n-3.
  \end{array}\right.
\end{align}
The different behavior of the polynomial part is explained by the Taylor expansion of $\tilde{u}$ beginning with a cubic term. Now we use \eqref{eq:1436} with $\tilde{u}$, $u$ in the role of $N$, $\hat{N}$, respectively, and with $n+1$ in the role of $n$ (recall that $v\hat{N}=N$). I.e.~we use
\begin{align}
  \label{eq:1573}
  \frac{d^{n-1}u}{dv^{n-1}}=\frac{(n-1)!}{v^{n}}A_{n-1}\tilde{u}.
\end{align}
Setting $k=n-1$ and $j=n-i$ in \eqref{eq:1431} we have
\begin{align}
  \label{eq:1574}
  A_{n-1}\tilde{u}=\sum_{j=1}^n\frac{(-1)^{j-1}}{(n-j)!}v^{n-j}\frac{d^{n-j}\tilde{u}}{dv^{n-j}}.
\end{align}
Recalling that the null space of $A_{n-1}$ consists of all polynomials of degree $n-1$ with no constant term, we obtain, substituting \eqref{eq:1572},
\begin{align}
  \label{eq:1575}
  A_{n-1}\tilde{u}=P_{n+1,n}+\frac{\kappa}{2}v^{n+1}\sum_{j=1}^n\sum_{m=0}^{j+1}\frac{(-1)^{j+m-1}(n-m)(n+1-m)}{m!(n-j)!(j+1-m)!}\hat{I}_{n,m}.
\end{align}

We rewrite the double sum in \eqref{eq:1575} as
\begin{align}
  \label{eq:1576}
  \sum_{m=0}^{n+1}b_{n,m}\frac{(-1)^m(n-m)(n+1-m)}{m!}\hat{I}_{n,m},
\end{align}
where
\begin{align}
  \label{eq:1577}
  b_{n,m}\coloneqq\sum_{j=\max\{1,m-1\}}^n\frac{(-1)^{j-1}}{(n-j)!(j+1-m)!}.
\end{align}
Recalling $\tilde{a}_{n,m}$ given by \eqref{eq:1484}, we see that for $m\geq 2$ we have
\begin{align}
  \label{eq:1578}
  b_{n,m}=-\tilde{a}_{n,m-1}=\left\{
  \begin{array}{ll}
    0 & 2\leq m\leq n,\\
    (-1)^{n-1} & m=n+1.
  \end{array}\right.
\end{align}
On the other hand,
\begin{align}
  \label{eq:1579}
  b_{n,1}=-\tilde{a}_{n,0}+\frac{1}{n!}=\frac{1}{n!}
\end{align}
and
\begin{align}
  \label{eq:1580}
  b_{n,0}=\sum_{j=1}^n\frac{(-1)^{j+1}}{(n-j)!(j+1)!}=\frac{1}{(n+1)(n-1)!}.
\end{align}
Then \eqref{eq:1576} is
\begin{align}
  \label{eq:1581}
  \frac{n}{(n-1)!}\hat{I}_{n,0}-\frac{n-1}{(n-1)!}\hat{I}_{n,1}
\end{align}
and we conclude from \eqref{eq:1573},
\begin{align}
  \label{eq:1582}
  \frac{d^{n-1}u}{dv^{n-1}}=P_{1,0}+\frac{\kappa}{2}v\left\{n\hat{I}_{n,0}-(n-1)\hat{I}_{n,1}\right\}.
\end{align}

Now, from \eqref{eq:1585}, \eqref{eq:1582} we conclude, in view of $u=V-c_{+0}$,
\begin{align}
  \label{eq:1586}
  \frac{d^{n-1}V}{dv^{n-1}}=\frac{\kappa}{2}v\left(n\hat{I}_{n,0}-(n-1)\hat{I}_{n,1}\right)+\left\{
  \begin{array}{ll}
    P_{1,1} & n=2,\\
    P_{1} & n\geq 3.
  \end{array}\right.
\end{align}

\subsubsection{Estimate for $d^{n-1}\rho/dv^{n-1}$}
We recall the function $\rho$ given by
\begin{align}
  \label{eq:1222}
  \frac{1}{\gamma(v)}=\frac{c_{+0}-c_{-0}}{\kappa v}(1+\rho(v)),
\end{align}
where
\begin{align}
  \label{eq:1225}
  \gamma(v)=\frac{\bar{c}_+(v)-V(v)}{V(v)-\bar{c}_-(v)}.
\end{align}
Using also $u=V-c_{+0}$ and defining
\begin{align}
  \label{eq:1588}
  \nu\coloneqq\frac{\tilde{\nu}}{v},\qquad \textrm{where}\qquad\tilde{\nu}\coloneqq\bar{c}_+-c_{+0}-u,
\end{align}
we obtain
\begin{align}
  \label{eq:1587}
  \rho=\frac{\kappa}{c_{+0}-c_{-0}}\frac{u+c_{+0}-\bar{c}_-}{\nu}-1.
\end{align}

We have (recall $\bar{c}_+(v)=c_+(\alpha_+(v),\beta_+(v))$)
\begin{align}
  \label{eq:1590}
  \frac{d\tilde{\nu}}{dv}=\pp{c_+}{\alpha}(\alpha_+,\beta_+)\frac{d\alpha_+}{dv}+\pp{c_+}{\beta}(\alpha_+,\beta_+)\frac{d\beta_+}{dv}-\frac{du}{dv}.
\end{align}
Recalling
\begin{align}
  \label{eq:1592}
  \frac{d\alpha_+}{dv}(0)=\dot{\alpha}_0,\qquad \frac{d\beta_+}{dv}(0)=0,\qquad \cp{\pp{c_+}{\alpha}}\dot{\alpha}_0=\kappa
\end{align}
and \eqref{eq:1585} we obtain
\begin{align}
  \label{eq:1593}
  \frac{d\tilde{\nu}}{dv}(0)=\kappa.
\end{align}
Therefore,
\begin{align}
  \label{eq:1610}
  \nu(0)=\kappa,\qquad \rho(0)=0.
\end{align}

From \eqref{eq:1300}, \eqref{eq:1313} we have
\begin{align}
  \label{eq:1595}
  \frac{d^{n-1}\bar{c}_+}{dv^{n-1}}=P_1.
\end{align}
Together with \eqref{eq:1582} we obtain
\begin{align}
  \label{eq:1596}
  \frac{d^{n-1}\tilde{\nu}}{dv^{n-1}}=-\frac{\kappa}{2}v\left(n\hat{I}_{n,0}-(n-1)\hat{I}_{n,1}\right)+\left\{
  \begin{array}{ll}
    P_{1,1}+\kappa & n=2,\\
    P_{1} & n\geq 3.
  \end{array}\right.
\end{align}
We now use \eqref{eq:1456}, \eqref{eq:1462} and \eqref{eq:1458} to compute the $l$-fold iterated integral of this. In the role of $f_0$ we have $d^ny/dv^n$ and $vd^ny/dv^n$. We obtain
\begin{align}
  \label{eq:1597}
  \frac{d^{n-1-l}\tilde{\nu}}{dv^{n-1-l}}&=P_{l+1}-\frac{\kappa}{2}v^{l+1}\bigg\{\frac{n}{(l+1)!}\hat{I}_{n,0}+n\sum_{m=1}^l\frac{(-1)^m}{(m-1)!(l-m)!}\frac{1}{(m+1)}\hat{I}_{n,m+1}\notag\\
&\hspace{30mm} -\frac{n-1}{l!}\hat{I}_{n,1}-(n-1)\sum_{m=1}^l\frac{(-1)^m}{(m-1)!(l-m)!}\frac{1}{m}\hat{I}_{n,m+1}\bigg\}.
\end{align}
Setting then $j=l+1$, we obtain, after a straightforward computation,
\begin{align}
  \label{eq:1598}
  \frac{d^{n-j}\tilde{\nu}}{dv^{n-j}}=-\frac{\kappa}{2}v^j\sum_{m=0}^j\frac{(-1)^m(n-m)}{m!(j-m)!}\hat{I}_{n,m}+\left\{
  \begin{array}{ll}
    P_{n,1} & j=n,\\
    P_j & j\leq n-1,
  \end{array}\right.
\end{align}
where we also used the fact that $\tilde{\nu}(0)=0$.

We now apply \eqref{eq:1573}, \eqref{eq:1574} with $\nu$, $\tilde{\nu}$ in the role of $u$, $\tilde{u}$ respectively. Since the null space of $A_{n-1}$ consists of all polynomials of degree $n-1$ with no constant term, we obtain
\begin{align}
  \label{eq:1599}
  A_{n-1}\tilde{\nu}=P_{n,n}+\frac{\kappa}{2}v^n\sum_{j=1}^n\sum_{m=0}^j\frac{(-1)^{j+m}(n-m)}{m!(n-j)!(j-m)!}\hat{I}_{n,m}.
\end{align}
We rewrite the double sum as
\begin{align}
  \label{eq:1600}
  \sum_{m=0}^n\frac{(-1)^m(n-m)}{m!}a'_{n,m}\hat{I}_{n,m},
\end{align}
where
\begin{align}
  \label{eq:1601}
  a'_{n,m}\coloneqq\sum_{j=\max\{1,m\}}^n\frac{(-1)^j}{(n-j)!(j-m)!}.
\end{align}
Comparing with the coefficients $\tilde{a}_{n,m}$ given by \eqref{eq:1484} we see that for $m\geq 1$ we have $a'_{n,m}=\tilde{a}_{n,m}$, but for $m=0$ we have
\begin{align}
  \label{eq:1602}
  a'_{n,0}=\tilde{a}_{n,0}-\frac{1}{n!}.
\end{align}
From \eqref{eq:1603} we obtain
\begin{align}
  \label{eq:1604}
  a'_{n,m}=\left\{
    \begin{array}{ll}
      -\frac{1}{n!} & m=0,\\
      (-1)^n & m=n,\\
      0 & m\neq 0,n.
    \end{array}\right.
\end{align}
Therefore, \eqref{eq:1600} reduces to $-\hat{I}_{n,0}/(n-1)!$ and we conclude from \eqref{eq:1599},
\begin{align}
  \label{eq:1605}
  A_{n-1}\tilde{\nu}=P_{n,n}-\frac{\kappa v^n}{2(n-1)!}\hat{I}_{n,0}.
\end{align}
Hence, from \eqref{eq:1573} with $\nu$, $\tilde{\nu}$ in the role of $u$, $\tilde{u}$ respectively,
\begin{align}
  \label{eq:1606}
  \frac{d^{n-1}\nu}{dv^{n-1}}=P_0-\frac{\kappa}{2}\hat{I}_{n,0}.
\end{align}

Using the inductive hypothesis \eqref{eq:1197} in \eqref{eq:1582} and \eqref{eq:1606} we obtain
\begin{align}
  \label{eq:1607}
  \frac{d^{n-1}u}{dv^{n-1}}=P_0,\qquad \frac{d^{n-1}\nu}{dv^{n-1}}=\Landau(1).
\end{align}
From \eqref{eq:1300}, \eqref{eq:1313} we obtain, in view of $\bar{c}_-=c_-(\alpha_+,\beta_+)$,
\begin{align}
  \label{eq:1608}
  \frac{d^{n-1}\bar{c}_-}{dv^{n-1}}=P_1.
\end{align}
Integrating \eqref{eq:1607}, \eqref{eq:1608}, we obtain for $m\leq n-2$
\begin{align}
  \label{eq:1609}
  \frac{d^mu}{dv^m}=P_{n-m-1},\qquad \frac{d^m\nu}{dv^m}=P_{n-m-2},\qquad \frac{d^m\bar{c}_-}{dv^m}=P_{n-m}.
\end{align}

By \eqref{eq:1610}, \eqref{eq:1606}, \eqref{eq:1609} we obtain (see \eqref{eq:1587})
\begin{align}
  \label{eq:1223}
  \frac{d^{n-1}\rho}{dv^{n-1}}&=P_0-\frac{\kappa}{c_{+0}-c_{-0}}\frac{u+c_{+0}-\bar{c}_-}{\nu^2}\frac{d^{n-1}\nu}{dv^{n-1}}\notag\\
&=P_0+\frac{1}{2}\hat{I}_{n,0}.
\end{align}
Setting $f_0=\frac{d^ny}{dv^n}$ in \eqref{eq:1455} so that $g_0=\hat{I}_{n,0}$ (see \eqref{eq:1456}), the $l$-fold iterated integral of $\hat{I}_{n,0}$ is given by \eqref{eq:1458} with $0$ in the role of $k$, i.e.~it is
\begin{align}
  \label{eq:1946}
  v^l\left\{\frac{\hat{I}_{n,0}}{l!}+\sum_{m=1}^l\frac{(-1)^m}{m!(l-m)!}\hat{I}_{n,m}\right\}.
\end{align}
From this we obtain $d^{n-1-l}\rho/dv^{n-1-l}$. Setting $j=l+1$ and taking into account that $\rho(0)=0$ we find
\begin{align}
  \label{eq:1612}
  \frac{d^{n-j}\rho}{dv^{n-j}}=\frac{1}{2}v^{j-1}\sum_{m=0}^{j-1}\frac{(-1)^m}{m!(j-1-m)!}\hat{I}_{n,m}+\left\{
   \begin{array}{ll}
     P_{n-1,1} & j=n,\\
     P_{j-1} & j \leq n-1.
   \end{array}\right.
\end{align}

Let $\hat{\rho}\coloneqq\rho/v$. We now apply \eqref{eq:1573}, \eqref{eq:1574} with $\hat{\rho}$, $\rho$ in the role of $u$, $\tilde{u}$ respectively. Since the null space of $A_{n-1}$ consists of all polynomials of degree $n-1$ with no constant term, we obtain
\begin{align}
  \label{eq:1613}
  A_{n-1}\rho=\Landau(v^n)+\frac{1}{2}v^{n-1}\sum_{j=1}^n\sum_{m=0}^{j-1}\frac{(-1)^{j+m-1}}{m!(n-j)!(j-1-m)!}\hat{I}_{n,m}.
\end{align}
We rewrite the double sum as
\begin{align}
  \label{eq:1614}
  \sum_{m=0}^{n-1}\frac{(-1)^m}{m!}c_{n,m}\hat{I}_{n,m},
\end{align}
where
\begin{align}
  \label{eq:1615}
  c_{n,m}\coloneqq\sum_{j=m+1}^n\frac{(-1)^{j-1}}{(n-j)!(j-1-m)!}.
\end{align}
Comparing with the coefficients $\tilde{a}_{n,m}$ given by \eqref{eq:1484} we see that $c_{n,m}=-\tilde{a}_{n,m+1}$. Then, from \eqref{eq:1603},
\begin{align}
  \label{eq:1617}
  c_{n,m}=\left\{
    \begin{array}{ll}
      0 & 0\leq m \leq n-2,\\
      (-1)^{n-1} & m=n-1.
    \end{array}\right.
\end{align}
Hence \eqref{eq:1614} is $\hat{I}_{n,n-1}/(n-1)!$ and \eqref{eq:1613} is
\begin{align}
  \label{eq:1618}
  A_{n-1}\rho=\Landau(v^n)+\frac{1}{2}\frac{v^{n-1}}{(n-1)!}\hat{I}_{n,n-1}.
\end{align}
Then \eqref{eq:1573} with $\hat{\rho}$, $\rho$ in the role of $u$, $\tilde{u}$ respectively, yields
\begin{align}
  \label{eq:1619}
  \frac{d^{n-1}\hat{\rho}}{dv^{n-1}}=\Landau(1)+\frac{1}{2v}\hat{I}_{n,n-1}.
\end{align}

Now we find the $l$-fold iterated integral of \eqref{eq:1619}. Since
\begin{align}
  \label{eq:1726}
  \frac{1}{v}\hat{I}_{n,n-1}=\frac{1}{v^n}I_{n,n-1},
\end{align}
we use \eqref{eq:1458} with $f_0=v^{n-1}\frac{d^ny}{dv^n}$ which then implies $g_0=I_{n,n-1}$ and $k=-n$, so $g_{k+m}=I_{n,m-1}$ (see \eqref{eq:1456}, \eqref{eq:1462}). Since $k<0$ we use \eqref{eq:1725} for the first term in \eqref{eq:1458}. Therefore, the $l$-fold iterated integral of \eqref{eq:1619} is
\begin{align}
  \label{eq:1727}
  \frac{d^{n-1-l}\hat{\rho}}{dv^{n-1-l}}=P_{l-1}+\frac{v^{l-1}}{2}\left\{(-1)^l\frac{(n-l-1)!}{(n-1)!}\hat{I}_{n,n-1}-\sum_{m=1}^l\frac{(-1)^m}{(m-1)!(l-m)!(n-m)}\hat{I}_{n,m-1}\right\}.
\end{align}
From the inductive hypothesis \eqref{eq:1197} (see also \eqref{eq:1346}) we get
\begin{align}
  \label{eq:1728}
  \frac{d^m\hat{\rho}}{dv^m}=\left\{
    \begin{array}{ll}
      \Landau(1) & m=n-2,\\
      P_{n-m-3} & 0\leq m\leq n-3.
    \end{array}\right.
\end{align}

\subsubsection{Inductive Step for Derivatives of $t$, $\alpha$, $\beta$ Part One}
In the following, using the assumptions \eqref{eq:1233}, \ldots, \eqref{eq:1293}, we prove
\begin{align}
  \label{eq:1236}
  \frac{\partial^{n-1}t}{\partial u^{n-1}}=Q_2,\qquad \frac{\partial^{n-1}t}{\partial v^{n-1}}&=Q_1,\\
  \label{eq:1638}
  \frac{\partial^{i+j}}{\partial u^i\partial v^j}\left(\pppp{t}{u}{v}\right)&=Q_1:i+j=n-2.
\end{align}
This will then establish ($t_{p,n}$), ($t_{m,n}$). We recall the equation for $t(u,v)$ satisfied for $(u,v)\in T_\varepsilon$
\begin{align}
  \label{eq:1622}
  \pppp{t}{u}{v}+\mu\pp{t}{v}-\nu\pp{t}{u}=0,
\end{align}
where
\begin{align}
  \label{eq:1229}
  \mu&=\frac{1}{c_+-c_-}\pp{c_+}{u}=\frac{1}{c_+-c_-}\left(\pp{c_+}{\alpha}\pp{\alpha}{u}+\pp{c_+}{\beta}\pp{\beta}{u}\right),\\
  \label{eq:1642}
  \nu&=\frac{1}{c_+-c_-}\pp{c_-}{v}=\frac{1}{c_+-c_-}\left(\pp{c_-}{\alpha}\pp{\alpha}{v}+\pp{c_-}{\beta}\pp{\beta}{v}\right).
\end{align}

We first prove \eqref{eq:1236}, \eqref{eq:1638} for $n=2$. For \eqref{eq:1236} in the case $n=2$ we make use of what has already been established in the existence proof (see the proposition in the end of the chapter dealing with the solution of the fixed boundary problem), i.e.~we have
\begin{align}
  \label{eq:1626}
  \pp{t}{u}=Q_{2,2},\qquad \pp{t}{v}=Q_{1,1}.
\end{align}
Using the inductive hypotheses \eqref{eq:1234}, \eqref{eq:1293} with $n=2$ together with \eqref{eq:1626} in \eqref{eq:1622} we obtain
\begin{align}
  \label{eq:1627}
  \pppp{t}{u}{v}=Q_{1,1},
\end{align}
which is \eqref{eq:1638} in the case $n=2$.

We now consider the case $n\geq 3$. We have, for $i+j= n-2$,
\begin{align}
  \label{eq:1237}
  \frac{\partial^{i+j}}{\partial u^i\partial v^j}\left(\pppp{t}{u}{v}\right)=\frac{\partial^{i+j}}{\partial u^i\partial v^j}\left(-\mu\pp{t}{v}+\nu\pp{t}{u}\right).
\end{align}
Let us first consider the case $i,j\geq 1$. We note that this case only shows up for $n\geq 4$. The case $n=3$ will be contained in the proof of the cases $i=0$ and $j=0$. From \eqref{eq:1641}, \eqref{eq:1639}, \eqref{eq:1235}, \eqref{eq:1292} together with \eqref{eq:1626} we obtain
\begin{align}
  \label{eq:1623}
  \frac{\partial^{i+j}\mu}{\partial u^i\partial v^j}\pp{t}{v}=Q_{1,1},\qquad\frac{\partial^{i+j}\nu}{\partial u^i\partial v^j}\pp{t}{u}=Q_{1,1}.
\end{align}
From \eqref{eq:1637} we have
\begin{align}
  \label{eq:1624}
  \frac{\partial^{i+j+1}t}{\partial u^i\partial v^{j+1}},\frac{\partial^{i+j+1}t}{\partial u^{i+1}\partial v^{j}}=Q_1.
\end{align}
All other derivatives of $\alpha$, $\beta$ and $t$ appearing in \eqref{eq:1237} are of the order less than $n-1$ and are, by the inductive hypothesis, all $Q_1$. Therefore,
\begin{align}
  \label{eq:1239}
  \frac{\partial^{i+j}}{\partial u^i\partial v^j}\left(\pppp{t}{u}{v}\right)=Q_1:i+j= n-2,i,j\geq 1.
\end{align}
This is \eqref{eq:1638} in the case $i,j\geq 1$.

We now study \eqref{eq:1237} in the case $i=0$. We note that in the case $n=3$ only this case (or the other case, namely $j=0$) shows up and not the case $i,j\geq 1$. We define
\begin{align}
  \label{eq:1635}
  R_{v,n-1}\coloneqq\frac{\partial^{n-2}}{\partial v^{n-2}}\left(-\mu\pp{t}{v}+\nu\pp{t}{u}\right)+\mu\frac{\partial^{n-1}t}{\partial v^{n-1}}
\end{align}
and rewrite \eqref{eq:1237} in the case $i=0$ as
\begin{align}
  \label{eq:1240}
  \frac{\partial}{\partial u}\left(\frac{\partial^{n-1}t}{\partial v^{n-1}}\right)+\mu \frac{\partial^{n-1}t}{\partial v^{n-1}}=R_{v,n-1},
\end{align}
which implies
\begin{align}
  \label{eq:1243}
  \frac{\partial^{n-1}t}{\partial v^{n-1}}(u,v)=e^{-L(u,v)}\left\{\frac{\partial^{n-1}t}{\partial v^{n-1}}(v,v)+\int_v^ue^{L(u',v)}R_{v,n-1}(u',v)du'\right\},
\end{align}
where we recall
\begin{align}
  \label{eq:1636}
  L(u,v)=\int_v^u\mu(u',v)du'.
\end{align}

We have
\begin{align}
  \label{eq:1629}
  \frac{\partial^{n-2}}{\partial v^{n-2}}\left(\mu\pp{t}{v}\right)=\mu\frac{\partial^{n-1}t}{\partial v^{n-1}}+\frac{\partial^{n-2}\mu}{\partial v^{n-2}}\pp{t}{v}+\sum_{m=1}^{n-3}\binom{n-2}{m}\frac{\partial^{n-2-m}\mu}{\partial v^{n-2-m}}\frac{\partial^{m+1}t}{\partial v^{m+1}}.
\end{align}
The terms in the sum involve derivatives of $t$ w.r.t.~$v$ of order at most $n-2$ which, by the inductive hypotheses \eqref{eq:1233}, are all $Q_1$. The terms in the sum also involve derivatives of $\mu$ w.r.t.~$v$ of order at most $n-3$ which in turn, through \eqref{eq:1229}, involve derivatives of $\alpha$, $\beta$ of order at most $n-2$. By the inductive hypothesis \eqref{eq:1235}, \eqref{eq:1641}, \eqref{eq:1292}, \eqref{eq:1639}, each of these terms is a $Q_1$. Therefore, the sum is a $Q_1$. The second term in \eqref{eq:1629} involves mixed derivatives of $\alpha$ and $\beta$ of order $n-1$ and pure derivatives of order $n-2$ which, by the inductive hypothesis \eqref{eq:1235}, \eqref{eq:1641}, \eqref{eq:1292}, \eqref{eq:1639}, are a $Q_1$. Therefore,
\begin{align}
  \label{eq:1630}
  \frac{\partial^{n-2}}{\partial v^{n-2}}\left(\mu\pp{t}{v}\right)=\mu\frac{\partial^{n-1}t}{\partial v^{n-1}}+Q_1.
\end{align}
We also have
\begin{align}
  \label{eq:1631}
  \frac{\partial^{n-2}}{\partial v^{n-2}}\left(\nu\pp{t}{u}\right)=\frac{\partial^{n-2}\nu}{\partial v^{n-2}}\pp{t}{u}+\sum_{m=1}^{n-2}\binom{n-2}{m}\frac{\partial^{n-2-m}\nu}{\partial v^{n-2-m}}\frac{\partial^{m+1}t}{\partial v^{m}\partial u}.
\end{align}
The first term on the right involves pure derivatives of $\alpha$, $\beta$ w.r.t.~$v$ of order at most $n-1$. These derivatives are, by the inductive hypothesis \eqref{eq:1293}, all $Q_0$. Making use of the first of \eqref{eq:1626} we see that the first term is a $Q_1$. In view of the inductive hypothesis each of the terms in the sum is also a $Q_1$. Therefore,
\begin{align}
  \label{eq:1632}
  \frac{\partial^{n-2}}{\partial v^{n-2}}\left(\nu\pp{t}{u}\right)=Q_1.
\end{align}
From \eqref{eq:1635}, \eqref{eq:1630}, \eqref{eq:1632} we obtain
\begin{align}
  \label{eq:1241}
  R_{v,n-1}=Q_1.
\end{align}

We recall
\begin{align}
  \label{eq:1643}
  \pp{t}{u}(v,v)=\gamma(v)\pp{t}{v}(v,v),
\end{align}
where $\gamma$ is given by \eqref{eq:1225}. Differentiating $n-2$ times we obtain
\begin{align}
  \label{eq:1644}
  \left\{\left(\pp{}{v}+\pp{}{u}\right)^{n-2}\pp{t}{u}\right\}(v,v)=\sum_{l=0}^{n-2}\binom{n-2}{l}\frac{d^l\gamma}{dv^l}(v)\left\{\left(\pp{}{v}+\pp{}{u}\right)^{n-2-l}\pp{t}{v}\right\}(v,v).
\end{align}
Defining now
\begin{align}
  \label{eq:1645}
  a_k(v)&\coloneqq\left\{\left(\pp{}{v}+\pp{}{u}\right)^{k-1}\pp{t}{v}\right\}(v,v),\\
  \label{eq:1649}
  b_k(v)&\coloneqq\left\{\left(\pp{}{v}+\pp{}{u}\right)^{k-1}\pp{t}{u}\right\}(v,v),
\end{align}
we have
\begin{align}
  \label{eq:1646}
  a_k(v)+b_k(v)&=\left\{\left(\pp{}{v}+\pp{}{u}\right)^kt\right\}(v,v)\notag\\
&=\frac{d^kf}{dv^k}(v).
\end{align}
Also, \eqref{eq:1644} reads
\begin{align}
  \label{eq:1647}
  b_{n-1}&=\sum_{l=0}^{n-2}\binom{n-2}{l}\frac{d^l\gamma}{dv^l}a_{n-1-l}\notag\\
&=\gamma a_{n-1}+\sum_{l=1}^{n-2}\binom{n-2}{l}\frac{d^l\gamma}{dv^l}a_{n-1-l}.
\end{align}

Using now the relation \eqref{eq:1646} with $n-1$ in the role of $k$, substituting \eqref{eq:1647} and solving for $a_{n-1}$ yields
\begin{align}
  \label{eq:1245}
  a_{n-1}=\frac{1}{1+\gamma}\left\{\frac{d^{n-1}f}{dv^{n-1}}-\sum_{l=1}^{n-2}\binom{n-2}{l}\frac{d^l\gamma}{dv^l}a_{n-1-l}\right\}.
\end{align}
By the third case of \eqref{eq:1213} the first term in the curly bracket is a $P_1$. From \eqref{eq:1233}, \eqref{eq:1637} we have
\begin{align}
  \label{eq:1650}
  a_k=P_1:k\leq n-2.
\end{align}
From \eqref{eq:1222} we have
\begin{align}
  \label{eq:1652}
  \gamma=\frac{\kappa}{c_{+0}-c_{-0}}\,\frac{v}{1+\rho}.
\end{align}
From \eqref{eq:1223} we have
\begin{align}
  \label{eq:1651}
  \frac{d^{n-1}\rho}{dv^{n-1}}=\Landau(1).
\end{align}
We therefore find (recall that $\rho(0)=0$)
\begin{align}
  \label{eq:1653}
  \frac{d^{n-1}\gamma}{dv^{n-1}}=P_0.
\end{align}
This implies
\begin{align}
  \label{eq:1654}
  \frac{d^m\gamma}{dv^m}=P_{n-m-1}.
\end{align}
From the above we deduce
\begin{align}
  \label{eq:1706}
  a_{n-1}=P_1.
\end{align}

From \eqref{eq:1645} with $n-1$ in the role of $k$ we have
\begin{align}
  \label{eq:1655}
  \frac{\partial^{n-1}t}{\partial v^{n-1}}(v,v)=a_{n-1}(v)-\sum_{l=1}^{n-2}\binom{n-2}{l}\frac{\partial^{n-1}t}{\partial v^{n-1-l}\partial u^l}(v,v).
\end{align}
By \eqref{eq:1637} all the terms in the sum are a $P_1$. Therefore,
\begin{align}
  \label{eq:1247}
  \frac{\partial^{n-1}t}{\partial v^{n-1}}(v,v)=P_1(v),
\end{align}
which, together with \eqref{eq:1241}, through \eqref{eq:1243}, implies
\begin{align}
  \label{eq:1248}
  \frac{\partial^{n-1}t}{\partial v^{n-1}}(u,v)=Q_1.
\end{align}
This is the second of \eqref{eq:1236}. Using this in \eqref{eq:1240} we obtain \eqref{eq:1638} in the case $i=0$.

We now study \eqref{eq:1237} in the case $j=0$. We define
\begin{align}
  \label{eq:1657}
  R_{u,n-1}\coloneqq\frac{\partial ^{n-2}}{\partial u^{n-2}}\left(-\mu\pp{t}{v}+\nu\pp{t}{u}\right)-\nu\frac{\partial^{n-1}t}{\partial u^{n-1}}
\end{align}
and rewrite \eqref{eq:1237} in the case $j=0$ as
\begin{align}
  \label{eq:1658}
  \pp{}{v}\left(\frac{\partial^{n-1}t}{\partial u^{n-1}}\right)-\nu \frac{\partial^{n-1}t}{\partial u^{n-1}}=R_{u,n-1},
\end{align}
which implies
\begin{align}
  \label{eq:1659}
  \frac{\partial^{n-1}t}{\partial u^{n-1}}(u,v)=e^{-K(u,v)}\left\{\frac{d^{n-1}h}{du^{n-1}}(u)-\int_0^ve^{K(u,v')}R_{u,n-1}(u,v')dv'\right\},
\end{align}
where we recall
\begin{align}
  \label{eq:1660}
  K(u,v)=\int_0^v(-\nu)(u,v')dv'
\end{align}
and the initial condition $t(u,0)=h(u)$. We assume $h$ to be a smooth function. Analogous to the treatment of $R_{v,n-1}$ (see \eqref{eq:1629}, \ldots, \eqref{eq:1632}) we find
\begin{align}
  \label{eq:1661}
  R_{u,n-1}=Q_1,
\end{align}
which, through \eqref{eq:1659}, implies
\begin{align}
  \label{eq:1249}
  \frac{\partial^{n-1}t}{\partial u^{n-1}}(u,v)=Q_2.
\end{align}
This is the first of \eqref{eq:1236}. Using this in \eqref{eq:1658} we obtain \eqref{eq:1638} in the case $j=0$. We have thus shown \eqref{eq:1236}, \eqref{eq:1638}, i.e.~we have established ($t_{p,n}$), ($t_{m,n}$).

Now we turn to show
\begin{align}
  \label{eq:1663}
  \frac{\partial^{n-1}\alpha}{\partial u^{n-1}}=Q_2,\qquad \frac{\partial^{n-1}\alpha}{\partial v^{n-1}}&=Q_1,\\
    \label{eq:1664}
\frac{\partial^{i+j}}{\partial u^i\partial v^j}\left(\pppp{\alpha}{u}{v}\right)&=Q_1:i+j= n-2,\\
      \label{eq:1665}
  \frac{\partial^{n-1}\beta}{\partial u^{n-1}}=Q_2,\qquad\frac{\partial^{n-1}\beta}{\partial v^{n-1}}&=Q_1,\\
        \label{eq:1666}
  \frac{\partial^{i+j}}{\partial u^i\partial v^j}\left(\pppp{\beta}{u}{v}\right)&=Q_1:i+j= n-2.
\end{align}
This will then establish ($\alpha_{p,n}$), ($\alpha_{m,n}$), ($\beta_{p,n}$) and ($\beta_{m,n}$).

Let us recall the system of equations for $\alpha$, $\beta$
\begin{align}
  \label{eq:1251}
  \pp{\alpha}{v}&=\pp{t}{v}\tilde{A}(\alpha,\beta,r),\\
  \label{eq:1252}
  \pp{\beta}{u}&=\pp{t}{u}\tilde{B}(\alpha,\beta,r),
\end{align}
which implies
\begin{align}
  \label{eq:1253}
  \alpha(u,v)&=\alpha_i(u)+\int_0^v\left\{\pp{t}{v}\tilde{A}(\alpha,\beta,r)\right\}(u,v')dv',\\
  \beta(u,v)&=\beta_+(v)+\int_v^u\left\{\pp{t}{u}\tilde{B}(\alpha,\beta,r)\right\}(u',v)du'.\label{eq:1254}
\end{align}
Here $\alpha_i$ is given by the initial conditions on $\underline{C}$ and we assume $\alpha_i$ to be smooth. $\beta_+$ is given by the jump condition and we recall $\jump{\beta}=\jump{\alpha}^3G(\alpha_+,\alpha_-,\beta_-)$.

From \eqref{eq:1253} we obtain
\begin{align}
  \label{eq:1255}
  \frac{\partial^{n-1}\alpha}{\partial u^{n-1}}(u,v)=\frac{d^{n-1}\alpha_i}{dv^{n-1}}(u)+\int_0^v\left\{\sum_{i=0}^{n-1}\binom{n-1}{i}\frac{\partial^{i+1}t}{\partial u^i\partial v}\frac{\partial^{n-1-i}\tilde{A}}{\partial u^{n-1-i}}\right\}(u,v')dv'.
\end{align}
Here we use the abbreviation $\tilde{A}$ for $\tilde{A}(\alpha,\beta,r)$. We split the sum into
\begin{align}
  \label{eq:1676}
  \sum_{i=1}^{n-2}\binom{n-1}{i}\frac{\partial^{i+1}t}{\partial u^i\partial v}\frac{\partial^{n-1-i}\tilde{A}}{\partial u^{n-1-i}}+\pp{t}{v}\frac{\partial^{n-1}\tilde{A}}{\partial u^{n-1}}+\frac{\partial^nt}{\partial u^{n-1}\partial v}\tilde{A}.
\end{align}
From
\begin{align}
  \label{eq:1677}
  \tilde{A}(u,v)=\tilde{A}(\alpha(u,v),\beta(u,v),r(u,v))
\end{align}
and recalling the second of the Hodograph system
\begin{align}
  \label{eq:1678}
  \pp{r}{v}&=c_+(\alpha,\beta)\pp{t}{v},\\
  \label{eq:1680}
  \pp{r}{u}&=c_-(\alpha,\beta)\pp{t}{u},
\end{align}
we see that in the second term of \eqref{eq:1676} there are involved the partial derivatives of $\alpha$, $\beta$ and $t$ w.r.t.~$u$ of order at most $n-1$. Using now the assumption \eqref{eq:1234} for the partial derivatives of $\alpha$ and $\beta$ together with \eqref{eq:1249} we obtain
\begin{align}
  \label{eq:1679}
  \frac{\partial^{n-1}\tilde{A}}{\partial u^{n-1}}=Q_0.
\end{align}
Together with the second of \eqref{eq:1626} we find that the second term in \eqref{eq:1676} is a $Q_{1,1}$. From \eqref{eq:1638} in the case $j=0$, which was established above, we find that the third term in \eqref{eq:1676} is a $Q_1$. Each of the terms in the sum in \eqref{eq:1676} involves derivatives of $\tilde{A}$ of order less than $n-1$ and mixed derivatives of $t$ of order less than $n$. These terms are being taken care of by the assumptions \eqref{eq:1233}, \eqref{eq:1637}, \eqref{eq:1235}, \eqref{eq:1641}, \eqref{eq:1292}, \eqref{eq:1639}. Therefore, taking into account $\int_0^vQ_1(u,v')dv'=Q_2$, we find
\begin{align}
  \label{eq:1257}
  \frac{\partial^{n-1}\alpha}{\partial u^{n-1}}=Q_2.
\end{align}
We note that in the case $n=2$ the sum in \eqref{eq:1676} is not present and instead of using \eqref{eq:1638} we use \eqref{eq:1627} to deal with the last term in \eqref{eq:1676}.

From \eqref{eq:1252} we obtain
\begin{align}
  \label{eq:1258}
  \frac{\partial^{n-1}\beta}{\partial u^{n-1}}=\sum_{i=0}^{n-2}\binom{n-2}{i}\frac{\partial^{i+1}t}{\partial u^{i+1}}\frac{\partial^{n-2-i}\tilde{B}}{\partial u^{n-2-i}}.
\end{align}
We split the sum into
\begin{align}
  \label{eq:1682}
  \sum_{i=0}^{n-3}\binom{n-2}{i}\frac{\partial^{i+1}t}{\partial u^{i+1}}\frac{\partial^{n-2-i}\tilde{B}}{\partial u^{n-2-i}}+\frac{\partial^{n-1}t}{\partial u^{n-1}}\tilde{B}.
\end{align}
The first ones of \eqref{eq:1233}, \eqref{eq:1235}, \eqref{eq:1292} in conjunction with \eqref{eq:1680} imply
\begin{align}
  \label{eq:1683}
  \frac{\partial^{n-2-i}\tilde{B}}{\partial u^{n-2-i}}=Q_2:0\leq i\leq n-3.
\end{align}
Using the first of \eqref{eq:1233} we find that the sum in \eqref{eq:1682} is a $Q_2$. Now, $\alpha$, $\beta$ and $r$ being continuously differentiable as established in the existence proof, we obtain from \eqref{eq:1253}, \eqref{eq:1254} together with \eqref{eq:1626} and \eqref{eq:1313} that $\alpha=Q_2$, $\beta=Q_2$, which implies
\begin{align}
  \label{eq:1260}
  \tilde{A},\tilde{B}=Q_2.
\end{align}
From the second of this together with \eqref{eq:1249} we obtain that also the second term in \eqref{eq:1682} is a $Q_2$. Therefore,
\begin{align}
  \label{eq:1261}
  \frac{\partial^{n-1}\beta}{\partial u^{n-1}}=Q_2.
\end{align}
We note that in the case $n=2$ the sum in \eqref{eq:1682} is not present and for the second term we use the first of \eqref{eq:1626}.

From \eqref{eq:1251} we obtain
\begin{align}
  \label{eq:1266}
  \frac{\partial^{n-1}\alpha}{\partial v^{n-1}}=\sum_{i=0}^{n-2}\binom{n-2}{i}\frac{\partial^{i+1}t}{\partial v^{i+1}}\frac{\partial^{n-2-i}\tilde{A}}{\partial v^{n-2-i}}.
\end{align}
We split the sum into
\begin{align}
  \label{eq:1684}
  \sum_{i=0}^{n-3}\binom{n-2}{i}\frac{\partial^{i+1}t}{\partial v^{i+1}}\frac{\partial^{n-2-i}\tilde{A}}{\partial v^{n-2-i}}+\frac{\partial^{n-1}t}{\partial v^{n-1}}\tilde{A}.
\end{align}
Each of the second ones of \eqref{eq:1233}, \eqref{eq:1235}, \eqref{eq:1292} in conjunction with \eqref{eq:1678} imply
\begin{align}
  \label{eq:1685}
  \frac{\partial^{n-2-i}\tilde{A}}{\partial v^{n-2-i}}=Q_1:0\leq i\leq n-3.
\end{align}
Using the second of \eqref{eq:1233} we find that the sum in \eqref{eq:1684} is a $Q_1$. From the first of \eqref{eq:1260} together with \eqref{eq:1248} we obtain that also the second term in \eqref{eq:1684} is a $Q_1$. Therefore,
\begin{align}
  \label{eq:1262}
  \frac{\partial^{n-1}\alpha}{\partial v^{n-1}}=Q_1.
\end{align}
We note that in the case $n=2$ the sum in \eqref{eq:1684} is not present and for the second term we use the second of \eqref{eq:1626}.

For $f=f(u,v)$ we define
\begin{align}
  \label{eq:1667}
  F(a,v)\coloneqq\int_a^vf(v,v')dv'.
\end{align}
We claim
\begin{align}
  \label{eq:1668}
  \frac{\partial^kF}{\partial v^k}(a,v)=\int_a^v\frac{\partial^kf}{\partial u^k}(v,v')dv'+\sum_{l=0}^{k-1}\binom{k}{l+1}\frac{\partial^{k-1}f}{\partial u^{k-1-l}\partial v^l}(v,v).
\end{align}
We prove this claim by induction. It is satisfied for $k=1$. Let it now hold for $k$. Then
\begin{align}
  \label{eq:1670}
  \frac{\partial^{k+1}F}{\partial v^{k+1}}(a,v)&=\int_a^v\frac{\partial^{k+1}f}{\partial u^{k+1}}(v,v')dv'+\frac{\partial^kf}{\partial u^k}(v,v)\notag\\
&\qquad +\sum_{l=0}^{k-1}\binom{k}{l+1}\frac{\partial^{k}f}{\partial u^{k-l}\partial v^l}(v,v)+\sum_{l=0}^{k-1}\binom{k}{l+1}\frac{\partial^{k}f}{\partial u^{k-1-l}\partial v^{l+1}}(v,v).
\end{align}
Rewriting the second sum as
\begin{align}
  \label{eq:1671}
  \sum_{l=1}^k\binom{k}{l}\frac{\partial^kf}{\partial u^{k-l}\partial v^l}(v,v),
\end{align}
the sum of the two sums is
\begin{align}
  \label{eq:1672}
  \sum_{l=0}^{k-1}\left\{\binom{k}{l}+\binom{k}{l+1}\right\}\frac{\partial^kf}{\partial u^{k-l}\partial v^l}(v,v)-\frac{\partial^kf}{\partial u^k}(v,v)+\frac{\partial^kf}{\partial v^k}(v,v).
\end{align}
Making use of
\begin{align}
  \label{eq:1673}
  \binom{k}{l}+\binom{k}{l+1}=\binom{k+1}{l+1},
\end{align}
\eqref{eq:1672} becomes
\begin{align}
  \label{eq:1674}
  \sum_{l=0}^k\binom{k+1}{l+1}\frac{\partial^kf}{\partial u^{k-l}\partial v^l}(v,v)-\frac{\partial^kf}{\partial u^k}(v,v).
\end{align}
Using this in \eqref{eq:1670} we obtain
\begin{align}
  \label{eq:1675}
  \frac{\partial^{k+1}F}{\partial v^{k+1}}(a,v)=\int_a^v\frac{\partial^{k+1}f}{\partial u^{k+1}}(v,v')dv'+\sum_{l=0}^k\binom{k+1}{l+1}\frac{\partial^kf}{\partial u^{k-l}\partial v^l}(v,v),
\end{align}
which is \eqref{eq:1668} with $k+1$ in the role of $k$. This proves the claim.

In view of \eqref{eq:1254} we now set $f(u,v)=-B(v,u)$ and $a=u$ in \eqref{eq:1667}, where we recall
\begin{align}
  \label{eq:1686}
  B=\pp{t}{u}\tilde{B}(\alpha,\beta,r).
\end{align}
From \eqref{eq:1668} we then obtain
\begin{align}
  \label{eq:1264}
  \frac{\partial^{n-1}\beta}{\partial v^{n-1}}(u,v)&=\frac{d^{n-1}\beta_+}{dv^{n-1}}(v)+\int_v^u\left\{\sum_{i=0}^{n-1}\binom{n-1}{i}\frac{\partial^{i+1}t}{\partial v^i\partial u}\frac{\partial^{n-1-i}\tilde{B}}{\partial v^{n-1-i}}\right\}(u',v)du'\notag\\
&\qquad -\sum_{l=0}^{n-2}\binom{n-1}{l+1}\left\{\frac{\partial^{n-2}}{\partial u^l\partial v^{n-2-l}}\left(\pp{t}{u}\tilde{B}\right)\right\}(v,v).
\end{align}
For the first term we use \eqref{eq:1313} in the case $m=n-1$. We split the sum in the integral into
\begin{align}
  \label{eq:1687}
  \sum_{i=1}^{n-2}\binom{n-1}{i}\frac{\partial^{i+1}t}{\partial v^i\partial u}\frac{\partial^{n-1-i}\tilde{B}}{\partial v^{n-1-i}}+\pp{t}{u}\frac{\partial^{n-1}\tilde{B}}{\partial v^{n-1}}+\frac{\partial^nt}{\partial v^{n-1}\partial u}\tilde{B}.
\end{align}
We see that in the second term there are involved the partial derivatives of $\alpha$, $\beta$ and $r$ w.r.t.~$v$ of order at most $n-1$. From the assumptions \eqref{eq:1234}, \eqref{eq:1293} together with \eqref{eq:1248} we obtain
\begin{align}
  \label{eq:1688}
  \frac{\partial^{n-1}\tilde{B}}{\partial v^{n-1}}=Q_0.
\end{align}
Together with the first of \eqref{eq:1626} we find that the second term in \eqref{eq:1687} is a $Q_1$. From \eqref{eq:1638} in the case $i=0$, which was established above, we find that the third term in \eqref{eq:1687} is a $Q_1$. All terms in the sum in \eqref{eq:1687} involve derivatives of $\tilde{B}$ of order less than $n-1$ and mixed derivatives of $t$ of order less than $n$. These terms are being taken care of by the assumptions \eqref{eq:1233}, \ldots, \eqref{eq:1639}. Therefore, the second term in \eqref{eq:1264} is a $Q_1$.

We split the sum in the second line of \eqref{eq:1264} into
\begin{align}
  \label{eq:1689}
  \sum_{l=0}^{n-3}\binom{n-1}{l+1}\frac{\partial^{n-2}}{\partial u^l\partial v^{n-2-l}}\left(\pp{t}{u}\tilde{B}\right)+\frac{\partial^{n-2}}{\partial u^{n-2}}\left(\pp{t}{u}\tilde{B}\right).
\end{align}
The second term involves a derivative of $t$ with respect to $u$ of order $n-1$ which, by \eqref{eq:1249} is a $Q_2$. All other terms appearing in \eqref{eq:1689} involve mixed derivatives of $t$ of order at most $n-1$ which, by the assumption \eqref{eq:1637}, are a $Q_1$. Furthermore, these terms involve derivatives of $t$, $\alpha$, $\beta$ and $r$ of order at most $n-2$. By the assumptions \eqref{eq:1233}, \ldots, \eqref{eq:1293} and the Hodograph system \eqref{eq:1678}, \eqref{eq:1680}, all of them are a $Q_1$. Therefore, the third term in \eqref{eq:1264} is a $Q_1$. We conclude,
\begin{align}
  \label{eq:1265}
  \frac{\partial^{n-1}\beta}{\partial v^{n-1}}=Q_1.
\end{align}
We note that in the case $n=2$ the sum in \eqref{eq:1687} is not present and instead of using \eqref{eq:1638} we use \eqref{eq:1627} to deal with the last term in \eqref{eq:1687}.

In view of the system \eqref{eq:1251}, \eqref{eq:1252} and the Hodograph system \eqref{eq:1678}, \eqref{eq:1680}, a mixed derivative of $\alpha$ or $\beta$ of order $n$ is given in terms of a mixed derivative of $t$ of order at most $n$ and derivatives of $\alpha$, $\beta$ and $t$ of order at most $n-1$. By the assumptions \eqref{eq:1233}, \ldots, \eqref{eq:1293} together with \eqref{eq:1236}, \eqref{eq:1638}, \eqref{eq:1257}, \eqref{eq:1261}, \eqref{eq:1262}, \eqref{eq:1265} we conclude that each of the mixed derivatives of $\alpha$ and $\beta$ of order $n$ is a $Q_1$, i.e.
\begin{align}
  \label{eq:1263}
  \frac{\partial^{i+j}}{\partial u^i\partial v^j}\left(\pppp{\alpha}{u}{v}\right)&=Q_1:i+j=n-2,\\
  \frac{\partial^{i+j}}{\partial u^i\partial v^j}\left(\pppp{\beta}{u}{v}\right)&=Q_1:i+j=n-2.
\end{align}
From \eqref{eq:1236}, \eqref{eq:1638}, \eqref{eq:1663}, \eqref{eq:1664}, \eqref{eq:1665}, \eqref{eq:1666}  we conclude that ($t_{p,n}$), ($t_{m,n}$), ($\alpha_{p,n}$), ($\alpha_{m,n}$), ($\beta_{p,n}$), ($\beta_{m,n}$) hold.

In the following we prove 
\begin{align}
  \label{eq:1690}
  \frac{\partial^n\alpha}{\partial u^n}=Q_0,\qquad\frac{\partial^n\beta}{\partial u^n}=Q_0.
\end{align}
Putting $n$ in the role of $n-1$ in the equations \eqref{eq:1659}, \eqref{eq:1255} and \eqref{eq:1258} we have
\begin{align}
  \label{eq:1692}
  \frac{\partial^{n}t}{\partial u^{n}}(u,v)&=e^{-K(u,v)}\left\{\frac{d^{n}h}{du^{n}}(u)-\int_0^ve^{K(u,v')}R_{u,n}(u,v')dv'\right\},\\
  \label{eq:1695}
  \frac{\partial^{n}\alpha}{\partial u^{n}}(u,v)&=\frac{d^{n}\alpha_i}{dv^{n}}(u)+\int_0^v\left\{\sum_{i=0}^{n}\binom{n}{i}\frac{\partial^{i+1}t}{\partial u^i\partial v}\frac{\partial^{n-i}\tilde{A}}{\partial u^{n-i}}\right\}(u,v')dv',\\
  \label{eq:1694}
  \frac{\partial^{n}\beta}{\partial u^{n}}(u,v)&=\left\{\sum_{i=0}^{n-1}\binom{n-1}{i}\frac{\partial^{i+1}t}{\partial u^{i+1}}\frac{\partial^{n-1-i}\tilde{B}}{\partial u^{n-1-i}}\right\}(u,v).
\end{align}
From \eqref{eq:1657} with $n$ in the role of $n-1$, together with ($t_{p,n}$), ($t_{m,n}$), ($\alpha_{p,n}$), ($\alpha_{m,n}$), ($\beta_{p,n}$), ($\beta_{m,n}$), we have
\begin{align}
  \label{eq:1693}
  R_{u,n}=Q_1+Q_1'\frac{\partial^n\alpha}{\partial u^n}+Q_1''\frac{\partial^n\beta}{\partial u^n},
\end{align}
which implies
\begin{align}
  \label{eq:1268}
\frac{\partial^nt}{\partial u^n}(u,v)=Q_1+\int_0^v\left\{Q_1'\frac{\partial^n\alpha}{\partial u^n}+Q_1''\frac{\partial^n\beta}{\partial u^n}\right\}(u,v')dv'.
\end{align}

We split the sum in \eqref{eq:1695} into
\begin{align}
  \label{eq:1697}
  \sum_{i=1}^{n-1}\binom{n}{i}\frac{\partial^{i+1}t}{\partial u^i\partial v}\frac{\partial^{n-i}\tilde{A}}{\partial u^{n-i}}+\pp{t}{v}\frac{\partial^n\tilde{A}}{\partial u^n}+\frac{\partial^{n+1}t}{\partial u^n\partial v}\tilde{A}.
\end{align}
In view of ($t_{p,n}$), ($t_{m,n}$), ($\alpha_{p,n}$), ($\alpha_{m,n}$), ($\beta_{p,n}$), ($\beta_{m,n}$) and the Hodograph system, each of the terms in the sum is a $Q_1$ and for the second term in \eqref{eq:1697} we have
\begin{align}
  \label{eq:1698}
  \frac{\partial^n\tilde{A}}{\partial u^n}=Q_1+Q_1'\frac{\partial^n\alpha}{\partial u^n}+Q_1''\frac{\partial^n\beta}{\partial u^n}+Q_1'''\frac{\partial^nt}{\partial u^n}.
\end{align}
From \eqref{eq:1237} with $i=n-1$, $j=0$ we obtain, using again the results for the partial derivatives of $\alpha$, $\beta$ and $t$,
\begin{align}
  \label{eq:1696}
  \frac{\partial^{n+1}t}{\partial u^n\partial v}=Q_1+Q_1'\frac{\partial^n\alpha}{\partial u^n}+Q_1''\frac{\partial^n\beta}{\partial u^n}+Q_1'''\frac{\partial^n t}{\partial u^n}.
\end{align}
Substituting now \eqref{eq:1698}, \eqref{eq:1696} in \eqref{eq:1697} and the resulting expression in \eqref{eq:1695} we obtain
\begin{align}
  \label{eq:1267}
  \frac{\partial^n\alpha}{\partial u^n}(u,v)=Q_1+\int_0^v\left\{Q_1'\frac{\partial^n\alpha}{\partial u^n}+Q_1''\frac{\partial^n\beta}{\partial u^n}+Q_1'''\frac{\partial^nt}{\partial u^n}\right\}(u,v')dv'.
\end{align}
For \eqref{eq:1694} we make use of \eqref{eq:1268}. We obtain
\begin{align}
  \label{eq:1269}
  \frac{\partial^n\beta}{\partial u^n}(u,v)=Q_1+\int_0^v\left\{Q_1'\frac{\partial^n\alpha}{\partial u^n}+Q_1''\frac{\partial^n\beta}{\partial u^n}\right\}(u,v')dv'.
\end{align}

Defining
\begin{align}
  \label{eq:1270}
  F\coloneqq\left|\frac{\partial^n\alpha}{\partial u^n}\right|+\left|\frac{\partial^n\beta}{\partial u^n}\right|+\left|\frac{\partial^nt}{\partial u^n}\right|,
\end{align}
and taking the sum of the absolute values of \eqref{eq:1268}, \eqref{eq:1267} and \eqref{eq:1269} we obtain
\begin{align}
  \label{eq:1271}
  F(u,v)\leq C+C'\int_0^vF(u,v')dv',
\end{align}
which implies
\begin{align}
  \label{eq:1272}
  F(u,v)\leq C,
\end{align}
which in turn implies
\begin{align}
  \label{eq:1273}
  \left|\frac{\partial^n\alpha}{\partial u^n}\right|,\left|\frac{\partial^n\beta}{\partial u^n}\right|,\left|\frac{\partial^nt}{\partial u^n}\right|\leq C.
\end{align}
Therefore, using this in \eqref{eq:1268}, \eqref{eq:1267} and \eqref{eq:1269}, we obtain
\begin{align}
  \label{eq:1274}
  \frac{\partial^n\alpha}{\partial u^n},\frac{\partial^n\beta}{\partial u^n},\frac{\partial^nt}{\partial u^n}=Q_0,
\end{align}
the first and the second of which are \eqref{eq:1690}. For the analogous expression for derivatives with respect to $v$ see \ref{section_part_2} beginning on page \pageref{section_part_2}.

\subsubsection{Estimate for $d^n\hat{f}/dv^n$}
We recall the function $A$ given by
\begin{align}
  \label{eq:1620}
  A(v)=e^{-K(v,v)}\left(\hat{\rho}(v)+\frac{\kappa}{c_{+0}-c_{-0}}\right)-\frac{1}{v}\left(1-e^{-K(v,v)}\right),
\end{align}
where we recall
\begin{align}
  \label{eq:1721}
  K(u,v)=\int_0^v(-\nu)(u,v')dv'.
\end{align}
We set $f(u,v)=-\nu(u,v)$ and $a=0$ in \eqref{eq:1667} which implies $F(0,v)=K(v,v)$. We obtain from \eqref{eq:1668}
\begin{align}
  \label{eq:1722}
  \frac{d^{n-1}K}{dv^{n-1}}(v,v)=-\int_0^v\frac{\partial^{n-1}\nu}{\partial u^{n-1}}(v,v')dv'-\sum_{l=0}^{n-2}\binom{n-1}{l+1}\frac{\partial^{n-2}\nu}{\partial u^{n-2-l}\partial v^l}(v,v).
\end{align}
The integrand involves partial derivatives of $\alpha$, $\beta$ of order at most $n$, where the pure derivatives with respect to $v$ of order $n$ do not show up. By the above results for the partial derivatives of $\alpha$, $\beta$ these are all a $Q_0$ which implies that the first term is a $P_1$. The second term involves derivatives of $\alpha$, $\beta$ of order at most $n-1$. Again by the results for the partial derivatives of $\alpha$, $\beta$ these are all a $P_1$. We conclude
\begin{align}
  \label{eq:1723}
  \frac{d^{n-1}K}{dv^{n-1}}(v,v)=P_1(v),
\end{align}
which implies, by integration,
\begin{align}
  \label{eq:1724}
  \frac{d^mK}{dv^m}(v,v)=P_{n-m}(v):1\leq m\leq n-1,\qquad K(v,v)=P_{n,1}(v),
\end{align}
where for the second we took into account $K(0,0)=0$.

We apply $d^{n-1}/dv^{n-1}$ to \eqref{eq:1620}. Using \eqref{eq:1619}, \eqref{eq:1728} and \eqref{eq:1724} we obtain
\begin{align}
  \label{eq:1729}
  \frac{d^{n-1}A}{dv^{n-1}}=\Landau(1)+\frac{1}{2v}\hat{I}_{n,n-1}.
\end{align}
Since the expression for $d^{n-1}A/dv^{n-1}$ is formally identical to the one for $d^{n-1}\hat{\rho}/dv^{n-1}$ given by the right hand side of \eqref{eq:1619}, the $l$-fold iterated integral of \eqref{eq:1729} is given by the right hand side of \eqref{eq:1727}, i.e.
\begin{align}
  \label{eq:1730}
  \frac{d^{n-1-l}A}{dv^{n-1-l}}=P_{l-1}+\frac{v^{l-1}}{2}\left\{(-1)^l\frac{(n-l-1)!}{(n-1)!}\hat{I}_{n,n-1}-\sum_{m=1}^l\frac{(-1)^m}{(m-1)!(l-m)!(n-m)}\hat{I}_{n,m-1}\right\}.
\end{align}

Let us recall (here $f(v)=t(v,v)$)
\begin{align}
  \label{eq:1731}
  \frac{df}{dv}=\frac{\lambda}{2\kappa^2}M+N,
\end{align}
and
\begin{align}
  \label{eq:1733}
  M(v)=M_{0}(v)+M_{1}(v)+M_{2}(v),\qquad N(v)=N_{0}(v)+N_{1}(v),
\end{align}
as well as
\begin{align}
  \label{eq:1736}
  M_{0}(v)&\coloneqq v-\frac{1}{v^2}\int_0^vv'^2dv'=\frac{2}{3}v,\\
  \label{eq:1737}
  M_{1}(v)&\coloneqq\frac{1}{v^2}\int_0^v\left(1-e^{-\int_{v'}^vA(v'')dv''}\right)v'^2dv',\\
  \label{eq:1738}
  M_{2}(v)&\coloneqq -\frac{A(v)}{v}\int_0^ve^{-\int_{v'}^vA(v'')dv''}v'^2dv'
\end{align}
and
\begin{align}
  \label{eq:1735}
  N_{i}(v)\coloneqq v\hat{B}_{i}(v)-\frac{1}{v^2}(1+vA(v))\int_0^ve^{-\int_{v'}^vA(v'')dv''}v'^2\hat{B}_{i}(v')dv',\qquad \textrm{$i=0,1$}.
\end{align}
We also recall
\begin{align}
  \label{eq:1739}
  \hat{B}_{0}(v)&\coloneqq\frac{\lambda}{2\kappa^2}\left\{e^{-K(v,v)}\left(\rho(v)+\frac{\kappa v}{c_{+0}-c_{-0}}\right)-\left(1-e^{-K(v,v)}\right)\right\}\notag\\
  &\qquad +\frac{1}{v^2}e^{-K(v,v)}\left(1+\rho(v)+\frac{\kappa v}{c_{+0}-c_{-0}}\right)\left(\frac{c_{+0}-c_{-0}}{\kappa}h'(v)-\frac{\lambda}{2\kappa^2}v^2+h(v)\right),\\
  \label{eq:1740}
  \hat{B}_{1}(v)&\coloneqq -\frac{1}{v^2}e^{-K(v,v)}\left(1+\rho(v)+\frac{\kappa v}{c_{+0}-c_{-0}}\right)I(v).
\end{align}
Differentiating \eqref{eq:1731} $n-1$ times we obtain
\begin{align}
  \label{eq:1732}
  \frac{d^{n}f}{dv^n}=\frac{\lambda}{2\kappa^2}\frac{d^{n-1}M}{dv^{n-1}}+\frac{d^{n-1}N}{dv^{n-1}}.
\end{align}

Setting $a=0$ and
\begin{align}
  \label{eq:1741}
  f(u,v)\coloneqq\left(1-e^{-\int_v^uA(v'')dv''}\right)v^2
\end{align}
in \eqref{eq:1667}, and letting $\tilde{F}(v)\coloneqq F(0,v)$, we have
\begin{align}
  \label{eq:1742}
  M_1(v)=\frac{\tilde{F}(v)}{v^2}=\frac{1}{v^2}\int_0^vf(v,v')dv'
\end{align}
and
\begin{align}
  \label{eq:1743}
  \frac{d^k\tilde{F}}{dv^k}(v)=\frac{\partial^kF}{\partial v^k}(0,v),
\end{align}
where the right hand side is given by \eqref{eq:1668}. Since
\begin{align}
  \label{eq:1762}
  \tilde{F}(0)=\frac{d\tilde{F}}{dv}(0)=0,
\end{align}
the Taylor expansion of $\tilde{F}$ begins with quadratic terms. Since
\begin{align}
  \label{eq:1744}
  \frac{d^k}{dv^k}\left(\frac{1}{v^2}\right)=\frac{(-1)^k(k+1)!}{v^{k+2}},
\end{align}
we have
\begin{align}
  \label{eq:1745}
  \frac{d^{n-1}M_1}{dv^{n-1}}&=\frac{d^{n-1}}{dv^{n-1}}\bigg(\frac{\tilde{F}}{v^2}\bigg)\notag\\
&=\frac{(n-1)!}{v^{n+1}}B_{n-1}\tilde{F},
\end{align}
where $B_k$ is the linear $k$th order operator
\begin{align}
  \label{eq:1761}
  B_k=\sum_{m=0}^k\frac{(-1)^{k-m}(k+1-m)}{m!}v^m\frac{d^m}{dv^m}.
\end{align}
This operator is homogeneous w.r.t.~scaling. Hence $B_k$ takes a polynomial to a polynomial of the same degree. Let $G$ be a polynomial which begins with quadratic terms. Then $M\coloneqq G/v^2$ is analytic, hence so is $v^{-k-2}B_kG$. This follows from \eqref{eq:1745} with $G$ in the role of $\tilde{F}$, $M$ in the role of $M_1$ and $k+1$ in the role of $n$. It follows that the polynomial $B_kG$ begins with terms of degree $k+2$. We conclude that the null space of $B_k$ consists of all polynomials of degree $k+1$ which begin with quadratic terms, i.e.
\begin{align}
  \label{eq:1763}
  B_k\bar{P}_{k+1,2}=0.
\end{align}
This is a $k$-dimensional space.

Since $f(v,v)=0$, we have
\begin{align}
  \label{eq:1746}
  \sum_{l=0}^{k-1}\binom{k-1}{l}\frac{\partial^{k-1}f}{\partial u^{k-1-l}\partial v^l}(v,v)=0,
\end{align}
which implies
\begin{align}
  \label{eq:1747}
  \frac{\partial^{k-1}f}{\partial v^{k-1}}(v,v)+\sum_{l=0}^{k-2}\binom{k-1}{l}\frac{\partial^{k-1}f}{\partial u^{k-1-l}\partial v^l}(v,v)=0.
\end{align}
Substituting this into \eqref{eq:1668} and making use of (note that this is only valid for $l\leq k-2$)
\begin{align}
  \label{eq:1748}
  \binom{k}{l+1}-\binom{k-1}{l}=\binom{k-1}{l+1},
\end{align}
we obtain
\begin{align}
  \label{eq:1749}
  \frac{\partial^kF}{\partial v^k}(0,v)=\int_0^v\frac{\partial^kf}{\partial u^k}(v,v')dv'+\sum_{l=0}^{k-2}\binom{k-1}{l+1}\frac{\partial^{k-1}f}{\partial u^{k-1-l}\partial v^l}(v,v).
\end{align}

To deal with the first term in \eqref{eq:1749} we have to study $\partial^kf/\partial u^k$.
%Using the inductive hypothesis \eqref{eq:1197}  we obtain from \eqref{eq:1730}
%\begin{align}
%  \label{eq:1751}
%  \frac{d^kA}{dv^k}=P_{n-k-3}.
%\end{align}
From \eqref{eq:1730} together with the inductive hypothesis \eqref{eq:1197} (see also the second of \eqref{eq:1346}) we obtain
\begin{align}
  \label{eq:1788}
  \frac{d^{n-l-1}A}{dv^{n-l-1}}=\left\{
  \begin{array}{ll}
    P_{l-2} & 2\leq l\leq n-1,\\
    \Landau(1) & l=1,\\
    \Landau(v^{-1}) & l=0.
  \end{array}\right.
\end{align}
Using this we deduce from \eqref{eq:1741}
\begin{align}
  \label{eq:1752}
  \frac{\partial^kf}{\partial u^k}(u,v)=v^2e^{-\int_v^uA(v'')dv''}\left\{\frac{d^{k-1}A}{dv^{k-1}}(u)+P_{n-k-1}\right\},
\end{align}
which implies
\begin{align}
  \label{eq:1753}
  \int_0^v\frac{\partial^kf}{\partial u^k}(v,v')dv'=\frac{d^{k-1}A}{dv^{k-1}}(v)\int_0^vv'^2e^{-\int_{v'}^vA(v'')dv''}dv'+P_{n-k+2,3}.
\end{align}
Setting now $l=n-k$ in \eqref{eq:1730}, equation \eqref{eq:1753} becomes
\begin{align}
  \label{eq:1754}
  \int_0^v\frac{\partial^kf}{\partial u^k}(v,v')dv'&=P_{n-k+2,3}+\frac{v^{n-k+2}}{6}\Bigg\{(-1)^{n-k}\frac{(k-1)!}{(n-1)!}\hat{I}_{n,n-1}\notag\\
&\hspace{43mm}-\sum_{l=1}^{n-k}\frac{(-1)^l}{(l-1)!(n-k-l)!(n-l)}\hat{I}_{n,l-1}\Bigg\}.
\end{align}

We now look at the second term in \eqref{eq:1749}. From \eqref{eq:1741} we deduce, using \eqref{eq:1788},
\begin{align}
  \label{eq:1756}
  \frac{\partial^{k-1}f}{\partial u^{k-1}}(v,v)&=v^2\frac{d^{k-2}A}{dv^{k-2}}(v)+P_{n-k+2,2},\\
  \label{eq:1758}
  \frac{\partial^{k-1}f}{\partial u^{k-2}\partial v}(v,v)&=2v\frac{d^{k-3}A}{dv^{k-3}}(v)+P_{n-k+2,1},\\
  \label{eq:1757}
\frac{\partial^{k-1}f}{\partial u^{k-3}\partial v^2}(v,v)&=2\frac{d^{k-4}A}{dv^{k-4}}(v)+P_{n-k+2},\\
  \label{eq:1759}
\frac{\partial^{k-1}f}{\partial u^{k-1-l}\partial v^l}(v,v)&=P_{n-k+3}:3\leq l\leq k-2.
\end{align}
Therefore,
\begin{align}
  \label{eq:1755}
  \sum_{l=0}^{k-2}\binom{k-1}{l+1}\frac{\partial^{k-1}f}{\partial u^{k-1-l}\partial v^l}=(k-1)v^2\frac{d^{k-2}A}{dv^{k-2}}&+(k-1)(k-2)v\frac{d^{k-3}A}{dv^{k-3}}\notag\\
&+\frac{(k-1)(k-2)(k-3)}{3}\frac{d^{k-4}A}{dv^{k-4}}+P_{n-k+2}.
\end{align}
Substituting now \eqref{eq:1730} with $n-m+1$, $n-m+2$, $n-m+3$ in the role of $l$ into the first, second and third term on the right hand side, respectively, and using the resulting expression together with \eqref{eq:1754} in \eqref{eq:1749}, we obtain, after a straightforward computation,
\begin{align}
  \label{eq:1760}
  \frac{d^k\tilde{F}}{dv^k}=P_{n-k+2}-\frac{v^{n+2-k}}{6}\sum_{l=1}^{n+3-k}\frac{(-1)^l(n+1-l)(n+2-l)}{(l-1)!(n+3-k-l)!}\hat{I}_{n,l-1}.
\end{align}
Using this now in \eqref{eq:1745} (see \eqref{eq:1761} with $n-1$ in the role of $k$ for the operator $B_{n-1}$), we obtain
\begin{align}
  \label{eq:1764}
  \frac{d^{n-1}M_1}{dv^{n-1}}=P_1-\frac{v}{6}(n-1)!\sum_{m=0}^{n-1}\sum_{l=1}^{n+3-m}\frac{(-1)^{n+1-l+m}(n-m)(n+1-l)(n+2-l)}{m!(l-1)!(n+3-m-l)!}\hat{I}_{n,l-1}.
\end{align}

We rewrite the double sum in \eqref{eq:1764} as
\begin{align}
  \label{eq:1765}
  \sum_{l=1}^{n+3}c_{n,l}\frac{(-1)^{n+1-l}(n+1-l)(n+2-l)}{(l-1)!}\hat{I}_{n,l-1},
\end{align}
where
\begin{align}
  \label{eq:1766}
  c_{n,l}\coloneqq\sum_{m=0}^{\min\{n-1,n+3-l\}}\frac{(-1)^m(n-m)}{m!(n+3-l-m)!}.
\end{align}
We consider
\begin{align}
  \label{eq:1767}
  \tilde{c}_{n,l}\coloneqq\sum_{m=0}^{n+3-l}\frac{(-1)^m(n-m)}{m!(n+3-l-m)!}.
\end{align}
For $l\geq 3$ we have $\tilde{c}_{n,l}=c_{n,l}$. For $l=2$ we have
\begin{align}
  \label{eq:1768}
  \tilde{c}_{n,2}=c_{n,2}+\frac{(-1)^n}{(n+1)!},
\end{align}
while for $l=1$ we have
\begin{align}
  \label{eq:1769}
  \tilde{c}_{n,1}=c_{n,1}+\frac{(-1)^n}{(n+1)!}-\frac{2(-1)^n}{(n+2)!}.
\end{align}

To compute $\tilde{c}_{n,l}$ we express it as
\begin{align}
  \label{eq:1770}
  \tilde{c}_{n,l}=na_{n,l}-b_{n,l},
\end{align}
where
\begin{align}
  \label{eq:1771}
  a_{n,l}&=\sum_{m=0}^{n+3-l}\frac{(-1)^m}{m!(n+3-l-m)!}\notag\\
&=\frac{1}{(n+3-l)!}\sum_{m=0}^{n+3-l}\binom{n+3-l}{m}(-1)^m\notag\\
&=\frac{1}{(n+3-l)!}(1-1)^{n+3-l}\notag\\
&=\left\{
\begin{array}{ll}
  0 & l<n+3,\\
  1 & l=n+3
\end{array}\right.
\end{align}
and
\begin{align}
  \label{eq:1772}
  b_{n,l}&=\sum_{m=0}^{n+3-l}\frac{m(-1)^m}{m!(n+3-l-m)!}\notag\\
&=\frac{1}{(n+3-l)!}\left.\frac{d}{dx}\left\{\sum_{m=0}^{n+3-l}\binom{n+3-l}{m}(-x)^m\right\}\right|_{x=1}\notag\\
&=\frac{1}{(n+3-l)!}\left.\frac{d}{dx}\left\{(1-x)^{n+3-l}\right\}\right|_{x=1}\notag\\
&=\left\{
\begin{array}{ll}
  0 & l\neq n+2,\\
  -1 & l=n+2.
\end{array}\right.
\end{align}
Hence
\begin{align}
  \label{eq:1773}
  \tilde{c}_{n,l}=\left\{
    \begin{array}{ll}
      0 & 1\leq l\leq n+1,\\
      1 & l=n+2,\\
      n & l=n+3.
    \end{array}\right.
\end{align}

We obtain from \eqref{eq:1768}, \eqref{eq:1769} and \eqref{eq:1773}
\begin{align}
  \label{eq:1774}
  c_{n,l}=\left\{
    \begin{array}{ll}
      -\frac{n(-1)^n}{(n+2)!} & l=1,\\
      -\frac{(-1)^n}{(n+1)!} & l=2,\\
      0 & 3\leq l\leq n+1,\\
      1 & l=n+2,\\
      n & l=n+3.
    \end{array}\right.
\end{align}
Substituting in \eqref{eq:1765} and the resulting expression in \eqref{eq:1764} we obtain
\begin{align}
  \label{eq:1775}
  \frac{d^{n-1}M_1}{dv^{n-1}}=\frac{v}{6}\left\{\frac{n}{n+2}\hat{I}_{n,0}-\frac{n-1}{n+1}\hat{I}_{n,1}-\frac{2}{(n+1)(n+2)}\hat{I}_{n,n+2}\right\}+\left\{
  \begin{array}{ll}
    P_{1,1} & n=2,\\
    P_1 & n\geq 3.
  \end{array}\right.
\end{align}

To see that the polynomial part in \eqref{eq:1775} has no term of order zero in the case $n=2$ we consider
\begin{align}
  \label{eq:1777}
  \frac{dM_1}{dv}=-\frac{2}{v^3}\int_0^v\left(1-e^{-\int_{v'}^vA(v'')dv''}\right)v'^2dv'-\frac{A(v)}{v^2}\int_0^ve^{-\int_{v'}^vA(v'')dv''}v'^2dv'.
\end{align}
Since \eqref{eq:1730} in the case $n=2$, $l=1$ (this is \eqref{eq:1729} in the case $n=2$ integrated once) is
\begin{align}
  \label{eq:1778}
  A=P_0-\frac{1}{2}\hat{I}_{2,1}+\frac{1}{2}\hat{I}_{2,0},
\end{align}
we have from the inductive hypothesis \eqref{eq:1197}
\begin{align}
  \label{eq:1779}
  A=\Landau(1).
\end{align}
Using this in \eqref{eq:1777} we obtain
\begin{align}
  \label{eq:1780}
  \frac{dM_1}{dv}=\Landau(v).
\end{align}

We turn to $d^{n-1}M_2/dv^{n-1}$. Recalling \eqref{eq:1737}, \eqref{eq:1738} we can write
\begin{align}
  \label{eq:1781}
  M_2=-\frac{1}{3}v^2A+vM_1A.
\end{align}
Applying $d^{n-1}/dv^{n-1}$ to the first term we obtain
\begin{align}
  \label{eq:1782}
  -\frac{d^{n-1}}{dv^{n-1}}\left(\frac{1}{3}v^2A\right)=-\frac{1}{3}v^2\frac{d^{n-1}A}{dv^{n-1}}-\frac{2}{3}(n-1)v\frac{d^{n-2}A}{dv^{n-2}}-\frac{1}{3}(n-1)(n-2)\frac{d^{n-3}A}{dv^{n-3}}.
\end{align}
For the first term we use \eqref{eq:1729} while for the second and third we use \eqref{eq:1730} with $l=1$, $l=2$ respectively. We arrive at
\begin{align}
  \label{eq:1783}
  -\frac{d^{n-1}}{dv^{n-1}}\left(\frac{1}{3}v^2A\right)=\frac{v}{6}\left\{-n\hat{I}_{n,0}+(n-1)\hat{I}_{n,1}\right\}+\left\{
    \begin{array}{ll}
      P_{1,1} & n=2,\\
      P_1 & n\geq 3.
    \end{array}\right.
\end{align}

Applying $d^{n-1}/dv^{n-1}$ to the second term in \eqref{eq:1781} we obtain
\begin{align}
  \label{eq:1784}
  \frac{d^{n-1}}{dv^{n-1}}(vM_1A)=(n-1)\frac{d^{n-2}}{dv^{n-2}}(M_1A)+v\frac{d^{n-1}}{dv^{n-1}}(M_1A).
\end{align}
From \eqref{eq:1775} together with the inductive hypothesis \eqref{eq:1197} we have
\begin{align}
  \label{eq:1785}
  \frac{d^{n-1}M_1}{dv^{n-1}}=\left\{
    \begin{array}{ll}
      \Landau(v) & n=2,\\
      P_0 & n\geq 3,
    \end{array}\right.
\end{align}
which, through integration and taking into account that the Taylor expansion of $M_1$ starts with quadratic terms, implies
\begin{align}
  \label{eq:1786}
  \frac{d^mM_1}{dv^m}=\left\{
    \begin{array}{ll}
      P_{n-1,2} & m=0,\\
      P_{n-2,1} & m=1,\\
      P_{n-m-1} & 2\leq m\leq n-2.
    \end{array}\right.
\end{align}
Using \eqref{eq:1788} and \eqref{eq:1786} in \eqref{eq:1784} yields
\begin{align}
  \label{eq:1790}
  \frac{d^{n-1}}{dv^{n-1}}(vM_1A)=\left\{
    \begin{array}{ll}
      P_{1,2} & n=2,\\
      P_{1,1} & n= 3,\\
      P_1 & n\geq 4,
    \end{array}\right.
\end{align}
which, together with \eqref{eq:1783}, implies
\begin{align}
  \label{eq:1791}
  \frac{d^{n-1}M_2}{dv^{n-1}}=\frac{v}{6}\left\{-n\hat{I}_{n,0}+(n-1)\hat{I}_{n,1}\right\}+\left\{
    \begin{array}{ll}
      P_{1,1} & n=2,\\
      P_1 & n\geq 3.
    \end{array}\right.
\end{align}
Noting that only in the case $n=2$ we have the contribution $dM_0/dv=2/3$ (see \eqref{eq:1736}), we obtain from \eqref{eq:1775}, \eqref{eq:1791}
\begin{align}
  \label{eq:1792}
  \frac{d^{n-1}M}{dv^{n-1}}=\frac{v}{6}\left\{-\frac{n(n+1)}{n+2}\hat{I}_{n,0}+\frac{n(n-1)}{n+1}\hat{I}_{n,1}-\frac{2}{(n+1)(n+2)}\hat{I}_{n,n+2}\right\}+\left\{
  \begin{array}{ll}
    \frac{2}{3}+P_{1,1} & n=2,\\
    P_1 & n\geq 3.
  \end{array}\right.
\end{align}

We turn to $d^{n-1}N/dv^{n-1}$. We look at \eqref{eq:1735} and write
\begin{align}
  \label{eq:1793}
  N_i=N'_i+H_{i1}+H_{i2},\qquad i=0,1,
\end{align}
where
\begin{alignat}{4}
  \label{eq:1794}
  N'_i(v)&\coloneqq v\hat{B}_i(v)-\frac{1}{v^2}\int_0^vv'^2\hat{B}_i(v')dv',&\qquad & i=0,1,\\
  \label{eq:1937}
  H_{i1}(v)&\coloneqq\frac{1}{v^2}\int_0^vv'^2\hat{B}_i(v')\left(1-e^{-\int_{v'}^vA(v'')dv''}\right)dv',&\qquad & i=0,1,\\
  \label{eq:1938}
  H_{i2}(v)&\coloneqq -\frac{A(v)}{v}\int_0^vv'^2\hat{B}_i(v')e^{-\int_{v'}^vA(v'')dv''}dv',&\qquad & i=0,1.
\end{alignat}
We first establish estimates for the derivatives of $\hat{B}_0$. In view of \eqref{eq:1739} this involves estimates for the derivatives of $K(v,v)$ and $\rho(v)$. The derivatives of $K(v,v)$ are given by \eqref{eq:1724}. We recall that the second bracket in the second line of \eqref{eq:1739} is a smooth function whose Taylor expansion begins with cubic terms. This implies
\begin{align}
  \label{eq:1796}
  \frac{d^j\hat{B}_0}{dv^j}=\frac{\lambda}{2\kappa^2}\frac{d^j\rho}{dv^j}+\left\{
  \begin{array}{ll}
    P_{n-1,1} & j=0,\\
    P_{n-j-1} & 1\leq j\leq n-1.
  \end{array}\right.
\end{align}
From \eqref{eq:1651}, through integration, we obtain
\begin{align}
  \label{eq:1797}
  \frac{d^j\rho}{dv^j}=\left\{
    \begin{array}{ll}
      P_{n-2,1}& j=0,\\
      P_{n-j-2} & 1\leq j\leq n-2,\\
      \Landau(1) & j=n-1.
    \end{array}\right.
\end{align}
where we made use of $\rho(0)=0$ (see the second of \eqref{eq:1610}). Therefore,
\begin{align}
  \label{eq:1798}
  \frac{d^j\hat{B}_0}{dv^j}=\left\{
    \begin{array}{ll}
      P_{n-2,1} & j=0,\\
      P_{n-j-2} & 1\leq j\leq n-2,\\
      \Landau(1) & j=n-1.
    \end{array}\right.
\end{align}
Using this together with \eqref{eq:1788} and making use of arguments analogous to the ones we used for derivatives of $\hat{f}=f/v$ (see \eqref{eq:1427}) and $M_1=\tilde{F}/v^2$ (see \eqref{eq:1745}) to take care of the terms involving prefactors of $1/v$ and $1/v^2$ respectively, we arrive at
\begin{align}
  \label{eq:1799}
  \frac{d^{n-1}H_{0i}}{dv^{n-1}}=\left\{
    \begin{array}{ll}
      P_{1,1} & n=2,\\
      P_1 & n\geq 3,
    \end{array}\right.\qquad\qquad \textrm{$i=1,2$}.
\end{align}

We now turn to the principal term in \eqref{eq:1793} in the case $i=0$, which is $N_0'$. We define
\begin{align}
  \label{eq:1800}
  G_0(v)\coloneqq v^3\hat{B}_0(v)-\int_0^vv'^2\hat{B}_0(v')dv'
\end{align}
and rewrite
\begin{align}
  \label{eq:1801}
  N'_0=\frac{G_0}{v^2}.
\end{align}
Since the Taylor expansion of $G_0$ begins with quartic terms, we can apply \eqref{eq:1745} with $N_0'$, $G_0$ in the roles of $M_1$, $\tilde{F}$, respectively, i.e.
\begin{align}
  \label{eq:1802}
  \frac{d^{n-1}N'_0}{dv^{n-1}}=\frac{(n-1)!}{v^{n+1}}B_{n-1}G_0,
\end{align}
where $B_k$ is the operator \eqref{eq:1761}. We claim
\begin{align}
  \label{eq:1803}
  \frac{d^mG_0}{dv^m}&=v^3\frac{d^m\hat{B}_0}{dv^m}+(3m-1)v^2\frac{d^{m-1}\hat{B}_0}{dv^{m-1}}+(3m-2)(m-1)v\frac{d^{m-2}\hat{B}_0}{dv^{m-2}}\notag\\
&\qquad+(m-1)^2(m-2)\frac{d^{m-3}\hat{B}_0}{dv^{m-3}}.
\end{align}

For the proof of the claim in \eqref{eq:1803} we suppress the index $0$. For $m=1,2,3$ it is true as can be seen by direct computation. For $m=3$ we have
\begin{align}
  \label{eq:1809}
  \frac{d^3G}{dv^3}=v^3\frac{d^3\hat{B}}{dv^3}+8v^2\frac{d^2\hat{B}}{dv^2}+14v\frac{d\hat{B}}{dv}+4\hat{B}.
\end{align}
For $m\geq 3$ we write
\begin{align}
  \label{eq:1804}
  \frac{d^mG}{dv^m}=v^3\frac{d^m\hat{B}}{dv^m}+c_{2,m}v^2\frac{d^{m-1}\hat{B}}{dv^{m-1}}+c_{1,m}v\frac{d^{m-2}\hat{B}}{dv^{m-2}}+c_{0,m}\frac{d^{m-3}\hat{B}}{dv^{m-3}}.
\end{align}
From \eqref{eq:1809} we have
\begin{align}
  \label{eq:1810}
  c_{2,3}=8,\qquad c_{1,3}=14,\qquad c_{0,3}=4.
\end{align}

Differentiating \eqref{eq:1804}
\begin{align}
  \label{eq:1805}
  \frac{d^{m+1}G}{dv^{m+1}}&=v^3\frac{d^{m+1}\hat{B}}{dv^{m+1}}+(c_{2,m}+3)v^2\frac{d^{m}\hat{B}}{dv^{m}}\notag\\
&\qquad+(c_{1,m}+2c_{2,m})v\frac{d^{m-1}\hat{B}}{dv^{m-1}}+(c_{0,m}+c_{1,m})\frac{d^{m-2}\hat{B}}{dv^{m-2}},
\end{align}
gives us the following recursion formulas
\begin{align}
  \label{eq:1806}
  c_{2,m+1}&=c_{2,m}+3,\\
  \label{eq:1807}
  c_{1,m+1}&=c_{1,m}+2c_{2,m},\\
  \label{eq:1808}
  c_{0,m+1}&=c_{0,m}+c_{1,m}.
\end{align}
From the first of \eqref{eq:1810} and \eqref{eq:1806},
\begin{align}
  \label{eq:1811}
  c_{2,m}=3m-1.
\end{align}
Substituting in \eqref{eq:1807} gives
\begin{align}
  \label{eq:1812}
  c_{1,m+1}=c_{1,m}+6m-2.
\end{align}
Therefore, since $c_{1,1}=0$,
\begin{align}
  \label{eq:1813}
  c_{1,m}=\sum_{j=1}^{m-1}(6j-2)=(3m-2)(m-1).
\end{align}
Substituting in \eqref{eq:1808} gives
\begin{align}
  \label{eq:1814}
  c_{0,m+1}=c_{0,m}+(3m-2)(m-1).
\end{align}
Therefore, since $c_{0,2}=0$,
\begin{align}
  \label{eq:1815}
  c_{0,m}=\sum_{j=2}^{m-1}(3j-2)(j-1)=(m-1)^2(m-2).
\end{align}
In view of \eqref{eq:1811}, \eqref{eq:1813} and \eqref{eq:1815} the claim \eqref{eq:1803} is proven.

We substitute \eqref{eq:1796} with $m$, $m-1$, $m-2$, $m-3$ in the role of $j$ into \eqref{eq:1803}. For the expressions involving derivatives of $\rho$ we use \eqref{eq:1612}. We obtain, after a straightforward computation,
\begin{align}
  \label{eq:1816}
  \frac{d^mG_0}{dv^m}=P_{n-m+2}-\frac{\lambda v^{n-m+2}}{4\kappa^2}\sum_{l=1}^{n+3-m}\frac{(-1)^l(n+1-l)(n+2-l)^2}{(l-1)!(n+3-m-l)!}\hat{I}_{n,l-1}.
\end{align}

Now we go back to \eqref{eq:1802}. We must calculate $B_{n-1}G_0$. From \eqref{eq:1761},
\begin{align}
  \label{eq:1817}
  B_{n-1}G_0=\sum_{m=0}^{n-1}\frac{(-1)^{n-m-1}(n-m)}{m!}v^m\frac{d^mG_0}{dv^m}.
\end{align}
Since the null space of $B_{n-1}$ consists of all polynomials of degree $n$ which begin with quadratic terms, $B_{n-1}$ applied to the polynomial part of $G_0$ gives a $\bar{P}_{n+2,n+1}$. Then
\begin{align}
  \label{eq:1818}
  B_{n-1}G_0=P_{n+2,n+1}-\frac{\lambda v^{n+2}}{4\kappa^2}\sum_{m=0}^{n-1}\sum_{l=1}^{n+3-m}\frac{(-1)^{n-m-1+l}(n-m)(n+1-l)(n+2-l)^2}{(l-1)!m!(n+3-m-l)!}\hat{I}_{n,l-1}.
\end{align}
The double sum is
\begin{align}
  \label{eq:1819}
  \sum_{l=1}^{n+3}c_{n,l}\frac{(-1)^{n+1-l}(n+1-l)(n+2-l)^2}{(l-1)!}\hat{I}_{n,l-1},
\end{align}
where the coefficients $c_{n,l}$ are given by \eqref{eq:1766}. Using \eqref{eq:1774} we obtain, after a straightforward computation,
\begin{align}
  \label{eq:1821}
  B_{n-1}G_0=P_{n+2,n+1}+\frac{\lambda}{4\kappa^2}\frac{v^{n+2}}{(n-1)!}\left\{\frac{n(n+1)}{n+2}\hat{I}_{n,0}-\frac{n(n-1)}{n+1}\hat{I}_{n,1}+\frac{2}{(n+2)(n+1)}\hat{I}_{n,n+2}\right\}.
\end{align}
Substituting in \eqref{eq:1802} and using the resulting expression together with \eqref{eq:1799} in the $(n-1)$'th order derivative of \eqref{eq:1793} with $i=0$, we arrive at
\begin{align}
  \label{eq:1822}
  \frac{d^{n-1}N_0}{dv^{n-1}}=\frac{\lambda}{4\kappa^2}v\left\{\frac{n(n+1)}{n+2}\hat{I}_{n,0}-\frac{n(n-1)}{n+1}\hat{I}_{n,1}+\frac{2}{(n+2)(n+1)}\hat{I}_{n,n+2}\right\}+\left\{
    \begin{array}{ll}
      P_{1,1} & n=2,\\
      P_1 & n\geq 3.
    \end{array}\right.
\end{align}

To see that the polynomial part in \eqref{eq:1822} has no term of order zero in the case $n=2$ we take the derivative of \eqref{eq:1735}
\begin{align}
  \label{eq:1823}
  \frac{dN_i}{dv}=v\frac{d\hat{B}_i}{dv}-vA\hat{B}_i+\left\{\frac{2}{v^3}+\frac{2A}{v^2}-\frac{1}{v}\frac{dA}{dv}+\frac{A}{v^2}\right\}\int_0^vv'^2\hat{B}_i(v')e^{-\int_{v'}^vA(v'')dv''}dv'.
\end{align}
From \eqref{eq:1788}, \eqref{eq:1798} we have, in the case $n=2$,
\begin{align}
  \label{eq:1824}
  A=\Landau(1), \qquad \frac{dA}{dv}=\Landau(v^{-1}),\qquad \hat{B}_0=\Landau(v),\qquad \frac{d\hat{B}_0}{dv}=\Landau(1).
\end{align}
Using these in \eqref{eq:1823} with $i=0$ we obtain
\begin{align}
  \label{eq:1825}
  \frac{dN_0}{dv}=\Landau(v).
\end{align}

We turn to derive an estimate for $d^{n-1}N_1/dv^{n-1}$. In view of \eqref{eq:1740} we need an estimate for $d^{n-1}I/dv^{n-1}$, where we recall
\begin{align}
  \label{eq:1227}
  I(v)=\int_0^v\left\{e^{K(v,v')}-1+e^{K(v,v')}\tau(v,v')\right\}\pp{t}{v}(v,v')dv',
\end{align}
where
\begin{align}
  \label{eq:1228}
  K(u,v)=\int_0^v(-\nu)(u,v')dv',\qquad \tau(u,v)=\frac{c_{+0}-c_{-0}}{\kappa}\mu(u,v)-1.
\end{align}

Defining
\begin{align}
  \label{eq:1230}
  f(u,v)\coloneqq\left\{e^{K(u,v)}-1+e^{K(u,v)}\tau(u,v)\right\}\pp{t}{v}(u,v)
\end{align}
we have
\begin{align}
  \label{eq:1231}
  I(v)=\int_0^vf(v,v')dv'.
\end{align}
Making now use of \eqref{eq:1667}, \eqref{eq:1668}, we obtain
\begin{align}
  \label{eq:1232}
  \frac{d^{n-1}I}{dv^{n-1}}(v)=\int_0^v\frac{\partial^{n-1}f}{\partial u^{n-1}}(v,v')dv'+\sum_{l=0}^{n-2}\binom{n-1}{l+1}\frac{\partial^{n-2}f}{\partial u^{n-2-l}\partial v^l}(v,v).
\end{align}
We see that we need estimates for mixed derivatives of $f$ of order at most $n-2$, for derivatives of $f$ with respect to $u$ of order at most $n-1$ and for derivatives of $f$ with respect to $v$ of order at most $n-2$. Therefore we need expressions for derivatives of $\alpha$ and $\beta$ with respect to $u$ of order at most $n$, expressions for mixed derivatives of $\alpha$ and $\beta$ of order at most  $n$ and expressions for derivatives of $\alpha$ and $\beta$ with respect to $v$ of order at most $n-1$. We also need expressions for mixed derivatives of $t$ of order at most $n$ and expressions for derivatives of $t$ with respect to $v$ of order at most $n-1$. We define
\begin{align}
  \label{eq:1826}
  H(u,v)\coloneqq e^{K(u,v)}-1+e^{K(u,v)}\tau(u,v).
\end{align}

For the first term in \eqref{eq:1232} we have
\begin{align}
  \label{eq:1828}
  \frac{\partial^{n-1}f}{\partial u^{n-1}}=\frac{\partial^{n-1}}{\partial u^{n-1}}\left(H\pp{t}{v}\right)&=\sum_{i=0}^{n-1}\binom{n-1}{i}\frac{\partial^iH}{\partial u^i}\frac{\partial^{n-i}t}{\partial u^{n-1-i}\partial v}\notag\\
&=\sum_{i=0}^{n-2}\binom{n-1}{i}\frac{\partial^iH}{\partial u^i}\frac{\partial^{n-i}t}{\partial u^{n-1-i}\partial v}+\frac{\partial^{n-1}H}{\partial u^{n-1}}\pp{t}{v}.
\end{align}
By the results for the partial derivatives of $\alpha$, $\beta$ and $t$ established above we see that each of the terms in the sum is a $Q_1$. By the same results we see that
\begin{align}
  \label{eq:1829}
  \frac{\partial^{n-1}H}{\partial u^{n-1}}=Q_0.
\end{align}
Together with the second of \eqref{eq:1626} we see that the second term in the second line in \eqref{eq:1828} is a $Q_{1,1}$. Therefore,
\begin{align}
  \label{eq:1830}
  \int_0^v\frac{\partial^{n-1}f}{\partial u^{n-1}}(v,v')dv'=P_{2,1}(v).
\end{align}

For the second term in \eqref{eq:1232} we write
\begin{align}
  \label{eq:1838}
  \sum_{l=0}^{n-2}\binom{n-1}{l+1}\frac{\partial^{n-2}f}{\partial u^{n-2-l}\partial v^l}&=\sum_{l=0}^{n-2}\binom{n-1}{l+1}\frac{\partial^{n-2}}{\partial u^{n-2-l}\partial v^l}\left(H\pp{t}{v}\right)\notag\\
&=\sum_{l=0}^{n-2}\sum_{i=0}^l\binom{n-1}{l+1}\binom{l}{i}\frac{\partial^{n-2-l}}{\partial u^{n-2-l}}\left(\frac{\partial^iH}{\partial v^i}\frac{\partial^{l+1-i}t}{\partial v^{l+1-i}}\right)\notag\\
&=\sum_{l=0}^{n-2}\sum_{i=0}^l\sum_{j=0}^{n-2-l}\binom{n-1}{l+1}\binom{l}{i}\binom{n-2-l}{j}\frac{\partial^{i+j}H}{\partial u^j\partial v^i}\frac{\partial^{n-1-i-j}t}{\partial u^{n-2-l-j}\partial v^{l+1-i}}.
\end{align}
We first look at the case $i=j=0$. From \eqref{eq:1300}, \eqref{eq:1313} we have $\alpha_+,\beta_+=P_2$. Therefore, $\bar{c}_\pm=P_2$. Using this together with the first ones of \eqref{eq:1235}, \eqref{eq:1292} in \eqref{eq:1229} we obtain
\begin{align}
  \label{eq:1841}
  \mu(v,v)=P_2(v),
\end{align}
which, together with
\begin{align}
  \label{eq:1843}
  \cp{\pp{c_+}{\alpha}}\cp{\pp{\alpha}{u}}=\kappa,\qquad\cp{\pp{\beta}{u}}=0,
\end{align}
implies
\begin{align}
  \label{eq:1842}
  \tau(v,v)=P_{2,1}(v).
\end{align}
Therefore, together with \eqref{eq:1723},
\begin{align}
  \label{eq:1844}
  H(v,v)=P_{2,1}(v).
\end{align}
From ($t_{p,n}$), ($t_{m,n}$) we have
\begin{align}
  \label{eq:1845}
  \frac{\partial^{n-1}t}{\partial u^{n-2-l}\partial v^{l+1}}=Q_1:0\leq l\leq n-2.
\end{align}
From \eqref{eq:1844}, \eqref{eq:1845} we deduce that each of the terms in \eqref{eq:1838} with $i=j=0$ is a $P_{2,1}$.

Now we look at the case $i+j=n-2$. The only term in \eqref{eq:1838} satisfying this condition is
\begin{align}
  \label{eq:1848}
  \frac{\partial^{n-2}H}{\partial u^j\partial v^i}\pp{t}{v}.
\end{align}
The first factor involves derivatives of $\alpha$ and $\beta$ of order at most $n-1$. Therefore, by ($\alpha_{p,n}$), ($\alpha_{m,n}$), ($\beta_{p,n}$), ($\beta_{m,n}$) the first factor in \eqref{eq:1848} is a $Q_1$. By \eqref{eq:1213} and the first of \eqref{eq:1626} we have for $n\geq 3$
\begin{align}
  \label{eq:1849}
  \pp{t}{v}(v,v)=\frac{df}{dv}(v)-\pp{t}{u}(v,v)=P_{2,1}.
\end{align}
Therefore, for $n\geq 3$,
\begin{align}
  \label{eq:1850}
  \left(\frac{\partial^{n-2}H}{\partial u^j\partial v^i}\pp{t}{v}\right)(v,v)=P_{2,1}(v).
\end{align}
In the case $n=2$ \eqref{eq:1848} is $H(\partial t/\partial v)$. Using the second of \eqref{eq:1626} and \eqref{eq:1844} we obtain that \eqref{eq:1850} is also valid in the case $n=2$. We note that for $n=2$ this is the only non-vanishing term in \eqref{eq:1838}. For $n=3$ the two cases $i=j=0$ and $i+j=n-2=1$ cover all the terms appearing in \eqref{eq:1838}.

Let now $n\geq 4$. For the terms in \eqref{eq:1838} with $1\leq i+j\leq n-3$ we need estimates for
\begin{align}
  \label{eq:1847}
  \qquad\frac{\partial^{k+1}t}{\partial u^{k-l}\partial v^{l+1}}:\left\{
  \begin{array}{l}
    1\leq k\leq n-3,\\
    0\leq l\leq k.
  \end{array}\right.
\end{align}
We set
\begin{align}
  \label{eq:1832}
  g(v)\coloneqq\frac{\partial^{k+1}t}{\partial u^{k-l}\partial v^{l+1}}(v,v).
\end{align}
Then
\begin{align}
  \label{eq:1833}
  \frac{d^pg}{dv^p}(v)&=\left\{\left(\pp{}{u}+\pp{}{v}\right)^p\frac{\partial^{k+1}t}{\partial u^{k-l}\partial v^{l+1}}\right\}(v,v)\notag\\
&=\sum_{q=0}^p\binom{p}{q}\frac{\partial^{p+k+1}t}{\partial u^{p-q+k-l}\partial v^{q+l+1}}(v,v).
\end{align}
Setting $p=n-k-2$ and $j=q+l$ implies, through \eqref{eq:1845},
\begin{align}
  \label{eq:1835}
  \frac{d^{n-k-2}g}{dv^{n-k-2}}=P_1.
\end{align}
This yields, through integration,
\begin{align}
  \label{eq:1836}
  g=P_{n-k-1},
\end{align}
i.e.,
\begin{align}
  \label{eq:1837}
  \frac{\partial^{k+1}t}{\partial u^{k-l}\partial v^{l+1}}(v,v)=P_{n-k-1}(v).
\end{align}
Hence,
\begin{align}
  \label{eq:1852}
  \frac{\partial^{k+1}t}{\partial u^{k-l}\partial v^{l+1}}(v,v)=P_2(v):\left\{
    \begin{array}{ll}
      1\leq k\leq n-3,\\
      0\leq l\leq k.
    \end{array}\right.
\end{align}

We also need estimates for
\begin{align}
  \label{eq:1860}
  \frac{\partial^kH}{\partial u^{k-l}\partial v^l}:\left\{
  \begin{array}{l}
    1\leq k\leq n-3,\\
    0\leq l\leq k.
  \end{array}\right.
\end{align}
These in turn require estimates for
\begin{align}
  \label{eq:1853}
  \frac{\partial^{k+1}\alpha}{\partial u^{k+1-l}\partial v^l}:\left\{
  \begin{array}{l}
    1\leq k\leq n-3,\\
    0\leq l\leq k.
  \end{array}\right.
\end{align}
We set
\begin{align}
  \label{eq:1854}
  h(v)\coloneqq\frac{\partial^{k+1}\alpha}{\partial u^{k+1-l}\partial v^l}(v,v).
\end{align}
Then
\begin{align}
  \label{eq:1855}
  \frac{d^ph}{dv^p}(v)&=\left\{\left(\pp{}{u}+\pp{}{v}\right)^p\frac{\partial^{k+1}\alpha}{\partial u^{k+1-l}\partial v^{l}}\right\}(v,v)\notag\\
&=\sum_{q=0}^p\binom{p}{q}\frac{\partial^{p+k+1}\alpha}{\partial u^{p-q+k-l+1}\partial v^{q+l}}(v,v).
\end{align}
Setting $p=n-k-2$ and $j=q+l$ and using ($\alpha_{p,n}$), ($\alpha_{m,n}$) implies
\begin{align}
  \label{eq:1856}
  \frac{d^{n-k-2}h}{dv^{n-k-2}}=P_1.
\end{align}
This yields, through integration,
\begin{align}
  \label{eq:1858}
  h=P_{n-k-1},
\end{align}
i.e.,
\begin{align}
  \label{eq:1857}
  \frac{\partial^{k+1}\alpha}{\partial u^{k+1-l}\partial v^l}(v,v)=P_{n-k-1}(v).
\end{align}
Hence,
\begin{align}
  \label{eq:1859}
  \frac{\partial^{k+1}\alpha}{\partial u^{k+1-l}\partial v^l}(v,v)=P_2(v):\left\{
  \begin{array}{l}
    1\leq k\leq n-3,\\
    0\leq l\leq k.
  \end{array}\right.
\end{align}
The same procedure applies to derivatives of $\beta$ using $(\beta_{p,n})$, $(\beta_{m,n})$. Therefore,
\begin{align}
  \label{eq:1861}
  \frac{\partial^kH}{\partial u^{k-l}\partial v^l}(v,v)=P_2(v):\left\{
  \begin{array}{l}
    1\leq k\leq n-3,\\
    0\leq l\leq k.
  \end{array}\right.
\end{align}

From \eqref{eq:1852} together with \eqref{eq:1861} we obtain that each of the terms in \eqref{eq:1838} with $1\leq i+j\leq n-3$ is a $P_2$. We conclude
\begin{align}
  \label{eq:1862}
  \sum_{l=0}^{n-2}\binom{n-1}{l+1}\frac{\partial^{n-2}f}{\partial u^{n-2-l}\partial v^l}(v,v)=P_2(v),
\end{align}
which, together with \eqref{eq:1830} implies
\begin{align}
  \label{eq:1276}
  \frac{d^{n-1}I}{dv^{n-1}}=P_2.
\end{align}

Making use of
\begin{align}
  \label{eq:1865}
  K(0,0)=\tau(0,0)=\pp{t}{v}(0,0)=\pppp{t}{u}{v}(0,0)=0,
\end{align}
we obtain
\begin{align}
  \label{eq:1864}
  I(0)=\frac{dI}{dv}(0)=\frac{d^2I}{dv^2}(0)=0.
\end{align}
Therefore, by integrating \eqref{eq:1276}, we deduce
\begin{align}
  \label{eq:1866}
  \frac{d^jI}{dv^j}=\left\{
    \begin{array}{ll}
      P_{n+1,3} & j=0,\\
      P_{n,2} & j=1,\\
      P_{n-1,1} & j=2,\\
      P_{n+1-j} & 3\leq j\leq n-1.
    \end{array}\right.
\end{align}

Now we apply \eqref{eq:1745} with $I$ in the role of $\tilde{F}$ (see \eqref{eq:1742} for the relation between $M_1$ and $\tilde{F}$).
\begin{align}
  \label{eq:1867}
  \frac{d^{n-1}}{dv^{n-1}}\left(\frac{I}{v^2}\right)=\frac{(n-1)!}{v^{n+1}}B_{n-1}I,
\end{align}
where the operator $B_k$ is given in \eqref{eq:1761}. We find
\begin{align}
  \label{eq:1868}
  \frac{d^{n-1}}{dv^{n-1}}\left(\frac{I}{v^2}\right)=P_0,
\end{align}
which, through integration, implies
\begin{align}
  \label{eq:1869}
  \frac{d^j}{dv^j}\left(\frac{I}{v^2}\right)=\left\{
    \begin{array}{ll}
      P_{n-1,1} & j=0,\\
      P_{n-1-j} & 1\leq j\leq n-1.
    \end{array}\right.
\end{align}

Now we apply $d^{n-1}/dv^{n-1}$ to $\hat{B}_1$. We rewrite
\begin{align}
  \label{eq:1870}
  \hat{B}_1=\Xi \frac{I}{v^2},
\end{align}
where
\begin{align}
  \label{eq:1871}
  \Xi(v)\coloneqq -e^{-K(v,v)}\left(1+\rho(v)+\frac{\kappa v}{c_{+0}-c_{-0}}\right).
\end{align}
Using \eqref{eq:1797} and \eqref{eq:1724} we obtain
\begin{align}
  \label{eq:1872}
  \frac{d^j\Xi}{dv^j}=P_{n-j-2}.
\end{align}

Now,
\begin{align}
  \label{eq:1873}
  \frac{d^{n-1}\hat{B}_1}{dv^{n-1}}=\sum_{i=1}^{n-1}\binom{n-1}{i}\frac{d^i}{dv^i}\left(\frac{I}{v^2}\right)\frac{d^{n-1-i}\Xi}{dv^{n-1-i}}+\frac{I}{v^2}\frac{d^{n-1}\Xi}{dv^{n-1}}.
\end{align}
Therefore, from \eqref{eq:1869} and \eqref{eq:1872},
\begin{align}
  \label{eq:1874}
  \frac{d^{n-1}\hat{B}_1}{dv^{n-1}}=P_0,
\end{align}
which, through integration, implies
\begin{align}
  \label{eq:1875}
  \frac{d^j\hat{B}_1}{dv^j}=\left\{
    \begin{array}{ll}
      P_{n-1,1} & j=0,\\
      P_{n-1-j} & 1\leq j\leq n-1.
    \end{array}\right.
\end{align}
We see that, in particular, \eqref{eq:1798} with $1$ in the role of $0$ holds. Therefore, also \eqref{eq:1799} with $1$ in the role of $0$ holds, i.e.~we have
\begin{align}
  \label{eq:1876}
    \frac{d^{n-1}H_{1i}}{dv^{n-1}}=\left\{
    \begin{array}{ll}
      P_{1,1} & n=2,\\
      P_1 & n\geq 3,
    \end{array}\right.\qquad\qquad \textrm{$i=1,2$}.
\end{align}

Now we define (this is \eqref{eq:1800} with $1$ in the role of $0$)
\begin{align}
  \label{eq:1878}
  G_1\coloneqq v^3\hat{B}_1-\int_0^vv'^2\hat{B}_1(v')dv'
\end{align}
and have
\begin{align}
  \label{eq:1879}
  N_1'=\frac{G_1}{v^2}.
\end{align}
We then have, as in \eqref{eq:1802} with $1$ in the role of $0$,
\begin{align}
  \label{eq:1880}
  \frac{d^{n-1}N_1'}{dv^{n-1}}=\frac{(n-1)!}{v^{n+1}}B_{n-1}G_1,
\end{align}
where (this is \eqref{eq:1817} with $1$ in the role of $0$)
\begin{align}
  \label{eq:1881}
  B_{n-1}G_1=\sum_{m=0}^{n-1}\frac{(-1)^{n-m-1}(n-m)}{m!}v^m\frac{d^mG_1}{dv^m}.
\end{align}

Now we use \eqref{eq:1875} in \eqref{eq:1803}, setting successively $j=m,m-1,m-2,m-3$. We obtain
\begin{align}
  \label{eq:1882}
  \frac{d^mG_1}{dv^m}=P_{n-m+2}.
\end{align}
Using this in \eqref{eq:1881} and taking into account that the null space of $B_{n-1}$ consists of all polynomials of degree $n$ which begin with quadratic terms, we obtain
\begin{align}
  \label{eq:1883}
  B_{n-1}G_1=P_{n+2,n+1}.
\end{align}
Substituting this in \eqref{eq:1880} yields
\begin{align}
  \label{eq:1885}
  \frac{d^{n-1}N_1'}{dv^{n-1}}=P_1,
\end{align}
which, together with \eqref{eq:1876}, yields
\begin{align}
  \label{eq:1884}
  \frac{d^{n-1}N_1}{dv^{n-1}}=\left\{
    \begin{array}{ll}
      P_{1,1} & n=2,\\
      P_1 & n\geq 3.
    \end{array}\right.
\end{align}
To see that the polynomial part has no term of order zero in the case $n=2$ we use \eqref{eq:1874}, \eqref{eq:1875} together with the first and the second of \eqref{eq:1824} in \eqref{eq:1823} with $i=1$.

Combining the estimates \eqref{eq:1822} and \eqref{eq:1884} we obtain
\begin{align}
  \label{eq:1886}
  \frac{d^{n-1}N}{dv^{n-1}}=\frac{\lambda}{4\kappa^2}v\left\{\frac{n(n+1)}{n+2}\hat{I}_{n,0}-\frac{n(n-1)}{n+1}\hat{I}_{n,1}+\frac{2}{(n+2)(n+1)}\hat{I}_{n,n+2}\right\}+\left\{
    \begin{array}{ll}
      P_{1,1} & n=2,\\
      P_1 & n\geq 3.
    \end{array}\right.
\end{align}
%Combining this with \eqref{eq:1792} we obtain (see \eqref{eq:1731})
%\begin{align}
%  \label{eq:1887}
%  \frac{d^nf}{dv^n}=\frac{\lambda}{6\kappa^2}v\left\{\frac{n(n+1)}{n+2}\hat{I}_{n,0}-\frac{n(n-1)}{n+1}\hat{I}_{n,1}+\frac{2}{(n+1)(n+2)}\hat{I}_{n,n+2}\right\}+\left\{
%  \begin{array}{ll}
%    \frac{\lambda}{3\kappa^2}+P_{1,1} & n=2,\\
%    P_1 & n\geq 3.
%  \end{array}\right.
%\end{align}

We introduce
\begin{align}
  \label{eq:1888}
  \Phi(v)\coloneqq f(v)-\frac{\lambda}{6\kappa^2}v^2.
\end{align}
We have
\begin{align}
  \label{eq:1889}
  \frac{d^{n}\Phi}{dv^{n}}=\frac{d^nf}{dv^n}-\left\{
    \begin{array}{ll}
      \frac{\lambda}{3\kappa^2} & n=2,\\
      0 & n\geq 3.
    \end{array}\right.
\end{align}
Combining now \eqref{eq:1792} with \eqref{eq:1886} we obtain (see \eqref{eq:1731})
\begin{align}
  \label{eq:1890}
  \frac{d^{n}\Phi}{dv^{n}}=\frac{\lambda}{6\kappa^2}v\left\{\frac{n(n+1)}{n+2}\hat{I}_{n,0}-\frac{n(n-1)}{n+1}\hat{I}_{n,1}+\frac{2}{(n+1)(n+2)}\hat{I}_{n,n+2}\right\}+\left\{
  \begin{array}{ll}
    P_{1,1} & n=2,\\
    P_1 & n\geq 3.
  \end{array}\right.
\end{align}
We note that since
\begin{align}
  \label{eq:1893}
  f(0)=0,\qquad\frac{df}{dv}(0)=0,\qquad \frac{d^2f}{dv^2}(0)=\frac{\lambda}{3\kappa^2},
\end{align}
we have
\begin{align}
  \label{eq:1894}
  \Phi(0)=\frac{d\Phi}{dv}(0)=\frac{d^2\Phi}{dv^2}(0)=0.
\end{align}

We recall $f=v^2\hat{f}$. Using \eqref{eq:1745} we obtain
\begin{align}
  \label{eq:1896}
  \frac{d^n\hat{f}}{dv^n}=\frac{d^n}{dv^n}\left(\frac{\Phi}{v^2}\right)=\frac{1}{v^{n+2}}M_n\Phi,
\end{align}
where (see \eqref{eq:1761}) the linear $n$'th order operator $M_n$ is given by
\begin{align}
  \label{eq:1897}
  M_n=n!B_n&=\sum_{m=0}^n\frac{(-1)^{n-m}n!(n+1-m)}{m!}v^m\frac{d^m}{dv^m}\notag\\
  &=\sum_{l=0}^n\frac{(-1)^{l}n!(l+1)}{(n-l)!}v^{n-l}\frac{d^{n-l}}{dv^{n-l}}.
\end{align}
We recall that the null space of $B_n$ (and hence the null space of $M_n$) is the space of all polynomials of degree $n+1$ which begin with quadratic terms.

Now we use \eqref{eq:1458} to compute, for $0\leq l\leq n$, the $l$-fold iterated integral of $v\hat{I}_{n,0}$, $v\hat{I}_{n,1}$, $v\hat{I}_{n,n+2}$ appearing in \eqref{eq:1890}. Setting $f_0=\frac{d^ny}{dv^n}$ in \eqref{eq:1456}, so that $g_0=I_{n,0}=\hat{I}_{n,0}$, and $k=1$ in \eqref{eq:1458}, we find that the $l$-fold iterated integral of $v\hat{I}_{n,0}$ is given by
\begin{align}
  \label{eq:1900}
  v^{l+1}\left\{\frac{\hat{I}_{n,0}}{(l+1)!}+\sum_{m=1}^l\frac{(-1)^m}{(m-1)!(l-m)!}\frac{\hat{I}_{n,m+1}}{(m+1)}\right\}.
\end{align}
Setting $f_0=v\frac{d^ny}{dv^n}$ in \eqref{eq:1456}, so that $g_0=I_{n,1}=v\hat{I}_{n,1}$, and $k=0$ in \eqref{eq:1458}, we find that the $l$-fold iterated integral of $v\hat{I}_{n,1}$ is given by
\begin{align}
  \label{eq:1901}
  v^{l+1}\left\{\frac{\hat{I}_{n,1}}{l!}+\sum_{m=1}^l\frac{(-1)^m}{(m-1)!(l-m)!}\frac{\hat{I}_{n,m+1}}{m}\right\}.
\end{align}
Setting $f_0=v^{n+2}\frac{d^ny}{dv^n}$ in \eqref{eq:1456}, so that $g_0=I_{n,n+2}=v^{n+2}\hat{I}_{n,n+2}$, and $k=-n-1$, we find that the $l$-fold iterated integral of $v\hat{I}_{n,n+2}$ is given by
\begin{align}
  \label{eq:1902}
  v^{l+1}\left\{\frac{(-1)^l(n-l)!\hat{I}_{n,n+2}}{n!}-\sum_{m=1}^l\frac{(-1)^m}{(m-1)!(l-m)!}\frac{\hat{I}_{n,m+1}}{(n+1-m)}\right\}.
\end{align}
We note that here we made use of \eqref{eq:1458} in the case $k<0$, which is valid since $k+l\leq -1$. Therefore we interpret the first factor in the first term of \eqref{eq:1458} as in \eqref{eq:1725}.

Using now \eqref{eq:1900}, \eqref{eq:1901} and \eqref{eq:1902} for the $l$-fold iterated integral of \eqref{eq:1890} in conjunction with \eqref{eq:1897}, yields
\begin{align}
  \label{eq:1903}
  M_n\Phi&=\frac{\lambda v^{n+1}}{6\kappa^2}\Bigg\{\frac{n(n+1)}{n+2}\hat{I}_{n,0}\sum_{l=0}^n\frac{(-1)^ln!}{(n-l)!l!}\notag\\
&\hspace{20mm}-\frac{n(n-1)}{n+1}\hat{I}_{n,1}\sum_{l=0}^n\frac{(-1)^ln!(l+1)}{(n-l)!l!}\notag\\
&\hspace{20mm}+\frac{2}{(n+1)(n+2)}\hat{I}_{n,n+2}\sum_{l=0}^n(l+1)\notag\\
&\hspace{20mm}-\sum_{l=1}^n\sum_{m=1}^l\frac{(-1)^{m+l}n!}{(m+1)!(l-m)!}\frac{(n-m)(n-1-m)(l+1)}{(n-l)!(n+1-m)}\hat{I}_{n,m+1}\Bigg\}+\Landau(v^{n+2}).
\end{align}
Here we made use of the fact that, when \eqref{eq:1890} is iteratively integrated $n$ times, the polynomial part of $\Phi$ is of degree $n+1$ and, in view of \eqref{eq:1894} begins with cubic terms. Therefore, the polynomial part of $\Phi$ is annihilated by $B_n$ hence it is annihilated by $M_n$. The first sum in \eqref{eq:1903} is
\begin{align}
  \label{eq:1904}
  \sum_{l=0}^n\binom{n}{l}(-1)^l=(1-1)^n=0,
\end{align}
since $n\geq 2$.

In view of this the second sum in \eqref{eq:1903} is
\begin{align}
  \label{eq:1905}
  \sum_{l=0}^n\binom{n}{l}(-1)^ll=\left.\frac{d}{dx}\left\{\sum_{l=0}^n\binom{n}{l}(-x)^l\right\}\right|_{x=1}=\frac{d}{dx}\left.(1-x)^n\right|_{x=1}=0,
\end{align}
since $n\geq 2$.

The third sum in \eqref{eq:1903} is
\begin{align}
  \label{eq:1906}
  \sum_{l=0}^n(l+1)=\frac{(n+1)(n+2)}{2}.
\end{align}

Finally, we rewrite the double sum in \eqref{eq:1903} as
\begin{align}
  \label{eq:1907}
  \sum_{m=1}^n\frac{(-1)^m(n-m)(n-1-m)n!}{(m+1)!(n+1-m)!}a_{n,m}\hat{I}_{n,m+1},
\end{align}
where
\begin{align}
  \label{eq:1908}
  a_{n,m}&\coloneqq\sum_{l=m}^n\frac{(-1)^l(l+1)}{(l-m)!(n-l)!}\notag\\
&=(-1)^m\sum_{i=0}^{n-m}\frac{(-1)^i(m+i+1)}{i!(n-m-i)!}.
\end{align}
We see that in \eqref{eq:1907} the terms with $m=n$ and $m=n-1$ vanish. Therefore, we can restrict to the case $1\leq m\leq n-2$. We have
\begin{align}
  \label{eq:1909}
  \sum_{i=0}^{n-m}\frac{(-1)^i}{i!(n-m-i)!}&=\frac{1}{(n-m)!}\sum_{i=0}^{n-m}\binom{n-m}{i}(-1)^i\notag\\
&=\frac{1}{(n-m)!}(1-1)^{n-m}\notag\\
&=0,
\end{align}
since $n-m\geq 1$. Also,
\begin{align}
  \label{eq:1910}
  \sum_{i=0}^{n-m}\frac{(-1)^ii}{i!(n-m-i)!}&=\frac{1}{(n-m)!}\sum_{i=0}^{n-m}\binom{n-m}{i}(-1)^ii\notag\\
&=\frac{1}{(n-m)!}\frac{d}{dx}\left.\left\{\sum_{i=0}^{n-m}\binom{n-m}{i}(-x)^i\right\}\right|_{x=1}\notag\\
&=\frac{1}{(n-m)!}\frac{d}{dx}\left.(1-x)^{n-m}\right|_{x=1}\notag\\
&=0,
\end{align}
since $n-m\geq 2$. We conclude that
\begin{align}
  \label{eq:1911}
  a_{n,m}=0:1\leq m\leq n-2.
\end{align}
Hence the double sum in \eqref{eq:1903} vanishes.

We deduce from the above
\begin{align}
  \label{eq:1912}
  M_n\Phi=\frac{\lambda v^{n+1}}{6\kappa^2}\hat{I}_{n,n+2}+\Landau(v^{n+2}).
\end{align}
Therefore, substituting in \eqref{eq:1896}, we conclude
\begin{align}
  \label{eq:1913}
  \frac{d^n\hat{f}}{dv^n}=\frac{\lambda}{6\kappa^2 v}\hat{I}_{n,n+2}+\Landau(1).
\end{align}

\subsubsection{Estimate for $d^n\hat{\delta}/dv^n$}
We recall the function $\delta$ given by
\begin{align}
  \label{eq:1914}
  \delta(v)=g(v)-c_{+0}f(v),
\end{align}
where
\begin{align}
  \label{eq:1915}
  g(v)=r(v,v)-r_0.
\end{align}
We also recall the function $\hat{\delta}$ given by $\delta(v)=v^3\hat{\delta}(v)$. Using
\begin{align}
  \label{eq:1917}
  \frac{d^k}{dv^k}\left(\frac{1}{v^3}\right)=\frac{(-1)^k(k+2)!}{2v^{k+3}},
\end{align}
we deduce
\begin{align}
  \label{eq:1918}
  \frac{d^n\hat{\delta}}{dv^n}&=\sum_{k=0}^n\binom{n}{k}\frac{d^k}{dv^k}\left(\frac{1}{v^3}\right)\frac{d^{n-k}\delta}{dv^{n-k}}\notag\\
&=\frac{1}{2}\sum_{k=0}^n\frac{n!(-1)^k(k+1)(k+2)}{(n-k)!v^{k+3}}\frac{d^{n-k}\delta}{dv^{n-k}}\notag\\
&=\frac{1}{v^{n+3}}L_n\delta,
\end{align}
where $L_n$ is the $n$'th order differential operator
\begin{align}
  \label{eq:1919}
  L_n=\frac{1}{2}\sum_{m=0}^n\frac{(-1)^{n-m}n!(n+2-m)(n+1-m)}{m!}v^m\frac{d^m}{dv^m},
\end{align}
which is homogeneous w.r.t.~scaling. Hence $L_n$ takes a polynomial to a polynomial of the same degree. Let $G$ be a polynomial which begins cubic terms. Then $M\coloneqq G/v^3$ is analytic, hence so is $v^{-k-3}L_nG$. This follows from \eqref{eq:1918} with $G$ in the role of $\delta$ and $M$ in the role of $\hat{\delta}$. It follows that the polynomial $L_nG$ begins with terms of degree $k+3$. We conclude that the null space of $L_n$ consists of all polynomials of degree $n+2$ which begin with cubic terms, i.e.
\begin{align}
  \label{eq:1920}
  L_n\bar{P}_{n+2,3}=0.
\end{align}
This is a $n$-dimensional space. %If this short argument is not satisfactory, then look at pages SF212, SF213.

We now estimate $d^m\delta/dv^m$. Let us recall the splitting of $\delta(v)$
\begin{align}
  \label{eq:1921}
  \delta(v)=\delta_0(v)+\delta_1(v),
\end{align}
where the functions $\delta_0$ and $\delta_1$ are given by
\begin{alignat}{3}
  \label{eq:1922}
  \frac{d\delta_0}{dv}(v)&=\frac{\lambda}{6\kappa}(1+y(v))v^2,&\qquad & \delta_0(0)=0,\\
  \label{eq:1925}
  \frac{d\delta_1}{dv}(v)&=\left(V(v)-c_{+0}-\frac{\kappa}{2}(1+y(v))v\right)\frac{\lambda}{3\kappa^2}v+(V(v)-c_{+0})\phi(v),& & \delta_1(0)=0,
\end{alignat}
where we recall the function $\phi$
\begin{align}
  \label{eq:1923}
  \phi(v)=\frac{df}{dv}(v)-\frac{\lambda}{3\kappa^2}v.
\end{align}
We note that
\begin{align}
  \label{eq:1924}
  \phi(v)=\frac{d\Phi}{dv}(v),
\end{align}
where $\Phi$ is given in \eqref{eq:1888}.

Now we apply $d^{m-1}/dv^{m-1}$ to $d\delta_0/dv$. We claim
\begin{align}
  \label{eq:1939}
  \frac{d^m\delta_0}{dv^m}=\frac{\lambda}{6\kappa}\left\{v^2\frac{d^{m-1}y}{dv^{m-1}}+2(m-1)v\frac{d^{m-2}y}{dv^{m-2}}+(m-1)(m-2)\frac{d^{m-3}y}{dv^{m-3}}\right\}+\bar{P},
\end{align}
where $\bar{P}$ is a generic polynomial and we interpret $d^ky/dv^k$ for $k<0$ as the $k$-fold iterated integral of $y$. For $m\geq 1$ this follows directly and we note that for $m\geq 3$ the polynomial $\bar{P}$ is the zero polynomial. For the case $m=0$ we define
\begin{align}
  \label{eq:1940}
  u_1(v)&\coloneqq\int_0^vy(v')dv',\\
  \label{eq:1941}
  u_2(v)&\coloneqq\int_0^vu_1(v')dv',\\
  \label{eq:1942}
  u_3(v)&\coloneqq\int_0^vu_2(v')dv'.
\end{align}
Then the right hand side in \eqref{eq:1939} becomes
\begin{align}
  \label{eq:1943}
  \frac{\lambda}{6\kappa}\left\{v^2u_1(v)-2vu_2(v)+2u_3(v)\right\}+\bar{P}(v),
\end{align}
while from \eqref{eq:1922} we have
\begin{align}
  \label{eq:1944}
  \delta_0(v)&=\frac{\lambda}{6\kappa}\int_0^vv'^2(y(v')+1)dv'\notag\\
  &=\frac{\lambda}{6\kappa}\int_0^vv'^2\left(1+\frac{du_1}{dv}(v')\right)dv'\notag\\
&=\frac{\lambda}{18\kappa}v^3+\frac{\lambda}{6\kappa}\left\{v^2u_1(v)-2\int_0^vv'u_1(v')dv'\right\}.
\end{align}
We write
\begin{align}
  \label{eq:1945}
  \int_0^vv'u_1(v')dv'&=\int_0^vv'\frac{du_2}{dv}(v')dv'\notag\\
&=vu_2(v)-\int_0^vu_2(v')dv'\notag\\
&=vu_2(v)-u_3(v).
\end{align}
Substituting in \eqref{eq:1944} we see that \eqref{eq:1944} coincides with \eqref{eq:1943} up to a polynomial. Therefore, \eqref{eq:1939} holds for $m=0$ also, hence it holds for $m\geq 0$.

From
\begin{align}
  \label{eq:1947}
  \frac{d^{n-1}y}{dv^{n-1}}(v)=\frac{d^{n-1}y}{dv^{n-1}}(0)+\hat{I}_{n,0}(v),
\end{align}
together with the expression for the $l$-fold iterated integral of $\hat{I}_{n,0}$ given by \eqref{eq:1946} we obtain $d^{n-1-l}y/dv^{n-1-l}$. Setting $m=n-1-l$ we obtain
\begin{align}
  \label{eq:1948}
  \frac{d^my}{dv^m}=\bar{P}_{n-1-m}+v^{n-1-m}\left\{\frac{\hat{I}_{n,0}}{(n-1-m)!}+\sum_{j=1}^{n-1-m}\frac{(-1)^j}{j!(n-1-m-j)!}\hat{I}_{n,j}\right\}.
\end{align}
Replacing $m$ by $m-1$, $m-2$, $m-3$ and substituting in \eqref{eq:1939} we obtain
\begin{align}
  \label{eq:1949}
  \frac{d^m\delta_0}{dv^m}&=\frac{\lambda}{6\kappa}v^{n+2-m}\left\{\frac{n(n+1)}{(n+2-m)!}\hat{I}_{n,0}+\sum_{j=1}^{n+2-m}\frac{(-1)^j(n-j)(n-j+1)}{j!(n+2-m-j)!}\hat{I}_{n,j}\right\}\notag\\
&\hspace{70mm}+\left\{
    \begin{array}{ll}
      P_{n+2-m,4-m} & 0\leq m\leq 3,\\
      P_{n+2-m} & m\geq 4.\\
    \end{array}\right.
\end{align}
The polynomial part follows from $\delta_0=\Landau(v^4)$, which in turn follows from $d\hat{\delta}_0/dv=\Landau(1)$.

Now we estimate $L_n\delta_0$, where the operator $L_n$ is given in \eqref{eq:1919}. Recalling that the null space of $L_n$ consists of all polynomials of degree $n+2$ which begin with cubic terms, we see that $L_n$ annihilates the polynomial part of $\delta_0$. Therefore,
\begin{align}
  \label{eq:1950}
  L_n\delta_0&=\frac{\lambda}{12\kappa}v^{n+2}\bigg\{n(n+1)(-1)^n\hat{I}_{n,0}\sum_{m=0}^n\binom{n}{m}(-1)^{m}\notag\\
&\qquad\qquad\qquad+\sum_{m=0}^n\sum_{j=1}^{n+2-m}\frac{(-1)^{n-m-j}n!(n+2-m)(n+1-m)(n-j)(n+1-j)}{m!j!(n+2-m-j)!}\hat{I}_{n,j}\bigg\}.
\end{align}
Since
\begin{align}
  \label{eq:1951}
  \sum_{m=0}^n\binom{n}{m}(-1)^m=(1-1)^n=0,
\end{align}
for $n\geq 1$, the first sum in \eqref{eq:1950} vanishes.

We rewrite the double sum in \eqref{eq:1950} as
\begin{align}
  \label{eq:1952}
  \sum_{j=1}^{n+2}\frac{(-1)^{n+j}n!(n-j)(n+1-j)}{j!}a_{n,j}\hat{I}_{n,j},
\end{align}
where
\begin{align}
  \label{eq:1953}
  a_{n,j}\coloneqq\sum_{m=0}^{n+2-j}\frac{(-1)^m(n+2-m)(n+1-m)}{m!(n+2-m-j)!}.
\end{align}
We were able to include the term $m=n+1$ for $j=1$ trivially because it vanishes. We have
\begin{align}
  \label{eq:1954}
  a_{n,j}&=\frac{1}{(n+2-j)!}\sum_{m=0}^{n+2-j}\binom{n+2-j}{m}(-1)^m(n+2-m)(n+1-m)\notag\\
&=\frac{1}{(n+2-j)!}\frac{d^2}{dx^2}\left.\left\{\sum_{m=0}^{n+2-j}\binom{n+2-j}{m}x^{n+2-m}(-1)^m\right\}\right|_{x=1}\notag\\
&=\frac{1}{(n+2-j)!}\frac{d^2}{dx^2}\left.\left\{x^j(x-1)^{n+2-j}\right\}\right|_{x=1}.
\end{align}
Therefore,
\begin{align}
  \label{eq:1955}
  a_{n,j}=\left\{
    \begin{array}{ll}
      (n+1)(n+2) & j=n+2,\\
      2(n+1) & j=n+1,\\
      1 & j=n,\\
      0 & j\leq n-1.
    \end{array}\right.
\end{align}
However, the terms with the coefficients $a_{n,n}$ and $a_{n,n+1}$ do not contribute in the sum in \eqref{eq:1952} and we obtain that \eqref{eq:1952} collapses to the term $j=n+2$, i.e.~\eqref{eq:1952} is equal to
\begin{align}
  \label{eq:1956}
  2\hat{I}_{n,n+2}.
\end{align}
Therefore,
\begin{align}
  \label{eq:1957}
  L_n\delta_0=\frac{\lambda}{6\kappa}v^{n+2}\hat{I}_{n,n+2},
\end{align}
which, when substituted in \eqref{eq:1918} with $\delta_0$ in the role of $\delta$, gives
\begin{align}
  \label{eq:1958}
  \frac{d^n\hat{\delta}_0}{dv^n}=\frac{\lambda}{6\kappa v}\hat{I}_{n,n+2}.
\end{align}

We turn to $d^n\hat{\delta}_1/dv^n$. We apply $d^{n-1}/dv^{n-1}$ to $d\delta_1/dv$. For $n\geq 3$ we obtain,
\begin{align}
  \label{eq:1959}
  \frac{d^n\delta_1}{dv^n}&=\frac{\lambda}{3\kappa^2}v\left\{\frac{d^{n-1}V}{dv^{n-1}}-\frac{\kappa v}{2}\frac{d^{n-1}y}{dv^{n-1}}-\frac{(n-1)\kappa}{2}\frac{d^{n-2}y}{dv^{n-2}}\right\}\notag\\
&\qquad+(n-1)\frac{\lambda}{3\kappa^2}\left\{\frac{d^{n-2}V}{dv^{n-2}}-\frac{\kappa v}{2}\frac{d^{n-2}y}{dv^{n-2}}-\frac{(n-2)\kappa}{2}\frac{d^{n-3}y}{dv^{n-3}}\right\}\notag\\
&\qquad+\sum_{l=0}^{n-1}\binom{n-1}{l}\frac{d^lV}{dv^l}\frac{d^{n-1-l}\phi}{dv^{n-1-l}}.
\end{align}
From \eqref{eq:1947} and \eqref{eq:1948} with $m=n-2$ we have, for $n\geq 3$,
\begin{align}
  \label{eq:1960}
  v\frac{d^{n-1}y}{dv^{n-1}}&=\bar{P}_{1,1}+v\hat{I}_{n,0},\\
  \label{eq:1961}
  \frac{d^{n-2}y}{dv^{n-2}}&=\bar{P}_{1}+v\hat{I}_{n,0}-v\hat{I}_{n,1}.
\end{align}
Using \eqref{eq:1586} we obtain
\begin{align}
  \label{eq:1962}
  \frac{d^{n-1}V}{dv^{n-1}}-\frac{\kappa v}{2}\frac{d^{n-1}y}{dv^{n-1}}-\frac{(n-1)\kappa}{2}\frac{d^{n-2}y}{dv^{n-2}}=P_2,
\end{align}
which implies that the first line in \eqref{eq:1959} is a $P_{2,1}$.

From \eqref{eq:1948} with $m=n-3$ we have
\begin{align}
  \label{eq:1963}
  \frac{d^{n-3}y}{dv^{n-3}}=\bar{P}_2+\frac{v^2}{2}\left\{\hat{I}_{n,0}-2\hat{I}_{n,1}+\hat{I}_{n,2}\right\}.
\end{align}
Using
\begin{align}
  \label{eq:1964}
  \int_0^vv'\hat{I}_{n,0}(v')dv'&=\frac{v^2}{2}\left(\hat{I}_{n,0}-\hat{I}_{n,2}\right),\\
  \label{eq:1973}
\int_0^vv'\hat{I}_{n,1}(v')dv'&=v^2\left(\hat{I}_{n,1}-\hat{I}_{n,2}\right),
\end{align}
we obtain, through integration of \eqref{eq:1586},
\begin{align}
  \label{eq:1965}
  \frac{d^{n-2}V}{dv^{n-2}}=\frac{\kappa v^2}{4}\left\{n\hat{I}_{n,0}-2(n-1)\hat{I}_{n,1}+(n-2)\hat{I}_{n,2}\right\}.
\end{align}
From \eqref{eq:1961}, \eqref{eq:1963} and \eqref{eq:1965} we obtain
\begin{align}
  \label{eq:1966}
  \frac{d^{n-2}V}{dv^{n-2}}-\frac{\kappa v}{2}\frac{d^{n-2}y}{dv^{n-2}}-\frac{(n-2)\kappa}{2}\frac{d^{n-3}y}{dv^{n-3}}=P_2,
\end{align}
i.e.~the second line in \eqref{eq:1959} is a $P_2$.

From \eqref{eq:1586} together with the fact that $V(v)-c_{+0}=\Landau(v^2)$ we have
\begin{align}
  \label{eq:1967}
  \frac{d^mV}{dv^m}=\left\{
    \begin{array}{ll}
      c_{+0}+P_{n-1,2} & m=0,\\
      P_{n-2,1} & m=1,\\
      P_{n-m-1} & 2\leq m\leq n-1.
    \end{array}\right.
\end{align}
In view of \eqref{eq:1924} we have from \eqref{eq:1890}, \eqref{eq:1894},
\begin{align}
  \label{eq:1968}
  \frac{d^m\phi}{dv^m}=\left\{
    \begin{array}{ll}
      P_{n-1,2} & m=0,\\
      P_{n-2,1} & m=1,\\
      P_{n-m-1} & 2\leq m\leq n-1.
    \end{array}\right.
\end{align}
Using now \eqref{eq:1967} and \eqref{eq:1968} we deduce that each term in the sum in \eqref{eq:1959} is a $P_2$. We conclude that for $n\geq 3$, \eqref{eq:1959} is a $P_2$.

In the case $n=2$ in place of \eqref{eq:1959} we have
\begin{align}
  \label{eq:1969}
  \frac{d^2\delta_1}{dv^2}&=\frac{\lambda}{3\kappa^2}v\left(\frac{dV}{dv}-\frac{\kappa}{2}(1+y)-\frac{\kappa v}{2}\frac{dy}{dv}\right)+\frac{\lambda}{3\kappa^2}\left(V-c_{+0}-\frac{\kappa}{2}(1+y)v\right)\notag\\
&\qquad+(V-c_{+0})\frac{d\phi}{dv}+\frac{dV}{dv}\phi.
\end{align}
Using
\begin{align}
  \label{eq:1970}
  \int_0^v\hat{I}_{2,0}(v')dv'=v\left(\hat{I}_{2,0}-\hat{I}_{2,1}\right)
\end{align}
in \eqref{eq:1947} with $n=2$ we obtain (recall that $y(0)=-1$)
\begin{align}
  \label{eq:1974}
  1+y=\int_0^v\frac{dy}{dv}(v')dv'=\bar{P}_{1,1}+v\left(\hat{I}_{2,0}-\hat{I}_{2,1}\right).
\end{align}
Together with \eqref{eq:1586} and \eqref{eq:1947}, both with $n=2$, we obtain
\begin{align}
  \label{eq:1971}
  \frac{dV}{dv}-\frac{\kappa}{2}(1+y)-\frac{\kappa v}{2}\frac{dy}{dv}=P_{1,1}.
\end{align}
Using \eqref{eq:1964}, \eqref{eq:1973}, both with $n=2$, in \eqref{eq:1586} with $n=2$, we deduce
\begin{align}
  \label{eq:1972}
  V-c_{+0}=\int_0^v\frac{dV}{dv}(v')dv'=\frac{\kappa v^2}{2}\left(\hat{I}_{2,0}-\hat{I}_{2,1}\right)+P_{2,2}.
\end{align}
This together with \eqref{eq:1974} implies
\begin{align}
  \label{eq:1975}
  V-c_{+0}-\frac{\kappa}{2}(1+y)v=P_{2,2}.
\end{align}
In view of \eqref{eq:1967}, \eqref{eq:1968},
\begin{align}
  \label{eq:1976}
  (V-c_{+0})\frac{d\phi}{dv},\frac{dV}{dv}\phi=\Landau(v^3).
\end{align}
From \eqref{eq:1971}, \eqref{eq:1975} and \eqref{eq:1976} we obtain
\begin{align}
  \label{eq:1977}
  \frac{d^2\delta_1}{dv^2}=P_{2,2}.
\end{align}
Together with the above conclusion for $n\geq 3$ we conclude
\begin{align}
  \label{eq:1978}
  \frac{d^n\delta_1}{dv^n}=P_2.
\end{align}

From \eqref{eq:1978} we have
\begin{align}
  \label{eq:1979}
  \frac{d^m\delta_1}{dv^m}=P_{n+2-m}.
\end{align}
In view also of \eqref{eq:626} we have, in particular,
\begin{align}
  \label{eq:1980}
  \delta_1=P_{n+2,4}.
\end{align}
It follows that $L_n$ annihilates the polynomial part of $\delta_1$. From \eqref{eq:1918} (see also \eqref{eq:1919}) with $\delta_1$ in the role of $\delta$ we obtain
\begin{align}
  \label{eq:1981}
  \frac{d^n\hat{\delta}_1}{dv^n}=\Landau(1).
\end{align}
Combining with \eqref{eq:1958} we conclude
\begin{align}
  \label{eq:1982}
  \frac{d^n\hat{\delta}}{dv^n}=\frac{\lambda}{6\kappa v}\hat{I}_{n,n+2}+\Landau(1).
\end{align}

\subsubsection{Estimate for $d^ny/dv^n$}
We recall
\begin{align}
  \label{eq:1983}
  \frac{dy}{dv}(v)=-\frac{(\partial\hat{F})/\partial v)(v,y(v))}{(\partial\hat{F}/\partial y)(v,y(v))},
\end{align}
where
\begin{align}
  \label{eq:1984}
  \pp{\hat{F}}{y}(v,y)&=\frac{\lambda}{2\kappa}y^2-\kappa\hat{f}(v)+v\pp{R}{y}(v,y),\\
  \label{eq:1986}
\pp{\hat{F}}{v}(v,y)&=-\kappa y\frac{d\hat{f}}{dv}(v)+\frac{d\hat{\delta}}{dv}(v)+R(v,y)+v\pp{R}{v}(v,y).
\end{align}
We recall that $R$ is given by
\begin{align}
  \label{eq:1985}
  R(v,y)\coloneqq -\left(\frac{\partial^2r^\ast}{\partial t^2}\right)_0\left(\hat{f}(v)\right)^2-\left(\frac{\partial^4r^\ast}{\partial w^4}\right)_0\frac{y^4}{24}-\left(\frac{\partial^3r^\ast}{\partial t\partial w^2}\right)_0\frac{y^2\hat{f}(v)}{2}-vH(\hat{f}(v),y),
\end{align}
where $H$ is a smooth function of its arguments.

Setting $f_0=v^{n+2}\frac{d^ny}{dv^n}$ in \eqref{eq:1456} so that $g_0=I_{n,n+2}$ and using \eqref{eq:1458} with $k=-n-3$ we obtain the $l$-fold iterated integral of $\frac{1}{v}\hat{I}_{n,n+2}$. Using this in \eqref{eq:1913}, \eqref{eq:1982} we obtain
\begin{align}
  \label{eq:1987}
  \frac{d^m\hat{f}}{dv^m}&=\left\{
    \begin{array}{ll}
      P_{n-m-2} & 0\leq m\leq n-2,\\
      \Landau(1) & m=n-1,
    \end{array}\right.\\
  \label{eq:1988}
  \frac{d^m\hat{\delta}}{dv^m}&=\left\{
    \begin{array}{ll}
      P_{n-m-2} & 0\leq m\leq n-2,\\
      \Landau(1) & m=n-1.
    \end{array}\right.
\end{align}
From \eqref{eq:1948} we have
\begin{align}
  \label{eq:1990}
  \frac{d^my}{dv^m}=\left\{
    \begin{array}{ll}
      P_{n-m-2} & 0\leq m\leq n-2,\\
      \Landau(1) & m=n-1.
    \end{array}\right.
\end{align}
Recalling $(\partial\hat{F}/\partial y)(0,-1)=\frac{\lambda}{3\kappa}$, we deduce from \eqref{eq:1987}, \eqref{eq:1988} and \eqref{eq:1990}
\begin{align}
  \label{eq:1989}
  \frac{d^m}{dv^m}\left\{\pp{\hat{F}}{y}(v,y(v))\right\}&=\left\{
    \begin{array}{ll}
      \frac{\lambda}{3\kappa}+\Landau(v) & m=0,\\
      P_{n-m-2} & 1\leq m\leq n-2,\\
      \Landau(1) & m=n-1,
    \end{array}\right.\\
  \label{eq:1991}
  \frac{d^m}{dv^m}\left\{\pp{\hat{F}}{v}(v,y(v))\right\}&=\kappa\frac{d^{m+1}\hat{f}}{dv^{m+1}}+\frac{d^{m+1}\hat{\delta}}{dv^{m+1}}+\left\{
    \begin{array}{ll}
      P_{n-m-2} & 0\leq m\leq n-2,\\
      \Landau(1) & m=n-1.
    \end{array}\right.
\end{align}

We apply $d^{n-1}/dv^{n-1}$ to \eqref{eq:1983}. Using \eqref{eq:1913}, \eqref{eq:1982}, \eqref{eq:1989} and \eqref{eq:1991} we obtain
\begin{align}
  \label{eq:1992}
  \frac{d^ny}{dv^n}=\frac{1}{v}\hat{I}_{n,n+2}+\Landau(1),
\end{align}
i.e.
\begin{align}
  \label{eq:1993}
  \frac{d^ny}{dv^n}(v)=\frac{1}{v^{n+3}}\int_0^vv'^{n+2}\frac{d^ny}{dv^n}(v')dv'+\Landau(1).
\end{align}
Setting
\begin{align}
  \label{eq:1994}
  Z_n(v)\coloneqq\int_0^vv'^{n+2}\frac{d^ny}{dv^n}(v')dv',
\end{align}
we have
\begin{align}
  \label{eq:1995}
  \frac{d}{dv}\left(\frac{Z_n}{v}\right)=\Landau(v^{n+1}).
\end{align}
Integrating gives
\begin{align}
  \label{eq:1996}
  \frac{Z_n}{v}=\Landau(v^{n+2}).
\end{align}
Therefore,
\begin{align}
  \label{eq:1997}
  Z_n=\Landau(v^{n+3}),
\end{align}
which, when substituted in \eqref{eq:1993}, yields
\begin{align}
  \label{eq:1998}
  \frac{d^ny}{dv^n}=\Landau(1),
\end{align}
i.e.~$Y_n$ is bounded, hence $(Y_n)$ holds and the inductive step for the derivatives of the function $y$ is complete.

\subsubsection{Bound for $d^n\hat{f}/dv^n$ and $d^n\alpha_+/dv^n=P_1$}
We recall \eqref{eq:1913}
\begin{align}
  \label{eq:1999}
  \frac{d^n\hat{f}}{dv^n}(v)=\frac{\lambda}{6\kappa^2v^{n+3}}\int_0^vv'^{n+2}\frac{d^ny}{dv^n}(v')dv'+\Landau(1).
\end{align}
In view of the bound on $Y_n$ we have
\begin{align}
  \label{eq:2000}
  \frac{d^n\hat{f}}{dv^n}=\Landau(1).
\end{align}
Therefore, $F_n$ is bounded. Hence $(F_n)$ holds and the inductive step for the derivatives of the function $\hat{f}$ is complete. We note that \eqref{eq:2000} implies (recall $f=v^2\hat{f}$)
\begin{align}
  \label{eq:2001}
  \frac{d^nf}{dv^n}=P_1.
\end{align}

We turn to $d^n\alpha_+/dv^n$. From \eqref{eq:1253} we obtain
\begin{align}
  \label{eq:1279}
  \frac{d^n\alpha_+}{dv^n}(v)&=\frac{d^n\alpha_i}{du^n}(v)+\int_0^v\left\{\sum_{i=0}^{n}\binom{n}{i}\frac{\partial^{i+1}t}{\partial u^i\partial v}\frac{\partial^{n-i}\tilde{A}}{\partial u^{n-i}}\right\}(v,v')dv'\notag\\
&\qquad+\sum_{l=0}^{n-1}\binom{n-2}{l+1}\left\{\frac{\partial^{n-1}}{\partial u^{n-1-l}\partial v^l}\left(\pp{t}{v}\tilde{A}\right)\right\}(v,v).
\end{align}
The first term is taken care of by the assumption on the initial data. In the following we will make use of $(t_{p,m})$, $(t_{m,n})$, $(\alpha_{p,n})$, $(\alpha_{m,n})$, $(\beta_{p,n})$, $(\beta_{m,n})$ and \eqref{eq:1274} without any further reference. Each of the terms in the sum of the second term is at least a $Q_0$. Therefore, the second term is a $P_{1,1}$. We split the third term according to
\begin{align}
  \label{eq:1280}
  \sum_{l=0}^{n-1}\binom{n-2}{l+1}\frac{\partial^{n-1}}{\partial u^{n-1-l}\partial v^l}\left(\pp{t}{v}\tilde{A}\right)=\sum_{l=0}^{n-2}\binom{n-2}{l+1}\frac{\partial^{n-1}}{\partial u^{n-1-l}\partial v^l}\left(\pp{t}{v}\tilde{A}\right)+\binom{n-2}{n}\frac{\partial^{n-1}}{\partial v^{n-1}}\left(\pp{t}{v}\tilde{A}\right).
\end{align}
The first term involves mixed derivatives of $t$ of order $n$ and less and derivatives of $\alpha$, $\beta$ and $t$ of order $n-1$ and less which are all $Q_1$. Therefore, the first term in \eqref{eq:1280} is a $P_1$. For the second term in \eqref{eq:1280} we have
\begin{align}
  \label{eq:1281}
  \left\{\frac{\partial^{n-1}}{\partial v^{n-1}}\left(\pp{t}{v}\tilde{A}\right)\right\}(v,v)=P_1+\frac{\partial^nt}{\partial v^n}(v,v)\tilde{A}(v,v),
\end{align}
where for the first term we reason as above. To deal with the second term in \eqref{eq:1281} we use \eqref{eq:1645} with $n$ in the role of $k$ and \eqref{eq:1245} with $n$ in the role of $n-1$ which is
\begin{align}
  \label{eq:1282}
  a_{n}=\frac{1}{1+\gamma}\left\{\frac{d^{n}f}{dv^{n}}-\sum_{l=1}^{n-1}\binom{n-1}{l}\frac{d^l\gamma}{dv^l}a_{n-l}\right\}.
\end{align}
For the first term in the bracket we use \eqref{eq:2001}. For the sum we note that each of the $a_{n-l}$ involves derivatives of $t$ of order $n-1$ and less and is therefore a $P_1$. From \eqref{eq:1998} we have
\begin{align}
  \label{eq:1284}
  \hat{I}_{n,0}=\int_0^v\frac{d^ny}{dv^n}(v')dv'=\Landau(v).
\end{align}
Therefore, from \eqref{eq:1223},
\begin{align}
  \label{eq:2002}
  \frac{d^{n-1}\rho}{dv^{n-1}}=P_0,
\end{align}
which, through \eqref{eq:1652}, implies, recalling that $\rho(0)=0$,
\begin{align}
  \label{eq:1285}
  \frac{d^{n-1}\gamma}{dv^{n-1}}=P_1.
\end{align}
Therefore,
\begin{align}
  \label{eq:2003}
 a_n=P_1. 
\end{align}
Therefore, from \eqref{eq:1645} with $n$ in the role of $k$ we find
\begin{align}
  \label{eq:1287}
  \frac{\partial^nt}{\partial v^n}(v,v)=P_1(v).
\end{align}
Using this in \eqref{eq:1281} we see that the second term in \eqref{eq:1280} is a $P_1$ and therefore the whole sum in \eqref{eq:1280} is a $P_1$ which in turn implies that the third term in \eqref{eq:1279} is a $P_1$. We conclude
\begin{align}
  \label{eq:1288}
  \frac{d^n\alpha_+}{dv^n}=P_1,
\end{align}
i.e.~$(\alpha_{+,n})$ holds.

\subsubsection{Inductive Step for Derivatives of $t$, $\alpha$, $\beta$ Part Two\label{section_part_2}}
In the following we prove
\begin{align}
  \label{eq:1407}
    \frac{\partial^n\alpha}{\partial v^n}=Q_0,\qquad\frac{\partial^n\beta}{\partial v^n}=Q_0.
\end{align}
Together with \eqref{eq:1690} this will then establish \eqref{eq:1234}, \eqref{eq:1293} with $n$ in the role of $n-1$, i.e.~it will establish ($\alpha_{0,n}$), ($\beta_{0,n}$). With $n$ in the role of $n-1$ in the equations \eqref{eq:1243}, \eqref{eq:1266}, \eqref{eq:1264} we have
\begin{align}
  \label{eq:1702}
  \frac{\partial^{n}t}{\partial v^{n}}(u,v)&=e^{-L(u,v)}\left\{\frac{\partial^{n}t}{\partial v^{n}}(v,v)+\int_v^ue^{L(u',v)}R_{v,n}(u',v)du'\right\},\\
  \label{eq:1703}
\frac{\partial^{n}\alpha}{\partial v^{n}}(u,v)&=\left\{\sum_{i=0}^{n-1}\binom{n-1}{i}\frac{\partial^{i+1}t}{\partial v^{i+1}}\frac{\partial^{n-1-i}\tilde{A}}{\partial v^{n-1-i}}\right\}(u,v),\\
  \label{eq:1704}
\frac{\partial^{n}\beta}{\partial v^{n}}(u,v)&=\frac{d^{n}\beta_+}{dv^{n}}(v)+\int_v^u\left\{\sum_{i=0}^{n}\binom{n}{i}\frac{\partial^{i+1}t}{\partial v^i\partial u}\frac{\partial^{n-i}\tilde{B}}{\partial v^{n-i}}\right\}(u',v)du'\notag\\
&\qquad -\sum_{l=0}^{n-1}\binom{n}{l+1}\left\{\frac{\partial^{n-1}}{\partial u^l\partial v^{n-1-l}}\left(\pp{t}{u}\tilde{B}\right)\right\}(v,v).
\end{align}

Analogous to \eqref{eq:1693} we find
\begin{align}
  \label{eq:1705}
  R_{v,n}=Q_1+Q_1'\frac{\partial^n\alpha}{\partial v^n}+Q_1''\frac{\partial^n\beta}{\partial v^n}.
\end{align}
Therefore, taking into account \eqref{eq:1287},
\begin{align}
  \label{eq:1709}
  \frac{\partial^nt}{\partial v^n}(u,v)=Q_1+\int_v^u\left\{Q_1'\frac{\partial^n\alpha}{\partial v^n}+Q_1''\frac{\partial^n\beta}{\partial v^n}\right\}(u',v)du'.
\end{align}

For \eqref{eq:1703} we make use of \eqref{eq:1709} together with ($t_{p,n}$), ($t_{m,n}$), ($\alpha_{p,n}$), ($\alpha_{m,n}$), ($\beta_{p,n}$), ($\beta_{m,n}$). We find
\begin{align}
  \label{eq:1710}
  \frac{\partial^n\alpha}{\partial v^n}(u,v)=Q_1+\int_v^u\left\{Q_1'\frac{\partial^n\alpha}{\partial v^n}+Q_1''\frac{\partial^n\beta}{\partial v^n}\right\}(u',v)du'.
\end{align}

Now, ($Y_n$) implies that \eqref{eq:1298} holds with $n$ in the role of $n-1$. By \eqref{eq:2001} and \eqref{eq:1288} also \eqref{eq:1213} and \eqref{eq:1300} hold with $n$ in the role of $n-1$. Therefore, we obtain, in the same way as we derived \eqref{eq:1313}, that \eqref{eq:1313} holds with $n$ in the role of $n-1$. Therefore, the first term in \eqref{eq:1704} is a $P_1$.

We split the sum in the integral of \eqref{eq:1704} into
\begin{align}
  \label{eq:1711}
  \sum_{i=1}^{n-1}\binom{n}{i}\frac{\partial^{i+1}t}{\partial v^i\partial u}\frac{\partial^{n-i}\tilde{B}}{\partial v^{n-i}}+\pp{t}{u}\frac{\partial^n\tilde{B}}{\partial v^n}+\frac{\partial^{n+1}t}{\partial u\partial v^n}\tilde{B}.
\end{align}
In view of ($t_{p,n}$), ($t_{m,n}$), ($\alpha_{p,n}$), ($\alpha_{m,n}$), ($\beta_{p,n}$), ($\beta_{m,n}$) and the Hodograph system, each of the terms in the sum is a $Q_1$ and for the second term in \eqref{eq:1711} we have
\begin{align}
  \label{eq:1712}
  \frac{\partial^n\tilde{B}}{\partial v^n}=Q_1+Q_1'\frac{\partial^n\alpha}{\partial v^n}+Q_1''\frac{\partial^n\beta}{\partial v^n}+Q_1'''\frac{\partial^nt}{\partial v^n}.
\end{align}
From \eqref{eq:1237} with $i=0$, $j=n-1$ we obtain, using again ($t_{p,n}$), ($t_{m,n}$), ($\alpha_{p,n}$), ($\alpha_{m,n}$), ($\beta_{p,n}$), ($\beta_{m,n}$),
\begin{align}
  \label{eq:1713}
  \frac{\partial^{n+1}t}{\partial u\partial v^n}=Q_1+Q_1'\frac{\partial^n\alpha}{\partial v^n}+Q_1''\frac{\partial^n\beta}{\partial v^n}+Q_1'''\frac{\partial^n t}{\partial v^n}.
\end{align}

We now look at the last term in \eqref{eq:1704}. We split the sum into
\begin{align}
  \label{eq:1714}
  \sum_{l=0}^{n-2}\binom{n}{l+1}\frac{\partial^{n-1}}{\partial u^l\partial v^{n-1-l}}\left(\pp{t}{u}\tilde{B}\right)+\frac{\partial^{n-1}}{\partial u^{n-1}}\left(\pp{t}{u}\tilde{B}\right).
\end{align}
In view of ($t_{p,n}$), ($t_{m,n}$), ($\alpha_{p,n}$), ($\alpha_{m,n}$), ($\beta_{p,n}$), ($\beta_{m,n}$) and the Hodograph system, each of the terms in the sum is a $Q_1$ and for the second term we use \eqref{eq:1274} and conclude that this term is a $Q_0$. Using now \eqref{eq:1712}, \eqref{eq:1713} in \eqref{eq:1711} and the resulting expression in \eqref{eq:1704} we find
\begin{align}
  \label{eq:1715}
  \frac{\partial^n\beta}{\partial v^n}(u,v)=Q_0+\int_v^u\left\{Q_1\frac{\partial^n\alpha}{\partial v^n}+Q_1'\frac{\partial^n\beta}{\partial v^n}+Q_1'''\frac{\partial^nt}{\partial v^n}\right\}(u',v)du'.
\end{align}

Defining
\begin{align}
  \label{eq:1716}
  G\coloneqq\left|\frac{\partial^n\alpha}{\partial v^n}\right|+\left|\frac{\partial^n\beta}{\partial v^n}\right|+\left|\frac{\partial^nt}{\partial v^n}\right|,
\end{align}
and taking the sum of the absolute values of \eqref{eq:1709}, \eqref{eq:1710} and \eqref{eq:1715}, we obtain
\begin{align}
  \label{eq:1717}
  G(u,v)\leq C+C'\int_v^uG(u',v)du',
\end{align}
which implies
\begin{align}
  \label{eq:1718}
  G(u,v)\leq C,
\end{align}
which in turn implies
\begin{align}
  \label{eq:1719}
  \left|\frac{\partial^n\alpha}{\partial v^n}\right|,\left|\frac{\partial^n\beta}{\partial v^n}\right|,\left|\frac{\partial^nt}{\partial v^n}\right|\leq C.
\end{align}
Therefore, using this in \eqref{eq:1709}, \eqref{eq:1710}, \eqref{eq:1715} we obtain
\begin{align}
  \label{eq:1275}
  \frac{\partial^n\alpha}{\partial v^n},\frac{\partial^n\beta}{\partial v^n},\frac{\partial^nt}{\partial v^n}=Q_0,
\end{align}
the first and second of which are \eqref{eq:1407}. From \eqref{eq:1690} and \eqref{eq:1407} we conclude that ($\alpha_{0,n}$), ($\beta_{0,n}$) hold. This completes the proof of the inductive step for the derivatives of $\alpha$, $\beta$ and $t$. Therefore, the inductive step is complete.

\subsection{Blowup on the Incoming Characteristic originating at the Cusp Point}
We recall the following asymptotic forms
\begin{align}
  \label{eq:1378}
  \pp{\alpha}{v}(u,v)&=\frac{\lambda\tilde{A}_0}{3\kappa^2}v+\Landau(uv),\\
  \label{eq:1393}
  \pp{\beta}{v}(u,v)&=\frac{\lambda}{3\kappa^2}\cp{\pp{\beta^\ast}{t}}v+\Landau(uv),\\
  \label{eq:1379}
  \pp{t}{v}(u,v)&=\frac{\lambda}{3\kappa^2}v+\Landau(uv).
\end{align}
As established above, $\alpha$, $\beta$ and $t$ are smooth functions of $u$ and $v$ in the state behind the shock. Let us now consider an outgoing characteristic originating at a point on $\underline{C}$ corresponding to the coordinates $(u,0)$. According to \eqref{eq:1378}, \eqref{eq:1393} and \eqref{eq:1379}, along this outgoing characteristic the Taylor expansions in $v$ of $\alpha$, $\beta$ as well as $t$ do not contain linear terms but do contain odd powers beginning with the third. Therefore, $\alpha$ and $\beta$ hence also the $\psi_\mu$ are smooth functions not of the parameter $t$ but rather of the parameter
\begin{align}
  \label{eq:1397}
  \sqrt{t-t_0},\qquad \textrm{where}\qquad t_0=t(u,0).
\end{align}
Therefore, the derivatives of the $\psi_\mu$ with respect to $L_+$ of order greater than the first blow up as we approach $\underline{C}$ from the state behind the shock (recall \eqref{eq:123} for $L_+$).

%\subsection*{Remarks}
%Check the sign of $\gamma$ in relation to the determinism condition and how it works therefore (hopefully) in \eqref{eq:1245}.

%%% Local Variables: 
%%% mode: latex
%%% TeX-master: "./master"
%%% End: 

\bibliographystyle{alpha}
\bibliography{bibliography}

\begin{thebibliography}{Tau48}

\bibitem[Chr95]{I}
D.~Christodoulou.
\newblock {Self-Gravitating Relativistic Fluids: A Two-Phase Model.}
\newblock {\em Arch. Ration. Mech. Anal.}, 130(4):343--400, 1995.

\bibitem[Chr07]{ch2007}
D.~Christodoulou.
\newblock {\em {The Formation of Shocks in 3-Dimensional Fluids.}}
\newblock {EMS Monographs in Mathematics. Z\"urich: European Mathematical
  Society Publishing House. viii}, 2007.

\bibitem[Rie59]{Rie}
B.~Riemann.
\newblock {\"Uber die Fortpflanzung ebener Luftwellen von endlicher
  Schwingungsweite}.
\newblock {\em Abhandlungen der Gesellschaft der Wissenschaften zu G\"ottingen,
  Mathematisch-physikalische Klasse}, 8, 1858-59.

\bibitem[Tau48]{taub}
A.~H. Taub.
\newblock {Relativistic Rankine-Hugoniot Equations}.
\newblock {\em Phys. Rev.}, 74:328--334, Aug 1948.

\end{thebibliography}

\end{document}